\numberwithin{equation}{section}
\theoremstyle{plain}
\newtheorem{theorem}{Theorem}[section]
\newtheorem{lemma}[theorem]{Lemma}
\newtheorem{corollary}[theorem]{Corollary}
\newtheorem{proposition}[theorem]{Proposition}
\newtheorem{conjecture}[theorem]{Conjecture}
\theoremstyle{definition}
\newtheorem{definition}[theorem]{Definition}
\newtheorem{example}[theorem]{Example}
\theoremstyle{remark}
\newtheorem{remark}[theorem]{Remark}
\newtheorem{notation}[theorem]{Notation}
\newtheorem*{conventions}{Conventions}
\newtheorem*{acknowledgements}{Acknowledgements}
\DeclareMathOperator{\Bl}{Bl}
\DeclareMathOperator{\Cl}{Cl}
\DeclareMathOperator{\Def}{Def}
\DeclareMathOperator{\Gal}{Gal}
\DeclareMathOperator{\Gr}{Gr}
\DeclareMathOperator{\gen}{gen}
\DeclareMathOperator{\K3}{K3}
\DeclareMathOperator{\im}{im}
\DeclareMathOperator{\MM}{MM}
\DeclareMathOperator{\Newt}{Newt}
\DeclareMathOperator{\NS}{NS}
\DeclareMathOperator{\Pic}{Pic}
\DeclareMathOperator{\res}{res}
\DeclareMathOperator{\rk}{rank}
\title{Modularity of Landau--Ginzburg models}
\author[Doran, Harder, Katzarkov, Ovcharenko, and Przyjalkowski]{Charles~Doran, Andrew~Harder, Ludmil~Katzarkov, Mikhail~Ovcharenko, and Victor~Przyjalkowski}
\date{}
\address{Charles Doran \newline
   \textnormal{Department of Mathematical and Statistical Sciences, 632 CAB, University of Alberta, Edmonton, AB T6G 2G1, Canada. \newline
    Bard College, Annandale-on-Hudson, NY 12571, USA. \newline
    Center of Mathematical Sciences and Applications, Harvard University, 20 Garden Street, Cambridge, MA 02138, USA.} \newline
    \textnormal{\texttt{charles.doran@ualberta.ca}}}
\address{Andrew Harder \newline
   \textnormal{Department of Mathematics, Lehigh University, Chandler--Ullmann Hall, 17 Memorial Dr. E.,, Bethlehem, PA 18015, USA.} \newline
   \textnormal{\texttt{anh318@lehigh.edu}}}
\address{Ludmil Katzarkov \newline
   \textnormal{University of Miami, Coral Gables, 33146 FL, USA. \newline
   National Research University Higher School of Economics, Laboratory of Mirror Symmetry, NRU HSE, 6 Usacheva str., Moscow, Russia, 119048. \newline
   Institute of Mathematics and Informatics, Bulgarian Academy of Sciences, Acad. G. Bonchev Str. bl. 8, 1113, Sofia, Bulgaria.} \newline
   \textnormal{\texttt{lkatzarkov@gmail.com}}}
\address{Mikhail Ovcharenko \newline
   \textnormal{Steklov Mathematical Institute of Russian Academy of Sciences, 8 Gubkina St., Moscow 119991, Russia. \newline
   National Research University Higher School of Economics, Laboratory of Mirror Symmetry, NRU HSE, 6 Usacheva str., Moscow 119048, Russia.} \newline
   \textnormal{\texttt{michael.a.ovcharenko@gmail.com, ovcharenko@mi-ras.ru}}}
\address{Victor Przyjalkowski \newline
   \textnormal{Steklov Mathematical Institute of Russian Academy of Sciences, 8 Gubkina St., Moscow 119991, Russia. \newline
   National Research University Higher School of Economics, Laboratory of Mirror Symmetry, NRU HSE, 6 Usacheva str., Moscow 119048, Russia.} \newline
   \textnormal{\texttt{victorprz@mi-ras.ru, victorprz@gmail.com}}}
\begin{document}

\begin{abstract}
  For each smooth Fano threefold, we construct a family of Landau--Ginzburg models which satisfy many expectations coming from different aspects of mirror symmetry; they are log Calabi--Yau varieties with proper superpotential maps; they admit open algebraic torus charts on which the superpotential function \(\mathsf{w}\) restricts to a Laurent polynomial satisfying a deformation of the Minkowski ansatz~\cite{akhtar2012minkowski}; the general fibres of \(\mathsf{w}\) are Dolgachev--Nikulin dual to the anticanonical hypersurfaces in the initial Fano threefold. To do this, we study the deformation theory of Landau--Ginzburg models in arbitrary dimension, following~\cite{katzarkov2017bogomolov}, specializing to the case of Landau--Ginzburg models obtained from Laurent polynomials. Our proof of Dolgachev--Nikulin mirror symmetry is by detailed case-by-case analysis, refining work of Cheltsov and the fifth-named author~\cite{cheltsov2018katzarkov}.
\end{abstract}

\maketitle

\section{Introduction}

\subsection{Relations between mirror symmetry predictions for Landau--Ginzburg models}

Mirror symmetry studies relations between symplectic geometry and complex algebraic geometry. In particular, the form of mirror symmetry that we are interested in here is the relationship between Fano manifolds and their mirror Landau--Ginzburg models. For the moment, a Landau--Ginzburg model will denote simply a smooth quasi-projective variety \(Y\) equipped with a regular function \(\mathsf{w}\). In the past decades, many different inter-related forms of mirror symmetry have been proposed. One of the goals of this paper is to understand how predictions coming from different forms of mirror symmetry relate to one another. In particular, if \(X\) is a Fano variety, and \((Y,\mathsf{w})\) is its Landau--Ginzburg mirror, we have the following predictions, which we state in the case where \(\dim(X) = 3\). Our discussion of mirror symmetry in this section is quite coarse, and we often suppress specific choices of symplectic and complex structures for simplicity.

\begin{itemize}[\quad -]

\item \textbf{Homological mirror symmetry} (e.g.,~\cite{katzarkov2008hodge,katzarkov2017bogomolov}). Homological mirror symmetry, initiated by Kontsevich for Calabi--Yau varieties in~\cite{kontsevich1994homological} and extended to the case of certain Fano varieties (see~\cite{seidel2001vanishing} for exposition), predicts that for a Fano manifold with smooth anticanonical divisor \(V\), the log Calabi--Yau pair \((X,V)\) has a mirror log Calabi--Yau pair \((Z,D)\). The act of compactifying \(X - V\) to \(X\) corresponds under mirror symmetry to equipping \(Y = Z-D\) with a proper function \(\mathsf{w}\) (see~\cite{auroux2007mirror}). Therefore, if \(X\) and \((Y,\mathsf{w})\) form a mirror Fano/Landau--Ginzburg model pair, one expects that \(Y\) admits a log Calabi--Yau compactification \(Z\) to which \(\mathsf{w}\) extends to a morphism \(\mathsf{f} \colon Z\rightarrow \mathbb{P}^1\). The fibre over \(\infty\) is snc and anticanonical.

\item \textbf{Hodge-theoretic mirror symmetry} (e.g.,~\cite{iritani2009integral,iritani2016quantum}). Hodge-theoretic mirror symmetry predicts the identification of the regularized quantum cohomology D-module of a Fano variety \(X\) with the Gauss--Manin connection on the fibres of \(\mathsf{w}: Y\rightarrow \mathbb{C}\). This formulation of Hodge-theoretic mirror symmetry goes back to Givental~\cite{givental1998mirror}, with the regularization introduced by Golyshev~\cite{golyshev2004modularity}. Iritani~\cite{iritani2009integral} and concurrent work of Katzarkov--Kontsevich--Pantev~\cite{katzarkov2008hodge} equipped the (regularized) quantum D-module with an integral structure  which should match the natural integral structure underlying the B-model variation of Hodge structure. Furthermore, the ambient quantum D-module of an anticanonical Calabi--Yau hypersurface in \(X\) is identified with a sub-local system of the solution sheaf of the quantum D-module~\cite{iritani2011quantum,iritani2016quantum}.

  In dimension 3, if the Picard lattice of a very general anticanonical hypersurface of \(X\) is denoted \(\Pic(X)\), then one expects that the transcendental lattice of a very general fibre of \(\mathsf{w}\) is \(H \oplus \Pic(X)\). Here \(H\) indicates the unique rank 2 even unimodular lattice of signature \((1,1)\). This relation is called \emph{Dolgachev--Nikulin mirror symmetry}~\cite{dolgachev1996mirror}. For a discussion of this aspect of mirror symmetry, see~\cite{ueda2020mirror}. This can also be extracted from homological mirror symmetry.

\item \textbf{Fanosearch programme} (e.g.,~\cite{coates2014mirror,coates2016quantum,coates2021maximally}). In an ongoing series of papers, Coates, Corti, Kasprzyk, and a number of collaborators have pursued a program aimed towards understanding the classification of Fano varieties through mirror symmetry and Laurent polynomials. The basic observation, which goes back in some form at least to the work of Batyrev--Ciocan-Fontanine--Kim--van Straten~\cite{batyrev2000mirror}, is that if \(X\) is a Fano variety, and \(X\) admits a degeneration to a Gorenstein Fano toric variety \(T\), then the mirror of \(X\) admits a torus chart \((\mathbb{C}^*)^n\) to which \(\mathsf{w}\) restricts to give a Laurent polynomial \(\mathsf{p}\) whose Newton polytope is the anticanonical polytope of \(T\). These Laurent polynomials do not have general coefficients, and the choice of coefficients seems to have a deep connection to the structure of the degeneration of \(X\) to \(T\).

  One of the main ideas appearing in the work of Coates, Corti, Kasprzyk, and collaborators is that this process can be inverted; by characterizing the Laurent polynomials that appear in this way, one should be able to \emph{create} a family of Fano manifolds starting from the data of a Laurent polynomial of appropriate type by applying deformation-theoretic techniques (see, for instance,~\cite{coates2019inversion,akhtar2016mirror}). The main challenge has been to characterize which polynomials correspond to Fano varieties. The current expectation is that there is a bijection between mutation classes of \emph{rigid maximally mutable Laurent polynomial} (see Subsection~\ref{subsection:mink}) and TG Fano varieties (see~\cite{coates2021maximally} for precise details).
\end{itemize}

These three strands of mirror symmetry are deeply interwoven; the Fanosearch programme is influenced by Hodge-theoretic mirror symmetry, and one expects that homological mirror symmetry at least partially implies Hodge-theoretic mirror symmetry. The goal of this article is to show that the predictions made about the class of objects mirror to Fano varieties by these three aspects of mirror symmetry are in harmony with one another. We provide a more detailed outline of our results below.

\subsection{Picard lattices of Landau--Ginzburg threefolds}

The theory of toric Landau--Ginzburg models (see Subsection~\ref{subsection:LG-models} for definition or~\cite{przyjalkowski2018review} for a comprehensive overview) gives an effective approach of constructing mirror Landau--Ginzburg models of Fano varieties. According to this theory, a Fano variety \(X\) corresponds to a Laurent polynomial \(\mathsf{p}\), interpreted as a regular function on \((\mathbb{C}^*)^d\), so that the periods of the fibres of \(\mathsf{p}\) correspond to the Gromov--Witten invariants of \(X\). Motivated by mirror symmetry, one expects~\cite{przyjalkowski2017compactification} that \((\mathbb{C}^*)^d\) also admits a compactification \(Z\) so that \(\mathsf{p}\) extends to a morphism \(\mathsf{f} \colon Z\rightarrow \mathbb{P}^1\), and so that \(\mathsf{f}^{-1}(\infty)\) is a simple normal crossings anticanonical divisor. Finally, overwhelming amounts of computations coming from the Fanosearch programme lead us to believe that there  should exist a degeneration of \(X\) to a toric variety \(T\) whose fan polytope is the Newton polytope of the Laurent polynomial (see Subsection~\ref{subsection:LG-models}). A Laurent polynomial satisfying all of these expectations is said to be a \emph{toric Landau--Ginzburg model} of \(X\).

It was proved in~\cite{przyjalkowski2008LG,przyjalkowski2013weak,akhtar2012minkowski,coates2016quantum} that smooth Fano threefolds have toric Landau--Ginzburg mirrors. Note that the corresponding Laurent polynomial is not uniquely determined by a Fano variety. Nevertheless, if the anticanonical class \(-K_X\) is very ample, then the toric Landau--Ginzburg model of a smooth Fano threefold \(X\) is provided by a \emph{Minkowski polynomial} (see Subsection~\ref{subsection:minkowski}). It was proved in~\cite{akhtar2012minkowski} that they are all related by birational transformations called \emph{mutations}. Thus one can choose any Laurent polynomial from~\cite[Appendix~B]{akhtar2012minkowski} among mirror partners for \(X\) as well.

Dolgachev--Nikulin duality is a form of mirror symmetry between families of lattice-polarized K3 surfaces: for any lattice \(L\) (under some natural restrictions on the lattice) and any complete family of \(L\)-polarized K3 surfaces there is a corresponding complete family of \(L^{\vee}\)-polarized K3 surfaces. Here \(L^{\vee}\) is the \emph{Dolgachev--Nikulin dual lattice} (see Subsection~\ref{subsection:DN-duality} for a brief review, and~\cite{dolgachev1996mirror} for further details). Given a Fano threefold \(X\), there is a lattice polarization on each smooth anticanonical divisor \(S\) of \(X\), obtained by taking the sublattice \(\im (H^2(X,\mathbb{Z})\rightarrow H^2(S,\mathbb{Z}))\) and equipping this sublattice with the induced bilinear form. Let us denote this lattice by \(\Pic(X)\). Beauville~\cite{beauvillefano} has shown that for a very general choice of \(S\), there is an isomorphism between \(\Pic(X)\) and \(\Pic(S)\), and that the deformations of pairs \((X, S)\) form a complete family of \(\Pic(X)\)-polarized K3 surfaces of dimension \(20 - \rho(X)\). Consequently, Dolgachev--Nikulin duality implies that a complete family of \(\Pic(X)^{\vee}\)-polarized K3 surfaces has dimension \(\rho(X)\), and the Picard lattice of its very general fibre is isomorphic to \(\Pic(X)^{\vee}\) (see~\cite[MS2\('\)]{dolgachev1996mirror}).

In~\cite{ueda2020mirror}, Ueda applies work of Iritani~\cite{iritani2009integral} to prove, modulo some lattice-theoretic details, that if \(T\) is a smooth toric Fano threefold, then the fibres of the Landau--Ginzburg mirror of \(T\) are also Dolgachev--Nikulin dual to anticanonical hypersurfaces in \(T\). In this case, the Landau--Ginzburg mirror of \(T\) is \((\mathbb{C}^*)^3\) equipped with a general Laurent polynomial \(\mathsf{p}\) with Newton polytope equal to the fan polytope of \(T\). Since Iritani's results on mirror symmetry for variations of integral Hodge structures are not known for general Fano threefolds, Ueda's arguments do not apply more generally. However, we are led to the following conjecture.

\begin{conjecture}\label{conjecture:DN-duality}
  Let \(X\) be a smooth Fano threefold, and \(F\) be a general fibre of its log Calabi--Yau compactified toric Landau--Ginzburg model \(\mathsf{f} \colon Z \rightarrow \mathbb{P}^1\). Then there is an isomorphism of lattices:
  \[
    \Pic(X)^{\vee} \cong \im \left ( H^2(Z, \mathbb{Z}) \xrightarrow{\res} H^2(F, \mathbb{Z}) \right ).
  \]
\end{conjecture}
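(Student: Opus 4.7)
The plan is to establish the isomorphism by a detailed case-by-case analysis using the Iskovskikh--Mori--Mukai classification of smooth Fano threefolds. Since toric Landau--Ginzburg mirrors for each of the 105 deformation families are known explicitly (as Minkowski polynomials), I would proceed family by family, first selecting a representative Laurent polynomial $\mathsf{p}$, and then constructing an explicit log Calabi--Yau compactification $\mathsf{f} \colon Z \rightarrow \mathbb{P}^1$ whose fibre over $\infty$ is simple normal crossings and anticanonical. The compactification would be obtained by the birational procedure developed earlier in the paper: toric compactification adapted to the Newton polytope of $\mathsf{p}$, followed by an explicit sequence of blow-ups along loci dictated by the combinatorics of the polytope and the singularities of the fibre at infinity. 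This yields a smooth projective $Z$ whose geometry one can control.

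On the Fano side, the lattice $\Pic(X)$ is read off from Beauville's description and is tabulated in the literature for each family. The Dolgachev--Nikulin dual $\Pic(X)^{\vee}$ is then extracted from a primitive embedding into the K3 lattice: any such embedding gives a decomposition in which $\Pic(X)^{\vee}$ is realized as the orthogonal complement of $\Pic(X) \oplus H$, and Nikulin's uniqueness theorems for primitive embeddings in sufficiently small rank make this well-defined. On the Landau--Ginzburg side, $H^2(Z, \mathbb{Z})$ is generated by classes of irreducible divisors coming from the explicit construction, and the image of $\res$ is spanned by the traces on $F$ of these divisor classes, modulo those supported on fibres of $\mathsf{f}$ (which restrict trivially to a smooth fibre).

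The identification is then verified in two steps. First, match the ranks: the number of divisor classes on $Z$ that restrict nontrivially to $F$ should equal $\rk(\Pic(X)^{\vee}) = 22 - \rk(\Pic(X)) - 2$. Second, match the intersection forms: intersection numbers of restricted divisors on the K3 surface $F$ can be computed directly from the combinatorics of the Newton polytope and the chosen blow-ups, and compared with the Gram matrix of $\Pic(X)^{\vee}$. Many of these computations will piggyback on, and refine, those in \cite{cheltsov2018katzarkov}, where the rank of the polarizing lattice has already been analyzed for several families.

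The principal obstacle, I expect, lies in constructing and controlling $H^2(Z, \mathbb{Z})$ uniformly across all families, especially those of higher Picard rank or whose Minkowski polynomials have Newton polytopes with many facets, since these force complicated compactifications with numerous exceptional divisors whose contribution to the image of $\res$ must be analyzed carefully. A secondary difficulty is that matching lattices only up to rank and discriminant is insufficient: one needs to identify the exact intersection form with $\Pic(X)^{\vee}$, which in borderline cases requires appealing to Nikulin's gluing theory together with case-specific arguments to rule out non-isomorphic lattices sharing the same genus.
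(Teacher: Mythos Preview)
Your proposal follows essentially the same case-by-case strategy as the paper, but the paper's implementation differs in a few ways that sidestep precisely the obstacles you anticipate. First, rather than constructing \(Z\) and computing \(H^2(Z,\mathbb{Z})\) divisor by divisor, the paper works with the pencil of quartic surfaces \(\mathcal{S}_\lambda \subset \mathbb{P}^3\) obtained by homogenizing the Laurent polynomial; a key proposition (extracted from \cite{cheltsov2018katzarkov}) identifies \(\im(\res)\) with the lattice \(L_{\mathcal{S}}\) spanned by strict transforms of the base-locus curves together with the exceptional curves of the minimal (du~Val) resolution of a general member. This reduces the ``principal obstacle'' you flag to a routine listing of curves and singular points. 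Second, the lattice comparison is carried out entirely via discriminant forms: one computes the discriminant group and quadratic form of \(L_{\mathcal{S}}\) and of \(H \oplus \Pic(X)\) and checks they are negatives of each other, which by Nikulin's theory (together with the primitivity and signature constraints) forces \(L_{\mathcal{S}}^\perp \simeq H \oplus \Pic(X)\) in \(L_{\mathrm{K3}}\). So discriminant-form matching is the primary tool, not a fallback, and direct Gram-matrix identification with \(\Pic(X)^\vee\) is never attempted. Third, one subtlety you do not mention: for several families (\textnumero2.12, 4.2, and 6.1--10.1) the natural lattice \(L_\lambda\) carries a nontrivial \(\Gal(\mathbb{C}(\lambda))\)-action, and \(L_{\mathcal{S}}\) is its invariant sublattice; missing this would give the wrong rank in those cases.
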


\begin{proposition}[see~{\cite[Theorem~5.25]{przyjalkowski2018review}}]\label{proposition:prrev}
  Conjecture~\ref{conjecture:DN-duality} holds for Fano threefolds with \(\rho(X) = 1\).
\end{proposition}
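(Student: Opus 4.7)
The plan is to verify Conjecture~\ref{conjecture:DN-duality} separately for each of the seventeen deformation families of smooth Fano threefolds with $\rho(X) = 1$, building on the case-by-case constructions compiled in~\cite{przyjalkowski2018review}. Since $\rho(X) = 1$, the lattice $\Pic(X)$ is a rank-one lattice $\langle d \rangle$ determined by the Fano index and anticanonical degree of $X$, and $\Pic(X)^{\vee}$ is therefore an explicit rank-$19$ K3-type lattice that can be written down in each case via the standard Dolgachev--Nikulin lattice recipe. The goal is then to identify $\im(H^2(Z,\mathbb{Z}) \xrightarrow{\res} H^2(F,\mathbb{Z}))$ with this target lattice.

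The first step is to construct the compactification $\mathsf{f} \colon Z \to \mathbb{P}^1$ explicitly in each family. For each of the seventeen cases a toric Landau--Ginzburg model $\mathsf{p}$ is known (a Minkowski polynomial when $-K_X$ is very ample, and analogues in the remaining cases); I would take the toric variety associated with a fan refining the spanning fan of the Newton polytope of $\mathsf{p}$, resolve the base locus of the pencil $\{\mathsf{p} = t\}$ so that $\mathsf{p}$ extends to a morphism to $\mathbb{P}^1$, and further blow up the toric boundary as needed so that $\mathsf{f}^{-1}(\infty)$ becomes snc and anticanonical. The second step is to enumerate the divisor classes in $Z$ that can contribute to the image: the components of $\mathsf{f}^{-1}(\infty)$, the strict transforms of the toric boundary divisors that are not contained in a fibre, and the exceptional divisors of the base-locus resolution. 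Modulo the rational equivalence relation coming from the fibre class, these classes span a sublattice of $H^2(Z,\mathbb{Z})$ whose image under $\res$ is all of $\im(H^2(Z,\mathbb{Z}) \xrightarrow{\res} H^2(F,\mathbb{Z}))$.

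The final step is to compute the Gram matrix of this image inside $H^2(F,\mathbb{Z})$ using the adjunction-type intersection numbers on the K3 surface $F$, and to exhibit an isometry with $\Pic(X)^{\vee}$. The main obstacle is the lattice-theoretic matching rather than the numerical rank count: showing that the image is a sublattice of rank $19$ of $\Pic(F)$ is usually straightforward, but verifying that it is saturated and that its discriminant form agrees with that of $\Pic(X)^{\vee}$ requires a careful choice of generators in each case and a direct computation of intersection numbers. For those families for which $X$ admits a smoothing from a smooth toric Fano threefold, Ueda's theorem~\cite{ueda2020mirror} yields the identification immediately; the remaining families are handled by the explicit calculations collected in the proof of~\cite[Theorem~5.25]{przyjalkowski2018review}, which amount to verifying, family by family, that the $19$ divisor classes produced by the construction above assemble into the expected Dolgachev--Nikulin dual lattice.
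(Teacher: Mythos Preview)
The paper does not prove this proposition; it is stated with a bare citation to~\cite[Theorem~5.25]{przyjalkowski2018review} and no argument is given in the present text. Your proposal is therefore not really competing with an in-paper proof but rather sketching what the cited reference does. As such, the outline you give---construct the compactified Landau--Ginzburg model for each of the seventeen families, identify the sublattice of $H^2(F,\mathbb{Z})$ generated by restrictions of divisor classes from $Z$, compute its Gram matrix, and match discriminant forms with the rank-$19$ lattice $\Pic(X)^\vee$---is a faithful description of the strategy, and is also the template the present paper follows in its appendices for the higher-rank families.

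Two small remarks. First, your appeal to Ueda~\cite{ueda2020mirror} for ``families for which $X$ admits a smoothing from a smooth toric Fano threefold'' covers only $\mathbb{P}^3$ in Picard rank~$1$: that is the unique smooth toric Fano threefold of rank~$1$, so Ueda's result buys you one case out of seventeen, not a substantial shortcut. Second, in practice the rank-$1$ verification (as carried out in~\cite{ilten2013toric} and summarised in~\cite{przyjalkowski2018review}) proceeds by exhibiting an explicit elliptic fibration with section on the general fibre $F$ whose Mordell--Weil lattice and reducible-fibre contributions pin down the Picard lattice, rather than by tracking ambient divisor classes through the resolution as you describe; the two routes are equivalent in the end, but the elliptic-fibration route is what makes the saturation and discriminant-form checks tractable in the cited source.
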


\begin{definition}\label{definition:LG-standard}
  Let \(X\) be a smooth Fano threefold with \(\rho(X) > 1\). We refer to the toric Landau--Ginzburg model of the Fano threefold \(X\) used in~\cite{cheltsov2018katzarkov} as a \emph{standard} Landau--Ginzburg mirror of \(X\).
\end{definition}

\begin{theorem}\label{theorem:DN-duality}
  Conjecture~\ref{conjecture:DN-duality} holds for standard toric Landau--Ginzburg models.
\end{theorem}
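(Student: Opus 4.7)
The plan is to establish the conjecture by detailed case-by-case verification over the deformation families of smooth Fano threefolds with $\rho(X) \geq 2$, building on and refining the work of Cheltsov and Przyjalkowski~\cite{cheltsov2018katzarkov}; the case $\rho(X) = 1$ is already covered by Proposition~\ref{proposition:prrev}.

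For each family, the standard Landau--Ginzburg model is a specified Laurent polynomial $\mathsf{p}_X$ on $(\mathbb{C}^*)^3$. First I would construct an explicit log Calabi--Yau compactification $\mathsf{f}\colon Z \to \mathbb{P}^1$: take a projective toric compactification of $(\mathbb{C}^*)^3$ adapted to the Newton polytope of $\mathsf{p}_X$, then perform further birational modifications (toric blowups along boundary strata together with resolutions of the base locus and indeterminacies of the induced pencil) so that $\mathsf{f}^{-1}(\infty)$ is a simple normal crossings anticanonical divisor and the general fibre $F$ is a smooth K3 surface.

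Second, I would compute the image $\im(\res\colon H^2(Z, \mathbb{Z}) \to H^2(F, \mathbb{Z}))$ by restricting an explicit generating set of divisors on $Z$ (toric boundary components, exceptional divisors from the resolutions used to form $Z$, and components of the reducible fibres of $\mathsf{f}$) to $F$, and computing the resulting intersection form via adjunction-based methods as in~\cite{cheltsov2018katzarkov}. By the local invariant cycle theorem these restrictions span the image rationally, so the substantive content is the integral structure. In parallel I would compute $\Pic(X)^{\vee}$ directly from the Mori--Mukai classification using the standard Dolgachev--Nikulin duality recipe, and then exhibit an explicit lattice isomorphism with the image sublattice, matching generators and Gram matrices.

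The main obstacle is the combination of the sheer number of cases (around ninety deformation families with $\rho(X) \geq 2$) and the requirement of an \emph{integral} rather than merely rational lattice identification: one must verify that the chosen generating divisors restrict to primitive classes in $H^2(F, \mathbb{Z})$ and that no integral classes are missed due to torsion or divisibility phenomena coming from the resolution process. In several cases I expect this to force a more refined compactification than the one used in~\cite{cheltsov2018katzarkov}, for instance performing additional toric blowups so that all restrictions of boundary divisors to $F$ become transverse and the intersection pairing with candidate integral generators becomes unimodular on the relevant sublattice.
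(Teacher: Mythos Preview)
Your overall strategy---case-by-case verification building on \cite{cheltsov2018katzarkov}, with $\rho(X)=1$ handled by Proposition~\ref{proposition:prrev}---matches the paper exactly. The paper likewise computes, for each family, the image lattice $L_{\mathcal{S}} = \im(\res)$ using the explicit divisorial generators (base locus curves and exceptional curves of the minimal resolution of the du Val quartic $\mathcal{S}_\lambda$).

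Two implementation differences are worth noting. First, the paper does not build a new toric compactification: it works directly with the pencil of quartic surfaces in $\mathbb{P}^3$ (or $\mathbb{P}^1\times\mathbb{P}^2$ in two cases) already set up in \cite{cheltsov2018katzarkov}, identifying the general fibre $F$ with the minimal resolution of a du Val member $\mathcal{S}_\lambda$. No refinement of the compactification is needed; what is refined is the lattice computation itself, including the Galois action on $L_\lambda$ when the exceptional configuration is not defined over $\mathbb{C}(\lambda)$.

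Second, and more substantively, the paper does \emph{not} compute $\Pic(X)^\vee$ and then exhibit a direct isomorphism with $L_{\mathcal{S}}$. Instead it computes $\Pic(X)$ itself (via Proposition~\ref{proposition:blupform}, Theorem~\ref{theorem:GH}, etc.), then compares the discriminant forms of $L_{\mathcal{S}}$ and of $H\oplus\Pic(X)$, verifying they are negatives of one another. By Nikulin's criteria (Remarks~\ref{remark:embedding} and~\ref{remark:orthogonal-complement}) this forces $L_{\mathcal{S}}^\perp \simeq H\oplus\Pic(X)$ inside $L_{\K3}$, which is exactly the statement $L_{\mathcal{S}}\simeq\Pic(X)^\vee$. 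This sidesteps the awkward step of first embedding $\Pic(X)$ in $L_{\K3}$ and extracting $\Pic(X)^\vee$ by hand. Your worry about primitivity is also dissolved in this framework: $L_{\mathcal{S}}\subset H^2(F,\mathbb{Z})$ is automatically primitive by the global invariant cycles theorem, so the integral structure is controlled once the discriminant data match.
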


\subsection{Deformations of standard Landau--Ginzburg models}

For each Fano threefold, we have only a single standard Landau--Ginzburg model, therefore, in general, the fibres of \(\mathsf{w} \colon Y\rightarrow \mathbb{A}^1\) do not form a complete family of \(\Pic(X)^\vee\)-polarized family of K3 surfaces. Even worse, the Picard lattice of a (very) general fibre of~\(\mathsf{w}\) can be \emph{strictly larger} than \(\Pic(X)^\vee\). Following Cheltsov--Przyjalkowski (see~\cite{cheltsov2018katzarkov}), the Dolgachev--Nikulin dual lattice (i.e., the lattice of monodromy invariants) arises as the \(\Gal(\mathbb{C}(t))\)-invariant part of some larger sublattice in the Picard lattice of a general fibre. It was noticed in~\cite{cheltsov2018katzarkov} that the \(\Gal(\mathbb{C}(t))\)-action is non-trivial for Families~2.12,~4.2, and~6.1--10.1.

By work of the fifth-named author~\cite{przyjalkowski2017compactification}, each standard Landau--Ginzburg model associated to a Fano threefold with very ample anticanonical bundle admits a tame compactification (see Definition~\ref{definition:tcom} below), and thus results of Katzarkov--Kontsevich--Pantev~\cite{katzarkov2017bogomolov} show that the deformation theory of \((Z, D, \mathsf{f})\) is unobstructed. Results of Cheltsov--Przyjalkowski~\cite{cheltsov2018katzarkov}, along with results in Subsection~\ref{subsection:def} below, show that the space of deformations of standard tame compactified Landau--Ginzburg models \((Z, D, \mathsf{f})\) has dimension \(h^{1,1}(X) + 2\), and the deformation space of pairs \((Z,F)\), where \(F\) is a fibre of \(\mathsf{f}\), has dimension \(\rk(\Pic(X))\). Since the moduli space of \(L_Z\)-polarized K3 surfaces has dimension \(20 - \rk(L_Z) = \rk(\Pic(X))\) (where the last equality follows from Theorem~\ref{theorem:DN-duality}), it reasonable to expect that the forgetful map \(\Def(Z,F)\rightarrow \Def(F)\) is surjective onto the space of deformations which preserve \(L_Z\)-polarization. This is indeed the case, as we prove in Proposition~\ref{proposition:main}. More generally:

\begin{theorem}
  Suppose \((Z,D,\mathsf{f})\) is a tame compactified Landau--Ginzburg model of dimension 3 so that \((Z,D)\) is a log Calabi--Yau pair satisfying certain minor topological conditions. Let \(F\) be a smooth fibre of \(\mathsf{f}\).
  \begin{enumerate}
  \item The forgetful map \(\Def(Z,D,\mathsf{f})\rightarrow \Def(Z)\) is surjective, hence deformations of \(Z\) are unobstructed\footnote{This result is essentially claimed in~\cite[Corollary 2.18]{katzarkov2017bogomolov}, however, the proof there is incomplete. We fill in the gaps in the proof of {op. cit.} in Subsection~\ref{subsection:def}.}.
  \item The forgetful map \(\Def(Z,F) \rightarrow \Def(F)\) is a submersion of relative dimension \(h^{2,1}(Z)\) onto the subspace of \(\Def(F)\) preserving \(L_Z\)-polarization.
  \end{enumerate}
  Consequently, for a very general deformation of \((Z,D,\mathsf{f})\) and a very general fibre \(F\) of \(\mathsf{f}\), the Picard lattice of \(F\) is isomorphic to \(L_Z\).
\end{theorem}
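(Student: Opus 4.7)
The plan is to model all four deformation functors as hypercohomologies of logarithmic tangent complexes on $Z$, relate them by natural long exact sequences, and then leverage the unobstructedness of $\Def(Z,D,\mathsf{f})$ from Katzarkov--Kontsevich--Pantev (applicable under the tame compactified hypothesis) to reduce both parts of the theorem to surjectivity statements on tangent spaces. The key algebraic input is the log Calabi--Yau isomorphism $T_Z(-\log D) \cong \Omega^2_Z(\log D)$ induced by contraction with a log volume form; together with the $E_1$-degeneration of the log Hodge-to-de Rham spectral sequence in the tame setting, it converts the obstruction maps between sheaf cohomology groups into maps between Hodge pieces of the mixed Hodge structure (MHS) on $H^\bullet(Y)$, where $Y = Z \setminus D$.

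For part (1), I would factor the forgetful map as $\Def(Z,D,\mathsf{f}) \to \Def(Z,D) \to \Def(Z)$. The second factor sits in the long exact sequence associated to $0 \to T_Z(-\log D) \to T_Z \to \mathcal{N}_{D/Z} \to 0$ (with $\mathcal{N}_{D/Z}$ the appropriate cokernel sheaf supported on the snc divisor $D$), and the obstruction to lifting a first-order deformation of $Z$ to one preserving $D$ lies in $H^1(\mathcal{N}_{D/Z})$. The heart of the argument is to show that the connecting map $H^1(T_Z) \to H^1(\mathcal{N}_{D/Z})$ vanishes; via the log Calabi--Yau identification this becomes a map between Hodge pieces of the MHS on $H^\bullet(Y)$ that must vanish for weight-filtration reasons. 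The first factor $\Def(Z,D,\mathsf{f}) \to \Def(Z,D)$ is surjective because $\mathsf{f}$ is determined up to a $\mathbb{G}_m$-action by its divisors of zeros and poles, both already captured in a deformation of $(Z,D)$. Unobstructedness of $\Def(Z)$ then follows formally, as the image of the smooth functor $\Def(Z,D,\mathsf{f})$ under a map that is surjective on tangent spaces.

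For part (2), the short exact sequence $0 \to T_Z(-\log F) \to T_Z \to \mathcal{N}_{F/Z} \to 0$ together with the fibre condition $\mathcal{N}_{F/Z} \cong \mathcal{O}_F$ (so $h^0 = 1$ and $h^1 = 0$ since $F$ is a K3) yields $\dim \Def(Z,F) = \dim \Def(Z) + 1$, with the extra direction accounting for moving $F$ in the pencil. Composing with the natural map $\Def(Z) \to \Def(F)$ induced by restricting the Kodaira--Spencer class through $0 \to T_F \to T_Z|_F \to \mathcal{N}_{F/Z} \to 0$, one checks that the image in $\Def(F) = H^1(T_F)$ lies in the $L_Z$-polarized subspace because the integral classes generating $L_Z = \im(H^2(Z,\mathbb{Z}) \to H^2(F,\mathbb{Z}))$ are topologically invariant under deformations of the pair $(Z,F)$. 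To pin down the relative dimension as $h^{2,1}(Z)$, I would invoke the residue sequence $0 \to H^2(Z) \to H^2(Y) \to H^1(D)(-1) \to 0$ and the log Calabi--Yau duality to compute the kernel of $\Def(Z,F) \to \Def(F)$ in terms of the Hodge pieces of $H^\bullet(Y)$, then compare against the dimension $20 - \rk(L_Z)$ of the polarized locus.

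The main obstacle, as the footnote already flags, is the careful Hodge-theoretic verification of surjectivity in part (1): proving the vanishing of the connecting map $H^1(T_Z) \to H^1(\mathcal{N}_{D/Z})$ requires tracing through Deligne's weight filtration on $H^\bullet(Y)$, the residue isomorphisms on strata of the snc divisor $D$, and their compatibility with the log Calabi--Yau trivialization; the tameness hypothesis is essential so that the relevant spectral sequences degenerate at $E_1$. The consequence about Picard lattices then follows from a Noether--Lefschetz argument: a very general $L_Z$-polarized K3 has Picard lattice exactly $L_Z$, and the submersion in part (2) guarantees that this very general point is attained by some deformation inside $\Def(Z,F)$, so that a very general fibre of a very general deformation has $\Pic(F) \cong L_Z$.
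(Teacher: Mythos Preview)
Your factorization $\Def(Z,D,\mathsf{f})\to\Def(Z,D)\to\Def(Z)$ for part (1) matches the paper's, but the execution of both factors diverges, and part (2) has a conceptual gap.

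For $\Def(Z,D)\to\Def(Z)$, you propose showing the connecting map $H^1(T_Z)\to H^1(\mathcal{N}_{D/Z})$ vanishes via weight-filtration arguments on the MHS of $H^\bullet(Y)$. The paper is far more direct: by Kawamata $\mathcal{N}_{D/Z}\cong\bigoplus_i k_{i*}N_{D_i}$, and adjunction plus the anticanonical condition give $N_{D_i}\cong\omega_{D_i}$. The ``minor topological conditions'' include precisely $H^0(D_i,\omega_{D_i})=H^1(D_i,\omega_{D_i})=0$ (which holds when all $D_i$ are rational surfaces), so $H^0(\mathcal{N}_{D/Z})=H^1(\mathcal{N}_{D/Z})=0$ and the map $H^1(T_Z(-\log D))\to H^1(T_Z)$ is an \emph{isomorphism}. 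Your weight-filtration route is a detour around a two-line computation.

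For $\Def(Z,D,\mathsf{f})\to\Def(Z,D)$, your claim that ``$\mathsf{f}$ is determined by its divisors of zeros and poles, both captured in a deformation of $(Z,D)$'' is not a proof: a deformation of $(Z,D)$ deforms only $D=\mathsf{f}^{-1}(\infty)$, not any other fibre, and the obstruction to extending the pencil lies in $H^1(Z,\mathcal{O}_Z(D))$, which you do not address. (The kernel is also $2$-dimensional---scaling \emph{and} translation---not $\mathbb{G}_m$.) The paper's argument uses the log Calabi--Yau contraction to reduce surjectivity to that of $H^1(\Omega^{d-1}_Z(\log D,\mathsf{f}))\to H^1(\Omega^{d-1}_Z(\log D))$, which follows from Shamoto's short exact sequence and the vanishing $H^1(F,\Omega^{d-1}_F)=0$ for a K3 fibre. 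The $f$-adapted log forms genuinely enter here; your sketch bypasses them.

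For part (2), there is no natural map $\Def(Z)\to\Def(F)$---$F$ is not determined by $Z$ alone, and there is no map $T_Z|_F\to T_F$ (the normal sequence goes the other way)---so your factorization through $\Def(Z)$ does not exist. The paper instead works directly with $0\to T_Z\otimes\mathcal{O}_Z(-F)\to T_Z(-\log F)\to i_{F*}T_F\to 0$ (not your sequence involving $\mathcal{N}_{F/Z}$), uses the anticanonical isomorphism $T_Z(-F)\cong\Omega^{d-1}_Z$ to identify the kernel of $H^1(T_Z(-\log F))\to H^1(T_F)$ with $H^1(Z,\Omega^{d-1}_Z)$, and then follows Beauville's Serre-duality argument to pin down the image as the orthogonal complement of $\im(H^1(Z,\Omega^1_Z)\to H^1(F,\Omega^1_F))$. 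Unobstructedness of $\Def(Z,F)$ is quoted from Iacono. Your Noether--Lefschetz conclusion at the end is correct once part (2) is established.
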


This result is quite abstract. It tells us very little about the nature of deformations of \((Z,D,\mathsf{f})\). Our next result describes this deformation space algebraically. As mentioned above, the Landau--Ginzburg model \((Z,D,\mathsf{f})\) can be constructed from a specific type of Laurent polynomial called a Minkowski polynomial. The construction was first given in~\cite{przyjalkowski2017compactification} and recalled below in Proposition~\ref{proposition:prz}. We recall the definition of a Minkowski polynomial in Subsection~\ref{subsection:minkowski}, but there are two important points: (a) a Minkowski decomposition of each face of \(\Newt(\mathsf{p})\) along with a corresponding factorization of the face polynomials of \(\mathsf{p}\), and (b) a normalization condition: for each vertex \(\nu\) of \(\Newt(\mathsf{p})\) the corresponding coefficient is equal to \(1\). Dropping the normalization requirement, we obtain a family of Laurent polynomials. We call such a polynomial a \emph{parametrized Minkowski polynomial}. Przyjalkowski's method allows us to construct a tame compactified Landau--Ginzburg model for any parametrized Minkowski polynomial \(\mathsf{p}\), and thus an algebraic family of tame compactified Landau--Ginzburg models. We prove the following result in Subsection~\ref{subsection:LG-deformations}.

\begin{theorem}
  Suppose \(\mathsf{p}\) is a parametrized Minkowski polynomial of dimension at least \(3\). Any small deformation of \((Z_\mathsf{p},D_\mathsf{p},\mathsf{f})\) is obtained by varying the coefficients of \(\mathsf{p}\).
\end{theorem}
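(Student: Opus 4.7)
The plan is to produce the algebraic family of compactified Landau--Ginzburg models parametrized by coefficients of parametrized Minkowski polynomials and to show that its Kodaira--Spencer map at $\mathsf{p}$ surjects onto the tangent space of $\Def(Z_\mathsf{p}, D_\mathsf{p}, \mathsf{f})$. Because the preceding theorem shows that this deformation functor is unobstructed, surjectivity on tangent spaces lifts to surjectivity on small formal, hence analytic, deformations, which is the desired statement.

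Concretely, let $B$ denote the affine variety of polynomials sharing the Newton polytope and face-wise Minkowski factorizations of $\mathsf{p}$. Przyjalkowski's construction (Proposition~\ref{proposition:prz}) applied fibrewise over $B$ yields a flat family $(\mathcal{Z}, \mathcal{D}, \boldsymbol{\mathsf{f}}) \to B$ restricting to $(Z_\mathsf{p}, D_\mathsf{p}, \mathsf{f})$ at $\mathsf{p}$, and hence a Kodaira--Spencer map $\kappa\colon T_\mathsf{p} B \to T_{[(Z_\mathsf{p}, D_\mathsf{p}, \mathsf{f})]} \Def(Z_\mathsf{p}, D_\mathsf{p}, \mathsf{f})$. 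The torus automorphism group $(\mathbb{C}^*)^n$ acts on $B$ via $x_i \mapsto \lambda_i x_i$, rescaling coefficients but leaving each compactified Landau--Ginzburg model in its isomorphism class; this contributes an $n$-dimensional subspace to $\ker \kappa$, which must be quotiented out before comparing with $\Def(Z_\mathsf{p}, D_\mathsf{p}, \mathsf{f})$.

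The crux is then a dimension-matching argument. On one hand, $\dim T_\mathsf{p} B$ is a combinatorial quantity obtained by counting free parameters face-by-face on $\Newt(\mathsf{p})$ subject to the Minkowski factorization constraints. On the other hand, $\dim \Def(Z_\mathsf{p}, D_\mathsf{p}, \mathsf{f})$ is computed in the preceding theorem (equal to $h^{1,1}(X) + 2$ in the Fano threefold case). Once the equality $\dim T_\mathsf{p} B - n = \dim \Def(Z_\mathsf{p}, D_\mathsf{p}, \mathsf{f})$ is established, surjectivity of $\kappa$ reduces to injectivity of the induced map on a complement of the torus-action subspace. The latter should follow from the fact that the dense torus orbit in $Z_\mathsf{p}$ is canonically recovered as the complement of a distinguished component of $D_\mathsf{p}$, so that $\boldsymbol{\mathsf{f}}$ restricted there reproduces the Laurent polynomial up to a torus automorphism. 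The main obstacle is the combinatorial dimension count: producing a uniform polytopal formula that interacts well with arbitrary Minkowski decompositions is delicate, and in the absence of such a formula the verification must be arranged face-by-face in tandem with the case analysis of the Fanosearch classification, ensuring compatibility across all parametrized Minkowski polynomials arising from smooth Fano threefolds.
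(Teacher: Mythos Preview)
Your approach is the reverse of the paper's, and the reversal introduces two genuine gaps.  The paper argues top-down: given an arbitrary small deformation $(\mathscr{Z}_\mathsf{p},\mathscr{D}_\mathsf{p})$, it applies Horikawa's theorem (Proposition~\ref{proposition:defblow}) to contract the exceptional divisors of the blow-ups that built $Z_\mathsf{p}$ from $X_{P^*}$.  Because anticanonical toric pairs are rigid ($H^1(X_{P^*},T_{X_{P^*}}(-\log D_P)) = H^1(X_{P^*},\Omega^{d-1}_{X_{P^*}}(\log D_P)) = 0$), the contraction lands on $(X_{P^*},D_P)\times B$, and the deformed potential restricts to a family of Laurent polynomials on the torus.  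No dimension of $\Def(Z_\mathsf{p},D_\mathsf{p},\mathsf{f})$ is ever needed.

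Your bottom-up argument, by contrast, hinges on establishing $\dim T_\mathsf{p} B - n = \dim \Def(Z_\mathsf{p},D_\mathsf{p},\mathsf{f})$ for an \emph{arbitrary} parametrized Minkowski polynomial in dimension $\geq 3$.  You concede this is the main obstacle, and the fallback to case-by-case verification over the Fanosearch list would prove only a special case, not the stated theorem.  There is no independent computation of $\dim\mathbb{H}^1(Z_\mathsf{p},\mathfrak{g}^\bullet)$ available in this generality; the equality $h^{1,1}(X)+2$ that you invoke is specific to Fano threefold mirrors.  Your injectivity sketch also fails as written: the dense torus is \emph{not} the complement of $D_\mathsf{p}$ (or of any component of it) in $Z_\mathsf{p}$, since $Z_\mathsf{p}\setminus D_\mathsf{p}$ contains open pieces of the exceptional divisors of the base-locus blow-ups.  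Recovering the torus requires contracting those exceptional divisors --- which is exactly the Horikawa step that drives the paper's proof and which, once you have it, makes the dimension count and the Kodaira--Spencer surjectivity argument unnecessary.
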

\begin{remark}
    The variation of coefficients of \(\mathsf{p}\) corresponding to deformations of \((Z_\mathsf{p},D_\mathsf{p}, \mathsf{f})\) is not arbitrary. See Theorem~\ref{theorem:defmink} below for more details.
\end{remark}

To every Laurent polynomial we attach what we call \emph{Minkowski data}, corresponding to a factorization of the face polynomials of \(\mathsf{p}\) of its Newton polytope \(P\). Minkowski data are partially ordered by refinement. It is clear that if \(\mathsf{p}\) is a Laurent polynomial, the Minkowski data of \(\mathsf{p}\) is a refinement of any deformation of \(\mathsf{p}\). A subtle point in the discussion below is that this need not be an equality. The following statement is a direct consequence of Theorem~\ref{theorem:defmink}.

\begin{proposition}\label{proposition:invariance}
  Let \(X\) be a smooth Fano threefold with very ample anticanonical class, and let \((Z,D,\mathsf{f})\) be its standard Landau--Ginzburg model, obtained by partially compactifying the corresponding Laurent polynomial \(\mathsf{p}\) of \(X\). There is a \(\rk (\Pic(X))\)-dimensional family of Landau--Ginzburg models deformation equivalent to \((Z,D,\mathsf{f})\) so that the following statements hold.
  \begin{enumerate}
  \item Any small deformation of \(Z\) is obtained by deforming \(\mathsf{p}\).
  \item The deformation space of pairs \((Z,F)\) form a complete family of \(\Pic(X)^\vee\)-polarized K3 surfaces.
  \end{enumerate}
\end{proposition}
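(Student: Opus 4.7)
My plan is to derive the proposition by straightforwardly assembling Theorem~\ref{theorem:defmink} with the general deformation-theoretic theorem for tame compactified Landau--Ginzburg threefolds stated just above it, and with Theorem~\ref{theorem:DN-duality}. The proposition is advertised as a direct consequence of Theorem~\ref{theorem:defmink}, so all the analytic work should already be contained in the latter; what remains is a dimension count and a bookkeeping check.

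First I would identify the $\rk(\Pic(X))$-dimensional family explicitly. Applied to the Minkowski polynomial $\mathsf{p}$ underlying the standard Landau--Ginzburg model, Theorem~\ref{theorem:defmink} produces an algebraic family of tame compactified Landau--Ginzburg models, parametrized by variations of the coefficients of $\mathsf{p}$ refining its Minkowski data, that realizes every small deformation of $(Z,D,\mathsf{f})$. By the dimension computations recalled in this subsection, $\Def(Z,D,\mathsf{f})$ has dimension $h^{1,1}(X)+2$ while $\Def(Z,F)$ has dimension $\rk(\Pic(X))$. The sought family is then the image of the parametrized Minkowski family under the natural forgetful map to $\Def(Z,F)$, obtained by fixing a reference fibre; by construction each member is deformation equivalent to $(Z,D,\mathsf{f})$.

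Part (1) is now immediate by composition of surjections: the preceding abstract deformation theorem provides a surjection $\Def(Z,D,\mathsf{f})\twoheadrightarrow\Def(Z)$, and Theorem~\ref{theorem:defmink} exhibits every element of the source as a variation of the coefficients of $\mathsf{p}$. For part (2), I would invoke the second half of the same abstract theorem: the forgetful map $\Def(Z,F)\to\Def(F)$ is a submersion of relative dimension $h^{2,1}(Z)$ onto the $L_Z$-polarization-preserving locus. Theorem~\ref{theorem:DN-duality} identifies $L_Z\cong\Pic(X)^\vee$, so the target locus has dimension $20-\rk(\Pic(X)^\vee)=\rk(\Pic(X))$, matching $\dim\Def(Z,F)$; comparing dimensions forces $h^{2,1}(Z)=0$ and promotes the submersion to a local isomorphism onto the moduli of $\Pic(X)^\vee$-polarized K3 surfaces, which is exactly the completeness claim.

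The only nontrivial point --- and the reason the ``direct consequence'' phrasing still deserves attention --- is the Minkowski-data refinement subtlety emphasized just before the proposition: one must verify that the coefficient variations allowed by Theorem~\ref{theorem:defmink} really do surject onto $\Def(Z,D,\mathsf{f})$ rather than map into a proper subspace cut out by preservation of the original Minkowski data of $\mathsf{p}$. I expect this tangent-space comparison to be the main obstacle in practice, but it is absorbed into the proof of Theorem~\ref{theorem:defmink}, so at the level of this proposition it suffices to cite that theorem and run the above dimension count.
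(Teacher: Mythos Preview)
Your approach is correct and matches the paper's: the proposition is stated as a direct consequence of Theorem~\ref{theorem:defmink}, and you have correctly identified the other ingredients needed to fill in the details (Theorem~\ref{theorem:main1} for the surjection $\Def(Z,D,\mathsf{f})\to\Def(Z)$, Proposition~\ref{proposition:main} for the image of $\Def(Z,F)\to\Def(F)$, and Theorem~\ref{theorem:DN-duality} for the identification $L_Z\cong\Pic(X)^\vee$).

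One minor remark on presentation: your deduction of $h^{2,1}(Z)=0$ via dimension comparison is somewhat roundabout. The input $\dim\Def(Z,F)=\rk(\Pic(X))$ that you cite is itself established in the paper by first invoking $H^3(Z,\mathbb{Q})=0$ from \cite{cheltsov2018katzarkov}, which gives $h^{2,1}(Z)=0$ directly; so your comparison recovers a fact already used upstream. More to the point, completeness in part~(2) does not actually require $h^{2,1}(Z)=0$: by Proposition~\ref{proposition:main} the \emph{image} of $\Def(Z,F)\to\Def(F)$ already has dimension $h^1_p(F,\Omega^1_F)=20-\rk(L_Z)=\rk(\Pic(X))=\dim\mathcal{M}_{L_Z}$, so dominance onto $\mathcal{M}_{L_Z}$ follows regardless of the fibre dimension (this is exactly how Proposition~\ref{proposition:surj} argues). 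Your route is valid, just slightly longer than necessary.
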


\begin{remark}[Modularity of Landau--Ginzburg mirrors of Fano threefolds]
  In~\cite{golyshev2004modularity,golyshev2007classification} Golyshev studied quantum cohomology of Fano threefolds of Picard rank 1 and showed that their regularized quantum differential equations come from symmetric squares of uniformizing differential equations of classical modular curves. In~\cite{ilten2013toric}, Ilten, Lewis, and the fifth-named author show that the regularized quantum differential operator can be obtained as the Picard--Fuchs operator attached to a Laurent polynomial whose fibres are K3 surfaces of Picard rank 19.

  The original motivation for this paper was to understand how this statement generalizes to higher Picard rank. Together, Propositions~\ref{proposition:prrev} and~\ref{proposition:invariance} tell us that for any Fano threefold \(X\) with very ample anticanonical bundle there is a \((\rk(\Pic(X)) -1)\)-dimensional family of Landau--Ginzburg models \((Z_t,D_t,\mathsf{f}_t)\). Here \(t\) denotes a parameter for this family of Landau--Ginzburg models. Furthermore, we see that this deformation is obtained by deforming the Laurent polynomial \(\mathsf{p}\) defining the standard Landau--Ginzburg model of \(X\). Finally, we see that as \(t\) varies, the fibres of \(\mathsf{f}_t\) form a family of \(\Pic(X)^\vee\)-polarized K3 surfaces, and that this family of K3 surfaces have a finite, dominant period map onto the moduli space of \(\Pic(X)^\vee\)-polarized K3 surfaces. This should be viewed as a generalization of Golyshev's original results to higher Picard rank. We still lack a complete understanding of what exactly this map looks like, but a few examples are computed in Section~\ref{section:ex}.

  Moreover, Proposition~\ref{proposition:surj} and Theorem~\ref{theorem:DN-duality} combined imply that for any smooth Fano threefold with very ample anticanonical bundle the corresponding moduli space of \(\Pic(X)^{\vee}\)-polarized K3 surfaces is uniruled (see Corollary~\ref{corollary:ssur}). In particular, we obtain that the moduli space of \(\mathbb{E}_8(-1)^2 \oplus \langle 2n \rangle\)-polarized K3 surfaces is uniruled for \(n = 6,7,8,10,11,13,14,16,17\), which generalizes a known result of Gritsenko--Hulek for \(n = 21\) in~\cite{gritsenko1998minimal}. See Subsubsection~\ref{subsubsection:siegel-hilbert} for more details and further discussion of related topics.
\end{remark}

\subsection{Parametrized Landau--Ginzburg models}

We expect that the families of Landau--Ginzburg models obtained by deforming \(\mathsf{p}\), as in Proposition~\ref{proposition:invariance}, form mirrors of Fano varieties equipped with general complexified divisor classes.

\begin{definition}
  Let \(X\) be a smooth Fano variety, and \(\Pic(X) \simeq \mathbb{Z}^{\rho(X)}\) be any choice of basis. We refer to a Laurent polynomial \(\mathsf{p} \in \mathbb{Z}[a_1^{\pm 1}, \ldots, a_{\rho(X)}^{\pm 1}][x_1^{\pm 1}, \ldots, x_{\dim(X)}^{\pm 1}]\) as a \emph{parametrized Landau--Ginzburg model} if for any choice of the complexified divisor class \(K = (\alpha_1, \ldots, \alpha_{\rho(X)}) \in \Pic(X) \otimes \mathbb{C} \) the induced parameter specialization \(\mathsf{p}_K = \mathsf{p}(\{a_i = \exp(-\alpha_i)\})\) is a toric Landau--Ginzburg model for the pair \((X, K)\).
\end{definition}

\begin{definition}
  Let \(X\) be a smooth Fano threefold, and \(\mathsf{f}\) be its parametrized Landau--Ginzburg model. We refer to \(\mathsf{f}\) as \emph{standard} if its specialization \(\mathsf{f}_0\) is a standard toric Landau--Ginzburg model for \(X\).
\end{definition}

As an illustration, we have explicitly constructed the isomorphism from Conjecture~\ref{conjecture:DN-duality} for parametrized Landau--Ginzburg models for smooth Fano threefolds of the form \(S \times \mathbb{P}^1\), where \(S\) is a smooth del Pezzo surface with very ample anticanonical class. In this case we are able to verify the period condition.

\begin{proposition}[see Appendix~\ref{appendix:parametrized}]\label{proposition:parametrized}
  There exist standard parametrized toric Landau--Ginzburg models \(\mathsf{f} \colon Z_{(X, K)} \rightarrow \mathbb{P}^1\) for the families of Fano varieties \textnumero2.11,~3.27,~3.28,~4.10,~5.3,~6.1,~7.1,~8.1. In these cases for any complexified divisor class \(K\) there is a natural isomorphism
  \[
    \Pic(X)^{\vee} = \im \left ( H^2(Z_{(X, K)}, \mathbb{Z}) \xrightarrow{\res} H^2(F_{(X, K)}, \mathbb{Z}) \right ).
  \]
\end{proposition}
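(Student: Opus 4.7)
The plan is to handle the proposition by case-by-case analysis of the eight families listed, exploiting that each target \(X\) is of the form \(S \times \mathbb{P}^1\) with \(S\) a smooth del Pezzo surface whose anticanonical class is very ample. For such products, the mirror superpotential naturally decomposes into a sum of superpotentials, one for each factor, with parameters tracking the two factors of \(\Pic(S) \oplus \Pic(\mathbb{P}^1)\). Concretely, first I would take a standard parametrized Laurent polynomial \(\mathsf{p}_S(x_1,x_2;\alpha) \in \mathbb{Z}[\alpha^{\pm 1}][x_1^{\pm 1}, x_2^{\pm 1}]\) for each del Pezzo factor \(S\) (these are well-known; for \(S = \mathbb{P}^2\), \(\mathbb{F}_1\), \(\mathbb{P}^1 \times \mathbb{P}^1\), and the blow-ups \(\mathrm{Bl}_n \mathbb{P}^2\) for \(n = 2,\ldots,6\), the coefficients can be read off the Minkowski decompositions of the faces of the fan polytope) and form
\[
  \mathsf{p}_{(X,K)}(x_1,x_2,y;\alpha,\beta) \;=\; \mathsf{p}_S(x_1,x_2;\alpha) \;+\; y \;+\; \frac{\beta}{y},
\]
where \((\alpha,\beta)\) parametrizes \(\Pic(X) \otimes \mathbb{C}\) via \(a_i = \exp(-\alpha_i)\), \(b = \exp(-\beta)\). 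Specializing at \(K = 0\) should recover the standard Laurent polynomial used by Cheltsov--Przyjalkowski.

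The next step is to verify that this \(\mathsf{p}_{(X,K)}\) is a toric Landau--Ginzburg model in the sense of Subsection~\ref{subsection:LG-models}. The Newton polytope is the product of \(\Newt(\mathsf{p}_S)\) with the segment \([-1,1]\); combined with the standard toric degeneration of \(S \times \mathbb{P}^1\), this handles the toric condition. For the period condition, one writes the principal period as an iterated contour integral, separates variables in \(\mathsf{p}_S + y + \beta/y\), and matches term-by-term with the (regularized) quantum period of \((X, K)\); the key input is that quantum cohomology of a product splits, and that \(\mathsf{p}_S\) is a parametrized LG model for \((S, K|_S)\) (which can be checked directly in each case, or imported from existing computations in the Fanosearch literature).

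Having established the family of Laurent polynomials, I would apply Przyjalkowski's construction (Proposition~\ref{proposition:prz}) fibrewise over the parameter space to obtain a family of tame compactified Landau--Ginzburg models \(\mathsf{f} \colon Z_{(X,K)} \to \mathbb{P}^1\) together with their snc anticanonical boundaries \(D_{(X,K)}\). Because all the polynomials in the family share the same Newton polytope and the same face-factorization pattern (the Minkowski data stay constant as we vary \(K\)), the combinatorial data controlling the compactification is independent of \(K\), so the compactification varies smoothly and the components of the fibre over \(\infty\) and of the horizontal boundary can be labelled uniformly. In particular the lattice \(L_Z := \im(H^2(Z_{(X,K)},\mathbb{Z}) \to H^2(F_{(X,K)},\mathbb{Z}))\) is deformation-invariant and can be computed at the single point \(K = 0\), where Theorem~\ref{theorem:DN-duality} identifies it with \(\Pic(X)^\vee\).

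The final step is to check that the isomorphism \(\Pic(X)^\vee \cong L_Z\) is natural in \(K\). I would do this by producing explicit generators: lifts to \(H^2(Z_{(X,K)},\mathbb{Z})\) of a basis of \(L_Z\) coming from components of \(D_{(X,K)}\) and from strict transforms of coordinate divisors, and tracking how these classes vary in the family. The main obstacle, and the reason the proposition is an appendix computation rather than a corollary of the main theorems, is precisely this last check: for each of the eight families one must write down the compactification geometrically in enough detail to identify the residue images, and verify the period condition by an honest computation that separates the \(\mathsf{p}_S\) and \(\mathbb{P}^1\) contributions. The combinatorics is elementary but lengthy, and the strategy is simply to present the computation for each of families \textnumero2.11, 3.27, 3.28, 4.10, 5.3, 6.1, 7.1, 8.1 in turn.
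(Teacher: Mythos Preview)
Your overall strategy matches the paper's: both recognize that these eight families are products \(S \times \mathbb{P}^1\) with \(S\) a del Pezzo of degree at least \(3\), and both form the parametrized potential as \(\mathsf{p}_S(x_1,x_2;\alpha) + z + \beta z^{-1}\), with \(\mathsf{p}_S\) a parametrized model for the del Pezzo factor built via Przyjalkowski's algorithm. (A side remark: the statement lists 2.11, but the appendix in fact treats 2.34 \(= \mathbb{P}^2 \times \mathbb{P}^1\); this looks like a typo in the proposition, and your product ansatz would not even apply to 2.11, which is not of product type.)

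Where you diverge from the paper is in the lattice step. You propose to argue by deformation invariance of \(L_Z\) over the \(K\)-parameter space and reduce to \(K = 0\), where Theorem~\ref{theorem:DN-duality} gives the answer. The paper does not do this: for each of the eight families it re-runs the entire lattice computation of Appendices~\ref{appendix:rank-02}--\ref{appendix:higher-ranks} (base-locus curves, singularities of \(\mathcal{S}(\alpha)_\lambda\), intersection matrix, discriminant form) with the parameters \(\alpha\) explicitly present, and checks that the answer is independent of \(\alpha\). Your route is conceptually cleaner and would work, but it has a gap as written. The claim that the \(K = 0\) specialization \emph{is} the standard Laurent polynomial of~\cite{cheltsov2018katzarkov} is not automatic: the paper devotes a whole subsection (Subsection~\ref{subsection:dPxP1_list}) to mutating \(\mathsf{p}_S + z + \beta z^{-1}\) into the standard form, and for family 6.1 this requires a chain of three mutations plus coordinate changes. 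Without these mutations your parametrized model is not ``standard'' in the sense of Definition~\ref{definition:LG-standard}, so Theorem~\ref{theorem:DN-duality} does not apply directly at \(K = 0\). The fix is easy --- either carry out the mutations first, or replace the appeal to Theorem~\ref{theorem:DN-duality} by mutation-invariance of \(L_Z\) --- but it is missing from your outline.
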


\begin{remark}
    While the results in Proposition~\ref{proposition:parametrized} are stated abstractly, we note that the parametrized Landau--Ginzburg models in question are in fact constructed in Appendix~\ref{appendix:parametrized}.
\end{remark}

\subsection{Outline}

The paper is structured as follows. In Section~\ref{section:modlg} we study the deformation theory of Landau--Ginzburg models and related objects. We extend~\cite[Corollary 2.18]{katzarkov2017bogomolov} and show that if \((Z,D,\mathsf{f})\) is a Landau--Ginzburg model of dimension at least \(3\) satisfying certain mild topological conditions, then the natural maps \(T_s \Def(Z,D,\mathsf{f})\rightarrow T_s \Def(Z,D)\) and \(T_s \Def(Z,D) \rightarrow T_s \Def(Z)\) are surjective, and, therefore, that \((Z,D)\) and \(Z\) have unobstructed deformations, and that any small deformation  \(Z'\) of \(Z\) admits a function \(\mathsf{f}':Z'\rightarrow \mathbb{P}^1\) so that \((Z',D'= (\mathsf{f}')^{-1}(\infty), \mathsf{f}')\) is a deformation of \((Z,D,\mathsf{f})\).
As a consequence, if \(F\) is a smooth fibre of \(\mathsf{f}\), then the deformations of \((Z,F)\) are unobstructed.
In Subsection~\ref{subsection:iacono-beauville} we compute the image of the map \(T_s \Def(Z,F) \rightarrow T_s \Def(F)\), following Beauville~\cite{beauvillefano}.

In Section~\ref{section:wreg}  we focus on Landau--Ginzburg models which are obtained from Laurent polynomials. We define a class of Laurent polynomials called \emph{weakly non-degenerate \(\mathsf{M}\)-polynomials}, which generalize the Minkowski polynomials of~\cite{akhtar2012minkowski}. We show that weakly non-degenerate \(\mathsf{M}\)-polynomials give rise to well-behaved Landau--Ginzburg models, and we show that if \((Z,D,\mathsf{f})\) is such a Landau--Ginzburg model coming from a Laurent polynomial \(\mathsf{p}\), then any deformation of \((Z,D,\mathsf{f})\) comes from a deformation of \(\mathsf{p}\).

Section~\ref{section:k3} specializes our results to the case where \(\dim(Z) = 3\). In Subsection~\ref{subsection:DN-duality} we recall basic notions regarding lattice polarizations of K3 surfaces and Dolgachev--Nikulin duality. Then we show that if \((Z,D,\mathsf{f})\) is a Landau--Ginzburg threefold satisfying certain mild topological conditions, and its fibres are K3 surfaces, then the fibres of \(Z\) and its deformations form a complete family of lattice-polarized K3 surfaces. In Section~\ref{section:ex} we compute several examples which are related to well-known modular families of K3 surfaces.

In Section~\ref{section:DN-duality} we describe our proof of Dolgachev--Nikulin duality for anticanonical hypersurfaces of Fano threefolds and fibres of their standard toric Landau--Ginzburg mirrors. We describe computational tools used to compute the ambient Picard lattice of the fibres of standard Landau--Ginzburg mirrors of Fano threefolds and lattice-theoretic tools used to compare abstract lattices. We then describe the computational and technical tools used to compute Picard lattices for anticanonical divisors in Fano threefolds. The outcome of these computations can be found in Appendix~\ref{appendix:picard-lattices-fano}.

Finally, in Appendices~\ref{appendix:rank-02},~\ref{appendix:rank-03},~\ref{appendix:rank-04},~\ref{appendix:higher-ranks} we prove Theorem~\ref{theorem:DN-duality} for smooth Fano threefolds of ranks 2, 3, 4, and 5--10, respectively. All these sections are split by subsections whose numbers match the numbers of families of smooth Fano threefolds given in~\cite{coates2016quantum}. In Appendix~\ref{appendix:parametrized} we prove Proposition~\ref{proposition:parametrized} and explicitly construct the corresponding parametrized Landau--Ginzburg models.

\begin{conventions}
  Unless otherwise stated, all varieties are taken to be over \(\mathbb{C}\). We follow the labelling of families of Fano threefolds of Picard rank greater than 1 given by Mori--Mukai in~\cite{mori1981classification}. We refer to the family of Fano threefolds of rank \(4\) missing in the original classificaiton as Family \textnumero 4.13.
\end{conventions}

\begin{acknowledgements}
  The authors would like to thank Jacob Lewis for his contributions to this work in its early stages. The authors would also like to acknowledge the following support:
  \begin{enumerate}[(1)]
  \item The work of C.F.D. was supported by the McCalla Professorship of Science at the University of Alberta, the Visiting Campobassi Professorship of Physics at the University of Maryland, the Visiting Distinguished Professorship of Mathematics and Physics at Bard College,  the Natural Sciences and Engineering Research Council of Canada, and from Harvard University's Center for Mathematical Sciences and Applications.
  \item A.H. has received research support from the Natural Sciences and Engineering Research Council of Canada, and from Harvard University's Center for Mathematical Sciences and Applications, and from the Simons Foundation Collaboration in Homological Mirror Symmetry. He has received Simons Travel Support for Mathematicians.
  \item L.K. was supported by Simons Foundation Collaboration in Homological Mirror Symmetry, Simons Investigators award and by the Basic Research Program of the National Research University Higher School of Economics. He was funded by the National Science Fund of Bulgaria, National Scientific Program ``VIHREN'', Project no. KP-06-DV-7.
  \item M.O. was supported by the project ``International academic cooperation'' National Research University Higher School of Economics. V.P. was supported by the Basic Research Program of the National Research University Higher School of Economics.
  \end{enumerate}
\end{acknowledgements}

\section{Modularity of tame compactified Landau--Ginzburg models and their fibres}\label{section:modlg}

In this section we prove that the fibres of tame compactified Landau--Ginzburg models satisfying some mild topological conditions (see conditions (a), (b), and (c) in Theorem~\ref{theorem:main1}) form complete families of Calabi--Yau varieties. The arguments in this section owe much to Katzarkov--Kontsevich--Pantev~\cite{katzarkov2017bogomolov} and Beauville~\cite{beauvillefano}.

\subsection{Deformations of tame compactified Landau--Ginzburg models}\label{subsection:def}

The main objects that we will study in this paper are Landau--Ginzburg models. In a very broad sense, a Landau--Ginzburg model is simply a quasi-projective variety equipped with a function. We will restrict ourselves to those Landau--Ginzburg models that we expect to contain the class of Landau--Ginzburg models which are mirror to Fano varieties.

\begin{definition}\label{definition:tcom}
  A \emph{proper, tame compactified Landau--Ginzburg model} is a triple \((Z,D,\mathsf{f})\) consisting of a smooth projective variety \(Z\), a simple normal crossings (snc) divisor \(D\), and a morphism \(\mathsf{f} \colon Z \rightarrow \mathbb{P}^1\) so that \(\mathsf{f}^*(\infty) = D\). We say that a tame compactified Landau--Ginzburg model satisfies the \emph{Calabi--Yau condition} if \(D\) is an anticanonical divisor of \(Z\).

  We use the notation \(Y\) to denote \(Z\setminus D\), and \(\mathsf{w}\) denotes the restriction of \(\mathsf{f}\) to \(Y\).
\end{definition}

In~\cite{katzarkov2017bogomolov}, the third-named author, along with Kontsevich and Pantev, study the deformation theory and the Hodge theory of Landau--Ginzburg models. A deformation of Landau--Ginzburg models is a quadruple \((\mathscr{Z},\mathscr{D},\bm{f}, \varpi)\), where \(\mathscr{Z}\) is a smooth manifold, \(\mathscr{D}\) is a snc divisor in \(\mathscr{Z}\), \({\bm f} \colon \mathscr{Z}\rightarrow \mathbb{P}^1\) is a morphism so that \(\bm{f}^{-1}(\infty) = \mathscr{D}\), and where \(\varpi \colon \mathscr{Z}\rightarrow B\) is a smooth projective morphism so that for each \(b \in B\), the preimage \(Z_b = \varpi^{-1}(b)\)  along with \(D_b = Z_b \cap \mathscr{D}_b\) and \(\mathsf{f}_b = \bm{f}|_{Z_b}\) form a tame compactified Landau--Ginzburg model.

Let \(T_Z(-\log D)\) denote the sheaf of tangent vectors to \(Z\) which vanish logarithmically along \(D\). Then the deformations of the pair \((Z,D)\) are controlled by the sheaf \(T_Z(-\log D)\), and the deformations of the triple \((Z,D,\mathsf{f})\) are controlled by the complex
\[
  \mathfrak{g}^\bullet = \left[  T_Z(-\log D) \xrightarrow{\mathrm{d}\mathsf{f}} \mathsf{f}^*T_{\mathbb{P}^1}(-\log \infty)\right].
\]
Observe that the global sections of \(T_{\mathbb{P}^1}(-\log \infty)\cong \mathcal{O}_{\mathbb{P}^1}(1)\) can be written as \((\lambda z + \mu)\partial_z\) on the affine chart with coordinate \(z\). Therefore, global sections of \(\mathsf{f}^{-1}T_{\mathbb{P}^1}(-\log \infty)\) are of the form \((\lambda \mathsf{f}  + \mu)\partial_\mathsf{f}\), and sections of \(\mathsf{f}^*T_{\mathbb{P}^1}(-\log \infty)\) are expressed similarly.

Following Horikawa~\cite{horikawa1976deformations}, we may give a simple presentation of the Kodaira--Spencer map. We represent the hypercohomology of \(\mathfrak{g}^\bullet\) by choosing a Stein covering of \(Z\), which we denote \(\mathfrak{U}\). We then obtain a \v{C}ech resolution of \(\mathfrak{g}^\bullet\) which represents \(\mathbb{H}^1(Z,\mathfrak{g}^\bullet)\) as \(H^1\) of the complex
\begin{align*}
  0 \rightarrow C^0(\mathfrak{U},T_Z(-\log D)) \rightarrow C^0(\mathfrak{U},\mathsf{f}^*T_{\mathbb{P}^1}(-\log \infty))& \oplus C^1(\mathfrak{U},T_Z(-\log D))\\ & \rightarrow C^1(\mathfrak{U},\mathsf{f}^*T_{\mathbb{P}^1}(- \log \infty)) \oplus C^2(\mathfrak{U},T_Z(-\log D)) \rightarrow \cdots
\end{align*}
equipped with the induced differentials. Given a deformation of \((Z,D,\mathsf{f})\) over a base \(B\), which we denote \((\mathscr{Z},\mathscr{D},\bm{f})\), we associate the cohomology class in \(H^1(Z,T_Z(-\log D))\) in the following way. Assume that \(\widetilde{\mathfrak{U}} =\{\widetilde{U}_\alpha\}\) is a Stein covering of \(\mathscr{Z}\) which restricts to a Stein covering \(\mathfrak{U} = \{U_\alpha\}\) of \(Z\). Let \(g_{\alpha,\beta}\) be the transition functions for this atlas which satisfy the 1-cocycle condition. These transition functions satisfy the obvious conditions:
\begin{itemize}
\item \(g_{\alpha,\beta}^*\bm{f}|_{\widetilde{U}_{\alpha}} = \bm{f}|_{\widetilde{U}_\beta}\), and \(\bm{f}|_{s=0} = \mathsf{f}\);
\item if \(h_{\alpha}\) is a local function defining \(\mathscr{D}\cap \widetilde{U}_\alpha\), then \(g_{\alpha,\beta}^* h_\alpha = u_{\alpha,\beta}h_\beta\), where \(u_{\alpha,\beta}\) is a unit in \(\Gamma(\widetilde{U}_{\alpha,\beta},\mathcal{O}_{U_{\alpha,\beta}})\).
\end{itemize}
For a coordinate function \(t\) on \(B\) we define
\[
  \tau_\alpha = \partial_t \bm{f}_\alpha|_{t=0},\qquad \rho_{\alpha,\beta} = \partial_t g_{\alpha,\beta}|_{t=0}.
\]
It is elementary to check that \(\rho_{\alpha,\beta} - \rho_{\alpha,\gamma} + \rho_{\beta,\gamma} = 0\) on \(U_{\alpha,\beta,\gamma}\) for each triple of indices \(\alpha,\beta,\gamma\), and that \(\tau_\alpha -\tau_\beta = \mathrm{d} \mathsf{f}(\rho_{\alpha,\beta})\). Calculations in local coordinates can be used to show that the zeroes of \(\tau_\alpha\) and \(\rho_{\alpha,\beta}\) are logarithmic. Therefore, \((\tau_\alpha,\rho_{\alpha,\beta})\) form a \v{C}ech cycle in \(C^0(\mathfrak{U},\mathsf{f}^*T_{\mathbb{P}^1}(-\log \infty)) \oplus C^1(\mathfrak{U},T_Z(-\log D))\). Suppose \(b \in B\) is a point so that \((Z,D,\mathsf{f}) = (Z_b,D_b,\mathsf{f}_b)\), the assignment
\[
  \kappa \colon T_{B,b} \rightarrow \mathbb{H}^1(Z,\mathfrak{g}^\bullet)
\]
is called the \emph{Kodaira--Spencer} map of the deformation \((\mathscr{Z},\mathscr{D},\bm{f})\) of \((Z,D,\mathsf{f})\).

\begin{definition}
  Let \((Z,D,\mathsf{f})\) be a tame compactified Landau--Ginzburg model. Deformations of \((Z,D,\mathsf{f})\) are \emph{unobstructed} if there is a family \(\varpi: (\mathscr{Z},\mathscr{D})\rightarrow B\) over smooth base \(B\) so that the Kodaira--Spencer map is surjective.
\end{definition}

We have the following theorem which provides a starting point for our study.

\begin{theorem}[Katzarkov--Kontsevich--Pantev,~\cite{katzarkov2017bogomolov}]\label{theorem:kkp}
  Suppose \((Z,D,\mathsf{f})\) is a Landau--Ginzburg model satisfying the Calabi--Yau condition and for which \(H^1(Z,\mathbb{Q}) = 0\). Then deformations of \((Z,D,\mathsf{f})\) are unobstructed.
\end{theorem}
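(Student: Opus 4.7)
The plan is to prove unobstructedness by a Bogomolov--Tian--Todorov (BTT) style argument, showing that the hypercohomology DGLA $\mathfrak{g}^\bullet = [T_Z(-\log D) \xrightarrow{\mathrm{d}\mathsf{f}} \mathsf{f}^*T_{\mathbb{P}^1}(-\log \infty)]$ controlling deformations of $(Z,D,\mathsf{f})$ is formal, so that every obstruction class in $\mathbb{H}^2(Z,\mathfrak{g}^\bullet)$ to lifting a first-order deformation is killed by a suitably chosen primitive. Concretely, I would resolve $\mathfrak{g}^\bullet$ by a fine Dolbeault-type complex carrying the natural Lie bracket of logarithmic vector fields in the first slot (extended via the $\mathcal{O}_Z$-linear action on the second slot), and then produce a contracting homotopy for the iterative Maurer--Cartan procedure.

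The key input is the Calabi--Yau hypothesis. A choice of nowhere-vanishing logarithmic volume form $\Omega \in H^0(Z,\Omega_Z^n(\log D))$, whose existence follows from $D \sim -K_Z$, yields a Tian--Todorov isomorphism $\iota_\bullet \Omega \colon T_Z(-\log D) \xrightarrow{\sim} \Omega_Z^{n-1}(\log D)$. Under this identification the first slot of $\mathfrak{g}^\bullet$ lands inside the logarithmic de Rham complex $\Omega_Z^\bullet(\log D)$, the Lie bracket becomes the Tian--Todorov bracket on forms, and $\mathrm{d}\mathsf{f}$ becomes wedging with a logarithmic $1$-form attached to $\mathsf{f}$. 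The Hodge-theoretic engine is then the $E_1$-degeneration of the Hodge--de Rham spectral sequence for the log pair $(Z,D)$ (equivalently, a $\partial\bar\partial$-lemma for log forms), which supplies a Hodge splitting of the obstruction sequence and hence a canonical primitive at each order of the induction. The hypothesis $H^1(Z,\mathbb{Q}) = 0$ enters through $H^1(Z,\mathcal{O}_Z) = 0$, which kills trace-type ambiguities and guarantees that the Kodaira--Spencer map lands surjectively in the correct hypercohomology group.

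The principal obstacle, and the part of the argument where \cite{katzarkov2017bogomolov} leaves a gap to be filled in Subsection~\ref{subsection:def}, is the coupling term $\mathsf{f}^*T_{\mathbb{P}^1}(-\log\infty)$ in $\mathfrak{g}^\bullet$: the ordinary BTT mechanism for log Calabi--Yau pairs $(Z,D)$ alone does not directly interact with $\mathrm{d}\mathsf{f}$, and one must verify that the Hodge splitting propagates through the cone. The expected strategy is to reinterpret $\mathfrak{g}^\bullet$ as a mapping cone in the log de Rham complex of $(Z,D)$ \emph{relative to} the fibration $\mathsf{f}$, and then apply an $E_1$-degeneration statement for this relative complex. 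Tameness of the compactification ensures the requisite properness and local normal crossings behaviour at $\infty$, so that the spectral sequence arguments go through and the bracketed primitive at each step of the BTT induction can be chosen compatibly with both slots of $\mathfrak{g}^\bullet$, yielding formality and hence surjectivity of the Kodaira--Spencer map for the resulting Kuranishi family.
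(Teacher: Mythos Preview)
The paper does not give its own proof of this theorem: it is stated as Theorem~\ref{theorem:kkp} and attributed directly to \cite{katzarkov2017bogomolov}, with no argument supplied. So there is no ``paper's own proof'' to compare against; the authors simply take the KKP result as a black box and build on it.

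Your sketch of the BTT mechanism is broadly in line with what \cite{katzarkov2017bogomolov} actually does: the contraction isomorphism with the log Calabi--Yau form, the passage to a complex of $\mathsf{f}$-adapted logarithmic forms, and an $E_1$-degeneration argument are all ingredients of the KKP proof. The paper in fact recalls exactly this machinery (the sheaves $\Omega_Z^\bullet(\log D,\mathsf{f})$, the complex $\mathfrak{G}^\bullet$, and the quasi-isomorphism $\sigma_{\geq 0}\mathfrak{G}^\bullet \hookrightarrow \mathfrak{g}^\bullet$) in Subsection~2.2, but only in order to use it for the \emph{subsequent} results, not to re-prove Theorem~\ref{theorem:kkp}.

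You have, however, misread where the gap is. The incompleteness the authors flag (footnote~2 in the introduction) concerns \cite[Corollary~2.18]{katzarkov2017bogomolov}, which asserts surjectivity of the forgetful map $T_s\Def(Z,D,\mathsf{f})\to T_s\Def(Z)$. That is a separate statement from unobstructedness of $\Def(Z,D,\mathsf{f})$, and it is what the paper addresses in Propositions~\ref{proposition:surjmap}, \ref{proposition:unoblog}, and \ref{proposition:lastprop}, culminating in Theorem~\ref{theorem:main1}. The unobstructedness of $(Z,D,\mathsf{f})$ itself is accepted as established by KKP and is not revisited; so your final paragraph, which frames the coupling with $\mathsf{f}^*T_{\mathbb{P}^1}(-\log\infty)$ as the gap in the proof of Theorem~\ref{theorem:kkp} that Subsection~\ref{subsection:def} fills, conflates two different issues. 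Your role-of-$H^1(Z,\mathbb{Q})=0$ explanation is also somewhat off: in KKP this enters to control the log Hodge theory (e.g., vanishing of $H^0(Z,\Omega^1_Z(\log D))$ and related degenerations), not primarily as a ``trace-type ambiguity'' fix.
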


Any Landau--Ginzburg model \((Z,D,\mathsf{f})\) admits a deformation over the unit polydisc \(\Delta\) in \(\mathbb{A}^2\) with coordinate \((\lambda,\mu)\) which is obtained by deforming the potential function:
\begin{equation}\label{equation:trivdef}
  (\mathscr{Z},\mathscr{D},\bm{f}) = (Z\times \Delta, D\times \Delta, (1-\lambda)\mathsf{f} + \mu),\qquad \varpi \colon \mathscr{Z} = Z\times \Delta \rightarrow \Delta.
\end{equation}
In~\eqref{equation:trivdef} we interpret \((1 - \lambda)\mathsf{f} + \mu\) as scaling the potential function \(\mathsf{f}\) by \((1 - \lambda)\) and translating by \(\mu \in \mathbb{C} \subseteq \mathbb{P}^1\).

\begin{proposition}
  Let \((Z,D,\mathsf{f})\) be a proper tame compactified Landau--Ginzburg model.
  \begin{enumerate}
  \item \(T_s \Def(Z,D,\mathsf{f}) \cong H^1(Z,\mathfrak{g}^\bullet)\) sits in an exact sequence,
    \begin{equation}\label{equation:kkp}
      \cdots \rightarrow \mathbb{C}^2 \xrightarrow{\psi} T_s \Def(Z,D,\mathsf{f}) \cong \mathbb{H}^1(Z,\mathfrak{g}^\bullet) \rightarrow T_s \Def(Z,D)\cong H^1(Z,T_Z(-\log D))\rightarrow  \cdots
    \end{equation}
  \item The image of the \(\kappa \colon T_{\Delta,0} \rightarrow \mathbb{H}^1(Z,\mathfrak{g}^\bullet)\) for the family in~\eqref{equation:trivdef} is identified with the image of \(\psi\) in~\eqref{equation:kkp}.
  \end{enumerate}
\end{proposition}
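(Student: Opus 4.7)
For part (1), my plan is to realize the sequence as a piece of the hypercohomology long exact sequence associated to the stupid truncation short exact sequence
\[
0 \to \mathsf{f}^* T_{\mathbb{P}^1}(-\log \infty)[-1] \to \mathfrak{g}^\bullet \to T_Z(-\log D) \to 0,
\]
which yields
\[
\cdots \to H^0(Z, \mathsf{f}^* T_{\mathbb{P}^1}(-\log \infty)) \xrightarrow{\psi} \mathbb{H}^1(Z, \mathfrak{g}^\bullet) \to H^1(Z, T_Z(-\log D)) \to H^1(Z, \mathsf{f}^* T_{\mathbb{P}^1}(-\log \infty)) \to \cdots.
\]
The identifications $T_s \Def(Z,D,\mathsf{f}) \cong \mathbb{H}^1(Z, \mathfrak{g}^\bullet)$ and $T_s \Def(Z,D) \cong H^1(Z, T_Z(-\log D))$ are standard, and the \v{C}ech/Horikawa cocycle description recalled in the excerpt makes the compatibility with the middle arrow (forgetting the $\tau_\alpha$-component of a cocycle $(\tau_\alpha, \rho_{\alpha,\beta})$) immediate. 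To identify the leftmost term with $\mathbb{C}^2$, I would invoke the projection formula together with $\mathsf{f}_* \mathcal{O}_Z = \mathcal{O}_{\mathbb{P}^1}$ (i.e., connectedness of the general fibre of $\mathsf{f}$, a standing feature of the tame compactified Landau--Ginzburg setup) to obtain
\[
H^0(Z, \mathsf{f}^* T_{\mathbb{P}^1}(-\log \infty)) \cong H^0(\mathbb{P}^1, \mathcal{O}(1)) \cong \mathbb{C}^2,
\]
with basis realized by $\partial_\mathsf{f}$ and $\mathsf{f}\,\partial_\mathsf{f}$, matching the $(\lambda z + \mu)\partial_z$ description given in the excerpt.

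Part (2) is then a direct cocycle calculation. Since $\mathscr{Z} = Z \times \Delta$ in \eqref{equation:trivdef}, I would choose a Stein cover $\widetilde{\mathfrak{U}} = \mathfrak{U} \times \Delta$ whose transition functions are constant in $(\lambda, \mu)$, so that $\rho_{\alpha,\beta} = \partial_t g_{\alpha,\beta}|_{t=0} = 0$ in both the $\partial_\lambda$ and the $\partial_\mu$ direction. Differentiating $\bm{f} = (1-\lambda)\mathsf{f} + \mu$ yields
\[
\tau_\alpha^{(\lambda)} = -\mathsf{f}|_{U_\alpha}, \qquad \tau_\alpha^{(\mu)} = 1,
\]
each of which glues to the global section $-\mathsf{f}\,\partial_\mathsf{f}$, respectively $\partial_\mathsf{f}$, of $\mathsf{f}^* T_{\mathbb{P}^1}(-\log \infty)$. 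The resulting Kodaira--Spencer cocycle has the form $(\tau_\alpha, 0)$; inspecting the connecting map of the stupid truncation long exact sequence, this is by construction $\psi$ applied to the corresponding global section. Since $\{\partial_\mathsf{f}, \mathsf{f}\,\partial_\mathsf{f}\}$ span the $\mathbb{C}^2$ above, the two tangent directions of \eqref{equation:trivdef} surject onto $\operatorname{im}(\psi)$, giving the desired identification $\operatorname{im}(\kappa) = \operatorname{im}(\psi)$.

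The only substantive input beyond diagram chasing is the global-sections computation in part (1), which hinges on $\mathsf{f}_* \mathcal{O}_Z = \mathcal{O}_{\mathbb{P}^1}$. This is standard in the present framework but is the single place where genuine geometric hypotheses on $(Z,D,\mathsf{f})$ enter; I would isolate it as a brief preliminary observation before appealing to the long exact sequence. Everything else is formal: the long exact sequence comes from the stupid truncation, and the Kodaira--Spencer cocycle for \eqref{equation:trivdef} is concentrated in the $\tau_\alpha$-component precisely because the underlying family of pairs $(\mathscr{Z}, \mathscr{D}) = (Z, D) \times \Delta$ is trivial.
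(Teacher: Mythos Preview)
Your proposal is correct and follows essentially the same approach as the paper: the same stupid-truncation short exact sequence, the same identification of $H^0(Z,\mathsf{f}^*T_{\mathbb{P}^1}(-\log\infty))\cong\mathbb{C}^2$ (the paper phrases it as $\mathsf{f}^*\mathcal{O}_{\mathbb{P}^1}(1)\cong\mathcal{O}_Z(D)$ with $h^0(Z,\mathcal{O}_Z(D))=2$ since $D$ is a fibre, which is equivalent to your projection-formula argument and rests on the same connectedness hypothesis), and the same direct cocycle computation showing $\rho_{\alpha,\beta}=0$ and $\tau_\alpha=(-\lambda\mathsf{f}+\mu)\partial_\mathsf{f}$, with the paper working in a general $(\lambda,\mu)$-direction rather than your two coordinate directions.
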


\begin{proof}
  To prove the first claim, note that there is a short exact sequence of complexes of sheaves
  \[
    0 \rightarrow \mathsf{f}^*T_{\mathbb{P}^1} (-\log \infty) [1] \rightarrow \mathfrak{g}^\bullet \rightarrow   T_Z(-\log D) \rightarrow 0.
  \]
  Therefore, under the assumptions in the statement of the proposition, we have an exact sequence computing the hypercohomology of \(\mathfrak{g}^\bullet\),
  \[
    \cdots \rightarrow H^0(Z,\mathsf{f}^*T_{\mathbb{P}^1}(-\log \infty)) \rightarrow \mathbb{H}^1(Z,\mathfrak{g}^\bullet) \rightarrow H^1(Z,T_Z(-\log D))   \rightarrow H^1(Z,\mathsf{f}^*T_{\mathbb{P}^1}(-\log \infty)) \rightarrow \cdots
  \]
  Then we apply the fact that \(T_{\mathbb{P}^1}(-\log \infty) \cong \mathcal{O}_{\mathbb{P}^1}(1)\) and hence \(\mathsf{f}^*\mathcal{O}_{\mathbb{P}^1}(1) = \mathcal{O}_Z(D)\). Since \(D\) is the fibre of a morphism to \(\mathbb{P}^1\), we have \(h^0(Z,\mathcal{O}_Z(D)) = 2\). This proves the result.

  The second claim may be proved via straightforward and elementary analysis of the Kodaira--Spencer map as presented above. In this case, the transition functions \(g_{\alpha,\beta}\) are trivial, therefore, \(\rho_{\alpha,\beta} = 0\) for all \(\alpha,\beta\). We also calculate easily that given in any chart \(\widetilde{U}_\alpha = U_\alpha \times \Delta\), we have \(\partial_t((1-\lambda t)\mathsf{f} +(\mu t))|_{t=0} = (-\lambda \mathsf{f} + \mu)\partial_\mathsf{f}\). Therefore,
  \[
    \kappa(Z\times \Delta, D\times \Delta, (1-\lambda) \mathsf{f} + \mu) = (\{((1-\lambda) \mathsf{f} + \mu)|_{U_\alpha}\partial_\mathsf{f}\}, 0) \in C^0(\mathfrak{U},\mathsf{f}^*T_{\mathbb{P}^1}(-\log \infty)) \oplus C^1(\mathfrak{U},T_Z(-\log D)).
  \]
  Since the vector fields \((\lambda \mathsf{f} + \mu)|_{U_\alpha} \partial_\mathsf{f}\) are all restrictions of a global vector field, they form a cocycle in \(C^0(\mathfrak{U},\mathsf{f}^*T_{\mathbb{P}^1}(-\log \infty))\). The induced map \(H^0(Z,\mathsf{f}^*T_{\mathbb{P}^1}(-\log \infty))\rightarrow \mathbb{H}^1(Z,\mathfrak{g}^\bullet)\) sends global vector fields of the form \((\lambda \mathsf{f} + \mu)\partial_\mathsf{f}\) to the corresponding \v{C}ech cocycle. This concludes the proof.
\end{proof}

\begin{lemma}\label{lemma:surjmap}
  If \((Z,D)\) has unobstructed deformations and the map \(H^1(Z,T_Z(-\log D))\rightarrow H^1(Z,T_Z)\) is surjective, then \(Z\) has unobstructed deformations. Similarly, if \((Z,D,\mathsf{f})\) has unobstructed deformations and the forgetful map \(\mathbb{H}^1(Z,\mathfrak{g}^\bullet) \rightarrow H^1(Z,T_Z(-\log D))\) is surjective, then deformations of \((Z,D)\) are also unobstructed.
\end{lemma}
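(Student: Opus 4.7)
The plan is to exploit the naturality of the Kodaira--Spencer map under forgetting extra structure; both statements reduce to the fact that a composition of two surjections is surjective. For the first assertion, I would start with a family $\varpi\colon (\mathscr{Z},\mathscr{D}) \to B$ over a smooth base whose Kodaira--Spencer map $\kappa\colon T_{B,0}\to H^1(Z,T_Z(-\log D))$ is surjective; such a family exists by the unobstructedness hypothesis for $(Z,D)$. Forgetting the divisor produces a deformation $\mathscr{Z}\to B$ of $Z$, and I would show that its Kodaira--Spencer map $\kappa'\colon T_{B,0} \to H^1(Z,T_Z)$ factors as $\kappa' = \iota_* \circ \kappa$, where $\iota_*\colon H^1(Z,T_Z(-\log D))\to H^1(Z,T_Z)$ is induced by the inclusion of sheaves $T_Z(-\log D) \hookrightarrow T_Z$. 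Since $\iota_*$ is surjective by hypothesis and $\kappa$ is surjective by construction, $\kappa'$ is surjective and $Z$ has unobstructed deformations.

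The key step is the factorization, which I would verify using the \v{C}ech description developed earlier in Subsection~\ref{subsection:def}. Choose a Stein covering $\widetilde{\mathfrak{U}}=\{\widetilde{U}_\alpha\}$ of $\mathscr{Z}$ restricting to a Stein covering $\mathfrak{U}=\{U_\alpha\}$ of $Z$, together with transition functions $g_{\alpha,\beta}$. These transition functions compute both Kodaira--Spencer classes simultaneously: for the family $(\mathscr{Z},\mathscr{D})$ one obtains the $1$-cocycle $\rho_{\alpha,\beta} = \partial_t g_{\alpha,\beta}|_{t=0}$ regarded as a section of $T_Z(-\log D)$, which is logarithmic because the $g_{\alpha,\beta}$ preserve $\mathscr{D}$; for $\mathscr{Z}$ alone one obtains the same cocycle regarded as a section of $T_Z$ via the forgetful inclusion. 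The induced map on cohomology is precisely $\iota_*$, giving the required factorization.

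The second statement is entirely parallel. Starting from a family $(\mathscr{Z},\mathscr{D},\bm{f})\to B$ realizing the unobstructedness of $(Z,D,\mathsf{f})$, I would forget $\bm{f}$ to obtain a family $(\mathscr{Z},\mathscr{D})\to B$, and show by the same \v{C}ech manipulation that its Kodaira--Spencer map factors as the composition of the original Kodaira--Spencer map with the forgetful map $\mathbb{H}^1(Z,\mathfrak{g}^\bullet)\to H^1(Z,T_Z(-\log D))$ arising from the short exact sequence of complexes used in the proof of the previous proposition: on cocycles the representative $(\tau_\alpha,\rho_{\alpha,\beta})$ in $C^0(\mathfrak{U},\mathsf{f}^*T_{\mathbb{P}^1}(-\log\infty))\oplus C^1(\mathfrak{U},T_Z(-\log D))$ maps to $(\rho_{\alpha,\beta})$, which is exactly the Kodaira--Spencer cocycle for $(\mathscr{Z},\mathscr{D})$. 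Surjectivity of both maps in the composition then yields unobstructedness of $(Z,D)$. There is no essential obstacle here; the only point requiring mild care is checking that the cocycle $\rho_{\alpha,\beta}$ coming from a family preserving $\mathscr{D}$ (and $\bm{f}$) is genuinely logarithmic, which is guaranteed by the explicit local form of the transition functions recalled in the paragraphs preceding the lemma.
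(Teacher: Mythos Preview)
Your proposal is correct and follows essentially the same approach as the paper: both arguments rest on the fact that the Kodaira--Spencer map of the forgotten family factors through the forgetful cohomology map, so surjectivity follows by composition. The paper phrases this more abstractly, invoking semi-universal deformation spaces and the identification of the tangent map of the forgetful morphism with $H^1(Z,T_Z(-\log D))\to H^1(Z,T_Z)$, whereas you verify the factorization directly at the level of \v{C}ech cocycles; the underlying content is the same.
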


\begin{proof}
  For both \((Z,D)\) and \(Z\) there exist semi-universal deformation spaces and forgetful morphisms between them. The tangent map of this forgetful morphism is the map \(H^1(Z,T_Z(-\log D)) \rightarrow H^1(Z,T_Z)\). If \((\mathscr{Z},\mathscr{D})\) is a deformation of \((X,D)\) over a smooth base with surjective Kodaira--Spencer map, the induced deformation \(\mathscr{Z}\), therefore, also has surjective Kodaira--Spencer map.
\end{proof}

\begin{example}
  Suppose \(C_1\) and \(C_2\) are distinct cubic curves in \(\mathbb{P}^2\) meeting in 9 distinct points. Let \(Z\) be the blow up of \(\mathbb{P}^2\) at the intersection of \(C_1\) and \(C_2\). There is an induced map \(\mathsf{f} \colon Z \rightarrow \mathbb{P}^1\) so that \(\mathsf{f}^{-1}(\infty) = C_2\). According to a result of Horikawa~\cite{horikawa1976deformations}, deformations of \(Z\) come from deformations of the nine points blown up to obtain \(Z\). A simple dimension count shows that the collection of 9-tuples of points in \(\mathbb{P}^2\) which are the base locus of a pencil of cubic curves is of codimension \(2\).  Therefore, in this case, the map \(T_s \Def(Z,D,\mathsf{f})\rightarrow T_s \Def(Z)\) is not surjective, and, in particular, the unobstructedness of deformations of \((Z,D,\mathsf{f})\) does not imply unobstructed deformations of \(Z\) (cf.~\cite[Corollary 2.18]{katzarkov2017bogomolov}). In other words, there are deformations of rational elliptic surfaces which are not rational elliptic. In the next section, we will show that this phenomenon does not persist in higher dimensions.
\end{example}

\subsection{Calabi--Yau Landau--Ginzburg models}

In this section, we will wish to compute the map
\[
  T_s \Def(Z,D,\mathsf{f})\rightarrow T_s \Def(Z,D)
\]
by Hodge-theoretic means. To do this, we need the following construction from~\cite{katzarkov2017bogomolov}. Suppose that \((Z,D)\) satisfies the Calabi--Yau condition, and that \(\omega\) is a nonvanishing section of the line bundle \(\Omega^d_Z(\log D)\). Here \(d\) is the dimension of \(Z\). Then there is a map \(i_\omega: T^i_Z\rightarrow \Omega^{d-i}_Z(\log D)\). Define a subsheaf
\[
  \Omega_Z^i(\log D,\mathsf{f}) = \{ \eta \in \Omega_Z^i(\log D) \mid \mathrm{d}\mathsf{f} \wedge \eta \in \Omega_Z^{i+1}(\log D)\}.
\]
This is called the \emph{sheaf of \(\mathsf{f}\)-adapted logarithmic forms} on \(Z\). We let \(T^{d-i}_Z(-\log D, \mathsf{f})\) be the preimage of \(\Omega_Z^i(\log D,\mathsf{f})\) under \(i_\omega\). The morphism \(\wedge \mathrm{d}\mathsf{f}\) makes \(\Omega_{Z}^i(\log D,\mathsf{f})\) a complex of sheaves. We may define \(\mathfrak{G}^i = T^{1-i}_Z(-\log  D, \mathsf{f})\). Under the isomorphism \(\Omega_Z^\bullet(\log D, \mathsf{f})\rightarrow \mathfrak{G}^{1-d+\bullet}\) the map \(\wedge \mathrm{d}\mathsf{f}\) is identified with the contraction \(\iota_{\mathrm{d}\mathsf{f}}\). It is shown in~\cite{katzarkov2017bogomolov} that there is a quasi-isomorphism of complexes induced by the natural injection on sheaves
\[
  \sigma_{\geq 0} \mathfrak{G}^\bullet \hookrightarrow \mathfrak{g}^\bullet
\]
and isomorphisms
\[
  \left[ T_Z(-\log D, \mathsf{f} ) \xrightarrow{\iota_{\mathrm{d}\mathsf{f}}} \mathcal{O}_Z\right] \xrightarrow{ \iota_{\omega} } \left[ \Omega^{d-1}_Z(\log D,\mathsf{f}) \xrightarrow{\wedge \mathrm{d}\mathsf{f} }\Omega^d_Z(\log D)\right].
\]
Here we have used the fact that \(\Omega_Z^d(\log D,\mathsf{f}) = \Omega_Z^d(\log D)\) which follows directly from the definition.

\begin{proposition}\label{proposition:surjmap}
  Suppose \((Z,D,\mathsf{f})\) is a proper Landau--Ginzburg model satisfying the Calabi--Yau condition. If the map \(H^{1}(Z,\Omega^{d-1}_Z(\log D,\mathsf{f})) \rightarrow H^1(Z,\Omega^{d-1}_Z(\log D))\) is surjective, then so is \(T_s \Def(Z,D,\mathsf{f}) \rightarrow T_s \Def(Z,D)\).
\end{proposition}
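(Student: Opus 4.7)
The plan is to reinterpret both sides of the forgetful map in terms of logarithmic differential forms, using the KKP quasi-isomorphism and the Calabi--Yau trivialization $\omega$, and then to perform a diagram chase. Via $i_\omega$ and $\sigma_{\geq 0}\mathfrak{G}^\bullet \hookrightarrow \mathfrak{g}^\bullet$, I identify
\[
T_s\Def(Z,D,\mathsf{f}) \cong \mathbb{H}^1(\mathcal{K}^\bullet), \qquad T_s\Def(Z,D) \cong H^1(\Omega^{d-1}_Z(\log D)),
\]
where $\mathcal{K}^\bullet := [\Omega^{d-1}_Z(\log D,\mathsf{f}) \xrightarrow{\wedge \mathrm{d}\mathsf{f}} \Omega^d_Z(\log D)]$, and the forgetful map becomes the composition
\[
\mathbb{H}^1(\mathcal{K}^\bullet) \xrightarrow{A} H^1(\Omega^{d-1}_Z(\log D,\mathsf{f})) \xrightarrow{\iota} H^1(\Omega^{d-1}_Z(\log D)),
\]
with $\iota$ precisely the map from the hypothesis. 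Taking the long exact sequence of the short exact sequence of complexes $0 \to \Omega^d_Z(\log D)[-1] \to \mathcal{K}^\bullet \to \Omega^{d-1}_Z(\log D,\mathsf{f}) \to 0$, one finds $\mathrm{image}(A) = \ker(\delta_1)$ with $\delta_1$ the connecting map induced by $\wedge \mathrm{d}\mathsf{f}$, so the conclusion amounts to $\iota(\ker \delta_1) = H^1(\Omega^{d-1}_Z(\log D))$.

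The structural input is an identification of sheaf quotients. Under the Calabi--Yau isomorphism $\Omega^d_Z(\log D) \cong \mathcal{O}_Z$ and the triviality of the normal bundle $\mathcal{O}_D(D)$ (since $D$ is a fibre of $\mathsf{f}$, so $\mathsf{f}|_D$ is constant and $\mathsf{f}^* \mathcal{O}_{\mathbb{P}^1}(1)|_D$ is trivial), one identifies $\mathcal{O}_Z(D)/\Omega^d_Z(\log D) \cong \mathcal{O}_D$. A local computation further shows that $\wedge \mathrm{d}\mathsf{f}$ descends to an isomorphism $\Omega^{d-1}_Z(\log D)/\Omega^{d-1}_Z(\log D,\mathsf{f}) \xrightarrow{\cong} \mathcal{O}_D$ (injectivity holds by the very definition of the adapted sheaf). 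Together these fit into a morphism of short exact sequences of sheaves from $0 \to \Omega^{d-1}_Z(\log D,\mathsf{f}) \to \Omega^{d-1}_Z(\log D) \to \mathcal{O}_D \to 0$ to $0 \to \Omega^d_Z(\log D) \to \mathcal{O}_Z(D) \to \mathcal{O}_D \to 0$, with vertical maps $(\wedge \mathrm{d}\mathsf{f}, \wedge \mathrm{d}\mathsf{f}, \mathrm{id})$.

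Finally, I would chase the induced morphism of long exact sequences. The hypothesis is equivalent to the vanishing of the top-row connecting map $r \colon H^1(\Omega^{d-1}_Z(\log D)) \to H^1(\mathcal{O}_D)$. By naturality, using the identity on $H^1(\mathcal{O}_D)$, $r = s \circ \delta_1'$, where $\delta_1' \colon H^1(\Omega^{d-1}_Z(\log D)) \to H^1(\mathcal{O}_Z(D))$ is the analogous bottom connecting map and $s \colon H^1(\mathcal{O}_Z(D)) \to H^1(\mathcal{O}_D)$. The surjectivity of $\iota$ combined with the middle commutative square $\delta_1' \circ \iota = j_1 \circ \delta_1$ gives $\mathrm{image}(\delta_1') = j_1(\mathrm{image}(\delta_1))$, while the left square forces $\ker j_1 = \delta_1(\ker \iota)$. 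Combining these, $\delta_1' = 0$, which is equivalent to the desired surjectivity. The hard part will be this concluding diagram chase: in the paper's intended applications (tame compactified Landau--Ginzburg threefolds with K3 fibres), mild topological hypotheses give $H^1(Z,\mathcal{O}_Z) = 0$, so $j_1$ vanishes outright and the chase is immediate; more generally, the identification $\ker j_1 = \delta_1(\ker \iota)$ is the essential tool to transfer the vanishing of $r$ down to the vanishing of $\delta_1'$.
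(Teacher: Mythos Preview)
Your approach matches the paper's: use the Katzarkov--Kontsevich--Pantev quasi-isomorphism $\sigma_{\geq 0}\mathfrak{G}^\bullet \hookrightarrow \mathfrak{g}^\bullet$ and the Calabi--Yau contraction $i_\omega$ to translate the forgetful map into the factorization $\iota \circ A$, then argue by diagram chase. The paper compresses this into ``a simple diagram chase after applying the cohomology functor''; your version spells out the chase, which usefully exposes what it actually requires.

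The concluding step, however, does not go through as written. You correctly reduce to showing $\delta_1' = 0$ and establish (a) $r = s \circ \delta_1'$, (b) $\mathrm{image}(\delta_1') = j_1(\mathrm{image}(\delta_1))$, and (c) $\ker j_1 = \delta_1(\ker \iota)$. But these three facts do not combine to give $\delta_1' = 0$. By (b), $\delta_1' = 0$ is equivalent to $\mathrm{image}(\delta_1) \subseteq \ker j_1$, which by (c) becomes $\mathrm{image}(\delta_1) \subseteq \delta_1(\ker \iota)$, i.e., $H^1(\Omega^{d-1}_Z(\log D, \mathsf{f})) = \ker\delta_1 + \ker\iota$. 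That is exactly the target statement $\iota(\ker\delta_1) = H^1(\Omega^{d-1}_Z(\log D))$ rewritten, so the argument is circular; fact (a) only contributes $\mathrm{image}(\delta_1') \subseteq \mathrm{image}(j_1)$, which already follows from (b). You are right that if $H^1(Z, \mathcal{O}_Z) = 0$, equivalently $H^1(\Omega^d_Z(\log D)) = 0$, then the source of $j_1$ vanishes and (b) gives $\delta_1' = 0$ at once; this hypothesis is present in every subsequent application the paper makes of the proposition. In the stated generality, the commutative square alone does not suffice: surjectivity of $\iota$ does not force surjectivity of $\iota \circ A$ unless $A$ is itself surjective, and $\mathrm{image}(A) = \ker\delta_1$ need not be all of $H^1(\Omega^{d-1}_Z(\log D, \mathsf{f}))$ without that extra vanishing.
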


\begin{proof}
  There is a commutative diagram
  \[
    \begin{tikzcd}
      \sigma_{\geq 0}\mathfrak{G}^\bullet \ar[d,"\mathrm{q.is}"] \ar[r] & T_Z(-\log D ,\mathsf{f} ) \ar[r,"\cong"] \ar[d,hookrightarrow] &  \Omega^{d-1}_Z(\log D,\mathsf{f}) \ar[d,hookrightarrow] \\
    \mathfrak{g}^\bullet \ar[r] & T_Z(-\log D) \ar[r,"\cong"] & \Omega_Z^{d-1}(\log D)
    \end{tikzcd}
  \]
  where the horizontal maps are the induced morphisms of complexes coming from truncation. We know that the map \(\mathbb{H}^1(Z,\mathfrak{g}^\bullet) \rightarrow H^1(Z,T_Z(-\log D))\) is identified with \(T_s \Def(Z,D,\mathsf{f}) \rightarrow T_s \Def(Z,D)\). The result then follows by a simple diagram chase after applying the cohomology functor.
\end{proof}

\begin{remark}
  In~\cite{shamoto2018hodge}, Shamoto shows that there is a cohomological mixed Hodge complex underlying the complex \((\Omega_Z^\bullet(\log D), d_\mathrm{dR})\). The injective map of complexes \((\Omega_Z^\bullet(\log D,\mathsf{f}),d_\mathrm{dR})\rightarrow (\Omega_Z^\bullet(\log D), d_\mathrm{dR})\) underlies a morphism of cohomological mixed Hodge complexes, therefore, the map \(H^1(Z,\Omega^{d-1}_Z(\log D,\mathsf{f}))\rightarrow H^1(Z,\Omega^{d-1}_Z(\log D))\) is equal to the induced map
  \[
    \Gr^F_1H^d(Z,(\Omega_Z^\bullet(\log D,\mathsf{f}),d_\mathrm{dR})) \rightarrow \Gr^F_1 H^d(Z,(\Omega_Z^\bullet(\log D), d_\mathrm{dR}))
  \]
  by strictness. This allows us to use tools from Hodge theory to determine the surjectivity of \(T_s \Def(Z,D,\mathsf{f}) \rightarrow T_s \Def(Z,D)\). However, as we will see below, under topological assumptions which are frequently satisfied, this is not necessary.
\end{remark}

\begin{proposition}\label{proposition:unoblog}
  Let \((Z,D,\mathsf{f})\) be a proper tame compactified Landau--Ginzburg model satisfying the Calabi--Yau condition with the property that \(H^1(Z,\mathbb{Q}) = 0\). Suppose \(d \geq 2\), and assume that the fibres of \(\mathsf{f}\) are generically smooth Calabi--Yau varieties. Then deformations of \((Z,D)\) are unobstructed.
\end{proposition}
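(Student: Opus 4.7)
The plan is to chain Theorem~\ref{theorem:kkp} together with Lemma~\ref{lemma:surjmap}(2) and Proposition~\ref{proposition:surjmap}. First, I would invoke Theorem~\ref{theorem:kkp}: the Calabi--Yau condition and $H^1(Z, \mathbb{Q}) = 0$ together imply that deformations of the triple $(Z, D, \mathsf{f})$ are unobstructed. By Lemma~\ref{lemma:surjmap}(2), to upgrade this to unobstructedness of $(Z, D)$ it is enough to prove that the forgetful map $\mathbb{H}^1(Z, \mathfrak{g}^\bullet) \to H^1(Z, T_Z(-\log D))$ is surjective, and by Proposition~\ref{proposition:surjmap} this in turn reduces to surjectivity of the Hodge-theoretic map $H^1(Z, \Omega_Z^{d-1}(\log D, \mathsf{f})) \to H^1(Z, \Omega_Z^{d-1}(\log D))$.

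Next, I would identify the cokernel sheaf $\mathcal{K}$ of the inclusion $\Omega_Z^{d-1}(\log D, \mathsf{f}) \hookrightarrow \Omega_Z^{d-1}(\log D)$. A local coordinate computation at smooth points and double crossings of $D$, using the Calabi--Yau trivialization $\omega \in \Gamma(Z, \Omega_Z^d(\log D))$, should show that $\wedge \mathrm{d}\mathsf{f}$ descends to an isomorphism $\mathcal{K} \cong \mathcal{O}_Z(D)/\mathcal{O}_Z \cong \mathcal{O}_D(D)$. Taking cohomology of the associated short exact sequence then reduces the required surjectivity to the vanishing of the connecting map $H^1(Z, \Omega_Z^{d-1}(\log D)) \to H^1(D, \mathcal{O}_D(D))$.

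The hard part will be this last vanishing. I would factor the connecting map through $H^1(Z, \mathcal{O}_Z(D))$ via the Calabi--Yau identification $T_Z(-\log D) \cong \Omega_Z^{d-1}(\log D)$, and then analyze $H^1(Z, \mathcal{O}_Z(D)) = H^1(Z, \mathsf{f}^*\mathcal{O}_{\mathbb{P}^1}(1))$ with the Leray spectral sequence for $\mathsf{f} \colon Z \to \mathbb{P}^1$. The hypothesis $d \geq 2$ together with the generically smooth Calabi--Yau fibres should control the higher direct images $R^q \mathsf{f}_* \mathcal{O}_Z$ enough to force the required vanishing, possibly combined with the fact that $H^1(Z,\mathcal{O}_Z) = 0$ which follows from $H^1(Z,\mathbb{Q}) = 0$. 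Alternatively, following the remark after Proposition~\ref{proposition:surjmap}, I would recognize the map as the $\mathrm{Gr}^F_{d-1}$-piece of a morphism of mixed Hodge structures coming from Shamoto's $\mathsf{f}$-adapted log de Rham complex, and deduce vanishing from strictness of morphisms of mixed Hodge structures.
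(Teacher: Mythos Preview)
Your overall framework is exactly the paper's: invoke Theorem~\ref{theorem:kkp}, then reduce via Lemma~\ref{lemma:surjmap} and Proposition~\ref{proposition:surjmap} to surjectivity of $H^1(Z,\Omega_Z^{d-1}(\log D,\mathsf{f}))\to H^1(Z,\Omega_Z^{d-1}(\log D))$. The divergence is entirely in how you handle this last surjectivity, and here the paper's route is much shorter than yours. The paper does not compute the cokernel by hand; it simply cites Shamoto's short exact sequence
\[
0 \to \Omega_Z^{d-1}(\log D,\mathsf{f}) \to \Omega_Z^{d-1}(\log D) \to \Omega^{d-1}_{Z_\infty/\Delta}(\log D) \to 0,
\]
where $Z_\infty$ is a small disc neighbourhood of $D$, and then invokes Steenbrink to identify $H^1(Z_\infty,\Omega^{d-1}_{Z_\infty/\Delta}(\log D))$ with $H^1(F,\Omega_F^{d-1})$ for a nearby smooth fibre $F$. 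The Calabi--Yau hypothesis on $F$ then kills this group outright, so the map on $H^1$ is surjective for the trivial reason that $H^1$ of the cokernel vanishes.

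Your alternative via $\mathcal{K}\cong\mathcal{O}_D(D)$ has a real gap in what you call the ``hard part.'' First, the map you want to vanish, $H^1(\Omega_Z^{d-1}(\log D))\to H^1(\mathcal{K})$, is not a connecting map, and there is no natural factorisation of it through $H^1(Z,\mathcal{O}_Z(D))$: the Calabi--Yau isomorphism $T_Z(-\log D)\cong\Omega_Z^{d-1}(\log D)$ does not produce a sheaf map $\Omega_Z^{d-1}(\log D)\to\mathcal{O}_Z(D)$ compatible with the quotient to $\mathcal{K}$. Second, even granting your Leray analysis of $H^1(Z,\mathsf{f}^*\mathcal{O}_{\mathbb{P}^1}(1))$, controlling $R^1\mathsf{f}_*\mathcal{O}_Z$ at singular fibres is delicate, and vanishing of $H^1(\mathcal{O}_Z(D))$ would only inject $H^1(\mathcal{O}_D(D))$ into $H^2(\mathcal{O}_Z)$, which need not vanish. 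Your final alternative via strictness of mixed Hodge structures is closer in spirit but still incomplete: strictness tells you the induced map on $\mathrm{Gr}^F$ is what it should be, not that it is zero or surjective. The missing idea is precisely Steenbrink's identification of the cokernel cohomology with the Hodge cohomology of a nearby smooth fibre, which turns the Calabi--Yau hypothesis on fibres directly into the required vanishing.
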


\begin{proof}
  According to~\cite{shamoto2018hodge}, there is a short exact sequence of sheaves
  \[
    0 \rightarrow \Omega_Z^{d-1}(\log D,\mathsf{f}) \rightarrow \Omega_Z^{d-1}(\log D) \rightarrow \Omega_{Z_\infty/\Delta}^{d-1}(\log D)\rightarrow 0,
  \]
  where \(Z_\infty\) is a preimage under \(\mathsf{f}\) of a small disc \(\Delta\) around \(\infty\). According to~\cite{steenbrink1976limits}, \(H^1(Z_\infty,\Omega^{d-1}_{Z_\infty/\Delta})\) is isomorphic to \(H^1(F,\Omega^{d-1}_F)\), where \(F\) is a smooth fibre of \(\mathsf{f}\). By assumption, this is \(0\). Therefore, the induced map \(H^1(Z,\Omega_Z^{d-1}(\log D,\mathsf{f}))\rightarrow H^1(Z,\Omega_Z^{d-1}(\log D))\) is surjective. By Proposition~\ref{proposition:surjmap} we see that \(T_s \Def(Z,D,\mathsf{f}) \rightarrow T_s \Def(Z,D)\) is surjective. Applying Theorem~\ref{theorem:kkp} and Lemma~\ref{lemma:surjmap}, the result follows.
\end{proof}

\begin{remark}
  Observe that \(\dim T_s \Def(Z,D,\mathsf{f}) = \dim H^1(Z,\Omega_Z^{d-1}(\log D,\mathsf{f})) + 1\). Therefore, a direct analogue of the local Torelli theorem for Landau--Ginzburg models does not hold.
\end{remark}

\subsection{Relation between deformations of \texorpdfstring{\((Z,D)\)}{(Z,D)} and deformations of \texorpdfstring{\(Z\)}{Z}}

Next, we will look at the map \(H^1(Z,T_Z(-\log D)\rightarrow H^1(Z,T_Z)\). This map relates deformations of the pair \((Z,D)\) to deformations of \(Z\).

\begin{proposition}\label{proposition:lastprop}
  Suppose \((Z,D,\mathsf{f})\) is a tame compactified Landau--Ginzburg model satisfying the Calabi--Yau condition. Assume that all components of \(D\) satisfy \(H^0(D_i,\omega_{D_i}) = H^1(D_i,\omega_{D_i}) = 0\). Then the map \(H^1(Z,T_Z(-\log D))\rightarrow H^1(Z,T_Z)\) is an isomorphism. Consequently, by Lemma~\ref{lemma:surjmap}, if \((Z,D)\) admit unobstructed deformations, then so does \(Z\).
\end{proposition}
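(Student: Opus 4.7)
The plan is to consider the residue short exact sequence
\[
0 \to T_Z(-\log D) \to T_Z \to \mathcal{Q} \to 0,
\]
where $\mathcal{Q}$, by a direct computation in local coordinates adapted to the snc divisor $D = \sum_i D_i$, is canonically isomorphic to $\bigoplus_i (\iota_i)_* N_{D_i/Z}$, with $\iota_i\colon D_i \hookrightarrow Z$ the inclusion. Taking the long exact sequence in cohomology, the desired statement reduces to the vanishing $H^0(D_i, N_{D_i/Z}) = H^1(D_i, N_{D_i/Z}) = 0$ for each $i$.

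The key step is the identification $N_{D_i/Z} \cong \omega_{D_i}$, which combines the Calabi--Yau hypothesis with the fact that $D$ is a fibre of $\mathsf{f}$. Since $D = \mathsf{f}^*(\infty)$, we have $\mathcal{O}_Z(D) \cong \mathsf{f}^*\mathcal{O}_{\mathbb{P}^1}(1)$; because $\mathsf{f}|_{D_i}$ factors through the point $\infty \in \mathbb{P}^1$, the restriction $\mathcal{O}_Z(D)|_{D_i}$ is trivial. The Calabi--Yau condition $\omega_Z \cong \mathcal{O}_Z(-D)$ then yields $\omega_Z|_{D_i} \cong \mathcal{O}_{D_i}$, and adjunction gives
\[
\omega_{D_i} \cong \omega_Z(D_i)|_{D_i} \cong \mathcal{O}_Z(D_i)|_{D_i} = N_{D_i/Z}.
\]
With this identification in hand, the hypothesis $H^0(D_i, \omega_{D_i}) = H^1(D_i, \omega_{D_i}) = 0$ is exactly the cohomological vanishing required above.

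Inserting these vanishings into the long exact sequence associated to the short exact sequence above, both the connecting map from $H^0(Z,\mathcal{Q})$ into $H^1(Z, T_Z(-\log D))$ and the map from $H^1(Z,T_Z)$ into $H^1(Z,\mathcal{Q})$ vanish, so $H^1(Z, T_Z(-\log D)) \to H^1(Z, T_Z)$ is an isomorphism. The final statement about $Z$ then follows directly from Lemma~\ref{lemma:surjmap}. I expect the only mildly subtle point to be the global identification of $\mathcal{Q}$ with $\bigoplus_i (\iota_i)_* N_{D_i/Z}$, i.e.\ checking that the local descriptions patch correctly across the strata $D_i \cap D_j$; this is a standard snc computation but deserves to be recorded explicitly.
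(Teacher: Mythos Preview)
Your proof is correct and follows essentially the same approach as the paper: the same residue short exact sequence, the same identification of the quotient with $\bigoplus_i (\iota_i)_* N_{D_i/Z}$ (the paper cites Kawamata for this rather than doing the local snc computation), and the same adjunction argument to identify $N_{D_i/Z}\cong\omega_{D_i}$. The paper's write-up is slightly more compressed, but the content is the same.
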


\begin{proof}
  There is a short exact sequence of sheaves
  \begin{equation}\label{equation:smooth}
    0 \rightarrow T_Z(-\log D) \rightarrow T_Z \rightarrow N_{D} \rightarrow 0,
  \end{equation}
  where \(N_{D}\) is the {equisingular normal sheaf} of \(D\) in \(Z\).  Kawamata~\cite[pp. 250]{kawamata1977deformations} observes that in the case where \(D\) is normal crossings with components \(D_1, \ldots, D_n\), we have an isomorphism \(N_{D}\cong \bigoplus_{i=1}^n k_{i*}N_{D_i}\) where \(k_i \colon D_i \rightarrow D\) is the natural inclusion map. By adjunction and the fact that \(\omega_Z = \mathcal{O}_Z(-F)\) is trivial when restricted to \(D_i\), we can compute that
  \[
    \omega_{D_i} \cong k_{i}^*(\omega_Z \otimes \mathcal{O}_Z(D_i)) \cong k_{i}^*\mathcal{O}_Z(D_i) \cong N_{D_i}.
  \]
  Here, as in the proof of Proposition~\ref{proposition:unoblog}, we have used \(F\) to denote a smooth fibre of \(\mathsf{f}\). Under the conditions in the proposition, and applying the long exact sequence in cohomology coming from~\eqref{equation:smooth}, we see that the claim holds.
\end{proof}

\begin{remark}
  Observe that the conditions of Proposition~\ref{proposition:lastprop} fail in dimension 2, since \(H^1(D_i,\omega_{D_i}) \cong \mathbb{C}\) for any smooth projective curve. This condition can also fail in higher dimension. For instance, if \(D\) is a type II degeneration of K3 surfaces, then \(D\) is a union of surfaces \(D_0, \ldots, D_n\) so that \(D_0\) and \(D_n\) are rational, but \(D_i\), \(i \neq 0,n\), are ruled over an elliptic curve. Note that \(H^0(D_i,\omega_{D_i}) \cong \mathbb{C}\) for \(i \neq 0,1\), so the conditions above fail if \(i \neq 1\). On the other hand, if all irreducible components of \(D\) are rational and \(\dim(Z) \geq 3\), then the conditions of Proposition~\ref{proposition:lastprop} are satisfied. The case that we are most interested in this paper is when \(F\) is a K3 surface, and \(D\) is a type III degenerate K3 surface (see, e.g.,~\cite{kulikov1977degenerations}), then this condition is satisfied.
\end{remark}

Combining Proposition~\ref{proposition:lastprop} and Proposition~\ref{proposition:unoblog}, we obtain the following result.
\begin{theorem}[cf.~{\cite[Corollary 2.18]{katzarkov2017bogomolov}}]\label{theorem:main1}
  Suppose \((Z,D,\mathsf{f})\) is a tame compactified Landau--Ginzburg model of dimension \(d\) satisfying the following conditions:
  \begin{enumerate}[\quad (a)]
  \item \(D \sim -K_Z\);
  \item \(H^1(Z,\mathbb{Q})\cong 0\);
  \item \(H^0(D_i,\omega_{D_i}), H^1(D_i,\omega_{D_i}) \cong 0\) for all irreducible components \(D_i\) of \(D\).
  \end{enumerate}
  Then the tangent map \(T_s \Def(Z,D,\mathsf{f}) \rightarrow T_s \Def(Z)\) is surjective, and its kernel is spanned by deformations of the potential function of \((Z,D,\mathsf{f})\). Consequently, by Lemma~\ref{lemma:surjmap}, deformations of \(Z\) are unobstructed.
\end{theorem}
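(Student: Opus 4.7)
The theorem is a formal consequence of Propositions~\ref{proposition:unoblog} and~\ref{proposition:lastprop}, together with Theorem~\ref{theorem:kkp} and Lemma~\ref{lemma:surjmap}; the plan is simply to assemble these ingredients in the right order and to track the kernel of the composed map carefully. First I would verify that hypothesis (a) is precisely the Calabi--Yau condition of Definition~\ref{definition:tcom}, and that together with (b) it supplies the hypotheses of Proposition~\ref{proposition:unoblog}. The one small ingredient that needs a separate remark is that a smooth fibre \(F\) of \(\mathsf{f}\) is Calabi--Yau: since \(F \sim D \sim -K_Z\), adjunction gives \(K_F = (K_Z + F)|_F \sim 0\). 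Applying Proposition~\ref{proposition:unoblog} then yields both that \(T_s\Def(Z,D,\mathsf{f}) \to T_s\Def(Z,D)\) is surjective and that \((Z,D)\) has unobstructed deformations. Note that condition (c) forces \(d \geq 3\), since in dimension \(2\) the components \(D_i\) are smooth curves and \(H^1(D_i,\omega_{D_i}) \cong \mathbb{C}\) by Serre duality.

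The second step invokes hypothesis (c) to apply Proposition~\ref{proposition:lastprop}, which gives that the map \(H^1(Z, T_Z(-\log D)) \to H^1(Z, T_Z)\) is an isomorphism; equivalently, \(T_s\Def(Z,D) \to T_s\Def(Z)\) is an isomorphism. Composing with the surjection from the first step produces the surjection \(T_s\Def(Z,D,\mathsf{f}) \to T_s\Def(Z)\) claimed in the statement.

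For the description of the kernel I would use the exact sequence~\eqref{equation:kkp}: the kernel of \(T_s\Def(Z,D,\mathsf{f}) \to T_s\Def(Z,D)\) is exactly the image of \(\psi\colon \mathbb{C}^2 \to \mathbb{H}^1(Z,\mathfrak{g}^\bullet)\). By the second part of the proposition preceding Lemma~\ref{lemma:surjmap}, this image coincides with the Kodaira--Spencer image of the two-parameter family~\eqref{equation:trivdef} given by \(\mathsf{f} \mapsto (1-\lambda)\mathsf{f} + \mu\), that is, with the deformations of the potential function. Because the second map in the composition is an isomorphism, this is also the kernel of \(T_s\Def(Z,D,\mathsf{f}) \to T_s\Def(Z)\).

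Finally, unobstructedness of \(Z\) follows by applying Lemma~\ref{lemma:surjmap} twice. Theorem~\ref{theorem:kkp} gives that \((Z,D,\mathsf{f})\) has unobstructed deformations from (a) and (b); Lemma~\ref{lemma:surjmap} together with the surjection of the first step promotes this to unobstructedness of \((Z,D)\); and Lemma~\ref{lemma:surjmap} together with the isomorphism of the second step promotes it further to unobstructedness of \(Z\). In this proof there is no real technical obstacle, since each step is an immediate application of a result already set up in the preceding subsections; the only point requiring a little care is recognising that the Calabi--Yau hypothesis on \emph{fibres} in Proposition~\ref{proposition:unoblog} is built in automatically from hypothesis (a) by adjunction, and that the kernel produced by~\eqref{equation:kkp} is genuinely captured by the explicit two-parameter family~\eqref{equation:trivdef}.
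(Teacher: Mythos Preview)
Your proof is correct and takes the same approach as the paper, which introduces the theorem with the single sentence ``Combining Proposition~\ref{proposition:lastprop} and Proposition~\ref{proposition:unoblog}, we obtain the following result.'' You have spelled out this combination carefully, including the identification of the kernel via the exact sequence~\eqref{equation:kkp} and the proposition preceding Lemma~\ref{lemma:surjmap}, which the paper leaves implicit.
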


\begin{remark}
  As a consequence of the unobstructedness of deformations of the tuples \((Z,D,\mathsf{f}), (Z,D)\), and \(Z\), we obtain smooth moduli stacks \(\mathcal{M}_{(Z,D,\mathsf{f})}, \mathcal{M}_{(Z,D)}\), and \(\mathcal{M}_Z\) along with dominant forgetful morphisms
  \[
    \mathcal{M}_{(Z,D,\mathsf{f})} \rightarrow \mathcal{M}_{(Z,D)} \rightarrow \mathcal{M}_Z.
  \]
  The morphism \(\mathcal{M}_{(Z,D,\mathsf{f})}\rightarrow \mathcal{M}_{(Z,D)}\) has relative dimension \(2\). In fact, this maps is a \(\mathbb{C}^* \times \mathbb{C}\) fibre bundle with fibres given by deformations of the potential function \(\mathsf{f}\). The morphism \(\mathcal{M}_{(Z,D)}\rightarrow\mathcal{M}_Z\) is finite.
\end{remark}

\subsection{Deformations of the pair \texorpdfstring{\((Z,F)\)}{(Z,F)} and \texorpdfstring{\(F\)}{F}}\label{subsection:iacono-beauville}

For a tame compactified Landau--Ginzburg model with general fibre \(F\), let \(j_F \colon F \hookrightarrow Z\) indicate the natural injection map. Since \(F\) is a smooth Calabi--Yau variety of dimension \(d-1\), there is a canonical perfect pairing for any \(a,b\),
\begin{equation}\label{equation:pairing}
  (-,-): H^{a}(F,\Omega_F^b) \otimes_\mathbb{C} H^{d-1-a}(F,\Omega_F^{d-1-b}) \rightarrow H^{d-1}(F,\Omega_F^{d-1}) \cong \mathbb{C}
\end{equation}
coming from Serre duality. We define:
\[
  H^1_p(F,\Omega^{d-2}_F) = \im\left(j_{F}^* \colon H^{d-2}(Z,\Omega^{1}_Z)\rightarrow H^{d-2}(F,\Omega^{1}_F)\right)^\perp.
\]
Here the orthogonal complement is taken with respect to the pairing in~\eqref{equation:pairing} with \(a=1,b=d-2\).

\begin{proposition}\label{proposition:main}
  Let \(Z\) be a smooth projective variety, and let \(\mathsf{f} \colon Z\rightarrow \mathbb{P}^1\) be a morphism whose fibres are Calabi--Yau. Put \(D = \mathsf{f}^{-1}(\infty)\) and assume \((Z,D,\mathsf{f})\) satisfies conditions (a), (b) and (c) of Theorem~\ref{theorem:main1}. Let \(F\) be a smooth fibre of \(\mathsf{f}\). Deformations of \((Z,F)\) are unobstructed. The image of \(\kappa \colon T_s \Def(Z,F)\rightarrow T_s \Def(F) \cong H^1(F,\Omega^{d-2}_F)\) is \(H^1_p(F,\Omega_F^{d-2})\), and the kernel of \(\kappa\) has dimension \(h^1(Z,\Omega_Z^{d-1})\).
\end{proposition}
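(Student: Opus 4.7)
The plan is to analyze $\kappa$ through an auxiliary short exact sequence of sheaves on $Z$ combined with Serre duality, following Beauville~\cite{beauvillefano}. The basic tool is the sequence
\[
  0 \to T_Z(-F) \to T_Z(-\log F) \to i_\ast T_F \to 0,
\]
where $i \colon F \hookrightarrow Z$ is the inclusion; this is checked locally using that $F$ is smooth. Since $F$ is a fibre of $\mathsf{f}$, it is linearly equivalent to $D \sim -K_Z$ by condition (a), so contraction with a nonvanishing section of $\omega_Z(F) \cong \mathcal{O}_Z$ yields an isomorphism $T_Z(-F) \cong \Omega_Z^{d-1}$. The resulting long exact cohomology sequence reads
\[
  H^0(F, T_F) \to H^1(Z, \Omega_Z^{d-1}) \to H^1(Z, T_Z(-\log F)) \xrightarrow{\kappa} H^1(F, T_F) \xrightarrow{\delta} H^2(Z, \Omega_Z^{d-1}).
\]

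For unobstructedness of $\Def(Z,F)$, I would appeal to a Bogomolov--Tian--Todorov-type theorem for log Calabi--Yau pairs (as in the work of Iacono--Manetti and Kawamata--Namikawa): since $(Z,F)$ is log Calabi--Yau with $F$ smooth, and since $\Def(Z)$ is unobstructed by Theorem~\ref{theorem:main1}, a T1-lifting argument using the triviality of $N_{F/Z} \cong \mathcal{O}_F$ gives unobstructedness. Alternatively, one may directly build a deformation of $(Z,F)$ from the universal family of $(Z,D,\mathsf{f})$ provided by Theorem~\ref{theorem:main1} by choosing a smooth fibre of the deformed potential over a moving basepoint.

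For the image of $\kappa$: under the Calabi--Yau isomorphism $T_F \cong \Omega_F^{d-2}$, the target becomes $H^1(F, \Omega_F^{d-2}) = T_s \Def(F)$, and the image is $\ker(\delta)$. The key claim is that via Serre duality on $F$ (dimension $d-1$) and on $Z$ (dimension $d$), combined with the Calabi--Yau identifications $T_F \cong \Omega_F^{d-2}$ and $T_Z(-F) \cong \Omega_Z^{d-1}$, the transpose of $\delta$ coincides with the natural restriction
\[
  j_F^\ast \colon H^{d-2}(Z, \Omega_Z^1) \to H^{d-2}(F, \Omega_F^1).
\]
I would verify this by describing both maps at the \v{C}ech level and tracking the compatibility of the Serre pairings (mediated by the same Calabi--Yau trivialisations). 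Granting $\delta^\vee = j_F^\ast$, one concludes $\im(\kappa) = \ker(\delta) = (\im j_F^\ast)^\perp = H^1_p(F, \Omega_F^{d-2})$. Finally, $\ker(\kappa)$ is the image of $H^1(Z, \Omega_Z^{d-1}) \to H^1(Z, T_Z(-\log F))$, of dimension $h^1(Z, \Omega_Z^{d-1})$ provided the preceding connecting map $H^0(F, T_F) \to H^1(Z, \Omega_Z^{d-1})$ vanishes; this is automatic since $H^0(F, T_F) \cong H^0(F, \Omega_F^{d-2}) = 0$ for a strict Calabi--Yau $F$, and in particular for the case of interest $d = 3$ when $F$ is a K3 surface.

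The main obstacle is the Serre-duality identification $\delta^\vee = j_F^\ast$: once set up at the \v{C}ech level it is essentially formal, but one must be careful that the two Calabi--Yau isomorphisms and the two Serre pairings are used consistently. Unobstructedness is the other potentially delicate point, but it is already known in the log Calabi--Yau setting. All remaining steps are standard diagram chases.
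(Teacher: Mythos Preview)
Your proposal is correct and matches the paper's proof essentially step for step: the same short exact sequence $0 \to T_Z(-F) \to T_Z(-\log F) \to i_\ast T_F \to 0$, the same contraction isomorphism $T_Z(-F) \cong \Omega_Z^{d-1}$, and the same Beauville-style Serre duality identification $\delta^\vee = j_F^\ast$; the paper cites Iacono~\cite[Corollary~4.8]{iacono2015deformations} for unobstructedness, which is one of the references you invoke. Your caveat about needing $H^0(F,T_F)=0$ is also in line with the paper, which writes the long exact sequence starting from $0$ ``by our topological assumptions on $F$'' without further comment.
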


\begin{proof}
  Let \((X,E)\) be any smooth pair, where \(E \in |-mK_X|\) for a positive integer \(m\). According to Iacono~\cite[Corollary 4.8]{iacono2015deformations}, deformations of \((X,E)\) are unobstructed. This proves the first claim.  The second claim is straightforward and follows the same argument as is used by Beauville in~\cite[\S 3]{beauvillefano}. The induced morphism is provided by the standard short exact sequence of sheaves
  \[
    0 \rightarrow T_Z\otimes \mathcal{O}_Z(-F) \rightarrow T_Z(-\log F) \rightarrow i_{F*}T_F \rightarrow 0.
  \]
  Since \(F\) is anticanonical, there is a contraction isomorphism
  \[
    T_Z \otimes \mathcal{O}_Z(-F)\cong T_Z\otimes \omega_Z \rightarrow \Omega_Z^{d-1}.
  \]
  Therefore, by our topological assumptions on \(F\), we have an exact sequence
  \[
    0 \rightarrow H^1(Z,\Omega_Z^{d-1}) \rightarrow H^1(Z,T_Z(-\log F )) \rightarrow H^1(F,T_F) \xrightarrow{\gamma} H^2(Z,\Omega_Z^{d-1})\rightarrow  \cdots.
  \]
  Following the arguments in~\cite[Section 3]{beauvillefano} exactly, one identifies the kernel of \(\gamma\) with the orthogonal complement of the image of the induced map \(H^{d-2}(Z,\Omega_Z^{1})\rightarrow H^{d-2}(F,\Omega^{1}_F)\) under the natural perfect pairing \(H^1(F,T_F)\otimes_\mathbb{C} H^{d-2}(F,\Omega_F^1)\rightarrow H^{d-2}(F,\mathcal{O}_F)\). There is a commutative diagram of sheaves
  \[
    \begin{tikzcd}
      T_F \otimes \Omega^{1}_F \ar[r] \ar[d,"i_\omega \otimes \mathrm{id}"] & \mathcal{O}_F \ar[d,"i_\omega"]  \\
      \Omega^{d-2}_F \otimes \Omega_F^1 \ar[r,"(-)\wedge(-)"] & \omega_F
    \end{tikzcd}
  \]
  where \(i_\omega\) indicates contraction with a holomorphic \((d-1)\)-form on \(F\). Applying this identification to the induced maps in cohomology completes the proof.
\end{proof}

\begin{remark}
  Since deformations of the pair \((Z,F)\) are unobstructed, we obtain a smooth moduli stack \(\mathcal{M}_{(Z,F)}\). If we assume that \(H^1(F,\mathbb{Q}) = 0\) as well, then the moduli stack of Calabi--Yau manifolds deformation equivalent to \(\mathcal{M}_F\) is also smooth. The computation given above tells us that the forgetful map
  \[
    \mathcal{M}_{(Z,F)}\rightarrow \mathcal{M}_F
  \]
  has fibre of dimension \(h^{1}(Z,\Omega^{d-1}_Z)\), and image is of dimension \(h^1_p(F,\Omega_F^{d-2})\).
\end{remark}

\section{Weakly nondegenerate Laurent polynomials and their deformations}\label{section:wreg}

It has been proven in~\cite{przyjalkowski2017compactification}, following~\cite{coates2016quantum}, that all Fano threefolds with very ample anticanonical bundle have mirrors which are constructed from a certain class of Laurent polynomials, called Minkowski polynomials. In this section, we use this construction to produce what we believe are well-behaved versal families of Landau--Ginzburg mirrors of Fano threefolds with very ample anticanonical bundle.

\begin{remark}[Conventions regarding toric varieties]
  We use the following conventions. The toric variety attached to a reflexive polytope \(P\) is the toric variety whose underlying fan is the spanning fan of the polytope \(P\). We denote this variety \(\mathrm{TV}(P)\). If \(X\) is a toric variety coming from a fan \(\Sigma\) in a lattice \(M\) of dimension \(d\), then each cone in \(\Sigma\) of dimension \(k\) corresponds to a torus of dimension \(d-k\) in \(X\). If \(c\) is a cone in \(\Sigma\), we denote the corresponding torus \(\mathbb{T}_c\). We use the phrase \emph{toric boundary} of a toric variety \(X\) to denote the union of all torus invariant divisors of \(X\) or, in the notation introduced above, \(X - \mathbb{T}_{\bm 0}\) where \({\bm 0}\) denotes the cone in \(\Sigma\) consisting of just the origin in \(M\). For general background on toric varieties, standard references include Fulton's classic text~\cite{fulton1993introduction} and the more recent book by Cox--Little--Schenck~\cite{cox2011toric}.
\end{remark}

\subsection{\texorpdfstring{\(\mathsf{M}\)}{M}-polynomials}\label{subsection:mink}

For a polytope \(\delta\) in a lattice \(M\), a \emph{lattice Minkowski decomposition} \(\mathsf{M}(\delta)\) of \(\delta\) is an unordered list of pairs \(\{(\delta_1,n_1),\ldots, (\delta_k,n_k)\}\) of where \(\delta_i\) is an integral polytope and \(n_i\) is a positive integer so that
\[
  \delta = \underbrace{\delta_1 + \cdots + \delta_1}_{n_1} + \cdots + \underbrace{\delta_k + \cdots + \delta_k}_{n_k}
\]
Furthermore, for each \(\delta_i\), let \(\delta_{i,\mathbb{Z}} = \delta_i \cap M\). We also require that
\[
  \delta \cap M =: \delta_\mathbb{Z} = \underbrace{\delta_{1,\mathbb{Z}} + \cdots + \delta_{1,\mathbb{Z}}}_{n_1} + \cdots + \underbrace{\delta_{k,\mathbb{Z}} + \cdots + \delta_{k,\mathbb{Z}}}_{n_k}
\]
Let \(\mathsf{M}_1(\delta) = \{(\sigma_1,m_1),\dots, (\sigma_\ell, m_\ell)\}\) and \(\mathsf{M}_2(\delta) = \{(\delta_1,n_1),\dots, (\delta_k,n_k)\}\) be lattice Minkowski decompositions of \(\delta\). We say that \(\mathsf{M}_1(\delta)\) refines \(\mathsf{M}_2(\delta)\) if there is a partition \(I_1\cup \dots \cup I_k=\{1,\dots, \ell\}\) so that \(n_j \mid m_i\) for all \(m_i \in I_j\) and if for all \(j\),
\[
\sum_{i \in I_j} \left(\dfrac{m_i}{n_j}\right)\sigma_i = \delta_j.
\]
For instance, any lattice Minkowski decomposition of \(\delta\) refines the trivial decomposition \(\{(\delta,1)\}\) and the lattice Minkowski decomposition \(\{(\delta,k)\}\) refines \(\{(\delta,1),\dots, (\delta,1)\}\). For a polytope \(\delta\), Minkowski decompositions of \(\delta\) are partially ordered by refinement. If \(\mathsf{M}_1(\delta)\) is a refinement of \(\mathsf{M}_2(\delta)\) we say that \(\mathsf{M}_2(\delta) \preceq \mathsf{M}_1(\delta)\). Observe that if \(\delta'\) is a face of \(\delta\) then any lattice Minkowski decomposition of \(\delta\) induces a lattice Minkowski decomposition of \(\delta'\) which we denote \(\mathsf{M}(\delta)|_{\delta'}\).

\begin{definition}
  Given a reflexive polytope \(P\), a \emph{Minkowski datum} for \(P\) is the data of a lattice Minkowski decomposition \(\mathsf{M}(\delta)\) of each face \(\delta\) of \(P\) so that for any faces \(\delta' \subseteq \delta\) the induced Minkowski decomposition of \(\delta'\) satisfies
  \[
    \mathsf{M}(\delta)|_{\delta'} \preceq \mathsf{M}(\delta').
  \]
\end{definition}

Pairs \((P,\mathsf{M})\) are also partially ordered by refinement, that is, if \((P,\mathsf{M})\) and \((P,\mathsf{M}')\) are Minkowski decompositions of \(P\), we say that \((P,\mathsf{M}) \preceq (P,\mathsf{M}')\) if \(\mathsf{M}(\delta) \preceq \mathsf{M}'(\delta)\) for all \(\delta\). We let \(\mathsf{M}_\mathrm{triv}\) denote the trivial Minkowski decomposition.

If \(\mathsf{p}= \sum_{\rho \in P_\mathbb{Z}}c_\rho x^\rho\) is a Laurent polynomial with Newton polytope \(P\) then for each face \(\delta\) of \(P\) there is an associated face polynomial
\[
  \mathsf{p}_\delta = \sum_{\rho \in \delta \cap P_\mathbb{Z}} c_\rho x^\rho.
\]

\begin{definition}\label{definition:Mpol}
  Let \(\mathsf{M}\) be a Minkowski datum for a reflexive polytope \(P\). A Laurent polynomial \(\mathsf{p}\) with Newton polytope \(P\) is an \emph{\(\mathsf{M}\)-polynomial} if for each face \(\delta\) of \(P\) we have \(\mathsf{M}(\delta) = \{(\delta_1,n_1), \ldots, (\delta_k,n_k)\}\), and there are polynomials \(\mathsf{h}_{\delta_1},\dots, \mathsf{h}_{\delta_k}\) so that the Newton polytope of \(\mathsf{h}_{\delta_i}\) is \(\delta_i\) and
  \[
    \mathsf{p}_\delta = (\mathsf{h}_{\delta_1}^{n_1} \cdots \mathsf{h}_{\delta_k}^{n_k})x^\nu
  \]
  for some monomial \(x^\nu\). The set of all \(\mathsf{M}\)-polynomials is denoted \(\mathcal{L}_{(P,\mathsf{M})}\). If \((P,\mathsf{M}) \preceq (P,\mathsf{M}')\), then we have \(\mathcal{L}_{(P,\mathsf{M}')} \subseteq \mathcal{L}_{(P,\mathsf{M})}\).
\end{definition}

Suppose we are given an \(\mathsf{M}\)-polynomial \(\mathsf{p}\) with  reflexive Newton polytope \(P\). Let \(P^*\) denote the polar dual polytope and let \({X}_{P^*}\) denote a crepant resolution of the toric variety \(\mathrm{TV}(P^*)\), if such a resolution exists. Such a resolution always exists if \(\dim P \leq 3\). In this case, the vanishing locus of \(\mathsf{p}\) in \(X_{P^*}\) provides a possibly singular hypersurface, which we can denote \(F_\mathsf{p}\), determined by the vanishing of a section \(\sigma_\mathsf{p}\) of the anticanonical bundle of \(X_{P^*}\). Each cone \(c\) in the fan \(\Sigma\) underlying \(X_{P^*}\) corresponds to a toric stratum in \(X_{P^*}\), which we denote \(\mathbb{T}_c\). Let \(\rho_1, \ldots, \rho_k\) be ray generators of \(c\). The intersection \(\mathbb{T}_c \cap F_\mathsf{p}\) is the vanishing locus of the Laurent polynomial \(\mathsf{p}_{c^*}\), where
\[
  c^* = \{ m \in P \mid \langle m, \rho_i \rangle = -1 \, \text{ for all } i = 1, \ldots, k\}.
\]

\begin{definition}\label{definition:wnd}
  We say that an \(\mathsf{M}\)-polynomial \(\mathsf{p}\) is \emph{weakly non-degenerate} if for each cone \(c\), the intersection \(F_\mathsf{p}\cap \mathbb{T}_c\) is a divisor whose irreducible components are smooth, not necessarily reduced, and so that the intersection of any collection of irreducible components of \(F_\mathsf{p} \cap \mathbb{T}_c\) is smooth.
\end{definition}

\begin{remark}
  The notion of weak non-degeneracy here is a weakening of the classical notion of non-degeneracy for a Laurent polynomial~\cite{danilov1987newton}. By the discussion preceding Definition~\ref{definition:wnd}, weak nondegeneracy is equivalent to requiring that \(V(\mathsf{p}_\delta)\) have smooth irreducible components whose intersections are all smooth.
\end{remark}

Suppose we are given a weakly non-degenerate \(\mathsf{M}\)-polynomial \(\mathsf{p}\) supported on a reflexive Newton polytope \(P\). We obtain a pencil of anticanonical hypersurfaces
\begin{equation}\label{equation:pencil}
\mathscr{P}_\mathsf{p}: s\sigma_\mathsf{p}  + t \prod_{\rho \in P\cap M} x_\rho.
\end{equation}
Let \(D_P\) denote the snc union of all torus invariant divisors in \(X_{P^*}\). The base locus of this pencil is the intersection of \(\sigma_\mathsf{p}\) with \(D_P\). Given a toric stratum \(\mathbb{T}_c\) attached to a cone \(c\) of \(\Sigma\), the vanishing locus of \(\sigma_\mathsf{p}\) in \(\mathbb{T}_c\) can be computed using \(P^*\) as in the discussion preceding Definition~\ref{definition:wnd}.

\begin{proposition}[Przyjalkowski~\cite{przyjalkowski2017compactification}, cf.~Duistermaat--van der Kallen~{\cite[Theorem 4]{duistermaat1998constant}}]\label{proposition:prz}
  Let \(\mathsf{p}\) be a weakly non-degenerate polynomial in \(n\) variables with reflexive Newton polytope \(P\). Assume that the toric Fano variety attached to \(P^*\) admits a crepant resolution \(X_{P^*}\). There exists a tame compactified Landau--Ginzburg model \((Z_\mathsf{p},D_\mathsf{p},\mathsf{f})\), obtained by systematically resolving the base locus of \(\mathscr{P}_\mathsf{p}\), with the following properties.
  \begin{enumerate}
  \item The fibres of \(\mathsf{f}\) are compactifications of the fibres of \(\mathsf{p}\).
  \item The Landau--Ginzburg model \((Z_\mathsf{p},D_\mathsf{p},\mathsf{f})\) satisfies conditions (a), (b), and (c) of Theorem~\ref{theorem:main1}.
  \end{enumerate}
\end{proposition}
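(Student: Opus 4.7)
The plan is to construct $(Z_\mathsf{p}, D_\mathsf{p}, \mathsf{f})$ by a carefully sequenced resolution of the base locus of the pencil $\mathscr{P}_\mathsf{p}$ on $X_{P^*}$, following~\cite{przyjalkowski2017compactification}, and then to verify the three hypotheses of Theorem~\ref{theorem:main1}. The starting point is the base locus $B = V(\sigma_\mathsf{p}) \cap D_P$, which is naturally stratified by the toric orbits of $X_{P^*}$: for each cone $c$ of the fan, $B \cap \mathbb{T}_c$ is cut out by the face polynomial $\mathsf{p}_{c^*}$, which by Definition~\ref{definition:Mpol} factorizes as $\prod_i \mathsf{h}_{\delta_i}^{n_i}$ up to a monomial. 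Weak non-degeneracy then ensures that each $V(\mathsf{h}_{\delta_i})$ is smooth and that the intersections of any collection of such factors are smooth, supplying a well-controlled configuration of (possibly non-reduced) smooth centers to blow up.

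Next I would construct $Z_\mathsf{p} \to X_{P^*}$ inductively by blowing up the strict transforms of these components, ordered so that deeper toric strata are treated first, and performing $n_i$ successive blow-ups along a center of multiplicity $n_i$. Smoothness at each stage, together with the transversality supplied by weak non-degeneracy, guarantees that $Z_\mathsf{p}$ is smooth and that the total transform of $D_P$ together with the new exceptional divisors assembles into a simple normal crossings divisor. Once the base locus is cleared, the pencil extends to a morphism $\mathsf{f} \colon Z_\mathsf{p} \to \mathbb{P}^1$ whose finite fibres are the compactified fibres of $\mathsf{p}$ and whose fibre over $\infty$ is $D_\mathsf{p}$, establishing claim (1). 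Matching the multiplicities of the blow-ups to the base locus multiplicities makes the resolution crepant in the relevant sense, so the pencil remains an anticanonical pencil throughout and $D_\mathsf{p} = \mathsf{f}^{-1}(\infty) \sim -K_{Z_\mathsf{p}}$, giving condition (a).

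For condition (b), $Z_\mathsf{p}$ is a smooth projective variety birational to the rational toric variety $X_{P^*}$, and rational smooth projective varieties have $H^1(Z_\mathsf{p}, \mathbb{Q}) = 0$. For condition (c), each irreducible component $D_i$ of $D_\mathsf{p}$ is either a strict transform of a torus invariant divisor of $X_{P^*}$, hence itself a rational smooth toric variety, or an exceptional divisor of one of the blow-ups, which is a projective bundle over some $V(\mathsf{h}_{\delta_i})$; the $\mathsf{M}$-polynomial structure constrains $\mathsf{h}_{\delta_i}$ enough to force rationality of these centers. Since $\dim D_i \geq 2$ in the cases of interest ($n \geq 3$), Serre duality gives $H^0(D_i, \omega_{D_i}) \cong H^{\dim D_i}(D_i, \mathcal{O}_{D_i})^\vee$ and $H^1(D_i, \omega_{D_i}) \cong H^{\dim D_i - 1}(D_i, \mathcal{O}_{D_i})^\vee$, both of which vanish by rationality of $D_i$.

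The main obstacle is the bookkeeping inside the resolution step: one has to orchestrate the order and multiplicities of the blow-ups so that smoothness, the simple normal crossings condition, and the anticanonical identity are preserved simultaneously across the many combinatorial cases produced by varying face polynomials. A secondary subtlety is ensuring rationality of the centers $V(\mathsf{h}_{\delta_i})$, which for a generic Laurent polynomial would fail, and which here relies on the rigid combinatorial form imposed by the $\mathsf{M}$-polynomial hypothesis.
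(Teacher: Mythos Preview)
Your overall strategy matches the paper's: resolve the base locus of the pencil by a sequence of smooth blow-ups and then verify (a), (b), (c). Conditions (a) and (b) are handled correctly. However, your treatment of condition (c) contains a genuine misunderstanding of the geometry.

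You claim that irreducible components of $D_\mathsf{p} = \mathsf{f}^{-1}(\infty)$ can be exceptional divisors of the blow-ups, and then worry about rationality of the centres $V(\mathsf{h}_{\delta_i})$. This is wrong: every blow-up centre lies in the \emph{base locus} of the pencil, i.e.\ in the intersection of a general member with the member over $\infty$. Consequently each exceptional divisor maps surjectively onto $\mathbb{P}^1$ under the extended pencil and is not contained in $\mathsf{f}^{-1}(\infty)$. The fibre $D_\mathsf{p}$ therefore consists solely of the strict transforms of the toric boundary divisors of $X_{P^*}$. Since the blow-up centres are never entire toric strata (they are hypersurfaces inside strata), these strict transforms are birational to the original rational toric divisors, and condition (c) follows at once --- this is exactly the paper's argument. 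Your proposed remedy, that the $\mathsf{M}$-polynomial hypothesis forces rationality of $V(\mathsf{h}_{\delta_i})$, is both unnecessary and unjustified: Definition~\ref{definition:Mpol} imposes no constraint on the individual factors $\mathsf{h}_{\delta_i}$ beyond their Newton polytopes, so for a general $\mathsf{M}$-polynomial those centres need not be rational at all.

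A minor point: you order the blow-ups by depth of toric stratum, whereas the paper orders by multiplicity of the base component. The paper's ordering makes termination transparent (each round strictly decreases the maximal multiplicity) and cleanly accounts for the new base components that appear when a centre meets a deeper boundary divisor --- these are $\mathbb{P}^1$-bundles over smooth intersections, with strictly smaller multiplicity than the component just blown up.
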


\begin{proof}
  This is proved for 3-dimensional Minkowski polynomials in~\cite{przyjalkowski2017compactification}. The general case follows the same arguments, and the conclusion is ensured by the strong assumptions that we have made about the structure of the base locus of the pencil in~\eqref{equation:pencil}. The proof proceeds along the following lines.

  Suppose \(B \subseteq D_i\) is a smooth, irreducible component of the base locus of \(\mathscr{P}_\mathsf{p}\). Blowing up along \(B\) reduces the multiplicity of \(\mathscr{P}_\mathsf{p}\) along \(B\) by \(1\). The pencil of hypersurfaces in~\eqref{equation:pencil} remains an anticanonical pencil after blow up along a component of the base locus, however, if \(B\) intersects another irreducible component \(D_j\) of \(D_P\), then another smooth irreducible component might be introduced into the base locus, a \(\mathbb{P}^1\)-bundle over \(B \cap D_j\), which is smooth by assumption. The multiplicity of this new component is less than \(\mathrm{mult}_\mathsf{p}(B)\).

  Now that this is taken care of, the algorithm is to blow up, one-by-one, the irreducible components of the base locus of \(\mathscr{P}_\mathsf{p}\) in order of multiplicity. Each round of blow-ups introduces finitely many new components but reduces maximal multiplicity by at least \(1\). Therefore, this procedure eventually resolves the base locus of \(\mathscr{P}_\mathsf{p}\).

  Finally, we check the topological conditions (a), (b), and (c) in Theorem~\ref{theorem:main1} hold. Since each blow up is at a smooth centre contained in the base-locus of a pencil of anticanonical hypersurfaces, the proper transform of \(\partial X_{P^*}\) remains anticanonical after each blow-up, verifying (a). The variety \(X_{P^*}\) satisfies \(H^1(X_{P^*},\mathbb{Q}) = 0\) and smooth blow up does not affect \(H^1\). This verifies (b). The irreducible components of \(\partial X_{P^*}\) are rational. Since we do not blow up any stratum in \(\partial X_{P^*}\), all irreducible components of \(D_\mathsf{p}\) are birational to an irreducible component of \(\partial X_{P^*}\). Thus they too are rational and satisfy (c).
\end{proof}

\begin{remark}
  The assumption that the toric crepant resolution \(X_{P^*}\) exists ensures that the smooth model of \((Z_\mathsf{p},D_\mathsf{p},\mathsf{f})\) exists. On the other hand, one may always find a partial crepant orbifold resolution of \(\mathrm{TV}(P^*)\). Carrying out the same procedure with this partial crepant orbifold resolution produces an orbifold Landau--Ginzburg model. We speculate that the results of~\cite{katzarkov2017bogomolov} can be extended to the orbifold context along with all of the results in this  paper.
\end{remark}

\begin{corollary}\label{corollary:ade}
  The vanishing locus of a weakly non-degenerate polynomial supported on a Newton polytope is a Calabi--Yau variety which admits a crepant resolution of singularities.
\end{corollary}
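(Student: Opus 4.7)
The plan is to let Proposition~\ref{proposition:prz} do the heavy lifting: the tame compactified Landau--Ginzburg model \((Z_\mathsf{p},D_\mathsf{p},\mathsf{f})\) is built by iteratively blowing up smooth centres in the successive base loci starting from the anticanonical pencil \(\mathscr{P}_\mathsf{p}\), and I would exhibit the fibre \(\widetilde{F}_\mathsf{p} := \mathsf{f}^{-1}(0)\) together with the restriction of the blow-down \(\pi \colon Z_\mathsf{p} \to X_{P^*}\) as the desired crepant resolution of \(F_\mathsf{p}\).

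First I would verify that \(F_\mathsf{p}\) is Calabi--Yau. Since \(P\) is reflexive, \(X_{P^*}\) is a crepant resolution of a Gorenstein Fano toric variety and \(\sigma_\mathsf{p}\) is a section of \(-K_{X_{P^*}}\), so adjunction gives \(\omega_{F_\mathsf{p}} \cong \mathcal{O}_{F_\mathsf{p}}\). Moreover, the weak non-degeneracy hypothesis confines the singular locus of \(F_\mathsf{p}\) to the intersection with the toric boundary \(F_\mathsf{p} \cap D_P\): every stratum \(F_\mathsf{p} \cap \mathbb{T}_c\) is smooth, so singularities of \(F_\mathsf{p}\) can only arise along its stratification boundary, which sits inside \(D_P\). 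In particular the singular locus of \(F_\mathsf{p}\) is contained in the base locus of \(\mathscr{P}_\mathsf{p}\).

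Next I would identify the resolution and check crepancy. The morphism \(\pi \colon Z_\mathsf{p} \to X_{P^*}\) is a sequence of blow-ups at smooth centres supported in the iterated proper transforms of \(F_\mathsf{p} \cap D_P\); resolving the base locus of the pencil simultaneously resolves the singularities of \(F_\mathsf{p}\), yielding a variety \(\widetilde{F}_\mathsf{p}\) smooth and birational to \(F_\mathsf{p}\). As observed in the proof of Proposition~\ref{proposition:prz}, each blow-up preserves the anticanonical property of the pencil, so \(\widetilde{F}_\mathsf{p}\) is an anticanonical divisor in \(Z_\mathsf{p}\), and adjunction forces \(\omega_{\widetilde{F}_\mathsf{p}}\) to be trivial. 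The birational morphism \(\pi\vert_{\widetilde{F}_\mathsf{p}} \colon \widetilde{F}_\mathsf{p} \to F_\mathsf{p}\) therefore relates two varieties with trivial canonical class, so \(K_{\widetilde{F}_\mathsf{p}} = (\pi\vert_{\widetilde{F}_\mathsf{p}})^* K_{F_\mathsf{p}} = 0\), which is precisely the crepant condition.

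The delicate step, which I would handle most carefully, is the smoothness of \(\widetilde{F}_\mathsf{p}\): tameness of \((Z_\mathsf{p},D_\mathsf{p},\mathsf{f})\) only asserts the snc structure of the fibre at infinity, so smoothness of the fibre at zero is not automatic from Proposition~\ref{proposition:prz}. What makes it work is that the Przyjalkowski algorithm reduces the multiplicity of the pencil along each centre by one at each stage, and the smooth-intersection conditions built into weak non-degeneracy (Definition~\ref{definition:wnd}) ensure that after the algorithm terminates the proper transform of \(F_\mathsf{p}\) meets the proper transform of \(D_P\) transversely and is cut out by a smooth local equation around every point.
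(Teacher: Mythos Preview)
Your approach is essentially the same as the paper's: invoke Proposition~\ref{proposition:prz} to produce \((Z_\mathsf{p},D_\mathsf{p},\mathsf{f})\) and exhibit the fibre over \(0\) as the desired smooth Calabi--Yau model of \(F_\mathsf{p}\). The paper's own proof is two sentences --- it simply asserts that the fibres of \(\mathsf{f}\) are smooth Calabi--Yau varieties birational to the fibres of \(\mathsf{p}\) --- whereas you unpack the argument, explicitly identifying \(\pi|_{\mathsf{f}^{-1}(0)}\) as the resolution morphism and checking crepancy via triviality of the canonical bundle on both sides. Your flagging of the smoothness of \(\mathsf{f}^{-1}(0)\) as the delicate point is apt: the paper asserts it without comment, and your explanation (weak non-degeneracy confines singularities of \(F_\mathsf{p}\) to the base locus, which the Przyjalkowski algorithm resolves by smooth-centre blow-ups) is precisely what is needed to justify that assertion.
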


\begin{proof}
  Resolving the base locus in the pencil determined by \(\mathsf{p}\), we obtain a tame compactified Landau--Ginzburg model \((Z_\mathsf{p},D_\mathsf{p},\mathsf{f})\) satisfying the Calabi--Yau condition by Proposition~\ref{proposition:prz}. The fibres of \(\mathsf{f}\) are smooth Calabi--Yau varieties birational to the fibres of \(\mathsf{p}\).
\end{proof}

\begin{notation}
  Given the data of a reflexive polytope \(P\) of dimension \(3\) along with lattice Minkowski decomposition of all faces of \(P\), as in Definition~\ref{definition:Mpol}, let \(\mathcal{L}_{(P,\mathsf{M})}\) be the collection of all \(\mathsf{M}\)-polynomials supported on \(P\). Let \(\mathcal{L}^\text{wn-d}_{(P,\mathsf{M})}\) denote the subset of all weakly non-degenerate \(\mathsf{M}\)-polynomials.
\end{notation}

\begin{remark}
  We observe that moduli spaces of weakly nondegenerate Laurent polynomials form substrata of the discriminant locus of moduli space of Laurent polynomials with Newton polytope \(P\). This space has been well studied and admits a compactification, called the \emph{secondary stack} of \(P\). This space has been studied in relation to the moduli space of Landau--Ginzburg models by Diemer--Katzarkov--Kerr~\cite{diemer2016symplectomorphism,diemer2013compactifications}. It would be very interesting to understand the stratification of \(\mathcal{L}^{\text{wn-d}}_P\) obtained by varying Minkowski data.
\end{remark}

The construction in Proposition~\ref{proposition:prz} is consistent in families. Therefore, given the universal family \({\bm p} \colon (\mathbb{C}^*)^d \times \mathcal{L}_{(P,{\mathsf M})}^\text{wn-d}\rightarrow \mathbb{C}\), one obtains a family of tame compactified Landau--Ginzburg models satisfying the Calabi--Yau condition:
\begin{equation}\label{equation:verfam}
\varpi \colon (\mathscr{Z}_{\bm{p}},\mathscr{D}_{\bm{p}},\bm{f})\rightarrow \mathcal{L}^\text{wn-d}_{(P,\mathsf{M})}.
\end{equation}
One might hope that this family is versal, however this expectation fails immediately in dimension \(2\). The question of versality of the family in~\eqref{equation:verfam} is subtle, and it involves both the combinatorics and algebraic geometry of \(\mathsf{M}\).

\begin{example}\label{example:2.28}
  Let us compute Minkowski decomposed polynomials associated to the polytope with vertices labeled as polytope 68 in the list of Kreuzer--Skarke (see~\cite{kreuzer2004PALP}):
  \[
    \left(
      \begin{array}{rrrrrrr}
        1 & 0 &1 &0 &-1& 0& 1 \\
        0 & 1 & 1 & -1&  0& 0& 1 \\
        0 & 0 & 2 & -1& -1 &1 & 1
      \end{array}
    \right).
  \]
  The corresponding Laurent polynomials are generically of the form
  \[
    \mathsf{p} = a_0x + a_1 y+ a_2xyz^2 + \dfrac{a_3}{yz} + \dfrac{a_4}{xz} + a_5z + a_6xyz + a_7.
  \]
  We let \(a_i \in \mathbb{C}^*\). Observe that we could allow \(a_7 = 0\) without changing the Newton polytope of \(\mathsf{p}\) however we avoid this for simplicity. All faces of the polynomial \(P\) are simplices except for three, which are unit squares. The corresponding face polynomials are
  \[
    \mathsf{p}_{\delta_1} = a_1y + a_2xyz^2 + \dfrac{a_4}{xz} + a_5z,\quad \mathsf{p}_{\delta_2} = a_0x + a_2xyz^2 +\dfrac{a_3}{yz} + a_5z, \quad \mathsf{p}_{\delta_3} = a_0x + a_1y + \dfrac{a_3}{yz} + a_5z.
  \]
  To a square, there is only one admissible Minkowski sum decomposition, into a pair of transverse line segments of length \(1\). Minkowski decomposition of the first face occurs precisely when
  \[
    a_5a_1 - a_2a_4 = 0,\qquad a_0a_5 - a_2a_3=0,\qquad a_1a_5 - a_0a_3 = 0.
  \]
  We observe that this is a 5-dimensional torus in \((\mathbb{C}^*)^8\). There is a torus action by scaling each coordinate on the space of all such Laurent polynomials which reduces the space of all Minkowski polynomials of this form to a 1-dimensional torus. We observe that this is consistent with the fact that the corresponding family of Fano varieties, \textnumero 2.28, has Picard rank \(2\). After parametrization, this family of Laurent polynomials can be written as
  \[
    \mathsf{p}_{a,b} = \dfrac{(xyz + a) (bxyz^2 + xyz + x + y)}{xyz} - 1.
  \]
  In Subsection~\ref{subsection:mm228} we will show that this family of Laurent polynomials is versal.
\end{example}

\begin{example}
  Let us take the family of Laurent polynomials
  \begin{equation}\label{equation:dp6}
    \mathsf{p} =  \dfrac{b_0}{z} + b_1z  + \dfrac{ \sum_{0 \leq i+j\leq 3} a_{i,j} x^i y^j}{xy}
  \end{equation}
  Each of the six facets of this Laurent polynomial is an \(A_3\) triangle (Definition~\ref{definition:Mink} below), so the unique Minkowski polynomial attached to this data is given by
  \[
    \mathsf{p} = \dfrac{1}{z} + z + \dfrac{(x+y + 1)^3}{xy}.
  \]
  The corresponding family of Landau--Ginzburg models is {not} versal.
\end{example}

\subsection{Relations between \texorpdfstring{\(\mathsf{M}\)}{M}-polynomials and other types of polynomials}\label{subsection:minkowski}

The definition of an \(\mathsf{M}\)-polynomial is inspired by the definition of Minkowski polynomials in~\cite{coates2014mirror}. The goal of this subsection is to explain the relationship between \(\mathsf{M}\)-polynomials and Minkowski polynomials, and to indicate a possible relationship between \(\mathsf{M}\)-polynomials and a more general class of polynomials, called \emph{maximally mutable Laurent polynomials}. This discussion of this section will not be used in the remainder of the paper.

We recall the definition of a Minkowski polynomial.

\begin{definition}\label{definition:Mink}
  Suppose \(\mathsf{p}\) is a Laurent polynomial  with reflexive Newton polytope of dimension \(3\). We say that \(\mathsf{p}\) is a \emph{Minkowski polynomial} if it is an \(\mathsf{M}\)-polynomial so that or each face \(\delta\) we have \(\mathsf{M}(\delta) = \sum_{i=1}^n n_i \delta_i\), where \(\delta_i\) is either affinely equivalent to an \(A_n\)-triangle with vertices \((0,0),(1,0)\) and \((0,n)\) or an interval of length \(1\). We require that if \(\delta_i\) is an \(A_n\) triangle, then, after possible change of variables,
  \[
    \mathsf{p}_{\delta_i} = (1+y)^n + x.
  \]
  If \(\delta_i\) is an interval of length 1, we require that, after a possible change of variables, \(\mathsf{p}_{\delta_i} = (1 + x)\).
\end{definition}

\noindent In other words, Minkowski polynomials are \(\mathsf{M}\)-polynomials whose Minkowski summands are tightly restricted, and so that the coefficients of \(\mathsf{p}\) are specialized. Case-by-case analysis shows that all Minkowski polynomials are weakly non-degenerate~\cite{przyjalkowski2017compactification}. All Fano threefolds with very ample anticanonical bundle have a mirror which comes from a Minkowski polynomial, however, there are Minkowski polynomials which are not mirror to Fano manifolds, even in dimension \(3\). To deal with this, and to model Landau--Ginzburg mirrors of higher dimensional Fano varieties, the authors of~\cite{coates2021maximally} introduce the notion of a \emph{maximally mutable Laurent polynomial}. This notation is inspired by the notion of mutation in cluster theory, going back at least to work of Fomin--Zelevinsky~\cite{fomin2002cluster}. The connection to Landau--Ginzburg models was first made in work of Galkin--Usnich~\cite{galkin2010mutations}.

\begin{definition}[see~\cite{akhtar2012minkowski}]\label{definition:mut}
  Let \(\mathsf{p}\) be a Laurent polynomial whose Newton polytope \(P\) is contained in a lattice \(M\). Choose a primitive element \({\mathbf{m}} \in M\) and let \(x_{\mathbf{m}}\) be the corresponding function on \(\mathbb{C}^{*d}\). Choosing a complementary set of coordinate functions, \(x_1,\dots, x_{d-1}\) we may write
  \[
    \mathsf{p} = \sum_{i\in \mathbb{Z}} \mathsf{q}_i x_{\mathbf{m}}^i,\qquad \mathsf{q}_i = 0 \text{ for all but finitely many } i
  \]
  where \(\mathsf{q}_i \in \mathbb{C}[x_1^{\pm 1},\dots, x_{d-1}^{\pm 1}]\). For any \(\mathsf{g} \in \mathbb{C}[x_1^{\pm 1},\dotsm x_{d-1}^{\pm 1}]\) we define the \emph{mutation} of \(\mathsf{p}\) along \(\mathsf{g}\) in direction \({\mathbf{m}}\) to be the polynomial
  \[
    \mu_{{\mathbf{m}},\mathsf{g}}(\mathsf{p}) = \sum_{i \in \mathbb{Z}} \mathsf{q}_i\mathsf{g}^i x_{\mathbf{m}}^i
  \]
  \emph{if} \(\mu_{{\mathbf{m}},\mathsf{g}}(\mathsf{p}) \in \mathbb{C}[M]\). Given data \(({\mathbf{m}},\mathsf{g})\), we say that \(\mathsf{p}\) is \emph{mutable} along \(({\mathbf{m}},\mathsf{g})\) if \(\mu_{{\mathbf{m}},\mathsf{g}}(\mathsf{p})\) exists, or, equivalently, if \(\mathsf{g}^{-i}\) divides \(\mathsf{q}_i\) for all \(i < 0\).
\end{definition}

\noindent  For \({\mathbf{m}}\) as in Definition~\ref{definition:mut}, put
\[
  \delta_{\mathbf{m}} = \left\{ x \in P \,\, \left| \,\, {\mathbf{m}}(x) = \min_{y \in P}\{{\mathbf{m}}(y)\}\right.\right\}.
\]
Assuming that \(0\) is in the interior of \(P\), it is a direct observation that if \(\mathsf{p}\) is mutable along \(({\mathbf{m}},\mathsf{g})\) then \(\mathsf{p}_{\delta_{\mathbf{m}}} = \mathsf{g} h_{({\mathbf{m}},\mathsf{g})}\) for some polynomial \(h_{({\mathbf{m}},\mathsf{g})}\). If \(P\) is reflexive, and \(\delta_{\mathbf{m}}\) is a facet of \(P\), then \(\min_{y \in P}\{{\mathbf{m}}(y)\}=-1\) so being mutable along \(({\mathbf{m}},\mathsf{g})\) is equivalent to \(\Newt(\mathsf{g})\) being a Minkowski summand of \(\delta_{\mathbf{m}}\), however, if \(\delta_{\mathbf{m}}\) is a face of \(P\) and \(\min_{y \in P}\{{\mathbf{m}}(y)\}\neq (-1)\), then mutability is stronger than the existence of a Minkowski decomposition. For a Laurent polynomial \(\mathsf{p}\) we let
\[
  S_\mathsf{p} = \{ ({\mathbf{m}},\mathsf{g}) \mid \mathsf{p} \text{ is mutable along } ({\mathbf{m}},\mathsf{g})\}.
\]
We say that \(\mathsf{p}\) is \emph{rigid maximally mutable} if the only normalized polynomial with Newton polytope \(P\) and mutation set \(S_\mathsf{p}\) is \(\mathsf{p}\) itself\footnote{This is \emph{not} the definition of a rigid, maximally mutable polynomial given in~\cite{coates2021maximally}, however, this definition is more compact and it is conjectured in in {op. cit.} that the definition presented here is equivalent to the proper definition of a rigid maximally mutable Laurent polynomial.}. A Laurent polynomial is normalized if all of the coefficients associated to vertices of \(\Newt(\mathsf{p})\) are equal to \(1\). It would be interesting to know whether all maximally mutable Laurent polynomials are weakly regular.

\subsection{Deformations of LG models and deformations of weakly non-degenerate Laurent polynomials}\label{subsection:LG-deformations}

Suppose \(\mathsf{p}\) is a weakly non-degenerate Laurent polynomial with Newton polytope \(P\). The goal of this section is to show that there is Minkowski data \((P,\mathsf{M}_{\gen})\) and an induced map
\[
  \mathcal{L}^{\text{wn-d}}_{(P,\mathsf{M})}\rightarrow \mathcal{M}_{(Z_\mathsf{p},D_\mathsf{p})}
\]
which is dominant.

Before addressing this question, we discuss a more general question of how blow-ups of pairs behave under deformation. The following result essentially goes back to Horikawa~\cite[Theorem 9.1]{horikawa1976deformations}.

\begin{proposition}\label{proposition:defblow}
  Let \((X,D)\) be a pair consisting of a smooth variety \(X\) and a snc divisor \(D\). Let \(C\) be a smooth codimension 1 subvariety of an irreducible component \(D_1\) of \(D\), and let \(b : \widetilde{X}\rightarrow X\) be the blow up along \(C\) and let \(\widetilde{D}\) be the proper transform of \(D\) under \(b\). Suppose \((\widetilde{\mathscr{X}},\widetilde{\mathscr{D}})\) is a deformation of \((\widetilde{X},\widetilde{D})\). Then there is a deformation \((\mathscr{X},\mathscr{D})\) of \((X,D)\) and a contraction morphism
  \[
    \pi \colon (\widetilde{\mathscr{X}},\widetilde{\mathscr{D}}) \rightarrow (\mathscr{X},\mathscr{D})
  \]
  with exceptional divisor \(\mathscr{E}\) so the image of \(\mathscr{E}\) is a deformation of \(C\) in \(D_1\).
\end{proposition}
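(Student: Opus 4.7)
The strategy is to reconstruct the exceptional divisor $E$ of $b$ inside the total space $\widetilde{\mathscr{X}}$ as a relative divisor $\mathscr{E}$, and then apply a relative contraction theorem. First I would analyse $E = \mathbb{P}(N_{C/X}) \to C$ as a $\mathbb{P}^1$-bundle with normal bundle $N_{E/\widetilde{X}} \cong \mathcal{O}_E(-1)$, restricting to $\mathcal{O}(-1)$ along every $\mathbb{P}^1$-fibre. The normal-bundle sequence
\[
 0 \to N_{C/D_1} \to N_{C/X} \to N_{D_1/X}|_C \to 0,
\]
together with the hypothesis that $C$ has codimension one in the smooth divisor $D_1$, implies that $\widetilde{D}_1$ meets $E$ in precisely the section $\mathbb{P}(N_{C/D_1}) \hookrightarrow \mathbb{P}(N_{C/X})$ of $E \to C$; for $i \neq 1$ the divisor $\widetilde{D}_i$ is disjoint from $E$. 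In particular $E$ is disjoint from the singular locus of $\widetilde{D}$.

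Next I would lift $E$ to the total space. Applying the Leray spectral sequence for $E \to C$ together with the vanishing $H^i(\mathbb{P}^1,\mathcal{O}(-1))=0$ gives $H^i(E, N_{E/\widetilde{X}}) = 0$ for all $i$, so the Hilbert scheme of $\widetilde{X}$ at $[E]$ is a reduced point. Consequently $E$ extends uniquely to a flat family of smooth divisors $\mathscr{E} \hookrightarrow \widetilde{\mathscr{X}}$ over the deformation base; since being a $\mathbb{P}^1$-bundle is an open condition on smooth deformations, after possibly shrinking the base we obtain a smooth deformation $\mathscr{C}$ of $C$ and a relative $\mathbb{P}^1$-bundle structure $\mathscr{E} \to \mathscr{C}$ whose relative normal bundle restricts to $\mathcal{O}(-1)$ on each ruling.

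Third, I would apply the relative Nakano--Fujiki contraction theorem: under these hypotheses there is a projective morphism $\pi \colon \widetilde{\mathscr{X}} \to \mathscr{X}$ over the deformation base contracting $\mathscr{E}$ onto $\mathscr{C}$, with $\mathscr{X}$ smooth and central fibre recovering $b$. Setting $\mathscr{D} = \pi_*\widetilde{\mathscr{D}}$ yields a flat deformation of $D$, which remains snc because $\mathscr{E}$ is disjoint from the singular locus of $\widetilde{\mathscr{D}}$ and meets $\widetilde{\mathscr{D}}_1$ only in a section fibrewise. The image $\pi(\mathscr{E}) = \mathscr{C}$ is then contained in $\pi(\widetilde{\mathscr{D}}_1) = \mathscr{D}_1$, exhibiting $\mathscr{C}$ as a deformation of $C$ inside a deformation of $D_1$.

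The main technical obstacle is the existence of the relative contraction $\pi$. One can establish this either by invoking the Nakano--Fujiki theorem in the complex-analytic setting and then algebraizing by Artin approximation, or by observing that $\mathcal{O}_{\widetilde{X}}(-E)$ is $b$-ample and that this relative ampleness persists in a neighbourhood of the central fibre, so that a sufficiently high power of $\mathcal{O}_{\widetilde{\mathscr{X}}}(-\mathscr{E})$ produces the desired relative contraction via relative Proj.
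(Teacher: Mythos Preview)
Your argument is essentially correct and reaches the same conclusion, but it is organized differently from the paper's proof. The paper first invokes Horikawa's theorem (which already gives the relative contraction $\widetilde{\mathscr{X}}\to\mathscr{X}$ for deformations of a blow-up), and then the only remaining issue is whether the centre stays inside the deformation of $D_1$. The paper settles this by comparing the log tangent sheaves $T_{\widetilde X}(-\log(\widetilde D+E))\hookrightarrow T_{\widetilde X}(-\log\widetilde D)$: the cokernel is $N_E\cong\mathcal{O}_E(-1)$, whose higher direct images under $E\to C$ vanish, so deformations of $(\widetilde X,\widetilde D)$ and of $(\widetilde X,\widetilde D+E)$ coincide. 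You use the same vanishing $H^i(E,N_E)=0$, but interpret it as rigidity of $E$ in the Hilbert scheme, then build the contraction by hand via Nakano--Fujiki or relative Proj. Both routes pivot on the identical cohomological fact; yours is more self-contained, while the paper's is shorter because it outsources the contraction to Horikawa and packages the ``centre stays in $D_1$'' statement cleanly as an identification of deformation functors of pairs.

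One inaccuracy: your claim that $\widetilde D_i$ is disjoint from $E$ for $i\neq 1$ is false in general---if $C$ meets $D_i$ (without being contained in it) then $\widetilde D_i$ meets $E$ over $C\cap D_i$. This does not break your main argument (rigidity of $E$, extension to $\mathscr{E}$, contraction, and the section $\widetilde D_1\cap E$ forcing $\mathscr{C}\subset\mathscr{D}_1$ all survive), but your justification that $\pi_*\widetilde{\mathscr D}$ remains snc relies on it and would need to be redone. The paper's log-tangent-sheaf comparison sidesteps this by working directly with the pair $(\widetilde X,\widetilde D+E)$, so that all relevant incidences are tracked automatically.
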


\begin{proof}
 According to Horikawa~\cite[Theorem 9.1]{horikawa1976deformations}, any deformation \(\widetilde{\mathscr{X}}\rightarrow B\) of \(\widetilde{X} = \Bl_CX\) admits a contraction map \(b \colon \widetilde{\mathscr{X}} \rightarrow \mathscr{X}\), where the centre of \(b\) is a deformation of \(C\). Therefore, any deformation of the pair \((\widetilde{X},\widetilde{D})\), where \(\widetilde{D}\) represents the proper transform of \(D\), admits a contraction map. The only thing that we need to show is that in deforming \((\widetilde{X},\widetilde{D})\), the centre of \(b\) remains in \(D_1\). This is true if and only if the exceptional divisor \(E\) of the blow up intersects \(D_1\). So it is sufficient to see that any deformation of the pair \((\widetilde{X},\widetilde{D})\) comes from a deformation of the pair \((\widetilde{X},\widetilde{D}+E)\). This reduces to the following calculation in cohomology.

  We have a short exact sequence of sheaves, (e.g,~\cite[Proposition 1]{kawamata1977deformations})
  \begin{equation}\label{equation:seq}
    0 \rightarrow T_{\Bl_CX}(-\log   b^{-1} D) \rightarrow T_{\Bl_CX}(-\log \widetilde{D} ) \rightarrow N_E  \rightarrow 0.
  \end{equation}
  Since \(E\) is the exceptional divisor of a blow up, we have that \(N_E \cong \mathcal{O}_E(-1)\). By, e.g.,~\cite[III Exercise 8.4]{hartshorne2013algebraic}, we have that \(R^ib_*\mathcal{O}_E(-1) \cong 0\) for all \(i\). Therefore, by the Leray spectral sequence, we have \(H^j(E,N_E) \cong 0\) for all \(j\). Consequently, the map in~\eqref{equation:seq} gives isomorphisms
  \[
    H^j(\Bl_CX,T_{\Bl_{C}X} (-\log  b^{-1}D )) \xrightarrow{\sim} H^j(\Bl_CX,T_{\Bl_CX}(-\log  \widetilde{D}))
  \]
  for all \(j\). Thus both the tangent space and obstruction spaces of the deformation space of \((\widetilde{X},\widetilde{D})\) and \((\widetilde{X},b^{-1}D)\) are identified. In other words, any deformation of the pair \((\widetilde{X},\widetilde{D})\) comes from a deformation of the pair \((\widetilde{X},\widetilde{D} + E)\). In particular, in any small deformation of \((\widetilde{X},\widetilde{D})\) the exceptional divisor \(E\) intersects \(D_1\) in a deformation of \(C\), and the intersections of \(C\) with \(D_i\) deform smoothly as well.
\end{proof}

Let \(\mathsf{p}\) be a weakly non-degenerate Laurent polynomial, which we identify with a rational function on \(X_{P^*}\). A deformation of \(\mathsf{p}\) over connected base \(S\) with a marked point \(s_0\) is a Laurent polynomial with coefficients in \(\Gamma(S,\mathcal{O}_S)\) (or, equivalently, a rational map \({\bm p} \colon X_{P^*}\times S\dashrightarrow \mathbb{C}\)) so that \({\bm p}|_{X_{P^*} \times s_0} = \mathsf{p}\) and so that each \(s\in S\), \(\mathsf{p}|_{X_{P^*}\times s}\) is a weakly non-degenerate Laurent polynomial. Any small deformation of a weakly non-degenerate \(\mathsf{M}\)-polynomial gives rise to a deformation of Landau--Ginzburg models by applying Proposition~\ref{proposition:prz} over \(\mathcal{O}_S\).

\begin{theorem}\label{theorem:defmink}
  Let \(\mathsf{p}\) be a weakly non-degenerate \(\mathsf{M}\)-polynomial with Newton polytope \(P\). Suppose a projective crepant resolution \(X_{P^*}\) exists. Let \((Z_\mathsf{p},D_\mathsf{p})\) be the corresponding Landau--Ginzburg model. There is Minkowski data \((P,\mathsf{M}_{\gen})\) so that \((P,\mathsf{M}_{\gen})\preceq (P,\mathsf{M})\) along with an open subset \(U\) of \(\mathcal{L}_{(P,\mathsf{M}_{\gen})}^\text{wn-d}\) so that the induced map
  \[
    \mathcal{L}^{\text{wn-d}}_{(P,\mathsf{M}_{\gen})}\supseteq U \rightarrow \mathcal{M}_{(Z_\mathsf{p},D_\mathsf{p})}.
  \]
  is dominant.
\end{theorem}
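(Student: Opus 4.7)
The plan is to invert the iterated blow-up construction of Proposition~\ref{proposition:prz}, using Proposition~\ref{proposition:defblow} to contract any small deformation of $(Z_\mathsf{p},D_\mathsf{p})$ down to deformations of the successive blow-up centers on $X_{P^*}$, and then to identify the latter with coefficient deformations of the Laurent polynomial.

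First I would apply Proposition~\ref{proposition:defblow} successively along the blow-up tower $Z_\mathsf{p} \to X_{r-1} \to \cdots \to X_1 \to X_{P^*}$ constructed in Proposition~\ref{proposition:prz}. This shows that any small deformation $(\mathscr{Z}, \mathscr{D})$ of $(Z_\mathsf{p},D_\mathsf{p})$ contracts to a deformation $(\mathscr{X}_{P^*}, \mathscr{D}_P)$ of $(X_{P^*}, D_P)$ together with compatible deformations of all intermediate centers, each remaining in the boundary. The toric pair $(X_{P^*}, D_P)$ is rigid: since $X_{P^*}$ is a smooth toric variety with toric boundary $D_P$, the logarithmic tangent bundle $T_{X_{P^*}}(-\log D_P)$ is trivial, hence $H^1(X_{P^*}, T_{X_{P^*}}(-\log D_P)) = 0$. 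Consequently every deformation of $(Z_\mathsf{p}, D_\mathsf{p})$ arises purely from deformations of blow-up centers inside the fixed pair $(X_{P^*}, D_P)$.

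Next I would identify those center deformations with coefficient deformations of $\mathsf{p}$. The initial centers are the irreducible components of $V(\sigma_\mathsf{p}) \cap D_P$, and on each toric stratum $\mathbb{T}_c$ such components are exactly the vanishing loci of the irreducible factors of the face polynomial $\mathsf{p}_{c^*}$ prescribed by the Minkowski datum $\mathsf{M}(c^*)$. A deformation of such a center corresponds to a perturbation of the coefficients of the associated factor. Gluing these perturbations across faces subject to compatibility on overlaps yields a deformation of $\mathsf{p}$ itself, and the higher-multiplicity centers appearing after subsequent rounds of blow-ups are again recovered from the resulting pencil. The datum $\mathsf{M}_{\gen}$ is then defined to record only those factorizations of face polynomials which are actually forced by the resolution combinatorics; finer pieces of $\mathsf{M}$ corresponding to factors permitted to merge without altering the LG model's deformation class are coarsened, producing $\mathsf{M}_{\gen} \preceq \mathsf{M}$. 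Taking $U \subseteq \mathcal{L}_{(P, \mathsf{M}_{\gen})}^\text{wn-d}$ to be the Zariski open locus on which weak non-degeneracy and the combinatorial type of the resolution of Proposition~\ref{proposition:prz} are both preserved, any $\mathsf{p}' \in U$ yields a bona fide deformation $(Z_{\mathsf{p}'}, D_{\mathsf{p}'})$ of $(Z_\mathsf{p}, D_\mathsf{p})$, and the induced classifying map is well-defined.

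The main obstacle lies in the careful bookkeeping of the Minkowski datum $\mathsf{M}_{\gen}$ and in verifying that the classifying map is dominant. Dominance amounts to surjectivity of the tangent map onto $T_s \mathcal{M}_{(Z_\mathsf{p}, D_\mathsf{p})} \cong H^1(Z_\mathsf{p}, T_{Z_\mathsf{p}}(-\log D_\mathsf{p}))$, which is unobstructed by Theorem~\ref{theorem:main1}. Establishing this requires matching the parameter count of coefficient deformations of $\mathsf{p}$ modulo toric rescalings with the cohomological dimension of $H^1$, and showing that no deformation direction of $(Z_\mathsf{p}, D_\mathsf{p})$ is missed by the Laurent polynomial deformation. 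This final bookkeeping is where the toric rigidity argument and the Minkowski combinatorics must interlock cleanly.
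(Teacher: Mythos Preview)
Your contraction-and-rigidity start matches the paper exactly: apply Proposition~\ref{proposition:defblow} iteratively to contract any small deformation of \((Z_\mathsf{p},D_\mathsf{p})\) down to a deformation of \((X_{P^*},D_P)\), then use \(H^1(X_{P^*},T_{X_{P^*}}(-\log D_P))=0\) to trivialize the base. Where you diverge from the paper is in the next step, and this is where your proof becomes harder than it needs to be.

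You propose to reconstruct the Laurent polynomial from the deformations of the blow-up centres, by deforming the factors of the face polynomials and then ``gluing these perturbations across faces subject to compatibility on overlaps''. This gluing is a genuine issue: the centres only determine each face polynomial up to scalar, and choosing scalars compatibly across the face lattice is exactly the bookkeeping you flag as an obstacle. The paper sidesteps this entirely. It first invokes the surjectivity of \(\Def(Z,D,\mathsf{f})\to\Def(Z,D)\) established in Section~\ref{section:modlg} (Theorem~\ref{theorem:main1}) to lift the potential \(\mathsf{f}\) to the deformation \((\mathscr{Z}_\mathsf{p},\mathscr{D}_\mathsf{p},\bm{f})\), and \emph{then} pushes \(\bm{f}\) down along the contraction \(\pi\). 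Since \(\pi\) lands in \(X_{P^*}\times B\) with \(\mathscr{D}_P=D_P\times B\), the push-down of \(\bm{f}\) is a rational function on \(X_{P^*}\times B\) with polar divisor \(D_P\times B\)---that is, directly a family of Laurent polynomials \(\bm{p}\). No gluing or scalar normalization is needed; the polynomial is handed to you whole.

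This also eliminates the dominance worry in your last paragraph. Since every small deformation of \((Z_\mathsf{p},D_\mathsf{p})\) is, by this construction, the Landau--Ginzburg model of some Laurent polynomial in the family, dominance is immediate---no tangent-space dimension count is required. The only thing left to check (and the paper does this in one line via Proposition~\ref{proposition:defblow}) is that the indeterminacy locus of \(\bm{p}\) is a deformation of the indeterminacy locus of \(\mathsf{p}\), so that the Minkowski datum of \(\bm{p}\) refines that of \(\mathsf{p}\); this is precisely what guarantees the deformed polynomial lies in some \(\mathcal{L}^{\text{wn-d}}_{(P,\mathsf{M}_{\gen})}\) with \(\mathsf{M}_{\gen}\preceq\mathsf{M}\).
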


\begin{proof}
  By Proposition~\ref{proposition:defblow}, a small deformation \((\mathscr{Z}_\mathsf{p},\mathscr{D}_\mathsf{p})\)  of the pair \((Z_\mathsf{p},D_\mathsf{p})\) over a base \(B\) admits a contraction map \(\pi \colon (\mathscr{Z}_\mathsf{p},\mathscr{D}_\mathsf{p}) \rightarrow (\mathscr{X}_P,\mathscr{D}_P)\), where \((\mathscr{X}_P,\mathscr{D}_P)\) is a deformation of the toric variety \(X_P\) and its boundary divisor \(D_P\). Anticanonical toric pairs are rigid, since \(H^1(X_P,T_X(-\log D_P )) \cong H^1(X_P,\Omega^{n-1}_{X_P}(\log D_P)) \cong 0\). It follows that \(\mathscr{X}_P  \cong X_P \times B\). From Proposition~\ref{proposition:main}, we know that for any small deformation of \((Z_\mathsf{p},D_\mathsf{p})\), there is a small deformation of the data \((Z_\mathsf{p},D_\mathsf{p},\mathsf{f})\), unique up to scaling and translating \(\mathsf{f}\). Let \((\mathscr{Z}_\mathsf{p},\mathscr{D}_\mathsf{p},{\bm f})\) denote such a deformation over a base \(B\), and let \(\pi\) denote the contraction to \((\mathscr{X}_P,\mathscr{D}_P) = (X_P, D_P)\times B\). By construction, \({\bm f}\) induces a rational function \({\bm p}\) on \((X_P,D_P)\times B\) with polar locus \(D_P \times B\), or in other words, a family of Laurent polynomials.

  The last thing that we must check is that the locus of indeterminacy of \({\bm p}\) is a deformation of the locus of indeterminacy of \(\mathsf{p}\). However, this is ensured by Proposition~\ref{proposition:defblow}, since the locus of indeterminacy is precisely the union of the centres of \(\pi\), and multiplicities are equal to the number of exceptional divisors contracting to each component in the centre.
\end{proof}

\begin{remark}\label{remark:spec}
  We call \((P,\mathsf{M}_{\gen})\) the \emph{general} Minkowski data of \((P,\mathsf{M})\). An expectation of the Fanosearch programme is that the Minkowski data of a Laurent polynomial should describe data involved in \(\mathbb{Q}\)-Gorenstein deformation from \(\mathrm{TV}(P)\) to a terminal Gorenstein Fano variety. The total space of this degeneration is expected to be projective, and polarized by a power of the relative anticanonical sheaf. This forces the central fibre to be Fano and toric. However, by changing polarization, one can sometimes partially desingularize the central fibre, at the cost of changing polarization. If \((Z_\mathsf{p},D_\mathsf{p},\mathsf{f})\) is mirror to a Fano variety \(X\), then the parameters on \(\mathcal{M}_{(Z_\mathsf{p},D_\mathsf{p})}\) should correspond to complexified K\"ahler parameters on \(X\). General Minkowski data should describe how much the central fibre of a \(\mathbb{Q}\)-Gorenstein degeneration can be resolved by changing polarization. For instance all Gorenstein toric degenerations of del Pezzo surfaces may be viewed as smooth deformations. Correspondingly, for any reflexive \(P\), and Minkowski data \(\mathsf{M}\), one can see in Appendix~\ref{appendix:parametrized} that the general Minkowski data of \(\mathsf{M}\) is trivial.
\end{remark}

\section{The case of Fano threefolds}\label{section:k3}

\subsection{Moduli spaces of lattice-polarized K3 surfaces and Dolgachev--Nikulin duality}\label{subsection:DN-duality}

Let us recall some basic notions regarding lattice polarizations of K3 surfaces.

We refer to the lattice \(L_{\K3} = H^{\oplus 3} \oplus \mathbb{E}_8(-1)^{\oplus 2}\) as the \emph{K3 lattice}.

\begin{proposition}[see~{\cite[Proposition~1.3.5]{huybrechts2016K3}}]
  For any complex K3 surface \(S\) we have \(H^2(S, \mathbb{Z}) \cong L_{\K3}\).
\end{proposition}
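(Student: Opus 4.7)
The plan is to recognise \(H^2(S, \mathbb{Z})\), equipped with the cup product pairing, as an abstract even unimodular lattice of signature \((3,19)\) and then invoke the Milnor--Serre classification, which asserts that such a lattice is unique up to isomorphism once both \(p, q \geq 1\).

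First I would verify the basic topological properties. Any complex K3 surface is simply connected; this can be reduced, via Kodaira's theorem that all K3 surfaces are deformation equivalent (hence diffeomorphic), to the case of a smooth quartic in \(\mathbb{P}^3\), where simple connectedness follows from the Lefschetz hyperplane theorem. Thus \(H_1(S, \mathbb{Z}) = 0\), so by the universal coefficient theorem \(H^2(S, \mathbb{Z})\) is torsion-free. Combined with Poincaré duality, this makes \(H^2(S, \mathbb{Z})\) with the cup product into a unimodular lattice.

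Next I would compute the rank and signature. Noether's formula with \(K_S = 0\) and \(\chi(\mathcal{O}_S) = 2\) gives \(c_2(S) = 24\), so the topological Euler characteristic is \(24\) and \(b_2(S) = 22\). The Hodge decomposition in degree two has \(h^{2,0} = 1\) and \(h^{1,1} = 20\), so the Hodge index theorem forces the cup product pairing to have signature \((3, 19)\). To see the form is even I would invoke Wu's formula: on a closed oriented four-manifold with vanishing first Betti number, the intersection form is even if and only if \(w_2 = 0\); since \(S\) is complex, \(w_2 \equiv c_1 \pmod 2\), and \(c_1(S) = 0\) by the K3 condition.

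Having assembled these invariants, the Milnor--Serre classification identifies \(H^2(S, \mathbb{Z})\) with the unique indefinite even unimodular lattice of signature \((3, 19)\), which is manifestly realised by \(L_{\K3} = H^{\oplus 3} \oplus \mathbb{E}_8(-1)^{\oplus 2}\) (the summands contribute signatures \((1,1)\) and \((0,8)\) respectively). The main obstacle is not really mathematical---each step is classical---but rather how much one is willing to take as a black box. The most ``global'' input is simple connectedness, which is used both for torsion-freeness of \(H^2\) and to justify replacing the Wu class by \(w_2\); a fully self-contained treatment either develops the deformation argument above or computes the lattice directly on a Kummer surface, where the \(16\) exceptional \((-2)\)-curves together with the images of the lattice of the underlying abelian surface generate a finite-index sublattice whose overlattice structure one then pins down explicitly.
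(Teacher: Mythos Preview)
Your argument is correct and is exactly the standard proof: torsion-freeness via simple connectedness, unimodularity via Poincar\'e duality, rank and signature via Noether and Hodge index, evenness via Wu's formula and \(c_1 = 0\), then the Milnor--Serre classification of indefinite even unimodular lattices. The paper itself does not prove this proposition at all---it is stated with a bare citation to \cite[Proposition~1.3.5]{huybrechts2016K3} and used as background---so there is nothing to compare; your sketch is precisely the argument one finds in the cited reference.
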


\begin{definition}
  Let \(L\) be an even lattice of signature \((1,r)\). We say that a K3 surface \(S\) is \emph{\(L\)-polarized} if there is a primitive embedding \(\iota \colon L \hookrightarrow \Pic(S)\) whose image contains an ample class. A family \(\varpi: \mathcal{S}\rightarrow B\) of K3 surfaces is \emph{\(L\)-polarized} if there is a trivial sub-local system \(\mathbb{L} \subseteq R^2\varpi_*\underline{\mathbb{Z}}_{\mathcal{S}}\) which induces an \(L\)-polarization on each fibre of \(\varpi\).
\end{definition}

There is a well-defined coarse moduli space of \(L\)-polarized K3 surfaces (see, e.g.,~\cite[Theorem 3.1]{dolgachev1996mirror} or~\cite{alexeev/k3} for details). We start with the period domain
\[
  \mathcal{P}_{L} = \{ z \in L^\perp \otimes_\mathbb{Z}\mathbb{C} \mid (z,z) = 0,(z,\overline{z}) > 0\}.
\]
Define the group
\[
  O^+(L) = \{ \gamma \in O(L_{\K3}) \mid \gamma|_L = \mathrm{id}_L\}.
\]
Let \(\Delta(L^\perp)\) denote the set of all \(\delta \in L^\perp\) so that \((\delta,\delta) = (-2)\), and define \(H_\delta = \{ z \in \mathcal{P}_{L} \mid (z,\delta) =0 \}\) for any \(\delta \in \Delta(L^\perp)\). Then the coarse moduli space of \(L\)-polarized K3 surfaces is the arithmetic quotient
\[
  \mathcal{M}_L = O^+(L^\perp)\backslash \left(\mathcal{P}_{L} - \bigcup_{\delta \in \Delta(L^\perp)}H_\delta\right).
\]
To any family of \(L\)-polarized K3 surfaces, \(\varpi \colon \mathcal{S}\rightarrow B\), there is a period morphism \(\Pi \colon B \rightarrow \mathcal{M}_{L}\). We say that \(\mathcal{S}\) is a \emph{complete} family of \(L\)-polarized K3 surfaces if \(\Pi\) is dominant.

\begin{theorem}[see~{\cite{dolgachev1996mirror}}]
  Let \(L\) be a lattice such that there exists a unique (up to isometry) primitive embedding into the K3 lattice \(L_{\K3}\). Then \(\mathcal{M}_L\) is a coarse moduli space of \(L\)-polarized K3 surfaces.
\end{theorem}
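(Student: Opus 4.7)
The plan is to realise \(\mathcal{M}_L\) as the image of a period map whose fibres correspond exactly to isomorphism classes of \(L\)-polarized K3 surfaces. First I would construct this map directly. Given an \(L\)-polarized K3 surface \((S, \iota)\), choose a marking \(\varphi \colon H^2(S, \mathbb{Z}) \xrightarrow{\sim} L_{\K3}\). The hypothesis that \(L\) admits a unique primitive embedding into \(L_{\K3}\) up to isometry means that, after composing \(\varphi\) with a suitable element of \(O(L_{\K3})\), one may assume \(\varphi \circ \iota\) agrees with a fixed reference embedding \(L \hookrightarrow L_{\K3}\). The Hodge line \(H^{2,0}(S)\) then maps into \(L^{\perp} \otimes \mathbb{C}\), because every class in \(\iota(L)\) is of type \((1,1)\), and the Hodge--Riemann relations place its image in \(\mathcal{P}_L\). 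Changing the choice of marking (with the reference embedding fixed) modifies the resulting point by an element of \(O^+(L^{\perp})\), so the construction descends to \(\mathcal{M}_L\).

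Next I would verify that the period lands in the complement of the \((-2)\)-hyperplanes. If \((z, \delta) = 0\) for some \(\delta \in \Delta(L^{\perp})\), then \(\varphi^{-1}(\delta)\) would be a \((-2)\)-class in \(\Pic(S)\) orthogonal to \(\iota(L)\); by Riemann--Roch either \(\delta\) or \(-\delta\) is effective, and pairing with any ample class lifted from \(\iota(L)\) contradicts the positivity required by the \(L\)-polarization. Having established that the period map \(\Pi\) is well-defined on isomorphism classes, I would prove bijectivity in two steps. Surjectivity uses the classical surjectivity theorem for the K3 period map: given \(z \in \mathcal{P}_L \setminus \bigcup H_\delta\), one obtains a marked K3 surface with period \(z\); pulling back the reference embedding along the marking produces a primitive sublattice of \(\Pic(S)\), and the avoidance of the \(H_\delta\) allows one, after acting by a Weyl group element if necessary, to guarantee that this sublattice contains an ample class. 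Injectivity follows from the strong Torelli theorem: if two \(L\)-polarized surfaces have normalized markings with the same image in \(\mathcal{M}_L\), then the composition of the markings is a Hodge isometry preserving the ample cone, and hence is induced by a unique isomorphism of \(L\)-polarized K3 surfaces.

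To finish, I would promote \(\Pi\) to a morphism out of the moduli functor and verify the universal property of a coarse moduli space: any family \(\mathcal{S} \to B\) of \(L\)-polarized K3 surfaces induces a morphism \(B \to \mathcal{M}_L\) via the associated variation of Hodge structure, and \(\mathcal{M}_L\) is clearly initial among analytic spaces equipped with such maps. It is only a coarse moduli space and not fine because generic \(L\)-polarized K3 surfaces admit non-trivial automorphisms fixing the polarization, which obstruct descent of the universal family.

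The main technical hurdle will be the careful bookkeeping of marking ambiguities. One must check that the combined effect of replacing \(\varphi\) by another marking and replacing \(\iota\) by an equivalent polarization is exactly an element of \(O^+(L^{\perp})\), neither more nor less. The single-orbit hypothesis on primitive embeddings \(L \hookrightarrow L_{\K3}\) enters in an essential way here: without it, different \(L\)-polarized surfaces could produce periods in non-conjugate copies of \(\mathcal{P}_L\) inside \(\mathcal{P}_{L_{\K3}}\), which a single quotient by \(O^+(L^{\perp})\) could never identify.
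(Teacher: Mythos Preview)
The paper does not prove this theorem; it is stated with the attribution ``see~\cite{dolgachev1996mirror}'' and no proof is given. The result is quoted as a known fact from Dolgachev's foundational paper on mirror symmetry for lattice-polarized K3 surfaces, and the surrounding text (in particular Remark~\ref{remark:embedding}) only records sufficient conditions under which the uniqueness hypothesis holds.

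Your outline is essentially the argument Dolgachev gives in that reference: normalise the marking using the single-orbit hypothesis, land the period in \(\mathcal{P}_L\), quotient by the residual marking ambiguity, and then invoke surjectivity of the period map and the strong Torelli theorem for injectivity. A few points deserve more care than your sketch indicates. First, the group by which one quotients must be identified precisely with the group the paper calls \(O^+(L^\perp)\); your phrase ``neither more nor less'' is the right concern, but you will need to track both the orientation condition (the ``\(+\)'') and the pointwise-versus-setwise stabiliser distinction. Second, your argument that avoiding the \((-2)\)-hyperplanes guarantees an ample class in \(\iota(L)\) is not quite complete: what you get directly is a class of positive square orthogonal to all \((-2)\)-classes in \(L^\perp \cap \Pic(S)\), and turning this into an honest ample class (rather than merely big and nef) requires the chamber argument you allude to with ``acting by a Weyl group element if necessary''; Dolgachev handles this by distinguishing ample and pseudo-ample polarizations. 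Third, the passage from ``bijection on points'' to ``coarse moduli space'' requires constructing the map out of families in the algebraic or analytic category and checking it is a morphism, which is routine but not automatic from the pointwise description.
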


\begin{remark}\label{remark:embedding}
  There exist sufficient conditions on a lattice \(L\) to have a primitive embedding into the K3 lattice (see~\cite[Theorem~1.4.6]{dolgachev1982lattices}), and to ensure that this embedding is unique (see~\cite[Theorem~1.4.8]{dolgachev1982lattices}). More generally, the coarse moduli space of \(L\)-polarized K3 surfaces has a finite number of irreducible components corresponding to equivalence classes of embeddings of \(L\) into \(L_{\K3}\).
\end{remark}

There is a natural projective compactification of \(\mathcal{M}_L\), called the \emph{Baily--Borel compactification}. We will use the notation \(\overline{\mathcal{M}}_{L}\) to denote the Baily--Borel compactification.  It is obtained by adding points called type III boundary components, and curves called type II boundary components to \(\mathcal{M}_L\). The  type III (resp., type II) boundary components are in set-theoretic bijection between \(O^+(L^\perp)\) equivalence classes of rank 1 (resp., rank 2) totally isotropic sublattices of \(L^\perp\). See~\cite{scattone1987compactification,dolgachev1996mirror} for details.

\begin{definition}
  Let \(L \subset L_{\K3}\) be a primitive sublattice. Assume that the orthogonal complement admits the decomposition \(L^{\perp} = H \oplus L^{\vee}\) for some lattice \(L^{\vee}\). We refer to \(L^{\vee}\) as the \emph{Dolgachev--Nikulin dual} to \(L\).
\end{definition}

\begin{remark}\label{remark:orthogonal-complement}
  Let \(L\) and \(M\) be lattices primitively embedded into the K3 lattice. One can explicitly check whether \(L\) and \(M\) are actually Dolgachev--Nikulin dual to each other using~\cite[Lemma~1.4.5]{dolgachev1982lattices}.
\end{remark}

Dolgachev--Nikulin duality is a mirror symmetry correspondence between families of lattice-polarized K3 surfaces: for a primitively embedded lattice \(L \subset L_{\K3}\) one may construct complete families of \(L\) and \(L^\vee\)-polarized K3 surfaces which are related by various forms of mirror symmetry.

\begin{remark}
  Observe that the existence of \(L^\vee\) corresponds to a specific choice of embedding \(H\) into \(L^\perp\) which, in turn, corresponds to a particular choice of type III boundary component. Dolgachev--Nikulin duality relates \(L\)-polarized K3 surfaces whose complex structure is near this type III boundary point to the symplectic structure of \(L^\vee\)-polarized K3 surfaces.
\end{remark}

\subsection{The canonical lattice of Landau--Ginzburg threefold}

Suppose \((Z,D,\mathsf{f})\) is a tame compactified Landau--Ginzburg model which satisfies the Calabi--Yau condition. By adjunction, smooth fibres of \(\mathsf{f}\) are Calabi--Yau, and, therefore, if \(\dim Z = 3\) they are either K3 surfaces or abelian surfaces. Furthermore, there is a natural polarizing lattice on the fibres of \(\mathsf{f}\). Suppose \(F\) is a smooth fibre of \(\mathsf{f}\), then the restriction morphism \(j_F: H^2(Z,\mathbb{Z})\rightarrow H^2(F,\mathbb{Z})\) has primitive image which is contained in \(\Pic(F)\). Furthermore, the image of \(j_F\) is monodromy invariant and thus generates a trivial local subsystem of \(R^2\mathsf{f}^\circ_*\underline{\mathbb{Z}}_{U}\), where \(U\) is union of all smooth fibres of \(\mathsf{f}\) and \(\mathsf{f}^\circ = \mathsf{f}|_U\). Let \(L_Z\) denote the image of \(j_F\). Let \(V\) denote the regular values of \(\mathsf{f}\) in \(\mathbb{P}^1\).

\begin{lemma}
  The family \(\mathsf{f}^\circ: U\rightarrow V\) is an \(L_Z\)-polarized family of K3 or abelian surfaces.
\end{lemma}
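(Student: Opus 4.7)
The plan is to verify the requirements for an $L_Z$-polarized family one by one, leveraging the structural facts already established in the paragraph preceding the lemma.

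First, I would identify the fibres. Since $(Z,D,\mathsf{f})$ satisfies the Calabi--Yau condition, $D = \mathsf{f}^{-1}(\infty) \in |-K_Z|$. For any smooth fibre $F$ of $\mathsf{f}$, adjunction yields $\omega_F \cong \mathcal{O}_F(K_Z + F)|_F \cong \mathcal{O}_F$. Since $\dim F = 2$, the Enriques--Kodaira classification of minimal surfaces with trivial canonical bundle forces $F$ to be either a K3 surface or an abelian surface, as claimed.

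Second, I would produce the trivial sub-local system. The restriction maps $j_{F_v}\colon H^2(Z,\mathbb{Z}) \to H^2(F_v,\mathbb{Z})$ assemble into a morphism of local systems on $V$,
\[
  \underline{H^2(Z,\mathbb{Z})}_V \longrightarrow R^2 \mathsf{f}^\circ_*\underline{\mathbb{Z}}_U,
\]
whose image is a sub-local system $\mathbb{L}\subseteq R^2\mathsf{f}^\circ_* \underline{\mathbb{Z}}_U$. Since the source is constant and the image of $j_{F_v}$ is monodromy invariant, $\mathbb{L}$ is a trivial local system with fibre canonically identified with $L_Z$. To see that the induced embedding lands in $\Pic(F)$, note that $j_{F_v}$ is a morphism of pure Hodge structures of weight $2$, so its image lies in $H^{1,1}(F_v) \cap H^2(F_v,\mathbb{Z})$; by the Lefschetz $(1,1)$-theorem these classes are algebraic.

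Third, I would check that the polarization contains an ample class fibrewise. Fix any ample line bundle $H$ on the projective variety $Z$. Its class $[H] \in H^2(Z,\mathbb{Z})$ restricts to $[H|_{F_v}]$, which is ample on $F_v$ (by, e.g., the restriction of ample line bundles to smooth subvarieties). This ample class lies in $L_Z$ by construction, so the embedding $L_Z \hookrightarrow \Pic(F_v)$ is a polarization in the required sense.

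The main technical point is the primitivity of the embedding $L_Z \hookrightarrow H^2(F_v,\mathbb{Z})$, which is needed for the notion of an $L_Z$-polarization to apply on the nose rather than via its saturation. I would deduce this from the long exact sequence of the pair $(Z,F_v)$, namely
\[
  \cdots \to H^2(Z,\mathbb{Z}) \xrightarrow{j_{F_v}} H^2(F_v,\mathbb{Z}) \to H^3(Z,F_v;\mathbb{Z}) \to \cdots,
\]
combined with Lefschetz duality $H^3(Z,F_v;\mathbb{Z}) \cong H_3(Z\setminus F_v,\mathbb{Z})$. Since $Z\setminus F_v$ retracts onto a $3$-dimensional open manifold admitting a proper map to $\mathbb{A}^1$, one expects this group to be torsion-free, giving primitivity of $\mathrm{im}(j_{F_v})$. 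Should this argument fall short in general, one may replace $L_Z$ by its saturation in $H^2(F_v,\mathbb{Z})$: the saturation is still monodromy invariant and still contains the ample class $[H|_{F_v}]$, so all other conditions persist. Verifying primitivity cleanly is the step I anticipate to be the main obstacle.
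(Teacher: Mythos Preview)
Your handling of the fibre type, the trivial sub-local system, the containment in $\Pic(F)$ via Hodge theory, and the ample class is correct and matches the paper (most of it is already set up in the paragraph preceding the lemma; the paper's proof itself is two sentences).

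The genuine gap is primitivity. Your claim that $Z\setminus F_v$ ``retracts onto a $3$-dimensional open manifold'' is wrong: it is a $6$-manifold fibred properly over $\mathbb{A}^1$, and there is no general reason for $H_3(Z\setminus F_v;\mathbb{Z})$ to be torsion-free. Your fallback of passing to the saturation does not prove the lemma as stated, since $L_Z$ is \emph{defined} to be the image of $j_F$ and the content of the lemma is precisely that this image is already saturated.

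The paper's argument is different and short: any $\beta\in H^2(F,\mathbb{Z})$ with a multiple in $L_Z$ is monodromy-invariant, and the global invariant cycles theorem then asserts that such a $\beta$ lies in the image of $H^2(Z,\mathbb{Z})\to H^2(F,\mathbb{Z})$, i.e.\ $\beta\in L_Z$. This is the standard device for exactly this situation; you should invoke it in place of the pair-sequence attempt.
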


\begin{proof}
  If \(\alpha\) is ample on \(Z\), then its restriction is in \(L_Z\) and is ample on \(F\). Now we prove that \(L_Z\) is a primitive sublattice. Any element \(\beta\) of \(\Pic(F) \cap L_Z\otimes_\mathbb{Z}\mathbb{Q}\) in \(\Pic(F)\otimes_\mathbb{Z}\mathbb{Q}\) is also monodromy-invariant. By the global invariant cycles theorem it follows that \(\beta\) comes from a cohomology class in \(H^2(Z,\mathbb{Z})\). Thus \(L_Z\) is primitive.
\end{proof}

In Proposition~\ref{proposition:prz} we constructed Landau--Ginzburg models from sufficiently regular Laurent polynomials in three variables supported on reflexive polytopes. In the process of this construction, we chose to blow up the toric threefold \(X_{P^*}\) in a sequence of codimension 2 subvarieties. The order in which we perform these blow ups determines the resulting variety \((Z_\mathsf{p},D_\mathsf{p})\), up to birational transformation. The following result implies in particular that the lattice \(L_{Z_\mathsf{p}}\) does not depend on this choice.

\begin{proposition}
  Suppose two K3 fibred varieties \(f_1,f_2:X_1,X_2\rightarrow C\) and that there is a birational map \(g \colon X_1\rightarrow X_2\) making the diagram
  \[
    \begin{tikzcd}
      X_1 \ar[rd,"f_1"] \ar[dashed, rr,"g"] & & X_2 \ar[ld,"f_2",swap] \\
      & C &
    \end{tikzcd}
  \]
  commute. Then \(L_{X_1} = L_{X_2}\).
\end{proposition}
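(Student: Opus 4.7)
The strategy is to resolve $g$ by a common birational model and to compare the two lattices $\pi_i^*L_{X_i}$ inside the second cohomology of a single general fiber. To set this up, take a smooth projective variety $W$ equipped with birational morphisms $p_i\colon W \to X_i$ resolving the indeterminacy of $g$, so that the hypothesis extends to the equality $h := f_1 \circ p_1 = f_2 \circ p_2\colon W \to C$. For generic $c \in C$ let $G = h^{-1}(c)$, $F_i = f_i^{-1}(c)$, and $\pi_i = p_i|_G\colon G \to F_i$; each $\pi_i$ is then a birational morphism of smooth projective surfaces. Since $K_{F_i} = 0$, the K3 surface $F_i$ contains no $(-1)$-curves, so $\pi_i$ is a composition of blow-ups at (possibly infinitely near) points.

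Because the minimal model of a non-ruled smooth surface is unique, and any birational map between K3 surfaces is a biregular isomorphism, we obtain a canonical identification $F_1 \cong F_2$ agreeing with the isomorphism induced by $g|_{F_1}$. Under this identification the two pullback sublattices $\pi_1^*H^2(F_1,\mathbb{Z})$ and $\pi_2^*H^2(F_2,\mathbb{Z})$ of $H^2(G,\mathbb{Z})$ coincide: both realize the pullback, along any minimal model morphism, of $H^2$ of the unique minimal model of $G$. Denote this common primitive sublattice by $H^2(G)_{\mathrm{pure}}$.

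The key step is then to prove that, for each $i$,
\[
  \pi_i^*(L_{X_i}) \;=\; \im\bigl(H^2(W,\mathbb{Z}) \to H^2(G,\mathbb{Z})\bigr) \cap H^2(G)_{\mathrm{pure}}.
\]
To do this, I would use that for a birational morphism $p_i$ of smooth projective varieties, $p_i^*\colon H^2(X_i,\mathbb{Q}) \hookrightarrow H^2(W,\mathbb{Q})$ admits a natural complement spanned by the classes of the irreducible exceptional divisors of $p_i$. Upon restriction to $G$, the first summand maps into $H^2(G)_{\mathrm{pure}}$ with image $\pi_i^*(L_{X_i}) \otimes \mathbb{Q}$, while each exceptional divisor of $p_i$ restricts to a union of exceptional curves of $\pi_i$, which all lie in $H^2(G)_{\mathrm{pure}}^\perp$. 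The right hand side of the displayed equation is manifestly independent of $i$, so this yields $\pi_1^*L_{X_1} = \pi_2^*L_{X_2}$, and hence $L_{X_1}=L_{X_2}$. The main technical obstacle will be upgrading this rational decomposition argument to an integral one: I would handle it either by finishing over $\mathbb{Q}$ and invoking primitivity of $L_{X_i}$ (which follows from the lemma preceding the proposition), or by using weak factorization to assume each $p_i$ is a sequence of blow-ups at smooth centers, in which case the integral $H^2$ decomposition is classical.
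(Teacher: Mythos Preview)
Your argument is correct, but it takes a substantially different route from the paper. The paper observes that since birational maps between K3 surfaces extend to isomorphisms, $g$ restricts to an isomorphism $f_1^{-1}(U) \cong f_2^{-1}(U)$ over some nonempty open $U \subset C$. It then invokes the global invariant cycles theorem to identify $L_{X_i} = \im\bigl(H^2(X_i,\mathbb{Z}) \to H^2(F,\mathbb{Z})\bigr)$ with $\im\bigl(H^2(f_i^{-1}(U),\mathbb{Z}) \to H^2(F,\mathbb{Z})\bigr)$, and the latter is visibly independent of $i$. That is essentially a three-line proof once the theorem is cited.

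Your approach is more hands-on: you resolve $g$ and track how the blow-up decomposition of $H^2(W)$ restricts to a general fiber. This avoids the global invariant cycles theorem entirely --- your only inputs are the blow-up formula for $H^2$ of a smooth projective variety and uniqueness of minimal models of non-ruled surfaces --- so it is more elementary in its dependencies, at the cost of being longer and requiring the integral/primitivity bookkeeping you flag at the end (which does go through, via the primitivity lemma immediately preceding the proposition). Both arguments rest on the same surface-theoretic fact that birational K3 surfaces are isomorphic; the difference is that the paper packages the step from ``isomorphic over an open set'' to ``equal $L_{X_i}$'' into a single citation, whereas you unwind it explicitly through the resolved model $W$.
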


\begin{proof}
  By definition, this map induces a birational map between each individual fibre. Since K3 surfaces are minimal, this means that each smooth fibre of \(f_1\) is isomorphic to the corresponding smooth fibre of \(f_2\). Therefore, there is a nonempty open subset \(U\) of \(C\) over which \(g\) restricts to an isomorphism \(g \colon f_1^{-1}(U) \rightarrow f_2^{-1}(U)\). By the global invariant cycles theorem (see, e.g.,~\cite[\S 4.3.3]{voisin2003hodge}) we may identify the image of \(H^2(X_i,\mathbb{Z})\rightarrow H^2(F,\mathbb{Z})\) with the image of \(H^2(f^{-1}_i(U),\mathbb{Z}) \rightarrow H^2(F,\mathbb{Z})\). Therefore, the result holds.
\end{proof}

An important operation in the study of Minkowski polynomials is called \emph{mutation}, which is a type of birational map making the following diagram commute
\[
  \begin{tikzcd}
    (\mathbb{C}^*)^3 \ar[rd,"\mathsf{p}_1"] \ar[dashed, rr,"h"] & & (\mathbb{C}^*)^3 \ar[ld,"\mathsf{p}_2",swap] \\
    & \mathbb{C} &
  \end{tikzcd}
\]
Since both \(((\mathbb{C}^*)^3,\mathsf{p}_1)\) and \(((\mathbb{C}^*)^3,\mathsf{p}_2)\) admit relative compactifications, we obtain the following result.

\begin{corollary}
  If two \(\mathsf{M}\)-polynomials \(\mathsf{p}_1,\mathsf{p}_2\) are related by mutation, then \(L_{Z_{\mathsf{p}_1}} = L_{Z_{\mathsf{p}_2}}\).
\end{corollary}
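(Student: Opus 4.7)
The plan is to reduce the statement directly to the preceding Proposition applied to the compactifications constructed in Proposition~\ref{proposition:prz}. By hypothesis, the mutation between $\mathsf{p}_1$ and $\mathsf{p}_2$ is a birational map $h \colon (\mathbb{C}^*)^3 \dashrightarrow (\mathbb{C}^*)^3$ of the form described in Definition~\ref{definition:mut}, and it fits into the commutative diagram displayed just before the corollary, so that $\mathsf{p}_2 \circ h = \mathsf{p}_1$ as rational functions on $(\mathbb{C}^*)^3$.

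Next I would apply Proposition~\ref{proposition:prz} to each of $\mathsf{p}_1$ and $\mathsf{p}_2$ to obtain proper tame compactified Landau--Ginzburg models $(Z_{\mathsf{p}_1}, D_{\mathsf{p}_1}, \mathsf{f}_1)$ and $(Z_{\mathsf{p}_2}, D_{\mathsf{p}_2}, \mathsf{f}_2)$ with $\mathsf{f}_i \colon Z_{\mathsf{p}_i} \to \mathbb{P}^1$. Each $Z_{\mathsf{p}_i}$ contains $(\mathbb{C}^*)^3$ as a dense open subvariety on which $\mathsf{f}_i$ restricts to $\mathsf{p}_i$. Composing the open immersion $(\mathbb{C}^*)^3 \hookrightarrow Z_{\mathsf{p}_2}$ with $h$ yields a birational map $\widetilde{h} \colon Z_{\mathsf{p}_1} \dashrightarrow Z_{\mathsf{p}_2}$ between smooth projective varieties. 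Because $\mathsf{p}_2 \circ h = \mathsf{p}_1$ and both $\mathsf{f}_i$ extend $\mathsf{p}_i$ as rational maps to $\mathbb{P}^1$, the diagram
\[
  \begin{tikzcd}
    Z_{\mathsf{p}_1} \ar[rd,"\mathsf{f}_1"] \ar[rr,dashed,"\widetilde{h}"] & & Z_{\mathsf{p}_2} \ar[ld,"\mathsf{f}_2",swap] \\
    & \mathbb{P}^1 &
  \end{tikzcd}
\]
commutes as a diagram of rational maps.

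Finally I would invoke the preceding proposition with $C = \mathbb{P}^1$, $X_i = Z_{\mathsf{p}_i}$, and $g = \widetilde{h}$: the general fibres of $\mathsf{f}_1$ and $\mathsf{f}_2$ are K3 surfaces by Corollary~\ref{corollary:ade} together with the Calabi--Yau condition in Proposition~\ref{proposition:prz}, so the hypotheses apply and give $L_{Z_{\mathsf{p}_1}} = L_{Z_{\mathsf{p}_2}}$.

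The only step requiring care is checking that $\widetilde{h}$ is defined on a nonempty open subset of each smooth fibre of $\mathsf{f}_1$ so that the preceding proposition's conclusion really applies; this is immediate because $h$ is an isomorphism between nonempty open tori in the fibres of $\mathsf{p}_1$ and $\mathsf{p}_2$, and hence between nonempty open subsets of the corresponding fibres of $\mathsf{f}_1$ and $\mathsf{f}_2$. I anticipate no serious obstacle beyond this bookkeeping, since all the real work has already been done in Proposition~\ref{proposition:prz} (which provides the compactifications) and in the preceding proposition (which does the global invariant cycles argument identifying the polarizing lattices).
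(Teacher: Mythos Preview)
Your proposal is correct and follows exactly the paper's approach: compactify both $(\mathbb{C}^*)^3$'s via Proposition~\ref{proposition:prz}, extend the mutation to a birational map over $\mathbb{P}^1$, and invoke the preceding proposition. Your final paragraph of caution about $\widetilde{h}$ being defined on fibres is unnecessary, since that is already handled inside the proof of the proposition you are citing (birational maps over $C$ restrict to birational maps on fibres, and birational K3 surfaces are isomorphic), but it does no harm.
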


\begin{remark}
  Throughout this paper we will freely change birational models for K3 surfaces in order to compute Picard lattices. The fact that birational maps between smooth K3 surfaces are isomorphisms justifies this tactic.
\end{remark}

\subsection{Characterizing the internal period maps of Landau--Ginzburg threefolds}

In this section we restrict to the case where \(\dim(Z) = 3\) and, furthermore, that \(H^{i,0}(Z) = 0\) for all \(i \neq 0\).  In addition to conditions (a), (b), and (c) of Theorem~\ref{theorem:main1}, assume that \((Z,D,\mathsf{f})\) satisfies:
\begin{itemize}[\quad (d)]
\item Smooth fibres of \(\mathsf{f}\) are K3 surfaces, and the fibre over \(\infty\) is a type III degenerate K3 surface.
\end{itemize}
Precisely, this means that the dual intersection complex of the fibre over \(\infty\) is a normal crossings union of rational surfaces whose dual intersection complex is a triangulation of the 2-sphere (see~\cite{kulikov1977degenerations} for more details). For instance, if \(\mathsf{p}\) is a sufficiently regular Laurent polynomial in three variables, \((Z_\mathsf{p},D_\mathsf{p},\mathsf{f})\) satisfies this criterion.

Each 3-dimensional Landau--Ginzburg model \((Z,D,\mathsf{f})\) admits an internal algebraic period map to the moduli space of \(L_Z\)-polarized K3 surfaces corresponding to the family \(\mathsf{f} \colon Z \rightarrow \mathbb{P}^1\). We denote it by \(\Pi_Z \colon \mathbb{P}^1 \rightarrow \overline{\mathcal{M}}_{L_Z}\). Such period maps send type III degenerate K3 surfaces to type III boundary points of \(\overline{\mathcal{M}}_{L_Z}\).

\begin{proposition}\label{proposition:surj}
  Let \((Z,D,\mathsf{f})\) be a tame compactified Landau--Ginzburg model of dimension 3 satisfying conditions (a), (b), and (c) in Theorem~\ref{theorem:main1}, and condition (d) above.
Then the moduli space \(\overline{\mathcal{M}}_{L_Z}\) is uniruled by a pencil of curves passing through a type III boundary point.
\end{proposition}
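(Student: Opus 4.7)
The plan is to combine the unobstructedness of deformations from Theorem~\ref{theorem:main1} with the computation of the differential of the period map in Proposition~\ref{proposition:main} to produce a covering family of rational curves in $\overline{\mathcal{M}}_{L_Z}$, each passing through a type III boundary point.

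First I would invoke Theorem~\ref{theorem:main1} to obtain a smooth (formal) moduli space $\mathcal{M}_{(Z,D,\mathsf{f})}$ of small deformations of the tame compactified Landau--Ginzburg model. Since the lattice $L_Z$ is a topological invariant, for every $t \in \mathcal{M}_{(Z,D,\mathsf{f})}$ the structure morphism $\mathsf{f}_t \colon Z_t \to \mathbb{P}^1$ yields an $L_Z$-polarized family of K3 surfaces on its smooth locus, and hence an internal period map $\Pi_{Z_t} \colon \mathbb{P}^1 \to \overline{\mathcal{M}}_{L_Z}$. Condition (d) is open under deformation, so $\mathsf{f}_t^{-1}(\infty)$ remains a type III Kulikov degeneration, and by the standard correspondence between type III degenerations of K3 surfaces and type III strata of the Baily--Borel compactification, $\Pi_{Z_t}(\infty)$ lies in a type III boundary point.

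Next I would assemble these into a single evaluation morphism
\[
    \mathrm{ev} \colon \mathcal{M}_{(Z,D,\mathsf{f})} \times \mathbb{P}^1 \dashrightarrow \overline{\mathcal{M}}_{L_Z}, \qquad (t,s) \longmapsto \Pi_{Z_t}(s).
\]
Each restriction of $\mathrm{ev}$ to $\{t\} \times \mathbb{P}^1$ is the rational curve $\Pi_{Z_t}(\mathbb{P}^1)$, whose image contains a type III boundary point, so uniruledness of $\overline{\mathcal{M}}_{L_Z}$ by such curves reduces to showing that $\mathrm{ev}$ is dominant. This is exactly where Proposition~\ref{proposition:main} applies: the differential of $\mathrm{ev}$ at a very general $(t_0, s_0)$, corresponding to a pair $(Z, F)$, factors through the map $T_s \Def(Z, F) \to T_s \Def(F) \cong H^1(F, \Omega^1_F)$, whose image is $H^1_p(F, \Omega^1_F)$, the orthogonal complement of $\im(j_F^* \colon H^1(Z, \Omega^1_Z) \to H^1(F, \Omega^1_F))$ under Serre duality. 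This orthogonal complement is precisely the tangent space to $\mathcal{M}_{L_Z}$ at $[F]$ in the universal period domain, so $\mathrm{ev}$ is submersive onto $\mathcal{M}_{L_Z}$ at a general point, hence dominant.

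The main obstacle I foresee is disentangling the contribution of varying $t$ from that of varying $s$: one must know that the individual rational curves $\Pi_{Z_t}(\mathbb{P}^1)$ are not collapsed to points, so that the family truly sweeps out a positive-dimensional locus transverse to each fibre of $\mathrm{ev}$ over $\mathcal{M}_{(Z,D,\mathsf{f})}$. This is ensured by the non-isotriviality of the family $\mathsf{f}$, which follows from condition (d), since the type III degeneration at infinity forces non-trivial monodromy on the transcendental Hodge structures of the smooth fibres. Combining these ingredients gives a covering family of rational curves in $\overline{\mathcal{M}}_{L_Z}$, each passing through a type III boundary point, which is the desired uniruling.
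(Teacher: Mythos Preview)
Your proposal is correct and follows essentially the same approach as the paper: both construct an evaluation map from $B \times \mathbb{P}^1$ (where $B$ is a versal base for deformations of $(Z,D,\mathsf{f})$) to $\overline{\mathcal{M}}_{L_Z}$, use Proposition~\ref{proposition:main} to show it is dominant, and use persistence of the type III fibre at $\infty$ to conclude each rational curve passes through a type III boundary point. The only points the paper makes slightly more explicit are (i) that the factorization through $\Def(Z,F)$ is surjective on tangent spaces because $\mathcal{M}_{(Z,D,\mathsf{f})} \to \mathcal{M}_Z$ is dominant (your invocation of Theorem~\ref{theorem:main1} covers this, but you might state it), and (ii) that projectivity of $\overline{\mathcal{M}}_{L_Z}$ lets one extend the local covering family of rational curves to a global one.
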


\begin{proof}
  There is a finite forgetful map \({\bm \varphi} \colon \mathcal{M}_{L_Z}\rightarrow \mathcal{M}_{\K3}\), where \(\mathcal{M}_{\K3}\) is the 20-dimensional coarse moduli space of (complex) K3 surfaces. There is a factorization
  \[
    \begin{tikzcd}
\mathcal{M}_{(Z,F)} \ar[rd] \ar[d] & \\
\mathcal{M}_{L_Z} \ar[r] &  \mathcal{M}_F = \mathcal{M}_{\K3}.
    \end{tikzcd}
  \]
  The image of \({\bm \varphi}\) in \(\mathcal{M}_{\K3}\) has dimension
  \[
    h^1_p(F,\Omega^1_F) = 20 - \dim \left(\im\left(H^1(Z,\Omega_Z^1)\rightarrow H^1(F,\Omega_F^1)\right)\right) = \dim \mathcal{M}_{L_Z}.
  \]
  Therefore, the map \(\mathcal{M}(Z,F)\rightarrow \mathcal{M}_{L_Z}\) is dominant. Since the map \(\mathcal{M}(Z,D,\mathsf{f})\rightarrow \mathcal{M}(Z)\) is dominant, any small \(L_Z\)-polarized deformation of \(F\) is a fibre of a small deformation of \((Z',D',\mathsf{f}')\).

  Choosing a versal deformation \(\varpi :(\mathscr{Z},\mathscr{D},{\bm f})\rightarrow B\) we obtain a family of morphisms \(\varpi \times {\bm f} \colon \mathscr{Z}\rightarrow B\times \mathbb{P}^1\) whose general fibre is a smooth K3 surface. Thus there is a holomorphic map \(\Pi \colon B \times \mathbb{P}^1 \rightarrow \overline{\mathcal{M}}_{L_Z}\). Since each fibre of \(\varpi:\mathscr{D}\rightarrow B\) is a deformation of \(B\), each \(\varpi^{-1}(b)\) is a type III degenerate K3 surface. Consequently, \(\Pi(B \times \infty)\) maps to a type III boundary point \(p_\infty\) of \(\overline{\mathcal{M}}_{L_Z}\), and \(\varpi(b\times \mathbb{P}^1)\) cut out a family of rational curves passing through \(p_\infty\) as \(b\) varies which locally cover \(\mathcal{M}_{L_Z}\). Since \(\overline{\mathcal{M}}_{L_Z}\) is projective this family of curves can be extended globally.
\end{proof}

\begin{remark}
  The existence of a type III boundary point in \(\mathcal{M}_{L_Z}\) implies that the lattice \(L_Z^\perp\) admits a totally isotropic sublattice of rank \(1\). In fact, Hodge-theoretic mirror symmetry suggests more, namely, that there is an embedding of the lattice \(H\) into \(L_Z^\perp\).
\end{remark}

\begin{remark}
    As the name suggests, Kulikov~\cite{kulikov1977degenerations} classified other types of degenerations of K3 surfaces, of type I and II respectively. Type I semistable degenerations of K3 surfaces are simply smooth K3 surfaces, and semistable type II degenerations correspond to chains of surfaces \(S_0 \cup \cdots \cup S_n\) so that \(S_0, S_n\) are rational, and \(S_i, i \neq 0,n\) are ruled surfaces over an elliptic curve. Under the period mapping, type II degenerations of K3 surfaces map to type II boundary points. The conclusions of Proposition~\ref{proposition:surj} continue to hold if \(\mathsf{f}^{-1}(\infty)\) is a type II degenerate K3 surface with \(n=1\), however the uniruling is by curves passing through a type II boundary component, not a type III boundary point. If \(\mathsf{f}^{-1}(\infty)\) is type I (i.e. smooth) or type II with \(n > 1\) then condition (c) of Theorem~\ref{theorem:main1} fails.
\end{remark}

This places strong restrictions on the lattice polarizations that can appear on anticanonical hypersurfaces of Fano threefolds. In the case where \(\rk(L_Z) = 19\), however, this is simply the observation that \(\overline{\mathcal{M}}_{L_Z} \cong \mathbb{P}^1\). We note that in the case where \(\rk(L_Z) = 19\), the condition that \(\overline{\mathcal{M}}_{L_Z}  \cong \mathbb{P}^1\) is not enough to characterize the lattices \(L_Z\), even under the condition that \(H\) is a sublattice of \(L_Z^\perp\). See~\cite[Theorem 2.12]{doran2020calabi}.

\begin{definition}
  Suppose \(\mathbb{V}\) is a \(\mathbb{Q}\)-local system over a nonempty Zariski-open subset of \(\mathbb{P}^1\). Let \(i \colon U\rightarrow \mathbb{P}^1\) denote the canonical embedding. We say that \(\mathbb{V}\) is \emph{extremal} if \(H^1(\mathbb{P}^1,i_*\mathbb{V}) = 0\). Given a map \(\phi \colon \mathbb{P}^1 \rightarrow \overline{\mathcal{M}}_{L_Z}\), there are finitely many variations of Hodge structure over \(\phi^{-1}(\mathcal{M}_{L_Z})\) for which the period map is \(\phi\). We say that the map \(\phi\) is \emph{extremal} if one of these variations of Hodge structure has extremal underlying local system.
\end{definition}

It is shown in~\cite{przyjalkowski2017compactification} (see also~\cite{cheltsov2018katzarkov}) that for all Fano threefolds, there is a Landau--Ginzburg mirror \((Z,D,\mathsf{f})\) so that \(H^3(Z,\mathbb{Q})\cong 0\). The same is true for all deformations of \(Z\). By the Leray spectral sequence, one can argue that this implies that if \(V\) is the set of regular values of \(\mathsf{f}\), and \(\mathsf{f}^\circ\) denotes the restriction of \(\mathsf{f}\) to \(\mathsf{f}^{-1}(V)\), then local system \(R^2\mathsf{f}_*^\circ\underline{\mathbb{Q}}_{\mathsf{f}^{-1}(V)}\) is extremal (see, e.g.,~\cite[Proof of Lemma 3.2]{doran2016calabi}). Thus the period maps of any deformation of \((Z,D,w)\) are also extremal. This property is predicted by mirror symmetry (see~\cite{harder2016geometry,shamoto2018hodge}). Consequently, we have the following result when combined with Theorem~\ref{theorem:DN-duality}.

\begin{corollary}\label{corollary:ssur}
  Suppose \(X\) is a Fano threefold with very ample anticanonical bundle. Then the moduli space of \(\Pic(X)^\vee\)-polarized K3 surfaces admits a type III boundary point \(p_\infty\) and a ruling by extremal curves passing through \(p_\infty\).
\end{corollary}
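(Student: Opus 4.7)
The plan is to chain together the three main ingredients the authors have assembled by this point in the paper: the construction of the standard Landau--Ginzburg model from~\cite{przyjalkowski2017compactification}, the identification \(L_Z \cong \Pic(X)^\vee\) provided by Theorem~\ref{theorem:DN-duality}, and the uniruling statement of Proposition~\ref{proposition:surj}, while invoking the extremality property of \(R^2\mathsf{f}^\circ_*\underline{\mathbb{Q}}\) recorded just above the corollary.

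First I would fix \(X\) a smooth Fano threefold with very ample anticanonical bundle and let \((Z,D,\mathsf{f})\) be its standard toric Landau--Ginzburg mirror, constructed via Proposition~\ref{proposition:prz} from a Minkowski polynomial \(\mathsf{p}\) associated to \(X\). Proposition~\ref{proposition:prz} directly gives conditions (a), (b), and (c) of Theorem~\ref{theorem:main1}: \(D\) is anticanonical, \(H^1(Z,\mathbb{Q})=0\), and every irreducible component of \(D\) is rational, so the relevant canonical cohomologies vanish. Condition (d) of the paragraph preceding Proposition~\ref{proposition:surj}—that smooth fibres are K3 surfaces and the fibre over \(\infty\) is a type III degenerate K3—is ensured by the construction in~\cite{przyjalkowski2017compactification}: the smooth fibres of \(\mathsf{f}\) are smooth compactified fibres of the Laurent polynomial \(\mathsf{p}\), hence anticanonical K3 surfaces, and the fibre \(D\) over \(\infty\) is, by Proposition~\ref{proposition:prz} together with \(D\sim -K_Z\) being snc with rational components forming a triangulation of \(S^2\), exactly a type III Kulikov model.

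Second, I would apply Theorem~\ref{theorem:DN-duality} to identify
\[
    L_Z = \im\!\left(H^2(Z,\mathbb{Z}) \xrightarrow{\res} H^2(F,\mathbb{Z})\right) \cong \Pic(X)^\vee,
\]
so that the internal period map of \((Z,D,\mathsf{f})\) lands in \(\overline{\mathcal{M}}_{\Pic(X)^\vee}\). Proposition~\ref{proposition:surj} then applies verbatim and yields a uniruling of \(\overline{\mathcal{M}}_{\Pic(X)^\vee}\) by a pencil of curves through a type III boundary point \(p_\infty\) (namely the image of \(D\) under the period map of any versal deformation of \((Z,D,\mathsf{f})\)).

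Third, it remains to upgrade ``curves'' to ``extremal curves''. For this I would invoke the fact, established in~\cite{przyjalkowski2017compactification} and~\cite{cheltsov2018katzarkov} and already recalled above the corollary, that \(H^3(Z',\mathbb{Q}) = 0\) for every small deformation \(Z'\) of \(Z\). Combined with the Leray spectral sequence for the restriction \(\mathsf{f}^\circ\) of the deformed potential to the regular locus \(V'\subset\mathbb{P}^1\), this forces \(H^1(\mathbb{P}^1, i_*R^2\mathsf{f}^\circ_*\underline{\mathbb{Q}}) = 0\), i.e.\ extremality of the associated local system. Thus each curve in the uniruling produced by Proposition~\ref{proposition:surj} arises from the extremal variation of Hodge structure of a deformed LG model, so its period map is extremal in the sense of the definition stated above, completing the proof. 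The only real point requiring care is bookkeeping the mild topological hypotheses (a)--(d) for the standard model; everything else is a direct application of results proved earlier in the paper.
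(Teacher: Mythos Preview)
Your proposal is correct and follows essentially the same approach as the paper: the paper's proof (given in the paragraph immediately preceding the corollary) is precisely to combine the standard Landau--Ginzburg model from~\cite{przyjalkowski2017compactification}, Theorem~\ref{theorem:DN-duality}, Proposition~\ref{proposition:surj}, and the extremality of \(R^2\mathsf{f}^\circ_*\underline{\mathbb{Q}}\) coming from \(H^3(Z,\mathbb{Q})=0\) via the Leray spectral sequence. You have spelled out the verification of hypotheses (a)--(d) a bit more explicitly than the paper does, but the logical structure is identical.
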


\begin{remark}\label{remark:smifano}
  Corollary~\ref{corollary:ssur} is predicted by mirror symmetry and Proposition~\ref{proposition:surj}. Suppose \(X\) is Fano and it has a tame compactified mirror Landau--Ginzburg model \((Z,D,\mathsf{f})\)\footnote{It is not true that all Fano varieties have tame compactified mirrors. For more details, see~\cite{przyjalkowski2022singular}, but essentially, the existence of tame compactified mirrors seems related to having \emph{very} ample anticanonical bundle.}. Let \(Y = Z \setminus \mathsf{f}^{-1}(\infty)\). One expects an identification between \(H^3(Y,F)\) and \(\oplus_{i=0}^3H^{i,i}(X)\), and cup product with \(c_1(T_X)\) on \(\oplus_{i=0}^3 H^{i,i}(X)\) should correspond to the action of \(\log(T_\infty)\), where \(T_\infty\) is the monodromy operator
  \[
    T_\infty \colon H^3(Y,F) \rightarrow H^3(Y,F)
  \]
  obtained by letting \(F\) vary in a small loop around infinity (see, e.g.,~\cite[\S 3.1]{katzarkov2017bogomolov} for a more detailed explanation of these explanations). Shamoto~\cite{shamoto2018hodge} equips \(H^3(Y,F)\) with a mixed Hodge structure whose weight filtration is induced by \(\log(T_\infty)\) and whose Hodge filtration has graded dimensions equal to those of the canonical Hodge filtration on \(H^3(Y,F)\).

  Let \(H^3_{\mathrm{sh}}(Y,F)\) denote this mixed Hodge structure. The Hodge filtration on \(H^3(Y,F)\) should agree, under mirror symmetry, with the degree filtration on \(\bigoplus_{i=0}^3 H^{i,i}(X)\). Consequently, if \(c_1(T_X)\) is ample, then one expects the weight and Hodge filtrations of \(H_{\mathrm{sh}}^3(Y,F)\) to be opposed, and hence \(H^3_\mathrm{sh}(Y,F)\) will be mixed Tate. One can deduce from this (\cite{harder2021hodge}) that both \(H^3(Z,\mathbb{Q}) = 0\) and that \(H^2_\infty(F,\mathbb{Q})\) (the limit mixed Hodge structure at \(\infty\) of \(F\)) is mixed Tate, thus \(D\) is a type III degenerate K3 surface. Combining this with Proposition~\ref{proposition:surj}, we predict that Corollary~\ref{corollary:ssur} should hold.

  We remark that this prediction requires two things: (a) the existence of a tame compactified mirror \((Z,D,\mathsf{f})\) and (b) that \(\cup c_1(T_X)\) behaves like the cup product with an ample divisor. The existence of a tame compactified mirror \((Z,D,\mathsf{f})\) seems to be a feature of many weak-Fano varieties (e.g., weak Fano toric threefolds or, more generally, weak Fano toric complete intersections) and the second identifies \(X\) as a \emph{semi-}Fano threefold in the terminology of Corti et al.~\cite{corti2013asymptotically, corti2015g_2}, since this condition implies that the anticanonical map is semi-small. The classification of semi-Fano threefolds is related to the classification of \(G_2\)-manifolds (op. cit.). If we take mirror symmetry for granted, the classification of semi-Fano threefolds should therefore be related to the classification of lattices for which \(\mathcal{M}_L\) has the properties in Corollary~\ref{corollary:ssur}.
\end{remark}

\subsection{A Noether--Lefschetz type result}

If \(S\) is a surface in a \(\mathbb{P}^3\) which is very general and of degree at least \(4\), then it is a classical theorem, first proved by Lefschetz~\cite{lefschetz1924analysis} by topological means, later proved by Griffiths--Harris~\cite{griffiths1985noether} algebraically, that the Picard rank of \(S\) is \(1\). More general results of this nature have been proved for hypersurfaces in toric varieties, for instance if \(X_\Sigma\) is a simplicial toric variety, and \(S\) is an ample hypersurface in \(X_\Sigma\) of large enough degree, then Bruzzo--Grassi~\cite{bruzzo2012picard} show that the same result holds. In this section, we formulate and prove a more general version of this for nef anticanonical hypersurfaces in smooth toric threefolds, and we extend this to describing the image of the period map.

\begin{proposition}\label{proposition:nl}
  Let \(\mathsf{p}\) be a weakly non-degenerate Laurent polynomial supported on a reflexive polytope \(P\) of dimension 3 with Minkowski data \(\mathsf{M}\). Suppose that \((P,\mathsf{M}) = (P,\mathsf{M}_{\gen})\). Let \(\mathsf{p}'\in \mathcal{L}^{\text{wn-d}}_{(P,\mathsf{M})}\) be a very general Laurent polynomial, and let \(F\) denote a minimal resolution of any compactification of \(\mathsf{p}'\). Then \(\Pic(F)\) is spanned by divisors of the following type.
  \begin{itemize}
  \item Proper transforms of irreducible components of \(D_P\).
  \item Exceptional divisors of the resolution map \(b :F\rightarrow \overline{F}\).
  \end{itemize}
\end{proposition}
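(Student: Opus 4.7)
The plan is to combine a geometric description of the image lattice
\[
  L_Z \;=\; \im\bigl(H^2(Z_{\mathsf{p}},\mathbb{Z}) \xrightarrow{\res} H^2(F,\mathbb{Z})\bigr)
\]
with a completeness argument for the resulting family of K3 surfaces. First, I would identify $L_Z$ with the sublattice of $H^2(F, \mathbb{Z})$ generated by the divisor classes listed in the statement. By Proposition~\ref{proposition:prz}, $Z_{\mathsf{p}}$ is obtained from the crepant resolution $X_{P^*}$ by iteratively blowing up smooth components of the base locus of the anticanonical pencil $\mathscr{P}_{\mathsf{p}}$, so $\Pic(Z_{\mathsf{p}})$ is generated by pull-backs of the torus-invariant divisors of $X_{P^*}$ together with the exceptional divisors of these successive blow-ups. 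A smooth fibre $F$ of $\mathsf{f}$ is (modulo the blow-down $b \colon F \to \overline{F}$) the proper transform of a smooth member of $\mathscr{P}_{\mathsf{p}}$ inside $X_{P^*}$, and restricting these generators of $\Pic(Z_{\mathsf{p}})$ to $F$ yields precisely the two classes of divisors appearing in the statement.

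Next, I would invoke the hypothesis $(P, \mathsf{M}) = (P, \mathsf{M}_{\gen})$ together with Theorem~\ref{theorem:defmink}: this yields a dominant map from an open subset of $\mathcal{L}^{\text{wn-d}}_{(P, \mathsf{M})}$ to the deformation space $\mathcal{M}_{(Z_{\mathsf{p}}, D_{\mathsf{p}})}$. Combined with Proposition~\ref{proposition:main}, the induced family of smooth fibres forms a complete $L_Z$-polarised family of K3 surfaces, in the sense that its period map dominates a component of $\mathcal{M}_{L_Z}$. This is the crucial input for the Noether--Lefschetz-style argument that follows.

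Finally, I would apply the standard Noether--Lefschetz principle: for any complete family of $L_Z$-polarised K3 surfaces over an irreducible base $B$, the Hodge locus of points $b \in B$ at which $\Pic(F_b)$ is strictly larger than $L_Z$ is a countable union of proper analytic subvarieties of $B$. A very general parameter $\mathsf{p}'$ thus lies in the complement, so $\Pic(F_{\mathsf{p}'}) = L_Z$ as claimed.

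The main obstacle I expect is the first step: establishing carefully that $L_Z$ equals the sublattice spanned by the indicated divisors, rather than merely containing it or being contained in it. This requires tracking the Przyjalkowski resolution in enough detail to verify that no classes in $\Pic(Z_{\mathsf{p}})$ arise outside the span of pull-backs from $X_{P^*}$ and the successive exceptional divisors, and that restriction to $F$ of this sublattice remains primitive so that no quotient is lost in passing from $\Pic(Z_{\mathsf{p}})$ to $L_Z \subset H^2(F,\mathbb{Z})$. The Noether--Lefschetz step itself is standard once completeness of the family has been secured.
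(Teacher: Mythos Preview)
Your proposal is correct and follows essentially the same route as the paper: build $Z_{\mathsf{p}}$ via Proposition~\ref{proposition:prz}, use Theorem~\ref{theorem:defmink} together with the hypothesis $(P,\mathsf{M})=(P,\mathsf{M}_{\gen})$ and the surjectivity result (the paper cites Proposition~\ref{proposition:surj}, which packages your Proposition~\ref{proposition:main} for the K3 case) to obtain dominance onto $\mathcal{M}_{L_Z}$, then invoke the countable Noether--Lefschetz locus. Your flagged obstacle, primitivity of $L_Z$ in $H^2(F,\mathbb{Z})$, is handled earlier in the paper via the global invariant cycles theorem (the lemma showing $\mathsf{f}^\circ$ is $L_Z$-polarized); the explicit description of $L_Z$ in terms of the stated divisors is exactly the content of the paper's final paragraph, where one checks that proper transforms of boundary components of $X_{P^*}$ restrict trivially while exceptional divisors of $Z_{\mathsf{p}}\to X_{P^*}$ restrict either to exceptional curves of $b\colon F\to\overline{F}$ or to components of $\overline{F}\cap D_P$.
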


More precisely; the phrase \emph{very general} indicates that \(\mathsf{p}'\) corresponds to a point in the complement of a countable collection of codimension 1 subvarieties of \(\mathcal{L}^{\text{wn-d}}_{(P,\mathsf{M})}\).

\begin{proof}
  For notational simplicity we let \(\mathsf{p} = \mathsf{p}'\). We may construct a smooth resolution of the vanishing locus of \(\mathsf{p}\) by constructing the Landau--Ginzburg model \((Z_{\mathsf{p}},D_{\mathsf{p}}, \mathsf{f}_\mathsf{p})\), following the recipe in Proposition~\ref{proposition:prz}. Then \(S = \mathsf{f}_{\mathsf{p}}^{-1}(0)\), and
  \[
    \mathcal{L}^\text{wn-d}_{(P,\mathsf{M})}\rightarrow  \mathcal{M}_{L_{Z_\mathsf{p}}}
  \]
  is, locally, a composition of dominant maps by Theorem~\ref{theorem:defmink} and Proposition~\ref{proposition:surj}. The period map is algebraic, so this map is dominant. There is a countable, dense, collection of divisors in
  \(\mathcal{M}_{L_{Z_\mathsf{p}}}\), called Noether--Lefschetz divisors, whose complement parametrizes K3 surface whose Picard lattice is \(L_{Z_\mathsf{p}}\), the image of the natural map \(\Pic(Z_\mathsf{p})\rightarrow \Pic(F)\). Given a Minkowski decomposition \((P,{\mathsf{M}})\), the blow up process in Proposition~\ref{proposition:prz} is an iterated blow up along smooth centres. Therefore, \(H^2(Z_\mathsf{p},\mathbb{Z})\) is spanned by the proper transforms of toric boundary divisors of \(X_{P^*}\) and exceptional divisors.

  We need to compute the restriction of \(\Pic(Z_{\mathsf{p}})\) to a fibre \(F\) of \(\mathsf{f}\). Let \(b : Z_\mathsf{p} \rightarrow X_{P^*}\) denote the blow up map, and we let \(b|_F : F\rightarrow \overline{F}\) be the induced resolution map. If \(D\) is the proper transform of an irreducible component of \(D_P\), then its intersection with \(F\) is empty, and thus \([D]|_F = 0\). If \(E\) is an exceptional divisor of the blow up map \(b: Z_{\mathsf{p}}\rightarrow X_{P^*}\), its image is either a point in \(V(\mathsf{p})\), in which case \(E \cap F\) is contained in the exceptional locus of the map \(b : F\rightarrow \overline{F}\), or its image is a curve in \(D_P \cap \overline{F}\). Since we have assumed that \(\mathsf{p}\) is very general, we know that all algebraic cycles in \(\Pic(F)\) are in the image of the restriction map \(i^* \colon \Pic(Z_\mathsf{p})\rightarrow \Pic(F)\), so are, in particular, all exceptional divisors of \(b \colon F\rightarrow \overline{F}\) and all proper transforms of irreducible components of \(D_P\cap \overline{F}\). This finishes the proof.
\end{proof}

\begin{remark}
  Given an anticanonical hypersurface \(S\) in a smooth toric weak Fano threefold \(X_{P^*}\), the pullback map \(i^*:\Pic(X_{P^*})\rightarrow \Pic(S)\) is neither surjective nor injective in general. Depending on the geometry of \(P^*\) and its dual, there are irreducible torus invariant divisors of \(X_{P^*}\) which do not intersect \(S\), hence are in the kernel of \(i^*\), and similarly, there can be torus invariant divisors \(D\) of \(X_{P^*}\) so that \(D\cap S\) is a union of irreducible curves for general \(S\). For a general choice of \(S\), the irreducible components of \(S \cap D_P\) (recall that \(D_P\) denotes the toric boundary of \(X_P\) as in Section~\ref{section:wreg}) are a simple normal crossings union of curves. This normal crossings union of curves only depends on \(P\) and \(P^*\). We reinterpret Proposition~\ref{proposition:nl} in this case as the statement that if \(\mathsf{p}\) is a general Laurent polynomial whose Newton polytope is a 3-dimensional reflexive polytope, and \(S\) is the vanishing locus of \(\mathsf{p}\) in \(X_{P^*}\), then \(\Pic(S)\) is spanned by the irreducible components of \(S \cap D_P\). In~\cite{whitcher2015reflexive}, this lattice was called \(\Pic_\mathrm{cor}(S)\) and the image of \(\Pic(X_{P^*})\rightarrow \Pic(S)\) is called \(\Pic_\mathrm{tor}(S)\). These lattices were computed by Rohsiepe in~\cite{rohsiepe2004lattice}\footnote{Unfortunately, the URL listing the lattices computed in~\cite{rohsiepe2004lattice} is no longer valid.}.
\end{remark}

\section{Examples}\label{section:ex}

In this section we give several explicit examples of the phenomena described in the previous sections. The first example that we look at is the mirror to Family \textnumero 2.1. This example does not have very ample anticanonical bundle, therefore we do not automatically obtain a complete algebraic family of Landau--Ginzburg mirrors directly from Theorem~\ref{theorem:defmink} and~\cite{przyjalkowski2017compactification}. We construct a parametrized family of Laurent polynomials in this section, obtained by taking a 1-parameter deformation of the standard Landau--Ginzburg model for Family \textnumero 2.1. Applying the techniques used in~\cite[Subsection~2.1]{cheltsov2018katzarkov} we obtain a 1-parameter family of tame compactified Landau--Ginzburg models. In Subsection~\ref{subsection:MM21} we show that this family of Landau--Ginzburg models is complete.

The second pair of examples that we look at are mirrors to Family \textnumero 2.28 and Family \textnumero 2.33. In these cases, Proposition~\ref{proposition:surj} and Corollary~\ref{corollary:ssur} can be applied directly. The interesting aspect of these examples is that the anticanonical Picard lattices, denoted \(L_{{2.28}}\) and \(L_{{2.33}}\), are isomorphic (see Appendix~\ref{appendix:picard-lattices-fano} below), therefore the fibres of the mirror Landau--Ginzburg models are members of the same moduli space of lattice-polarized K3 surfaces, and by Corollary~\ref{corollary:ssur} we obtain two extremal rulings passing through two, possibly different, type III boundary points. We show that, in this case, the type III boundary points coincide, but the induced rulings are very different.

\subsection{Mori--Mukai 2.1}\label{subsection:MM21}

For the Landau--Ginzburg mirror of Family \textnumero 2.1, there is a natural compactification which does not satisfy Definition~\ref{definition:tcom} (see~\cite{przyjalkowski2022singular}). This Landau--Ginzburg model is constructed, according to~\cite{ilten2013toric}, from the Laurent polynomial
\begin{equation*}
  \mathsf{p} = \dfrac{(x + y + 1)^6(z + 1)}{xy^2z} + z.
\end{equation*}
The polytope dual to the Newton polytope of \(\mathsf{p}\) is not integral, therefore, the construction of \(Z_{\mathsf{p}}\) described in Proposition~\ref{proposition:prz} cannot be carried out verbatim.

It is proved in Appendix~\ref{subsection:02-01} that the fibres of \(\mathsf{p}\) compactify to a family of K3 surfaces whose ambient Picard lattice is
\[
  L_{2.1} = H \oplus \mathbb{E}_8(-1)^2.
\]
There is Minkowski data attached to \(\mathsf{p}\), which we denote \(\mathsf{M}\). Up to isomorphism, any element of \(\mathcal{L}_{(P,\mathsf{M})}\) can be described in the following form:
\begin{equation}\label{equation:2.1}
  \mathsf{p}_c = \dfrac{(x + y + 1)^6(z + c)}{xy^2z} + z.
\end{equation}
One may apply the techniques in~Appendix~\ref{appendix:rank-02} to find a fibrewise compactification \(\mathcal{S}\) of the fibres of \(\mathsf{p}_c\) for which the lattice \(L_\mathcal{S}\) is isomorphic to \(L_{2.1}\). We expect that this family of Landau--Ginzburg models mirror the K\"ahler moduli space of Fano varieties in Family \textnumero 2.1. In this section, we verify that the conclusions of Proposition~\ref{proposition:main} hold for this family of Landau--Ginzburg models.

K3 surfaces with lattice polarization by \(L_{2.1}\) have been studied by Clingher and the first-named author~\cite{clingher2007modular}. In that paper, the lattice \(L_{2.1}\) is simply called \(M\). We we use the notation \(L_{2.1}\) to avoid confusion. K3 surfaces admitting \(L_{2.1}\)-polarization can be written as the vanishing locus of quartic polynomials of the form
\begin{equation}\label{equation:cd}
  y^2zw - 4x^3z + 3\alpha xzw^2 + \beta zw^3 - \dfrac{1}{2}(\delta z^2w^2 + w^4).
\end{equation}
We may view the coordinates \((\alpha, \beta, \delta)\) as weighted projective coordinates, with weights \((2,3,6)\) respectively, on the Baily--Borel compactification of \(\mathcal{M}_{L_{2.1}}\). In this notation, the locus \(\delta =0\) is a type II boundary curve, and the point \(\alpha^3= \beta^2, \delta =0\) is a type III boundary point.

\begin{proposition}\label{proposition:2.1}
  For each polynomial \(\mathsf{p}_c\), the induced period map \(\Pi_c \colon \mathbb{P}^1 \rightarrow \overline{\mathcal{M}}_{L_{2.1}}\) is given by
  \[
    \Pi_c = \left[\lambda^{2/3} : 1728c\mu^2 - \lambda^2 + 864\lambda \mu  : 2^{12}3^6{(c\mu + \lambda)\mu^3c}\right].
  \]
For general values of \(c\), the map \(\Pi_c\) sends \([\lambda:0]\) to the type III boundary point in \(\overline{\mathcal{M}}_{L_{2.1}}\) and sends \([-cs:s]\) to a point in the type II boundary component. The union of the images of all maps \(\Pi_c\) is Zariski-open in \(\mathfrak{\mathcal{M}}_{L_{2.1}}\).
\end{proposition}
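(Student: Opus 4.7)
The plan is to compute the period map directly by placing each K3 fibre of the pencil $\{\mathsf{p}_c = t\}_{t \in \mathbb{P}^1}$ into the Clingher--Doran normal form~\eqref{equation:cd} and reading off $(\alpha, \beta, \delta)$ as explicit functions of $(\lambda, \mu, c)$, where $t = \lambda/\mu$. First, I would use the smooth relative compactification $\mathsf{f}_c \colon \mathscr{Z}_c \to \mathbb{P}^1$ constructed in Appendix~\ref{subsection:02-01} and exhibit a natural elliptic fibration on each K3 fibre coming from one of the toric projections on $(\mathbb{C}^*)^3$ (for example, the projection induced by the $z$-coordinate in $\mathsf{p}_c$). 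A direct birational substitution, tracked through the blow-ups used to resolve the base locus, produces a Weierstrass presentation $y^2 = x^3 + A(s)x + B(s)$ whose coefficients are explicit polynomials in $(\lambda, \mu, c, s)$. The $L_{2.1}$-polarization forces $A$ and $B$ to factor in precisely the form dictated by two $II^*$ fibres accounting for the two copies of $\mathbb{E}_8(-1)$, which collapses the Weierstrass data to essentially three scalar invariants that must match $(\alpha, \beta, \delta)$.

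Next I would match this Weierstrass presentation against the Clingher--Doran dictionary: in~\cite{clingher2007modular} they provide an explicit formula expressing $(\alpha, \beta, \delta)$ in terms of the coefficients of the Weierstrass form of the canonical elliptic fibration on an $L_{2.1}$-polarized K3. Substituting the explicit $A(s), B(s)$ from the previous step into their formulas produces the claimed expression for $\Pi_c$. With the formula in hand, the boundary assertions follow by inspection. Setting $\mu = 0$ yields $\Pi_c([\lambda:0]) = [\lambda^{2/3} : -\lambda^2 : 0]$, which after the weighted scaling of $\mathbb{P}(2,3,6)$ reduces to the unique type III boundary point $\alpha^3 = \beta^2$, $\delta = 0$. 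At $[\lambda:\mu] = [-c:1]$, the factor $(c\mu + \lambda)$ in the third coordinate vanishes, so $\delta = 0$ and the image lies on the type II boundary curve.

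For the Zariski density claim, one assembles the family of maps $\{\Pi_c\}_{c\in \mathbb{C}^*}$ into a single rational map $\Psi \colon \mathbb{P}^1 \times \mathbb{C}^* \to \overline{\mathcal{M}}_{L_{2.1}}$. Since the target is two-dimensional, dominance of $\Psi$ is equivalent to nondegeneracy of its differential at a single generic point, which is a straightforward check against the explicit formula --- for instance, the $c$-derivatives of the second and third coordinates contribute independently at a generic $[\lambda:\mu]$. The main obstacle I anticipate is the explicit Weierstrass computation: identifying the most convenient elliptic fibration on the toric ambient space and carefully propagating coordinate changes through the resolution of Appendix~\ref{subsection:02-01} is the essential labour, after which the comparison to the Clingher--Doran normal form and the boundary/dominance analysis are essentially algebraic manipulations.
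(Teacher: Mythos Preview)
Your strategy is essentially the paper's: exhibit on each K3 fibre an elliptic fibration with two $\mathrm{II}^*$ fibres (accounting for $\mathbb{E}_8(-1)^2$), put it into Weierstrass form, and match against the Clingher--Doran model to read off $(\alpha,\beta,\delta)$; the boundary and dominance claims then follow from the explicit formula exactly as you outline. The paper's execution differs from yours only tactically: rather than tracking through the resolution of Appendix~\ref{subsection:02-01}, it works directly with the affine equation $\mathsf{p}_c = \lambda$, makes the substitution $u = x+y+1$ to compactify to a sextic over $\mathbb{P}^1_z$, then sets $q = \tilde{q}u^2$ to reduce to a quadratic in $u$, yielding a double cover of $\mathbb{A}^2_{\tilde{q},z}$ whose Weierstrass form is computed in one further step. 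This direct manipulation avoids the bookkeeping of propagating coordinates through blow-ups that you anticipate as the main labour, and is considerably shorter; your route would work but is more cumbersome than necessary.
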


\begin{proof}
  Take a hypersurface \(S_{c,\lambda}\) presented as \(\mathsf{p}_c = \lambda\), where \(\mathsf{p}_c\) is as in~\eqref{equation:2.1}. The map \(z \colon S_{c,\lambda}\rightarrow \mathbb{C}^*\) has a general fibre which is a (singular affine) curve of geometric genus \(1\). Let \(u= x+y+q\). The hypersurface \(S_{c,\lambda}\) may be compactified to a family of sextics curves in \(\mathbb{P}^2_{u,y,q}\) over \(\mathbb{P}^1_{z,w}\) of the form
  \[
    X^6(z+ w)q + q^3(u-y-q)z(cz-\lambda w) = 0.
  \]
  In the affine chart \(y=w=1\) we make the change of variables \(q=\widetilde{q}u^2\). The proper transform of \(S_{c,\lambda}\) is then
  \[
    (z+ 1)u^2\widetilde{q} + \widetilde{q}^3(u-1-\widetilde{q}u^2)y^2z(cz-\lambda) = 0.
  \]
  This is now quadratic in \(u^2\). The resulting double cover of \(\mathbb{A}^2_{\widetilde{q},z}\)  has ramification locus
  \[
    \widetilde{q}^6z^2(cz-\lambda)^2 + 4((z+ 1)\widetilde{q} - \widetilde{q}^4z(cz-\lambda))(\widetilde{q}^3z(cz-\lambda)).
  \]
  After normalization, we see that \(S_{c,\lambda}\) is birational to a hypersurface in \(\mathbb{A}^2_{\widetilde{q},z,Y}\) expressed in terms of the equations
  \[
    Y^2 = (\lambda-cz)(-4z^2X^3c + 4zX^3t + z^2X^2c - zX^2t + 4 + 4z)
  \]
  after renaming \(X = \widetilde{q}\). Viewing this as a family of cubics in \((X,Y)\), we may put it into Weierstrass form.
  \[
    Y^2 = X^3 - \dfrac{z^4}{3}X - \dfrac{2 z^5 ((864\lambda c + 864c^2)z^2 + (-\lambda^2 + 864\lambda + 1728c)z + 864)}{27\lambda^2}.
  \]
  These K3 surfaces have two fibres of Kodaira type \(\mathrm{II}^*\) and four singular fibres of type \(\mathrm{I}_1\). On the other hand, quartic K3 surfaces of the form~\eqref{equation:cd} also admit a fibration of the same type:
  \[
    Y^2 = X^3 -192\alpha z^4X + 512z^5 (\delta z^2 -2\beta z + 1).
  \]
  After appropriate coordinate scaling, one identifies parameters and obtains the desired result.

  Finally, to check that the image is open in \(\overline{\mathcal{M}}_{L_{2.1}}\) it is enough to see that the tangent map has full rank at some point. This is straightforward, and we omit the details.
\end{proof}

\subsection{Mori--Mukai 2.33}

The unique Fano threefold making up Family \textnumero 2.33 is a blow up of \(\mathbb{P}^3\) in a line. Therefore, Givental's algorithm tells us that the mirror Laurent polynomial is of the form
\begin{equation}\label{equation:2.33}
  a_0x + a_1y+ a_2z + \dfrac{a_3x}{z} + \dfrac{a_4}{xy} + a_5,\qquad a_i \in \mathbb{C}^*.
\end{equation}
Let \(P\) denote the Newton polytope of the Laurent polynomials in~\eqref{equation:2.33}, and let \(\mathcal{L}_P\) denote the space of all such Laurent polynomials. Combining Proposition~\ref{proposition:nl} and the computations in Appendix~\ref{appendix:rank-02}, we know that the Picard lattice of the vanishing locus of some \(\mathsf{p} \in \mathcal{L}_P\) has the form
\[
  L_{2.33} = \mathbb{E}_8(-1)^{\oplus 2}\oplus \left(\begin{matrix} 0 & 3 \\ 3 & 4 \end{matrix} \right),
\]
and, in fact, we know a priori that the period map \(\mathcal{L}_P\rightarrow \mathcal{M}_{L_{2.33}}\) is dominant by Proposition~\ref{proposition:nl}. Letting \((\mathbb{C}^*)^3\) act on the coordinates of members of \(\mathcal{L}_P\), we see that all elements of \(\mathcal{L}_P\) are equivalent to polynomials of the form
\[
  \mathsf{p}_{a,b} = x + y+ z + \dfrac{x}{z} + \dfrac{a}{xy} + b,
\]
where \(a = a_0^3a_1a_4/(a_2^2 a_3^2)\) and \(b = a_0a_5/(a_2a_3)\).

There is a universal family of \(H \oplus \mathbb{E}_8(-1)\oplus \mathbb{E}_7(-1)\)-polarized K3 surfaces described by Clingher--Doran~\cite{clingher2012lattice} and by Kumar~\cite{kumar2008k3}. We write Kumar's model as
\begin{equation}\label{equation:kumar}
  y^2  = x^3 - t^3 \left(\dfrac{I_4}{12}t + 1 \right)x + t^{5}\left( \dfrac{I_{10}}{4}t^2 + \dfrac{(I_2I_4 - 3I_6)}{108}t + \dfrac{I_2}{24}\right).
\end{equation}
These K3 surfaces are related by the Shioda--Inose construction~\cite{morrison1984k3} to principally polarized abelian surfaces, and the moduli space of \(H \oplus \mathbb{E}_8(-1)\oplus \mathbb{E}_7(-1)\)-polarized K3 surfaces can be identified with the moduli space of principally polarized abelian surfaces. The parameters \(I_2,I_4, I_6, I_{10}\) in~\eqref{equation:kumar} are Igusa's modular invariants for genus 2 curves~\cite{igusa1960arithmetic}, and thus can be written in terms of Siegel modular forms and vice versa. According to Clingher--Doran~\cite{clingher2012lattice}, the Baily--Borel compactification of the moduli space of \(H \oplus \mathbb{E}_8(-1)\oplus \mathbb{E}_7(-1)\)-polarized K3 surfaces is given by the map
\[
  (\alpha,\beta,\gamma,\delta) = \left( \dfrac{1}{36} I_4, \dfrac{-1}{216} I_2I_4 + \dfrac{1}{72} I_6, \dfrac{1}{4} I_{10}, \dfrac{1}{96} I_2I_{10}\right)
\]
and \(\alpha,\beta,\gamma,\delta\) are variables on \(\mathbb{P}(2,3,5,6)\). The family of K3 surfaces in~\eqref{equation:cd} can be obtained by setting \(\gamma = 0\). In terms of these variables, there is a type II boundary component at \(\gamma=\delta=0\), a type III boundary component at \(\gamma=\delta=\beta^2-\alpha^3=0\).

\begin{proposition}\label{proposition:2.33}
  Any K3 surface which admits lattice polarization by
  \[
    L_{2.33} = \mathbb{E}_8(-1)^2 \oplus \left( \begin{array}{rr} 0 & 3  \\ 3 & 4 \end{array} \right)
  \]
  also admits lattice polarization by \(H \oplus \mathbb{E}_8(-1) \oplus \mathbb{E}_7(-1)\). Therefore, there is a period map from \(\mathcal{L}_P\) to the moduli space \(\mathcal{M}_{L_{2.33}}\). This morphism is given by the map
  \[
    \begin{array}{l}
      \alpha = \frac{1}{9} (144 a_{1} a_{2} a_{3} a_{4} + 24 a_{0} a_{1} a_{4} a_{5} + a_{5}^{4}),  \\  \beta = -\frac{1}{27} (216 a_{0}^{2} a_{1}^{2} a_{4}^{2} - 648 a_{1} a_{2} a_{3} a_{4} a_{5}^{2} + 36 a_{0} a_{1} a_{4} a_{5}^{3} + a_{5}^{6}), \\
      \gamma  = 1024  a_{3}  a_{2}  a_{0}^{2}  a_{4}^{3}  a_{1}^{3}, \\  \delta= \frac{1024}{3} a_{3} a_{2}  a_{4}^{3}  a_{1}^{3}  (12 a_{2}^{2} a_{3}^{2} - 12 a_{0} a_{2} a_{3} a_{5} + a_{0}^{2} a_{5}^{2}).
    \end{array}
  \]
  Consequently, the period map \(\Pi_{a,b} \colon \mathbb{P}^1\rightarrow \mathcal{M}_N\) is given by
  \[
    \begin{array}{l}
      \alpha(\mu,\lambda) = \frac{1}{9} (144 a\lambda^4 + 24 ab\lambda^3\mu + b^{4}\mu^4),  \\  \beta(\mu,\lambda) = -\frac{1}{27}  (216 a^2\lambda^6 - 648 ab^2\lambda^4\mu^2 + 36 ab^3\lambda^3\mu^3 + b^6\mu^6), \\
      \gamma(\mu,\lambda)  = 1024  a^3\lambda^{10}, \\  \delta(\mu,\lambda) = \frac{1024}{3} a^3\lambda^{10}  (12 \lambda^2 - 12 b\lambda\mu + b^{2}\mu^2).
    \end{array}
  \]
  For each \(a,b\), the map \(\Pi_{a,b}\) sends \([0:\lambda]\) to the type III boundary point but that no other point maps to closure of the type II boundary component of \(\mathcal{M}_{L_{2.33}}\).
\end{proposition}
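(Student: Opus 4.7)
The plan is to follow the template of the proof of Proposition~\ref{proposition:2.1}, replacing the Clingher--Doran quartic~\eqref{equation:cd} by Kumar's Weierstrass model~\eqref{equation:kumar}. The statement has three pieces, which I would address in sequence.

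First, for the lattice assertion: $A := H \oplus \mathbb{E}_8(-1) \oplus \mathbb{E}_7(-1)$ has rank~$17$ while $L_{2.33}$ has rank~$18$, so I would construct a primitive embedding with rank-$1$ orthogonal complement. Concretely, embed the $\mathbb{E}_8(-1)$ summand of $A$ into one of the two $\mathbb{E}_8(-1)$ factors of $L_{2.33}$, embed $\mathbb{E}_7(-1)$ into the other $\mathbb{E}_8(-1)$ factor (with orthogonal complement $\langle -2\rangle$), and exhibit a primitive copy of $H$ inside $\langle -2\rangle \oplus \begin{pmatrix} 0 & 3 \\ 3 & 4 \end{pmatrix}$; the residual discriminant-form compatibility is then handled by Nikulin's criterion (Remark~\ref{remark:embedding},~\cite[Theorems~1.4.6, 1.4.8]{dolgachev1982lattices}). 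This embedding induces a finite forgetful morphism $\mathcal{M}_{L_{2.33}} \to \mathcal{M}_{A}$, so every $L_{2.33}$-polarized K3 surface automatically inherits an $A$-polarization with an ample class, which is what allows Kumar's parametrization to be pulled back to our family.

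Second, for the explicit period map: for each $(a, b)$ and each $\lambda$, the K3 surface $S_{a,b,\lambda}$ cut out by $\mathsf{p}_{a,b} = \lambda$ admits an elliptic fibration obtained, as in Proposition~\ref{proposition:2.1}, by projecting along one of the coordinate axes, compactifying to a family of genus-$2$ curves, and running a sequence of birational reductions to Weierstrass form $Y^2 = X^3 + f(t)X + g(t)$ over the base. Matching this term-by-term with Kumar's model~\eqref{equation:kumar} will express the Igusa invariants $(I_2, I_4, I_6, I_{10})$ as explicit polynomials in $a_0, \ldots, a_5$; substituting into the Clingher--Doran formulas
\[
  (\alpha, \beta, \gamma, \delta) = \bigl(\tfrac{1}{36} I_4,\ -\tfrac{1}{216} I_2 I_4 + \tfrac{1}{72} I_6,\ \tfrac{1}{4} I_{10},\ \tfrac{1}{96} I_2 I_{10}\bigr)
\]
and collecting will produce the stated formulas on $\mathcal{L}_P$. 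Homogenizing the one-parameter pencil of fibres in $[\mu : \lambda]$ (essentially replacing $a_5$ by $a_5 - \lambda/\mu$) then gives the formulas for $\Pi_{a,b}$.

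Finally, the boundary analysis reduces to inspection of the resulting formulas. At $\lambda = 0$ one reads off $\alpha = b^4\mu^4/9$, $\beta = -b^6\mu^6/27$, and $\gamma = \delta = 0$; the identity $\alpha^3 = \beta^2 = b^{12}\mu^{12}/729$ then places the image at the (unique) type III boundary point $\{\gamma = \delta = 0,\ \alpha^3 = \beta^2\}$ of $\mathbb{P}(2,3,5,6)$. For any $[\mu : \lambda]$ with $\lambda \neq 0$, the coordinate $\gamma = 1024 a^3\lambda^{10}$ is nonzero, so the image avoids the closure $\{\gamma = \delta = 0\}$ of the type II boundary curve. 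The main technical obstacle is the Weierstrass reduction in the second step: in contrast to Proposition~\ref{proposition:2.1}, where the $z$-projection on $S_{c,\lambda}$ exhibits an almost immediate hyperelliptic structure, for Family~\textnumero~2.33 the appropriate genus-$1$ fibration on $S_{a,b,\lambda}$ is less transparent, and one must identify the correct sequence of birational changes of variable before the equation matches the normalization of~\eqref{equation:kumar}. Once that fibration has been chosen, the remaining identifications are routine polynomial computations.
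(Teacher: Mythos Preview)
Your boundary analysis in the third step is correct and essentially what the paper asserts without details. The overall strategy for the period-map computation is also right, though note a slip: the fibration you build on $S_{a,b,\lambda}$ is \emph{elliptic} (genus~$1$), not genus~$2$. The genus-$2$ curves enter only through the Shioda--Inose correspondence behind Kumar's parametrisation; on the K3 side you want an elliptic fibration with a $\mathrm{II}^*$ and a $\mathrm{III}^*$ fibre, since those are precisely the fibre types of~\eqref{equation:kumar}. The paper makes this explicit: one performs a toric change of variables on~\eqref{equation:2.33} so that projection to one coordinate exhibits such a fibration, puts it in Weierstrass form, and matches coefficients.

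The genuine gap is in your lattice step. The block embedding you propose --- $\mathbb{E}_8(-1)$ into one summand, $\mathbb{E}_7(-1)$ into the other (complement $\langle -2\rangle$), then $H$ into $\langle -2\rangle \oplus \left(\begin{smallmatrix}0&3\\3&4\end{smallmatrix}\right)$ --- does not exist. Write the last lattice with basis $e,f_1,f_2$ ($e^2=-2$, $f_1^2=0$, $f_2^2=4$, $f_1\cdot f_2=3$). An isotropic vector $u=ae+bf_1+cf_2$ satisfies $a^2=c(3b+2c)$, hence $a^2\equiv 2c^2\pmod 3$; since $2$ is a non-residue mod~$3$ this forces $3\mid a$ and $3\mid c$. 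But then $u\cdot v=-2aa'+3(bc'+b'c)+4cc'\in 3\mathbb{Z}$ for every $v$, so no $v$ with $u\cdot v=1$ exists and $H$ cannot embed. The true primitive embedding of $H\oplus\mathbb{E}_8(-1)\oplus\mathbb{E}_7(-1)$ into $L_{2.33}$ therefore cannot respect your block decomposition; the hyperbolic plane must mix with the second $\mathbb{E}_8(-1)$ factor. The paper sidesteps this by citing~\cite{kumar2015hilbert}; alternatively one can argue via orthogonal complements in $L_{\K3}$, reducing to a primitive embedding of $\left(\begin{smallmatrix}0&3\\3&4\end{smallmatrix}\right)$ into $H\oplus\langle -2\rangle$, which does exist (the unique order-$3$ subgroup of the discriminant group is isotropic). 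Either way, your appeal to Nikulin at this point would, if carried out, reveal the obstruction rather than resolve it.
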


\begin{proof}
  The first statement follows from the fact that the lattice \(\mathbb{E}_8(-1)\oplus \mathbb{E}_7(-1)\oplus H\) embeds primitively into the lattice \(L_{2.33}\), see, e.g.,~\cite{kumar2015hilbert}. To express this period map in terms of Clingher--Doran's modular invariants, we use a similar tactic to those described in the proof of Proposition~\ref{proposition:2.1}. We omit details.

  After an appropriate toric change of variables we may express~\eqref{equation:2.33} so that projecting onto one of the variables, \(x,y\), or \(z\), provides an elliptic fibration which also admits an \(\mathrm{II}^*\) and \(\mathrm{III}^*\) type singular fibres. Once we have such an equation, it can be put into Weierstrass form, \(y^2 = x^3 + g_2(a,b,\lambda,\mu)x + g_3(a,b,\lambda,\mu)\). After this is completed, we scale variables and coefficients until our Weierstrass equation looks like~\eqref{equation:kumar}. At this point, we then solve for \(I_2,I_4,I_6\), and \(I_{12}\). Changing variables to the \(\alpha,\beta,\gamma\), and \(\delta\) of Clingher and Doran, we obtain the desired result.
\end{proof}

\subsection{Mori--Mukai 2.28}\label{subsection:mm228}

We observe from Appendix~\ref{appendix:picard-lattices-fano} that the fibres of the Landau--Ginzburg mirrors of Family \textnumero 2.28 admit the same lattice polarization as the fibres of the mirror Landau--Ginzburg models to Family \textnumero 2.33. The standard mirror for Family \textnumero2.28 can be obtained by taking the family of Laurent polynomials in Example~\ref{example:2.28}
\begin{equation}\label{equation:pcd}
  \mathsf{p}_{c,d} = \dfrac{(xyz + 1) (cxyz^2 + dxyz + x + y)}{xyz} - 1,
\end{equation}
and specializing \(c=d=1\). We have the following result.

\begin{proposition}\label{proposition:228}
  The fibre of \(\mathsf{p}_{c,d}\) over \(\lambda\) is birational to the vanishing locus of
  \[
    x + y+ z + \dfrac{x}{z} + \dfrac{c\lambda^2}{xy} + 1 - d\lambda.
  \]
  Therefore, by Proposition~\ref{proposition:2.33}, period maps \(\Pi_{c,d}\) are of the form
  \[
    \begin{array}{l}
      \alpha(\mu,\lambda) = \left(\frac{1}{9}\right)  (d^{4} \lambda^{4} - 4 d^{3} \lambda^{3} \mu - 24 c d \lambda^{3} \mu + 6 d^{2} \lambda^{2} \mu^{2} + 168 c \lambda^{2} \mu^{2} - 4 d \lambda \mu^{3} + \mu^{4}), \\
      \beta(\lambda,\mu) = \left(-\frac{1}{27}\right)  (d^{6} \lambda^{6} - 6 d^{5} \lambda^{5} \mu - 36 c d^{3} \lambda^{5} \mu + 15 d^{4} \lambda^{4} \mu^{2} - 540 c d^{2} \lambda^{4} \mu^{2} - 20 d^{3} \lambda^{3} \mu^{3} + 216 c^{2} \lambda^{4} \mu^{2},
      \\ \hspace{7cm}+ 1188 c d \lambda^{3} \mu^{3} + 15 d^{2} \lambda^{2} \mu^{4} - 612 c \lambda^{2} \mu^{4} - 6 d \lambda \mu^{5} + \mu^{6}), \\
      \gamma(\mu,\lambda)  = \left(1024\right)  c^{3} \mu^{4} \lambda^{6}\\
      \delta(\mu,\lambda)= \left(\frac{1024}{3}\right)  c^{3}  \mu^{4}  \lambda^{6} (d^{2} \lambda^{2} + 10 d \lambda \mu + \mu^{2}).
    \end{array}
  \]
\end{proposition}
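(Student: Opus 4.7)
The plan is to first establish the birational equivalence asserted in the statement, and then to combine it with Proposition~\ref{proposition:2.33} to derive the period formulas. For the birational equivalence, the starting observation is that $\mathsf{p}_{c,d}=\lambda$ clears denominators to $(xyz+1)(cxyz^{2}+dxyz+x+y) = (1+\lambda)xyz$; introducing the toric functions $T=xyz$ and $q=(x+y)/T = 1/(xz)+1/(yz)$, this takes the compact form $(T+1)(cz+d+q)=1+\lambda$. The fibre is then realized as a double cover of the $(T,z)$-plane, with $x,y$ recovered as the two roots of $u^{2}-qTu+T/z=0$. In parallel, the target hypersurface $X+Y+Z+X/Z+c\lambda^{2}/(XY)+(1-d\lambda)=0$, viewed as a quadratic either in $X$ or in $W=XY$, admits an analogous double-cover presentation over a two-dimensional base. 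The plan is to find an explicit $\lambda$-dependent change of variables in the respective base planes that identifies the two branch loci, producing the birational map.

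Once the birational equivalence is established, the fibre of $\mathsf{p}_{c,d}$ over $\lambda$ is isomorphic (since birational smooth K3 surfaces are isomorphic) to the vanishing locus of the member of the family of Proposition~\ref{proposition:2.33} with parameters $(a,b)=(c\lambda^{2},\,1-d\lambda)$, and therefore has the same period in $\mathcal{M}_{N}$. Substituting these values into the formulas of Proposition~\ref{proposition:2.33} at the pencil point corresponding to the vanishing locus, and then homogenizing in the 2.28 pencil coordinates $[\mu:\lambda]$ using the weights $(2,3,5,6)$ of $\mathbb{P}(2,3,5,6)$ to fix the degrees, yields the expressions in the statement. Verifying that the homogenized expressions agree with the claimed formulas for $\alpha,\beta,\gamma,\delta$ is then a routine polynomial identity (each coefficient of $\mu^{i}\lambda^{j}$ can be matched term by term, using only expansions of $(1-d\lambda)^{k}$).

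The main obstacle is producing the explicit birational transformation above. The Newton polytopes of $\mathsf{p}_{c,d}$ and of $\mathsf{p}_{c\lambda^{2},\,1-d\lambda}$ have different vertex sets (eight versus six), so the map cannot be a toric mutation in the sense of Definition~\ref{definition:mut}. A robust alternative, in the spirit of the proof of Proposition~\ref{proposition:2.33}, is to exhibit on each K3 surface an elliptic fibration sharing the same configuration of $\mathrm{II}^{*}$ and $\mathrm{III}^{*}$ singular fibres, place both in Weierstrass form, and verify that the resulting models coincide after a $\lambda$-dependent rescaling; once the Weierstrass equations match, the remainder of the argument reduces to the Kumar and Clingher--Doran computation already carried out in the proof of Proposition~\ref{proposition:2.33}.
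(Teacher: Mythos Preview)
Your plan for the second half is correct and agrees with the paper: once the birational identification with the family~\eqref{equation:2.33} is known, one substitutes $a_0=a_1=a_2=a_3=1$, $a_4=c\lambda^{2}$, $a_5=1-d\lambda$ into the $(\alpha,\beta,\gamma,\delta)$ formulas of Proposition~\ref{proposition:2.33} and homogenizes in $[\mu:\lambda]$ using the $\mathbb{P}(2,3,5,6)$ weights (concretely, rescaling by $t=\mu^{2}$). Your term-by-term check is exactly how this goes.

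For the first half you over-complicate things and miss the one-line trick that does most of the work. The paper does \emph{not} attempt a double-cover matching; it simply observes that the $\lambda$-dependent monomial rescaling $(x,y,z)\mapsto (x/\lambda,\,y/\lambda,\,\lambda^{2}z)$ sends the fibre $\lambda\cdot\frac{(xyz+1)(cxyz^{2}+dxyz+x+y)}{xyz}=\mu$ to $\frac{(xyz+1)(c\lambda^{2}xyz^{2}+d\lambda\,xyz+x+y)}{xyz}=\mu$, absorbing $\lambda$ into the coefficients. This is a toric (indeed diagonal) change of variables, so your assertion that the map ``cannot be a toric mutation because the vertex counts differ'' is a red herring: mutations do change Newton polytopes, and in any case the first step here is just a scaling. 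After this reduction, the paper finishes by putting both the (rescaled) 2.28 fibre and the 2.33 fibre into a common Jacobian elliptic form---but with an $\mathrm{I}_4^{*}$ fibre at $0$ and an $\mathrm{I}_8$ fibre at $\infty$, not the $\mathrm{II}^{*}/\mathrm{III}^{*}$ fibration you propose. The $\mathrm{II}^{*}/\mathrm{III}^{*}$ fibration certainly exists on these surfaces (both carry an $H\oplus\mathbb{E}_8\oplus\mathbb{E}_7$ polarization), but it is not the one that arises naturally from toric projections of the 2.28 Laurent polynomial; the $\mathrm{I}_4^{*}/\mathrm{I}_8$ model is what both Laurent polynomials produce directly, making the coefficient comparison immediate.

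In short: keep your substitution-and-homogenization argument, but replace the speculative double-cover matching by the monomial rescaling above followed by the $\mathrm{I}_4^{*}/\mathrm{I}_8$ Weierstrass comparison.
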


\begin{proof}
  The fibre over \([\lambda:\mu]\in \mathbb{P}^1 \) of \(\mathsf{p}_{c,d}\) is written as the vanishing locus of the Laurent polynomial
  \[
    \lambda \dfrac{(xyz + 1) (cxyz^2 + dxyz + x + y)}{xyz} - \mu.
  \]
  After scaling coordinates \((x,y,z)\mapsto (x/\lambda,y/\lambda,\lambda ^2z)\), this becomes
  \[
    \dfrac{(xyz + 1) (c\lambda^2xyz^2 + d\lambda xyz + x + y)}{xyz} - \mu.
  \]
  Using appropriate toric change of variables, we can express fibres of \(\mathsf{p}_{c,d}\) and \(\mathsf{p}_{a,b}\) as Jacobian elliptic surfaces with a \(\mathrm{I}_4^*\) type fibre at 0, and an \(\mathrm{I}_8\) fibre at \(\infty\). Using this representation we express the \(a_0, \ldots, a_5\) coefficients of~\eqref{equation:2.33} in terms of \(c,d,\lambda,\mu\). Applying Proposition~\ref{proposition:2.33}, we obtain the equations in the statement of the current proposition.
\end{proof}

\begin{corollary}
  The family of Laurent polynomials in~\eqref{equation:pcd} is versal. In other words, the family of varieties \(Z_{\mathsf{p}_{c,d}}\) obtained by applying Proposition~\ref{proposition:prz} is a versal deformation of \(Z_{\mathsf{p}_{1,1}}\).
\end{corollary}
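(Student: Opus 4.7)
The plan is to show that the Kodaira--Spencer map $\kappa \colon T_{(1,1)}(\mathbb{C}^*)^{2} \to T_s \Def(Z_{\mathsf{p}_{1,1}})$ of the $(c,d)$-family at $(1,1)$ is an isomorphism. By Theorem~\ref{theorem:main1} combined with the dimension counts in the introduction (specifically $\dim \Def(Z,D,\mathsf{f}) = h^{1,1}(X) + 2$ and the $2$-dimensional kernel of the forgetful map $\Def(Z,D,\mathsf{f}) \to \Def(Z)$ spanned by the potential deformations from~\eqref{equation:trivdef}), we have $\dim \Def(Z_{\mathsf{p}_{1,1}}) = h^{1,1}(X) = 2$ for $X$ in Family \textnumero 2.28. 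Since the $(c,d)$-base is also $2$-dimensional, it suffices to show that $\kappa$ has rank $2$.

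To verify the rank, I would compose with the period map. Fix a base point $[\sigma_0:\tau_0] = [1:1] \in \mathbb{P}^1$ and form the marked fibre $F_{c,d} = \mathsf{f}_{c,d}^{-1}([1:1])$ for each $(c,d)$, which produces a Kodaira--Spencer map $T_{(1,1)}(\mathbb{C}^*)^{2} \to T_s \Def(Z_{\mathsf{p}_{1,1}}, F_{1,1})$. Composing with the period map $\Def(Z_{\mathsf{p}_{1,1}}, F_{1,1}) \to \mathcal{M}_{L_{2.28}}$, which is surjective onto the $2$-dimensional moduli space by Proposition~\ref{proposition:main}, yields the explicit composition $(c,d) \mapsto \Pi_{c,d}([1:1])$ computed in Proposition~\ref{proposition:228}. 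A direct Jacobian calculation at $(c,d) = (1,1)$ shows that $\partial_c$ and $\partial_d$ are linearly independent modulo the scaling direction in the weighted projective space $\mathbb{P}(2,3,5,6)$: the formula $\gamma(c,d,1,1) = 1024\, c^{3}$ gives $\partial_d \gamma = 0$ but $\partial_c \gamma \neq 0$, while $\delta(c,d,1,1) = \tfrac{1024}{3}c^3(d^2 + 10d + 1)$ has $\partial_d \delta \neq 0$ at $(1,1)$, which together with the standard scaling direction provides three linearly independent vectors in $T\mathbb{P}(2,3,5,6)$. Hence the composite $(c,d) \to \mathcal{M}_{L_{2.28}}$ has rank $2$ at $(c,d) = (1,1)$, forcing $\kappa$ to have rank $2$.

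The main obstacle is the careful dimension bookkeeping needed to pass from rank information for the map to $\Def(Z,F)$ (or to $\mathcal{M}_{L_{2.28}}$) back to rank information for $\kappa$ itself, because the forgetful map $\Def(Z,F) \to \Def(Z)$ has a one-dimensional fibre coming from the anticanonical pencil $|F|$. The cleanest workaround is to phrase the argument at the level of the triple $(Z, D, \mathsf{f})$: enlarge $(c,d)$ to a $4$-dimensional family $(c, d, \lambda, \mu)$ using the two-parameter potential deformation~\eqref{equation:trivdef}, whose Kodaira--Spencer maps into the $4$-dimensional $T_s \Def(Z, D, \mathsf{f})$. The $(\lambda, \mu)$-directions span exactly the $2$-dimensional kernel of $T_s\Def(Z,D,\mathsf{f}) \to T_s\Def(Z)$, and the rank computation above together with the explicit action of $(\lambda, \mu)$ on the family of period maps $\Pi_{c,d} \colon \mathbb{P}^1 \to \mathcal{M}_{L_{2.28}}$ by M\"obius transformations of the base $\mathbb{P}^1$ proves that the $4$-dimensional Kodaira--Spencer is injective, hence an isomorphism, whence $\kappa$ itself is an isomorphism and the $(c,d)$-family is versal.
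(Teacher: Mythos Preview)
Your proposal is correct and takes essentially the same approach as the paper: both deduce versality from the explicit period map in Proposition~\ref{proposition:228}, the paper phrasing it as a one-line contrapositive (``if not versal, the K3 fibres would not form a complete $L_{2.28}$-polarized family'') while you perform the Jacobian check directly. Your ``obstacle'' paragraph is over-cautious: here $\dim T_s\Def(Z,F)=\rk(\Pic(X))=2=\dim T_s\Def(Z)$, and the forgetful map $T_s\Def(Z,F)\to T_s\Def(Z)$ is surjective (since $H^1(F,\mathcal{O}_F)=0$ for a K3 fibre), hence an isomorphism; so your first argument already forces $\kappa$ to have rank $2$ and the four-parameter enlargement is unnecessary. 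One minor point: the components $\gamma$ and $\delta$ alone do not separate $\partial_c,\partial_d$ from the weighted-scaling direction in $\mathbb{P}(2,3,5,6)$; a further look at, say, $\alpha$ is needed to confirm linear independence, and that check goes through.
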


\begin{proof}
  If \(Z_{\mathsf{p}_{c,d}}\) were not versal, the induced deformation of pairs \((Z_{\mathsf{p}_{c,d}},F)\), obtained by deforming fibres in \(Z_{\mathsf{p}_{c,d}}\), would not induce a complete family of \(L_{2.28}\)-polarized K3 surfaces. From Proposition~\ref{proposition:228} one checks directly that this is not the case.
\end{proof}

\begin{remark}
  Notice that for each value of \(c,d\), the map \(\Pi_{c,d}\) maps both \([\mu:0]\) to the type III boundary point in \(\overline{\mathcal{M}}_{L_{2.33}}\) and maps \([0:\lambda]\) to a point in the type II boundary component of \(\overline{\mathcal{M}}_{L_{2.33}}\). It is interesting to compare the two maps. Observe that, since the images of \(\Pi_{a,b}\) do not intersect the type II boundary component, the images of the two maps are distinct, but that they both describe families of pointed curves in \(\overline{\mathcal{M}}_{L_{2.33}}\) passing through the same type III boundary point.
\end{remark}

\section{Dolgachev--Nikulin duality for toric Landau--Ginzburg models of Fano threefolds}\label{section:DN-duality}

In this section we will explain the proof of Theorem~\ref{theorem:DN-duality}, as stated in the introduction to this paper. Let us recall that Dolgachev--Nikulin duality is a mirror symmetry correspondence between families of lattice-polarized K3 surfaces (see Subsection~\ref{subsection:DN-duality} for details). After a brief summary of the theory of toric Landau--Ginzburg models, we show how to compute the intersection pairing on
\[
  \im(H^2(Z,\mathbb{Z}) \rightarrow H^2(F,\mathbb{Z})),
\]
where \((Z, \mathsf{f})\) is the standard toric Landau--Ginzburg model for a Fano threefold (see Definition~\ref{definition:LG-standard}), and \(F\) is its general fibre. Then we explain how to compute the intersection pairing on
\[
  \im(H^2(X,\mathbb{Z})\rightarrow H^2(S,\mathbb{Z})),
\]
where \(S\) is a smooth anticanonical divisor in a smooth Fano threefold \(X\). We then explain how one uses these results to verify Dolgachev--Nikulin mirror symmetry for families containing \(F\) and \(S\).

\subsection{Toric Landau--Ginzburg models}\label{subsection:LG-models}

Let \(\varphi[\mathsf{p}]\) be the constant term of a Laurent polynomial \(\mathsf{p}\). Define \emph{the main period} for \(\mathsf{p}\) as the following formal series: \(I_{\mathsf{p}}(t) = \sum \varphi[\mathsf{p}^j] t^j\). The following theorem (see~\cite[Proposition~2.3]{przyjalkowski2013weak} for the proof) justifies this definition.

\begin{theorem}
  Let \(\mathsf{p}\) be a Laurent polynomial in \(n\) variables. Let \(\mathbf {D}\) be a Picard--Fuchs differential operator for a pencil of hypersurfaces in a torus provided by \(\mathsf{p}\). Then we have \(\mathbf{D}[I_{\mathsf{p}}(t)] = 0\).
\end{theorem}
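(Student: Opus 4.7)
The plan is to realize the formal series $I_{\mathsf{p}}(t)$ as a genuine period integral of the pencil of affine hypersurfaces \(\{\mathsf{p}=s\}\subset (\mathbb{C}^*)^n\), parametrized by \(s=1/t\), from which annihilation by any Picard--Fuchs operator \(\mathbf{D}\) of that pencil is automatic. Throughout, let \(\omega=\dfrac{dx_1}{x_1}\wedge\cdots\wedge \dfrac{dx_n}{x_n}\) denote the invariant top form on the torus.

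First, I would introduce the oscillating period
\[
  \pi(t) \;=\; \frac{1}{(2\pi i)^n}\int_{\Gamma}\frac{\omega}{1-t\,\mathsf{p}(x)},
\]
where \(\Gamma=\{|x_1|=\varepsilon_1,\ldots,|x_n|=\varepsilon_n\}\) is a real \(n\)-torus chosen so that \(|t\,\mathsf{p}(x)|<1\) on \(\Gamma\) for \(t\) in a small punctured disk about \(0\). Expanding the integrand as a geometric series and interchanging sum and integral (justified by uniform convergence on the compact cycle \(\Gamma\)) gives
\[
  \pi(t) \;=\; \sum_{j\geq 0} t^{j}\cdot \frac{1}{(2\pi i)^n}\int_{\Gamma}\mathsf{p}(x)^{j}\,\omega.
\]
Since integration of a Laurent monomial \(x^{\alpha}\) against \(\omega/(2\pi i)^n\) over \(\Gamma\) selects the coefficient \([\alpha=0]\), the inner integral is exactly \(\varphi[\mathsf{p}^{j}]\), and hence \(\pi(t)=I_{\mathsf{p}}(t)\) as germs at \(t=0\).

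Next, I would interpret \(\pi(t)\) as a genuine period of the pencil. Writing \(s=1/t\) and using the Leray residue formula
\[
  \frac{\omega}{\mathsf{p}-s}\;=\;-\frac{dt}{t}\wedge\bigl(\text{something regular}\bigr) \;+\; \text{tube expansion around }V_s,
\]
one shows that the tube cycle \(\tau(\Gamma)\subset (\mathbb{C}^*)^n\setminus V_s\), obtained by fibering a small circle around \(V_s\) over a suitable cycle \(\Gamma_s\) in \(V_s\), satisfies
\[
  \pi(t) \;=\; \frac{1}{(2\pi i)^{n-1}}\int_{\Gamma_s}\mathrm{Res}_{V_s}\!\left(\frac{\omega}{\mathsf{p}-s}\right).
\]
As \(t\) varies in a small punctured disk, the residue form varies holomorphically as a section of the vanishing cohomology of the pencil, and the cycle \(\Gamma_s\) extends to a flat section of the local system \(R^{n-1}\mathsf{p}_*\mathbb{Z}\) (this is where one uses that \(\Gamma\) is a product cycle and that \(V_s\) remains smooth for small \(1/s\)). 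Therefore \(\pi(t)\) is a period of the pencil \(\{V_s\}\), and consequently any Picard--Fuchs operator \(\mathbf{D}\) of that pencil annihilates it; since \(\pi(t)=I_{\mathsf{p}}(t)\) as formal series, we conclude \(\mathbf{D}[I_{\mathsf{p}}(t)]=0\).

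The main obstacle I expect is the third step: carefully verifying the Leray tube identification and showing that the tube cycle actually extends to a flat family of cycles on the smooth locus of the pencil (and that the residue form is the correct holomorphic \((n-1)\)-form whose periods \(\mathbf{D}\) annihilates). For a non-degenerate \(\mathsf{p}\) this is standard Griffiths--Dwork machinery, but for general \(\mathsf{p}\) one must invoke a small-\(t\) argument so that \(V_{1/t}\) is smooth and the residue calculus is valid. The other subtlety, exchanging sum and integral in the geometric series expansion, is routine once the cycle \(\Gamma\) is chosen inside the domain of uniform convergence.
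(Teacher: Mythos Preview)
The paper itself does not prove this theorem; it simply cites \cite{przyjalkowski2013weak}. Your approach --- realize $I_{\mathsf p}(t)$ as the torus integral $\pi(t)=\frac{1}{(2\pi i)^n}\int_\Gamma\frac{\omega}{1-t\,\mathsf p}$ via a geometric series, then observe that $\pi(t)$ is a period and hence is killed by $\mathbf D$ --- is the standard one, and steps one and two are correct as written.

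The gap is in step three. Your Leray residue display is not a well-formed identity, and more importantly the claim that $\Gamma$ is a tube $\tau(\Gamma_s)$ over a cycle $\Gamma_s\subset V_s$ is false in general. The real $n$-torus $\Gamma$ represents a generator of $H_n\bigl((\mathbb C^*)^n,\mathbb Z\bigr)\cong\mathbb Z$, and this class survives under $H_n\bigl((\mathbb C^*)^n\setminus Z_t\bigr)\to H_n\bigl((\mathbb C^*)^n\bigr)$; but the image of the tube map $\tau\colon H_{n-1}(Z_t)\to H_n\bigl((\mathbb C^*)^n\setminus Z_t\bigr)$ lies in the kernel of that map. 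So $\Gamma$ is not homologous to a tube.

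The fix is to abandon the tube identification and work on the complement directly. For $t$ in a small disc, $\Gamma$ sits in $U_t=(\mathbb C^*)^n\setminus Z_t$ and is a locally constant family of $n$-cycles there; the class of $\frac{\omega}{1-t\,\mathsf p}$ lies in $H^n_{\mathrm{dR}}(U_t)$ and varies algebraically in $t$. Hence $\pi(t)$ is a period of the Gauss--Manin connection on $H^n(U_t)$. Griffiths pole-order reduction shows the classes $\partial_t^k\!\left[\frac{\omega}{1-t\,\mathsf p}\right]=k!\left[\frac{\mathsf p^k\omega}{(1-t\,\mathsf p)^{k+1}}\right]$ span a finite-dimensional space, yielding a linear relation $\sum_k a_k(t)\partial_t^k\pi(t)=0$; via the Gysin sequence $H^n(U_t)$ differs from $H^{n-1}(Z_t)$ only by the constant local system $H^n\bigl((\mathbb C^*)^n\bigr)$, so this operator is the Picard--Fuchs operator of the pencil. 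No smoothness of $V_s$ at $t=0$ is needed, since you only ever use that $\Gamma\subset U_t$ for small $t$.
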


Let us recall the definition of a toric Landau--Ginzburg model (see~\cite[2.1]{przyjalkowski2018review} for the details).

\begin{definition}
  A \emph{toric Landau--Ginzburg model} for a pair of a smooth Fano variety \(X\) of dimension \(n\) and divisor \(D\) on it is a Laurent polynomial \(\mathsf{p} \in \mathbb{C}[x_1^{\pm 1}, \ldots, x_n^{\pm 1}]\) which satisfies the following conditions.
  \begin{itemize}
  \item[\textbf{Period condition:}] One has \(I_{\mathsf{p}} = \widetilde{I}_0^{(X, D)}\), where \(\widetilde{I}_0^{(X, D)}\) is the so called \emph{restriction of the constant term of regularized I-series to the anticanonical direction}.
  \item[\textbf{Calabi--Yau condition:}] There exists a relative compactification of a family \(\mathsf{p} \colon (\mathbb{C}^*)^n \rightarrow \mathbb{C}\) whose total space is a (non-compact) smooth Calabi--Yau variety \(Y\). We refer to such a compactification as a \emph{Calabi--Yau (partial) compactification}.
  \item[\textbf{Toric condition:}] There is a flat degeneration \(X \leadsto T\) to a toric variety \(T\) so that \(F(T) = N(\mathsf{p})\), where \(F(T)\) is a fan polytope of \(T\), and \(N(\mathsf{p})\) is the Newton polytope for \(\mathsf{p}\).
  \end{itemize}
\end{definition}

It was proved in~\cite{przyjalkowski2017compactification} that standard toric Landau--Ginzburg models of smooth Fano threefolds satisfy the stronger compactification condition: they have a log Calabi--Yau compactification.

\begin{definition}\label{definition:log-CY}
  A compactification of the family \(\mathsf{p} \colon (\mathbb{C}^*)^n \rightarrow \mathbb{C}\) to a family \(\mathsf{f} \colon Z \rightarrow \mathbb{P}^1\), where \(Z\) is smooth, and \(-K_Z \sim \mathsf{f}^{-1}(\infty)\), is called a \emph{log Calabi--Yau compactification}.
\end{definition}

\begin{remark}
  The notion of log Calabi--Yau compactification
differs from the notion of tame compactification by not requiring snc condition for the fiber over infinity. However, in most of known cases of log Calabi--Yau compactifications, in particular, for standard toric Landau--Ginzburg models, the fibers over infinity are snc. These notions may become different if we allow singularities over infinity (see~\cite{przyjalkowski2022singular}); however, even in these cases statements from~\cite{katzarkov2017bogomolov} and other papers hold.
\end{remark}

\subsection{Pencil of quartic surfaces}

For every smooth Fano threefold \(X\) with \(\rho(X) > 1\) and very ample anticanonical divisor \(-K_X\) we can always choose a toric change of variables for the corresponding Minkowski polynomial \(\mathsf{p}\) in~\cite{akhtar2012minkowski} such that there is a pencil \(\mathcal{S}\) of quartic surfaces on \(\mathbb{P}^3\) (arising from the Minkowski polynomial in a natural way, see below) that expands the compactification diagram (see Definition~\ref{definition:log-CY})
\begin{equation}\label{equation:diagram}
  \xymatrix
  {
    (\mathbb C^*)^3\ar@{^{(}->}[rr]\ar@{->}[d]_{\mathsf{p}}&&
    Y\ar@{->}[d]^{\mathsf{w}}\ar@{^{(}->}[rr]&&
    Z\ar@{->}[d]^{\mathsf{f}} \\
    \mathbb{C}\ar@{=}[rr] && \mathbb{C}\ar@{^{(}->}[rr] && \mathbb{P}^1
  }
\end{equation}
to the following commutative diagram:
\begin{equation}\label{equation:extented-diagram}
  \xymatrix
  {
    (\mathbb{C}^*)^3\ar@{^{(}->}[rr]\ar@{->}[d]_{\mathsf{p}} &&
    Y\ar@{->}[d]^{\mathsf{w}}\ar@{^{(}->}[rr] &&
    Z\ar@{->}[d]^{\mathsf{f}} &&
    V\ar@{-->}[ll]_{\chi}\ar@{->}[d]^{\mathsf{g}}\ar@{->}[rr]^{\pi} &&
    \mathbb{P}^3\ar@{-->}[lld]^{\phi} \\
    \mathbb{C}\ar@{=}[rr] &&
    \mathbb{C}\ar@{^{(}->}[rr] &&
    \mathbb{P}^1\ar@{=}[rr] &&
    \mathbb{P}^1 &&
  }
\end{equation}
Here the pencil \(\mathcal{S}\) is just a fibrewise homogenisation of the Minkowski polynomial under the embedding \(\mathbb{C}^3 \subset \mathbb{P}^3\), \([x, y, z] \mapsto [x : y : z : 1]\), the map \(\phi\) is a rational map given by the pencil \(\mathcal{S}\), the map \(\pi\) is a birational morphism explicitly constructed in~\cite{cheltsov2018katzarkov}, the threefold \(V\) is smooth, and \(\chi\) is a composition of flops. We refer the reader to~\cite[Section~1]{cheltsov2018katzarkov} for all important details and constructions related to the pencil \(\mathcal{S}\).

If the anticanonical class \(-K_X\) of a smooth Fano threefold is not very ample, it is possible to find a Laurent polynomial with non-reflexive Newton polytope (and the corresponding pencil) that gives the commutative diagram (\ref{equation:diagram}) and the analogue of the commutative diagram (\ref{equation:extented-diagram}) (see~\cite[\nopp 2.1,~2.2,~2.3,~9.1,~10.1]{cheltsov2018katzarkov}).

\subsection{Idea of the proof}

The proof of Theorem~\ref{theorem:DN-duality} is heavily based on the paper~\cite{cheltsov2018katzarkov}.

It was proved in~\cite[Lemma~1.5.3]{cheltsov2018katzarkov} that a general element \(\mathcal{S}_{\lambda}\) of the pencil \(\mathcal{S}\) is a du Val surface, and the minimal resolution \(\widetilde{\mathcal{S}}_{\lambda} \rightarrow \mathcal{S}_{\lambda}\) can be identified with a general fibre of the Landau--Ginzburg model. It is useful to treat with a generic fibre of the Landau--Ginzburg model as well. Namely, we can consider the du Val surface \(\mathcal{S}_{\Bbbk}\) over \(\Bbbk = \mathbb{C}(\lambda)\) associated with the pencil \(\mathcal{S}\). Then we can identify a generic fibre of the Landau--Ginzburg model with the minimal resolution \(\widetilde{\mathcal{S}}_{\Bbbk} \rightarrow \mathcal{S}_{\Bbbk}\).

Moreover, the pencil \(\mathcal{S}\) allows us to compute the lattice of invariant cycles from Theorem~\ref{theorem:DN-duality}.

\begin{notation}
  Let \(\mathbb{P}_{\Bbbk}\) be the ambient variety of the generic member \(\mathcal{S}_{\Bbbk}\) of the pencil \(\mathcal{S}\). We denote by \(A_{\mathcal{S}}\) the subgroup of \(\Cl(\mathcal{S}_{\Bbbk})\) generated by the linear equivalence classes \([\mathcal{C}_i]\) of the irreducible curves composing the base locus of the pencil and the restriction \(i^* \Pic(\mathbb{P}_{\Bbbk})\), where we denote the inclusion by \(i \colon \mathcal{S}_{\Bbbk} \hookrightarrow \mathbb{P}_{\Bbbk}\).
\end{notation}

\begin{remark}
  We have \(\mathbb{P}_{\Bbbk} = \mathbb{P}_{\Bbbk}^3\) in all cases except for the families \(2.1\) and \(10.1\), where we have \(\mathbb{P}_{\Bbbk} = \mathbb{P}_{\Bbbk}^1 \times \mathbb{P}_{\Bbbk}^2\).
\end{remark}

\begin{notation}
  Let \(\widetilde{\mathcal{S}_{\lambda}}\) and \(\widetilde{\mathcal{S}_{\Bbbk}}\) be the minimal resolution of a general member \(\mathcal{S}_{\lambda}\) and the generic member \(\mathcal{S}_{\Bbbk}\) of the pencil, respectively. Denote by \(L_{\lambda} \subset \Pic(\widetilde{\mathcal{S}_{\lambda}})\) and \(L_{\mathcal{S}} \subset \Pic(\widetilde{\mathcal{S}_{\Bbbk}})\) the subgroups generated by linear equivalence classes of exceptional divisors of the resolution and of strict transforms of curves from \(A_{\mathcal{S}}\).
\end{notation}

\begin{remark}
  By construction the subgroup \(L_{\lambda}\) is equipped with the \(\Gal(\Bbbk)\)-action, and the subgroup \(L_{\mathcal{S}}\) can be identified with the subgroup \(L_{\lambda}^{\Gal(\Bbbk)} \subset L_{\lambda}\). Note that the sublattice \(L_{\lambda}^{\Gal(\Bbbk)} \subset L_{\lambda}\) is primitive.
\end{remark}

The following statement is implicitly contained in the proof of~\cite[Main~Theorem]{cheltsov2018katzarkov}.

\begin{proposition}[see~{\cite[Subsections~1.4,~1.9,~1.13]{cheltsov2018katzarkov}}]
  Let \(X\) be a smooth Fano threefold, and let \(F\) be a general fibre of its log Calabi--Yau compactified standard toric Landau--Ginzburg model \(\mathsf{f} \colon Z \rightarrow \mathbb{P}^1\). We have
  \[
    \im(H^2(Z, \mathbb{Z}) \xrightarrow{\res} H^2(F, \mathbb{Z})) = L_{\mathcal{S}}.
  \]
  Moreover, it is a lattice of rank
  \(\rk(\Pic(\widetilde{\mathcal{S}_{\Bbbk}})) - \rk(\Pic(\mathcal{S}_{\Bbbk})) + \rk(A_{\mathcal{S}}) = 20 - \rk(\Pic(X))\).
\end{proposition}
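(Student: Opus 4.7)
The plan is to reduce the computation of $\im(H^2(Z,\mathbb{Z})\to H^2(F,\mathbb{Z}))$ to the intermediate threefold $V$, where the geometry is an explicit iterated blow-up, and then to read off the answer fibrewise. Since $\chi\colon V \dashrightarrow Z$ is a composition of flops, it is an isomorphism in codimension one, so the strict transform yields an isomorphism $\Pic(V) \xrightarrow{\sim} \Pic(Z)$. The flopping loci are contained in finitely many fibres of $\mathsf{g}$ and $\mathsf{f}$, so for general $\lambda$ the fibre $\widetilde{\mathcal{S}_{\lambda}}$ of $\mathsf{g}$ maps isomorphically to the corresponding fibre $F$ of $\mathsf{f}$, and the two restriction maps to $H^2(F,\mathbb{Z})$ have the same image. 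It therefore suffices to describe $\im\bigl(H^2(V,\mathbb{Z}) \to H^2(\widetilde{\mathcal{S}_\lambda}, \mathbb{Z})\bigr)$.

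Next I would exploit the fact that $\pi\colon V \to \mathbb{P}_{\Bbbk}$ is the concrete sequence of blow-ups constructed in~\cite{cheltsov2018katzarkov} that resolves the base locus of the pencil $\mathcal{S}$. Then $\Pic(V)$ is generated by $\pi^{*}\Pic(\mathbb{P}_\Bbbk)$ together with the exceptional divisors of $\pi$, and the restriction of each such generator to $\widetilde{\mathcal{S}_\lambda}$ falls into one of three types: (i) the pullback of a class in $i^{*}\Pic(\mathbb{P}_\Bbbk) \subset A_{\mathcal{S}}$; (ii) the strict transform of a component $\mathcal{C}_i$ of the base locus, appearing from the first blow-up over that component; (iii) an exceptional curve of the minimal resolution $\widetilde{\mathcal{S}_\lambda} \to \mathcal{S}_\lambda$, produced by the later blow-ups that resolve the singularities induced on the proper transform of the pencil. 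All of these classes lie in $L_{\mathcal{S}}$, so the image is contained in $L_{\mathcal{S}}$, and checking against the explicit descriptions in~\cite[Subsections~1.4,~1.9,~1.13]{cheltsov2018katzarkov} will show that every generator of $L_{\mathcal{S}}$ is attained. Landing inside the Galois-invariant sublattice is automatic: classes of divisors on $V$ restrict consistently to every fibre of $\mathsf{g}$, hence specialise to elements of $L_\lambda^{\Gal(\Bbbk)} = L_{\mathcal{S}}$.

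For the rank formula, I would appeal to the rationality of the du Val singularities of $\mathcal{S}_\Bbbk$: the morphism $b\colon \widetilde{\mathcal{S}_\Bbbk} \to \mathcal{S}_\Bbbk$ gives a rational direct sum decomposition
\[
  \Pic(\widetilde{\mathcal{S}_\Bbbk}) \otimes \mathbb{Q} \;=\; b^{*}\bigl(\Pic(\mathcal{S}_\Bbbk) \otimes \mathbb{Q}\bigr) \;\oplus\; \bigoplus_{E} \mathbb{Q}\cdot [E],
\]
where $E$ runs over the exceptional curves of $b$. Thus the sublattice spanned by exceptional curves has rank $\rk(\Pic(\widetilde{\mathcal{S}_\Bbbk})) - \rk(\Pic(\mathcal{S}_\Bbbk))$. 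The remaining generators of $L_{\mathcal{S}}$ are strict transforms of curves in $A_{\mathcal{S}}$, which agree modulo exceptional classes with $b^{*}A_{\mathcal{S}}$ and contribute $\rk(A_{\mathcal{S}})$ further independent classes; summing gives the claimed rank. The final identity with $20 - \rk(\Pic(X))$ is not formal at this level of generality and will be verified family by family in Appendices~\ref{appendix:rank-02}--\ref{appendix:higher-ranks} by explicit computation of $\Pic(\widetilde{\mathcal{S}_\Bbbk})$, $\Pic(\mathcal{S}_\Bbbk)$ and $\rk(A_{\mathcal{S}})$ for each Mori--Mukai family.

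The main obstacle I expect is the local matching in the second paragraph: for each fixed Minkowski polynomial, one must align the chain of exceptional divisors produced by the three-dimensional resolution $V \to \mathbb{P}_\Bbbk$ with the exceptional chain of the two-dimensional minimal resolution $\widetilde{\mathcal{S}_\lambda} \to \mathcal{S}_\lambda$, accounting for every multiplicity. Subtleties arise when the pencil has multiplicity greater than one along a base component (so blowing up the component does not yet smooth the pencil there, and further exceptional divisors must be introduced) and when base components are not geometrically integral over $\Bbbk$ (so Galois-conjugate exceptional divisors collapse into a single $\Bbbk$-rational class of $L_{\mathcal{S}}$). These are precisely the phenomena that~\cite{cheltsov2018katzarkov} addresses case by case, and the plan is to reuse that bookkeeping directly in the verifications that follow.
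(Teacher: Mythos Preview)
Your proposal is correct and follows essentially the same approach as the paper, which itself does not give an independent proof but defers to \cite[Subsections~1.4,~1.9,~1.13]{cheltsov2018katzarkov}: reduce from $Z$ to $V$ via the flop $\chi$ (using that birational K3 surfaces are isomorphic, so general fibres match), describe $\Pic(V)$ via the explicit blow-up $\pi$ and identify the restrictions of its generators with the generators of $L_{\mathcal{S}}$, and obtain the rank formula from the standard decomposition for minimal resolutions of du Val surfaces. One minor notational slip: $\pi$ maps $V$ to the ambient $\mathbb{P}^3$ (or $\mathbb{P}^1\times\mathbb{P}^2$) over $\mathbb{C}$, not to $\mathbb{P}_{\Bbbk}$, which in the paper's notation denotes the ambient variety of the \emph{generic} fibre over $\Bbbk=\mathbb{C}(\lambda)$.
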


\begin{example}
  The paper~\cite{cheltsov2018katzarkov} contains a number of examples of explicit application of this approach: we refer the reader to~\cite[Examples~1.7.1,~1.8.6,~1.10.11,~1.12.3,~1.13.2]{cheltsov2018katzarkov}.
\end{example}

\begin{remark}
  Note that the sublattice \(L_{\mathcal{S}} \subset \Pic(F)\) is primitive by~\cite[Theorem~4.24]{voisin2003hodge}.
\end{remark}

The proof of Theorem~\ref{theorem:DN-duality} can be summarized as follows:
\begin{enumerate}
\item Explicitly compute the sublattices \(L_{\lambda} \subset \Pic(\widetilde{\mathcal{S}_{\lambda}})\) and \(L_{\mathcal{S}} = L_{\lambda}^{\Gal(\Bbbk)} \subset \Pic(\widetilde{\mathcal{S}_{\Bbbk}})\).
\item Prove that \(L_{\mathcal{S}}^{\perp} \simeq H \oplus \Pic(X)\subset L_{\K3}\)  (using Remarks~\ref{remark:embedding} and~\ref{remark:orthogonal-complement}).
\end{enumerate}

\begin{remark}
  Note that Picard lattices \(\Pic(X)\) of smooth Fano threefolds are explicitly presented in Appendix~\ref{appendix:picard-lattices-fano}. Moreover, Remark~\ref{remark:orthogonal-complement} provides an explicit method to prove that \(L_{\mathcal{S}}^{\perp} \simeq H \oplus \Pic(X)\) in terms of discriminant forms on these integral lattices (see~\cite[\nopp 3.3]{ebeling2013lattices} for a review of the theory).
\end{remark}

\subsection{Base locus of the pencil \texorpdfstring{\(\mathcal{S}\)}{S}}

Let \(C_i\) be the irreducible curves composing the base locus of \(\mathcal{S}\). We can find these curves from the intersection of a surface \(\mathcal{S}_{\lambda}\) for a general \(\lambda \in \mathbb{C}\) with the irreducible components of the member \(\mathcal{S}_{\infty}\) over infinity. We want to find a minimal set of generators of \(A_{\mathcal{S}}\) over \(\mathbb{Z}\). We can think of \(A_{\mathcal{S}}\) as an abelian group that is given by the elements \(\mathcal{C}_i\), a general hyperplane section \(H_{\mathcal{S}}\), and the relations of linear equivalence. It is usually enough to consider only linear equivalence of the hyperplane sections of~\(\mathcal{S}_{\lambda}\). Then by means of integral linear algebra we can pick a minimal subset from the set of generators of \(A_{\mathcal{S}}\). Note that in general the obtained subset would not be an integral basis.

\subsection{Computation of the lattice \texorpdfstring{\(L_{\lambda}\)}{L\_l}}\label{subsection:algorithm}

It is well-known that the minimal resolution of a du Val surface can be obtained as a series of blow-ups at singular points. Denote by \(E_i^j\) the \(j\)-th exceptional curve of the resolution at the singular point \(p_i\). Recall that the exceptional divisor of a blow up at a point can be identified with the projectivization of the tangent cone at this point. We can use this to compute the intersection numbers of the form \(\widetilde{C_l} \cdot E_j^k\), where \(\widetilde{C}_l\) is the strict transform of \(l\)-th curve generating \(A_{\mathcal{S}}\).

Namely, we can pass to a local chart \(U\) containing the given singular point \(p \in \mathcal{S}_{\lambda}\) and compute the initial term of \(\left. \mathcal{S}_{\lambda} \right|_U\) at this point. After a homogenezation, we obtain \emph{the quadratic term} of the singularity --- a homogeneous quadratic polynomial that defines the projectivization of the tangent cone \(E_p = \mathbb{P}(T_p \mathcal{S}_{\lambda})\).

The following cases may occur:
\begin{enumerate}
\item \(E_p\) is irreducible, then \(E_p \simeq \mathbb{P}^1\), and the point has the type \(\mathbb{A}_1\);
\item \(E_p\) consists of two irreducible components, and the point has the type \(\mathbb{A}_n\) for \(n > 1\). If a curve \(\mathcal{C}\) is tangent to the only one (respectively, to the both) of the irreducible components of \(E_p\), then its strict transform \(\widetilde{\mathcal{C}}\) intersects one of the ``outer'' (respectively, one of the ``inner'') exceptional curves at the point \(p \in \mathcal{S}_{\lambda}\) with respect to the corresponding Dynkin diagram.
\item \(E_p\) is a rational curve \(\mathcal{E} \simeq \mathbb{P}^1\) counted with multiplicity two. This case occurs when the point has the type \(\mathbb{D}_n\) for \(n > 4\) or \(\mathbb{E}_6\), \(\mathbb{E}_7\), \(\mathbb{E}_8\).
\end{enumerate}

We can compute an integral basis of the lattice \(L_{\lambda}\) as follows:
\begin{enumerate}
\item If we manage to obtain a basis of \(A_{\mathcal{S}}\) as a subset in the set \(\{[C_l]\}\), then an integral basis of \(L_{\lambda}\) consists of the exceptional curves \(E_j^k\) of the resolution and strict transforms \(\widetilde{C_i}\) of curves from \(A_{\mathcal{S}}\).
\item In general, we have to compute the intersection matrix on the exceptional curves \(E_j^k\) of the resolution and strict transforms \(\widetilde{C}_l\) of curves generating the group \(A_{\mathcal{S}}\). The intersection matrix would be degenerate, and the lattice \(L_{\lambda}\) is isomorphic to the free abelian group \(\mathbb{Z}[E_j^k, \widetilde{C}_l]\) modulo the identities from the kernel of the obtained matrix.
\end{enumerate}

\subsection{Conventions and shortcuts}\label{subsection:conventions}

Actual computation of the lattice of invariant cycles \(L_{\mathcal{S}}\) and checking out the Dolgachev--Nikulin duality \(L_{\mathcal{S}} \simeq \Pic(X)^{\vee}\) is very tedious and time-consuming. To this end, we have implemented the algorithm of Subsection~\ref{subsection:algorithm} in Sage, which can be found at \url{https://github.com/MikhailOvcharenko/DN-duality}.

For the sake of brevity we invent the following handy convention. By construction our intersection matrix \(L_{\lambda}\) is a symmetric block matrix of the form
\[
  \begin{pmatrix}
    A & B^T \\
    B & C
  \end{pmatrix},
\]
where \(A\) is the intersection matrix of exceptional curves, \(C\) is the intersection matrix of strict transforms of the curves forming the base locus of the pencil \(\mathcal{S}\), and \(B\) is the remaining ``mixed'' intersection matrix.

Moreover, a general element \(\mathcal{S}_{\lambda}\) of the pencil has du Val singularities, hence \(A\) is a block diagonal matrix of the form \(A = \oplus (-A_i)\), where \(i\) runs over all singular points of the surface \(\mathcal{S}_{\lambda}\), and \(A_i\) is the Cartan matrix of the corresponding Dynkin diagram. In other words, the matrix \(A\) can be easily reconstructed in each case.

To exclude any misunderstanding, let us fix the notation for Cartan matrices which is used in Sage:
\[
  \mathbb{D}_n =
  \left (
  \begin{array}{cccc|cccc}
    \ddots & \ddots & \ddots & \cdots & \cdots & \cdots & \cdots & \cdots \\
    \ddots & 2 & -1 & 0 & 0 & 0 & 0 & 0 \\
    \ddots & -1 & 2 & -1 & 0 & 0 & 0 & 0 \\
    \vdots & 0 & -1 & 2 & -1 & 0 & 0 & 0 \\
    \hline
    \vdots & 0 & 0 & -1 & 2 & -1 & 0 & 0 \\
    \vdots & 0 & 0 & 0 & -1 & 2 & -1 & -1 \\
    \vdots & 0 & 0 & 0 & 0 & -1 & 2 & 0 \\
    \vdots & 0 & 0 & 0 & 0 & -1 & 0 & 2
  \end{array}
  \right ), \quad
  \mathbb{E}_6 =
  \begin{pmatrix}
    2 & 0 & -1 & 0 & 0 & 0 \\
    0 & 2 & 0 & -1 & 0 & 0 \\
    -1 & 0 & 2 & -1 & 0 & 0 \\
    0 & -1 & -1 & 2 & -1 & 0 \\
    0 & 0 & 0 & -1 & 2 & -1 \\
    0 & 0 & 0 & 0 & -1 & 2
  \end{pmatrix}.
\]

Therefore, we only have to provide the submatrix \((B \vert C)\) and the ordered list of du Val singularities of \(\mathcal{S}_{\lambda}\). In this setting we say that the intersection matrix \(L_{\lambda}\) is \emph{represented} by the submatrix \((B \vert C)\).

We also use the following notations.

\begin{notation}
  We correspond to the lattice \(L_{\mathcal{S}}\) the following data:
  \begin{itemize}
  \item \(G'\) is the matrix of selected generators of the discriminant group of the lattice;
  \item \(B'\) is the matrix of \(\mathbb{Q} / \mathbb{Z}\)-valued bilinear discriminant form in this basis;
  \item \(Q'\) is the vector of values of the \(\mathbb{Q} / 2 \mathbb{Z}\)-valued quadratic discriminant form.
  \end{itemize}
  We use the similar notation \(G''\), \(B''\), and \(Q''\) for the conjectural orthogonal complement \(H \oplus \Pic(X)\).
\end{notation}

\begin{notation}
  We denote by \(C_{(f, g)}\) a curve in \(\mathbb{P}^3\) or \(\mathbb{P}^1 \times \mathbb{P}^2\) defined by (bi-)homogeneous polynomials \(f, g\). We also use the same notation for a surface \(S_{(f)}\) in \(\mathbb{P}^3\) or \(\mathbb{P}^1 \times \mathbb{P}^2\).
\end{notation}

\subsection{Computation of \texorpdfstring{\(\Pic(X)\)}{Pic(X)}}\label{subsection:L_X}

If \(S\) is an anticanonical hypersurface in a smooth Fano threefold \(X\), then by the Lefschetz hyperplane theorem we have the inclusion \(i^* \colon \Pic(X) \hookrightarrow \Pic(S)\). Conversely, Beauville proves in~\cite{beauvillefano} that for a very general choice of a deformation of \(X\) and an anticanonical hypersurface \(S\), the map \(i^*\) is an isomorphism. Thus the Picard rank of a very general such \(S\) is equal to \(b_2(X)\), and \(H^2(X,\mathbb{Z})\) can be given a symmetric bilinear form \(\langle \cdot, \cdot \rangle_X\) defined to be
\[
  \langle \alpha, \beta \rangle_X : = \int_X c_1(X)\cup \alpha\cup \beta
\]
for \(\alpha,\beta \in H^2(X,\mathbb{Z})\). Let \(\Pic(X)\) denote \(H^2(X,\mathbb{Z})\) equipped with this pairing. There is a natural primitive embedding
\[
  \Pic(X) \hookrightarrow \Pic(S)
\]
for each anticanonical hypersurface of \(X\). Beginning from explicit descriptions of Fano threefolds given by~\cite{mori1981classification}, the lattices \(\Pic(X)\) can be computed without much difficulty.  In this section we present tools for performing these computations. In Appendix~\ref{appendix:picard-lattices-fano}, we present our computations. According to~\cite{mori1981classification}, Fano threefolds are presented in one of the following forms.
\begin{enumerate}
\item Complete intersections inside of products of projective spaces.
\item Double covers of well-understood Fano threefolds ramified along a smooth divisor.
\item Blow-ups of Fano threefolds of lower rank.
\item Projective bundles over \(\mathbb{P}^1\) or del Pezzo surfaces.
\end{enumerate}
We explain our techniques for computing Picard lattices in each of these cases individually.

\subsubsection{Complete intersections in products of projective spaces}
If \(X\) is a complete intersection in a product of projective spaces, calculation of intersection theory on \(X\) is elementary, and adjunction may be used to compute the Picard lattice of \(X\).

\subsubsection{Double covers of other Fano threefolds}
Computation of the Picard lattice of a double cover \(\phi:X \rightarrow Z\) ramified along a smooth divisor is again elementary. It is well-known that
\[
  K_X = \phi^*K_Z + R
\]
where \(R\) is the ramification divisor of \(\phi\). We apply this formula along with the fact that for three divisors \(D_1,D_2\), and \(D_3\) on \(Z\), we have
\[
  \phi^*D_1 \cdot \phi^* D_2 \cdot \phi^* D_3 = (\deg \phi)( D_1 \cdot D_2 \cdot D_3).
\]

\begin{example}
  Let \(X\) be a double cover of \(\mathbb{P}^1 \times \mathbb{P}^2\)  ramified along a divisor of bidegree \((2,2)\). Thus \(-K_X\) is given by \(\phi^* \mathcal{O}_{\mathbb{P}^1 \times \mathbb{P}^2}(1,2)\). Then \(\Pic(X)\) is generated by the classes \(\phi^*[p \times \mathbb{P}^2]\) and \(\phi^*[\mathbb{P}^1 \times H]\), where \(H\) is a hyperplane section in \(\mathbb{P}^2\). Then we computes that
  \[
    -K_X\cdot \phi^*[p\times \mathbb{P}^2] \cdot \phi^*[\mathbb{P}^1 \times H]  = (\deg \phi)([p\times \mathbb{P}^2] + 2[\mathbb{P}^1 \times H])\cdot [p\times \mathbb{P}^2] \cdot [\mathbb{P}^1 \times H] = 4.
  \]
  Similarly, \(-K_X \cdot [p \times \mathbb{P}^2]^2 = 0\) and \(-K_X \cdot [\mathbb{P}^1 \times H]^2 = 2\).
\end{example}

\subsubsection{Blow-ups of Fano threefolds of lower Picard rank}

One may compute explicitly how the lattice \(\langle \cdot, \cdot \rangle_X\) changes under blow-up. We have the following computation.

\begin{proposition}\label{proposition:blupform}
  Let \(X\) be a smooth Fano threefold and let \(\pi:\widetilde{X}\rightarrow {X}\) be the blow up of \(X\) along a smooth curve \(Y\). Let \(\widetilde{Y}\) be the exceptional divisor of \(\pi\). Then \(\Pic(\widetilde{X})\) is generated by \(\widetilde{Y}\) and the pullback of divisors in \(X\). Furthermore, the following assertions hold.
  \begin{enumerate}
  \item For \(D_1,D_2\) divisors on \(X\), \(\langle \pi^*D_1, \pi^*D_2 \rangle_{\widetilde{X}} = \langle D_1,D_2 \rangle_X\).
  \item For \(D\) a divisor on \(X\), \(\langle D,\widetilde{Y} \rangle_{\widetilde{X}} = D \cdot Y\).
  \item \(\langle \widetilde{Y},\widetilde{Y} \rangle_{\widetilde{X}} = \chi(Y)\).
  \end{enumerate}
\end{proposition}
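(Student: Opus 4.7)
The plan is to verify the three assertions by direct intersection-theoretic computation on $\widetilde X$, reducing everything to the canonical bundle formula $K_{\widetilde X} = \pi^* K_X + \widetilde Y$ (so that $c_1(\widetilde X) = -\pi^* K_X - \widetilde Y$) and the structure of the exceptional divisor $\widetilde Y = \mathbb{P}(N_{Y/X})$ as a $\mathbb{P}^1$-bundle $q \colon \widetilde Y \to Y$. The generation statement for $\Pic(\widetilde X)$ is classical for blow-ups along a smooth center, and I would invoke it without reproof.

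Before addressing the individual pairings I would collect the four intersection identities that drive the proof. The projection formula gives $\pi^* D_1 \cdot \pi^* D_2 \cdot \pi^* D_3 = D_1 \cdot D_2 \cdot D_3$, and also $\pi^* D_1 \cdot \pi^* D_2 \cdot \widetilde Y = 0$, because $\pi^* D_i|_{\widetilde Y} = q^*(D_i|_Y)$ is pulled back from the curve $Y$ and two such classes intersect trivially on the surface $\widetilde Y$. The remaining two identities use the normal bundle description $\widetilde Y|_{\widetilde Y} = -\xi$, where $\xi$ is the relative hyperplane class on $\mathbb{P}(N_{Y/X})$: integrating along the fibres yields $\pi^* D \cdot \widetilde Y^2 = -D \cdot Y$, and the projective bundle relation $\xi^2 = q^* c_1(N_{Y/X}) \cdot \xi$ (valid on any $\mathbb{P}^1$-bundle over a curve since $c_2 = 0$) yields $\widetilde Y^3 = \deg N_{Y/X}$.

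With these in hand, all three claims follow by expanding $\langle \alpha, \beta \rangle_{\widetilde X} = (-\pi^* K_X - \widetilde Y) \cdot \alpha \cdot \beta$ and substituting. Assertion (1) is immediate: the $-\pi^*K_X$ term gives $\langle D_1, D_2\rangle_X$ by the projection formula, and the $\widetilde Y$ term vanishes. Assertion (2) follows similarly, with the term $-\widetilde Y \cdot \pi^* D \cdot \widetilde Y = -\pi^*D\cdot \widetilde Y^2$ contributing $D \cdot Y$. For (3), one combines $\pi^* K_X \cdot \widetilde Y^2 = -K_X \cdot Y$ with $\widetilde Y^3 = \deg N_{Y/X}$, and then invokes adjunction $2g(Y) - 2 = K_X \cdot Y + \deg N_{Y/X}$ for the curve $Y$ in $X$, rewriting $\deg N_{Y/X} = -\chi(Y) - K_X \cdot Y$, at which point the stated normalization emerges.

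The whole argument is mechanical; there is no serious obstacle beyond careful sign bookkeeping. The one spot meriting attention is the sign of $\widetilde Y|_{\widetilde Y}$ in terms of the tautological class on the projective bundle (different conventions abound in the literature), together with the adjunction formula for a codimension-two smooth subvariety, which uses $\det N_{Y/X}$ rather than $\mathcal{O}(Y)|_Y$. Once these are fixed, the three formulas come out cleanly by linear algebra in $\Pic(\widetilde X)$.
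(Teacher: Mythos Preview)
Your approach is exactly the paper's: expand $\langle \alpha,\beta\rangle_{\widetilde X}=(-\pi^*K_X-\widetilde Y)\cdot\alpha\cdot\beta$ using the four basic intersection numbers on the blow-up, citing Iskovskikh for the Picard group statement. Parts (1) and (2) go through as you describe. The gap is in (3): your projective bundle relation has the wrong sign. In the convention the paper itself uses (its Theorem on projective bundles, quoting Griffiths--Harris), the relation on the bundle of lines $\mathbb{P}(N_{Y/X})$ reads $\xi^2+q^*c_1(N_{Y/X})\cdot\xi=0$, not $\xi^2=q^*c_1(N_{Y/X})\cdot\xi$; equivalently, in the Grothendieck convention the exceptional divisor is $\mathbb{P}(N^\vee)$, not $\mathbb{P}(N)$. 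Either way one obtains $\widetilde Y^3=(-\xi)^2=-\deg N_{Y/X}$. (Sanity check: for the blow-up of $\mathbb{P}^3$ along a line, $N=\mathcal{O}(1)^{\oplus 2}$ and a direct computation on $E\cong\mathbb{P}^1\times\mathbb{P}^1$ gives $\widetilde Y^3=-2$.)

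This is not cosmetic. With your signs, the computation for (3) actually yields $K_X\cdot Y-\deg N_{Y/X}=2K_X\cdot Y+\chi(Y)$, so ``the stated normalization'' does \emph{not} emerge. With the corrected sign one gets $K_X\cdot Y+\deg N_{Y/X}=2g-2=-\chi(Y)$. In fact the statement of (3) in the paper seems to carry a sign typo: the lattices tabulated in the appendix (e.g.\ Families 2.11, 2.12, 2.19, 2.26, 2.33, all blow-ups along curves with $\chi\neq 0$) are consistent with $\langle\widetilde Y,\widetilde Y\rangle=-\chi(Y)$, and the paper's own proof for (3) also has muddled signs in its intermediate steps. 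Your instinct that ``the one spot meriting attention'' is the sign of $\widetilde Y|_{\widetilde Y}$ and the projective bundle relation was exactly right; you just need to actually resolve it rather than assert that it comes out.
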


\begin{proof}
  The condition that \(X\) and \(\widetilde{X}\) are both smooth Fano threefolds ensures that \(|-K_X|\) and \(|-K_{\widetilde{X}}|\) both contain smooth members. The rest of the proof is simply an application of~\cite[Lemma 2.11]{iskovskikh1977fano}. The claim that \(\Pic(X)\) is generated by \(\widetilde{Y}\) and the pullback of \(\Pic(X)\) is part of the aforementioned lemma. Let \(\pi^*Y\) be the tautological class on the exceptional divisor \(\widetilde{Y}\) of \(\widetilde{X}\). We notice that  \(-K_{\widetilde{X} }= -\pi^* K_X  - \widetilde{Y}\). For any divisors \(D_1,D_2\) on \(X\) we have
  \begin{align*}
    -K_{\widetilde{X}} \cdot \pi^* D_1 \cdot \pi^*D_2 &= -\pi^*K_X\cdot \pi^* D_1\cdot \pi^*D_2 - \widetilde{Y}\cdot \pi^* D_1 \cdot \pi^* D_2 \\
                                          & =  -K_X\cdot D_1\cdot D_2  = \langle D_1,D_2 \rangle_X.
  \end{align*}
  Let \(D\) be a divisor on \(X\). We find that if \(F\) denotes the class of a fibre of the contraction map \(\widetilde{Y}\rightarrow Y\),
  \begin{align*}
    -K_{\widetilde{X}} \cdot \pi^*D \cdot \widetilde{Y} & = -\pi^* K_X \cdot \pi^*D \cdot \widetilde{Y} - \pi^*D \cdot (\widetilde{Y})^2\\
                                              &= \pi^* D \cdot (\deg N_{Y/X} F - \pi^*Y) \\
                                              & = D \cdot Y.
  \end{align*}
  Finally, we have
  \begin{align*}
    (-\pi^*K_X - \widetilde{Y}) \cdot \widetilde{Y}^2 &= -\pi^*K_X \cdot \widetilde{Y}^2 - \widetilde{Y}^3 \\
                                                & = -\pi^* K_X \cdot (\pi^*Y - \deg N_{Y/X}F) + K_X \cdot Y + \chi(Y) \\
                                                & =  -\pi^*K_X\cdot \pi^* Y + K_X \cdot Y + \chi(Y) = \chi(Y).
  \end{align*}
  This completes the calculation.
\end{proof}

This proposition allows us to compute the Picard lattices of Fano threefolds which are obtained by smooth blow ups along smooth curves on Fano threefolds of lower Picard rank. Similarly, one may compute the Picard lattice of a Fano threefold \(X\) blown up at a point.

\begin{remark}
  Suppose \(X\) is a Fano threefold of Picard rank 1 whose Fano index is  divisible by 2 and \((-K_X)^3 = 16n\). Suppose that \(S_1,S_2 \in |-(\tfrac{1}{2})K_X|\) are smooth sections meeting transversally in an elliptic curve \(E\). It is a direct application of Proposition~\ref{proposition:blupform} to see that \(\Bl_EX\) has Picard lattice isomorphic to \(H(n)\), the lattice with Gram matrix
  \[
    \begin{pmatrix}
      0 & n \\
      n & 0
    \end{pmatrix}.
  \]
 This can be observed directly by looking at Appendix~\ref{appendix:picard-lattices-fano}. This phenomenon occurs for Families \textnumero 2.1, 2.3, 2.5, 2.10, and 2.14.
\end{remark}

\subsubsection{Projective bundles}

Finally, if \(X\) is a projective bundle over a variety \(Y\) associated to a vector bundle \(\mathcal{E}\), then we may use the following result.

\begin{theorem}[see, e.g.,~{\cite[pp. 606]{griffiths2014principles}}]\label{theorem:GH}
  Let \(Y\) be any smooth complex projective variety, and \(\mathcal{E}\) a complex vector bundle of rank \(r\). Then the ring \(H^*(\mathbb{P}(\mathcal{E}),\mathbb{Z})\) is generated as an \(H^*(Y,\mathbb{Z})\)-module by \(\zeta = c_1(\mathcal{O}_{\mathbb{P}(\mathcal{E})}(1))\), with the single relation
  \[
    \zeta^r + \pi^* c_1(\mathcal{E})\zeta^{r-1} + \cdots + \pi^* c_r(\mathcal{E}) = 0.
  \]
  Here \(\mathcal{O}_{\mathbb{P}(\mathcal{E})}(1)\) is the tautological line bundle on the projective bundle \(X\).
\end{theorem}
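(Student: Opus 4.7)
The plan is to prove the projective bundle formula via the Leray--Hirsch theorem together with the splitting principle.

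First, observe that \(\pi\colon\mathbb{P}(\mathcal{E})\to Y\) is a locally trivial topological fibration with fiber \(\mathbb{P}^{r-1}\); the cohomology of the fiber is \(\mathbb{Z}[h]/(h^r)\), where \(h\) is the hyperplane class. The class \(\zeta=c_1(\mathcal{O}_{\mathbb{P}(\mathcal{E})}(1))\) restricts on each fiber \(\mathbb{P}(\mathcal{E}_y)\) to \(c_1(\mathcal{O}_{\mathbb{P}^{r-1}}(1))=h\), so \(1,\zeta,\ldots,\zeta^{r-1}\) restrict to a \(\mathbb{Z}\)-basis of \(H^*(\mathbb{P}^{r-1},\mathbb{Z})\). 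The Leray--Hirsch theorem then yields an isomorphism of \(H^*(Y,\mathbb{Z})\)-modules
\[
H^*(\mathbb{P}(\mathcal{E}),\mathbb{Z})\cong\bigoplus_{i=0}^{r-1}H^*(Y,\mathbb{Z})\cdot\zeta^i.
\]
In particular, \(\zeta^r\) may be written uniquely as \(-\pi^*\alpha_1\cdot\zeta^{r-1}-\cdots-\pi^*\alpha_r\) for some classes \(\alpha_i\in H^{2i}(Y,\mathbb{Z})\), and the remaining content of the theorem is the identification \(\alpha_i=c_i(\mathcal{E})\).

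To make this identification I would invoke the splitting principle: there is a map \(\rho\colon F\to Y\), the complete flag bundle of \(\mathcal{E}\), for which \(\rho^*\) is injective on integral cohomology (this follows inductively from the module decomposition above, applied to the intermediate projective bundles) and for which \(\rho^*\mathcal{E}\) splits as a direct sum of line bundles \(L_1\oplus\cdots\oplus L_r\). Since Chern classes are functorial under pullback, it suffices to verify the relation for \(\rho^*\mathcal{E}\), and one is reduced to the split case. When \(\mathcal{E}=L_1\oplus\cdots\oplus L_r\), each line subbundle \(L_i\subseteq\mathcal{E}\) determines a section \(\sigma_i\colon Y\to\mathbb{P}(\mathcal{E})\) whose image is the vanishing locus of the composition \(\mathcal{O}_{\mathbb{P}(\mathcal{E})}(-1)\hookrightarrow\pi^*\mathcal{E}\twoheadrightarrow\pi^*(\mathcal{E}/L_i)\); the class of this divisor works out to \(\zeta+\pi^*c_1(L_i)\) up to a sign depending on convention. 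The \(r\) sections \(\sigma_i(Y)\) have empty common intersection, so
\[
\prod_{i=1}^{r}\bigl(\zeta+\pi^*c_1(L_i)\bigr)=0,
\]
and expanding and collecting by powers of \(\zeta\) yields the advertised relation because, by the Whitney product formula applied to \(\mathcal{E}=\bigoplus L_i\), the Chern class \(c_k(\mathcal{E})\) is precisely the \(k\)-th elementary symmetric polynomial in the \(c_1(L_i)\).

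The main obstacle is Leray--Hirsch itself; once it is available, the remaining steps are essentially formal. Leray--Hirsch can be proved either by showing that the Serre spectral sequence of the fibration \(\pi\) degenerates at \(E_2\) (the local system \(R^q\pi_*\mathbb{Z}\) is trivial because the global classes \(\zeta^i\) trivialize it fiberwise), or, in the algebraic category, by Mayer--Vietoris over a trivializing open cover of \(Y\) with induction on the number of charts. A secondary subtlety is fixing a consistent sign convention for \(\mathcal{O}_{\mathbb{P}(\mathcal{E})}(1)\) (Grothendieck vs.\ Grauert), which determines whether the relation appears with \(c_i(\mathcal{E})\) or with alternating signs, but either convention recovers the statement as written once traced through the splitting-principle computation.
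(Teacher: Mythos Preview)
The paper does not supply a proof of this theorem; it is quoted as a classical fact from Griffiths--Harris and used as a computational tool. Your approach via Leray--Hirsch followed by the splitting principle is the standard one and is essentially correct, but there is a muddle in your treatment of the split case. The image of the section \(\sigma_i\) has codimension \(r-1\), not \(1\), so it is not a divisor and its class is certainly not \(\zeta+\pi^*c_1(L_i)\); moreover, the sections \(\sigma_i(Y)\) are already pairwise disjoint, so the emptiness of their common intersection carries no information. What you want instead are the hyperplane sub-bundles \(H_i=\mathbb{P}\bigl(\bigoplus_{j\neq i}L_j\bigr)\subset\mathbb{P}(\mathcal{E})\): each \(H_i\) is the zero locus of the composite \(\mathcal{O}(-1)\hookrightarrow\pi^*\mathcal{E}\twoheadrightarrow\pi^*L_i\), hence a divisor with class \(\zeta+\pi^*c_1(L_i)\), and \(\bigcap_i H_i=\varnothing\) because a nonzero vector cannot project to zero in every summand. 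With this correction your argument goes through.

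Alternatively, and without the splitting principle, the tautological inclusion \(\mathcal{O}(-1)\hookrightarrow\pi^*\mathcal{E}\) is itself a nowhere-vanishing section of \(\pi^*\mathcal{E}\otimes\mathcal{O}(1)\), whence \(c_r\bigl(\pi^*\mathcal{E}\otimes\mathcal{O}(1)\bigr)=0\); expanding via the twist formula \(c_r(V\otimes L)=\sum_{k=0}^r c_k(V)\,c_1(L)^{r-k}\) yields the relation directly.
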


Using the same notation as in Theorem~\ref{theorem:GH} and letting \(p:\mathbb{P}(\mathcal{E}) \rightarrow Y\) be the natural fibration morphism, we have the following formula given by Reid in~\cite[pp. 349]{reid46young},
\begin{align}\label{equation:Reid}
  K_{\mathbb{P}(\mathcal{E})} &= p^*(K_Y + \det \mathcal{E} ) \otimes \mathcal{O}_{\mathbb{P}(\mathcal{E})}(-r) \\
                              & = -r \zeta + p^* K_Y - p^*(\det \mathcal{E})
\end{align}
to compute the Picard lattice of a projective bundle. Here, \(r\) is the rank of \(\mathcal{E}\) and \(\zeta\) is the relative hyperplane class. These two facts allow us to effectively compute the Picard lattice of a Fano projective bundle.

\begin{example}
  Let \(Y=\mathbb{P}^2\), and let \(\mathcal{E}\) be the vector bundle \(\mathcal{O}_{\mathbb{P}^2} \oplus \mathcal{O}_{\mathbb{P}^2}(2)\) with projection map \(p : \mathbb{P}(\mathcal{E}) \rightarrow \mathbb{P}^2\) and let \(X\) be the projective bundle \(\mathbb{P}(\mathcal{E})\). Thus \(X\) is a unique member of Family 2.36 in the list of Mori--Mukai. We let \([H]\) be the class of a hyperplane on \(\mathbb{P}^2\) and remark that \(p^*[H ]\cdot p^*[H] \)  is the class of a fibre \(f\) of \(p\), and that the intersection of the class of \(f\) and \(\zeta\) is the class of a single point. We compute that \(c_1(\mathcal{E}) = [2H] = \det \mathcal{E}\). Thus Theorem~\ref{theorem:GH} gives
  \[
    \zeta^2 = -p^*[2H]\cdot  \zeta.
  \]
  Thus
  \[
    \zeta^3 = (p^* [2H])^2 \cdot \zeta = 4,
  \]
  and Equation~\eqref{equation:Reid} gives
  \[
    [K_X] = -2 \zeta + p^*[H].
  \]
  Furthermore, \(H^2(X,\mathbb{Z})\) is generated by \(p^*[H]\) and \(\zeta\). Thus we find
  \begin{align*}
    -[K_X]\cdot \zeta\cdot\zeta & = 2 \zeta^3 + p^*[H]\cdot p^*[2H] \cdot \zeta = 8 +2 = 10, \\
     -[K_X]\cdot p^* [H]\cdot \zeta  & =  2 p^*[2H] \cdot p^*[H] \cdot \zeta + (p^*[H])^2 \cdot \zeta = 5, \\
    -[K_X]\cdot p^* [H]\cdot p^*[H ] & = 2 p^*[H] \cdot p^*[H] \cdot \zeta = 2.
  \end{align*}
  Thus the Picard lattice of \(X\) has Gram matrix given by
  \[
    \left( \begin{matrix} 2 & 5 \\ 5 & 10 \end{matrix} \right).
  \]
\end{example}

\begin{remark}
  The authors of~\cite{coates2016quantum} have produced toric complete intersection models for many Fano threefolds. In these situations, it is possible to compute the Picard lattice of \(X\) using toric geometry directly.
\end{remark}

\subsubsection{Varieties \(\mathbb{P}^1 \times S_d\)}

It remains to calculate the Picard lattices of anticanonical sections of \(Y_d = \mathbb{P}^1 \times S_d\), where \(S_d\) is a del Pezzo surface of degree \(d\). We see that the Picard group of \(S_{9-d}\) is spanned by the class \(H\) of the pullback of hyperplane section in \(\mathbb{P}^2\) and the exception classes \(E_i\) of blown up points. Therefore, we have that the Picard group of \(S_{9-d} \times \mathbb{P}^1\) is spanned by the classes of divisors
\[
  R_i = \mathbb{P}^1 \times E_i, G = \mathbb{P}^1 \times H, S= p \times S_{9-d},
\]
where \(p\) is a point in \(\mathbb{P}^1\). The anti-canonical divisor on \(S_{9-d}\) is given by \(3H - \sum_{i = 1}^d E_i\), and the anticanonical divisor on \(\mathbb{P}^1 \times S_{9-d}\) is given by the tensor product of the pullback of anticanonical classes on \(S_{9-d}\) and \(\mathbb{P}^1\). Therefore,
\[
  -K_{Y_d} = \left[ 3G - \sum_{i=1}^d R_i\right] + 2 \left[S\right].
\]
We calculate the Picard lattice of very general anticanonical hypersurfaces of \(Y_d\) by taking the intersection form on \(\Pic(Y_d)\) written as
\[
  \left< D_1, D_2 \right>_{Y_d} = D_1 \cdot D_2 \cdot (-K_{Y_d}),
\]
as given by~\cite{beauvillefano}. We may actually complete this calculation easily. We see that \(\Pic(X)\) has rank \(d + 2\), and
\[
  \left<R_i, R_j\right>_{Y_d} = -2\delta_{ij}, \left<R_i,G\right>_{Y_d} = 0, \left< R_i, S \right>_{Y_d} = -1, \left<G,S\right>_{Y_d} = 3, \left<G,G\right>_{Y_d} = 2  \text{ and } \left< S, S \right>_{Y_d} = 0.
\]
Since \(R_i\), \(S\), and \(G\) are generators of the Picard lattice of \(Y_d\), we have proved the following.

\begin{proposition}
  Let \(Y_d = \mathbb{P}^1 \times S_d\), then the Picard lattice of a general hypersurface in \(Y_d\) is of rank \(d+2\), and may be represented by the matrix
  \[
    N_d= \left(
      \begin{matrix}
        -2 & 0 & 0 & \cdots & 0 & 0 & -1 \\
        0 & -2 & 0 & \cdots & 0 & 0 & -1 \\
        0 & 0 & -2 & \ddots & 0 & 0 & -1 \\
        \vdots & \vdots & \ddots & \ddots  & \ddots & \vdots & \vdots \\
        0 & 0 & 0 & \ddots & -2 & 0 & -1 \\
        0 & 0 & 0 & \cdots & 0 & 2 & 3 \\
        -1 & -1 &-1 & \cdots & -1 & 3 & 0
      \end{matrix}
    \right).
  \]
\end{proposition}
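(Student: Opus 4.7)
The plan is to reduce to a direct intersection-theoretic computation on $Y_d$ via Beauville's theorem.

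First, invoke the result cited at the top of Subsection~\ref{subsection:L_X}: for a very general anticanonical hypersurface $S$ in a smooth Fano threefold $X$, the restriction map $\Pic(X) \to \Pic(S)$ is an isomorphism, and the induced bilinear form on $\Pic(X)$ pulled back from the intersection form on $\Pic(S)$ is $\langle D_1, D_2 \rangle_X = (-K_X) \cdot D_1 \cdot D_2$. It therefore suffices to identify $\Pic(Y_d)$ abstractly and to compute this symmetric form on a chosen basis.

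Next, identify $\Pic(Y_d)$ using the product structure. Since $S_{9-d}$ is $\mathbb{P}^2$ blown up at $d$ general points, its Picard group is generated by the hyperplane pullback $H$ and the exceptional classes $E_1, \ldots, E_d$. Applying Künneth to $Y_d = \mathbb{P}^1 \times S_{9-d}$ yields the generators $R_i = \mathbb{P}^1 \times E_i$, $G = \mathbb{P}^1 \times H$, and $S = \{p\} \times S_{9-d}$, giving $d+2$ classes in total, which matches the asserted rank. The anticanonical class of $S_{9-d}$ is $3H - \sum_i E_i$ and the anticanonical class of $\mathbb{P}^1$ is $2[\mathrm{pt}]$, so on the product one obtains $-K_{Y_d} = 3G - \sum_{i=1}^{d} R_i + 2S$.

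Finally, evaluate the pairings $\langle D_1, D_2 \rangle_{Y_d} = (-K_{Y_d}) \cdot D_1 \cdot D_2$ on each pair of generators. The product decomposition forces most triple intersections to vanish: a nonzero triple product must contain exactly one factor of $S$ (to contribute the point class on the $\mathbb{P}^1$ factor) together with two classes pulled back from the del Pezzo factor. Substituting the expression for $-K_{Y_d}$ and applying the standard intersection numbers $H^2 = 1$, $H \cdot E_i = 0$, and $E_i \cdot E_j = -\delta_{ij}$ on $S_{9-d}$ recovers exactly the values $\langle R_i, R_j\rangle = -2\delta_{ij}$, $\langle R_i, G\rangle = 0$, $\langle R_i, S\rangle = -1$, $\langle G, G\rangle = 2$, $\langle G, S\rangle = 3$, $\langle S, S\rangle = 0$ listed in the preceding discussion, and these assemble directly into the matrix $N_d$ in the ordered basis $(R_1, \ldots, R_d, G, S)$.

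There is no serious obstacle: the computation is pure bookkeeping once Beauville's isomorphism is in hand. The only minor care needed is in confirming that the listed generators are a $\mathbb{Z}$-basis (not merely a generating set) of $\Pic(Y_d)$, which follows because Künneth for Picard groups of products of smooth projective varieties holds integrally when one factor is $\mathbb{P}^1$, and in verifying that the computed form is nondegenerate (equivalently, that $\det(N_d) \neq 0$), which is immediate from the block structure of $N_d$.
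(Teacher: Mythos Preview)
Your proposal is correct and follows essentially the same approach as the paper: the paper establishes the generators $R_i$, $G$, $S$ of $\Pic(Y_d)$, writes down $-K_{Y_d}$, and computes all the pairings $\langle D_1, D_2\rangle_{Y_d} = (-K_{Y_d})\cdot D_1 \cdot D_2$ in the paragraphs immediately preceding the proposition statement. Your additional remarks on why the listed classes form an integral basis and on nondegeneracy are not made explicit in the paper but are harmless and correct.
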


It is well-known that for \(S_d\) with \(d \neq 9,8,7\) and \(S_d \neq \mathbb{P}^1 \times \mathbb{P}^1\), there are canonically associated root lattices \(R_d=\mathbb{E}_8,\mathbb{E}_7,\mathbb{E}_6,\mathbb{D}_5,\mathbb{A}_4,\mathbb{A}_2 \times \mathbb{A}_1\) for \(d = 1, \ldots, 6\) (see~\cite[Chapter~8]{dolgachev2012classical} for details). These lattices are obtained as the orthogonal complements of \(K_{S_d}\) in \(\NS(S_d)\). We find the following relation to the Picard lattice of the anticanonical K3 surface in \(\mathbb{P}^1\times S_d\).

\begin{proposition}\label{proposition:dppic}
  Let \(S_d\) be a del Pezzo surface of degree \(d\), and let \(d = 1, \ldots, 6\). The lattice \(N_{\mathbb{P}^1\times S_d}\) is isomorphic to \(H \oplus R_d(2)\), where \(R_d\) is the root lattice described above and \(R_d(2)\) indicates the lattice on the same group as \(R_d\) but with intersection form multiplied by \(2\).
\end{proposition}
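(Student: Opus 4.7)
The plan is to exploit the product structure on $Y_d = \mathbb{P}^1 \times S_d$ and split off an explicit hyperbolic plane. I begin by decomposing $\Pic(Y_d) = \mathbb{Z}\langle S\rangle \oplus \pi_2^*\Pic(S_d)$, where $\pi_2 \colon Y_d \to S_d$ is the second projection and $S = p\times S_d$ is a fibre class. Using $-K_{Y_d} = 2S + \pi_2^*(-K_{S_d})$ together with $S^2 = 0$ and the vanishing of any triple intersection of three pullbacks from a surface, the Beauville pairing $\langle D_1, D_2\rangle_{Y_d} = D_1\cdot D_2 \cdot (-K_{Y_d})$ evaluates to
\[
\langle S, S\rangle = 0, \quad \langle S, \pi_2^*\alpha\rangle = \alpha\cdot(-K_{S_d}), \quad \langle \pi_2^*\alpha, \pi_2^*\beta\rangle = 2\,(\alpha\cdot\beta)_{S_d}.
\]
In particular $\pi_2^*\Pic(S_d)$ already sits inside $N_d$ as an isometric copy of $\Pic(S_d)(2)$.

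To produce a hyperbolic plane, I pick a $(-1)$-curve $e \subset S_d$, which exists on every del Pezzo of degree $d \leq 7$. Adjunction gives $e^2 = -1$ and $e\cdot(-K_{S_d}) = 1$. Setting $L_1 = S$ and $L_2 = S + \pi_2^*e$, the formulas above immediately yield $\langle L_1, L_1\rangle = \langle L_2, L_2\rangle = 0$ and $\langle L_1, L_2\rangle = 1$, so $\Lambda := \mathbb{Z}\langle L_1, L_2\rangle \subset N_d$ is isometric to $H$. Because $H$ is unimodular, the inclusion $\Lambda \hookrightarrow N_d$ splits orthogonally, and $N_d = \Lambda \oplus \Lambda^\perp$.

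It then remains to identify $\Lambda^\perp$ with $R_d(2)$. Parametrizing a general class as $v = bS + \pi_2^*\alpha$ with $b \in \mathbb{Z}$ and $\alpha \in \Pic(S_d)$, the orthogonality conditions $\langle v, S\rangle = 0$ and $\langle v, L_2\rangle = 0$ force respectively $\alpha\cdot(-K_{S_d}) = 0$, i.e.\ $\alpha \in K_{S_d}^\perp = R_d$, and $b = -2\,(\alpha\cdot e)_{S_d}$. The assignment $v \mapsto \alpha$ thus gives a group isomorphism $\Lambda^\perp \xrightarrow{\sim} R_d$. Substituting back into the pairing produces $2(\alpha\cdot\alpha')_{S_d}$ together with cross terms of the form $b\cdot\alpha'\cdot(-K_{S_d})$ and $b'\cdot\alpha\cdot(-K_{S_d})$, which vanish exactly because $\alpha, \alpha' \in R_d$. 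Hence $\Lambda^\perp \cong R_d(2)$, and combining the two pieces yields $N_d \cong H \oplus R_d(2)$.

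The only subtle point is this cancellation of cross terms in the final calculation, but it works out precisely because the orthogonality condition defining $\Lambda^\perp$ places $\alpha$ and $\alpha'$ in $K_{S_d}^\perp$. No further obstacle is anticipated; the argument is essentially a direct diagonalization of the Beauville form using the product structure of $Y_d$ and the existence of a single $(-1)$-curve on $S_d$.
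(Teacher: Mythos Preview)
Your proof is correct and complete. The approach differs from the paper's: the paper embeds $R_d(2)$ into $N_d$ first (as $\pi_2^*K_{S_d}^\perp$), then locates a copy of $H$ in its orthogonal complement using the classes $\pi_2^*K_{S_d}$ and $3G-S$, and closes with a discriminant comparison to upgrade the sublattice inclusion $H\oplus R_d(2)\subseteq N_d$ to an equality. You proceed in the opposite order: you build the hyperbolic plane first from the fibre class $S$ and a single $(-1)$-curve $e\subset S_d$, and then compute $H^\perp$ directly. Your route is somewhat cleaner, since the unimodularity of $H$ forces the orthogonal splitting on the nose and your parametrisation $v=-2(\alpha\cdot e)S+\pi_2^*\alpha$ identifies $H^\perp$ with $R_d(2)$ without any residual index or discriminant bookkeeping. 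The paper's approach, by contrast, needs the discriminant check because the hyperbolic plane it exhibits is not manifestly primitive. Both arguments ultimately use the same structural input: the product decomposition $\Pic(Y_d)=\mathbb{Z}S\oplus\pi_2^*\Pic(S_d)$ and the evaluation of the Beauville form via $-K_{Y_d}=2S+\pi_2^*(-K_{S_d})$.
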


\begin{proof}
  Let \(D_1\) and \(D_2\) be divisors on \(S_d\) which are orthogonal to \(S_d\). Then we have
  \[
    -(\mathbb{P}^1 \times D_1)\cdot(\mathbb{P}^1\times D_2)\cdot K_{\mathbb{P}^1\times S_d} = -D_1\cdot D_2\cdot(\mathbb{P}^1 \times K_{S_d}  - 2p \times S_d) = 2D_1\cdot D_2
  \]
  for \(p\) a point in \(\mathbb{P}^1\). Hence we have an embedding of \(R_d(2)\) into \(N_{\mathbb{P}^1\times S_d}\). This lattice is spanned by classes \(3R_i - G\) for \(i=1, \ldots, d\). It remains to prove that the orthogonal complement of \(R_d(2)\) is isomorphic to \(H\). We see that the orthogonal complement contains \(\mathbb{P}^1 \times K_{S_d}\). We see that
  \[
    \langle 3G-S , 3R_i - G \rangle = 0.
  \]
  The lattice spanned by \(3G-S\) and \(\mathbb{P}^1 \times K_{S_d}\) is isomorphic to \(H\). Thus we have that \(H \oplus R_d(2)\) is an overlattice of \(N_{\mathbb{P}^1\times S_d}\). It can be checked explicitly that the two lattices have the same discriminant and thus are isomorphic.
\end{proof}

\subsection{Discussion} Here we will make a number of comments regarding the lattices that we compute in Appendices~\ref{appendix:picard-lattices-fano}--\ref{appendix:parametrized}.

\subsubsection{Isomorphisms between lattices appearing in Appendix~\ref{appendix:picard-lattices-fano}} We have already observed that the Picard lattices of Family \textnumero\({2.28}\) and Family \textnumero\({2.33}\) are the same. One can identify many other pairs of Picard lattices by comparing their discriminant lattices using Sage.
\begin{itemize}[\quad -]
\item Picard rank 2 (these isomorphisms have already been observed by Mase~\cite{mase2014families}):
  \[
    \begin{array}{cccc}
      \{\MM_{2.6}, \MM_{2.32}\}, & \{\MM_{2.4}, \MM_{2.28}, \MM_{2.33}\}, & \{\MM_{2.15}, \MM_{2.30}\}, & \{\MM_{2.8}, \MM_{2.35}\}, \\
      \{\MM_{2.9}, \MM_{2.19}, \MM_{2.27}\}, & \{\MM_{2.11}, \MM_{2.31}\}, & \{\MM_{2.7}, \MM_{2.23}, \MM_{2.29}\}.
    \end{array}
  \]
\item Picard rank 3:
  \[
    \begin{array}{cccc}
      \{\MM_{3.1}, \MM_{3.27}\}, & \{\MM_{3.2}, \MM_{3.28}\}, & \{\MM_{3.3}, \MM_{3.5}, \MM_{3.11}, \MM_{3.17}\}, & \{\MM_{3.8}, \MM_{3.15}\}, \\
      \{\MM_{3.9}, \MM_{3.29}\}, & \{\MM_{3.14}, \MM_{3.22}, \MM_{3.26}\}, & \{\MM_{3.21}, \MM_{3.24}\}.
    \end{array}
  \]
\item Picard rank 4:
  \[
    \begin{array}{ccc}
      \{\MM_{4.2}, \MM_{4.8}\} & \{\MM_{4.5}, \MM_{4.7}\} & \{\MM_{4.6}, \MM_{4.13}\}.
    \end{array}
  \]
\end{itemize}

In certain cases, these isomorphisms can be seen easily, however, some are more cryptic.

\subsubsection{Relationship to Siegel modular threefolds and Hilbert modular surfaces}\label{subsubsection:siegel-hilbert} In Golyshev's work~\cite{golyshev2004modularity,golyshev2007classification}, an important role is played by modular curves \(X_0(n)^+\) which are classically known to be moduli spaces of elliptic curves with level \(n\) structure. It was noticed by Dolgachev~\cite{dolgachev1996mirror} that these curves are moduli spaces of \(\mathbb{E}_8(-1)^2 \oplus H \oplus \langle 2n \rangle\)-polarized K3 surfaces, which are the Shioda--Inose partners (see~\cite{morrison1984k3}) of abelian surfaces of type \(E_{\tau_1} \times E_{\tau_2}\), where \(E_{\tau_1}\) and \(E_{\tau_2}\) are \(n\)-isogenous elliptic curves. More generally, a K3 surface with Picard lattice \(M\) of rank 18 admits a Shioda--Inose structure if and only if its transcendental lattice is of the form \(H \oplus L\) for a lattice \(L\) of rank 2 and signature \((1,1)\). In this case, the moduli space of \(M\)-polarized K3 surfaces is closely related to a Hilbert modular surface~\cite{harder2011moduli}. In particular, the moduli space of \(\Pic(X)^\vee\)-polarized K3 surfaces is precisely a classical Humbert surface if and only if \(\Pic(X)^\vee\) is isomorphic to a lattice of the following form (see, e.g.,~\cite{elkies2014k3}):
\begin{equation}\label{equation:kelkies}
  \left(\begin{matrix}-2 & 0 \\ 0 & D/2 \end{matrix} \right),\qquad \left(\begin{matrix}-2 & 1 \\ 1 & -(D+1)/2 \end{matrix} \right)
\end{equation}
for a positive integer \(D\) congruent to \(0\) or \(1 \bmod 4\), respectively.

A K3 surface of Picard lattice \(M\) of rank 17 admits Shioda--Inose structure if and only if its transcendental lattice is of the form \(H^2 \oplus \langle -2n \rangle\) for a positive integer \(n\). In this case the moduli space of \(M\)-polarized K3 surfaces is related to the moduli space of \((1,n)\)-polarized abelian surfaces~\cite{gritsenko1998minimal}. No K3 surface of Picard rank less than 17 admits Shioda--Inose structure. With this in mind we have the following result.
\begin{itemize}[\quad -]
\item The fibres of all Landau--Ginzburg mirrors of Fano threefolds of Picard rank 2 are parametrized by a quotient of a Hilbert modular surface. In the cases listed in Figure~\ref{figure:modular2}, the moduli space of \(\Pic(X)^\vee\)-polarized K3 surfaces is identified with a classical Humbert surface.

\begin{table}[H]
  \begin{minipage}[t]{0.45\textwidth}
    \begin{tabular}{|c|c|}
      \hline
      Family of Fano threefolds & Discriminant \\
      \hline
      \hline
      2.1 & \(1\) \\
      \hline
      2.2, 2.10 & \(4\) \\
      \hline
      2.36 & \(5\) \\
      \hline
      2.8, 2.35 & \(8\) \\
      \hline
      2.4, 2.28, 2.33 & \(9\) \\
      \hline
      2.16, 2.30  & \(12\) \\
      \hline
    \end{tabular}
  \end{minipage}%
  \begin{minipage}[t]{0.45\textwidth}
    \begin{tabular}{|c|c|}
      \hline
      Family of Fano threefolds & Discriminant \\
      \hline
      \hline
      \(2.12, 2.31\) & \(13\) \\
      \hline
      \(2.7, 2.24, 2.29\) & 16 \\
      \hline
      \(2.17\) & 20 \\
      \hline
      \(2.23\) & 24 \\
      \hline
      \(2.22\) & 28 \\
      \hline
      \(2.21\) & 29 \\
      \hline
    \end{tabular}
  \end{minipage}
  \caption{Fano threefolds whose Picard lattices are of the form described in~\eqref{equation:kelkies}.}\label{figure:modular2}
\end{table}

\item In the cases listed in Figure~\ref{figure:modular} there is an isomorphism between \(\Pic(X)\) and \(H \oplus \langle -2n \rangle\). Therefore, there is an isomorphism between \(\Pic(X)^\vee\) and \(\mathbb{E}_8(-1)^2 \oplus \langle 2n \rangle\).

\begin{table}[H]
  \begin{minipage}[t]{0.45\textwidth}
    \begin{tabular}{|c|c|}
      \hline
      Family of Fano threefolds & Picard lattice \\
      \hline
      \hline
      3.31 & \(H \oplus \langle -12 \rangle\) \\
      \hline
      3.30 & \(H \oplus \langle -14 \rangle\) \\
      \hline
      3.2, 3.28  & \(H \oplus \langle -16 \rangle\) \\
      \hline
      3.25 & \(H \oplus \langle -20 \rangle\) \\
      \hline
      3.21, 3.24 & \(H \oplus \langle -22 \rangle\) \\
      \hline
    \end{tabular}
  \end{minipage}%
  \begin{minipage}[t]{0.45\textwidth}
    \begin{tabular}{|c|c|}
      \hline
      Family of Fano threefolds & Picard lattice \\
      \hline
      \hline
      3.18 & \(H \oplus \langle -26 \rangle\) \\
      \hline
      3.3, 3.5, 3.11, 3.17  & \(H \oplus \langle -28 \rangle\) \\
      \hline
      3.6 & \(H \oplus \langle -32 \rangle\) \\
      \hline
      3.8, 3.15 & \(H \oplus \langle -34 \rangle\) \\
      \hline
    \end{tabular}
  \end{minipage}
  \caption{Fano threefolds whose Picard lattices are of the form \(H \oplus \langle -2n \rangle\) for \(n \in \mathbb{Z}_{> 0}\).}\label{figure:modular}
  \end{table}
\end{itemize}

The moduli spaces of \(\mathbb{E}_8(-1)^2 \oplus \langle 2n \rangle\)-polarized K3 surfaces is studied by Gritsenko--Hulek in~\cite{gritsenko1998minimal}, and it is referred to as \(\mathcal{A}^*_{n}\) in those works. The moduli space \(\mathcal{A}_n^*\) is a quotient of the moduli space of \((1,n)\)-polarized abelian surfaces, which is denoted \(\mathcal{A}_n\). It is known~\cite{gritsenko1994irrationality} that for \(n \geq 13\) and \(n \neq 14,  15, 16, 18, 20, 24, 30, 36\) the moduli space \(\mathcal{A}_n\) is not unirational. We have the following consequence of Corollary~\ref{corollary:ssur} and Theorem~\ref{theorem:DN-duality}.

\begin{proposition}
  For \(n = 6, 7, 8, 10, 11, 13, 14, 16, 17\), the moduli space \(\mathcal{A}_n^*\) is uniruled.
\end{proposition}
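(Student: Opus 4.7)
The plan is to exhibit, for each value of $n$ in the list $\{6, 7, 8, 10, 11, 13, 14, 16, 17\}$, a smooth Fano threefold $X_n$ of Picard rank $3$ whose Picard lattice is isomorphic to $H \oplus \langle -2n \rangle$, and then apply Corollary~\ref{corollary:ssur} together with Theorem~\ref{theorem:DN-duality} to deduce uniruledness of $\mathcal{A}_n^*$.

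First I would read off appropriate Fano threefolds from Figure~\ref{figure:modular}: for $n = 6, 7, 8, 10, 11, 13, 14, 16, 17$ one selects, respectively, families \textnumero\,3.31, \textnumero\,3.30, \textnumero\,3.28, \textnumero\,3.25, \textnumero\,3.24, \textnumero\,3.18, \textnumero\,3.17, \textnumero\,3.6, \textnumero\,3.15 (for most $n$ there are several acceptable alternatives). In each case the entry of Figure~\ref{figure:modular} asserts $\Pic(X_n) \cong H \oplus \langle -2n \rangle$. A direct computation with discriminant forms along the lines of Remark~\ref{remark:orthogonal-complement}, combined with uniqueness of the primitive embedding via~\cite[Theorem~1.4.8]{dolgachev1982lattices}, then identifies the Dolgachev--Nikulin dual as
\[
\Pic(X_n)^\vee \cong \mathbb{E}_8(-1)^2 \oplus \langle 2n \rangle,
\]
so that $\mathcal{M}_{\Pic(X_n)^\vee}$ coincides with $\mathcal{A}_n^*$.

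Next I would invoke Theorem~\ref{theorem:DN-duality} to conclude that Conjecture~\ref{conjecture:DN-duality} holds for the standard toric Landau--Ginzburg model $(Z_n, D_n, \mathsf{f}_n)$ of $X_n$; hence the smooth fibres of $\mathsf{f}_n$ form an $\mathbb{E}_8(-1)^2 \oplus \langle 2n \rangle$-polarized family of K3 surfaces. Since $-K_{X_n}$ is very ample for these Picard rank $3$ families (a standard consequence of the Mori--Mukai classification), Corollary~\ref{corollary:ssur} applies and produces a type III boundary point $p_\infty \in \overline{\mathcal{A}}_n^*$ together with a ruling of $\overline{\mathcal{A}}_n^*$ by rational curves through $p_\infty$. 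This immediately implies $\mathcal{A}_n^*$ is uniruled.

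The hard part here is essentially bookkeeping rather than conceptual: one must verify the very-ampleness hypothesis of Corollary~\ref{corollary:ssur} for one chosen representative family per $n$, and check that the lattice-theoretic identification $\Pic(X_n)^\vee \cong \mathbb{E}_8(-1)^2 \oplus \langle 2n \rangle$ is compatible with the definition of $\mathcal{A}_n^*$ used in~\cite{gritsenko1998minimal}. Both of these are finite, tabulated checks, so assuming Theorem~\ref{theorem:DN-duality} and Corollary~\ref{corollary:ssur} the proposition reduces to inspection of Figure~\ref{figure:modular}.
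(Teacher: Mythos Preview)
Your proposal is correct and follows exactly the paper's approach: the proposition is stated there as a direct consequence of Corollary~\ref{corollary:ssur} and Theorem~\ref{theorem:DN-duality}, applied via the identifications in Figure~\ref{figure:modular} (and the observation that $\Pic(X)\cong H\oplus\langle -2n\rangle$ implies $\Pic(X)^\vee\cong \mathbb{E}_8(-1)^2\oplus\langle 2n\rangle$, so that $\mathcal{M}_{\Pic(X)^\vee}=\mathcal{A}_n^*$). Your additional remarks on verifying very-ampleness and the lattice identification with~\cite{gritsenko1998minimal} are the right sanity checks, though the paper leaves these implicit.
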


Observe that Gritsenko's result (see~\cite{gritsenko1994irrationality}) shows that when \(n=13,17\) the moduli space \(\mathcal{A}_n\) is not unirational. In~\cite{gritsenko2014uniruledness}, Gritsenko and Hulek use techniques coming from the study of modular forms on orthogonal modular varieties to prove that \(\mathcal{A}_{21}^*\) is uniruled. We remark that Gross and Popescu prove rationality and unirationality results for many moduli spaces \(\mathcal{A}_n\) and their covers in~\cite{gross2001calabi,gross2011calabi}. Gross and Popescu's results are related to the existence of abelian surface fibred Calabi--Yau threefolds, which is similar in flavour to our results.

\begin{remark}
  Our results also show that the Humbert surfaces \(\mathcal{H}_d\) with \(d = 1, 4, 5, 8, 9, 12, 13 , 16, 20, 24, 28\), and \(19\) are uniruled, however, this result is well-known (see, e.g.,~\cite{elkies2014k3}).
\end{remark}

\subsubsection{Relative Fourier--Mukai partners, or absence thereof} It is well-known that Calabi--Yau varieties may admit more than one mirror Calabi--Yau variety. This phenomenon is closely related to the existence of Fourier--Mukai partners, and in the case of K3 surfaces, this culminates in Orlov's derived Torelli theorem for K3 surfaces~\cite{orlov1997equivalences}. More precisely, two K3 surfaces \(S_1\) and \(S_2\) are derived equivalent if and only if their transcendental lattices are Hodge isometric.

We may ask whether Fano threefolds also admit multiple mirrors. More precisely, we can ask whether to any of the Landau--Ginzburg models appearing in this paper there are distinct, non-isomorphic families of K3 surfaces whose transcendental variation of Hodge structure is isomorphic, or equivalently, which are fibre-wise Fourier--Mukai partners to the fibres of \((Y,\mathsf{w})\). The answer to this question is \emph{no}; according to, e.g.,~\cite[Corollary 3.8]{huybrechts2016K3}, any K3 surface with Picard rank at least 12 or which admits an elliptic fibration with section has no non-trivial Fourier--Mukai partners. In all cases except Family \textnumero 9.1 and Family \textnumero 10.1 the Picard rank of the fibres of the Landau--Ginzburg mirror has Picard rank at least \(12\). In the case of Family \textnumero 9.1 and Family \textnumero 10.1, each fibre admits an elliptic fibration with section. Therefore, we conclude that any family of K3 surfaces whose transcendental variation of Hodge structure is isomorphic to that of \((Y,\mathsf{w})\) is indeed fibre-wise isomorphic to \((Y,\mathsf{w})\).

\subsection{Further directions}

\begin{enumerate}[(1)]
\item The results in this paper place strong constraints on the K3 surface fibres of Landau--Ginzburg mirrors of Fano threefolds, or more precisely, if \(\Pic(X)\) is the Picard lattice of the very general anticanonical hypersurface in \(X\), then Corollary~\ref{corollary:ssur} says that there is a ruling on \(\mathcal{M}_{\Pic(X)^\vee}\) of a very particular type. We explain in Remark~\ref{remark:smifano} that the same thing should hold for mirrors of semi-Fano varieties as well. It would be interesting to classify moduli spaces of K3 surfaces satisfying the criteria of Corollary~\ref{corollary:ssur} and to relate this classification to the classification of semi-Fano varieties. Similarly, our results provide tight constraints on the moduli spaces of Calabi--Yau varieties which can be mirror to anticanonical hypersurfaces in semi-Fano varieties in higher dimensions.

\item Given a family of Landau--Ginzburg models, for convenience, denoted by \((Z_t,D_t,\mathsf{f}_t)\), there is an associated isomonodromic deformation attached to the local system \(j_t^*R^n\mathsf{f}_{t*}\underline{\mathbb{Z}}_{Z_t}\). Here \(j_t :U_t\rightarrow \mathbb{P}^1\) indicates inclusion of the collection of smooth values of \(\mathsf{f}_t\) into \(\mathbb{P}^1\). It would be interesting to investigate the relationship between this isomonodromy problem and the deformation theory of the triple \((Z,D,\mathsf{f})\). To deformations of \((Z,D,\mathsf{f})\) and to the isomonodromy of \(j_{t*}R^n\mathsf{f}_{t*}\underline{\mathbb{Z}}_{Z_t}\) one may associate Frobenius manifolds. It would be interesting to understand the relationship between these two Frobenius manifolds.

\item Roughly stated, Theorem~\ref{theorem:defmink} says that if \((Z,D,\mathsf{f})\) is a Landau--Ginzburg model arising from a weakly nondegenerate (see Definition~\ref{definition:wnd}) Laurent polynomial, then the same is true for any small deformation of \((Z,D,\mathsf{f})\). Under deformation, the combinatorics of the face polynomials of the corresponding Laurent polynomial can change. It would be interesting to give a precise description of how the combinatorics of this polynomial can change, and what this change means under mirror symmetry. See Remark~\ref{remark:spec} for some speculation in this direction.

\item In Section~\ref{section:k3}, we show that the moduli spaces of K3 surfaces which appear as the fibres of Landau--Ginzburg mirrors of Fano threefolds are of a very special type. The analogous result for  Fano threefolds of Picard rank 1 (see~\cite{golyshev2004modularity,golyshev2007classification}) were instrumental in classifications of K3 fibred Calabi--Yau threefolds of high Picard rank in~\cite{doran2020calabi}. According to the philosophy of~\cite{doran2017mirror}, one would expect that Calabi--Yau threefolds fibred by \(L\)-polarized K3 surfaces, where \(L\) is one of the Picard lattices appearing in Appendices~\ref{appendix:rank-02}--\ref{appendix:parametrized} should be related to the classification of Fano threefolds obtained by smoothing Tyurin degenerate Calabi--Yau threefolds coming from pairs of Fano threefolds. It would be interesting to carry out this program for Fano threefolds of higher Picard rank, and understanding the moduli spaces of these K3 surfaces will be vital in this project.
\end{enumerate}

\appendix

\section{Picard lattices of anticanonical hypersurfaces in Fano threefolds}\label{appendix:picard-lattices-fano}

This section lists the Picard lattices of all Fano threefolds except \(\mathbb{P}^1 \times S_d\), whose Picard lattices are described in Proposition~\ref{proposition:dppic}. See also~\cite{mase2014families} for the rank 2 case.

\begin{table}[H]
  \begin{minipage}[t]{0.25\textwidth}
    \centering
    \noindent
    \begin{tabular}[t]{|c|c|}
      \hline
      \textnumero & \(\Pic(X)\) \\
      \hline
      \hline
      2.1 & \(\left( \begin{matrix} 0 & 1 \\ 1 & 2 \end{matrix} \right)\) \\
      \hline
      2.2 & \(\left( \begin{matrix} 0 & 2 \\ 2 & 2 \end{matrix} \right)\) \\
      \hline
      2.3 & \(\left( \begin{matrix} 0 & 2 \\ 2 & 4 \end{matrix} \right)\) \\
      \hline
      2.4 & \(\left( \begin{matrix} 0 & 3 \\ 3 & 4 \end{matrix} \right)\) \\
      \hline
      2.5 & \(\left( \begin{matrix} 0 & 3 \\ 3 & 6 \end{matrix} \right)\) \\
      \hline
      2.6 & \(\left( \begin{matrix} 2 & 4 \\ 4 & 2 \end{matrix} \right)\) \\
      \hline
      2.7 & \(\left( \begin{matrix} 8 & 8 \\ 8 & 6 \end{matrix} \right)\) \\
      \hline
      2.8 & \(\left( \begin{matrix} -2 & 0 \\ 0 & 4 \end{matrix} \right)\) \\
      \hline
      2.9 & \(\left( \begin{matrix} 8 & 7 \\ 7 & 4 \end{matrix} \right)\) \\
      \hline
    \end{tabular}
  \end{minipage}%
  \begin{minipage}[t]{0.25\textwidth}
    \centering
    \begin{tabular}[t]{|c|c|}
      \hline
      \textnumero & \(\Pic(X)\) \\
      \hline
      \hline
      2.10 & \(\left( \begin{matrix} 0 & 4 \\ 4 & 8 \end{matrix} \right)\) \\
      \hline
      2.11 & \(\left( \begin{matrix} -2 & 1 \\ 1 & 6 \end{matrix} \right)\) \\
      \hline
      2.12 & \(\left( \begin{matrix} 4 & 6 \\ 6 & 4 \end{matrix} \right)\) \\
      \hline
      2.13 & \(\left( \begin{matrix} 2 & 6 \\ 6 & 6 \end{matrix} \right)\) \\
      \hline
      2.14 & \(\left( \begin{matrix} 0 & 5 \\ 5 & 10 \end{matrix} \right)\) \\
      \hline
      2.15 & \(\left( \begin{matrix} 6 & 6 \\ 6 & 4 \end{matrix} \right)\) \\
      \hline
      2.16 & \(\left( \begin{matrix} -2 & 2 \\ 2 & 8 \end{matrix} \right)\) \\
      \hline
      2.17 & \(\left( \begin{matrix} 0 & 5 \\ 5 & 6 \end{matrix} \right)\) \\
      \hline
      2.18 & \(\left( \begin{matrix} 0 & 4 \\ 4 & 2 \end{matrix} \right)\) \\
      \hline
    \end{tabular}
  \end{minipage}%
  \begin{minipage}[t]{0.25\textwidth}
    \centering
    \begin{tabular}[t]{|c|c|}
      \hline
      \textnumero & \(\Pic(X)\) \\
      \hline
      \hline
      2.19 & \(\left( \begin{matrix} -2 & 1 \\ 1 & 8 \end{matrix} \right)\) \\
      \hline
      2.20 & \(\left( \begin{matrix} -2 & 3 \\ 3 & 10 \end{matrix} \right)\) \\
      \hline
      2.21 & \(\left( \begin{matrix} -2 & 4 \\ 4 & 6 \end{matrix} \right)\) \\
      \hline
      2.22 & \(\left( \begin{matrix} -2 & 2 \\ 2 & 10 \end{matrix} \right)\) \\
      \hline
      2.23 & \(\left( \begin{matrix} 0 & 4 \\ 4 & 6 \end{matrix} \right)\) \\
      \hline
      2.24 & \(\left( \begin{matrix} 2 & 5 \\ 5 & 2 \end{matrix} \right)\) \\
      \hline
      2.25 & \(\left( \begin{matrix} 0 & 4 \\ 4 & 4 \end{matrix} \right)\) \\
      \hline
      2.26 & \(\left( \begin{matrix} -2 & 1 \\ 1 & 10 \end{matrix} \right)\) \\
      \hline
      2.27 & \(\left( \begin{matrix} -2 & 3 \\ 3 & 4 \end{matrix} \right)\)  \\
      \hline
    \end{tabular}
  \end{minipage}%
  \begin{minipage}[t]{0.25\textwidth}
    \centering
    \begin{tabular}[t]{|c|c|}
      \hline
      \textnumero & \(\Pic(X)\) \\
      \hline
      \hline
      2.28 & \(\left( \begin{matrix} 0 & 3 \\ 3 & 4 \end{matrix} \right)\) \\
      \hline
      2.29 & \(\left( \begin{matrix} -2 & 2 \\ 2 & 6 \end{matrix} \right)\) \\
      \hline
      2.30 & \(\left( \begin{matrix} -2 & 2 \\ 2 & 4 \end{matrix} \right)\) \\
      \hline
      2.31 & \(\left( \begin{matrix} -2 & 1 \\ 1 & 6 \end{matrix} \right)\) \\
      \hline
      2.32 & \(\left( \begin{matrix} 2 & 4 \\ 4 & 2 \end{matrix} \right)\) \\
      \hline
      2.33 & \(\left( \begin{matrix} 0 & 3 \\ 3 & 4 \end{matrix} \right)\) \\
      \hline
      2.34 & \(\left( \begin{matrix} 0 & 3 \\ 3 & 2 \end{matrix} \right)\) \\
      \hline
      2.35 & \(\left( \begin{matrix} -2 & 0 \\ 0 & 4 \end{matrix} \right)\) \\
      \hline
      2.36 & \(\left( \begin{matrix} 2 & 5 \\ 5 & 10 \end{matrix} \right)\) \\
      \hline
    \end{tabular}
  \end{minipage}%
  \vspace{0.2 in}
  \begin{minipage}[t]{0.24\textwidth}
    \centering
    \noindent
    \begin{tabular}[t]{|c|c|}
      \hline
      \textnumero & \(\Pic(X)\) \\
      \hline
      \hline
      3.1 & \(\left(\begin{matrix} 0 & 2 & 2 \\ 2 & 0 & 2 \\ 2 & 2 & 0 \end{matrix} \right)\) \\
      \hline
      3.2 & \(\left(\begin{matrix} 0 & 2 & 1 \\ 2 & 0 & 2 \\ 1 & 2 & -2 \end{matrix} \right)\) \\
      \hline
      3.3 & \(\left(\begin{matrix} 0 & 2 & 3 \\ 2 & 0 & 3 \\ 3 & 3 & 2  \end{matrix} \right)\) \\
      \hline
      3.4 & \(\left(\begin{matrix} -2 & 2 & 0 \\ 2 & 0 & 4 \\ 0 & 4 & 2  \end{matrix} \right)\) \\
      \hline
      3.5 & \(\left(\begin{matrix} -2 & 2 & 5 \\ 2 & 0 & 3 \\ 5 & 3 & 2 \end{matrix} \right)\) \\
      \hline
      3.6 & \(\left(\begin{matrix} -2 & 0 & 1 \\ 0 & 0 & 4 \\ 1 & 4 & 4 \end{matrix} \right)\) \\
      \hline
      3.7 & \(\left(\begin{matrix} 0 & 3 & 3 \\ 3 & 2 & 4 \\ 3 & 4 & 2 \end{matrix} \right)\) \\
      \hline
      3.8 & \(\left(\begin{matrix} 2 & 0 & 5 \\ 0 & -2 & 2 \\ 5 & 2 & 2 \end{matrix} \right)\) \\
      \hline
    \end{tabular}
  \end{minipage}%
  \begin{minipage}[t]{0.24\textwidth}
    \centering
    \begin{tabular}[t]{|c|c|}
      \hline
      \textnumero & \(\Pic(X)\) \\
      \hline
      \hline
      3.9 & \(\left(\begin{matrix} 4 & 8 & 4 \\ 8 & 10 & 5 \\ 4 & 5 & 2 \end{matrix} \right)\) \\
      \hline
      3.10 & \(\left(\begin{matrix} -2 & 0 & 2 \\ 0 & -2 & 2 \\ 2 & 2 & 6 \end{matrix} \right)\) \\
      \hline
      3.11 & \(\left(\begin{matrix} 0 & 1 & 4 \\ 1 & -2 & 0 \\ 4 & 0 & 4 \end{matrix} \right)\) \\
      \hline
      3.12 & \(\left(\begin{matrix} -2 & 0 & 1 \\ 0 & -2 & 3 \\ 1 & 3 & 4 \end{matrix} \right)\) \\
      \hline
      3.13 & \(\left(\begin{matrix} -2 & 2 & 2 \\ 2 & 2 & 4 \\ 2 & 4 & 2 \end{matrix} \right)\) \\
      \hline
      3.14 & \(\left(\begin{matrix} -2 & 0 & 0 \\ 0 & 0 & 3 \\ 0 & 3 & 4 \end{matrix} \right)\) \\
      \hline
      3.15 & \(\left(\begin{matrix} -2 & 0 & 1 \\ 0 & -2 & 2 \\1 & 2 & 6 \end{matrix} \right)\) \\
      \hline
      3.16 & \(\left(\begin{matrix} -2 & 1 & 3 \\ 1 & -2 & 0 \\ 3 & 0 & 4 \end{matrix} \right)\) \\
      \hline
    \end{tabular}
  \end{minipage}%
  \begin{minipage}[t]{0.24\textwidth}
    \centering
    \begin{tabular}[t]{|c|c|}
      \hline
      \textnumero & \(\Pic(X)\) \\
      \hline
      \hline
      3.17 & \(\left(\begin{matrix} 0 & 2 & 3 \\ 2 & 0 & 3 \\ 3 & 3 & 2 \end{matrix} \right)\) \\
      \hline
      3.18 & \(\left(\begin{matrix} -2 & 0 & 1 \\0 & -2 & 2 \\ 1 & 2 & 4 \end{matrix} \right)\)  \\
      \hline
      3.19 & \(\left(\begin{matrix} -2 & 0 & 0 \\ 0 & -2 & 0 \\0 & 0 & 6 \end{matrix} \right)\)  \\
      \hline
      3.20 & \(\left(\begin{matrix} -2 & 0 & 1 \\ 0 & -2 & 1 \\ 1 & 1 & 6 \end{matrix} \right)\)  \\
      \hline
      3.21 & \(\left(\begin{matrix} -2 & 2 & 1 \\ 2 & 0 & 3 \\ 1 & 3 & 2 \end{matrix} \right)\) \\
      \hline
      3.22 & \(\left(\begin{matrix} -2 & 0 & 2 \\ 0 & 0 & 2 \\ 2 & 3 & 2 \end{matrix} \right)\) \\
      \hline
      3.23 & \(\left(\begin{matrix} -2 & 1 & 2 \\ 1 & -2 & 0 \\ 2 & 0 & 4 \end{matrix} \right)\) \\
      \hline
      3.24 & \(\left(\begin{matrix} -2 & 0 & 1 \\ 0 & 2 & 4 \\ 1 & 4 & 2 \end{matrix} \right)\) \\
      \hline
    \end{tabular}
  \end{minipage}%
  \begin{minipage}[t]{0.24\textwidth}
    \centering
    \begin{tabular}[t]{|c|c|}
      \hline
      \textnumero & \(\Pic(X)\) \\
      \hline
      \hline
      3.25 & \(\left(\begin{matrix} -2 & 0 & 1 \\ 0 & -2 & 1 \\ 1 & 1 & 4 \end{matrix} \right)\) \\
      \hline
      3.26 & \(\left(\begin{matrix}  -2 & 0 & 0 \\ 0 & -2 & 1 \\ 0 & 1 & 4 \end{matrix} \right)\) \\
      \hline
      3.27 & \(\left(\begin{matrix}  0 & 2 & 2 \\ 2 & 0 & 2 \\ 2 & 2 & 0 \end{matrix} \right)\) \\
      \hline
      3.28 & \(\left(\begin{matrix} -2 & 1 & 0 \\ 1 & 0 & 3 \\ 0 & 3 & 4 \end{matrix} \right)\) \\
      \hline
      3.29 & \(\left(\begin{matrix} -2 & -1 & 0 \\ -1 & -2 & 0 \\ 0 & 0 & 4 \end{matrix} \right)\) \\
      \hline
      3.30 & \(\left(\begin{matrix} -2 & 1 & 1 \\ 1 & -2 & 0 \\ 1 & 0 & 4 \end{matrix} \right)\) \\
      \hline
      3.31 & \(\left(\begin{matrix} 0 & 2 & 3 \\ 2 & 0 & 3 \\ 3 & 3 & 6 \end{matrix} \right)\) \\
      \hline
    \end{tabular}
  \end{minipage}
  \caption{Picard lattices of Fano threefolds of rank \(2,3\).}
  \label{table:picard-lattice-2-3}
\end{table}

\begin{table}[H]
  \begin{minipage}[t]{0.31\textwidth}
    \centering
    \begin{tabular}[t]{|c|c|}
      \hline
      \textnumero & \(\Pic(X)\) \\
      \hline
      \hline
      4.1 & \(\left(\begin{matrix} 0 & 2 & 2 & 2 \\ 2 & 0 & 2 & 2 \\ 2 & 2 & 0 & 2 \\ 2 & 2 & 2 & 0 \end{matrix} \right)\) \\
      \hline
      4.2 & \(\left(\begin{matrix} 0 & 2 & 2 & 4 \\ 2 & 0 & 2 & 3 \\ 2 & 2 & 0 & 3 \\ 4 & 3 & 3 & 6 \end{matrix} \right)\)  \\
      \hline
      4.3 & \(\left( \begin{matrix} -2 & 1 & 1 & 2 \\ 1 & 0 & 2 & 2 \\ 1 & 2 & 0 & 2 \\ 2 & 2 & 2 & 0 \end{matrix} \right)\) \\
      \hline
      4.4 & \(\left(\begin{matrix} -2 & 1 & 1 & 2 \\ 1 & -2 & 0 & 0 \\ 1 & 0 & -2 & 0 \\ 2 & 0 & 0 & 6 \end{matrix} \right)\) \\
      \hline
      4.5 & \(\left(\begin{matrix} -2 & 0 & 1 & 2 \\ 0 & -2 & 0 & 1 \\ 1 & 0 & 2 & 3 \\ 2 & 1 & 3 & 0 \end{matrix} \right)\)  \\
      \hline
    \end{tabular}
  \end{minipage}%
  \begin{minipage}[t]{0.31\textwidth}
    \centering
    \begin{tabular}[t]{|c|c|}
      \hline
      \textnumero & \(\Pic(X)\) \\
      \hline
      \hline
      4.6 & \(\left(\begin{matrix} -2 & 0 & 0 & 1 \\ 0 & -2 & 0 & 1 \\ 0 & 0 & -2 & 1 \\ 1 & 1 & 1 & 4 \end{matrix} \right)\) \\
      \hline
      4.7 & \(\left(\begin{matrix} -2 & 0 & 1 & 0 \\ 0 & -2 & 0 & 1 \\ 1 & 0 & 2 & 4 \\ 0 & 1 & 4 & 2 \end{matrix} \right)\) \\
      \hline
      4.8 & \(\left(\begin{matrix} -2 & 0 & 1 & 1 \\ 0 & 0 & 2 & 2 \\ 1 & 2 & 0 & 2 \\ 1 & 2 & 2 & 0 \end{matrix} \right)\) \\
      \hline
      4.9 & \(\left( \begin{matrix} -2  & 0 & -1 & 0 \\ 0 & -2  & 0 & 1 \\  -1 & 0 & -2 & 1 \\ 0  & 1 & 1 & 4 \end{matrix} \right)\) \\
      \hline
      4.10 & \(\left( \begin{matrix} -2 & 0 & -1 & 0 \\ 0 & -2  & -1 & 0 \\ -1 & -1 & 0 & 3 \\ 0  & 0 & 3 & 2 \end{matrix} \right)\) \\
      \hline
    \end{tabular}
  \end{minipage}%
  \begin{minipage}[t]{0.35\textwidth}
    \centering
    \begin{tabular}[t]{|c|c|}
      \hline
      \textnumero & \(\Pic(X)\) \\
      \hline
      \hline
      4.11 & \(\left( \begin{matrix} -2 & -1 & 0 & 0 \\ -1 & -2  & -1 & 0 \\ 0 & -1 & 0 & 3 \\ 0  & 0 & 3 & 2 \end{matrix} \right)\)  \\
      \hline
      4.12 & \(\left( \begin{matrix} -2 & 0 & -1 & 0 \\ 0 & -2  & -1 & 0 \\ -1 & - 1 & -2 & 1 \\ 0  & 0 & 1 & 4 \end{matrix} \right)\) \\
      \hline
      4.13 & \(\left( \begin{matrix} -2  & 1 & 1 & 3 \\ 1 & 0  & 2 & 2 \\  1 & -2 & 0 & 2 \\ 3 & 2 & 2 & 0 \end{matrix} \right)\) \\
      \hline
      5.1 & \(\left( \begin{matrix} -2 & 0 & 0 & -1 & 0 \\ 0 & -2 & 0 & -1 & 0 \\ 0 & 0 & -2 & -1 & 0 \\ -1 & -1 & -1 & -2 & 2 \\ 0 & 0 & 0 & 2 & 6 \end{matrix} \right)\) \\
      \hline
      5.2 & \(\left( \begin{matrix} -2 & 0 & 0 & -1 & 0 \\ 0 & -2 & 0 & -1 & 0 \\ 0& 0 & -2 & 0 & 1 \\ -1 & -1 & 0 & -2 & 1 \\ 0 & 0 & 1 & 1 & 4 \end{matrix} \right)\) \\
      \hline
    \end{tabular}
  \end{minipage}
  \caption{Picard lattices of Fano threefolds of rank \(4,5\).}
  \label{table:picard-lattice-4-5}
\end{table}

\section{Dolgachev--Nikulin duality for Fano threefolds: rank 2}\label{appendix:rank-02}
\subsection{Family \textnumero2.1}\label{subsection:02-01}

The pencil \(\mathcal{S}\) is defined by the equation
\[
  X (X + Y) C^3 = Y (Y + \lambda (X + Y)) (A^3 - B C (A - B)). 
\]
Members \(\mathcal{S}_{\lambda}\) of the pencil are irreducible for any \(\lambda \in \mathbb{P}^1\) except
\[
  \mathcal{S}_{\infty} = S_{(Y)} + S_{(X + Y)} + S_{(A^3 - B C(A - B))}, \;
  \mathcal{S}_{-1} = S_{(X)} + S_{(Y (A^3 - B C (A - B)) + C^3 (X + Y))}.
\]
The base locus of the pencil \(\mathcal{S}\) consists of the following curves:
\[
  C_1 = C_{(Y, C)}, \;
  C_2 = C_{(A, C)}, \;
  C_3 = C_{(X, A^3 - B C (A - B))}, \;
  C_4 = C_{(X + Y, A^3 - B C (A - B))}.
\]
Their linear equivalence classes on the generic member \(\mathcal{S}_{\Bbbk}\) of the pencil satisfy the following relations:
\[
  [C_{3}] = [C_{4}] = [H_{\mathcal{S}}^{(1)}] = 3 [C_{1}].
\]

For a general choice of \(\lambda \in \mathbb{C}\) the surface \(\mathcal{S}_{\lambda}\) has the following singularities:
\begin{itemize}\setlength{\itemindent}{2cm}
\item[\(P_1 = P_{(Y, A, C)}\):] type \(\mathbb{A}_8\) with the quadratic term \(\lambda Y \cdot C\);
\item[\(P_2 = P_{(A, C, Y + \lambda (X + Y))}\):] type \(\mathbb{A}_8\) with the quadratic term \(\lambda (\lambda + 1) Y \cdot C\).
\end{itemize}

Galois action on the lattice \(L_{\lambda}\) is trivial. The intersection matrix on \(L_{\lambda} = L_{\mathcal{S}}\) is represented by
\begin{table}[H]
  \begin{tabular}{|c||cccccccc|cccccccc|ccc|}
    \hline
    \(\bullet\) & \(E_1^1\) & \(E_1^2\) & \(E_1^3\) & \(E_1^4\) & \(E_1^5\) & \(E_1^6\) & \(E_1^7\) & \(E_1^8\) & \(E_2^1\) & \(E_2^2\) & \(E_2^3\) & \(E_2^4\) & \(E_2^5\) & \(E_2^6\) & \(E_2^7\) & \(E_2^8\) & \(\widetilde{C_1}\) & \(\widetilde{C_2}\) & \(\widetilde{H_{\mathcal{S}}^{(2)}}\) \\
    \hline
    \hline
    \(\widetilde{C_1}\) & \(0\) & \(0\) & \(1\) & \(0\) & \(0\) & \(0\) & \(0\) & \(0\) & \(0\) & \(0\) & \(0\) & \(0\) & \(0\) & \(0\) & \(0\) & \(0\) & \(-2\) & \(0\) & \(1\) \\
    \(\widetilde{C_2}\) & \(0\) & \(0\) & \(0\) & \(0\) & \(0\) & \(0\) & \(0\) & \(1\) & \(1\) & \(0\) & \(0\) & \(0\) & \(0\) & \(0\) & \(0\) & \(0\) & \(0\) & \(-2\) & \(0\) \\
    \(\widetilde{H_{\mathcal{S}}^{(2)}}\) & \(0\) & \(0\) & \(0\) & \(0\) & \(0\) & \(0\) & \(0\) & \(0\) & \(0\) & \(0\) & \(0\) & \(0\) & \(0\) & \(0\) & \(0\) & \(0\) & \(1\) & \(0\) & \(2\) \\
    \hline
  \end{tabular}.
\end{table}

Note that the intersection matrix is degenerate. We choose the following integral basis of the lattice \(L_{\lambda}\):
\begin{align*}
  \begin{pmatrix}
    [E_2^8]
  \end{pmatrix} =
  \begin{pmatrix}
    -5 & -10 & -15 & -14 & -13 & -12 & -11 & -10 & -8 \\ -7 & -6 & -5 & -4 & -3 & -2 & -6 & -9 & 3
  \end{pmatrix} \cdot \\
  \begin{pmatrix}
    [E_1^1] & [E_1^2] & [E_1^3] & [E_1^4] & [E_1^5] & [E_1^6] & [E_1^7] & [E_1^8] & [E_2^1] & \\
    [E_2^2] & [E_2^3] & [E_2^4] & [E_2^5] & [E_2^6] & [E_2^7] & [\widetilde{C_1}] & [\widetilde{C_2}] & [\widetilde{H_{\mathcal{S}}^{(2)}}]
  \end{pmatrix}^T.
\end{align*}

Discriminant groups of the lattices \(L_{\mathcal{S}}\) and \(H \oplus \Pic(X)\) are both trivial.


\subsection{Family \textnumero2.2}\label{subsection:02-02}

The pencil \(\mathcal{S}\) is defined by the equation
\begin{gather*}
  (X Y + T^2  + Z (Y - T)) (Z^2 - \lambda X Y) = X Z^3.
\end{gather*}
Members \(\mathcal{S}_{\lambda}\) of the pencil are irreducible for any \(\lambda \in \mathbb{P}^1\) except
\begin{gather*}
  \mathcal{S}_{\infty} = S_{(X)} + S_{(Y)} + S_{(Y (X + Z) - T (Z - T))}, \;
  \mathcal{S}_{0} = 2 S_{(Z)} + S_{(Y (X + Z) - T (Z - T) - X Z)}.
\end{gather*}
The base locus of the pencil \(\mathcal{S}\) consists of the following curves:
\begin{gather*}
  C_{1} = C_{(X, Z)}, \;
  C_{2} = C_{(Y, Z)}, \;
  C_{3} = C_{(X, Z (Y - T) + T^2)}, \;
  C_{4} = C_{(Y, Z (X + T) - T^2)}, \;
  C_{5} = C_{(Z, X Y + T^2)}.
\end{gather*}
Their linear equivalence classes on the generic member \(\mathcal{S}_{\Bbbk}\) of the pencil satisfy the following relations:
\begin{gather*}
  \begin{pmatrix}
    [C_{3}] \\ [C_{4}] \\ [C_{5}]
  \end{pmatrix} = 
  \begin{pmatrix}
    -2 & 0 & 1 \\
    0 & -2 & 1 \\
    -1 & -1 & 1
  \end{pmatrix} \cdot
  \begin{pmatrix}
    [C_{1}] \\ [C_{2}] \\ [H_{\mathcal{S}}]
  \end{pmatrix}.
\end{gather*}

For a general choice of \(\lambda \in \mathbb{C}\) the surface \(\mathcal{S}_{\lambda}\) has the following singularities:
\begin{itemize}\setlength{\itemindent}{2cm}
\item[\(P_{1} = P_{(X, Y, Z)}\):] type \(\mathbb{A}_1\) with the quadratic term \(\lambda X Y - Z^2\);
\item[\(P_{2} = P_{(X, Z, T)}\):] type \(\mathbb{A}_9\) with the quadratic term \(\lambda X \cdot (X + Z)\);
\item[\(P_{3} = P_{(Y, Z, T)}\):] type \(\mathbb{E}_6\) with the quadratic term \(\lambda Y^2\).
\end{itemize}

Galois action on the lattice \(L_{\lambda}\) is trivial. The intersection matrix on \(L_{\lambda} = L_{\mathcal{S}}\) is represented by
\begin{table}[H]
  \begin{tabular}{|c||c|ccccccccc|cccccc|ccc|}
    \hline
    \(\bullet\) & \(E_1^1\) & \(E_2^1\) & \(E_2^2\) & \(E_2^3\) & \(E_2^4\) & \(E_2^5\) & \(E_2^6\) & \(E_2^7\) & \(E_2^8\) & \(E_2^9\) & \(E_3^1\) & \(E_3^2\) & \(E_3^3\) & \(E_3^4\) & \(E_3^5\) & \(E_3^6\) & \(\widetilde{C_{1}}\) & \(\widetilde{C_{2}}\) & \(\widetilde{H_{\mathcal{S}}}\) \\
    \hline
    \hline
    \(\widetilde{C_{1}}\) & \(1\) & \(0\) & \(1\) & \(0\) & \(0\) & \(0\) & \(0\) & \(0\) & \(0\) & \(0\) & \(0\) & \(0\) & \(0\) & \(0\) & \(0\) & \(0\) & \(-2\) & \(0\) & \(1\) \\
    \(\widetilde{C_{2}}\) & \(1\) & \(0\) & \(0\) & \(0\) & \(0\) & \(0\) & \(0\) & \(0\) & \(0\) & \(0\) & \(1\) & \(0\) & \(0\) & \(0\) & \(0\) & \(0\) & \(0\) & \(-2\) & \(1\) \\
    \(\widetilde{H_{\mathcal{S}}}\) & \(0\) & \(0\) & \(0\) & \(0\) & \(0\) & \(0\) & \(0\) & \(0\) & \(0\) & \(0\) & \(0\) & \(0\) & \(0\) & \(0\) & \(0\) & \(0\) & \(1\) & \(1\) & \(4\) \\
    \hline
  \end{tabular}.
\end{table}
Note that the intersection matrix is degenerate. We choose the following integral basis of the lattice \(L_{\lambda}\):
\begin{align*}
  \begin{pmatrix}
    [E_2^9]
  \end{pmatrix} =
  \begin{pmatrix}
-4 & -4 & -8 & -7 & -6 & -5 & -4 & -3 & -2 & -4 & -3 & -5 & -6 & -4 & -2 & -5 & -3 & 2
  \end{pmatrix} \cdot \\
  \begin{pmatrix}
    [E_1^1] & [E_2^1] & [E_2^2] & [E_2^3] & [E_2^4] & [E_2^5] & [E_2^6] & [E_2^7] & [E_2^8] & \\
    [E_3^1] & [E_3^2] & [E_3^3] & [E_3^4] & [E_3^5] & [E_3^6] & [\widetilde{C_{1}}] & [\widetilde{C_{2}}] & [\widetilde{H_{\mathcal{S}}}]
  \end{pmatrix}^T.
\end{align*}

Discriminant groups and discriminant forms of the lattices \(L_{\mathcal{S}}\) and \(H \oplus \Pic(X)\) are given by
\begin{gather*}
  G' = 
  \begin{pmatrix}
    \frac{1}{2} & \frac{1}{2} & 0 & 0 & 0 & 0 & 0 & 0 & 0 & 0 & \frac{1}{2} & \frac{1}{2} & 0 & 0 & 0 & \frac{1}{2} & \frac{1}{2} & \frac{1}{2} \\
    0 & \frac{1}{2} & 0 & 0 & 0 & 0 & 0 & 0 & 0 & 0 & \frac{1}{2} & \frac{1}{2} & 0 & 0 & 0 & \frac{1}{2} & \frac{1}{2} & 0
  \end{pmatrix}, \\
  G'' = 
  \begin{pmatrix}
    0 & 0 & \frac{1}{2} & 0 \\
    0 & 0 & 0 & \frac{1}{2}
  \end{pmatrix}; \;
  B' = 
  \begin{pmatrix}
    0 & \frac{1}{2} \\
    \frac{1}{2} & \frac{1}{2}
  \end{pmatrix}, \;
  B'' = 
  \begin{pmatrix}
    0 & \frac{1}{2} \\
    \frac{1}{2} & \frac{1}{2}
  \end{pmatrix}; \;
  \begin{pmatrix}
    Q' \\ Q''
  \end{pmatrix}
  =
  \begin{pmatrix}
    0 & \frac{3}{2} \\
    0 & \frac{1}{2}    
  \end{pmatrix}.
\end{gather*}


\subsection{Family \textnumero2.3}\label{subsection:02-03}

The pencil \(\mathcal{S}\) is defined by the equation
\begin{gather*}
  X^3 Y + (Y + Z) (X (Z + T) - T^2) (Y + \lambda Z) = 0.
\end{gather*}
Members \(\mathcal{S}_{\lambda}\) of the pencil are irreducible for any \(\lambda \in \mathbb{P}^1\) except
\begin{gather*}
  \mathcal{S}_{\infty} = S_{(Z)} + S_{(Y + Z)} + S_{(X (Z + T) - T^2)}, \;
  \mathcal{S}_{0} = S_{(Y)} + S_{(X^3 - (Y + Z) (X (Z + T) - T^2))}.
\end{gather*}
The base locus of the pencil \(\mathcal{S}\) consists of the following curves:
\begin{gather*}
  C_{1} = C_{(X, T)}, \;
  C_{2} = C_{(Y, Z)}, \;
  C_{3} = C_{(X, Y + Z)}, \;
  C_{4} = C_{(Y, X (Z + T) - T^2)}, \;
  C_{5} = C_{(Z, X^3 + Y T (X - T))}.
\end{gather*}
Their linear equivalence classes on the generic member \(\mathcal{S}_{\Bbbk}\) of the pencil satisfy the following relations:
\begin{gather*}
  \begin{pmatrix}
    [C_{2}] \\ [C_{4}] \\ [C_{5}]
  \end{pmatrix} = 
  \begin{pmatrix}
    0 & -3 & 1 \\
    0 & 6 & -1 \\
    0 & 3 & 0
  \end{pmatrix} \cdot
  \begin{pmatrix}
    [C_{1}] \\ [C_{3}] \\ [H_{\mathcal{S}}]
  \end{pmatrix}.
\end{gather*}

For a general choice of \(\lambda \in \mathbb{C}\) the surface \(\mathcal{S}_{\lambda}\) has the following singularities:
\begin{itemize}\setlength{\itemindent}{2cm}
\item[\(P_{1} = P_{(X, Y, Z)}\):] type \(\mathbb{A}_5\) with the quadratic term \((Y + Z) \cdot (Y + \lambda Z)\);
\item[\(P_{2} = P_{(X, Z, T)}\):] type \(\mathbb{A}_1\) with the quadratic term \(X (Z + T) - T^2\);
\item[\(P_{3} = P_{(X, T, Y + Z)}\):] type \(\mathbb{A}_5\) with the quadratic term \((\lambda - 1) X \cdot (Y + Z)\);
\item[\(P_{4} = P_{(X, T, Y + \lambda Z)}\):] type \(\mathbb{A}_5\) with the quadratic term \((\lambda - 1) X \cdot (Y + \lambda Z)\).
\end{itemize}

Galois action on the lattice \(L_{\lambda}\) is trivial. The intersection matrix on \(L_{\lambda} = L_{\mathcal{S}}\) is represented by
\begin{table}[H]
  \begin{tabular}{|c||ccccc|c|ccccc|ccccc|ccc|}
    \hline
    \(\bullet\) & \(E_1^1\) & \(E_1^2\) & \(E_1^3\) & \(E_1^4\) & \(E_1^5\) & \(E_2^1\) & \(E_3^1\) & \(E_3^2\) & \(E_3^3\) & \(E_3^4\) & \(E_3^5\) & \(E_4^1\) & \(E_4^2\) & \(E_4^3\) & \(E_4^4\) & \(E_4^5\) & \(\widetilde{C_{1}}\) & \(\widetilde{C_{3}}\) & \(\widetilde{H_{\mathcal{S}}}\) \\
    \hline
    \hline
    \(\widetilde{C_{1}}\) & \(0\) & \(0\) & \(0\) & \(0\) & \(0\) & \(1\) & \(1\) & \(0\) & \(0\) & \(0\) & \(0\) & \(1\) & \(0\) & \(0\) & \(0\) & \(0\) & \(-2\) & \(0\) & \(1\) \\
    \(\widetilde{C_{3}}\) & \(1\) & \(0\) & \(0\) & \(0\) & \(0\) & \(0\) & \(0\) & \(0\) & \(0\) & \(1\) & \(0\) & \(0\) & \(0\) & \(0\) & \(0\) & \(0\) & \(0\) & \(-2\) & \(1\) \\
    \(\widetilde{H_{\mathcal{S}}}\) & \(0\) & \(0\) & \(0\) & \(0\) & \(0\) & \(0\) & \(0\) & \(0\) & \(0\) & \(0\) & \(0\) & \(0\) & \(0\) & \(0\) & \(0\) & \(0\) & \(1\) & \(1\) & \(4\) \\
    \hline
  \end{tabular}.
\end{table}
Note that the intersection matrix is degenerate. We choose the following integral basis of the lattice \(L_{\lambda}\):
\begin{align*}
  \begin{pmatrix}
    [E_4^5]
  \end{pmatrix} =
  \begin{pmatrix}
    -5 & -4 & -3 & -2 & -1 & -3 & -7 & -8 & -9 & -10 & -5 & -5 & -4 & -3 & -2 & -6 & -6 & 3
  \end{pmatrix} \cdot \\
  \begin{pmatrix}
    [E_1^1] & [E_1^2] & [E_1^3] & [E_1^4] & [E_1^5] & [E_2^1] & [E_3^1] & [E_3^2] & [E_3^3] & \\
    [E_3^4] & [E_3^5] & [E_4^1] & [E_4^2] & [E_4^3] & [E_4^4] & [\widetilde{C_{1}}] & [\widetilde{C_{3}}] & [\widetilde{H_{\mathcal{S}}}]
  \end{pmatrix}^T.
\end{align*}

Discriminant groups and discriminant forms of the lattices \(L_{\mathcal{S}}\) and \(H \oplus \Pic(X)\) are given by
\begin{gather*}
  G' = 
  \begin{pmatrix}
    0 & 0 & 0 & 0 & 0 & \frac{1}{2} & \frac{1}{2} & 0 & \frac{1}{2} & 0 & \frac{1}{2} & 0 & 0 & 0 & 0 & 0 & 0 & 0 \\
    \frac{1}{2} & 0 & \frac{1}{2} & 0 & \frac{1}{2} & \frac{1}{2} & 0 & 0 & 0 & 0 & 0 & 0 & 0 & 0 & 0 & 0 & 0 & \frac{1}{2}
  \end{pmatrix}, \\
  G'' = 
  \begin{pmatrix}
    0 & 0 & \frac{1}{2} & 0 \\
    0 & 0 & 0 & \frac{1}{2}
  \end{pmatrix}; \;
  B' = 
  \begin{pmatrix}
    0 & \frac{1}{2} \\
    \frac{1}{2} & 0
  \end{pmatrix}, \;
  B'' = 
  \begin{pmatrix}
    0 & \frac{1}{2} \\
    \frac{1}{2} & 0
  \end{pmatrix}; \;
  \begin{pmatrix}
    Q' \\ Q''
  \end{pmatrix}
  =
  \begin{pmatrix}
    0 & 1 \\
    0 & 1 
  \end{pmatrix}.
\end{gather*}


\subsection{Family \textnumero2.4}\label{subsection:02-04}

The pencil \(\mathcal{S}\) is defined by the equation
\begin{gather*}
  X^{3} Y + X^{2} Y^{2} + X^{3} Z + 4 X^{2} Y Z + 2 X Y^{2} Z + 2 X^{2} Z^{2} + 4 X Y Z^{2} + Y^{2} Z^{2} + X Z^{3} + Y Z^{3} + 3 X^{2} Y T + \\ 2 X Y^{2} T + 2 X^{2} Z T + 2 Y^{2} Z T + 2 X Z^{2} T + 3 Y Z^{2} T + 3 X Y T^{2} + Y^{2} T^{2} + X Z T^{2} + 3 Y Z T^{2} + Y T^{3} = \lambda X Y Z T.
\end{gather*}
Members \(\mathcal{S}_{\lambda}\) of the pencil are irreducible for any \(\lambda \in \mathbb{P}^1\) except
\begin{gather*}
  \mathcal{S}_{\infty} = S_{(X)} + S_{(Y)} + S_{(Z)} + S_{(T)}, \;
  \mathcal{S}_{- 7} = S_{(X + Z + T)} + S_{(X + Y + Z + T)} + S_{(Y (X + Z + T) + X Z)}.
\end{gather*}
The base locus of the pencil \(\mathcal{S}\) consists of the following curves:
\begin{gather*}
  C_{1} = C_{(X, Y)}, \;
  C_{2} = C_{(Y, Z)}, \;
  C_{3} = C_{(X, Z + T)}, \;
  C_{4} = C_{(Z, X + T)}, \;
  C_{5} = C_{(T, X + Z)}, \\
  C_{6} = C_{(X, Y + Z + T)}, \;
  C_{7} = C_{(Y, X + Z + T)}, \;
  C_{8} = C_{(Z, X + Y + T)}, \;
  C_{9} = C_{(T, X + Y + Z)}, \;
  C_{10} = C_{(T, X Z + Y (X + Z))}.
\end{gather*}
Their linear equivalence classes on the generic member \(\mathcal{S}_{\Bbbk}\) of the pencil satisfy the following relations:
\begin{gather*}
  \begin{pmatrix}
    [C_{2}] \\ [C_{6}] \\ [C_{7}] \\ [C_{8}] \\ [C_{9}] \\ [C_{10}]
  \end{pmatrix} = 
  \begin{pmatrix}
    -1 & 2 & 2 & 2 & -1 \\
    -1 & -2 & 0 & 0 & 1 \\
    0 & -1 & -1 & -1 & 1 \\
    1 & -2 & -4 & -2 & 2 \\
    0 & 5 & 5 & 3 & -3 \\
    0 & -5 & -5 & -4 & 4
  \end{pmatrix} \cdot
  \begin{pmatrix}
    [C_{1}] \\ [C_{3}] \\ [C_{4}] \\ [C_{5}] \\ [H_{\mathcal{S}}]
  \end{pmatrix}.
\end{gather*}

For a general choice of \(\lambda \in \mathbb{C}\) the surface \(\mathcal{S}_{\lambda}\) has the following singularities:
\begin{itemize}\setlength{\itemindent}{2cm}
\item[\(P_{1} = P_{(X, Z, T)}\):] type \(\mathbb{D}_4\) with the quadratic term \((X + Z + T)^2\);
\item[\(P_{2} = P_{(X, Y, Z + T)}\):] type \(\mathbb{A}_4\) with the quadratic term \((\lambda + 7) X \cdot Y\);
\item[\(P_{3} = P_{(Y, Z, X + T)}\):] type \(\mathbb{A}_4\) with the quadratic term \((\lambda + 7) Y \cdot Z\);
\item[\(P_{4} = P_{(Y, T, X + Z)}\):] type \(\mathbb{A}_1\) with the quadratic term \((X + Z + T) (X + Y + Z + T) - (\lambda + 7) Y T\).
\end{itemize}

Galois action on the lattice \(L_{\lambda}\) is trivial. The intersection matrix on \(L_{\lambda} = L_{\mathcal{S}}\) is represented by
\begin{table}[H]
  \begin{tabular}{|c||cccc|cccc|cccc|c|ccccc|}
    \hline
    \(\bullet\) & \(E_1^1\) & \(E_1^2\) & \(E_1^3\) & \(E_1^4\) & \(E_2^1\) & \(E_2^2\) & \(E_2^3\) & \(E_2^4\) & \(E_3^1\) & \(E_3^2\) & \(E_3^3\) & \(E_3^4\) & \(E_4^1\) & \(\widetilde{C_{1}}\) & \(\widetilde{C_{3}}\) & \(\widetilde{C_{4}}\) & \(\widetilde{C_{5}}\) & \(\widetilde{H_{\mathcal{S}}}\) \\
    \hline
    \hline
    \(\widetilde{C_{1}}\) & \(0\) & \(0\) & \(0\) & \(0\) & \(0\) & \(1\) & \(0\) & \(0\) & \(0\) & \(0\) & \(0\) & \(0\) & \(0\) & \(-2\) & \(0\) & \(0\) & \(0\) & \(1\) \\
    \(\widetilde{C_{3}}\) & \(1\) & \(0\) & \(0\) & \(0\) & \(1\) & \(0\) & \(0\) & \(0\) & \(0\) & \(0\) & \(0\) & \(0\) & \(0\) & \(0\) & \(-2\) & \(0\) & \(0\) & \(1\) \\
    \(\widetilde{C_{4}}\) & \(0\) & \(0\) & \(1\) & \(0\) & \(0\) & \(0\) & \(0\) & \(0\) & \(0\) & \(0\) & \(0\) & \(1\) & \(0\) & \(0\) & \(0\) & \(-2\) & \(0\) & \(1\) \\
    \(\widetilde{C_{5}}\) & \(0\) & \(0\) & \(0\) & \(1\) & \(0\) & \(0\) & \(0\) & \(0\) & \(0\) & \(0\) & \(0\) & \(0\) & \(1\) & \(0\) & \(0\) & \(0\) & \(-2\) & \(1\) \\
    \(\widetilde{H_{\mathcal{S}}}\) & \(0\) & \(0\) & \(0\) & \(0\) & \(0\) & \(0\) & \(0\) & \(0\) & \(0\) & \(0\) & \(0\) & \(0\) & \(0\) & \(1\) & \(1\) & \(1\) & \(1\) & \(4\) \\
    \hline
  \end{tabular}.
\end{table}
Note that the intersection matrix is non-degenerate.

Discriminant groups and discriminant forms of the lattices \(L_{\mathcal{S}}\) and \(H \oplus \Pic(X)\) are given by
\begin{gather*}
  G' = 
  \begin{pmatrix}
    \frac{5}{9} & \frac{1}{3} & \frac{8}{9} & \frac{2}{9} & \frac{5}{9} & \frac{1}{3} & \frac{2}{9} & \frac{1}{9} & \frac{8}{9} & \frac{7}{9} & \frac{2}{3} & \frac{5}{9} & \frac{5}{9} & \frac{8}{9} & \frac{7}{9} & \frac{4}{9} & \frac{1}{9} & \frac{4}{9}
  \end{pmatrix}, \\
  G'' = 
  \begin{pmatrix}
    0 & 0 & \frac{8}{9} & \frac{1}{3}
  \end{pmatrix}; \;
  B' = 
  \begin{pmatrix}
    \frac{7}{9}
  \end{pmatrix}, \;
  B'' =
  \begin{pmatrix}
    \frac{2}{9}
  \end{pmatrix}; \;
  Q' =
  \begin{pmatrix}
    \frac{16}{9}
  \end{pmatrix}, \;
  Q'' =
  \begin{pmatrix}
    \frac{2}{9}
  \end{pmatrix}.
\end{gather*}


\subsection{Family \textnumero2.5}\label{subsection:02-05}

The pencil \(\mathcal{S}\) is defined by the equation
\begin{gather*}
  X^{2} Y Z + X Y^{2} Z + X Y Z^{2} + X^{3} T + 3 X^{2} Y T + 3 X Y^{2} T + Y^{3} T + 3 X^{2} Z T + 3 Y^{2} Z T + 3 X Z^{2} T + \\ 3 Y Z^{2} T + Z^{3} T + X^{2} T^{2} + 2 X Y T^{2} + Y^{2} T^{2} + 2 X Z T^{2} + 2 Y Z T^{2} + Z^{2} T^{2} = \lambda X Y Z T.
\end{gather*}
Members \(\mathcal{S}_{\lambda}\) of the pencil are irreducible for any \(\lambda \in \mathbb{P}^1\) except
\begin{gather*}
  \mathcal{S}_{\infty} = S_{(X)} + S_{(Y)} + S_{(Z)} + S_{(T)}, \\
  \mathcal{S}_{- 7} = S_{(X + Y + Z + T)} + S_{(T (X + Y + Z)^2 + X Y Z)}, \;
  \mathcal{S}_{- 6} = S_{(X + Y + Z)} + S_{(X Y Z + T (X + Y + Z) (X + Y + Z + T))}.
\end{gather*}
The base locus of the pencil \(\mathcal{S}\) consists of the following curves:
\begin{gather*}
  C_{1} = C_{(X, T)}, \;
  C_{2} = C_{(Y, T)}, \;
  C_{3} = C_{(Z, T)}, \;
  C_{4} = C_{(X, Y + Z)}, \;
  C_{5} = C_{(Y, X + Z)}, \\
  C_{6} = C_{(Z, X + Y)}, \;
  C_{7} = C_{(X, Y + Z + T)}, \;
  C_{8} = C_{(Y, X + Z + T)}, \;
  C_{9} = C_{(Z, X + Y + T)}, \;
  C_{10} = C_{(T, X + Y + Z)}.
\end{gather*}
Their linear equivalence classes on the generic member \(\mathcal{S}_{\Bbbk}\) of the pencil satisfy the following relations:
\begin{gather*}
  \begin{pmatrix}
    [C_{3}] \\ [C_{6}] \\ [C_{7}] \\ [C_{8}] \\ [C_{9}] \\ [H_{\mathcal{S}}]
  \end{pmatrix} = 
  \begin{pmatrix}
    -1 & -1 & 0 & 0 & 3 \\
    0 & 0 & -1 & -1 & 3 \\
    -1 & 0 & -2 & 0 & 4 \\
    0 & -1 & 0 & -2 & 4 \\
    1 & 1 & 2 & 2 & -5 \\
    0 & 0 & 0 & 0 & 4
  \end{pmatrix} \cdot
  \begin{pmatrix}
    [C_{1}] \\ [C_{2}] \\ [C_{4}] \\ [C_{5}] \\ [C_{10}]
  \end{pmatrix}.
\end{gather*}

For a general choice of \(\lambda \in \mathbb{C}\) the surface \(\mathcal{S}_{\lambda}\) has the following singularities:
\begin{itemize}\setlength{\itemindent}{2cm}
\item[\(P_{1} = P_{(X, Y, Z)}\):] type \(\mathbb{D}_4\) with the quadratic term \((X + Y + Z)^2\);
\item[\(P_{2} = P_{(X, T, Y + Z)}\):] type \(\mathbb{A}_3\) with the quadratic term \(X \cdot (X + Y + Z - (\lambda + 6) T)\);
\item[\(P_{3} = P_{(Y, T, X + Z)}\):] type \(\mathbb{A}_3\) with the quadratic term \(Y \cdot (X + Y + Z - (\lambda + 6) T)\);
\item[\(P_{4} = P_{(Z, T, X + Y)}\):] type \(\mathbb{A}_3\) with the quadratic term \(Z \cdot (X + Y + Z - (\lambda + 6) T)\).
\end{itemize}

Galois action on the lattice \(L_{\lambda}\) is trivial. The intersection matrix on \(L_{\lambda} = L_{\mathcal{S}}\) is represented by
\begin{table}[H]
  \begin{tabular}{|c||cccc|ccc|ccc|ccc|ccccc|}
    \hline
    \(\bullet\) & \(E_1^1\) & \(E_1^2\) & \(E_1^3\) & \(E_1^4\) & \(E_2^1\) & \(E_2^2\) & \(E_2^3\) & \(E_3^1\) & \(E_3^2\) & \(E_3^3\) & \(E_4^1\) & \(E_4^2\) & \(E_4^3\) & \(\widetilde{C_{1}}\) & \(\widetilde{C_{2}}\) & \(\widetilde{C_{4}}\) & \(\widetilde{C_{5}}\) & \(\widetilde{C_{10}}\) \\
    \hline
    \hline
    \(\widetilde{C_{1}}\) & \(0\) & \(0\) & \(0\) & \(0\) & \(1\) & \(0\) & \(0\) & \(0\) & \(0\) & \(0\) & \(0\) & \(0\) & \(0\) & \(-2\) & \(1\) & \(0\) & \(0\) & \(0\) \\
    \(\widetilde{C_{2}}\) & \(0\) & \(0\) & \(0\) & \(0\) & \(0\) & \(0\) & \(0\) & \(1\) & \(0\) & \(0\) & \(0\) & \(0\) & \(0\) & \(1\) & \(-2\) & \(0\) & \(0\) & \(0\) \\
    \(\widetilde{C_{4}}\) & \(0\) & \(0\) & \(1\) & \(0\) & \(1\) & \(0\) & \(0\) & \(0\) & \(0\) & \(0\) & \(0\) & \(0\) & \(0\) & \(0\) & \(0\) & \(-2\) & \(0\) & \(0\) \\
    \(\widetilde{C_{5}}\) & \(0\) & \(0\) & \(0\) & \(1\) & \(0\) & \(0\) & \(0\) & \(1\) & \(0\) & \(0\) & \(0\) & \(0\) & \(0\) & \(0\) & \(0\) & \(0\) & \(-2\) & \(0\) \\
    \(\widetilde{C_{10}}\) & \(0\) & \(0\) & \(0\) & \(0\) & \(0\) & \(0\) & \(1\) & \(0\) & \(0\) & \(1\) & \(0\) & \(0\) & \(1\) & \(0\) & \(0\) & \(0\) & \(0\) & \(-2\) \\
    \hline
  \end{tabular}.
\end{table}
Note that the intersection matrix is non-degenerate.

Discriminant groups and discriminant forms of the lattices \(L_{\mathcal{S}}\) and \(H \oplus \Pic(X)\) are given by
\begin{gather*}
  G' = 
  \begin{pmatrix}
    0 & 0 & 0 & 0 & 0 & \frac{2}{3} & \frac{1}{3} & 0 & \frac{1}{3} & \frac{2}{3} & 0 & 0 & 0 & \frac{1}{3} & \frac{2}{3} & 0 & 0 & 0 \\
    0 & 0 & \frac{2}{3} & \frac{1}{3} & 0 & \frac{2}{3} & \frac{1}{3} & 0 & \frac{1}{3} & \frac{2}{3} & 0 & 0 & 0 & 0 & 0 & \frac{1}{3} & \frac{2}{3} & 0
  \end{pmatrix}, \\
  G'' = 
  \begin{pmatrix}
    0 & 0 & \frac{1}{3} & 0 \\
    0 & 0 & 0 & \frac{1}{3}
  \end{pmatrix}; \;
  B' = 
  \begin{pmatrix}
    0 & \frac{2}{3} \\
    \frac{2}{3} & \frac{1}{3}
  \end{pmatrix}, \;
  B'' = 
  \begin{pmatrix}
    0 & \frac{1}{3} \\
    \frac{1}{3} & \frac{2}{3}
  \end{pmatrix}; \;
  \begin{pmatrix}
    Q' \\ Q''
  \end{pmatrix}
  =
  \begin{pmatrix}
    0 & \frac{4}{3} \\
    0 & \frac{2}{3}    
  \end{pmatrix}.
\end{gather*}


\subsection{Family \textnumero2.6}\label{subsection:02-06}

The pencil \(\mathcal{S}\) is defined by the equation
\begin{gather*}
  X^{2} Y Z + X Y^{2} Z + X^{2} Z^{2} + 2 X Y Z^{2} + Y^{2} Z^{2} + X Z^{3} + Y Z^{3} + X^{2} Y T + X Y^{2} T + 2 X^{2} Z T + 2 Y^{2} Z T + \\ 3 X Z^{2} T + 3 Y Z^{2} T + X^{2} T^{2} + 2 X Y T^{2} + Y^{2} T^{2} + 3 X Z T^{2} + 3 Y Z T^{2} + X T^{3} + Y T^{3} = \lambda X Y Z T.
\end{gather*}
Members \(\mathcal{S}_{\lambda}\) of the pencil are irreducible for any \(\lambda \in \mathbb{P}^1\) except
\begin{gather*}
  \mathcal{S}_{\infty} = S_{(X)} + S_{(Y)} + S_{(Z)} + S_{(T)}, \;
  \mathcal{S}_{- 4} = S_{(X + Y)} + S_{(Z + T)} + S_{(X + Z + T)} + S_{(Y + Z + T)}.
\end{gather*}
The base locus of the pencil \(\mathcal{S}\) consists of the following curves:
\begin{gather*}
  C_1 = C_{(X, Y)}, \;
  C_2 = C_{(Z, T)}, \;
  C_3 = C_{(X, Z + T)}, \;
  C_4 = C_{(Y, Z + T)}, \;
  C_5 = C_{(Z, X + Y)}, \;
  C_6 = C_{(Z, X + T)}, \\
  C_7 = C_{(Z, Y + T)}, \;
  C_8 = C_{(T, X + Y)}, \;
  C_9 = C_{(T, X + Z)}, \;
  C_{10} = C_{(T, Y + Z)}, \;
  C_{11} = C_{(X, Y + Z + T)}, \;
  C_{12} = C_{(Y, X + Z + T)}.
\end{gather*}
Their linear equivalence classes on the generic member \(\mathcal{S}_{\Bbbk}\) of the pencil satisfy the following relations:
\begin{gather*}
  \begin{pmatrix}
    [C_{4}] \\ [C_{5}] \\ [C_{7}] \\ [C_{8}] \\ [C_{9}] \\ [C_{11}] \\ [C_{12}]
  \end{pmatrix} = 
  \begin{pmatrix}
    0 & -2 & -1 & 0 & 0 & 1 \\
    -1 & -3 & -3 & -1 & 1 & 2 \\
    1 & 2 & 3 & 0 & -1 & -1 \\
    -1 & 3 & 3 & 1 & -1 & -1 \\
    1 & -4 & -3 & -1 & 0 & 2 \\
    -1 & 0 & -2 & 0 & 0 & 1 \\
    -1 & 4 & 2 & 0 & 0 & -1
  \end{pmatrix} \cdot
  \begin{pmatrix}
    [C_{1}] \\ [C_{2}] \\ [C_{3}] \\ [C_{6}] \\ [C_{10}] \\ [H_{\mathcal{S}}]
  \end{pmatrix}.
\end{gather*}

For a general choice of \(\lambda \in \mathbb{C}\) the surface \(\mathcal{S}_{\lambda}\) has the following singularities:
\begin{itemize}\setlength{\itemindent}{2cm}
\item[\(P_{1} = P_{(X, Z, T)}\):] type \(\mathbb{A}_3\) with the quadratic term \((Z + T) \cdot (X + Z + T)\);
\item[\(P_{2} = P_{(Y, Z, T)}\):] type \(\mathbb{A}_3\) with the quadratic term \((Z + T) \cdot (Y + Z + T)\);
\item[\(P_{3} = P_{(X, Y, Z + T)}\):] type \(\mathbb{A}_5\) with the quadratic term \((\lambda + 4) X \cdot Y\);
\item[\(P_{4} = P_{(Z, T, X + Y)}\):] type \(\mathbb{A}_1\) with the quadratic term \((X + Y) (Z + T) - (\lambda + 4) Z T\).
\end{itemize}

Galois action on the lattice \(L_{\lambda}\) is trivial. The intersection matrix on \(L_{\lambda} = L_{\mathcal{S}}\) is represented by
\begin{table}[H]
  \begin{tabular}{|c||ccc|ccc|ccccc|c|cccccc|}
    \hline
    \(\bullet\) & \(E_1^1\) & \(E_1^2\) & \(E_1^3\) & \(E_2^1\) & \(E_2^2\) & \(E_2^3\) & \(E_3^1\) & \(E_3^2\) & \(E_3^3\) & \(E_3^4\) & \(E_3^5\) & \(E_4^1\) & \(\widetilde{C_{1}}\) & \(\widetilde{C_{2}}\) & \(\widetilde{C_{3}}\) & \(\widetilde{C_{6}}\) & \(\widetilde{C_{10}}\) & \(\widetilde{H_{\mathcal{S}}}\) \\
    \hline
    \hline
    \(\widetilde{C_{1}}\) & \(0\) & \(0\) & \(0\) & \(0\) & \(0\) & \(0\) & \(0\) & \(0\) & \(1\) & \(0\) & \(0\) & \(0\) & \(-2\) & \(0\) & \(0\) & \(0\) & \(0\) & \(1\) \\
    \(\widetilde{C_{2}}\) & \(1\) & \(0\) & \(0\) & \(1\) & \(0\) & \(0\) & \(0\) & \(0\) & \(0\) & \(0\) & \(0\) & \(1\) & \(0\) & \(-2\) & \(0\) & \(0\) & \(0\) & \(1\) \\
    \(\widetilde{C_{3}}\) & \(0\) & \(1\) & \(0\) & \(0\) & \(0\) & \(0\) & \(1\) & \(0\) & \(0\) & \(0\) & \(0\) & \(0\) & \(0\) & \(0\) & \(-2\) & \(0\) & \(0\) & \(1\) \\
    \(\widetilde{C_{6}}\) & \(0\) & \(0\) & \(1\) & \(0\) & \(0\) & \(0\) & \(0\) & \(0\) & \(0\) & \(0\) & \(0\) & \(0\) & \(0\) & \(0\) & \(0\) & \(-2\) & \(0\) & \(1\) \\
    \(\widetilde{C_{10}}\) & \(0\) & \(0\) & \(0\) & \(0\) & \(0\) & \(1\) & \(0\) & \(0\) & \(0\) & \(0\) & \(0\) & \(0\) & \(0\) & \(0\) & \(0\) & \(0\) & \(-2\) & \(1\) \\
    \(\widetilde{H_{\mathcal{S}}}\) & \(0\) & \(0\) & \(0\) & \(0\) & \(0\) & \(0\) & \(0\) & \(0\) & \(0\) & \(0\) & \(0\) & \(0\) & \(1\) & \(1\) & \(1\) & \(1\) & \(1\) & \(4\) \\
    \hline
  \end{tabular}.
\end{table}
Note that the intersection matrix is non-degenerate.

Discriminant groups and discriminant forms of the lattices \(L_{\mathcal{S}}\) and \(H \oplus \Pic(X)\) are given by
\begin{gather*}
  G' = 
  \begin{pmatrix}
    \frac{1}{2} & 0 & 0 & 0 & 0 & 0 & 0 & \frac{1}{2} & 0 & 0 & 0 & \frac{1}{2} & \frac{1}{2} & 0 & \frac{1}{2} & 0 & 0 & 0 \\
    \frac{1}{3} & \frac{1}{3} & \frac{5}{6} & \frac{5}{6} & \frac{1}{3} & \frac{5}{6} & \frac{5}{6} & \frac{1}{6} & \frac{1}{2} & \frac{2}{3} & \frac{5}{6} & \frac{2}{3} & \frac{1}{6} & \frac{1}{3} & \frac{1}{2} & \frac{1}{3} & \frac{1}{3} & \frac{5}{6}
  \end{pmatrix}, \\
  G'' = 
  \begin{pmatrix}
    0 & 0 & \frac{1}{2} & 0 \\
    0 & 0 & \frac{2}{3} & \frac{1}{6}
  \end{pmatrix}; \;
  B' = 
  \begin{pmatrix}
    \frac{1}{2} & 0 \\
    0 & \frac{1}{6}
  \end{pmatrix}, \;
  B'' = 
  \begin{pmatrix}
    \frac{1}{2} & 0 \\
    0 & \frac{5}{6}
  \end{pmatrix}; \;
  \begin{pmatrix}
    Q' \\ Q''
  \end{pmatrix}
  =
  \begin{pmatrix}
    \frac{3}{2} & \frac{1}{6} \\
    \frac{1}{2} & \frac{11}{6}
  \end{pmatrix}.
\end{gather*}


\subsection{Family \textnumero2.7}\label{subsection:02-07}

The pencil \(\mathcal{S}\) is defined by the equation
\begin{gather*}
  X^{2} Y^{2} + X^{2} Y Z + 2 X Y^{2} Z + X Y Z^{2} + Y^{2} Z^{2} + 2 X^{2} Y T + 2 X Y^{2} T + X^{2} Z T + 2 Y^{2} Z T + \\ X Z^{2} T + 2 Y Z^{2} T + X^{2} T^{2} + 2 X Y T^{2} + Y^{2} T^{2} + 2 X Z T^{2} + 2 Y Z T^{2} + Z^{2} T^{2} = \lambda X Y Z T.
\end{gather*}
Members \(\mathcal{S}_{\lambda}\) of the pencil are irreducible for any \(\lambda \in \mathbb{P}^1\) except
\begin{gather*}
  \mathcal{S}_{\infty} = S_{(X)} + S_{(Y)} + S_{(Z)} + S_{(T)}, \;
  \mathcal{S}_{- 5} = S_{((X + Z) (Y + T) + Y T)} + S_{((X + T) (Y + Z) + X T + Y Z)}.
\end{gather*}
The base locus of the pencil \(\mathcal{S}\) consists of the following curves:
\begin{gather*}
  C_{1} = C_{(Y, T)}, \;
  C_{2} = C_{(Y, X + Z)}, \;
  C_{3} = C_{(T, X + Z)}, \;
  C_{4} = C_{(X, Y T + Z (Y + T))}, \\
  C_{5} = C_{(Y, X Z + T (X + Z))}, \;
  C_{6} = C_{(Z, Y T + X (Y + T))}, \;
  C_{7} = C_{(T, X Z + Y (X + Z))}.
\end{gather*}
Their linear equivalence classes on the generic member \(\mathcal{S}_{\Bbbk}\) of the pencil satisfy the following relations:
\[
  \begin{pmatrix}
    [C_5] \\ 2 [C_6] \\ [C_7] \\ [H_{\mathcal{S}}]
  \end{pmatrix} =
  \begin{pmatrix}
    -1 & -1 & 0 & 2 \\
    0 & 0 & 0 & 2 \\
    -1 & 0 & -1 & 2 \\
    0 & 0 & 0 & 2
  \end{pmatrix} \cdot
  \begin{pmatrix}
    [C_1] \\ [C_2] \\ [C_3] \\ [C_4]
  \end{pmatrix}.
\]

For a general choice of \(\lambda \in \mathbb{C}\) the surface \(\mathcal{S}_{\lambda}\) has the following singularities:
\begin{itemize}\setlength{\itemindent}{2cm}
\item[\(P_{1} = P_{(X, Y, Z)}\):] type \(\mathbb{D}_4\) with the quadratic term \((X + Y + Z)^2\);
\item[\(P_{2} = P_{(X, Y, T)}\):] type \(\mathbb{A}_3\) with the quadratic term \((Y + T) \cdot (X + Y + T)\);
\item[\(P_{3} = P_{(X, Z, T)}\):] type \(\mathbb{D}_4\) with the quadratic term \((X + Z + T)^2\);
\item[\(P_{4} = P_{(Y, Z, T)}\):] type \(\mathbb{A}_3\) with the quadratic term \((Y + T) \cdot (Y + Z + T)\);
\item[\(P_{5} = P_{(Y, T, X + Z)}\):] type \(\mathbb{A}_1\) with the quadratic term \((X + Z) (Y + T) - (\lambda + 4) Y T\).
\end{itemize}

Galois action on the lattice \(L_{\lambda}\) is trivial. The intersection matrix on \(L_{\lambda} = L_{\mathcal{S}}\) is represented by
\begin{table}[H]
  \setlength{\tabcolsep}{4pt}
  \begin{tabular}{|c||cccc|ccc|cccc|ccc|c|ccccc|}
    \hline
    \(\bullet\) & \(E_1^1\) & \(E_1^2\) & \(E_1^3\) & \(E_1^4\) & \(E_2^1\) & \(E_2^2\) & \(E_2^3\) & \(E_3^1\) & \(E_3^2\) & \(E_3^3\) & \(E_3^4\) & \(E_4^1\) & \(E_4^2\) & \(E_4^3\) & \(E_5^1\) & \(\widetilde{C_{1}}\) & \(\widetilde{C_{2}}\) & \(\widetilde{C_{3}}\) & \(\widetilde{C_{4}}\) & \(\widetilde{C_{6}}\) \\
    \hline
    \hline
    \(\widetilde{C_{1}}\) & \(0\) & \(0\) & \(0\) & \(0\) & \(1\) & \(0\) & \(0\) & \(0\) & \(0\) & \(0\) & \(0\) & \(1\) & \(0\) & \(0\) & \(1\) & \(-2\) & \(0\) & \(0\) & \(0\) & \(0\) \\
    \(\widetilde{C_{2}}\) & \(1\) & \(0\) & \(0\) & \(0\) & \(0\) & \(0\) & \(0\) & \(0\) & \(0\) & \(0\) & \(0\) & \(0\) & \(0\) & \(0\) & \(1\) & \(0\) & \(-2\) & \(0\) & \(0\) & \(0\) \\
    \(\widetilde{C_{3}}\) & \(0\) & \(0\) & \(0\) & \(0\) & \(0\) & \(0\) & \(0\) & \(1\) & \(0\) & \(0\) & \(0\) & \(0\) & \(0\) & \(0\) & \(1\) & \(0\) & \(0\) & \(-2\) & \(0\) & \(0\) \\
    \(\widetilde{C_{4}}\) & \(0\) & \(0\) & \(1\) & \(0\) & \(0\) & \(1\) & \(0\) & \(0\) & \(0\) & \(1\) & \(0\) & \(0\) & \(0\) & \(0\) & \(0\) & \(0\) & \(0\) & \(0\) & \(-2\) & \(0\) \\
    \(\widetilde{C_{6}}\) & \(0\) & \(0\) & \(0\) & \(1\) & \(0\) & \(0\) & \(0\) & \(0\) & \(0\) & \(0\) & \(1\) & \(0\) & \(1\) & \(0\) & \(0\) & \(0\) & \(0\) & \(0\) & \(0\) & \(-2\) \\
    \hline
  \end{tabular}.
\end{table}
Note that the intersection matrix is degenerate. We choose the following integral basis of the lattice \(L_{\lambda}\):
\begin{align*}
  \begin{pmatrix}
    [E_4^3] \\ [\widetilde{C_{6}}]
  \end{pmatrix} =
  \begin{pmatrix}
    0 & 2 & 3 & 1 & -1 & 2 & 1 & 0 & 2 & 3 & 1 & -3 & -2 & -4 & -4 & -2 & -2 & 4 \\
    0 & -1 & -1 & -1 & 1 & 0 & 0 & 0 & -1 & -1 & -1 & 1 & 0 & 2 & 2 & 1 & 1 & -1
  \end{pmatrix} \cdot \\
  \begin{pmatrix}
    [E_1^1] & [E_1^2] & [E_1^3] & [E_1^4] & [E_2^1] & [E_2^2] & [E_2^3] & [E_3^1] & [E_3^2] & \\
    [E_3^3] & [E_3^4] & [E_4^1] & [E_4^2] & [E_5^1] & [\widetilde{C_{1}}] & [\widetilde{C_{2}}] & [\widetilde{C_{3}}] & [\widetilde{C_{4}}]
  \end{pmatrix}^T.
\end{align*}

Discriminant groups and discriminant forms of the lattices \(L_{\mathcal{S}}\) and \(H \oplus \Pic(X)\) are given by
\begin{gather*}
  G' = 
  \begin{pmatrix}
    \frac{1}{2} & 0 & 0 & \frac{1}{2} & \frac{1}{2} & 0 & \frac{1}{2} & \frac{1}{2} & 0 & 0 & \frac{1}{2} & 0 & 0 & \frac{1}{2} & 0 & 0 & 0 & 0 \\
    0 & \frac{1}{4} & \frac{3}{8} & \frac{1}{8} & \frac{1}{2} & 0 & 0 & 0 & \frac{3}{4} & \frac{5}{8} & \frac{7}{8} & 0 & 0 & \frac{1}{2} & 0 & \frac{3}{4} & \frac{1}{4} & \frac{1}{2}
  \end{pmatrix}, \\
  G'' = 
  \begin{pmatrix}
    0 & 0 & \frac{1}{2} & \frac{1}{2} \\
    0 & 0 & \frac{7}{8} & 0
  \end{pmatrix}; \;
  B' = 
  \begin{pmatrix}
    \frac{1}{2} & 0 \\
    0 & \frac{7}{8}
  \end{pmatrix}, \;
  B'' = 
  \begin{pmatrix}
    \frac{1}{2} & 0 \\
    0 & \frac{1}{8}
  \end{pmatrix}; \;
  \begin{pmatrix}
    Q' \\ Q''
  \end{pmatrix}
  =
  \begin{pmatrix}
    \frac{1}{2} & \frac{15}{8} \\
    \frac{3}{2} & \frac{1}{8}    
  \end{pmatrix}.
\end{gather*}


\subsection{Family \textnumero2.8}\label{subsection:02-08}

The pencil \(\mathcal{S}\) is defined by the equation
\begin{gather*}
  X^{2} Y^{2} + 2 X^{2} Y Z + 2 X Y^{2} Z + X^{2} Z^{2} + 2 X Y Z^{2} + Y^{2} Z^{2} + 2 X^{2} Y T + \\ 2 X^{2} Z T + X^{2} T^{2} + 2 X Y T^{2} + 2 X Z T^{2} + 2 Y Z T^{2} + 2 X T^{3} + T^{4} = \lambda X Y Z T.
\end{gather*}
Members \(\mathcal{S}_{\lambda}\) of the pencil are irreducible for any \(\lambda \in \mathbb{P}^1\) except
\begin{gather*}
  \mathcal{S}_{\infty} = S_{(X)} + S_{(Y)} + S_{(Z)} + S_{(T)}, \;
  \mathcal{S}_{- 2} = 2 S_{(X (Y + Z + T) + Y Z + T^2)}.
\end{gather*}
The base locus of the pencil \(\mathcal{S}\) consists of the following curves:
\begin{gather*}
  C_1 = C_{(X, Y Z + T^2)}, \;
  C_2 = C_{(Y, X (Z + T) + T^2)}, \;
  C_3 = C_{(Z, X (Y + T) + T^2)}, \;
  C_4 = C_{(T, X (Y + Z) + Y Z)}.
\end{gather*}
Their linear equivalence classes on the generic member \(\mathcal{S}_{\Bbbk}\) of the pencil satisfy the following relations:
\[
  2 [C_1] = 2 [C_2] = 2 [C_3] = 2 [C_4] = [H_{\mathcal{S}}].
\]

For a general choice of \(\lambda \in \mathbb{C}\) the surface \(\mathcal{S}_{\lambda}\) has the following singularities:
\begin{itemize}\setlength{\itemindent}{2cm}
\item[\(P_{1} = P_{(X, Y, T)}\):] type \(\mathbb{D}_6\) with the quadratic term \((X + Y)^2\);
\item[\(P_{2} = P_{(X, Z, T)}\):] type \(\mathbb{D}_6\) with the quadratic term \((X + Z)^2\);
\item[\(P_{3} = P_{(Y, Z, T)}\):] type \(\mathbb{D}_4\) with the quadratic term \((Y + Z + T)^2\);
\item[\(P_{4} = P_{(Y, Z, X + T)}\):] type \(\mathbb{A}_1\) with the quadratic term \((X - Y - Z + T)^2 + (\lambda + 2) Y Z\).
\end{itemize}

Galois action on the lattice \(L_{\lambda}\) is trivial. The intersection matrix on \(L_{\lambda} = L_{\mathcal{S}}\) is represented by
\begin{table}[H]
  \setlength{\tabcolsep}{3pt}
  \begin{tabular}{|c||cccccc|cccccc|cccc|c|ccccc|}
    \hline
    \(\bullet\) & \(E_1^1\) & \(E_1^2\) & \(E_1^3\) & \(E_1^4\) & \(E_1^5\) & \(E_1^6\) & \(E_2^1\) & \(E_2^2\) & \(E_2^3\) & \(E_2^4\) & \(E_2^5\) & \(E_2^6\) & \(E_3^1\) & \(E_3^2\) & \(E_3^3\) & \(E_3^4\) & \(E_4^1\) & \(\widetilde{C_{1}}\) & \(\widetilde{C_{2}}\) & \(\widetilde{C_{3}}\) & \(\widetilde{C_{4}}\) & \(\widetilde{H_{\mathcal{S}}}\) \\
    \hline
    \hline
    \(\widetilde{C_{1}}\) & \(0\) & \(0\) & \(0\) & \(0\) & \(1\) & \(0\) & \(0\) & \(0\) & \(0\) & \(0\) & \(1\) & \(0\) & \(0\) & \(0\) & \(0\) & \(0\) & \(0\) & \(-2\) & \(0\) & \(0\) & \(0\) & \(2\) \\
    \(\widetilde{C_{2}}\) & \(0\) & \(0\) & \(0\) & \(0\) & \(0\) & \(1\) & \(0\) & \(0\) & \(0\) & \(0\) & \(0\) & \(0\) & \(1\) & \(0\) & \(0\) & \(0\) & \(1\) & \(0\) & \(-2\) & \(0\) & \(0\) & \(2\) \\
    \(\widetilde{C_{3}}\) & \(0\) & \(0\) & \(0\) & \(0\) & \(0\) & \(0\) & \(0\) & \(0\) & \(0\) & \(0\) & \(0\) & \(1\) & \(0\) & \(0\) & \(1\) & \(0\) & \(1\) & \(0\) & \(0\) & \(-2\) & \(0\) & \(2\) \\
    \(\widetilde{C_{4}}\) & \(1\) & \(0\) & \(0\) & \(0\) & \(0\) & \(0\) & \(1\) & \(0\) & \(0\) & \(0\) & \(0\) & \(0\) & \(0\) & \(0\) & \(0\) & \(1\) & \(0\) & \(0\) & \(0\) & \(0\) & \(-2\) & \(2\) \\
    \(\widetilde{H_{\mathcal{S}}}\) & \(0\) & \(0\) & \(0\) & \(0\) & \(0\) & \(0\) & \(0\) & \(0\) & \(0\) & \(0\) & \(0\) & \(0\) & \(0\) & \(0\) & \(0\) & \(0\) & \(0\) & \(2\) & \(2\) & \(2\) & \(2\) & \(4\) \\
    \hline
  \end{tabular}.
\end{table}
Note that the intersection matrix is degenerate. We choose the following integral basis of the lattice \(L_{\lambda}\):
\begin{align*}
  \begin{pmatrix}
    [E_3^3] \\ [E_4^1] \\ [\widetilde{C_{4}}] \\ [\widetilde{H_{\mathcal{S}}}]
  \end{pmatrix} =
  \begin{pmatrix}
    1 & 2 & 3 & 4 & 2 & 3 & -1 & -2 & -3 & -4 & -2 & -3 & 1 & 0 & 0 & 0 & 2 & -2 \\
    -1 & -2 & -3 & -4 & -1 & -4 & 2 & 4 & 6 & 8 & 5 & 5 & -3 & -2 & -1 & 2 & -4 & 2 \\
    -1 & -1 & -1 & -1 & 0 & -1 & 0 & 1 & 2 & 3 & 2 & 2 & -1 & -1 & -1 & 1 & -1 & 1 \\
    1 & 2 & 3 & 4 & 3 & 2 & 1 & 2 & 3 & 4 & 3 & 2 & 0 & 0 & 0 & 2 & 0 & 0
  \end{pmatrix} \cdot \\
  \begin{pmatrix}
    [E_1^1] & [E_1^2] & [E_1^3] & [E_1^4] & [E_1^5] & [E_1^6] & [E_2^1] & [E_2^2] & [E_2^3] & \\
    [E_2^4] & [E_2^5] & [E_2^6] & [E_3^1] & [E_3^2] & [E_3^4] & [\widetilde{C_{1}}] & [\widetilde{C_{2}}] & [\widetilde{C_{3}}]
  \end{pmatrix}^T.
\end{align*}

Discriminant groups and discriminant forms of the lattices \(L_{\mathcal{S}}\) and \(H \oplus \Pic(X)\) are given by
\begin{gather*}
  G' = 
  \begin{pmatrix}
    0 & 0 & 0 & 0 & \frac{1}{2} & \frac{1}{2} & \frac{1}{2} & 0 & \frac{1}{2} & 0 & \frac{1}{2} & 0 & \frac{1}{2} & 0 & \frac{1}{2} & 0 & 0 & 0 \\
    \frac{3}{4} & \frac{1}{2} & \frac{1}{4} & 0 & \frac{3}{4} & 0 & \frac{1}{4} & \frac{1}{2} & \frac{3}{4} & 0 & \frac{1}{4} & 0 & 0 & 0 & 0 & \frac{1}{2} & 0 & 0
  \end{pmatrix}, \\
  G'' = 
  \begin{pmatrix}
    0 & 0 & \frac{1}{2} & 0 \\
    0 & 0 & 0 & \frac{1}{4}
  \end{pmatrix}; \;
  B' = 
  \begin{pmatrix}
    \frac{1}{2} & 0 \\
    0 & \frac{3}{4}
  \end{pmatrix}, \;
  B'' = 
  \begin{pmatrix}
    \frac{1}{2} & 0 \\
    0 & \frac{1}{4}
  \end{pmatrix}; \;
  \begin{pmatrix}
    Q' \\ Q''
  \end{pmatrix}
  =
  \begin{pmatrix}
    \frac{1}{2} & \frac{7}{4} \\
    \frac{3}{2} & \frac{1}{4}
  \end{pmatrix}.
\end{gather*}


\subsection{Family \textnumero2.9}\label{subsection:02-09}

The pencil \(\mathcal{S}\) is defined by the equation
\begin{gather*}
  X^{2} Y Z + X Y^{2} Z + X Y Z^{2} + X^{2} Y T + X Y^{2} T + X^{2} Z T + Y^{2} Z T + 2 X Z^{2} T + \\ 2 Y Z^{2} T + Z^{3} T + X^{2} T^{2} + 2 X Y T^{2} + Y^{2} T^{2} + 2 X Z T^{2} + 2 Y Z T^{2} + Z^{2} T^{2} = \lambda X Y Z T.
\end{gather*}
Members \(\mathcal{S}_{\lambda}\) of the pencil are irreducible for any \(\lambda \in \mathbb{P}^1\) except
\begin{gather*}
  \mathcal{S}_{\infty} = S_{(X)} + S_{(Y)} + S_{(Z)} + S_{(T)}, \;
  \mathcal{S}_{- 3} = S_{(Z + T)} + S_{(X + Y + Z)} + S_{(T (X + Y + Z) + X Y)}.
\end{gather*}
The base locus of the pencil \(\mathcal{S}\) consists of the following curves:
\begin{gather*}
  C_{1} = C_{(X, T)}, \;
  C_{2} = C_{(Y, T)}, \;
  C_{3} = C_{(Z, T)}, \;
  C_{4} = C_{(X, Y + Z)}, \;
  C_{5} = C_{(X, Z + T)}, \;
  C_{6} = C_{(Y, X + Z)}, \\
  C_{7} = C_{(Y, Z + T)}, \;
  C_{8} = C_{(Z, X + Y)}, \;
  C_{9} = C_{(T, X + Y + Z)}, \;
  C_{10} = C_{(Z, X Y + T (X + Y))}.
\end{gather*}
Their linear equivalence classes on the generic member \(\mathcal{S}_{\Bbbk}\) of the pencil satisfy the following relations:
\begin{gather*}
  \begin{pmatrix}
    [C_{2}] \\ [C_{5}] \\ [C_{7}] \\ [C_{8}] \\ [C_{9}] \\ [C_{10}]
  \end{pmatrix} = 
  \begin{pmatrix}
    -1 & 2 & -2 & -2 & 1 \\
    -1 & 0 & -2 & 0 & 1 \\
    1 & -2 & 2 & 0 & 0 \\
    0 & 3 & -3 & -3 & 1 \\
    0 & -3 & 2 & 2 & 0 \\
    0 & -4 & 3 & 3 & 0
  \end{pmatrix} \cdot
  \begin{pmatrix}
    [C_{1}] \\ [C_{3}] \\ [C_{4}] \\ [C_{6}] \\ [H_{\mathcal{S}}]
  \end{pmatrix}.
\end{gather*}

For a general choice of \(\lambda \in \mathbb{C}\) the surface \(\mathcal{S}_{\lambda}\) has the following singularities:
\begin{itemize}\setlength{\itemindent}{2cm}
\item[\(P_{1} = P_{(X, Y, Z)}\):] type \(\mathbb{D}_4\) with the quadratic term \((X + Y + Z)^2\);
\item[\(P_{2} = P_{(X, Z, T)}\):] type \(\mathbb{A}_2\) with the quadratic term \((X + T) \cdot (Z + T)\);
\item[\(P_{3} = P_{(Y, Z, T)}\):] type \(\mathbb{A}_2\) with the quadratic term \((Y + T) \cdot (Z + T)\);
\item[\(P_{4} = P_{(X, T, Y + Z)}\):] type \(\mathbb{A}_2\) with the quadratic term \(X \cdot (X + Y + Z - (\lambda + 3) T)\);
\item[\(P_{5} = P_{(Y, T, X + Z)}\):] type \(\mathbb{A}_2\) with the quadratic term \(Y \cdot (X + Y + Z - (\lambda + 3) T)\);
\item[\(P_{6} = P_{(Z, T, X + Y)}\):] type \(\mathbb{A}_1\) with the quadratic term \((Z + T) (X + Y + Z) - (\lambda + 3) Z T\).
\end{itemize}

Galois action on the lattice \(L_{\lambda}\) is trivial. The intersection matrix on \(L_{\lambda} = L_{\mathcal{S}}\) is represented by
\begin{table}[H]
  \begin{tabular}{|c||cccc|cc|cc|cc|cc|c|ccccc|}
    \hline
    \(\bullet\) & \(E_1^1\) & \(E_1^2\) & \(E_1^3\) & \(E_1^4\) & \(E_2^1\) & \(E_2^2\) & \(E_3^1\) & \(E_3^2\) & \(E_4^1\) & \(E_4^2\) & \(E_5^1\) & \(E_5^2\) & \(E_6^1\) & \(\widetilde{C_{1}}\) & \(\widetilde{C_{3}}\) & \(\widetilde{C_{4}}\) & \(\widetilde{C_{6}}\) & \(\widetilde{H_{\mathcal{S}}}\) \\
    \hline
    \hline
    \(\widetilde{C_{1}}\) & \(0\) & \(0\) & \(0\) & \(0\) & \(1\) & \(0\) & \(0\) & \(0\) & \(1\) & \(0\) & \(0\) & \(0\) & \(0\) & \(-2\) & \(0\) & \(0\) & \(0\) & \(1\) \\
    \(\widetilde{C_{3}}\) & \(0\) & \(0\) & \(0\) & \(0\) & \(0\) & \(1\) & \(0\) & \(1\) & \(0\) & \(0\) & \(0\) & \(0\) & \(1\) & \(0\) & \(-2\) & \(0\) & \(0\) & \(1\) \\
    \(\widetilde{C_{4}}\) & \(1\) & \(0\) & \(0\) & \(0\) & \(0\) & \(0\) & \(0\) & \(0\) & \(1\) & \(0\) & \(0\) & \(0\) & \(0\) & \(0\) & \(0\) & \(-2\) & \(0\) & \(1\) \\
    \(\widetilde{C_{6}}\) & \(0\) & \(0\) & \(1\) & \(0\) & \(0\) & \(0\) & \(0\) & \(0\) & \(0\) & \(0\) & \(1\) & \(0\) & \(0\) & \(0\) & \(0\) & \(0\) & \(-2\) & \(1\) \\
    \(\widetilde{H_{\mathcal{S}}}\) & \(0\) & \(0\) & \(0\) & \(0\) & \(0\) & \(0\) & \(0\) & \(0\) & \(0\) & \(0\) & \(0\) & \(0\) & \(0\) & \(1\) & \(1\) & \(1\) & \(1\) & \(4\) \\
    \hline
  \end{tabular}.
\end{table}
Note that the intersection matrix is non-degenerate.

Discriminant groups and discriminant forms of the lattices \(L_{\mathcal{S}}\) and \(H \oplus \Pic(X)\) are given by
\begin{gather*}
  G' = 
  \begin{pmatrix}
    0 & \frac{6}{17} & \frac{9}{17} & \frac{3}{17} & \frac{1}{17} & \frac{16}{17} & \frac{16}{17} & \frac{15}{17} & \frac{15}{17} & \frac{16}{17} & \frac{8}{17} & \frac{4}{17} & \frac{7}{17} & \frac{3}{17} & \frac{14}{17} & \frac{11}{17} & \frac{12}{17} & \frac{7}{17}
  \end{pmatrix}, \\
  G'' = 
  \begin{pmatrix}
    0 & 0 & \frac{14}{17} & \frac{1}{17}
  \end{pmatrix}; \;
  B' = 
  \begin{pmatrix}
    \frac{15}{17}
  \end{pmatrix}, \;
  B'' = 
  \begin{pmatrix}
    \frac{2}{17}
  \end{pmatrix}; \;
  Q' =
  \begin{pmatrix}
    \frac{32}{17}
  \end{pmatrix}, \;
  Q'' =
  \begin{pmatrix}
    \frac{2}{17}
  \end{pmatrix}.
\end{gather*}


\subsection{Family \textnumero2.10}\label{subsection:02-10}

The pencil \(\mathcal{S}\) is defined by the equation
\begin{gather*}
  X^{2} Y Z + X Y^{2} Z + 2 X Y Z^{2} + Y^{2} Z^{2} + Y Z^{3} + X^{2} Y T + X^{2} Z T + Y^{2} Z T + X Z^{2} T + \\ 3 Y Z^{2} T + X^{2} T^{2} + 2 X Y T^{2} + 2 X Z T^{2} + 3 Y Z T^{2} + X T^{3} + Y T^{3} = \lambda X Y Z T.
\end{gather*}
Members \(\mathcal{S}_{\lambda}\) of the pencil are irreducible for any \(\lambda \in \mathbb{P}^1\) except
\begin{gather*}
  \mathcal{S}_{\infty} = S_{(X)} + S_{(Y)} + S_{(Z)} + S_{(T)}, \\
  \mathcal{S}_{- 4} = S_{(X + Z + T)} + S_{(Y^2 Z + (Z + T) (Y (X + Z + T) + X T))}, \;
  \mathcal{S}_{- 5} = S_{(Y (X + Z + T) + X T)} + S_{(X T + Z (X + Y) + (Z + T)^2)}.
\end{gather*}
The base locus of the pencil \(\mathcal{S}\) consists of the following curves:
\begin{gather*}
  C_{1} = C_{(X, Y)}, \;
  C_{2} = C_{(Y, T)}, \;
  C_{3} = C_{(Z, T)}, \;
  C_{4} = C_{(X, Z + T)}, \;
  C_{5} = C_{(Y, Z + T)}, \;
  C_{6} = C_{(Z, X + T)}, \\
  C_{7} = C_{(T, X + Z)}, \;
  C_{8} = C_{(T, X + Y + Z)}, \;
  C_{9} = C_{(Y, X + Z + T)}, \;
  C_{10} = C_{(X, Y Z + (Z + T)^2)}, \;
  C_{11} = C_{(Z, X T + Y (X + T))}.
\end{gather*}
Their linear equivalence classes on the generic member \(\mathcal{S}_{\Bbbk}\) of the pencil satisfy the following relations:
\begin{gather*}
  \begin{pmatrix}
    [C_{7}] \\ [C_{8}] \\ [C_{9}] \\ [C_{10}] \\ [C_{11}]
  \end{pmatrix} = 
  \begin{pmatrix}
    1 & 1 & 0 & -1 & 1 & -1 & 0 \\
    -1 & -2 & -1 & 1 & -1 & 1 & 1 \\
    -1 & -1 & 0 & 0 & -1 & 0 & 1 \\
    -1 & 0 & 0 & -1 & 0 & 0 & 1 \\
    0 & 0 & -1 & 0 & 0 & -1 & 1
  \end{pmatrix} \cdot
  \begin{pmatrix}
    [C_{1}] & [C_{2}] & [C_{3}] & [C_{4}] & [C_{5}] & [C_{6}] & [H_{\mathcal{S}}]
  \end{pmatrix}^T.
\end{gather*}

For a general choice of \(\lambda \in \mathbb{C}\) the surface \(\mathcal{S}_{\lambda}\) has the following singularities:
\begin{itemize}\setlength{\itemindent}{2cm}
\item[\(P_{1} = P_{(X, Z, T)}\):] type \(\mathbb{A}_4\) with the quadratic term \(Z \cdot (X + Z + T)\);
\item[\(P_{2} = P_{(Y, Z, T)}\):] type \(\mathbb{A}_2\) with the quadratic term \((Y + T) \cdot (Z + T)\);
\item[\(P_{3} = P_{(X, Y, Z + T)}\):] type \(\mathbb{A}_4\) with the quadratic term \((\lambda + 4) X \cdot Y \);
\item[\(P_{4} = P_{(Y, T, X + Z)}\):] type \(\mathbb{A}_2\) with the quadratic term \(T \cdot (X - (\lambda + 4) Y + Z + T)\).
\end{itemize}

Galois action on the lattice \(L_{\lambda}\) is trivial. The intersection matrix on \(L_{\lambda} = L_{\mathcal{S}}\) is represented by
\begin{table}[H]
  \begin{tabular}{|c||cccc|cc|cccc|cc|ccccccc|}
    \hline
    \(\bullet\) & \(E_1^1\) & \(E_1^2\) & \(E_1^3\) & \(E_1^4\) & \(E_2^1\) & \(E_2^2\) & \(E_3^1\) & \(E_3^2\) & \(E_3^3\) & \(E_3^4\) & \(E_4^1\) & \(E_4^2\) & \(\widetilde{C_{1}}\) & \(\widetilde{C_{2}}\) & \(\widetilde{C_{3}}\) & \(\widetilde{C_{4}}\) & \(\widetilde{C_{5}}\) & \(\widetilde{C_{6}}\) & \(\widetilde{H_{\mathcal{S}}}\) \\
    \hline
    \hline
    \(\widetilde{C_{1}}\) & \(0\) & \(0\) & \(0\) & \(0\) & \(0\) & \(0\) & \(0\) & \(1\) & \(0\) & \(0\) & \(0\) & \(0\) & \(-2\) & \(1\) & \(0\) & \(0\) & \(0\) & \(0\) & \(1\) \\
    \(\widetilde{C_{2}}\) & \(0\) & \(0\) & \(0\) & \(0\) & \(1\) & \(0\) & \(0\) & \(0\) & \(0\) & \(0\) & \(1\) & \(0\) & \(1\) & \(-2\) & \(0\) & \(0\) & \(0\) & \(0\) & \(1\) \\
    \(\widetilde{C_{3}}\) & \(1\) & \(0\) & \(0\) & \(0\) & \(0\) & \(1\) & \(0\) & \(0\) & \(0\) & \(0\) & \(0\) & \(0\) & \(0\) & \(0\) & \(-2\) & \(0\) & \(0\) & \(0\) & \(1\) \\
    \(\widetilde{C_{4}}\) & \(0\) & \(0\) & \(0\) & \(1\) & \(0\) & \(0\) & \(1\) & \(0\) & \(0\) & \(0\) & \(0\) & \(0\) & \(0\) & \(0\) & \(0\) & \(-2\) & \(0\) & \(0\) & \(1\) \\
    \(\widetilde{C_{5}}\) & \(0\) & \(0\) & \(0\) & \(0\) & \(0\) & \(1\) & \(0\) & \(0\) & \(0\) & \(1\) & \(0\) & \(0\) & \(0\) & \(0\) & \(0\) & \(0\) & \(-2\) & \(0\) & \(1\) \\
    \(\widetilde{C_{6}}\) & \(0\) & \(1\) & \(0\) & \(0\) & \(0\) & \(0\) & \(0\) & \(0\) & \(0\) & \(0\) & \(0\) & \(0\) & \(0\) & \(0\) & \(0\) & \(0\) & \(0\) & \(-2\) & \(1\) \\
    \(\widetilde{H_{\mathcal{S}}}\) & \(0\) & \(0\) & \(0\) & \(0\) & \(0\) & \(0\) & \(0\) & \(0\) & \(0\) & \(0\) & \(0\) & \(0\) & \(1\) & \(1\) & \(1\) & \(1\) & \(1\) & \(1\) & \(4\) \\
    \hline
  \end{tabular}.
\end{table}
Note that the intersection matrix is degenerate. We choose the following integral basis of the lattice \(L_{\lambda}\):
\begin{align*}
  \begin{pmatrix}
    [\widetilde{C_{5}}]
  \end{pmatrix} =
  \begin{pmatrix}
    2 & 3 & 2 & 1 & -2 & -1 & -2 & -4 & -3 & -2 & -2 & -1 & -3 & -3 & 1 & 0 & 2 & 1
  \end{pmatrix} \cdot \\
  \begin{pmatrix}
    [E_1^1] & [E_1^2] & [E_1^3] & [E_1^4] & [E_2^1] & [E_2^2] & [E_3^1] & [E_3^2] & [E_3^3] & \\
    [E_3^4] & [E_4^1] & [E_4^2] & [\widetilde{C_{1}}] & [\widetilde{C_{2}}] & [\widetilde{C_{3}}] & [\widetilde{C_{4}}] & [\widetilde{C_{6}}] & [\widetilde{H_{\mathcal{S}}}]
  \end{pmatrix}^T.
\end{align*}

Discriminant groups and discriminant forms of the lattices \(L_{\mathcal{S}}\) and \(H \oplus \Pic(X)\) are given by
\begin{gather*}
  G' = 
  \begin{pmatrix}
    \frac{1}{2} & \frac{3}{4} & \frac{1}{2} & \frac{1}{4} & \frac{1}{4} & \frac{3}{4} & \frac{1}{2} & 0 & 0 & 0 & \frac{1}{2} & \frac{1}{4} & \frac{1}{2} & \frac{3}{4} & \frac{1}{4} & 0 & \frac{1}{2} & \frac{1}{4} \\
    \frac{3}{4} & \frac{1}{4} & \frac{1}{4} & \frac{1}{4} & \frac{3}{4} & 0 & \frac{1}{2} & \frac{3}{4} & \frac{1}{2} & \frac{1}{4} & 0 & \frac{1}{2} & \frac{1}{2} & \frac{1}{2} & \frac{1}{4} & \frac{1}{4} & \frac{1}{2} & \frac{3}{4}
  \end{pmatrix}, \\
  G'' = 
  \begin{pmatrix}
    0 & 0 & \frac{1}{4} & 0 \\
    0 & 0 & 0 & \frac{1}{4}
  \end{pmatrix}; \;
  B' = 
  \begin{pmatrix}
    0 & \frac{3}{4} \\
    \frac{3}{4} & \frac{1}{2}
  \end{pmatrix}, \;
  B'' = 
  \begin{pmatrix}
    0 & \frac{1}{4} \\
    \frac{1}{4} & \frac{1}{2}
  \end{pmatrix}; \;
  \begin{pmatrix}
    Q' \\ Q''
  \end{pmatrix}
  =
  \begin{pmatrix}
    0 & \frac{3}{2} \\
    0 & \frac{1}{2}
  \end{pmatrix}.
\end{gather*}


\subsection{Family \textnumero2.11}\label{subsection:02-11}

The pencil \(\mathcal{S}\) is defined by the equation
\begin{gather*}
  X Y^{2} Z + X Y Z^{2} + Y^{2} Z^{2} + 2 Y Z^{3} + Z^{4} + X^{2} Y T + X Y^{2} T + \\
  2 Y^{2} Z T + 2 X Z^{2} T + 2 Y Z^{2} T + X^{2} T^{2} + 2 X Y T^{2} + Y^{2} T^{2} = \lambda X Y Z T.
\end{gather*}
Members \(\mathcal{S}_{\lambda}\) of the pencil are irreducible for any \(\lambda \in \mathbb{P}^1\) except
\begin{gather*}
  \mathcal{S}_{\infty} = S_{(X)} + S_{(Y)} + S_{(Z)} + S_{(T)}, \;
  \mathcal{S}_{- 2} = S_{(Z (Y + Z) + T (X + Y))} + S_{(Z (Y + Z) + T (X + Y) + X Y)}.
\end{gather*}
The base locus of the pencil \(\mathcal{S}\) consists of the following curves:
\begin{gather*}
  C_{1} = C_{(Z, T)}, \;
  C_{2} = C_{(Z, X + Y)}, \;
  C_{3} = C_{(T, Y + Z)}, \;
  C_{4} = C_{(X, Y (Z + T) + Z^2)}, \\
  C_{5} = C_{(Y, X T + Z^2)}, \;
  C_{6} = C_{(Z, X T + Y (X + T))}, \;
  C_{7} = C_{(T, Y (X + Z) + Z^2)}.
\end{gather*}
Their linear equivalence classes on the generic member \(\mathcal{S}_{\Bbbk}\) of the pencil satisfy the following relations:
\[
  \begin{pmatrix}
    2 [C_5] \\ [C_6] \\ [C_7] \\ [H_{\mathcal{S}}]
  \end{pmatrix} =
  \begin{pmatrix}
    0 & 0 & 0 & 2 \\
    -1 & -1 & 0 & 2 \\
    -1 & 0 & -1 & 2 \\
    0 & 0 & 0 & 2
  \end{pmatrix} \cdot
  \begin{pmatrix}
    [C_1] \\ [C_2] \\ [C_3] \\ [C_4]
  \end{pmatrix}.
\]

For a general choice of \(\lambda \in \mathbb{C}\) the surface \(\mathcal{S}_{\lambda}\) has the following singularities:
\begin{itemize}\setlength{\itemindent}{2cm}
\item[\(P_{1} = P_{(X, Y, Z)}\):] type \(\mathbb{D}_6\) with the quadratic term \((X + Y)^2\);
\item[\(P_{2} = P_{(X, Z, T)}\):] type \(\mathbb{A}_3\) with the quadratic term \((Z + T) \cdot (X + Z + T)\);
\item[\(P_{3} = P_{(Y, Z, T)}\):] type \(\mathbb{A}_5\) with the quadratic term \(T \cdot (Y + T)\);
\item[\(P_{4} = P_{(X, T, Y + Z)}\):] type \(\mathbb{A}_1\) with the quadratic term \((Y + Z - T) (X - Y - Z + T) - (\lambda + 2) X T\).
\end{itemize}

Galois action on the lattice \(L_{\lambda}\) is trivial. The intersection matrix on \(L_{\lambda} = L_{\mathcal{S}}\) is represented by
\begin{table}[H]
  \setlength{\tabcolsep}{4pt}
  \begin{tabular}{|c||cccccc|ccc|ccccc|c|ccccc|}
    \hline
    \(\bullet\) & \(E_1^1\) & \(E_1^2\) & \(E_1^3\) & \(E_1^4\) & \(E_1^5\) & \(E_1^6\) & \(E_2^1\) & \(E_2^2\) & \(E_2^3\) & \(E_3^1\) & \(E_3^2\) & \(E_3^3\) & \(E_3^4\) & \(E_3^5\) & \(E_4^1\) & \(\widetilde{C_{1}}\) & \(\widetilde{C_{2}}\) & \(\widetilde{C_{3}}\) & \(\widetilde{C_{4}}\) & \(\widetilde{C_{5}}\) \\
    \hline
    \hline
    \(\widetilde{C_{1}}\) & \(0\) & \(0\) & \(0\) & \(0\) & \(0\) & \(0\) & \(1\) & \(0\) & \(0\) & \(1\) & \(0\) & \(0\) & \(0\) & \(0\) & \(0\) & \(-2\) & \(1\) & \(0\) & \(0\) & \(0\) \\
    \(\widetilde{C_{2}}\) & \(1\) & \(0\) & \(0\) & \(0\) & \(0\) & \(0\) & \(0\) & \(0\) & \(0\) & \(0\) & \(0\) & \(0\) & \(0\) & \(0\) & \(0\) & \(1\) & \(-2\) & \(0\) & \(0\) & \(0\) \\
    \(\widetilde{C_{3}}\) & \(0\) & \(0\) & \(0\) & \(0\) & \(0\) & \(0\) & \(0\) & \(0\) & \(0\) & \(1\) & \(0\) & \(0\) & \(0\) & \(0\) & \(1\) & \(0\) & \(0\) & \(-2\) & \(0\) & \(0\) \\
    \(\widetilde{C_{4}}\) & \(0\) & \(0\) & \(0\) & \(0\) & \(1\) & \(0\) & \(0\) & \(1\) & \(0\) & \(0\) & \(0\) & \(0\) & \(0\) & \(0\) & \(1\) & \(0\) & \(0\) & \(0\) & \(-2\) & \(0\) \\
    \(\widetilde{C_{5}}\) & \(0\) & \(0\) & \(0\) & \(0\) & \(0\) & \(1\) & \(0\) & \(0\) & \(0\) & \(0\) & \(0\) & \(1\) & \(0\) & \(0\) & \(0\) & \(0\) & \(0\) & \(0\) & \(0\) & \(-2\) \\
    \hline
  \end{tabular}.
\end{table}
Note that the intersection matrix is degenerate. We choose the following integral basis of the lattice \(L_{\lambda}\):
\begin{align*}
  \begin{pmatrix}
    [E_4^1] \\ [\widetilde{C_{5}}]
  \end{pmatrix} = 
  \begin{pmatrix}
    0 & -2 & -4 & -6 & -5 & -3 & 1 & -2 & -1 & 5 & 4 & 3 & 2 & 1 & 4 & 2 & 2 & -4 \\
    0 & -1 & -2 & -3 & -2 & -2 & 1 & 0 & 0 & 2 & 1 & 0 & 0 & 0 & 2 & 1 & 1 & -1
  \end{pmatrix} \cdot \\
  \begin{pmatrix}
    [E_1^1] & [E_1^2] & [E_1^3] & [E_1^4] & [E_1^5] & [E_1^6] & [E_2^1] & [E_2^2] & [E_2^3] & \\ [E_3^1] & [E_3^2] & [E_3^3] & [E_3^4] & [E_3^5] & [\widetilde{C_{1}}] & [\widetilde{C_{2}}] & [\widetilde{C_{3}}] & [\widetilde{C_{4}}]
  \end{pmatrix}^T.
\end{align*}

Discriminant groups and discriminant forms of the lattices \(L_{\mathcal{S}}\) and \(H \oplus \Pic(X)\) are given by
\begin{gather*}
  G' = 
  \begin{pmatrix}
    \frac{10}{13} & \frac{4}{13} & \frac{11}{13} & \frac{5}{13} & \frac{3}{13} & \frac{9}{13} & \frac{4}{13} & \frac{12}{13} & \frac{6}{13} & \frac{11}{13} & \frac{1}{13} & \frac{4}{13} & \frac{7}{13} & \frac{10}{13} & \frac{9}{13} & \frac{3}{13} & \frac{12}{13} & \frac{1}{13}
  \end{pmatrix}, \\
  G'' = 
  \begin{pmatrix}
    0 & 0 & \frac{7}{13} & \frac{1}{13}
  \end{pmatrix}; \;
  B' = 
  \begin{pmatrix}
    \frac{6}{13}
  \end{pmatrix}, \;
  B'' = 
  \begin{pmatrix}
    \frac{7}{13}
  \end{pmatrix}; \;
  Q' =
  \begin{pmatrix}
    \frac{6}{13}
  \end{pmatrix}; \;
  Q'' =
  \begin{pmatrix}
    \frac{20}{13}
  \end{pmatrix}.
\end{gather*}


\subsection{Family \textnumero2.12}\label{subsection:02-12}

The pencil \(\mathcal{S}\) is defined by the equation
\begin{gather*}
  X^{2} Y^{2} + X^{2} Y Z + X Y^{2} Z + X Y Z^{2} + 2 X^{2} Y T + 2 X Y^{2} T + \\ X^{2} T^{2} + 2 X Y T^{2} + Y^{2} T^{2} + 2 X Z T^{2} + 2 Y Z T^{2} + Z^{2} T^{2} = \lambda X Y Z T.
\end{gather*}
Members \(\mathcal{S}_{\lambda}\) of the pencil are irreducible for any \(\lambda \in \mathbb{P}^1\) except
\(\mathcal{S}_{\infty} = S_{(X)} + S_{(Y)} + S_{(Z)} + S_{(T)}\).
The base locus of the pencil \(\mathcal{S}\) consists of the following curves:
\begin{gather*}
  C_{1} = C_{(X, T)}, \;
  C_{2} = C_{(Y, T)}, \;
  C_{3} = C_{(X, Y + Z)}, \;
  C_{4} = C_{(Y, X + Z)}, \\
  C_{5} = C_{(T, X + Z)}, \;
  C_{6} = C_{(T, Y + Z)}, \;
  C_{7} = C_{(Z, X Y + T (X + Y))}.
\end{gather*}
Their linear equivalence classes on the generic member \(\mathcal{S}_{\Bbbk}\) of the pencil satisfy the following relations:
\[
  \begin{pmatrix}
    2 [C_4] \\ [C_6] \\ 2 [C_7] \\ [H_{\mathcal{S}}]
  \end{pmatrix} =
  \begin{pmatrix}
    2 & -2 & 2 & 0 \\
    1 & -1 & 2 & -1 \\
    2 & 0 & 2 & 0 \\
    2 & 0 & 2 & 0
  \end{pmatrix} \cdot
  \begin{pmatrix}
    [C_1] \\ [C_2] \\ [C_3] \\ [C_5]
  \end{pmatrix}.
\]

Put \(\mu (\mu - 1) = (\lambda + 2)^{-1}\). For a general choice of \(\lambda \in \mathbb{C}\) the surface \(\mathcal{S}_{\lambda}\) has the following singularities:
\begin{itemize}\setlength{\itemindent}{2cm}
\item[\(P_{1} = P_{(X, Y, Z)}\):] type \(\mathbb{D}_4\) with the quadratic term \((X + Y + Z)^2\);
\item[\(P_{2} = P_{(X, Y, T)}\):] type \(\mathbb{A}_1\) with the quadratic term \(X Y + T^2\);
\item[\(P_{3} = P_{(X, Z, T)}\):] type \(\mathbb{A}_1\) with the quadratic term \(X Z + (X + T)^2\);
\item[\(P_{4} = P_{(Y, Z, T)}\):] type \(\mathbb{A}_1\) with the quadratic term \(Y Z + (X + T)^2\);
\item[\(P_{5} = P_{(X, T, Y + Z)}\):] type \(\mathbb{A}_3\) with the quadratic term \(X \cdot (Y + Z - (\lambda + 2) T)\);
\item[\(P_{6} = P_{(Y, T, X + Z)}\):] type \(\mathbb{A}_3\) with the quadratic term \(Y \cdot (X + Z - (\lambda + 2) T)\);
\item[\(P_{7} = P_{(\mu X + (\mu - 1) Y, Z, (\mu - 1) Y - T)}\):] type \(\mathbb{A}_1\);
\item[\(P_{8} = P_{((\mu - 1) X + \mu Y, Z, (\mu - 1) X - T)}\):] type \(\mathbb{A}_1\).
\end{itemize}

The only non-trivial Galois orbit on the lattice \(L_{\lambda}\) is \(E_7^1 + E_8^1\).

The intersection matrix on the lattice \(L_{\lambda}\) is represented by
\begin{table}[H]
  \setlength{\tabcolsep}{4pt}
  \begin{tabular}{|c||cccc|c|c|c|ccc|ccc|c|c|cccccc|}
    \hline
    \(\bullet\) & \(E_1^1\) & \(E_1^2\) & \(E_1^3\) & \(E_1^4\) & \(E_2^1\) & \(E_3^1\) & \(E_4^1\) & \(E_5^1\) & \(E_5^2\) & \(E_5^3\) & \(E_6^1\) & \(E_6^2\) & \(E_6^3\) & \(E_7^1\) & \(E_8^1\) & \(\widetilde{C_{1}}\) & \(\widetilde{C_{2}}\) & \(\widetilde{C_{3}}\) & \(\widetilde{C_{4}}\) & \(\widetilde{C_{5}}\) & \(\widetilde{C_{7}}\)  \\
    \hline
    \hline
    \(\widetilde{C_{1}}\) & \(0\) & \(0\) & \(0\) & \(0\) & \(1\) & \(1\) & \(0\) & \(1\) & \(0\) & \(0\) & \(0\) & \(0\) & \(0\) & \(0\) & \(0\) & \(-2\) & \(0\) & \(0\) & \(0\) & \(0\) & \(0\) \\
    \(\widetilde{C_{2}}\) & \(0\) & \(0\) & \(0\) & \(0\) & \(1\) & \(0\) & \(1\) & \(0\) & \(0\) & \(0\) & \(1\) & \(0\) & \(0\) & \(0\) & \(0\) & \(0\) & \(-2\) & \(0\) & \(0\) & \(0\) & \(0\) \\
    \(\widetilde{C_{3}}\) & \(1\) & \(0\) & \(0\) & \(0\) & \(0\) & \(0\) & \(0\) & \(1\) & \(0\) & \(0\) & \(0\) & \(0\) & \(0\) & \(0\) & \(0\) & \(0\) & \(0\) & \(-2\) & \(0\) & \(0\) & \(0\) \\
    \(\widetilde{C_{4}}\) & \(0\) & \(0\) & \(1\) & \(0\) & \(0\) & \(0\) & \(0\) & \(0\) & \(0\) & \(0\) & \(1\) & \(0\) & \(0\) & \(0\) & \(0\) & \(0\) & \(0\) & \(0\) & \(-2\) & \(0\) & \(0\) \\
    \(\widetilde{C_{5}}\) & \(0\) & \(0\) & \(0\) & \(0\) & \(0\) & \(1\) & \(0\) & \(0\) & \(0\) & \(0\) & \(0\) & \(0\) & \(1\) & \(0\) & \(0\) & \(0\) & \(0\) & \(0\) & \(0\) & \(-2\) & \(0\) \\
    \(\widetilde{C_{7}}\) & \(0\) & \(0\) & \(0\) & \(1\) & \(0\) & \(1\) & \(1\) & \(0\) & \(0\) & \(0\) & \(0\) & \(0\) & \(0\) & \(1\) & \(1\) & \(0\) & \(0\) & \(0\) & \(0\) & \(0\) & \(-2\) \\
    \hline
  \end{tabular}.
\end{table}
Note that the intersection matrix is degenerate. We choose the following integral basis of the lattice \(L_{\mathcal{S}}\):
\begin{align*}
  \begin{pmatrix}
    [E_2^1] \\ [E_3^1]
  \end{pmatrix} =
  \begin{pmatrix}
    -1 & 0 & 0 & 1 & 1 & -3 & -2 & -1 & 0 & 0 & 0 & 1 & -2 & 0 & -2 & 0 & 0 & 2 \\
    -1 & 0 & 1 & 0 & 1 & -3 & -2 & -1 & 3 & 2 & 1 & 0 & -2 & 2 & -2 & 2 & 0 & 0
  \end{pmatrix} \cdot \\
  \begin{pmatrix}
    [E_1^1] & [E_1^2] & [E_1^3] & [E_1^4] & [E_4^1] & [E_5^1] & [E_5^2] & [E_5^3] & [E_6^1] \\ [E_6^2] & [E_6^3] & [E_7^1 + E_8^1] & [\widetilde{C_{1}}] & [\widetilde{C_{2}}] & [\widetilde{C_{3}}] & [\widetilde{C_{4}}] & [\widetilde{C_{5}}] & [\widetilde{C_{7}}]
  \end{pmatrix}^T.
\end{align*}

Discriminant groups and discriminant forms of the lattices \(L_{\mathcal{S}}\) and \(H \oplus \Pic(X)\) are given by
\begin{gather*}
  G' =
  \begin{pmatrix}
    0 & 0 & 0 & 0 & 0 & 0 & 0 & 0 & 0 & 0 & 0 & \frac{1}{2} & 0 & 0 & 0 & 0 & 0 & 0 \\
    \frac{2}{5} & \frac{3}{10} & \frac{4}{5} & \frac{2}{5} & \frac{3}{5} & \frac{3}{5} & \frac{2}{5} & \frac{1}{5} & \frac{4}{5} & \frac{3}{5} & \frac{2}{5} & \frac{1}{2} & \frac{3}{10} & \frac{7}{10} & \frac{1}{2} & \frac{3}{10} & \frac{1}{5} & \frac{1}{2}
  \end{pmatrix}, \\
  G'' = 
  \begin{pmatrix}
    0 & 0 & \frac{1}{2} & 0 \\
    0 & 0 & \frac{3}{5} & \frac{1}{10}
  \end{pmatrix}; \;
  B' = 
  \begin{pmatrix}
    0 & \frac{1}{2} \\
    \frac{1}{2} & \frac{4}{5}
  \end{pmatrix}, \;
  B'' = 
  \begin{pmatrix}
    0 & \frac{1}{2} \\
    \frac{1}{2} & \frac{1}{5}
  \end{pmatrix}; \;
  \begin{pmatrix}
    Q' \\ Q''
  \end{pmatrix}
  =
  \begin{pmatrix}
    1 & \frac{9}{5} \\
    1 & \frac{1}{5}
  \end{pmatrix}.
\end{gather*}


\subsection{Family \textnumero2.13}\label{subsection:02-13}

The pencil \(\mathcal{S}\) is defined by the equation
\begin{gather*}
  X^{2} Y Z + X Y^{2} Z + X Y Z^{2} + Y^{2} Z^{2} + X^{2} Z T + 2 Y^{2} Z T + \\ X Z^{2} T + X^{2} T^{2} + 2 X Y T^{2} + Y^{2} T^{2} + 2 X Z T^{2} + X T^{3} = \lambda X Y Z T.
\end{gather*}
Members \(\mathcal{S}_{\lambda}\) of the pencil are irreducible for any \(\lambda \in \mathbb{P}^1\) except
\(\mathcal{S}_{\infty} = S_{(X)} + S_{(Y)} + S_{(Z)} + S_{(T)}\).
The base locus of the pencil \(\mathcal{S}\) consists of the following curves:
\begin{gather*}
  C_{1} = C_{(X, Y)}, \;
  C_{2} = C_{(Y, T)}, \;
  C_{3} = C_{(Z, T)}, \;
  C_{4} = C_{(X, Z + T)}, \;
  C_{5} = C_{(Y, Z + T)}, \\
  C_{6} = C_{(T, X + Y)}, \;
  C_{7} = C_{(T, X + Z)}, \;
  C_{8} = C_{(Y, X + Z + T)}, \;
  C_{9} = C_{(Z, X T + (X + Y)^2)}.
\end{gather*}
Their linear equivalence classes on the generic member \(\mathcal{S}_{\Bbbk}\) of the pencil satisfy the following relations:
\begin{gather*}
  \begin{pmatrix}
    [C_{7}] \\ [C_{8}] \\ [C_{9}] \\ [H_{\mathcal{S}}]
  \end{pmatrix} = 
  \begin{pmatrix}
    2 & -1 & -1 & 2 & 0 & -1 \\
    1 & -1 & 0 & 2 & -1 & 0 \\
    2 & 0 & -2 & 2 & 0 & 0 \\
    2 & 0 & 0 & 2 & 0 & 0
  \end{pmatrix} \cdot
  \begin{pmatrix}
    [C_{1}] & [C_{2}] & [C_{3}] & [C_{4}] & [C_{5}] & [C_{6}]
  \end{pmatrix}^T.
\end{gather*}

For a general choice of \(\lambda \in \mathbb{C}\) the surface \(\mathcal{S}_{\lambda}\) has the following singularities:
\begin{itemize}\setlength{\itemindent}{2cm}
\item[\(P_{1} = P_{(X, Y, T)}\):] type \(\mathbb{A}_1\) with the quadratic term \(Y^2 + X (Y + T)\);
\item[\(P_{2} = P_{(X, Z, T)}\):] type \(\mathbb{A}_1\) with the quadratic term \(X Z + (Z + T)^2\);
\item[\(P_{3} = P_{(Y, Z, T)}\):] type \(\mathbb{A}_1\) with the quadratic term \(T^2 + Z (Y + T)\);
\item[\(P_{4} = P_{(X, Y, Z + T)}\):] type \(\mathbb{A}_5\) with the quadratic term \((\lambda + 3) X \cdot Y\);
\item[\(P_{5} = P_{(Y, T, X + Z)}\):] type \(\mathbb{A}_1\) with the quadratic term \((Y + T) (X + Z + T) - (\lambda + 1) Y T\);
\item[\(P_{6} = P_{(Z, T, X + Y)}\):] type \(\mathbb{A}_2\) with the quadratic term \(Z \cdot (X + Y - (\lambda + 3) T)\);
\item[\(P_{7} = P_{(X, Z + T, Y - (\lambda + 3) T)}\):] type \(\mathbb{A}_1\) with the quadratic term
  \[
    (\lambda + 3) ((\lambda + 3) (Z + T)^2 - X (X + Y - Z - (\lambda + 4) T)).
  \]
\end{itemize}

Galois action on the lattice \(L_{\lambda}\) is trivial. The intersection matrix on \(L_{\lambda} = L_{\mathcal{S}}\) is represented by
\begin{table}[H]
  \begin{tabular}{|c||c|c|c|ccccc|c|cc|c|cccccc|}
    \hline
    \(\bullet\) & \(E_1^1\) & \(E_2^1\) & \(E_3^1\) & \(E_4^1\) & \(E_4^2\) & \(E_4^3\) & \(E_4^4\) & \(E_4^5\) & \(E_5^1\) & \(E_6^1\) & \(E_6^2\) & \(E_7^1\) & \(\widetilde{C_{1}}\) & \(\widetilde{C_{2}}\) & \(\widetilde{C_{3}}\) & \(\widetilde{C_{4}}\) & \(\widetilde{C_{5}}\) & \(\widetilde{C_{6}}\) \\
    \hline
    \hline
    \(\widetilde{C_{1}}\) & \(1\) & \(0\) & \(0\) & \(0\) & \(1\) & \(0\) & \(0\) & \(0\) & \(0\) & \(0\) & \(0\) & \(0\) & \(-2\) & \(0\) & \(0\) & \(0\) & \(0\) & \(0\) \\
    \(\widetilde{C_{2}}\) & \(1\) & \(0\) & \(1\) & \(0\) & \(0\) & \(0\) & \(0\) & \(0\) & \(1\) & \(0\) & \(0\) & \(0\) & \(0\) & \(-2\) & \(0\) & \(0\) & \(0\) & \(0\) \\
    \(\widetilde{C_{3}}\) & \(0\) & \(1\) & \(1\) & \(0\) & \(0\) & \(0\) & \(0\) & \(0\) & \(0\) & \(1\) & \(0\) & \(0\) & \(0\) & \(0\) & \(-2\) & \(0\) & \(0\) & \(0\) \\
    \(\widetilde{C_{4}}\) & \(0\) & \(1\) & \(0\) & \(1\) & \(0\) & \(0\) & \(0\) & \(0\) & \(0\) & \(0\) & \(0\) & \(1\) & \(0\) & \(0\) & \(0\) & \(-2\) & \(0\) & \(0\) \\
    \(\widetilde{C_{5}}\) & \(0\) & \(0\) & \(1\) & \(0\) & \(0\) & \(0\) & \(0\) & \(1\) & \(0\) & \(0\) & \(0\) & \(0\) & \(0\) & \(0\) & \(0\) & \(0\) & \(-2\) & \(0\) \\
    \(\widetilde{C_{6}}\) & \(1\) & \(0\) & \(0\) & \(0\) & \(0\) & \(0\) & \(0\) & \(0\) & \(0\) & \(0\) & \(1\) & \(0\) & \(0\) & \(0\) & \(0\) & \(0\) & \(0\) & \(-2\) \\
    \hline
  \end{tabular}.
\end{table}
Note that the intersection matrix is non-degenerate.

Discriminant groups and discriminant forms of the lattices \(L_{\mathcal{S}}\) and \(H \oplus \Pic(X)\) are given by
\begin{gather*}
  G' = 
  \begin{pmatrix}
    0 & \frac{1}{2} & 0 & \frac{1}{2} & 0 & 0 & 0 & 0 & 0 & \frac{1}{2} & 0 & 0 & \frac{1}{2} & 0 & 0 & 0 & 0 & \frac{1}{2} \\
\frac{5}{6} & \frac{3}{4} & \frac{5}{12} & \frac{1}{4} & \frac{1}{2} & \frac{1}{12} & \frac{2}{3} & \frac{1}{4} & \frac{3}{4} & \frac{5}{6} & \frac{1}{6} & 0 & \frac{2}{3} & \frac{1}{2} & \frac{1}{2} & 0 & \frac{5}{6} & \frac{1}{2}
  \end{pmatrix}, \\
  G'' = 
  \begin{pmatrix}
    0 & 0 & \frac{1}{2} & 0 \\
0 & 0 & \frac{3}{4} & \frac{1}{12}
  \end{pmatrix}; \;
  B' = 
  \begin{pmatrix}
    \frac{1}{2} & 0 \\
    0 & \frac{1}{12}
  \end{pmatrix}, \;
  B'' = 
  \begin{pmatrix}
    \frac{1}{2} & 0 \\
    0 & \frac{11}{12}
  \end{pmatrix}; \;
  \begin{pmatrix}
    Q' \\ Q''
  \end{pmatrix}
  =
  \begin{pmatrix}
    \frac{3}{2} & \frac{1}{12} \\
    \frac{1}{2} & \frac{23}{12}
  \end{pmatrix}.
\end{gather*}


\subsection{Family \textnumero2.14}\label{subsection:02-14}

The pencil \(\mathcal{S}\) is defined by the equation
\begin{gather*}
  X^{2} Y^{2} + X^{2} Y Z + X Y Z^{2} + 2 X Y^{2} T + Y Z^{2} T + Z^{3} T + 2 X Y T^{2} + \\ Y^{2} T^{2} + 3 Y Z T^{2} + 3 Z^{2} T^{2} + 2 Y T^{3} + 3 Z T^{3} + T^{4} = \lambda X Y Z T.
\end{gather*}
Members \(\mathcal{S}_{\lambda}\) of the pencil are irreducible for any \(\lambda \in \mathbb{P}^1\) except
\(\mathcal{S}_{\infty} = S_{(X)} + S_{(Y)} + S_{(Z)} + S_{(T)}\).
The base locus of the pencil \(\mathcal{S}\) consists of the following curves:
\begin{gather*}
  C_{1} = C_{(X, T)}, \;
  C_{2} = C_{(Y, T)}, \;
  C_{3} = C_{(Y, Z + T)}, \;
  C_{4} = C_{(X, Y + Z + T)}, \\
  C_{5} = C_{(X, Y T + (Z + T)^2)}, \;
  C_{6} = C_{(Z, Y (X + T) + T^2)}, \;
  C_{7} = C_{(T, X (Y + Z) + Z^2)}.
\end{gather*}
Their linear equivalence classes on the generic member \(\mathcal{S}_{\Bbbk}\) of the pencil satisfy the following relations:
\begin{gather*}
  \begin{pmatrix}
    [C_{1}] \\ [C_{2}] \\ [C_{5}] \\ [H_{\mathcal{S}}]
  \end{pmatrix} = 
  \begin{pmatrix}
    3 & 0 & 0 & -1 \\
    -3 & 0 & 2 & 0 \\
    -3 & -1 & 2 & 1 \\
    0 & 0 & 2 & 0
  \end{pmatrix} \cdot
  \begin{pmatrix}
    [C_{3}] \\ [C_{4}] \\ [C_{6}] \\ [C_{7}]
  \end{pmatrix}.
\end{gather*}

For a general choice of \(\lambda \in \mathbb{C}\) the surface \(\mathcal{S}_{\lambda}\) has the following singularities:
\begin{itemize}\setlength{\itemindent}{2cm}
\item[\(P_{1} = P_{(X, Z, T)}\):] type \(\mathbb{D}_5\) with the quadratic term \((X + T)^2\);
\item[\(P_{2} = P_{(Y, Z, T)}\):] type \(\mathbb{A}_3\) with the quadratic term \(Y \cdot (Y + Z)\);
\item[\(P_{3} = P_{(X, Y, Z + T)}\):] type \(\mathbb{A}_2\) with the quadratic term \(Y \cdot ((\lambda + 3) X + Y + Z + T)\);
\item[\(P_{4} = P_{(X, Z, Y + T)}\):] type \(\mathbb{A}_1\) with the quadratic term \((X - Y - Z - T) (X - Y - 2 Z - T) + (\lambda + 3) X Z\);
\item[\(P_{5} = P_{(Y, X - (\lambda + 3) T, Z + T)}\):] type \(\mathbb{A}_2\) with the quadratic term
  \[
    Y \cdot ((\lambda + 3) X - (\lambda + 4)^2 Y - (\lambda + 4) Z - ((\lambda + 4)^2 - (\lambda + 3)) T);
  \]
\item[\(P_{6} = P_{(Z, X - (\lambda + 3) T, (\lambda + 4) Y + T)}\):] type \(\mathbb{A}_1\).
\end{itemize}

Galois action on the lattice \(L_{\lambda}\) is trivial. The intersection matrix on \(L_{\lambda} = L_{\mathcal{S}}\) is represented by
\begin{table}[H]
  \begin{tabular}{|c||ccccc|ccc|cc|c|cc|c|cccc|}
    \hline
    \(\bullet\) & \(E_1^1\) & \(E_1^2\) & \(E_1^3\) & \(E_1^4\) & \(E_1^5\) & \(E_2^1\) & \(E_2^2\) & \(E_2^3\) & \(E_3^1\) & \(E_3^2\) & \(E_4^1\) & \(E_5^1\) & \(E_5^2\) & \(E_6^1\) & \(\widetilde{C_{3}}\) & \(\widetilde{C_{4}}\) & \(\widetilde{C_{6}}\) & \(\widetilde{C_{7}}\) \\
    \hline
    \hline
    \(\widetilde{C_{3}}\) & \(0\) & \(0\) & \(0\) & \(0\) & \(0\) & \(1\) & \(0\) & \(0\) & \(1\) & \(0\) & \(0\) & \(1\) & \(0\) & \(0\) & \(-2\) & \(0\) & \(0\) & \(0\) \\
    \(\widetilde{C_{4}}\) & \(0\) & \(0\) & \(0\) & \(0\) & \(0\) & \(0\) & \(0\) & \(0\) & \(0\) & \(1\) & \(1\) & \(0\) & \(0\) & \(0\) & \(0\) & \(-2\) & \(0\) & \(0\) \\
    \(\widetilde{C_{6}}\) & \(1\) & \(0\) & \(0\) & \(0\) & \(0\) & \(0\) & \(1\) & \(0\) & \(0\) & \(0\) & \(1\) & \(0\) & \(0\) & \(1\) & \(0\) & \(0\) & \(-2\) & \(0\) \\
    \(\widetilde{C_{7}}\) & \(0\) & \(0\) & \(0\) & \(0\) & \(1\) & \(0\) & \(0\) & \(1\) & \(0\) & \(0\) & \(0\) & \(0\) & \(0\) & \(0\) & \(0\) & \(0\) & \(0\) & \(-2\) \\
    \hline
  \end{tabular}.
\end{table}
Note that the intersection matrix is non-degenerate.

Discriminant groups and discriminant forms of the lattices \(L_{\mathcal{S}}\) and \(H \oplus \Pic(X)\) are given by
\begin{gather*}
  G' = 
  \begin{pmatrix}
    \frac{1}{5} & \frac{1}{5} & \frac{1}{5} & \frac{3}{5} & \frac{3}{5} & 0 & \frac{4}{5} & \frac{2}{5} & \frac{3}{5} & 0 & \frac{4}{5} & \frac{4}{5} & \frac{2}{5} & \frac{3}{5} & \frac{1}{5} & \frac{2}{5} & \frac{1}{5} & 0 \\
    \frac{1}{5} & \frac{3}{5} & 0 & 0 & \frac{2}{5} & \frac{1}{5} & \frac{3}{5} & \frac{1}{5} & \frac{1}{5} & \frac{3}{5} & \frac{2}{5} & \frac{1}{5} & \frac{3}{5} & \frac{2}{5} & \frac{4}{5} & 0 & \frac{4}{5} & \frac{4}{5}
  \end{pmatrix}, \\
  G'' = 
  \begin{pmatrix}
    0 & 0 & \frac{1}{5} & 0 \\
    0 & 0 & 0 & \frac{1}{5}
  \end{pmatrix}; \;
  B' = 
  \begin{pmatrix}
    0 & \frac{4}{5} \\
    \frac{4}{5} & \frac{3}{5}
  \end{pmatrix}, \;
  B'' = 
  \begin{pmatrix}
    0 & \frac{1}{5} \\
    \frac{1}{5} & \frac{2}{5}
  \end{pmatrix}; \;
  \begin{pmatrix}
    Q' \\ Q''
  \end{pmatrix}
  =
  \begin{pmatrix}
    0 & \frac{8}{5} \\
    0 & \frac{2}{5}
  \end{pmatrix}.
\end{gather*}


\subsection{Family \textnumero2.15}\label{subsection:02-15}

The pencil \(\mathcal{S}\) is defined by the equation
\begin{gather*}
  X^{2} Y Z + X Y^{2} Z + X Y Z^{2} + X^{2} Y T + X Y^{2} T + X^{2} T^{2} + \\ 2 X Y T^{2} + Y^{2} T^{2} + 2 X Z T^{2} + 2 Y Z T^{2} + Z^{2} T^{2} = \lambda X Y Z T.
\end{gather*}
Members \(\mathcal{S}_{\lambda}\) of the pencil are irreducible for any \(\lambda \in \mathbb{P}^1\) except
\begin{gather*}
  \mathcal{S}_{\infty} = S_{(X)} + S_{(Y)} + S_{(Z)} + S_{(T)}, \;
  \mathcal{S}_{- 1} = S_{(X + Y + Z)} + S_{(X Y (Z + T) + T^2 (X + Y + Z))}.
\end{gather*}
The base locus of the pencil \(\mathcal{S}\) consists of the following curves:
\begin{gather*}
  C_1 = C_{(X, T)}, \;
  C_2 = C_{(Y, T)}, \;
  C_3 = C_{(Z, T)}, \;
  C_4 = C_{(X, Y + Z)}, \\
  C_5 = C_{(Y, X + Z)}, \;
  C_6 = C_{(Z, X + Y)}, \;
  C_7 = C_{(T, X + Y + Z)}, \;
  C_8 = C_{(Z, X Y + T (X + Y))}.
\end{gather*}
Their linear equivalence classes on the generic member \(\mathcal{S}_{\Bbbk}\) of the pencil satisfy the following relations:
\[
  \begin{pmatrix}
    [C_5] + [C_8] \\ [C_6] + [C_8] \\ [C_7] \\ 2 [C_8] \\ [H_{\mathcal{S}}]
  \end{pmatrix} =
  \begin{pmatrix}
    3 & -3 & -2 & 5 \\
    2 & 0 & -1 & 2 \\
    1 & -1 & -1 & 2 \\
    4 & -4 & -4 & 8 \\
    2 & 0 & 0 & 2
  \end{pmatrix} \cdot
  \begin{pmatrix}
    [C_1] \\ [C_2] \\ [C_3] \\ [C_4]
  \end{pmatrix}
\]

For a general choice of \(\lambda \in \mathbb{C}\) the surface \(\mathcal{S}_{\lambda}\) has the following singularities:
\begin{itemize}\setlength{\itemindent}{2cm}
\item[\(P_1 = P_{(X, Y, Z)}\):] type \(\mathbb{D}_4\) with the quadratic term \((X + Y + Z)^2\);
\item[\(P_2 = P_{(X, Y, T)}\):] type \(\mathbb{A}_1\) with the quadratic term \(X Y + T^2\);
\item[\(P_3 = P_{(X, Z, T)}\):] type \(\mathbb{A}_1\) with the quadratic term \(T^2 + X (Z + T)\);
\item[\(P_4 = P_{(Y, Z, T)}\):] type \(\mathbb{A}_1\) with the quadratic term \(T^2 + Y (Z + T)\);
\item[\(P_5 = P_{(X, T, Y + Z)}\):] type \(\mathbb{A}_3\) with the quadratic term \(X \cdot (X + Y + Z - (\lambda + 1) T)\);
\item[\(P_6 = P_{(Y, T, X + Z)}\):] type \(\mathbb{A}_3\) with the quadratic term \(Y \cdot (X + Y + Z - (\lambda + 1) T)\);
\item[\(P_7 = P_{(Z, T, X + Y)}\):] type \(\mathbb{A}_1\) with the quadratic term \((Z + T) (X + Y + Z) - (\lambda + 1) Z T\).
\end{itemize}

Galois action on the lattice \(L_{\lambda}\) is trivial. The intersection matrix on \(L_{\lambda} = L_{\mathcal{S}}\) is represented by
\begin{table}[H]
  \setlength{\tabcolsep}{4pt}
  \begin{tabular}{|c||cccc|c|c|c|ccc|ccc|c|ccccccc|}
    \hline
    \(\bullet\) & \(E_1^1\) & \(E_1^2\) & \(E_1^3\) & \(E_1^4\) & \(E_2^1\) & \(E_3^1\) & \(E_4^1\) & \(E_5^1\) & \(E_5^2\) & \(E_5^3\) & \(E_6^1\) & \(E_6^2\) & \(E_6^3\) & \(E_7^1\) & \(\widetilde{C_{1}}\) & \(\widetilde{C_{2}}\) & \(\widetilde{C_{3}}\) & \(\widetilde{C_{4}}\) & \(\widetilde{C_{5}}\) & \(\widetilde{C_{6}}\) & \(\widetilde{C_{8}}\) \\
    \hline
    \hline
    \(\widetilde{C_{1}}\) & \(0\) & \(0\) & \(0\) & \(0\) & \(1\) & \(1\) & \(0\) & \(1\) & \(0\) & \(0\) & \(0\) & \(0\) & \(0\) & \(0\) & \(-2\) & \(0\) & \(0\) & \(0\) & \(0\) & \(0\) & \(0\) \\
    \(\widetilde{C_{2}}\) & \(0\) & \(0\) & \(0\) & \(0\) & \(1\) & \(0\) & \(1\) & \(0\) & \(0\) & \(0\) & \(1\) & \(0\) & \(0\) & \(0\) & \(0\) & \(-2\) & \(0\) & \(0\) & \(0\) & \(0\) & \(0\) \\
    \(\widetilde{C_{3}}\) & \(0\) & \(0\) & \(0\) & \(0\) & \(0\) & \(1\) & \(1\) & \(0\) & \(0\) & \(0\) & \(0\) & \(0\) & \(0\) & \(1\) & \(0\) & \(0\) & \(-2\) & \(0\) & \(0\) & \(0\) & \(0\) \\
    \(\widetilde{C_{4}}\) & \(1\) & \(0\) & \(0\) & \(0\) & \(0\) & \(0\) & \(0\) & \(1\) & \(0\) & \(0\) & \(0\) & \(0\) & \(0\) & \(0\) & \(0\) & \(0\) & \(0\) & \(-2\) & \(0\) & \(0\) & \(0\) \\
    \(\widetilde{C_{5}}\) & \(0\) & \(0\) & \(1\) & \(0\) & \(0\) & \(0\) & \(0\) & \(0\) & \(0\) & \(0\) & \(1\) & \(0\) & \(0\) & \(0\) & \(0\) & \(0\) & \(0\) & \(0\) & \(-2\) & \(0\) & \(0\) \\
    \(\widetilde{C_{6}}\) & \(0\) & \(0\) & \(0\) & \(1\) & \(0\) & \(0\) & \(0\) & \(0\) & \(0\) & \(0\) & \(0\) & \(0\) & \(0\) & \(1\) & \(0\) & \(0\) & \(0\) & \(0\) & \(0\) & \(-2\) & \(0\) \\
    \(\widetilde{C_{8}}\) & \(0\) & \(0\) & \(0\) & \(1\) & \(0\) & \(1\) & \(1\) & \(0\) & \(0\) & \(0\) & \(0\) & \(0\) & \(0\) & \(0\) & \(0\) & \(0\) & \(0\) & \(0\) & \(0\) & \(0\) & \(-2\) \\
    \hline
  \end{tabular}.
\end{table}
Note that the intersection matrix is degenerate. We choose the following integral basis of the lattice \(L_{\lambda}\):
\begin{align*}
  \begin{pmatrix}
    [E_6^3] \\ [\widetilde{C_{5}}] \\ [\widetilde{C_{8}}]
  \end{pmatrix} =
  \begin{pmatrix}
    5 & 6 & 3 & 4 & -2 & -1 & -3 & 3 & 2 & 1 & -3 & -2 & 0 & 0 & -4 & -2 & 4 & 2 \\
    -2 & -3 & -2 & -2 & 1 & 1 & 1 & 0 & 0 & 0 & 0 & 0 & 0 & 1 & 1 & 1 & -1 & -1 \\
    1 & 0 & 0 & -1 & 1 & 0 & -1 & 3 & 2 & 1 & 0 & 0 & -1 & 2 & 0 & -1 & 2 & -1 \\
  \end{pmatrix} \cdot \\
  \begin{pmatrix}
    [E_1^1] & [E_1^2] & [E_1^3] & [E_1^4] & [E_2^1] & [E_3^1] & [E_4^1] & [E_5^1] & [E_5^2] & \\
    [E_5^3] & [E_6^1] & [E_6^2] & [E_7^1] & [\widetilde{C_{1}}] & [\widetilde{C_{2}}] & [\widetilde{C_{3}}] & [\widetilde{C_{4}}] & [\widetilde{C_{6}}]
  \end{pmatrix}^T.
\end{align*}

Discriminant groups and discriminant forms of the lattices \(L_{\mathcal{S}}\) and \(H \oplus \Pic(X)\) are given by
\begin{gather*}
  G' = 
  \begin{pmatrix}
    \frac{1}{2} & 0 & 0 & \frac{1}{2} & \frac{1}{2} & 0 & \frac{1}{2} & \frac{1}{2} & 0 & \frac{1}{2} & 0 & 0 & \frac{1}{2} & 0 & 0 & 0 & 0 & 0 \\
    \frac{2}{3} & 0 & 0 & \frac{1}{3} & \frac{5}{6} & \frac{1}{2} & \frac{1}{6} & 0 & 0 & 0 & 0 & 0 & 0 & \frac{2}{3} & 0 & \frac{1}{3} & \frac{1}{3} & \frac{2}{3}
  \end{pmatrix}, \\
  G'' = 
  \begin{pmatrix}
    0 & 0 & \frac{1}{2} & \frac{1}{2} \\
    0 & 0 & \frac{5}{6} & 0
  \end{pmatrix}; \;
  B' = 
  \begin{pmatrix}
    \frac{1}{2} & 0 \\
    0 & \frac{5}{6}
  \end{pmatrix}, \;
  B'' = 
  \begin{pmatrix}
    \frac{1}{2} & 0 \\
    0 & \frac{1}{6}
  \end{pmatrix}; \;
  \begin{pmatrix}
    Q' \\ Q''
  \end{pmatrix}
  =
  \begin{pmatrix}
    \frac{1}{2} & \frac{11}{6} \\
    \frac{3}{2} & \frac{1}{6}
  \end{pmatrix}.
\end{gather*}


\subsection{Family \textnumero2.16}\label{subsection:02-16}

The pencil \(\mathcal{S}\) is defined by the equation
\begin{gather*}
  X^{2} Y Z + X Y Z^{2} + X^{2} Y T + X Y^{2} T + X^{2} Z T + Y^{2} Z T + X Z^{2} T + \\ Y Z^{2} T + X^{2} T^{2} + X Y T^{2} + 2 X Z T^{2} + Y Z T^{2} + Z^{2} T^{2} = \lambda X Y Z T.
\end{gather*}
Members \(\mathcal{S}_{\lambda}\) of the pencil are irreducible for any \(\lambda \in \mathbb{P}^1\) except
\begin{gather*}
  \mathcal{S}_{\infty} = S_{(X)} + S_{(Y)} + S_{(Z)} + S_{(T)}, \;
  \mathcal{S}_{- 2} = S_{(X + Z)} + S_{(Y + T)} + S_{(T (X + Y + Z) + X Z)}.
\end{gather*}
The base locus of the pencil \(\mathcal{S}\) consists of the following curves:
\begin{gather*}
  C_{1} = C_{(X, Z)}, \;
  C_{2} = C_{(X, T)}, \;
  C_{3} = C_{(Y, T)}, \;
  C_{4} = C_{(Z, T)}, \;
  C_{5} = C_{(X, Y + Z)}, \;
  C_{6} = C_{(X, Y + T)}, \\
  C_{7} = C_{(Y, X + Z)}, \;
  C_{8} = C_{(Z, X + Y)}, \;
  C_{9} = C_{(Z, Y + T)}, \;
  C_{10} = C_{(T, X + Z)}, \;
  C_{11} = C_{(Y, X Z + T (X + Z))}.
\end{gather*}
Their linear equivalence classes on the generic member \(\mathcal{S}_{\Bbbk}\) of the pencil satisfy the following relations:
\begin{gather*}
  \begin{pmatrix}
    [C_{2}] \\ [C_{4}] \\ [C_{8}] \\ [C_{9}] \\ [C_{10}] \\ [C_{11}]
  \end{pmatrix} = 
  \begin{pmatrix}
    -1 & 0 & -1 & -1 & 0 & 1 \\
    3 & -1 & 1 & 1 & 1 & -1 \\
    -4 & 3 & -1 & 0 & -1 & 1 \\
    0 & -2 & 0 & -1 & 0 & 1 \\
    -2 & 0 & 0 & 0 & -1 & 1 \\
    0 & -1 & 0 & 0 & -1 & 1
  \end{pmatrix} \cdot
  \begin{pmatrix}
    [C_{1}] \\ [C_{3}] \\ [C_{5}] \\ [C_{6}] \\ [C_{7}] \\ [H_{\mathcal{S}}]
  \end{pmatrix}.
\end{gather*}

For a general choice of \(\lambda \in \mathbb{C}\) the surface \(\mathcal{S}_{\lambda}\) has the following singularities:
\begin{itemize}\setlength{\itemindent}{2cm}
\item[\(P_{1} = P_{(X, Y, Z)}\):] type \(\mathbb{A}_3\) with the quadratic term \((X + Z) \cdot (X + Y + Z)\);
\item[\(P_{2} = P_{(X, Y, T)}\):] type \(\mathbb{A}_2\) with the quadratic term \((Y + T) \cdot (X + T)\);
\item[\(P_{3} = P_{(X, Z, T)}\):] type \(\mathbb{A}_3\) with the quadratic term \(T \cdot (X + Z)\);
\item[\(P_{4} = P_{(Y, Z, T)}\):] type \(\mathbb{A}_2\) with the quadratic term \((Y + T) \cdot (Z + T)\);
\item[\(P_{5} = P_{(X, Z, Y + T)}\):] type \(\mathbb{A}_1\) with the quadratic term \((X + Z) (Y + T) - (\lambda + 2) X Z\);
\item[\(P_{6} = P_{(Y, T, X + Z)}\):] type \(\mathbb{A}_1\) with the quadratic term \((X + Z) (Y + T) - (\lambda + 2) Y T\).
\end{itemize}

Galois action on the lattice \(L_{\lambda}\) is trivial. The intersection matrix on \(L_{\lambda} = L_{\mathcal{S}}\) is represented by
\begin{table}[H]
  \begin{tabular}{|c||ccc|cc|ccc|cc|c|c|cccccc|}
    \hline
    \(\bullet\) & \(E_1^1\) & \(E_1^2\) & \(E_1^3\) & \(E_2^1\) & \(E_2^2\) & \(E_3^1\) & \(E_3^2\) & \(E_3^3\) & \(E_4^1\) & \(E_4^2\) & \(E_5^1\) & \(E_6^1\) & \(\widetilde{C_{1}}\) & \(\widetilde{C_{3}}\) & \(\widetilde{C_{5}}\) & \(\widetilde{C_{6}}\) & \(\widetilde{C_{7}}\) & \(\widetilde{H_{\mathcal{S}}}\) \\
    \hline
    \hline
    \(\widetilde{C_{1}}\) & \(1\) & \(0\) & \(0\) & \(0\) & \(0\) & \(0\) & \(0\) & \(1\) & \(0\) & \(0\) & \(1\) & \(0\) & \(-2\) & \(0\) & \(0\) & \(0\) & \(0\) & \(1\) \\
    \(\widetilde{C_{3}}\) & \(0\) & \(0\) & \(0\) & \(0\) & \(1\) & \(0\) & \(0\) & \(0\) & \(1\) & \(0\) & \(0\) & \(1\) & \(0\) & \(-2\) & \(0\) & \(0\) & \(0\) & \(1\) \\
    \(\widetilde{C_{5}}\) & \(0\) & \(0\) & \(1\) & \(0\) & \(0\) & \(0\) & \(0\) & \(0\) & \(0\) & \(0\) & \(0\) & \(0\) & \(0\) & \(0\) & \(-2\) & \(1\) & \(0\) & \(1\) \\
    \(\widetilde{C_{6}}\) & \(0\) & \(0\) & \(0\) & \(0\) & \(1\) & \(0\) & \(0\) & \(0\) & \(0\) & \(0\) & \(1\) & \(0\) & \(0\) & \(0\) & \(1\) & \(-2\) & \(0\) & \(1\) \\
    \(\widetilde{C_{7}}\) & \(0\) & \(1\) & \(0\) & \(0\) & \(0\) & \(0\) & \(0\) & \(0\) & \(0\) & \(0\) & \(0\) & \(1\) & \(0\) & \(0\) & \(0\) & \(0\) & \(-2\) & \(1\) \\
    \(\widetilde{H_{\mathcal{S}}}\) & \(0\) & \(0\) & \(0\) & \(0\) & \(0\) & \(0\) & \(0\) & \(0\) & \(0\) & \(0\) & \(0\) & \(0\) & \(1\) & \(1\) & \(1\) & \(1\) & \(1\) & \(4\) \\
    \hline
  \end{tabular}.
\end{table}
Note that the intersection matrix is non-degenerate.

Discriminant groups and discriminant forms of the lattices \(L_{\mathcal{S}}\) and \(H \oplus \Pic(X)\) are given by
\begin{gather*}
  G' = 
  \begin{pmatrix}
    \frac{1}{2} & 0 & 0 & \frac{1}{2} & 0 & \frac{1}{2} & 0 & \frac{1}{2} & 0 & \frac{1}{2} & 0 & 0 & 0 & \frac{1}{2} & 0 & 0 & \frac{1}{2} & 0 \\
    \frac{7}{10} & \frac{1}{5} & \frac{9}{10} & 0 & 0 & \frac{3}{10} & \frac{3}{5} & \frac{9}{10} & \frac{1}{5} & \frac{3}{5} & \frac{7}{10} & \frac{3}{10} & \frac{1}{5} & \frac{4}{5} & \frac{3}{5} & \frac{1}{5} & \frac{4}{5} & \frac{1}{10}
  \end{pmatrix}, \\
  G'' = 
  \begin{pmatrix}
    0 & 0 & \frac{1}{2} & 0 \\
    0 & 0 & \frac{3}{5} & \frac{1}{10}
  \end{pmatrix}; \;
  B' = 
  \begin{pmatrix}
    \frac{1}{2} & \frac{1}{2} \\
    \frac{1}{2} & \frac{2}{5}
  \end{pmatrix}, \;
  B'' = 
  \begin{pmatrix}
    \frac{1}{2} & \frac{1}{2} \\
    \frac{1}{2} & \frac{3}{5}
  \end{pmatrix}; \;
  \begin{pmatrix}
    Q' \\ Q''
  \end{pmatrix}
  =
  \begin{pmatrix}
    \frac{1}{2} & \frac{2}{5} \\
    \frac{3}{2} & \frac{8}{5}    
  \end{pmatrix}.
\end{gather*}


\subsection{Family \textnumero2.17}\label{subsection:02-17}

The pencil \(\mathcal{S}\) is defined by the equation
\begin{gather*}
  X^{2} Y Z + X Y^{2} Z + X Y Z^{2} + X^{2} Z T + Y^{2} Z T + X Z^{2} T + Y Z^{2} T + \\ X Y T^{2} + 2 X Z T^{2} + Y Z T^{2} + Z^{2} T^{2} + Y T^{3} + Z T^{3} = \lambda X Y Z T.
\end{gather*}
Members \(\mathcal{S}_{\lambda}\) of the pencil are irreducible for any \(\lambda \in \mathbb{P}^1\) except
\begin{gather*}
  \mathcal{S}_{\infty} = S_{(X)} + S_{(Y)} + S_{(Z)} + S_{(T)}, \;
  \mathcal{S}_{- 2} = S_{(X + T)} + S_{(T^2 (Z - T) + (Y + T) (Z (X + Y + Z) + T^2))}.
\end{gather*}
The base locus of the pencil \(\mathcal{S}\) consists of the following curves:
\begin{gather*}
  C_1 = C_{(X, T)}, \;
  C_2 = C_{(Y, Z)}, \;
  C_3 = C_{(Y, T)}, \;
  C_4 = C_{(Z, T)}, \;
  C_5 = C_{(X, Y + Z)}, \\
  C_6 = C_{(Y, X + T)}, \;
  C_7 = C_{(Z, X + T)}, \;
  C_8 = C_{(Y, X + Z + T)}, \;
  C_9 = C_{(T, X + Y + Z)}, \;
  C_{10} = C_{(X, Z (Y + T) + T^2)}.
\end{gather*}
Their linear equivalence classes on the generic member \(\mathcal{S}_{\Bbbk}\) of the pencil satisfy the following relations:
\begin{gather*}
  \begin{pmatrix}
    [C_{2}] \\ [C_{3}] \\ [C_{7}] \\ [C_{9}] \\ [C_{10}]
  \end{pmatrix} = 
  \begin{pmatrix}
    2 & -2 & 0 & 1 & 0 & 0 \\
    -2 & 2 & 0 & -2 & -1 & 1 \\
    -2 & 0 & 0 & -1 & 0 & 1 \\
    1 & -3 & 0 & 2 & 1 & 0 \\
    -1 & 0 & -1 & 0 & 0 & 1
  \end{pmatrix} \cdot
  \begin{pmatrix}
    [C_{1}] & [C_{4}] & [C_{5}] & [C_{6}] & [C_{8}] & [H_{\mathcal{S}}]
  \end{pmatrix}^T.
\end{gather*}

For a general choice of \(\lambda \in \mathbb{C}\) the surface \(\mathcal{S}_{\lambda}\) has the following singularities:
\begin{itemize}\setlength{\itemindent}{2cm}
\item[\(P_{1} = P_{(X, Y, T)}\):] type \(\mathbb{A}_2\) with the quadratic term \((X + T) \cdot (Y + T)\);
\item[\(P_{2} = P_{(X, Z, T)}\):] type \(\mathbb{A}_3\) with the quadratic term \(Z \cdot (X + T)\);
\item[\(P_{3} = P_{(Y, Z, T)}\):] type \(\mathbb{A}_2\) with the quadratic term \(Z \cdot (Y + T)\);
\item[\(P_{4} = P_{(X, T, Y + Z)}\):] type \(\mathbb{A}_1\) with the quadratic term \((X + T) (X + Y + Z) - (\lambda + 2) X T\);
\item[\(P_{5} = P_{(Y, Z, X + T)}\):] type \(\mathbb{A}_2\) with the quadratic term \(Y \cdot (X + (\lambda + 2) Z + T)\);
\item[\(P_{6} = P_{(Y, T, X + Z)}\):] type \(\mathbb{A}_1\) with the quadratic term \((Y + T) (X + Y + Z + T) - (\lambda + 3) Y T\);
\item[\(P_{7} = P_{(Z, T, X + Y)}\):] type \(\mathbb{A}_1\) with the quadratic term \(Z (X + Y - \lambda T) + (Z - T)^2\).
\end{itemize}

Galois action on the lattice \(L_{\lambda}\) is trivial. The intersection matrix on \(L_{\lambda} = L_{\mathcal{S}}\) is represented by
\begin{table}[H]
  \begin{tabular}{|c||cc|ccc|cc|c|cc|c|c|cccccc|}
    \hline
    \(\bullet\) & \(E_1^1\) & \(E_1^2\) & \(E_2^1\) & \(E_2^2\) & \(E_2^3\) & \(E_3^1\) & \(E_3^2\) & \(E_4^1\) & \(E_5^1\) & \(E_5^2\) & \(E_6^1\) & \(E_7^1\) & \(\widetilde{C_{1}}\) & \(\widetilde{C_{4}}\) & \(\widetilde{C_{5}}\) & \(\widetilde{C_{6}}\) & \(\widetilde{C_{8}}\) & \(\widetilde{H_{\mathcal{S}}}\) \\
    \hline
    \hline
    \(\widetilde{C_{1}}\) & \(1\) & \(0\) & \(0\) & \(0\) & \(1\) & \(0\) & \(0\) & \(1\) & \(0\) & \(0\) & \(0\) & \(0\) & \(-2\) & \(0\) & \(0\) & \(0\) & \(0\) & \(1\) \\
    \(\widetilde{C_{4}}\) & \(0\) & \(0\) & \(1\) & \(0\) & \(0\) & \(1\) & \(0\) & \(0\) & \(0\) & \(0\) & \(0\) & \(1\) & \(0\) & \(-2\) & \(0\) & \(0\) & \(0\) & \(1\) \\
    \(\widetilde{C_{5}}\) & \(0\) & \(0\) & \(0\) & \(0\) & \(0\) & \(0\) & \(0\) & \(1\) & \(0\) & \(0\) & \(0\) & \(0\) & \(0\) & \(0\) & \(-2\) & \(0\) & \(0\) & \(1\) \\
    \(\widetilde{C_{6}}\) & \(1\) & \(0\) & \(0\) & \(0\) & \(0\) & \(0\) & \(0\) & \(0\) & \(1\) & \(0\) & \(0\) & \(0\) & \(0\) & \(0\) & \(0\) & \(-2\) & \(0\) & \(1\) \\
    \(\widetilde{C_{8}}\) & \(0\) & \(0\) & \(0\) & \(0\) & \(0\) & \(0\) & \(0\) & \(0\) & \(1\) & \(0\) & \(1\) & \(0\) & \(0\) & \(0\) & \(0\) & \(0\) & \(-2\) & \(1\) \\
    \(\widetilde{H_{\mathcal{S}}}\) & \(0\) & \(0\) & \(0\) & \(0\) & \(0\) & \(0\) & \(0\) & \(0\) & \(0\) & \(0\) & \(0\) & \(0\) & \(1\) & \(1\) & \(1\) & \(1\) & \(1\) & \(4\) \\
    \hline
  \end{tabular}.
\end{table}
Note that the intersection matrix is non-degenerate.

Discriminant groups and discriminant forms of the lattices \(L_{\mathcal{S}}\) and \(H \oplus \Pic(X)\) are given by
\begin{gather*}
  G' = 
  \begin{pmatrix}
    \frac{23}{25} & \frac{24}{25} & 0 & \frac{19}{25} & \frac{13}{25} & \frac{4}{25} & \frac{2}{25} & \frac{23}{25} & \frac{2}{25} & \frac{1}{25} & \frac{19}{25} & \frac{3}{25} & \frac{7}{25} & \frac{6}{25} & \frac{14}{25} & \frac{3}{5} & \frac{13}{25} & \frac{1}{5}
  \end{pmatrix}, \\
  G'' = 
  \begin{pmatrix}
    0 & 0 & \frac{24}{25} & \frac{1}{5}
  \end{pmatrix}; \;
  B' = 
  \begin{pmatrix}
    \frac{21}{25}
  \end{pmatrix}, \;
  B'' = 
  \begin{pmatrix}
    \frac{4}{25}
  \end{pmatrix}; \;
  Q' =
  \begin{pmatrix}
    \frac{46}{25}
  \end{pmatrix}, \;
  Q'' =
  \begin{pmatrix}
    \frac{4}{25}
  \end{pmatrix}.
\end{gather*}


\subsection{Family \textnumero2.18}\label{subsection:02-18}

The pencil \(\mathcal{S}\) is defined by the equation
\begin{gather*}
  X^{2} Y Z + X Y^{2} Z + X Y Z^{2} + Y^{2} Z T + Y Z^{2} T + X^{2} T^{2} + X Y T^{2} + \\ X Z T^{2} + Y Z T^{2} + 2 X T^{3} + Y T^{3} + Z T^{3} + T^{4} = \lambda X Y Z T.
\end{gather*}
Members \(\mathcal{S}_{\lambda}\) of the pencil are irreducible for any \(\lambda \in \mathbb{P}^1\) except
\begin{gather*}
  \mathcal{S}_{\infty} = S_{(X)} + S_{(Y)} + S_{(Z)} + S_{(T)}, \;
  \mathcal{S}_{-2} = S_{(X + T)} + S_{(X + Y + Z + T)} + S_{(Y Z + T^2)}.
\end{gather*}
The base locus of the pencil \(\mathcal{S}\) consists of the following curves:
\begin{gather*}
  C_{1} = C_{(X, T)}, \;
  C_{2} = C_{(Y, T)}, \;
  C_{3} = C_{(Z, T)}, \;
  C_{4} = C_{(Y, X + T)}, \;
  C_{5} = C_{(Z, X + T)}, \\
  C_{6} = C_{(X, Y + Z + T)}, \;
  C_{7} = C_{(Y, X + Z + T)}, \;
  C_{8} = C_{(Z, X + Y + T)}, \;
  C_{9} = C_{(T, X + Y + Z)}, \;
  C_{10} = C_{(X, Y Z + T^2)}.
\end{gather*}
Their linear equivalence classes on the generic member \(\mathcal{S}_{\Bbbk}\) of the pencil satisfy the following relations:
\begin{gather*}
  \begin{pmatrix}
    [C_{4}] \\ [C_{5}] \\ [C_{6}] \\ [C_{8}] \\ [C_{9}] \\ [C_{10}]
  \end{pmatrix} = 
  \begin{pmatrix}
    0 & -2 & 0 & -1 & 1 \\
    -2 & 2 & 0 & 1 & 0 \\
    -1 & 3 & 3 & 0 & -1 \\
    2 & -2 & -2 & -1 & 1 \\
    -1 & -1 & -1 & 0 & 1 \\
    0 & -3 & -3 & 0 & 2
  \end{pmatrix} \cdot
  \begin{pmatrix}
    [C_{1}] & [C_{2}] & [C_{3}] & [C_{7}] & [H_{\mathcal{S}}]
  \end{pmatrix}^T.
\end{gather*}

For a general choice of \(\lambda \in \mathbb{C}\) the surface \(\mathcal{S}_{\lambda}\) has the following singularities:
\begin{itemize}\setlength{\itemindent}{2cm}
\item[\(P_{1} = P_{(X, Y, T)}\):] type \(\mathbb{A}_3\) with the quadratic term \(Y \cdot (X + T)\);
\item[\(P_{2} = P_{(X, Z, T)}\):] type \(\mathbb{A}_3\) with the quadratic term \(Z \cdot (X + T)\);
\item[\(P_{3} = P_{(Y, Z, T)}\):] type \(\mathbb{A}_1\) with the quadratic term \(Y Z + T^2\);
\item[\(P_{4} = P_{(X, T, Y + Z)}\):] type \(\mathbb{A}_1\) with the quadratic term \((X + T) (X + Y + Z + T) - (\lambda + 2) X T\);
\item[\(P_{5} = P_{(Y, Z, X + T)}\):] type \(\mathbb{A}_1\) with the quadratic term \((X + T) (X + Y + Z + T) + (\lambda + 2) Y Z\);
\item[\(P_{6} = P_{(Y, T, X + Z)}\):] type \(\mathbb{A}_2\) with the quadratic term \(Y \cdot (X + Y + Z - (\lambda + 1) T)\);
\item[\(P_{7} = P_{(Z, T, X + Y)}\):] type \(\mathbb{A}_2\) with the quadratic term \(Z \cdot (X + Y + Z - (\lambda + 1) T)\).
\end{itemize}

Galois action on the lattice \(L_{\lambda}\) is trivial. The intersection matrix on \(L_{\lambda} = L_{\mathcal{S}}\) is represented by
\begin{table}[H]
  \begin{tabular}{|c||ccc|ccc|c|c|c|cc|cc|ccccc|}
    \hline
    \(\bullet\) & \(E_1^1\) & \(E_1^2\) & \(E_1^3\) & \(E_2^1\) & \(E_2^2\) & \(E_2^3\) & \(E_3^1\) & \(E_4^1\) & \(E_5^1\) & \(E_6^1\) & \(E_6^2\) & \(E_7^1\) & \(E_7^2\) & \(\widetilde{C_{1}}\) & \(\widetilde{C_{2}}\) & \(\widetilde{C_{3}}\) & \(\widetilde{C_{7}}\) & \(\widetilde{H_{\mathcal{S}}}\) \\
    \hline
    \hline
    \(\widetilde{C_{1}}\) & \(0\) & \(0\) & \(1\) & \(0\) & \(0\) & \(1\) & \(0\) & \(1\) & \(0\) & \(0\) & \(0\) & \(0\) & \(0\) & \(-2\) & \(0\) & \(0\) & \(0\) & \(1\) \\
    \(\widetilde{C_{2}}\) & \(1\) & \(0\) & \(0\) & \(0\) & \(0\) & \(0\) & \(1\) & \(0\) & \(0\) & \(1\) & \(0\) & \(0\) & \(0\) & \(0\) & \(-2\) & \(0\) & \(0\) & \(1\) \\
    \(\widetilde{C_{3}}\) & \(0\) & \(0\) & \(0\) & \(1\) & \(0\) & \(0\) & \(1\) & \(0\) & \(0\) & \(0\) & \(0\) & \(1\) & \(0\) & \(0\) & \(0\) & \(-2\) & \(0\) & \(1\) \\
    \(\widetilde{C_{7}}\) & \(0\) & \(0\) & \(0\) & \(0\) & \(0\) & \(0\) & \(0\) & \(0\) & \(1\) & \(1\) & \(0\) & \(0\) & \(0\) & \(0\) & \(0\) & \(0\) & \(-2\) & \(1\) \\
    \(\widetilde{H_{\mathcal{S}}}\) & \(0\) & \(0\) & \(0\) & \(0\) & \(0\) & \(0\) & \(0\) & \(0\) & \(0\) & \(0\) & \(0\) & \(0\) & \(0\) & \(1\) & \(1\) & \(1\) & \(1\) & \(4\) \\
    \hline
  \end{tabular}.
\end{table}
Note that the intersection matrix is non-degenerate.

Discriminant groups and discriminant forms of the lattices \(L_{\mathcal{S}}\) and \(H \oplus \Pic(X)\) are given by
\begin{gather*}
  G' = 
  \begin{pmatrix}
    0 & 0 & 0 & 0 & 0 & 0 & \frac{1}{2} & \frac{1}{2} & \frac{1}{2} & 0 & 0 & 0 & 0 & 0 & 0 & 0 & 0 & \frac{1}{2} \\
    \frac{1}{8} & \frac{1}{2} & \frac{7}{8} & \frac{5}{8} & \frac{1}{2} & \frac{3}{8} & \frac{1}{4} & \frac{1}{8} & \frac{3}{8} & 0 & \frac{1}{2} & \frac{1}{2} & \frac{1}{4} & \frac{1}{4} & \frac{3}{4} & \frac{3}{4} & \frac{3}{4} & \frac{1}{8}
  \end{pmatrix}, \\
  G'' = 
  \begin{pmatrix}
    0 & 0 & 0 & \frac{1}{2} \\
    0 & 0 & \frac{7}{8} & \frac{1}{4}
  \end{pmatrix}; \;
  B' =
  \begin{pmatrix}
    \frac{1}{2} & 0 \\
    0 & \frac{1}{8}
  \end{pmatrix}, \;
  B'' = 
  \begin{pmatrix}
    \frac{1}{2} & 0 \\
    0 & \frac{7}{8}
  \end{pmatrix}; \;
  \begin{pmatrix}
    Q' \\ Q''
  \end{pmatrix}
  =
  \begin{pmatrix}
    \frac{3}{2} & \frac{1}{8} \\
    \frac{1}{2} & \frac{15}{8}
  \end{pmatrix}.
\end{gather*}


\subsection{Family \textnumero2.19}\label{subsection:02-19}

The pencil \(\mathcal{S}\) is defined by the equation
\begin{gather*}
  X^{2} Y Z + X Y^{2} Z + X Y Z^{2} + Y^{2} Z^{2} + X^{2} Y T + X Y^{2} T + \\ X^{2} Z T + Y^{2} Z T + X Z^{2} T + X^{2} T^{2} + X Z T^{2} = \lambda X Y Z T.
\end{gather*}
Members \(\mathcal{S}_{\lambda}\) of the pencil are irreducible for any \(\lambda \in \mathbb{P}^1\) except
\begin{gather*}
  \mathcal{S}_{\infty} = S_{(X)} + S_{(Y)} + S_{(Z)} + S_{(T)}, \;
  \mathcal{S}_{- 1} = S_{(X + Z)} + S_{(Z + T)} + S_{(X (Y + T) + Y^2)}.
\end{gather*}
The base locus of the pencil \(\mathcal{S}\) consists of the following curves:
\begin{gather*}
  C_{1} = C_{(X, Y)}, \;
  C_{2} = C_{(X, Z)}, \;
  C_{3} = C_{(Y, T)}, \;
  C_{4} = C_{(Z, T)}, \;
  C_{5} = C_{(X, Z + T)}, \\
  C_{6} = C_{(Y, X + Z)}, \;
  C_{7} = C_{(Y, Z + T)}, \;
  C_{8} = C_{(T, X + Y)}, \;
  C_{9} = C_{(T, X + Z)}, \;
  C_{10} = C_{(Z, X (Y + T) + Y^2)}.
\end{gather*}
Their linear equivalence classes on the generic member \(\mathcal{S}_{\Bbbk}\) of the pencil satisfy the following relations:
\begin{gather*}
  \begin{pmatrix}
    [C_{1}] \\ [C_{2}] \\ [C_{7}] \\ [C_{8}] \\ [C_{9}] \\ [C_{10}]
  \end{pmatrix} = 
  \begin{pmatrix}
    -1 & 2 & 1 & -1 & 0 \\
    2 & -4 & -3 & 2 & 1 \\
    0 & -2 & -1 & 0 & 1 \\
    3 & -9 & -6 & 5 & 2 \\
    -4 & 8 & 6 & -5 & -1 \\
    -2 & 3 & 3 & -2 & 0
  \end{pmatrix} \cdot
  \begin{pmatrix}
    [C_{3}] \\ [C_{4}] \\ [C_{5}] \\ [C_{6}] \\ [H_{\mathcal{S}}]
  \end{pmatrix}.
\end{gather*}

For a general choice of \(\lambda \in \mathbb{C}\) the surface \(\mathcal{S}_{\lambda}\) has the following singularities:
\begin{itemize}\setlength{\itemindent}{2cm}
\item[\(P_{1} = P_{(X, Y, Z)}\):] type \(\mathbb{A}_4\) with the quadratic term \(X \cdot (X + Z)\);
\item[\(P_{2} = P_{(X, Y, T)}\):] type \(\mathbb{A}_1\) with the quadratic term \(X (Y + T) + Y^2\);
\item[\(P_{3} = P_{(X, Z, T)}\):] type \(\mathbb{A}_2\) with the quadratic term \((X + Z) \cdot (Z + T)\);
\item[\(P_{4} = P_{(Y, Z, T)}\):] type \(\mathbb{A}_2\) with the quadratic term \((Y + T) \cdot (Z + T)\);
\item[\(P_{5} = P_{(X, Y, Z + T)}\):] type \(\mathbb{A}_2\) with the quadratic term \(X \cdot ((\lambda + 1) Y - Z - T)\);
\item[\(P_{6} = P_{(Y, T, X + Z)}\):] type \(\mathbb{A}_1\) with the quadratic term \((X + Z) (Y + T) - (\lambda + 1) Y T\);
\item[\(P_{7} = P_{(Z, T, X + Y)}\):] type \(\mathbb{A}_1\) with the quadratic term \((Z + T) (X + Y - T) - (\lambda + 1) Z T\).
\end{itemize}

Galois action on the lattice \(L_{\lambda}\) is trivial. The intersection matrix on \(L_{\lambda} = L_{\mathcal{S}}\) is represented by
\begin{table}[H]
  \begin{tabular}{|c||cccc|c|cc|cc|cc|c|c|ccccc|}
    \hline
    \(\bullet\) & \(E_1^1\) & \(E_1^2\) & \(E_1^3\) & \(E_1^4\) & \(E_2^1\) & \(E_3^1\) & \(E_3^2\) & \(E_4^1\) & \(E_4^2\) & \(E_5^1\) & \(E_5^2\) & \(E_6^1\) & \(E_7^1\) & \(\widetilde{C_{3}}\) & \(\widetilde{C_{4}}\) & \(\widetilde{C_{5}}\) & \(\widetilde{C_{6}}\) & \(\widetilde{H_{\mathcal{S}}}\) \\
    \hline
    \hline
    \(\widetilde{C_{3}}\) & \(0\) & \(0\) & \(0\) & \(0\) & \(1\) & \(0\) & \(0\) & \(1\) & \(0\) & \(0\) & \(0\) & \(1\) & \(0\) & \(-2\) & \(0\) & \(0\) & \(0\) & \(1\) \\
    \(\widetilde{C_{4}}\) & \(0\) & \(0\) & \(0\) & \(0\) & \(0\) & \(0\) & \(1\) & \(0\) & \(1\) & \(0\) & \(0\) & \(0\) & \(1\) & \(0\) & \(-2\) & \(0\) & \(0\) & \(1\) \\
    \(\widetilde{C_{5}}\) & \(0\) & \(0\) & \(0\) & \(0\) & \(0\) & \(0\) & \(1\) & \(0\) & \(0\) & \(1\) & \(0\) & \(0\) & \(0\) & \(0\) & \(0\) & \(-2\) & \(0\) & \(1\) \\
    \(\widetilde{C_{6}}\) & \(0\) & \(0\) & \(0\) & \(1\) & \(0\) & \(0\) & \(0\) & \(0\) & \(0\) & \(0\) & \(0\) & \(1\) & \(0\) & \(0\) & \(0\) & \(0\) & \(-2\) & \(1\) \\
    \(\widetilde{H_{\mathcal{S}}}\) & \(0\) & \(0\) & \(0\) & \(0\) & \(0\) & \(0\) & \(0\) & \(0\) & \(0\) & \(0\) & \(0\) & \(0\) & \(0\) & \(1\) & \(1\) & \(1\) & \(1\) & \(4\) \\
    \hline
  \end{tabular}.
\end{table}
Note that the intersection matrix is non-degenerate.

Discriminant groups and discriminant forms of the lattices \(L_{\mathcal{S}}\) and \(H \oplus \Pic(X)\) are given by
\begin{gather*}
  G' = 
  \begin{pmatrix}
    \frac{13}{17} & \frac{9}{17} & \frac{5}{17} & \frac{1}{17} & \frac{15}{17} & \frac{7}{17} & \frac{14}{17} & \frac{1}{17} & \frac{6}{17} & \frac{1}{17} & \frac{9}{17} & \frac{5}{17} & \frac{14}{17} & \frac{13}{17} & \frac{11}{17} & \frac{10}{17} & \frac{14}{17} & \frac{5}{17}
  \end{pmatrix}, \\
  G'' = 
  \begin{pmatrix}
    0 & 0 & \frac{9}{17} & \frac{1}{17}
  \end{pmatrix}; \;
  B' = 
  \begin{pmatrix}
    \frac{8}{17}
  \end{pmatrix}, \;
  B'' = 
  \begin{pmatrix}
    \frac{9}{17}
  \end{pmatrix}; \;
  Q' =
  \begin{pmatrix}
    \frac{8}{17}
  \end{pmatrix}, \;
  Q'' =
  \begin{pmatrix}
    \frac{26}{17}
  \end{pmatrix}.
\end{gather*}


\subsection{Family \textnumero2.20}\label{subsection:02-20}

The pencil \(\mathcal{S}\) is defined by the equation
\begin{gather*}
  X^{2} Y Z + X Y^{2} Z + X^{2} Z^{2} + X Y Z^{2} + X Y^{2} T + Y^{2} Z T + \\ X Y T^{2} + Y^{2} T^{2} + X Z T^{2} + Y Z T^{2} + Y T^{3} = \lambda X Y Z T.
\end{gather*}
Members \(\mathcal{S}_{\lambda}\) of the pencil are irreducible for any \(\lambda \in \mathbb{P}^1\) except
\(\mathcal{S}_{\infty} = S_{(X)} + S_{(Y)} + S_{(Z)} + S_{(T)}\).
The base locus of the pencil \(\mathcal{S}\) consists of the following curves:
\begin{gather*}
  C_{1} = C_{(X, Y)}, \;
  C_{2} = C_{(X, T)}, \;
  C_{3} = C_{(Y, Z)}, \;
  C_{4} = C_{(Z, T)}, \;
  C_{5} = C_{(X, Y + T)}, \;
  C_{6} = C_{(X, Z + T)}, \\
  C_{7} = C_{(Z, X + T)}, \;
  C_{8} = C_{(Z, Y + T)}, \;
  C_{9} = C_{(T, X + Y)}, \;
  C_{10} = C_{(T, Y + Z)}, \;
  C_{11} = C_{(Y, X Z + T^2)}.
\end{gather*}
Their linear equivalence classes on the generic member \(\mathcal{S}_{\Bbbk}\) of the pencil satisfy the following relations:
\begin{gather*}
  \begin{pmatrix}
    [C_{2}] \\ [C_{4}] \\ [C_{9}] \\ [C_{11}]
  \end{pmatrix} = 
  \begin{pmatrix}
    -1 & 0 & -1 & -1 & 0 & 0 & 0 & 1 \\
    0 & -1 & 0 & 0 & -1 & -1 & 0 & 1 \\
    1 & 1 & 1 & 1 & 1 & 1 & -1 & -1 \\
    -1 & -1 & 0 & 0 & 0 & 0 & 0 & 1
  \end{pmatrix} \cdot \\
  \begin{pmatrix}
    [C_{1}] & [C_{3}] & [C_{5}] & [C_{6}] & [C_{7}] & [C_{8}] & [C_{10}] & [H_{\mathcal{S}}]
  \end{pmatrix}^T.
\end{gather*}

For a general choice of \(\lambda \in \mathbb{C}\) the surface \(\mathcal{S}_{\lambda}\) has the following singularities:
\begin{itemize}\setlength{\itemindent}{2cm}
\item[\(P_{1} = P_{(X, Y, T)}\):] type \(\mathbb{A}_4\) with the quadratic term \(X \cdot (X + Y)\);
\item[\(P_{2} = P_{(X, Z, T)}\):] type \(\mathbb{A}_2\) with the quadratic term \((X + T) \cdot (Z + T)\);
\item[\(P_{3} = P_{(Y, Z, T)}\):] type \(\mathbb{A}_4\) with the quadratic term \(Z \cdot (Y + Z)\).
\end{itemize}

Galois action on the lattice \(L_{\lambda}\) is trivial. The intersection matrix on \(L_{\lambda} = L_{\mathcal{S}}\) is represented by
\begin{table}[H]
  \begin{tabular}{|c||cccc|cc|cccc|cccccccc|}
    \hline
    \(\bullet\) & \(E_1^1\) & \(E_1^2\) & \(E_1^3\) & \(E_1^4\) & \(E_2^1\) & \(E_2^2\) & \(E_3^1\) & \(E_3^2\) & \(E_3^3\) & \(E_3^4\) & \(\widetilde{C_{1}}\) & \(\widetilde{C_{3}}\) & \(\widetilde{C_{5}}\) & \(\widetilde{C_{6}}\) & \(\widetilde{C_{7}}\) & \(\widetilde{C_{8}}\) & \(\widetilde{C_{10}}\) & \(\widetilde{H_{\mathcal{S}}}\) \\
    \hline
    \hline
    \(\widetilde{C_{1}}\) & \(0\) & \(0\) & \(1\) & \(0\) & \(0\) & \(0\) & \(0\) & \(0\) & \(0\) & \(0\) & \(-2\) & \(1\) & \(0\) & \(1\) & \(0\) & \(0\) & \(0\) & \(1\) \\
    \(\widetilde{C_{3}}\) & \(0\) & \(0\) & \(0\) & \(0\) & \(0\) & \(0\) & \(0\) & \(0\) & \(1\) & \(0\) & \(1\) & \(-2\) & \(0\) & \(0\) & \(1\) & \(0\) & \(0\) & \(1\) \\
    \(\widetilde{C_{5}}\) & \(1\) & \(0\) & \(0\) & \(0\) & \(0\) & \(0\) & \(0\) & \(0\) & \(0\) & \(0\) & \(0\) & \(0\) & \(-2\) & \(1\) & \(0\) & \(1\) & \(0\) & \(1\) \\
    \(\widetilde{C_{6}}\) & \(0\) & \(0\) & \(0\) & \(0\) & \(0\) & \(1\) & \(0\) & \(0\) & \(0\) & \(0\) & \(1\) & \(0\) & \(1\) & \(-2\) & \(0\) & \(0\) & \(0\) & \(1\) \\
    \(\widetilde{C_{7}}\) & \(0\) & \(0\) & \(0\) & \(0\) & \(1\) & \(0\) & \(0\) & \(0\) & \(0\) & \(0\) & \(0\) & \(1\) & \(0\) & \(0\) & \(-2\) & \(1\) & \(0\) & \(1\) \\
    \(\widetilde{C_{8}}\) & \(0\) & \(0\) & \(0\) & \(0\) & \(0\) & \(0\) & \(1\) & \(0\) & \(0\) & \(0\) & \(0\) & \(0\) & \(1\) & \(0\) & \(1\) & \(-2\) & \(0\) & \(1\) \\
    \(\widetilde{C_{10}}\) & \(0\) & \(0\) & \(0\) & \(0\) & \(0\) & \(0\) & \(0\) & \(0\) & \(0\) & \(1\) & \(0\) & \(0\) & \(0\) & \(0\) & \(0\) & \(0\) & \(-2\) & \(1\) \\
    \(\widetilde{H_{\mathcal{S}}}\) & \(0\) & \(0\) & \(0\) & \(0\) & \(0\) & \(0\) & \(0\) & \(0\) & \(0\) & \(0\) & \(1\) & \(1\) & \(1\) & \(1\) & \(1\) & \(1\) & \(1\) & \(4\) \\
    \hline
  \end{tabular}.
\end{table}
Note that the intersection matrix is non-degenerate.

Discriminant groups and discriminant forms of the lattices \(L_{\mathcal{S}}\) and \(H \oplus \Pic(X)\) are given by
\begin{gather*}
  G' = 
  \begin{pmatrix}
    \frac{16}{29} & \frac{7}{29} & \frac{27}{29} & \frac{28}{29} & \frac{12}{29} & \frac{26}{29} & \frac{22}{29} & \frac{2}{29} & \frac{11}{29} & \frac{1}{29} & \frac{19}{29} & \frac{19}{29} & \frac{25}{29} & \frac{11}{29} & \frac{27}{29} & \frac{13}{29} & \frac{20}{29} & \frac{10}{29}
  \end{pmatrix}, \\
  G'' = 
  \begin{pmatrix}
    0 & 0 & \frac{16}{29} & \frac{1}{29}
  \end{pmatrix}; \;
  B' = 
  \begin{pmatrix}
    \frac{14}{29}
  \end{pmatrix}, \;
  B'' = 
  \begin{pmatrix}
    \frac{15}{29}
  \end{pmatrix}; \;
  Q' =
  \begin{pmatrix}
    \frac{14}{29}
  \end{pmatrix}, \;
  Q'' =
  \begin{pmatrix}
    \frac{44}{29}
  \end{pmatrix}.
\end{gather*}


\subsection{Family \textnumero2.21}\label{subsection:02-21}

The pencil \(\mathcal{S}\) is defined by the equation
\begin{gather*}
  X^{2} Y Z + X Y^{2} Z + X Y Z^{2} + X^{2} Y T + X Y^{2} T + X^{2} Z T + Y^{2} Z T + X Z^{2} T + X Y T^{2} + Y Z T^{2} = \lambda X Y Z T.
\end{gather*}
Members \(\mathcal{S}_{\lambda}\) of the pencil are irreducible for any \(\lambda \in \mathbb{P}^1\) except
\(\mathcal{S}_{\infty} = S_{(X)} + S_{(Y)} + S_{(Z)} + S_{(T)}\).
The base locus of the pencil \(\mathcal{S}\) consists of the following curves:
\begin{gather*}
  C_{1} = C_{(X, Y)}, \;
  C_{2} = C_{(X, Z)}, \;
  C_{3} = C_{(X, T)}, \;
  C_{4} = C_{(Y, Z)}, \;
  C_{5} = C_{(Y, T)}, \\
  C_{6} = C_{(Z, T)}, \;
  C_{7} = C_{(X, Y + T)}, \;
  C_{8} = C_{(Y, X + Z)}, \;
  C_{9} = C_{(Z, X + Y + T)}, \;
  C_{10} = C_{(T, X + Y + Z)}.
\end{gather*}
Their linear equivalence classes on the generic member \(\mathcal{S}_{\Bbbk}\) of the pencil satisfy the following relations:
\begin{gather*}
  \begin{pmatrix}
    [C_{2}] \\ [C_{5}] \\ [C_{7}] \\ [C_{8}]
  \end{pmatrix} = 
  \begin{pmatrix}
    0 & 0 & -1 & -1 & -1 & 0 & 1 \\
    0 & -1 & 0 & -1 & 0 & -1 & 1 \\
    -1 & -1 & 1 & 1 & 1 & 0 & 0 \\
    -1 & 1 & -1 & 1 & 0 & 1 & 0
  \end{pmatrix} \cdot
  \begin{pmatrix}
    [C_{1}] & [C_{3}] & [C_{4}] & [C_{6}] & [C_{9}] & [C_{10}] & [H_{\mathcal{S}}]
  \end{pmatrix}^T.
\end{gather*}

For a general choice of \(\lambda \in \mathbb{C}\) the surface \(\mathcal{S}_{\lambda}\) has the following singularities:
\begin{itemize}\setlength{\itemindent}{2cm}
\item[\(P_{1} = P_{(X, Y, Z)}\):] type \(\mathbb{A}_3\) with the quadratic term \(Y \cdot (X + Z)\);
\item[\(P_{2} = P_{(X, Y, T)}\):] type \(\mathbb{A}_3\) with the quadratic term \(X \cdot (Y + T)\);
\item[\(P_{3} = P_{(X, Z, T)}\):] type \(\mathbb{A}_1\) with the quadratic term \(X (Z + T) + Z T\);
\item[\(P_{4} = P_{(Y, Z, T)}\):] type \(\mathbb{A}_1\) with the quadratic term \(Y (Z + T) + Z T\);
\item[\(P_{5} = P_{(X, Z, Y + T)}\):] type \(\mathbb{A}_1\) with the quadratic term \((X + Z) (X + Y + T) - (\lambda + 2) X Z\);
\item[\(P_{6} = P_{(Y, T, X + Z)}\):] type \(\mathbb{A}_1\) with the quadratic term \((Y + T) (X + Y + Z) - (\lambda + 2) Y T\);
\item[\(P_{7} = P_{(Z, T, X + Y)}\):] type \(\mathbb{A}_1\) with the quadratic term \((Z + T) (X + Y + Z + T) - (\lambda + 4) Z T\).
\end{itemize}

Galois action on the lattice \(L_{\lambda}\) is trivial. The intersection matrix on \(L_{\lambda} = L_{\mathcal{S}}\) is represented by
\begin{table}[H]
  \begin{tabular}{|c||ccc|ccc|c|c|c|c|c|ccccccc|}
    \hline
    \(\bullet\) & \(E_1^1\) & \(E_1^2\) & \(E_1^3\) & \(E_2^1\) & \(E_2^2\) & \(E_2^3\) & \(E_3^1\) & \(E_4^1\) & \(E_5^1\) & \(E_6^1\) & \(E_7^1\) & \(\widetilde{C_{1}}\) & \(\widetilde{C_{3}}\) & \(\widetilde{C_{4}}\) & \(\widetilde{C_{6}}\) & \(\widetilde{C_{9}}\) & \(\widetilde{C_{10}}\) & \(\widetilde{H_{\mathcal{S}}}\) \\
    \hline
    \hline
    \(\widetilde{C_{1}}\) & \(1\) & \(0\) & \(0\) & \(1\) & \(0\) & \(0\) & \(0\) & \(0\) & \(0\) & \(0\) & \(0\) & \(-2\) & \(0\) & \(0\) & \(0\) & \(0\) & \(0\) & \(1\) \\
    \(\widetilde{C_{3}}\) & \(0\) & \(0\) & \(0\) & \(1\) & \(0\) & \(0\) & \(1\) & \(0\) & \(0\) & \(0\) & \(0\) & \(0\) & \(-2\) & \(0\) & \(0\) & \(0\) & \(1\) & \(1\) \\
    \(\widetilde{C_{4}}\) & \(1\) & \(0\) & \(0\) & \(0\) & \(0\) & \(0\) & \(0\) & \(1\) & \(0\) & \(0\) & \(0\) & \(0\) & \(0\) & \(-2\) & \(0\) & \(1\) & \(0\) & \(1\) \\
    \(\widetilde{C_{6}}\) & \(0\) & \(0\) & \(0\) & \(0\) & \(0\) & \(0\) & \(1\) & \(1\) & \(0\) & \(0\) & \(1\) & \(0\) & \(0\) & \(0\) & \(-2\) & \(0\) & \(0\) & \(1\) \\
    \(\widetilde{C_{9}}\) & \(0\) & \(0\) & \(0\) & \(0\) & \(0\) & \(0\) & \(0\) & \(0\) & \(1\) & \(0\) & \(1\) & \(0\) & \(0\) & \(1\) & \(0\) & \(-2\) & \(0\) & \(1\) \\
    \(\widetilde{C_{10}}\) & \(0\) & \(0\) & \(0\) & \(0\) & \(0\) & \(0\) & \(0\) & \(0\) & \(0\) & \(1\) & \(1\) & \(0\) & \(1\) & \(0\) & \(0\) & \(0\) & \(-2\) & \(1\) \\
    \(\widetilde{H_{\mathcal{S}}}\) & \(0\) & \(0\) & \(0\) & \(0\) & \(0\) & \(0\) & \(0\) & \(0\) & \(0\) & \(0\) & \(0\) & \(1\) & \(1\) & \(1\) & \(1\) & \(1\) & \(1\) & \(4\) \\
    \hline
  \end{tabular}.
\end{table}
Note that the intersection matrix is non-degenerate.

Discriminant groups and discriminant forms of the lattices \(L_{\mathcal{S}}\) and \(H \oplus \Pic(X)\) are given by
\begin{gather*}
  G' = 
  \begin{pmatrix}
    0 & 0 & 0 & \frac{1}{2} & 0 & \frac{1}{2} & 0 & \frac{1}{2} & \frac{1}{2} & \frac{1}{2} & 0 & 0 & 0 & 0 & 0 & 0 & 0 & \frac{1}{2} \\
    \frac{1}{2} & 0 & \frac{1}{2} & \frac{1}{2} & 0 & \frac{1}{2} & \frac{1}{2} & \frac{1}{2} & \frac{4}{7} & \frac{4}{7} & 0 & \frac{5}{7} & \frac{2}{7} & \frac{2}{7} & \frac{5}{7} & \frac{1}{7} & \frac{1}{7} & \frac{3}{7}
  \end{pmatrix}, \\
  G'' = 
  \begin{pmatrix}
    0 & 0 & \frac{1}{2} & 0 \\
    0 & 0 & \frac{9}{14} & \frac{1}{14}
  \end{pmatrix}; \;
  B' = 
  \begin{pmatrix}
    \frac{1}{2} & \frac{1}{2} \\
    \frac{1}{2} & \frac{3}{7}
  \end{pmatrix}, \;
  B'' =
  \begin{pmatrix}
    \frac{1}{2} & \frac{1}{2} \\
    \frac{1}{2} & \frac{4}{7}
  \end{pmatrix}; \;
  \begin{pmatrix}
    Q' \\ Q''
  \end{pmatrix}
  =
  \begin{pmatrix}
    \frac{1}{2} & \frac{3}{7} \\
    \frac{3}{2} & \frac{11}{7}
  \end{pmatrix}.
\end{gather*}


\subsection{Family \textnumero2.22}\label{subsection:02-22}

The pencil \(\mathcal{S}\) is defined by the equation
\begin{gather*}
  X^{2} Y Z + X Y^{2} Z + X^{2} Z^{2} + X Y Z^{2} + Y^{2} Z T + X Y T^{2} + X Z T^{2} + Y Z T^{2} + Y T^{3} = \lambda X Y Z T.
\end{gather*}
Members \(\mathcal{S}_{\lambda}\) of the pencil are irreducible for any \(\lambda \in \mathbb{P}^1\) except
\(\mathcal{S}_{\infty} = S_{(X)} + S_{(Y)} + S_{(Z)} + S_{(T)}\).
The base locus of the pencil \(\mathcal{S}\) consists of the following curves:
\begin{gather*}
  C_{1} = C_{(X, Y)}, \;
  C_{2} = C_{(X, T)}, \;
  C_{3} = C_{(Y, Z)}, \;
  C_{4} = C_{(Z, T)}, \;
  C_{5} = C_{(Z, X + T)}, \\
  C_{6} = C_{(T, X + Y)}, \;
  C_{7} = C_{(T, Y + Z)}, \;
  C_{8} = C_{(X, Z (Y + T) + T^2)}, \;
  C_{9} = C_{(Y, X Z + T^2)}.
\end{gather*}
Their linear equivalence classes on the generic member \(\mathcal{S}_{\Bbbk}\) of the pencil satisfy the following relations:
\begin{gather*}
  \begin{pmatrix}
    [C_{2}] \\ [C_{3}] \\ [C_{8}] \\ [C_{9}]
  \end{pmatrix} = 
  \begin{pmatrix}
    0 & -1 & 0 & -1 & -1 & 1 \\
    0 & -2 & -1 & 0 & 0 & 1 \\
    -1 & 1 & 0 & 1 & 1 & 0 \\
    -1 & 2 & 1 & 0 & 0 & 0
  \end{pmatrix} \cdot
  \begin{pmatrix}
    [C_{1}] & [C_{4}] & [C_{5}] & [C_{6}] & [C_{7}] & [H_{\mathcal{S}}]
  \end{pmatrix}^T.
\end{gather*}

For a general choice of \(\lambda \in \mathbb{C}\) the surface \(\mathcal{S}_{\lambda}\) has the following singularities:
\begin{itemize}\setlength{\itemindent}{2cm}
\item[\(P_{1} = P_{(X, Y, T)}\):] type \(\mathbb{A}_4\) with the quadratic term \(X \cdot (X + Y)\);
\item[\(P_{2} = P_{(X, Z, T)}\):] type \(\mathbb{A}_3\) with the quadratic term \(Z \cdot (X + T)\);
\item[\(P_{3} = P_{(Y, Z, T)}\):] type \(\mathbb{A}_4\) with the quadratic term \(Z \cdot (Y + Z)\);
\item[\(P_{4} = P_{(Z, T, X + Y)}\):] type \(\mathbb{A}_1\) with the quadratic term \(Z (X + Y - (\lambda + 1) T) + T^2\).
\end{itemize}

Galois action on the lattice \(L_{\lambda}\) is trivial. The intersection matrix on \(L_{\lambda} = L_{\mathcal{S}}\) is represented by
\begin{table}[H]
  \begin{tabular}{|c||cccc|ccc|cccc|c|cccccc|}
    \hline
    \(\bullet\) & \(E_1^1\) & \(E_1^2\) & \(E_1^3\) & \(E_1^4\) & \(E_2^1\) & \(E_2^2\) & \(E_2^3\) & \(E_3^1\) & \(E_3^2\) & \(E_3^3\) & \(E_3^4\) & \(E_4^1\) & \(\widetilde{C_{1}}\) & \(\widetilde{C_{4}}\) & \(\widetilde{C_{5}}\) & \(\widetilde{C_{6}}\) & \(\widetilde{C_{7}}\) & \(\widetilde{H_{\mathcal{S}}}\) \\
    \hline
    \hline
    \(\widetilde{C_{1}}\) & \(0\) & \(0\) & \(1\) & \(0\) & \(0\) & \(0\) & \(0\) & \(0\) & \(0\) & \(0\) & \(0\) & \(0\) & \(-2\) & \(0\) & \(0\) & \(0\) & \(0\) & \(1\) \\
    \(\widetilde{C_{4}}\) & \(0\) & \(0\) & \(0\) & \(0\) & \(1\) & \(0\) & \(0\) & \(1\) & \(0\) & \(0\) & \(0\) & \(1\) & \(0\) & \(-2\) & \(0\) & \(0\) & \(0\) & \(1\) \\
    \(\widetilde{C_{5}}\) & \(0\) & \(0\) & \(0\) & \(0\) & \(0\) & \(1\) & \(0\) & \(0\) & \(0\) & \(0\) & \(0\) & \(0\) & \(0\) & \(0\) & \(-2\) & \(0\) & \(0\) & \(1\) \\
    \(\widetilde{C_{6}}\) & \(0\) & \(0\) & \(0\) & \(1\) & \(0\) & \(0\) & \(0\) & \(0\) & \(0\) & \(0\) & \(0\) & \(1\) & \(0\) & \(0\) & \(0\) & \(-2\) & \(1\) & \(1\) \\
    \(\widetilde{C_{7}}\) & \(0\) & \(0\) & \(0\) & \(0\) & \(0\) & \(0\) & \(0\) & \(0\) & \(0\) & \(0\) & \(1\) & \(0\) & \(0\) & \(0\) & \(0\) & \(1\) & \(-2\) & \(1\) \\
    \(\widetilde{H_{\mathcal{S}}}\) & \(0\) & \(0\) & \(0\) & \(0\) & \(0\) & \(0\) & \(0\) & \(0\) & \(0\) & \(0\) & \(0\) & \(0\) & \(1\) & \(1\) & \(1\) & \(1\) & \(1\) & \(4\) \\
    \hline
  \end{tabular}.
\end{table}
Note that the intersection matrix is non-degenerate.

Discriminant groups and discriminant forms of the lattices \(L_{\mathcal{S}}\) and \(H \oplus \Pic(X)\) are given by
\begin{gather*}
  G' = 
  \begin{pmatrix}
    0 & 0 & 0 & \frac{1}{2} & \frac{1}{2} & 0 & \frac{1}{2} & \frac{1}{2} & 0 & \frac{1}{2} & 0 & 0 & \frac{1}{2} & 0 & 0 & 0 & \frac{1}{2} & 0 \\
    \frac{2}{3} & \frac{1}{3} & 0 & \frac{2}{3} & \frac{1}{3} & \frac{5}{6} & \frac{11}{12} & \frac{3}{4} & \frac{2}{3} & \frac{7}{12} & \frac{1}{2} & \frac{7}{12} & 0 & \frac{5}{6} & \frac{5}{12} & \frac{1}{3} & \frac{5}{12} & 0
  \end{pmatrix}, \\
  G'' = 
  \begin{pmatrix}
    0 & 0 & \frac{1}{2} & 0 \\
    0 & 0 & \frac{7}{12} & \frac{1}{12}
  \end{pmatrix}; \;
  B' = 
  \begin{pmatrix}
    \frac{1}{2} & \frac{1}{2} \\
    \frac{1}{2} & \frac{5}{12}
  \end{pmatrix}, \;
  B'' =
  \begin{pmatrix}
    \frac{1}{2} & \frac{1}{2} \\
    \frac{1}{2} & \frac{7}{12}
  \end{pmatrix}; \;
  \begin{pmatrix}
    Q' \\ Q''
  \end{pmatrix}
  =
  \begin{pmatrix}
    \frac{1}{2} & \frac{5}{12} \\
    \frac{3}{2} & \frac{19}{12}
  \end{pmatrix}.
\end{gather*}


\subsection{Family \textnumero2.23}\label{subsection:02-23}

The pencil \(\mathcal{S}\) is defined by the equation
\begin{gather*}
  X^{2} Y Z + X Y^{2} Z + X Y Z^{2} + X^{2} Y T + X Y^{2} T + X Z^{2} T + Y Z^{2} T + X Z T^{2} + Y Z T^{2} = \lambda X Y Z T.
\end{gather*}
Members \(\mathcal{S}_{\lambda}\) of the pencil are irreducible for any \(\lambda \in \mathbb{P}^1\) except
\begin{gather*}
  \mathcal{S}_{\infty} = S_{(X)} + S_{(Y)} + S_{(Z)} + S_{(T)}, \;
  \mathcal{S}_{- 1} = S_{(Z + T)} + S_{((X + Y) (X Y + Z T) + X Y Z)}.
\end{gather*}
The base locus of the pencil \(\mathcal{S}\) consists of the following curves:
\begin{gather*}
  C_{1} = C_{(X, Y)}, \;
  C_{2} = C_{(X, Z)}, \;
  C_{3} = C_{(X, T)}, \;
  C_{4} = C_{(Y, Z)}, \;
  C_{5} = C_{(Y, T)}, \\
  C_{6} = C_{(Z, T)}, \;
  C_{7} = C_{(X, Z + T)}, \;
  C_{8} = C_{(Y, Z + T)}, \;
  C_{9} = C_{(Z, X + Y)}, \;
  C_{10} = C_{(T, X + Y + Z)}.
\end{gather*}
Their linear equivalence classes on the generic member \(\mathcal{S}_{\Bbbk}\) of the pencil satisfy the following relations:
\begin{gather*}
  \begin{pmatrix}
    [C_{1}] \\ [C_{4}] \\ [C_{6}] \\ [C_{7}] \\ [C_{10}]
  \end{pmatrix} = 
  \begin{pmatrix}
    3 & 1 & -2 & -3 & 2 & 0 \\
    -3 & -1 & 1 & 2 & -2 & 1 \\
    2 & 1 & -1 & -2 & 1 & 0 \\
    -4 & -2 & 2 & 3 & -2 & 1 \\
    -2 & -2 & 0 & 2 & -1 & 1
  \end{pmatrix} \cdot
  \begin{pmatrix}
    [C_{2}] & [C_{3}] & [C_{5}] & [C_{8}] & [C_{9}] & [H_{\mathcal{S}}]
  \end{pmatrix}^T.
\end{gather*}

For a general choice of \(\lambda \in \mathbb{C}\) the surface \(\mathcal{S}_{\lambda}\) has the following singularities:
\begin{itemize}\setlength{\itemindent}{2cm}
\item[\(P_{1} = P_{(X, Y, Z)}\):] type \(\mathbb{A}_3\) with the quadratic term \(Z \cdot (X + Y)\);
\item[\(P_{2} = P_{(X, Y, T)}\):] type \(\mathbb{A}_1\) with the quadratic term \(X Y + T (X + Y)\);
\item[\(P_{3} = P_{(X, Z, T)}\):] type \(\mathbb{A}_3\) with the quadratic term \(X \cdot (Z + T)\);
\item[\(P_{4} = P_{(Y, Z, T)}\):] type \(\mathbb{A}_3\) with the quadratic term \(Y \cdot (Z + T)\);
\item[\(P_{5} = P_{(X, Y, Z + T)}\):] type \(\mathbb{A}_1\) with the quadratic term \((X + Y) (Z + T) - (\lambda + 1) X Y\);
\item[\(P_{6} = P_{(Z, T, X + Y)}\):] type \(\mathbb{A}_1\) with the quadratic term \((Z + T) (X + Y + Z) - (\lambda + 1) Z T\).
\end{itemize}

Galois action on the lattice \(L_{\lambda}\) is trivial. The intersection matrix on \(L_{\lambda} = L_{\mathcal{S}}\) is represented by
\begin{table}[H]
  \begin{tabular}{|c||ccc|c|ccc|ccc|c|c|cccccc|}
    \hline
    \(\bullet\) & \(E_1^1\) & \(E_1^2\) & \(E_1^3\) & \(E_2^1\) & \(E_3^1\) & \(E_3^2\) & \(E_3^3\) & \(E_4^1\) & \(E_4^2\) & \(E_4^3\) & \(E_5^1\) & \(E_6^1\) & \(\widetilde{C_{2}}\) & \(\widetilde{C_{3}}\) & \(\widetilde{C_{5}}\) & \(\widetilde{C_{8}}\) & \(\widetilde{C_{9}}\) & \(\widetilde{H_{\mathcal{S}}}\) \\
    \hline
    \hline
    \(\widetilde{C_{2}}\) & \(1\) & \(0\) & \(0\) & \(0\) & \(1\) & \(0\) & \(0\) & \(0\) & \(0\) & \(0\) & \(0\) & \(0\) & \(-2\) & \(0\) & \(0\) & \(0\) & \(0\) & \(1\) \\
    \(\widetilde{C_{3}}\) & \(0\) & \(0\) & \(0\) & \(1\) & \(1\) & \(0\) & \(0\) & \(0\) & \(0\) & \(0\) & \(0\) & \(0\) & \(0\) & \(-2\) & \(0\) & \(0\) & \(0\) & \(1\) \\
    \(\widetilde{C_{5}}\) & \(0\) & \(0\) & \(0\) & \(1\) & \(0\) & \(0\) & \(0\) & \(1\) & \(0\) & \(0\) & \(0\) & \(0\) & \(0\) & \(0\) & \(-2\) & \(0\) & \(0\) & \(1\) \\
    \(\widetilde{C_{8}}\) & \(0\) & \(0\) & \(0\) & \(0\) & \(0\) & \(0\) & \(0\) & \(0\) & \(1\) & \(0\) & \(1\) & \(0\) & \(0\) & \(0\) & \(0\) & \(-2\) & \(0\) & \(1\) \\
    \(\widetilde{C_{9}}\) & \(0\) & \(1\) & \(0\) & \(0\) & \(0\) & \(0\) & \(0\) & \(0\) & \(0\) & \(0\) & \(0\) & \(1\) & \(0\) & \(0\) & \(0\) & \(0\) & \(-2\) & \(1\) \\
    \(\widetilde{H_{\mathcal{S}}}\) & \(0\) & \(0\) & \(0\) & \(0\) & \(0\) & \(0\) & \(0\) & \(0\) & \(0\) & \(0\) & \(0\) & \(0\) & \(1\) & \(1\) & \(1\) & \(1\) & \(1\) & \(4\) \\
    \hline
  \end{tabular}.
\end{table}
Note that the intersection matrix is non-degenerate.

Discriminant groups and discriminant forms of the lattices \(L_{\mathcal{S}}\) and \(H \oplus \Pic(X)\) are given by
\begin{gather*}
  G' = 
  \begin{pmatrix}
    \frac{1}{2} & 0 & \frac{1}{2} & \frac{1}{2} & 0 & 0 & 0 & 0 & 0 & 0 & \frac{1}{2} & \frac{1}{2} & 0 & 0 & 0 & 0 & 0 & \frac{1}{2} \\
    0 & \frac{1}{2} & \frac{1}{4} & 0 & \frac{3}{8} & \frac{1}{4} & \frac{1}{8} & \frac{3}{8} & \frac{3}{4} & \frac{7}{8} & \frac{1}{8} & \frac{3}{8} & \frac{1}{2} & 0 & 0 & \frac{1}{4} & \frac{3}{4} & \frac{5}{8}
  \end{pmatrix}, \\
  G'' = 
  \begin{pmatrix}
    0 & 0 & 0 & \frac{1}{2} \\
    0 & 0 & \frac{7}{8} & \frac{1}{4}
  \end{pmatrix}; \;
  B' = 
  \begin{pmatrix}
    \frac{1}{2} & \frac{1}{2} \\
    \frac{1}{2} & \frac{7}{8}
  \end{pmatrix}, \;
  B'' =
  \begin{pmatrix}
    \frac{1}{2} & \frac{1}{2} \\
    \frac{1}{2} & \frac{1}{8}
  \end{pmatrix}; \;
  \begin{pmatrix}
    Q' \\ Q''
  \end{pmatrix}
  =
  \begin{pmatrix}
    \frac{1}{2} & \frac{15}{8} \\
    \frac{3}{2} & \frac{1}{8}
  \end{pmatrix}.
\end{gather*}


\subsection{Family \textnumero2.24}\label{subsection:02-24}

The pencil \(\mathcal{S}\) is defined by the equation
\begin{gather*}
  X^{2} Y^{2} + X^{2} Y Z + X Y^{2} Z + X Y Z^{2} + X^{2} Y T + Y^{2} Z T + X Z^{2} T + X Z T^{2} + Y Z T^{2} = \lambda X Y Z T.
\end{gather*}
Members \(\mathcal{S}_{\lambda}\) of the pencil are irreducible for any \(\lambda \in \mathbb{P}^1\) except
\(\mathcal{S}_{\infty} = S_{(X)} + S_{(Y)} + S_{(Z)} + S_{(T)}\).
The base locus of the pencil \(\mathcal{S}\) consists of the following curves:
\begin{gather*}
  C_{1} = C_{(X, Y)}, \;
  C_{2} = C_{(X, Z)}, \;
  C_{3} = C_{(X, T)}, \;
  C_{4} = C_{(Y, Z)}, \;
  C_{5} = C_{(Y, T)}, \\
  C_{6} = C_{(X, Y + T)}, \;
  C_{7} = C_{(Y, Z + T)}, \;
  C_{8} = C_{(Z, Y + T)}, \;
  C_{9} = C_{(T, X + Z)}, \;
  C_{10} = C_{(T, Y + Z)}.
\end{gather*}
Their linear equivalence classes on the generic member \(\mathcal{S}_{\Bbbk}\) of the pencil satisfy the following relations:
\begin{gather*}
  \begin{pmatrix}
    [C_{1}] \\ [C_{4}] \\ [C_{8}] \\ [C_{10}]
  \end{pmatrix} = 
  \begin{pmatrix}
    -1 & -1 & 0 & -1 & 0 & 0 & 1 \\
    1 & 1 & -1 & 1 & -1 & 0 & 0 \\
    -3 & -1 & 1 & -1 & 1 & 0 & 1 \\
    0 & -1 & -1 & 0 & 0 & -1 & 1
  \end{pmatrix} \cdot
  \begin{pmatrix}
    [C_{2}] & [C_{3}] & [C_{5}] & [C_{6}] & [C_{7}] & [C_{9}] & [H_{\mathcal{S}}]
  \end{pmatrix}^T.
\end{gather*}

For a general choice of \(\lambda \in \mathbb{C}\) the surface \(\mathcal{S}_{\lambda}\) has the following singularities:
\begin{itemize}\setlength{\itemindent}{2cm}
\item[\(P_{1} = P_{(X, Y, Z)}\):] type \(\mathbb{A}_2\) with the quadratic term \(Z \cdot (X + Y)\);
\item[\(P_{2} = P_{(X, Y, T)}\):] type \(\mathbb{A}_3\) with the quadratic term \(X \cdot (Y + T)\);
\item[\(P_{3} = P_{(X, Z, T)}\):] type \(\mathbb{A}_1\) with the quadratic term \(X^2 + Z (X + T)\);
\item[\(P_{4} = P_{(Y, Z, T)}\):] type \(\mathbb{A}_3\) with the quadratic term \(Y \cdot (Y + Z + T)\);
\item[\(P_{5} = P_{(X, Z, Y + T)}\):] type \(\mathbb{A}_2\) with the quadratic term \(Z \cdot ((\lambda + 2) X - Y - T)\).
\end{itemize}

Galois action on the lattice \(L_{\lambda}\) is trivial. The intersection matrix on \(L_{\lambda} = L_{\mathcal{S}}\) is represented by
\begin{table}[H]
  \begin{tabular}{|c||cc|ccc|c|ccc|cc|ccccccc|}
    \hline
    \(\bullet\) & \(E_1^1\) & \(E_1^2\) & \(E_2^1\) & \(E_2^2\) & \(E_2^3\) & \(E_3^1\) & \(E_4^1\) & \(E_4^2\) & \(E_4^3\) & \(E_5^1\) & \(E_5^2\) & \(\widetilde{C_{2}}\) & \(\widetilde{C_{3}}\) & \(\widetilde{C_{5}}\) & \(\widetilde{C_{6}}\) & \(\widetilde{C_{7}}\) & \(\widetilde{C_{9}}\) & \(\widetilde{H_{\mathcal{S}}}\) \\
    \hline
    \hline
    \(\widetilde{C_{2}}\) & \(1\) & \(0\) & \(0\) & \(0\) & \(0\) & \(1\) & \(0\) & \(0\) & \(0\) & \(1\) & \(0\) & \(-2\) & \(0\) & \(0\) & \(0\) & \(0\) & \(0\) & \(1\) \\
    \(\widetilde{C_{3}}\) & \(0\) & \(0\) & \(1\) & \(0\) & \(0\) & \(1\) & \(0\) & \(0\) & \(0\) & \(0\) & \(0\) & \(0\) & \(-2\) & \(0\) & \(0\) & \(0\) & \(0\) & \(1\) \\
    \(\widetilde{C_{5}}\) & \(0\) & \(0\) & \(0\) & \(0\) & \(1\) & \(0\) & \(1\) & \(0\) & \(0\) & \(0\) & \(0\) & \(0\) & \(0\) & \(-2\) & \(0\) & \(0\) & \(1\) & \(1\) \\
    \(\widetilde{C_{6}}\) & \(0\) & \(0\) & \(0\) & \(1\) & \(0\) & \(0\) & \(0\) & \(0\) & \(0\) & \(0\) & \(1\) & \(0\) & \(0\) & \(0\) & \(-2\) & \(0\) & \(0\) & \(1\) \\
    \(\widetilde{C_{7}}\) & \(0\) & \(0\) & \(0\) & \(0\) & \(0\) & \(0\) & \(0\) & \(1\) & \(0\) & \(0\) & \(0\) & \(0\) & \(0\) & \(0\) & \(0\) & \(-2\) & \(0\) & \(1\) \\
    \(\widetilde{C_{9}}\) & \(0\) & \(0\) & \(0\) & \(0\) & \(0\) & \(1\) & \(0\) & \(0\) & \(0\) & \(0\) & \(0\) & \(0\) & \(0\) & \(1\) & \(0\) & \(0\) & \(-2\) & \(1\) \\
    \(\widetilde{H_{\mathcal{S}}}\) & \(0\) & \(0\) & \(0\) & \(0\) & \(0\) & \(0\) & \(0\) & \(0\) & \(0\) & \(0\) & \(0\) & \(1\) & \(1\) & \(1\) & \(1\) & \(1\) & \(1\) & \(4\) \\
    \hline
  \end{tabular}.
\end{table}
Note that the intersection matrix is non-degenerate.

Discriminant groups and discriminant forms of the lattices \(L_{\mathcal{S}}\) and \(H \oplus \Pic(X)\) are given by
\begin{gather*}
  G' = 
  \begin{pmatrix}
    \frac{20}{21} & \frac{10}{21} & \frac{16}{21} & \frac{6}{7} & \frac{17}{21} & \frac{8}{21} & \frac{6}{7} & \frac{20}{21} & \frac{10}{21} & \frac{1}{3} & \frac{5}{21} & \frac{3}{7} & \frac{2}{3} & \frac{16}{21} & \frac{1}{7} & \frac{4}{7} & \frac{2}{3} & \frac{4}{21}
  \end{pmatrix}, \\
  G'' = 
  \begin{pmatrix}
    0 & 0 & \frac{8}{21} & \frac{1}{21}
  \end{pmatrix}; \;
  B' = 
  \begin{pmatrix}
    \frac{11}{21}
  \end{pmatrix}, \;
  B'' =
  \begin{pmatrix}
    \frac{10}{21}
  \end{pmatrix}; \;
  Q' =
  \begin{pmatrix}
    \frac{32}{21}
  \end{pmatrix}, \;
  Q'' =
  \begin{pmatrix}
    \frac{10}{21}
  \end{pmatrix}.
\end{gather*}


\subsection{Family \textnumero2.25}\label{subsection:02-25}

The pencil \(\mathcal{S}\) is defined by the equation
\begin{gather*}
  X^{2} Y Z + X Y^{2} Z + X Y Z^{2} + Y^{2} Z^{2} + X^{2} Y T + X Z T^{2} + Y Z T^{2} + X T^{3} = \lambda X Y Z T.
\end{gather*}
Members \(\mathcal{S}_{\lambda}\) of the pencil are irreducible for any \(\lambda \in \mathbb{P}^1\) except
\begin{gather*}
  \mathcal{S}_{\infty} = S_{(X)} + S_{(Y)} + S_{(Z)} + S_{(T)}, \;
  \mathcal{S}_{- 1} = S_{(X (Z + T) + Y Z)} + S_{(Y (X + Z) + T^2)}.
\end{gather*}
The base locus of the pencil \(\mathcal{S}\) consists of the following curves:
\begin{gather*}
  C_{1} = C_{(X, Y)}, \;
  C_{2} = C_{(X, Z)}, \;
  C_{3} = C_{(Y, T)}, \;
  C_{4} = C_{(Z, T)}, \;
  C_{5} = C_{(Y, Z + T)}, \\
  C_{6} = C_{(T, X + Y)}, \;
  C_{7} = C_{(T, X + Z)}, \;
  C_{8} = C_{(X, Y Z + T^2)}, \;
  C_{9} = C_{(Z, X Y + T^2)}.
\end{gather*}
Their linear equivalence classes on the generic member \(\mathcal{S}_{\Bbbk}\) of the pencil satisfy the following relations:
\begin{gather*}
  \begin{pmatrix}
    [C_{1}] \\ [C_{6}] \\ [C_{8}] \\ [C_{9}]
  \end{pmatrix} = 
  \begin{pmatrix}
    0 & -2 & 0 & -1 & 0 & 1 \\
    0 & -1 & -1 & 0 & -1 & 1 \\
    -1 & 2 & 0 & 1 & 0 & 0 \\
    -1 & 0 & -1 & 0 & 0 & 1
  \end{pmatrix} \cdot
  \begin{pmatrix}
    [C_{2}] & [C_{3}] & [C_{4}] & [C_{5}] & [C_{7}] & [H_{\mathcal{S}}]
  \end{pmatrix}^T.
\end{gather*}

For a general choice of \(\lambda \in \mathbb{C}\) the surface \(\mathcal{S}_{\lambda}\) has the following singularities:
\begin{itemize}\setlength{\itemindent}{2cm}
\item[\(P_{1} = P_{(X, Y, T)}\):] type \(\mathbb{A}_4\) with the quadratic term \(Y \cdot (X + Y)\);
\item[\(P_{2} = P_{(X, Z, T)}\):] type \(\mathbb{A}_5\) with the quadratic term \(Z \cdot (X + Z)\);
\item[\(P_{3} = P_{(Y, Z, T)}\):] type \(\mathbb{A}_3\) with the quadratic term \(Y \cdot (Z + T)\);
\item[\(P_{4} = P_{(Y, T, X + Z)}\):] type \(\mathbb{A}_1\) with the quadratic term \(Y (X + Z - (\lambda + 1) T) + T^2\).
\end{itemize}

Galois action on the lattice \(L_{\lambda}\) is trivial. The intersection matrix on \(L_{\lambda} = L_{\mathcal{S}}\) is represented by
\begin{table}[H]
  \begin{tabular}{|c||cccc|ccccc|ccc|c|cccccc|}
    \hline
    \(\bullet\) & \(E_1^1\) & \(E_1^2\) & \(E_1^3\) & \(E_1^4\) & \(E_2^1\) & \(E_2^2\) & \(E_2^3\) & \(E_2^4\) & \(E_2^5\) & \(E_3^1\) & \(E_3^2\) & \(E_3^3\) & \(E_4^1\) & \(\widetilde{C_{2}}\) & \(\widetilde{C_{3}}\) & \(\widetilde{C_{4}}\) & \(\widetilde{C_{5}}\) & \(\widetilde{C_{7}}\) & \(\widetilde{H_{\mathcal{S}}}\) \\
    \hline
    \hline
    \(\widetilde{C_{2}}\) & \(0\) & \(0\) & \(0\) & \(0\) & \(0\) & \(1\) & \(0\) & \(0\) & \(0\) & \(0\) & \(0\) & \(0\) & \(0\) & \(-2\) & \(0\) & \(0\) & \(0\) & \(0\) & \(1\) \\
    \(\widetilde{C_{3}}\) & \(1\) & \(0\) & \(0\) & \(0\) & \(0\) & \(0\) & \(0\) & \(0\) & \(0\) & \(1\) & \(0\) & \(0\) & \(1\) & \(0\) & \(-2\) & \(0\) & \(0\) & \(0\) & \(1\) \\
    \(\widetilde{C_{4}}\) & \(0\) & \(0\) & \(0\) & \(0\) & \(1\) & \(0\) & \(0\) & \(0\) & \(0\) & \(0\) & \(0\) & \(1\) & \(0\) & \(0\) & \(0\) & \(-2\) & \(0\) & \(0\) & \(1\) \\
    \(\widetilde{C_{5}}\) & \(0\) & \(0\) & \(0\) & \(0\) & \(0\) & \(0\) & \(0\) & \(0\) & \(0\) & \(0\) & \(1\) & \(0\) & \(0\) & \(0\) & \(0\) & \(0\) & \(-2\) & \(0\) & \(1\) \\
    \(\widetilde{C_{7}}\) & \(0\) & \(0\) & \(0\) & \(0\) & \(0\) & \(0\) & \(0\) & \(0\) & \(1\) & \(0\) & \(0\) & \(0\) & \(1\) & \(0\) & \(0\) & \(0\) & \(0\) & \(-2\) & \(1\) \\
    \(\widetilde{H_{\mathcal{S}}}\) & \(0\) & \(0\) & \(0\) & \(0\) & \(0\) & \(0\) & \(0\) & \(0\) & \(0\) & \(0\) & \(0\) & \(0\) & \(0\) & \(1\) & \(1\) & \(1\) & \(1\) & \(1\) & \(4\) \\
    \hline
  \end{tabular}.
\end{table}
Note that the intersection matrix is degenerate. We choose the following integral basis of the lattice \(L_{\lambda}\):
\begin{align*}
  \begin{pmatrix}
    [\widetilde{C_{7}}]
  \end{pmatrix} =
  \begin{pmatrix}
    -4 & -3 & -2 & -1 & 2 & 3 & 2 & 1 & 0 & -4 & -3 & -1 & -3 & 2 & -5 & 1 & -1 & 1
  \end{pmatrix} \cdot \\
  \begin{pmatrix}
    [E_1^1] & [E_1^2] & [E_1^3] & [E_1^4] & [E_2^1] & [E_2^2] & [E_2^3] & [E_2^4] & [E_2^5] & \\ [E_3^1] & [E_3^2] & [E_3^3] & [E_4^1] & [\widetilde{C_{2}}] & [\widetilde{C_{3}}] & [\widetilde{C_{4}}] & [\widetilde{C_{5}}] & [\widetilde{H_{\mathcal{S}}}]
  \end{pmatrix}^T.
\end{align*}

Discriminant groups and discriminant forms of the lattices \(L_{\mathcal{S}}\) and \(H \oplus \Pic(X)\) are given by
\begin{gather*}
  G' = 
  \begin{pmatrix}
    0 & 0 & 0 & 0 & \frac{1}{4} & 0 & \frac{1}{2} & 0 & \frac{1}{2} & \frac{1}{2} & 0 & \frac{1}{4} & 0 & \frac{1}{4} & 0 & \frac{1}{2} & \frac{1}{4} & \frac{1}{2} \\
    0 & \frac{1}{2} & 0 & \frac{1}{2} & \frac{3}{4} & 0 & \frac{3}{4} & \frac{1}{2} & \frac{1}{4} & \frac{1}{4} & 0 & \frac{1}{4} & \frac{3}{4} & \frac{1}{2} & \frac{1}{2} & \frac{1}{2} & \frac{1}{2} & 0
  \end{pmatrix}, \\
  G'' = 
  \begin{pmatrix}
    0 & 0 & \frac{1}{4} & 0 \\
    0 & 0 & 0 & \frac{1}{4}
  \end{pmatrix}; \;
  B'' =
  \begin{pmatrix}
    0 & \frac{3}{4} \\
    \frac{3}{4} & \frac{3}{4}
  \end{pmatrix}, \;
  B'' =
  \begin{pmatrix}
    0 & \frac{1}{4} \\
    \frac{1}{4} & \frac{1}{4}
  \end{pmatrix}; \;
  \begin{pmatrix}
    Q' \\ Q''
  \end{pmatrix}
  =
  \begin{pmatrix}
    0 & \frac{7}{4} \\
    0 & \frac{1}{4}
  \end{pmatrix}.
\end{gather*}


\subsection{Family \textnumero2.26}\label{subsection:02-26}

The pencil \(\mathcal{S}\) is defined by the equation
\begin{gather*}
  X^{2} Y Z + X Y^{2} Z + X Y Z^{2} + Y^{2} Z T + X^{2} T^{2} + X Y T^{2} + X Z T^{2} + Y Z T^{2} = \lambda X Y Z T.
\end{gather*}
Members \(\mathcal{S}_{\lambda}\) of the pencil are irreducible for any \(\lambda \in \mathbb{P}^1\) except
\(\mathcal{S}_{\infty} = S_{(X)} + S_{(Y)} + S_{(Z)} + S_{(T)}\).
The base locus of the pencil \(\mathcal{S}\) consists of the following curves:
\begin{gather*}
  C_{1} = C_{(X, Y)}, \;
  C_{2} = C_{(X, Z)}, \;
  C_{3} = C_{(X, T)}, \;
  C_{4} = C_{(Y, T)}, \;
  C_{5} = C_{(Z, T)}, \\
  C_{6} = C_{(X, Y + T)}, \;
  C_{7} = C_{(Y, X + Z)}, \;
  C_{8} = C_{(Z, X + Y)}, \;
  C_{9} = C_{(T, X + Y + Z)}.
\end{gather*}
Their linear equivalence classes on the generic member \(\mathcal{S}_{\Bbbk}\) of the pencil satisfy the following relations:
\begin{gather*}
  \begin{pmatrix}
    [C_{6}] \\ [C_{7}] \\ [C_{8}] \\ [C_{9}]
  \end{pmatrix} = 
  \begin{pmatrix}
    -1 & -1 & -1 & 0 & 0 & 1 \\
    -1 & 0 & 0 & -2 & 0 & 1 \\
    0 & -1 & 0 & 0 & -2 & 1 \\
    0 & 0 & -1 & -1 & -1 & 1
  \end{pmatrix} \cdot
  \begin{pmatrix}
    [C_{1}] & [C_{2}] & [C_{3}] & [C_{4}] & [C_{5}] & [H_{\mathcal{S}}]
  \end{pmatrix}^T.
\end{gather*}

For a general choice of \(\lambda \in \mathbb{C}\) the surface \(\mathcal{S}_{\lambda}\) has the following singularities:
\begin{itemize}\setlength{\itemindent}{2cm}
\item[\(P_{1} = P_{(X, Y, Z)}\):] type \(\mathbb{A}_2\) with the quadratic term \((X + Y) \cdot (X + Z)\);
\item[\(P_{2} = P_{(X, Y, T)}\):] type \(\mathbb{A}_3\) with the quadratic term \(X \cdot Y\);
\item[\(P_{3} = P_{(X, Z, T)}\):] type \(\mathbb{A}_2\) with the quadratic term \(Z \cdot (X + T)\);
\item[\(P_{4} = P_{(Y, Z, T)}\):] type \(\mathbb{A}_1\) with the quadratic term \(Y Z + T^2\);
\item[\(P_{5} = P_{(Y, T, X + Z)}\):] type \(\mathbb{A}_2\) with the quadratic term \(Y \cdot (X + Y + Z - \lambda T)\);
\item[\(P_{6} = P_{(Z, T, X + Y)}\):] type \(\mathbb{A}_2\) with the quadratic term \(Z \cdot (X + Y + Z - (\lambda + 1) T)\).
\end{itemize}

Galois action on the lattice \(L_{\lambda}\) is trivial. The intersection matrix on \(L_{\lambda} = L_{\mathcal{S}}\) is represented by
\begin{table}[H]
  \begin{tabular}{|c||cc|ccc|cc|c|cc|cc|cccccc|}
    \hline
    \(\bullet\) & \(E_1^1\) & \(E_1^2\) & \(E_2^1\) & \(E_2^2\) & \(E_2^3\) & \(E_3^1\) & \(E_3^2\) & \(E_4^1\) & \(E_5^1\) & \(E_5^2\) & \(E_6^1\) & \(E_6^2\) & \(\widetilde{C_{1}}\) & \(\widetilde{C_{2}}\) & \(\widetilde{C_{3}}\) & \(\widetilde{C_{4}}\) & \(\widetilde{C_{5}}\) & \(\widetilde{H_{\mathcal{S}}}\) \\
    \hline
    \hline
    \(\widetilde{C_{1}}\) & \(1\) & \(0\) & \(0\) & \(1\) & \(0\) & \(0\) & \(0\) & \(0\) & \(0\) & \(0\) & \(0\) & \(0\) & \(-2\) & \(0\) & \(0\) & \(0\) & \(0\) & \(1\) \\
    \(\widetilde{C_{2}}\) & \(0\) & \(1\) & \(0\) & \(0\) & \(0\) & \(1\) & \(0\) & \(0\) & \(0\) & \(0\) & \(0\) & \(0\) & \(0\) & \(-2\) & \(0\) & \(0\) & \(0\) & \(1\) \\
    \(\widetilde{C_{3}}\) & \(0\) & \(0\) & \(1\) & \(0\) & \(0\) & \(0\) & \(1\) & \(0\) & \(0\) & \(0\) & \(0\) & \(0\) & \(0\) & \(0\) & \(-2\) & \(0\) & \(0\) & \(1\) \\
    \(\widetilde{C_{4}}\) & \(0\) & \(0\) & \(0\) & \(0\) & \(1\) & \(0\) & \(0\) & \(1\) & \(1\) & \(0\) & \(0\) & \(0\) & \(0\) & \(0\) & \(0\) & \(-2\) & \(0\) & \(1\) \\
    \(\widetilde{C_{5}}\) & \(0\) & \(0\) & \(0\) & \(0\) & \(0\) & \(1\) & \(0\) & \(1\) & \(0\) & \(0\) & \(1\) & \(0\) & \(0\) & \(0\) & \(0\) & \(0\) & \(-2\) & \(1\) \\
    \(\widetilde{H_{\mathcal{S}}}\) & \(0\) & \(0\) & \(0\) & \(0\) & \(0\) & \(0\) & \(0\) & \(0\) & \(0\) & \(0\) & \(0\) & \(0\) & \(1\) & \(1\) & \(1\) & \(1\) & \(1\) & \(4\) \\
    \hline
  \end{tabular}.
\end{table}
Note that the intersection matrix is non-degenerate.

Discriminant groups and discriminant forms of the lattices \(L_{\mathcal{S}}\) and \(H \oplus \Pic(X)\) are given by
\begin{gather*}
  G' = 
  \begin{pmatrix}
    \frac{20}{21} & \frac{3}{7} & \frac{2}{21} & 0 & \frac{3}{7} & \frac{8}{21} & \frac{2}{7} & \frac{5}{7} & \frac{4}{7} & \frac{2}{7} & \frac{1}{21} & \frac{11}{21} & \frac{10}{21} & \frac{19}{21} & \frac{4}{21} & \frac{6}{7} & \frac{4}{7} & 0
  \end{pmatrix}, \\
  G'' = 
  \begin{pmatrix}
    0 & 0 & \frac{11}{21} & \frac{1}{21}
  \end{pmatrix}; \;
  B' = 
  \begin{pmatrix}
    \frac{10}{21}
  \end{pmatrix}, \;
  B'' =
  \begin{pmatrix}
    \frac{11}{21}
  \end{pmatrix}; \;
  Q' =
  \begin{pmatrix}
    \frac{10}{21}
  \end{pmatrix}, \;
  Q'' =
  \begin{pmatrix}
    \frac{32}{21}
  \end{pmatrix}.
\end{gather*}


\subsection{Family \textnumero2.27}\label{subsection:02-27}

The pencil \(\mathcal{S}\) is defined by the equation
\begin{gather*}
  X^{2} Y Z + X Y^{2} Z + X Y Z^{2} + X^{2} Y T + Y Z T^{2} + X T^{3} + Z T^{3} = \lambda X Y Z T.
\end{gather*}
Members \(\mathcal{S}_{\lambda}\) of the pencil are irreducible for any \(\lambda \in \mathbb{P}^1\) except
\(\mathcal{S}_{\infty} = S_{(X)} + S_{(Y)} + S_{(Z)} + S_{(T)}\).
The base locus of the pencil \(\mathcal{S}\) consists of the following curves:
\begin{gather*}
  C_{1} = C_{(X, Z)}, \;
  C_{2} = C_{(X, T)}, \;
  C_{3} = C_{(Y, T)}, \;
  C_{4} = C_{(Z, T)}, \\
  C_{5} = C_{(X, Y + T)}, \;
  C_{6} = C_{(Y, X + Z)}, \;
  C_{7} = C_{(T, X + Y + Z)}, \;
  C_{8} = C_{(Z, X Y + T^2)}.
\end{gather*}
Their linear equivalence classes on the generic member \(\mathcal{S}_{\Bbbk}\) of the pencil satisfy the following relations:
\begin{gather*}
  \begin{pmatrix}
    [C_{1}] \\ [C_{6}] \\ [C_{7}] \\ [C_{8}]
  \end{pmatrix} = 
  \begin{pmatrix}
    -2 & 0 & 0 & -1 & 1 \\
    0 & -3 & 0 & 0 & 1 \\
    -1 & -1 & -1 & 0 & 1 \\
    2 & 0 & -1 & 1 & 0
  \end{pmatrix} \cdot
  \begin{pmatrix}
    [C_{2}] & [C_{3}] & [C_{4}] & [C_{5}] & [H_{\mathcal{S}}]
  \end{pmatrix}^T.
\end{gather*}

For a general choice of \(\lambda \in \mathbb{C}\) the surface \(\mathcal{S}_{\lambda}\) has the following singularities:
\begin{itemize}\setlength{\itemindent}{2cm}
\item[\(P_{1} = P_{(X, Y, T)}\):] type \(\mathbb{A}_2\) with the quadratic term \(X \cdot Y\);
\item[\(P_{2} = P_{(X, Z, T)}\):] type \(\mathbb{A}_5\) with the quadratic term \(X \cdot Z\);
\item[\(P_{3} = P_{(Y, Z, T)}\):] type \(\mathbb{A}_2\) with the quadratic term \(Y \cdot (Z + T)\);
\item[\(P_{4} = P_{(X, T, Y + Z)}\):] type \(\mathbb{A}_1\) with the quadratic term \(X (X + Y + Z - \lambda T) + T^2\);
\item[\(P_{5} = P_{(Y, T, X + Z)}\):] type \(\mathbb{A}_3\) with the quadratic term \(Y \cdot (X + Y + Z - (\lambda + 1) T)\).
\end{itemize}

Galois action on the lattice \(L_{\lambda}\) is trivial. The intersection matrix on \(L_{\lambda} = L_{\mathcal{S}}\) is represented by
\begin{table}[H]
  \begin{tabular}{|c||cc|ccccc|cc|c|ccc|ccccc|}
    \hline
    \(\bullet\) & \(E_1^1\) & \(E_1^2\) & \(E_2^1\) & \(E_2^2\) & \(E_2^3\) & \(E_2^4\) & \(E_2^5\) & \(E_3^1\) & \(E_3^2\) & \(E_4^1\) & \(E_5^1\) & \(E_5^2\) & \(E_5^3\) & \(\widetilde{C_{2}}\) & \(\widetilde{C_{3}}\) & \(\widetilde{C_{4}}\) & \(\widetilde{C_{5}}\) & \(\widetilde{H_{\mathcal{S}}}\) \\
    \hline
    \hline
    \(\widetilde{C_{2}}\) & \(1\) & \(0\) & \(1\) & \(0\) & \(0\) & \(0\) & \(0\) & \(0\) & \(0\) & \(1\) & \(0\) & \(0\) & \(0\) & \(-2\) & \(0\) & \(0\) & \(0\) & \(1\) \\
    \(\widetilde{C_{3}}\) & \(0\) & \(1\) & \(0\) & \(0\) & \(0\) & \(0\) & \(0\) & \(1\) & \(0\) & \(0\) & \(1\) & \(0\) & \(0\) & \(0\) & \(-2\) & \(0\) & \(0\) & \(1\) \\
    \(\widetilde{C_{4}}\) & \(0\) & \(0\) & \(0\) & \(0\) & \(0\) & \(0\) & \(1\) & \(0\) & \(1\) & \(0\) & \(0\) & \(0\) & \(0\) & \(0\) & \(0\) & \(-2\) & \(0\) & \(1\) \\
    \(\widetilde{C_{5}}\) & \(1\) & \(0\) & \(0\) & \(0\) & \(0\) & \(0\) & \(0\) & \(0\) & \(0\) & \(0\) & \(0\) & \(0\) & \(0\) & \(0\) & \(0\) & \(0\) & \(-2\) & \(1\) \\
    \(\widetilde{H_{\mathcal{S}}}\) & \(0\) & \(0\) & \(0\) & \(0\) & \(0\) & \(0\) & \(0\) & \(0\) & \(0\) & \(0\) & \(0\) & \(0\) & \(0\) & \(1\) & \(1\) & \(1\) & \(1\) & \(4\) \\
    \hline
  \end{tabular}.
\end{table}
Note that the intersection matrix is non-degenerate.

Discriminant groups and discriminant forms of the lattices \(L_{\mathcal{S}}\) and \(H \oplus \Pic(X)\) are given by
\begin{gather*}
  G' = 
  \begin{pmatrix}
    \frac{6}{17} & \frac{4}{17} & \frac{2}{17} & \frac{11}{17} & \frac{3}{17} & \frac{12}{17} & \frac{4}{17} & 0 & \frac{15}{17} & \frac{5}{17} & \frac{10}{17} & \frac{1}{17} & \frac{9}{17} & \frac{10}{17} & \frac{2}{17} & \frac{13}{17} & \frac{15}{17} & \frac{7}{17}
  \end{pmatrix}, \\
  G'' = 
  \begin{pmatrix}
    0 & 0 & \frac{10}{17} & \frac{1}{17}
  \end{pmatrix}; \;
  B' = 
  \begin{pmatrix}
    \frac{8}{17}
  \end{pmatrix}, \;
  B'' =
  \begin{pmatrix}
    \frac{9}{17}
  \end{pmatrix}; \;
  Q' =
  \begin{pmatrix}
    \frac{8}{17}
  \end{pmatrix}, \;
  Q'' =
  \begin{pmatrix}
    \frac{26}{17}
  \end{pmatrix}.
\end{gather*}


\subsection{Family \textnumero2.28}\label{subsection:02-28}

The pencil \(\mathcal{S}\) is defined by the equation
\begin{gather*}
  X^{2} Y Z + X Y Z^{2} + X^{2} Y T + X Y^{2} T + Y^{2} Z T + X^{2} T^{2} + X Z T^{2} = \lambda X Y Z T.
\end{gather*}
Members \(\mathcal{S}_{\lambda}\) of the pencil are irreducible for any \(\lambda \in \mathbb{P}^1\) except
\begin{gather*}
  \mathcal{S}_{\infty} = S_{(X)} + S_{(Y)} + S_{(Z)} + S_{(T)}, \;
  \mathcal{S}_{- 1} = S_{(X + Z)} + S_{(X Y (Z + T) + T (X T + Y^2))}.
\end{gather*}
The base locus of the pencil \(\mathcal{S}\) consists of the following curves:
\begin{gather*}
  C_{1} = C_{(X, Y)}, \;
  C_{2} = C_{(X, Z)}, \;
  C_{3} = C_{(X, T)}, \;
  C_{4} = C_{(Y, T)}, \\
  C_{5} = C_{(Z, T)}, \;
  C_{6} = C_{(Y, X + Z)}, \;
  C_{7} = C_{(T, X + Z)}, \;
  C_{8} = C_{(Z, Y^2 + X (Y + T))}.
\end{gather*}
Their linear equivalence classes on the generic member \(\mathcal{S}_{\Bbbk}\) of the pencil satisfy the following relations:
\begin{gather*}
  \begin{pmatrix}
    [C_{2}] \\ [C_{5}] \\ [C_{6}] \\ [C_{7}] \\ [C_{8}]
  \end{pmatrix} = 
  \begin{pmatrix}
    -2 & -1 & 0 & 1 \\
    -5 & -3 & -3 & 3 \\
    -1 & 0 & -2 & 1 \\
    5 & 2 & 2 & -2 \\
    7 & 4 & 3 & -3
  \end{pmatrix} \cdot
  \begin{pmatrix}
    [C_{1}] \\ [C_{3}] \\ [C_{4}] \\ [H_{\mathcal{S}}]
  \end{pmatrix}.
\end{gather*}

For a general choice of \(\lambda \in \mathbb{C}\) the surface \(\mathcal{S}_{\lambda}\) has the following singularities:
\begin{itemize}\setlength{\itemindent}{2cm}
\item[\(P_{1} = P_{(X, Y, Z)}\):] type \(\mathbb{A}_4\) with the quadratic term \(X \cdot (X + Z)\);
\item[\(P_{2} = P_{(X, Y, T)}\):] type \(\mathbb{A}_4\) with the quadratic term \(X \cdot Y\);
\item[\(P_{3} = P_{(X, Z, T)}\):] type \(\mathbb{A}_3\) with the quadratic term \(T \cdot (X + Z)\);
\item[\(P_{4} = P_{(Y, Z, T)}\):] type \(\mathbb{A}_1\) with the quadratic term \(Y (Z + T) + T^2\);
\item[\(P_{5} = P_{(Y, T, X + Z)}\):] type \(\mathbb{A}_2\) with the quadratic term \(Y \cdot (X + Z - (\lambda + 1) T)\).
\end{itemize}

Galois action on the lattice \(L_{\lambda}\) is trivial. The intersection matrix on \(L_{\lambda} = L_{\mathcal{S}}\) is represented by
\begin{table}[H]
  \begin{tabular}{|c||cccc|cccc|ccc|c|cc|cccc|}
    \hline
    \(\bullet\) & \(E_1^1\) & \(E_1^2\) & \(E_1^3\) & \(E_1^4\) & \(E_2^1\) & \(E_2^2\) & \(E_2^3\) & \(E_2^4\) & \(E_3^1\) & \(E_3^2\) & \(E_3^3\) & \(E_4^1\) & \(E_5^1\) & \(E_5^2\) & \(\widetilde{C_{1}}\) & \(\widetilde{C_{3}}\) & \(\widetilde{C_{4}}\) & \(\widetilde{H_{\mathcal{S}}}\) \\
    \hline
    \hline
    \(\widetilde{C_{1}}\) & \(1\) & \(0\) & \(0\) & \(0\) & \(0\) & \(1\) & \(0\) & \(0\) & \(0\) & \(0\) & \(0\) & \(0\) & \(0\) & \(0\) & \(-2\) & \(0\) & \(0\) & \(1\) \\
    \(\widetilde{C_{3}}\) & \(0\) & \(0\) & \(0\) & \(0\) & \(1\) & \(0\) & \(0\) & \(0\) & \(1\) & \(0\) & \(0\) & \(0\) & \(0\) & \(0\) & \(0\) & \(-2\) & \(0\) & \(1\) \\
    \(\widetilde{C_{4}}\) & \(0\) & \(0\) & \(0\) & \(0\) & \(0\) & \(0\) & \(0\) & \(1\) & \(0\) & \(0\) & \(0\) & \(1\) & \(1\) & \(0\) & \(0\) & \(0\) & \(-2\) & \(1\) \\
    \(\widetilde{H_{\mathcal{S}}}\) & \(0\) & \(0\) & \(0\) & \(0\) & \(0\) & \(0\) & \(0\) & \(0\) & \(0\) & \(0\) & \(0\) & \(0\) & \(0\) & \(0\) & \(1\) & \(1\) & \(1\) & \(4\) \\
    \hline
  \end{tabular}.
\end{table}
Note that the intersection matrix is non-degenerate.

Discriminant groups and discriminant forms of the lattices \(L_{\mathcal{S}}\) and \(H \oplus \Pic(X)\) are given by
\begin{gather*}
  G' = 
  \begin{pmatrix}
    \frac{7}{9} & \frac{1}{3} & \frac{8}{9} & \frac{4}{9} & \frac{5}{9} & \frac{2}{3} & \frac{5}{9} & \frac{4}{9} & \frac{1}{3} & \frac{2}{9} & \frac{1}{9} & \frac{2}{3} & \frac{5}{9} & \frac{7}{9} & \frac{2}{9} & \frac{4}{9} & \frac{1}{3} & 0
  \end{pmatrix}, \\
  G'' = 
  \begin{pmatrix}
    0 & 0 & \frac{8}{9} & \frac{1}{3}
  \end{pmatrix}; \;
  B' = 
  \begin{pmatrix}
    \frac{7}{9}
  \end{pmatrix}, \;
  B'' =
  \begin{pmatrix}
    \frac{2}{9}
  \end{pmatrix}; \;
  Q' =
  \begin{pmatrix}
    \frac{16}{9}
  \end{pmatrix}, \;
  Q'' =
  \begin{pmatrix}
    \frac{2}{9}
  \end{pmatrix}.
\end{gather*}


\subsection{Family \textnumero2.29}\label{subsection:02-29}

The pencil \(\mathcal{S}\) is defined by the equation
\begin{gather*}
  X^{2} Y Z + X Y^{2} Z + X Y Z^{2} + X^{2} Y T + X Y^{2} T + X Z T^{2} + Y Z T^{2} = \lambda X Y Z T.
\end{gather*}
Members \(\mathcal{S}_{\lambda}\) of the pencil are irreducible for any \(\lambda \in \mathbb{P}^1\) except
\(\mathcal{S}_{\infty} = S_{(X)} + S_{(Y)} + S_{(Z)} + S_{(T)}\).
The base locus of the pencil \(\mathcal{S}\) consists of the following curves:
\begin{gather*}
  C_{1} = C_{(X, Y)}, \;
  C_{2} = C_{(X, Z)}, \;
  C_{3} = C_{(X, T)}, \;
  C_{4} = C_{(Y, Z)}, \\
  C_{5} = C_{(Y, T)}, \;
  C_{6} = C_{(Z, T)}, \;
  C_{7} = C_{(Z, X + Y)}, \;
  C_{8} = C_{(T, X + Y + Z)}.
\end{gather*}
Their linear equivalence classes on the generic member \(\mathcal{S}_{\Bbbk}\) of the pencil satisfy the following relations:
\begin{gather*}
  \begin{pmatrix}
    [C_{2}] \\ [C_{4}] \\ [C_{6}] \\ [C_{7}]
  \end{pmatrix} = 
  \begin{pmatrix}
    -1 & -2 & 0 & 0 & 1 \\
    -1 & 0 & -2 & 0 & 1 \\
    0 & -1 & -1 & -1 & 1 \\
    2 & 3 & 3 & 1 & -2
  \end{pmatrix} \cdot
  \begin{pmatrix}
    [C_{1}] & [C_{3}] & [C_{5}] & [C_{8}] & [H_{\mathcal{S}}]
  \end{pmatrix}^T.
\end{gather*}

For a general choice of \(\lambda \in \mathbb{C}\) the surface \(\mathcal{S}_{\lambda}\) has the following singularities:
\begin{itemize}\setlength{\itemindent}{2cm}
\item[\(P_{1} = P_{(X, Y, Z)}\):] type \(\mathbb{A}_3\) with the quadratic term \(Z \cdot (X + Y)\);
\item[\(P_{2} = P_{(X, Y, T)}\):] type \(\mathbb{A}_3\) with the quadratic term \(X \cdot Y\);
\item[\(P_{3} = P_{(X, Z, T)}\):] type \(\mathbb{A}_2\) with the quadratic term \(X \cdot (Z + T)\);
\item[\(P_{4} = P_{(Y, Z, T)}\):] type \(\mathbb{A}_2\) with the quadratic term \(Y \cdot (Z + T)\);
\item[\(P_{5} = P_{(X, T, Y + Z)}\):] type \(\mathbb{A}_1\) with the quadratic term \(X (X + Y + Z - (\lambda + 1) T) + T^2\);
\item[\(P_{6} = P_{(Y, T, X + Z)}\):] type \(\mathbb{A}_1\) with the quadratic term \(Y (X + Y + Z - (\lambda + 1) T) + T^2\);
\item[\(P_{7} = P_{(Z, T, X + Y)}\):] type \(\mathbb{A}_1\) with the quadratic term \((Z + T) (X + Y + Z) - (\lambda + 1) Z T\).
\end{itemize}

Galois action on the lattice \(L_{\lambda}\) is trivial. The intersection matrix on \(L_{\lambda} = L_{\mathcal{S}}\) is represented by
\begin{table}[H]
  \begin{tabular}{|c||ccc|ccc|cc|cc|c|c|c|ccccc|}
    \hline
    \(\bullet\) & \(E_1^1\) & \(E_1^2\) & \(E_1^3\) & \(E_2^1\) & \(E_2^2\) & \(E_2^3\) & \(E_3^1\) & \(E_3^2\) & \(E_4^1\) & \(E_4^2\) & \(E_5^1\) & \(E_6^1\) & \(E_7^1\) & \(\widetilde{C_{1}}\) & \(\widetilde{C_{3}}\) & \(\widetilde{C_{5}}\) & \(\widetilde{C_{8}}\) & \(\widetilde{H_{\mathcal{S}}}\) \\
    \hline
    \hline
    \(\widetilde{C_{1}}\) & \(1\) & \(0\) & \(0\) & \(0\) & \(1\) & \(0\) & \(0\) & \(0\) & \(0\) & \(0\) & \(0\) & \(0\) & \(0\) & \(-2\) & \(0\) & \(0\) & \(0\) & \(1\) \\
    \(\widetilde{C_{3}}\) & \(0\) & \(0\) & \(0\) & \(1\) & \(0\) & \(0\) & \(1\) & \(0\) & \(0\) & \(0\) & \(1\) & \(0\) & \(0\) & \(0\) & \(-2\) & \(0\) & \(0\) & \(1\) \\
    \(\widetilde{C_{5}}\) & \(0\) & \(0\) & \(0\) & \(0\) & \(0\) & \(1\) & \(0\) & \(0\) & \(1\) & \(0\) & \(0\) & \(1\) & \(0\) & \(0\) & \(0\) & \(-2\) & \(0\) & \(1\) \\
    \(\widetilde{C_{8}}\) & \(0\) & \(0\) & \(0\) & \(0\) & \(0\) & \(0\) & \(0\) & \(0\) & \(0\) & \(0\) & \(1\) & \(1\) & \(1\) & \(0\) & \(0\) & \(0\) & \(-2\) & \(1\) \\
    \(\widetilde{H_{\mathcal{S}}}\) & \(0\) & \(0\) & \(0\) & \(0\) & \(0\) & \(0\) & \(0\) & \(0\) & \(0\) & \(0\) & \(0\) & \(0\) & \(0\) & \(1\) & \(1\) & \(1\) & \(1\) & \(4\) \\
    \hline
  \end{tabular}.
\end{table}
Note that the intersection matrix is non-degenerate.

Discriminant groups and discriminant forms of the lattices \(L_{\mathcal{S}}\) and \(H \oplus \Pic(X)\) are given by
\begin{gather*}
  G' = 
  \begin{pmatrix}
    \frac{1}{2} & 0 & \frac{1}{2} & 0 & 0 & 0 & 0 & 0 & 0 & 0 & \frac{1}{2} & \frac{1}{2} & \frac{1}{2} & 0 & 0 & 0 & 0 & \frac{1}{2} \\
    0 & 0 & 0 & \frac{3}{8} & \frac{1}{2} & \frac{5}{8} & \frac{1}{2} & \frac{3}{4} & \frac{1}{2} & \frac{1}{4} & \frac{1}{8} & \frac{7}{8} & \frac{1}{2} & 0 & \frac{1}{4} & \frac{3}{4} & 0 & \frac{1}{2}
  \end{pmatrix}, \\
  G'' = 
  \begin{pmatrix}
    0 & 0 & \frac{1}{2} & 0 \\
    0 & 0 & \frac{5}{8} & \frac{1}{8}
  \end{pmatrix}; \;
  B' =  
  \begin{pmatrix}
    \frac{1}{2} & \frac{1}{2} \\
    \frac{1}{2} & \frac{3}{8}
  \end{pmatrix}, \;
  B'' =
  \begin{pmatrix}
    \frac{1}{2} & \frac{1}{2} \\
    \frac{1}{2} & \frac{5}{8}
  \end{pmatrix}; \;
  \begin{pmatrix}
    Q' \\ Q''
  \end{pmatrix}
  =
  \begin{pmatrix}
    \frac{1}{2} & \frac{3}{8} \\
    \frac{3}{2} & \frac{13}{8}
  \end{pmatrix}.
\end{gather*}


\subsection{Family \textnumero2.30}\label{subsection:02-30}

The pencil \(\mathcal{S}\) is defined by the equation
\begin{gather*}
  X^{2} Y Z + Y^{2} Z T + Y Z^{2} T + X^{2} T^{2} + X Y T^{2} + X Z T^{2} = \lambda X Y Z T.
\end{gather*}
Members \(\mathcal{S}_{\lambda}\) of the pencil are irreducible for any \(\lambda \in \mathbb{P}^1\) except
\(\mathcal{S}_{\infty} = S_{(X)} + S_{(Y)} + S_{(Z)} + S_{(T)}\).
The base locus of the pencil \(\mathcal{S}\) consists of the following curves:
\begin{gather*}
  C_{1} = C_{(X, Y)}, \;
  C_{2} = C_{(X, Z)}, \;
  C_{3} = C_{(X, T)}, \;
  C_{4} = C_{(Y, T)}, \\
  C_{5} = C_{(Z, T)}, \;
  C_{6} = C_{(X, Y + Z)}, \;
  C_{7} = C_{(Y, X + Z)}, \;
  C_{8} = C_{(Z, X + Y)}.
\end{gather*}
Their linear equivalence classes on the generic member \(\mathcal{S}_{\Bbbk}\) of the pencil satisfy the following relations:
\begin{gather*}
  \begin{pmatrix}
    [C_{5}] \\ [C_{6}] \\ [C_{7}] \\ [C_{8}]
  \end{pmatrix} = 
  \begin{pmatrix}
    0 & 0 & -2 & -1 & 1 \\
    -1 & -1 & -1 & 0 & 1 \\
    -1 & 0 & 0 & -2 & 1 \\
    0 & -1 & 4 & 2 & -1
  \end{pmatrix} \cdot
  \begin{pmatrix}
    [C_{1}] & [C_{2}] & [C_{3}] & [C_{4}] & [H_{\mathcal{S}}]
  \end{pmatrix}^T.
\end{gather*}

For a general choice of \(\lambda \in \mathbb{C}\) the surface \(\mathcal{S}_{\lambda}\) has the following singularities:
\begin{itemize}\setlength{\itemindent}{2cm}
\item[\(P_{1} = P_{(X, Y, Z)}\):] type \(\mathbb{A}_3\) with the quadratic term \(X \cdot (X + Y + Z)\);
\item[\(P_{2} = P_{(X, Y, T)}\):] type \(\mathbb{A}_4\) with the quadratic term \(Y \cdot T\);
\item[\(P_{3} = P_{(X, Z, T)}\):] type \(\mathbb{A}_4\) with the quadratic term \(Z \cdot T\);
\item[\(P_{4} = P_{(Y, Z, T)}\):] type \(\mathbb{A}_1\) with the quadratic term \(Y Z + T^2\);
\item[\(P_{5} = P_{(X, T, Y + Z)}\):] type \(\mathbb{A}_1\) with the quadratic term \(X^2 - T (\lambda X - Y - Z)\).
\end{itemize}

Galois action on the lattice \(L_{\lambda}\) is trivial. The intersection matrix on \(L_{\lambda} = L_{\mathcal{S}}\) is represented by
\begin{table}[H]
  \begin{tabular}{|c||ccc|cccc|cccc|c|c|ccccc|}
    \hline
    \(\bullet\) & \(E_1^1\) & \(E_1^2\) & \(E_1^3\) & \(E_2^1\) & \(E_2^2\) & \(E_2^3\) & \(E_2^4\) & \(E_3^1\) & \(E_3^2\) & \(E_3^3\) & \(E_3^4\) & \(E_4^1\) & \(E_5^1\) & \(\widetilde{C_{1}}\) & \(\widetilde{C_{2}}\) & \(\widetilde{C_{3}}\) & \(\widetilde{C_{4}}\) & \(\widetilde{H_{\mathcal{S}}}\) \\
    \hline
    \hline
    \(\widetilde{C_{1}}\) & \(1\) & \(0\) & \(0\) & \(1\) & \(0\) & \(0\) & \(0\) & \(0\) & \(0\) & \(0\) & \(0\) & \(0\) & \(0\) & \(-2\) & \(0\) & \(0\) & \(0\) & \(1\) \\
    \(\widetilde{C_{2}}\) & \(1\) & \(0\) & \(0\) & \(0\) & \(0\) & \(0\) & \(0\) & \(1\) & \(0\) & \(0\) & \(0\) & \(0\) & \(0\) & \(0\) & \(-2\) & \(0\) & \(0\) & \(1\) \\
    \(\widetilde{C_{3}}\) & \(0\) & \(0\) & \(0\) & \(0\) & \(0\) & \(0\) & \(1\) & \(0\) & \(0\) & \(0\) & \(1\) & \(0\) & \(1\) & \(0\) & \(0\) & \(-2\) & \(0\) & \(1\) \\
    \(\widetilde{C_{4}}\) & \(0\) & \(0\) & \(0\) & \(0\) & \(1\) & \(0\) & \(0\) & \(0\) & \(0\) & \(0\) & \(0\) & \(1\) & \(0\) & \(0\) & \(0\) & \(0\) & \(-2\) & \(1\) \\
    \(\widetilde{H_{\mathcal{S}}}\) & \(0\) & \(0\) & \(0\) & \(0\) & \(0\) & \(0\) & \(0\) & \(0\) & \(0\) & \(0\) & \(0\) & \(0\) & \(0\) & \(1\) & \(1\) & \(1\) & \(1\) & \(4\) \\
    \hline
  \end{tabular}.
\end{table}
Note that the intersection matrix is non-degenerate.

Discriminant groups and discriminant forms of the lattices \(L_{\mathcal{S}}\) and \(H \oplus \Pic(X)\) are given by
\begin{gather*}
  G' = 
  \begin{pmatrix}
    0 & 0 & 0 & 0 & \frac{1}{2} & 0 & \frac{1}{2} & 0 & \frac{1}{2} & 0 & \frac{1}{2} & \frac{1}{2} & 0 & \frac{1}{2} & \frac{1}{2} & 0 & 0 & 0 \\
    \frac{1}{2} & 0 & \frac{1}{2} & \frac{1}{3} & 0 & \frac{2}{3} & \frac{1}{3} & \frac{2}{3} & 0 & \frac{1}{3} & \frac{2}{3} & \frac{1}{2} & \frac{1}{2} & \frac{2}{3} & \frac{1}{3} & 0 & 0 & \frac{1}{2}
  \end{pmatrix}, \\
  G'' = 
  \begin{pmatrix}
    0 & 0 & \frac{1}{2} & 0 \\
    0 & 0 & \frac{2}{3} & \frac{1}{6}
  \end{pmatrix}; \;
  B' = 
  \begin{pmatrix}
    \frac{1}{2} & \frac{1}{2} \\
    \frac{1}{2} & \frac{1}{3}
  \end{pmatrix}, \;
  B'' =
  \begin{pmatrix}
    \frac{1}{2} & \frac{1}{2} \\
    \frac{1}{2} & \frac{2}{3}
  \end{pmatrix}; \;
  \begin{pmatrix}
    Q' \\ Q''
  \end{pmatrix}
  =
  \begin{pmatrix}
    \frac{1}{2} & \frac{1}{3} \\
    \frac{3}{2} & \frac{5}{3}    
  \end{pmatrix}.
\end{gather*}


\subsection{Family \textnumero2.31}\label{subsection:02-31}

The pencil \(\mathcal{S}\) is defined by the equation
\begin{gather*}
  X^{2} Y Z + X Y^{2} Z + X Y Z^{2} + X^{2} Z T + Y Z T^{2} + X T^{3} = \lambda X Y Z T.
\end{gather*}
Members \(\mathcal{S}_{\lambda}\) of the pencil are irreducible for any \(\lambda \in \mathbb{P}^1\) except
\(\mathcal{S}_{\infty} = S_{(X)} + S_{(Y)} + S_{(Z)} + S_{(T)}\).
The base locus of the pencil \(\mathcal{S}\) consists of the following curves:
\begin{gather*}
  C_{1} = C_{(X, Y)}, \;
  C_{2} = C_{(X, Z)}, \;
  C_{3} = C_{(X, T)}, \;
  C_{4} = C_{(Y, T)}, \;
  C_{5} = C_{(Z, T)}, \;
  C_{6} = C_{(T, X + Y + Z)}, \;
  C_{7} = C_{(Y, X Z + T^2)}.
\end{gather*}
Their linear equivalence classes on the generic member \(\mathcal{S}_{\Bbbk}\) of the pencil satisfy the following relations:
\begin{gather*}
  \begin{pmatrix}
    [C_{1}] \\ [C_{2}] \\ [C_{6}] \\ [C_{7}]
  \end{pmatrix} = 
  \begin{pmatrix}
    -2 & 0 & 3 & 0 \\
    0 & 0 & -3 & 1 \\
    -1 & -1 & -1 & 1 \\
    2 & -1 & -3 & 1
  \end{pmatrix} \cdot
  \begin{pmatrix}
    [C_{3}] \\ [C_{4}] \\ [C_{5}] \\ [H_{\mathcal{S}}]
  \end{pmatrix}.
\end{gather*}

For a general choice of \(\lambda \in \mathbb{C}\) the surface \(\mathcal{S}_{\lambda}\) has the following singularities:
\begin{itemize}\setlength{\itemindent}{2cm}
\item[\(P_{1} = P_{(X, Y, T)}\):] type \(\mathbb{A}_5\) with the quadratic term \(X \cdot Y\);
\item[\(P_{2} = P_{(X, Z, T)}\):] type \(\mathbb{A}_4\) with the quadratic term \(X \cdot Z\);
\item[\(P_{3} = P_{(Y, Z, T)}\):] type \(\mathbb{A}_2\) with the quadratic term \(Z \cdot (Y + T)\);
\item[\(P_{4} = P_{(X, T, Y + Z)}\):] type \(\mathbb{A}_1\) with the quadratic term \(X (X + Y + Z - \lambda T) + T^2\);
\item[\(P_{5} = P_{(Z, T, X + Y)}\):] type \(\mathbb{A}_2\) with the quadratic term \(Z \cdot (X + Y + Z - (\lambda + 1) T)\).
\end{itemize}

Galois action on the lattice \(L_{\lambda}\) is trivial. The intersection matrix on \(L_{\lambda} = L_{\mathcal{S}}\) is represented by
\begin{table}[H]
  \begin{tabular}{|c||ccccc|cccc|cc|c|cc|cccc|}
    \hline
    \(\bullet\) & \(E_1^1\) & \(E_1^2\) & \(E_1^3\) & \(E_1^4\) & \(E_1^5\) & \(E_2^1\) & \(E_2^2\) & \(E_2^3\) & \(E_2^4\) & \(E_3^1\) & \(E_3^2\) & \(E_4^1\) & \(E_5^1\) & \(E_5^2\) & \(\widetilde{C_{3}}\) & \(\widetilde{C_{4}}\) & \(\widetilde{C_{5}}\) & \(\widetilde{H_{\mathcal{S}}}\) \\
    \hline
    \hline
    \(\widetilde{C_{3}}\) & \(1\) & \(0\) & \(0\) & \(0\) & \(0\) & \(1\) & \(0\) & \(0\) & \(0\) & \(0\) & \(0\) & \(1\) & \(0\) & \(0\) & \(-2\) & \(0\) & \(0\) & \(1\) \\
    \(\widetilde{C_{4}}\) & \(0\) & \(0\) & \(0\) & \(0\) & \(1\) & \(0\) & \(0\) & \(0\) & \(0\) & \(0\) & \(1\) & \(0\) & \(0\) & \(0\) & \(0\) & \(-2\) & \(0\) & \(1\) \\
    \(\widetilde{C_{5}}\) & \(0\) & \(0\) & \(0\) & \(0\) & \(0\) & \(0\) & \(0\) & \(0\) & \(1\) & \(1\) & \(0\) & \(0\) & \(1\) & \(0\) & \(0\) & \(0\) & \(-2\) & \(1\) \\
    \(\widetilde{H_{\mathcal{S}}}\) & \(0\) & \(0\) & \(0\) & \(0\) & \(0\) & \(0\) & \(0\) & \(0\) & \(0\) & \(0\) & \(0\) & \(0\) & \(0\) & \(0\) & \(1\) & \(1\) & \(1\) & \(4\) \\
    \hline
  \end{tabular}.
\end{table}
Note that the intersection matrix is non-degenerate.

Discriminant groups and discriminant forms of the lattices \(L_{\mathcal{S}}\) and \(H \oplus \Pic(X)\) are given by
\begin{gather*}
  G' = 
  \begin{pmatrix}
    \frac{7}{13} & \frac{4}{13} & \frac{1}{13} & \frac{11}{13} & \frac{8}{13} & \frac{6}{13} & \frac{2}{13} & \frac{11}{13} & \frac{7}{13} & \frac{8}{13} & 0 & \frac{5}{13} & \frac{2}{13} & \frac{1}{13} & \frac{10}{13} & \frac{5}{13} & \frac{3}{13} & \frac{2}{13}
  \end{pmatrix}, \\
  G'' = 
  \begin{pmatrix}
    0 & 0 & \frac{7}{13} & \frac{1}{13}
  \end{pmatrix}; \;
  B' = 
  \begin{pmatrix}
    \frac{6}{13}
  \end{pmatrix}, \;
  B'' =
  \begin{pmatrix}
    \frac{7}{13}
  \end{pmatrix}; \;
  Q' =
  \begin{pmatrix}
    \frac{6}{13}
  \end{pmatrix}, \;
  Q'' =
  \begin{pmatrix}
    \frac{20}{13}
  \end{pmatrix}.
\end{gather*}


\subsection{Family \textnumero2.32}\label{subsection:02-32}

The pencil \(\mathcal{S}\) is defined by the equation
\begin{gather*}
  X^{2} Y Z + X Y^{2} Z + X Y Z^{2} + X Z T^{2} + Y Z T^{2} + T^{4} = \lambda X Y Z T.
\end{gather*}
Members \(\mathcal{S}_{\lambda}\) of the pencil are irreducible for any \(\lambda \in \mathbb{P}^1\) except
\(\mathcal{S}_{\infty} = S_{(X)} + S_{(Y)} + S_{(Z)} + S_{(T)}\).
The base locus of the pencil \(\mathcal{S}\) consists of the following curves:
\begin{gather*}
  C_{1} = C_{(X, T)}, \;
  C_{2} = C_{(Y, T)}, \;
  C_{3} = C_{(Z, T)}, \;
  C_{4} = C_{(T, X + Y + Z)}, \;
  C_{5} = C_{(X, Y Z + T^2)}, \;
  C_{6} = C_{(Y, X Z + T^2)}.
\end{gather*}
Their linear equivalence classes on the generic member \(\mathcal{S}_{\Bbbk}\) of the pencil satisfy the following relations:
\begin{gather*}
  \begin{pmatrix}
    [C_{4}] \\ [C_{5}] \\ [C_{6}] \\ [H_{\mathcal{S}}]
  \end{pmatrix} = 
  \begin{pmatrix}
    -1 & -1 & 3 \\
    -2 & 0 & 4 \\
    0 & -2 & 4 \\
    0 & 0 & 4
  \end{pmatrix} \cdot
  \begin{pmatrix}
    [C_{1}] \\ [C_{2}] \\ [C_{3}]
  \end{pmatrix}.
\end{gather*}

For a general choice of \(\lambda \in \mathbb{C}\) the surface \(\mathcal{S}_{\lambda}\) has the following singularities:
\begin{itemize}\setlength{\itemindent}{2cm}
\item[\(P_{1} = P_{(X, Y, T)}\):] type \(\mathbb{A}_4\) with the quadratic term \(X \cdot Y\);
\item[\(P_{2} = P_{(X, Z, T)}\):] type \(\mathbb{A}_3\) with the quadratic term \(X \cdot Z\);
\item[\(P_{3} = P_{(Y, Z, T)}\):] type \(\mathbb{A}_3\) with the quadratic term \(Y \cdot Z\);
\item[\(P_{4} = P_{(X, T, Y + Z)}\):] type \(\mathbb{A}_1\) with the quadratic term \(X (X + Y + Z - \lambda T) + T^2\);
\item[\(P_{5} = P_{(Y, T, X + Z)}\):] type \(\mathbb{A}_1\) with the quadratic term \(Y (X + Y + Z - \lambda T) + T^2\);
\item[\(P_{6} = P_{(Z, T, X + Y)}\):] type \(\mathbb{A}_3\) with the quadratic term \(Z \cdot (X + Y + Z - \lambda T)\).
\end{itemize}

Galois action on the lattice \(L_{\lambda}\) is trivial. The intersection matrix on \(L_{\lambda} = L_{\mathcal{S}}\) is represented by
\begin{table}[H]
  \begin{tabular}{|c||cccc|ccc|ccc|c|c|ccc|ccc|}
    \hline
    \(\bullet\) & \(E_1^1\) & \(E_1^2\) & \(E_1^3\) & \(E_1^4\) & \(E_2^1\) & \(E_2^2\) & \(E_2^3\) & \(E_3^1\) & \(E_3^2\) & \(E_3^3\) & \(E_4^1\) & \(E_5^1\) & \(E_6^1\) & \(E_6^2\) & \(E_6^3\) & \(\widetilde{C_{1}}\) & \(\widetilde{C_{2}}\) & \(\widetilde{C_{3}}\) \\
    \hline
    \hline
    \(\widetilde{C_{1}}\) & \(1\) & \(0\) & \(0\) & \(0\) & \(1\) & \(0\) & \(0\) & \(0\) & \(0\) & \(0\) & \(1\) & \(0\) & \(0\) & \(0\) & \(0\) & \(-2\) & \(0\) & \(0\) \\
    \(\widetilde{C_{2}}\) & \(0\) & \(0\) & \(0\) & \(1\) & \(0\) & \(0\) & \(0\) & \(1\) & \(0\) & \(0\) & \(0\) & \(1\) & \(0\) & \(0\) & \(0\) & \(0\) & \(-2\) & \(0\) \\
    \(\widetilde{C_{3}}\) & \(0\) & \(0\) & \(0\) & \(0\) & \(0\) & \(0\) & \(1\) & \(0\) & \(0\) & \(1\) & \(0\) & \(0\) & \(1\) & \(0\) & \(0\) & \(0\) & \(0\) & \(-2\) \\
    \hline
  \end{tabular}.
\end{table}
Note that the intersection matrix is non-degenerate.

Discriminant groups and discriminant forms of the lattices \(L_{\mathcal{S}}\) and \(H \oplus \Pic(X)\) are given by
\begin{gather*}
  G' = 
  \begin{pmatrix}
    0 & 0 & 0 & 0 & 0 & 0 & 0 & \frac{1}{2} & 0 & \frac{1}{2} & 0 & \frac{1}{2} & \frac{1}{2} & 0 & \frac{1}{2} & 0 & 0 & 0 \\
    0 & \frac{1}{3} & \frac{2}{3} & 0 & \frac{1}{2} & \frac{1}{3} & \frac{1}{6} & 0 & \frac{2}{3} & \frac{1}{3} & \frac{5}{6} & \frac{2}{3} & \frac{1}{2} & 0 & \frac{1}{2} & \frac{2}{3} & \frac{1}{3} & 0
  \end{pmatrix}, \\
  G'' = 
  \begin{pmatrix}
    0 & 0 & \frac{1}{2} & 0 \\
    0 & 0 & \frac{2}{3} & \frac{1}{6}
  \end{pmatrix}; \;
  B' = 
  \begin{pmatrix}
    \frac{1}{2} & 0 \\
    0 & \frac{1}{6}
  \end{pmatrix}, \;
  B'' =
  \begin{pmatrix}
    \frac{1}{2} & 0 \\
    0 & \frac{5}{6}
  \end{pmatrix}; \;
  \begin{pmatrix}
    Q' \\ Q''
  \end{pmatrix}
  =
  \begin{pmatrix}
    \frac{3}{2} & \frac{1}{6} \\
    \frac{1}{2} & \frac{11}{6}
  \end{pmatrix}.
\end{gather*}


\subsection{Family \textnumero2.33}\label{subsection:02-33}

The pencil \(\mathcal{S}\) is defined by the equation
\begin{gather*}
  X^{2} Y Z + X Y^{2} Z + X Y Z^{2} + X^{2} Y T + Z T^{3} = \lambda X Y Z T.
\end{gather*}
Members \(\mathcal{S}_{\lambda}\) of the pencil are irreducible for any \(\lambda \in \mathbb{P}^1\) except
\(\mathcal{S}_{\infty} = S_{(X)} + S_{(Y)} + S_{(Z)} + S_{(T)}\).
The base locus of the pencil \(\mathcal{S}\) consists of the following curves:
\begin{gather*}
  C_1 = C_{(X, Z)}, \;
  C_2 = C_{(X, T)}, \;
  C_3 = C_{(Y, Z)}, \;
  C_4 = C_{(Y, T)}, \;
  C_5 = C_{(Z, T)}, \;
  C_6 = C_{(T, X + Y + Z)}.
\end{gather*}
Their linear equivalence classes on the generic member \(\mathcal{S}_{\Bbbk}\) of the pencil satisfy the following relations:
\begin{gather*}
  \begin{pmatrix}
    [C_{1}] \\ [C_{3}] \\ [C_{5}] \\ [C_{6}]
  \end{pmatrix} = 
  \begin{pmatrix}
    -3 & 0 & 1 \\
    0 & -3 & 1 \\
    6 & 3 & -2 \\
    -7 & -4 & 3
  \end{pmatrix} \cdot
  \begin{pmatrix}
    [C_{2}] \\ [C_{4}] \\ [H_{\mathcal{S}}]
  \end{pmatrix}.
\end{gather*}

For a general choice of \(\lambda \in \mathbb{C}\) the surface \(\mathcal{S}_{\lambda}\) has the following singularities:
\begin{itemize}\setlength{\itemindent}{2cm}
\item[\(P_{1} = P_{(X, Y, T)}\):] type \(\mathbb{A}_2\) with the quadratic term \(X \cdot Y\);
\item[\(P_{2} = P_{(X, Z, T)}\):] type \(\mathbb{A}_6\) with the quadratic term \(X \cdot Z\);
\item[\(P_{3} = P_{(Y, Z, T)}\):] type \(\mathbb{A}_3\) with the quadratic term \(Y \cdot (Z + T)\);
\item[\(P_{4} = P_{(X, T, Y + Z)}\):] type \(\mathbb{A}_2\) with the quadratic term \(X \cdot (X + Y + Z - \lambda T)\);
\item[\(P_{5} = P_{(Y, T, X + Z)}\):] type \(\mathbb{A}_2\) with the quadratic term \(Y \cdot (X + Y + Z - (\lambda + 1) T)\).
\end{itemize}

Galois action on the lattice \(L_{\lambda}\) is trivial. The intersection matrix on \(L_{\lambda} = L_{\mathcal{S}}\) is represented by
\begin{table}[H]
  \begin{tabular}{|c||cc|cccccc|ccc|cc|cc|ccc|}
    \hline
    \(\bullet\) & \(E_1^1\) & \(E_1^2\) & \(E_2^1\) & \(E_2^2\) & \(E_2^3\) & \(E_2^4\) & \(E_2^5\) & \(E_2^6\) & \(E_3^1\) & \(E_3^2\) & \(E_3^3\) & \(E_4^1\) & \(E_4^2\) & \(E_5^1\) & \(E_5^2\) & \(\widetilde{C_{2}}\) & \(\widetilde{C_{4}}\) & \(\widetilde{H_{\mathcal{S}}}\) \\
    \hline
    \hline
    \(\widetilde{C_{2}}\) & \(1\) & \(0\) & \(1\) & \(0\) & \(0\) & \(0\) & \(0\) & \(0\) & \(0\) & \(0\) & \(0\) & \(1\) & \(0\) & \(0\) & \(0\) & \(-2\) & \(0\) & \(1\) \\
    \(\widetilde{C_{4}}\) & \(0\) & \(1\) & \(0\) & \(0\) & \(0\) & \(0\) & \(0\) & \(0\) & \(0\) & \(0\) & \(1\) & \(0\) & \(0\) & \(0\) & \(1\) & \(0\) & \(-2\) & \(1\) \\
    \(\widetilde{H_{\mathcal{S}}}\) & \(0\) & \(0\) & \(0\) & \(0\) & \(0\) & \(0\) & \(0\) & \(0\) & \(0\) & \(0\) & \(0\) & \(0\) & \(0\) & \(0\) & \(0\) & \(1\) & \(1\) & \(4\) \\
    \hline
  \end{tabular}.
\end{table}
Note that the intersection matrix is non-degenerate.

Discriminant groups and discriminant forms of the lattices \(L_{\mathcal{S}}\) and \(H \oplus \Pic(X)\) are given by
\begin{gather*}
  G' = 
  \begin{pmatrix}
    \frac{7}{9} & \frac{2}{9} & 0 & \frac{2}{3} & \frac{1}{3} & 0 & \frac{2}{3} & \frac{1}{3} & \frac{2}{3} & \frac{1}{3} & 0 & \frac{8}{9} & \frac{4}{9} & \frac{5}{9} & \frac{1}{9} & \frac{1}{3} & \frac{2}{3} & 0
  \end{pmatrix}, \\
  G'' = 
  \begin{pmatrix}
    0 & 0 & \frac{8}{9} & \frac{1}{3}
  \end{pmatrix}; \;
  B' = 
  \begin{pmatrix}
    \frac{7}{9}
  \end{pmatrix}, \;
  B'' =
  \begin{pmatrix}
    \frac{2}{9}
  \end{pmatrix}; \;
  Q' =
  \begin{pmatrix}
    \frac{16}{9}
  \end{pmatrix}, \;
  Q'' =
  \begin{pmatrix}
    \frac{2}{9}
  \end{pmatrix}.
\end{gather*}


\subsection{Family \textnumero2.34}\label{subsection:02-34}

The pencil \(\mathcal{S}\) is defined by the equation
\begin{gather*}
  X^{2} Y Z + X Y^{2} Z + X Y Z^{2} + Y Z T^{2} + X T^{3} = \lambda X Y Z T.
\end{gather*}
Members \(\mathcal{S}_{\lambda}\) of the pencil are irreducible for any \(\lambda \in \mathbb{P}^1\) except
\(\mathcal{S}_{\infty} = S_{(X)} + S_{(Y)} + S_{(Z)} + S_{(T)}\).
The base locus of the pencil \(\mathcal{S}\) consists of the following curves:
\begin{gather*}
  C_{1} = C_{(X, Y)}, \;
  C_{2} = C_{(X, Z)}, \;
  C_{3} = C_{(X, T)}, \;
  C_{4} = C_{(Y, T)}, \;
  C_{5} = C_{(Z, T)}, \;
  C_{6} = C_{(T, X + Y + Z)}.
\end{gather*}
Their linear equivalence classes on the generic member \(\mathcal{S}_{\Bbbk}\) of the pencil satisfy the following relations:
\begin{gather*}
  \begin{pmatrix}
    [C_{1}] \\ [C_{2}] \\ [C_{6}] \\ [H_{\mathcal{S}}]
  \end{pmatrix} = 
  \begin{pmatrix}
    -2 & 0 & 3 \\
    -2 & 3 & 0 \\
    -3 & 2 & 2 \\
    -2 & 3 & 3
  \end{pmatrix} \cdot
  \begin{pmatrix}
    [C_{3}] \\ [C_{4}] \\ [C_{5}]
  \end{pmatrix}.
\end{gather*}

For a general choice of \(\lambda \in \mathbb{C}\) the surface \(\mathcal{S}_{\lambda}\) has the following singularities:
\begin{itemize}\setlength{\itemindent}{2cm}
\item[\(P_{1} = P_{(X, Y, T)}\):] type \(\mathbb{A}_4\) with the quadratic term \(X \cdot Y\);
\item[\(P_{2} = P_{(X, Z, T)}\):] type \(\mathbb{A}_4\) with the quadratic term \(X \cdot Z\);
\item[\(P_{3} = P_{(Y, Z, T)}\):] type \(\mathbb{A}_2\) with the quadratic term \(Y \cdot Z\);
\item[\(P_{4} = P_{(X, T, Y + Z)}\):] type \(\mathbb{A}_1\) with the quadratic term \(X (X + Y + Z - \lambda T) + T^2\);
\item[\(P_{5} = P_{(Y, T, X + Z)}\):] type \(\mathbb{A}_2\) with the quadratic term \(Y \cdot (X + Y + Z - \lambda T)\);
\item[\(P_{6} = P_{(Z, T, X + Y)}\):] type \(\mathbb{A}_2\) with the quadratic term \(Z \cdot (X + Y + Z - \lambda T)\).
\end{itemize}

Galois action on the lattice \(L_{\lambda}\) is trivial. The intersection matrix on \(L_{\lambda} = L_{\mathcal{S}}\) is represented by
\begin{table}[H]
  \begin{tabular}{|c||cccc|cccc|cc|c|cc|cc|ccc|}
    \hline
    \(\bullet\) & \(E_1^1\) & \(E_1^2\) & \(E_1^3\) & \(E_1^4\) & \(E_2^1\) & \(E_2^2\) & \(E_2^3\) & \(E_2^4\) & \(E_3^1\) & \(E_3^2\) & \(E_4^1\) & \(E_5^1\) & \(E_5^2\) & \(E_6^1\) & \(E_6^2\) & \(\widetilde{C_{3}}\) & \(\widetilde{C_{4}}\) & \(\widetilde{C_{5}}\) \\
    \hline
    \hline
    \(\widetilde{C_{3}}\) & \(1\) & \(0\) & \(0\) & \(0\) & \(1\) & \(0\) & \(0\) & \(0\) & \(0\) & \(0\) & \(1\) & \(0\) & \(0\) & \(0\) & \(0\) & \(-2\) & \(0\) & \(0\) \\
    \(\widetilde{C_{4}}\) & \(0\) & \(0\) & \(0\) & \(1\) & \(0\) & \(0\) & \(0\) & \(0\) & \(1\) & \(0\) & \(0\) & \(1\) & \(0\) & \(0\) & \(0\) & \(0\) & \(-2\) & \(0\) \\
    \(\widetilde{C_{5}}\) & \(0\) & \(0\) & \(0\) & \(0\) & \(0\) & \(0\) & \(0\) & \(1\) & \(0\) & \(1\) & \(0\) & \(0\) & \(0\) & \(0\) & \(1\) & \(0\) & \(0\) & \(-2\) \\
    \hline
  \end{tabular}.
\end{table}
Note that the intersection matrix is non-degenerate.

Discriminant groups and discriminant forms of the lattices \(L_{\mathcal{S}}\) and \(H \oplus \Pic(X)\) are given by
\begin{gather*}
  G' = 
  \begin{pmatrix}
    \frac{1}{3} & \frac{2}{3} & 0 & \frac{1}{3} & \frac{2}{3} & \frac{1}{3} & 0 & \frac{2}{3} & \frac{8}{9} & \frac{1}{9} & 0 & \frac{1}{9} & \frac{5}{9} & \frac{4}{9} & \frac{8}{9} & 0 & \frac{2}{3} & \frac{1}{3}
  \end{pmatrix}, \\
  G'' = 
  \begin{pmatrix}
    0 & 0 & \frac{7}{9} & \frac{1}{3}
  \end{pmatrix}; \;
  B' = 
  \begin{pmatrix}
    \frac{2}{9}
  \end{pmatrix}, \;
  B'' = 
  \begin{pmatrix}
    \frac{7}{9}
  \end{pmatrix}; \;
  Q' =
  \begin{pmatrix}
    \frac{2}{9}
  \end{pmatrix}, \;
  Q'' =
  \begin{pmatrix}
    \frac{16}{9}
  \end{pmatrix}.
\end{gather*}


\subsection{Family \textnumero2.35}\label{subsection:02-35}

The pencil \(\mathcal{S}\) is defined by the equation
\begin{gather*}
  X^{2} Y Z + X Y^{2} Z + X Y Z^{2} + X^{2} T^{2} + Y Z T^{2} = \lambda X Y Z T.
\end{gather*}
Members \(\mathcal{S}_{\lambda}\) of the pencil are irreducible for any \(\lambda \in \mathbb{P}^1\) except
\(\mathcal{S}_{\infty} = S_{(X)} + S_{(Y)} + S_{(Z)} + S_{(T)}\).
The base locus of the pencil \(\mathcal{S}\) consists of the following curves:
\begin{gather*}
  C_1 = C_{(X, Y)}, \;
  C_2 = C_{(X, Z)}, \;
  C_3 = C_{(X, T)}, \;
  C_4 = C_{(Y, T)}, \;
  C_5 = C_{(Z, T)}, \;
  C_6 = C_{(T, X + Y + Z)}.
\end{gather*}
Their linear equivalence classes on the generic member \(\mathcal{S}_{\Bbbk}\) of the pencil satisfy the following relations:
\[
  \begin{pmatrix}
    [C_4] + [C_5] + [C_6] \\ 2 [C_5] \\ 2 [C_6] \\ [H_{\mathcal{S}}]
  \end{pmatrix} =
  \begin{pmatrix}
    1 & 1 & 1 \\
    1 & -1 & 2 \\
    2 & 2 & -2 \\
    1 & 1 & 2
  \end{pmatrix} \cdot
  \begin{pmatrix}
    [C_1] \\ [C_2] \\ [C_3]
  \end{pmatrix}.
\]

For a general choice of \(\lambda \in \mathbb{C}\) the surface \(\mathcal{S}_{\lambda}\) has the following singularities:
\begin{itemize}\setlength{\itemindent}{2cm}
\item[\(P_{1} = P_{(X, Y, Z)}\):] type \(\mathbb{A}_1\) with the quadratic term \(X^2 + Y Z\);
\item[\(P_{2} = P_{(X, Y, T)}\):] type \(\mathbb{A}_5\) with the quadratic term \(X \cdot Y \);
\item[\(P_{3} = P_{(X, Z, T)}\):] type \(\mathbb{A}_5\) with the quadratic term \(X \cdot Z\);
\item[\(P_{4} = P_{(Y, Z, T)}\):] type \(\mathbb{A}_1\) with the quadratic term \(Y Z + T^2\);
\item[\(P_{5} = P_{(X, T, Y + Z)}\):] type \(\mathbb{A}_1\) with the quadratic term \(X (X + Y + Z - \lambda T) + T^2\);
\item[\(P_{6} = P_{(Y, T, X + Z)}\):] type \(\mathbb{A}_1\) with the quadratic term \(Y (X + Y + Z - \lambda T) - T^2\);
\item[\(P_{7} = P_{(Z, T, X + Y)}\):] type \(\mathbb{A}_1\) with the quadratic term \(Z (X + Y + Z - \lambda T) - T^2\).
\end{itemize}

Galois action on the lattice \(L_{\lambda}\) is trivial. The intersection matrix on \(L_{\lambda} = L_{\mathcal{S}}\) is represented by
\begin{table}[H]
  \setlength{\tabcolsep}{4pt}
  \begin{tabular}{|c||ccccc|ccccc|c|c|c|c|c|cccccc|}
    \hline
    \(\bullet\) & \(E_1^1\) & \(E_2^1\) & \(E_2^2\) & \(E_2^3\) & \(E_2^4\) & \(E_2^5\) & \(E_3^1\) & \(E_3^2\) & \(E_3^3\) & \(E_3^4\) & \(E_3^5\) & \(E_4^1\) & \(E_5^1\) & \(E_6^1\) & \(E_7^1\) & \(\widetilde{C_{1}}\) & \(\widetilde{C_{2}}\) & \(\widetilde{C_{3}}\) & \(\widetilde{C_{4}}\) & \(\widetilde{C_{5}}\) & \(\widetilde{C_{6}}\) \\
    \hline
    \hline
    \(\widetilde{C_{1}}\) & \(1\) & \(0\) & \(0\) & \(0\) & \(1\) & \(0\) & \(0\) & \(0\) & \(0\) & \(0\) & \(0\) & \(0\) & \(0\) & \(0\) & \(0\) & \(-2\) & \(0\) & \(0\) & \(0\) & \(0\) & \(0\) \\
    \(\widetilde{C_{2}}\) & \(1\) & \(0\) & \(0\) & \(0\) & \(0\) & \(0\) & \(0\) & \(0\) & \(0\) & \(1\) & \(0\) & \(0\) & \(0\) & \(0\) & \(0\) & \(0\) & \(-2\) & \(0\) & \(0\) & \(0\) & \(0\) \\
    \(\widetilde{C_{3}}\) & \(0\) & \(1\) & \(0\) & \(0\) & \(0\) & \(0\) & \(1\) & \(0\) & \(0\) & \(0\) & \(0\) & \(0\) & \(1\) & \(0\) & \(0\) & \(0\) & \(0\) & \(-2\) & \(0\) & \(0\) & \(0\) \\
    \(\widetilde{C_{4}}\) & \(0\) & \(0\) & \(0\) & \(0\) & \(0\) & \(1\) & \(0\) & \(0\) & \(0\) & \(0\) & \(0\) & \(1\) & \(0\) & \(1\) & \(0\) & \(0\) & \(0\) & \(0\) & \(-2\) & \(0\) & \(0\) \\
    \(\widetilde{C_{5}}\) & \(0\) & \(0\) & \(0\) & \(0\) & \(0\) & \(0\) & \(0\) & \(0\) & \(0\) & \(0\) & \(1\) & \(1\) & \(0\) & \(0\) & \(1\) & \(0\) & \(0\) & \(0\) & \(0\) & \(-2\) & \(0\) \\
    \(\widetilde{C_{6}}\) & \(0\) & \(0\) & \(0\) & \(0\) & \(0\) & \(0\) & \(0\) & \(0\) & \(0\) & \(0\) & \(0\) & \(0\) & \(1\) & \(1\) & \(1\) & \(0\) & \(0\) & \(0\) & \(0\) & \(0\) & \(-2\) \\
    \hline
  \end{tabular}.
\end{table}
Note that the intersection matrix is degenerate. We choose the following integral basis of the lattice \(L_{\lambda}\):
\begin{align*}
  \begin{pmatrix}
    [E_7^1] \\ [\widetilde{C_{2}}] \\ [\widetilde{C_{5}}]
  \end{pmatrix} = 
  \begin{pmatrix}
    2 & -3 & 0 & 3 & 6 & 5 & -5 & -4 & -3 & -2 & -1 & 2 & -4 & 1 & 4 & -6 & 4 & -2 \\
    0 & -1 & 0 & 1 & 2 & 2 & -2 & -2 & -2 & -2 & -1 & 1 & -1 & 1 & 1 & -2 & 2 & 0 \\
    -1 & 3 & 1 & -1 & -3 & -3 & 4 & 3 & 2 & 1 & 0 & -2 & 3 & -1 & -2 & 5 & -3 & 1
  \end{pmatrix} \cdot \\
  \begin{pmatrix}
    [E_1^1] & [E_2^1] & [E_2^2] & [E_2^3] & [E_2^4] & [E_2^5] & [E_3^1] & [E_3^2] & [E_3^3] & \\ [E_3^4] & [E_3^5] & [E_4^1] & [E_5^1] & [E_6^1] & [\widetilde{C_{1}}] & [\widetilde{C_{3}}] & [\widetilde{C_{4}}] & [\widetilde{C_{6}}]
  \end{pmatrix}^T.
\end{align*}

Discriminant groups and discriminant forms of the lattices \(L_{\mathcal{S}}\) and \(H \oplus \Pic(X)\) are given by
\begin{gather*}
  G' = 
  \begin{pmatrix}
    0 & \frac{1}{2} & 0 & \frac{1}{2} & 0 & \frac{1}{2} & \frac{1}{2} & 0 & \frac{1}{2} & 0 & \frac{1}{2} & \frac{1}{2} & 0 & 0 & 0 & 0 & 0 & 0 \\
    \frac{1}{2} & \frac{1}{2} & \frac{1}{2} & \frac{1}{2} & \frac{1}{2} & \frac{1}{2} & \frac{3}{4} & 0 & \frac{1}{4} & \frac{1}{2} & \frac{3}{4} & \frac{1}{4} & \frac{3}{4} & \frac{1}{4} & 0 & \frac{1}{2} & \frac{1}{2} & 0
  \end{pmatrix}, \\
  G'' = 
  \begin{pmatrix}
    0 & 0 & \frac{1}{2} & 0 \\
    0 & 0 & 0 & \frac{1}{4}
  \end{pmatrix}; \;
  B' = 
  \begin{pmatrix}
    \frac{1}{2} & 0 \\
    0 & \frac{3}{4}
  \end{pmatrix}, \;
  B'' =
  \begin{pmatrix}
    \frac{1}{2} & 0 \\
    0 & \frac{1}{4}
  \end{pmatrix}; \;
  \begin{pmatrix}
    Q' \\ Q''
  \end{pmatrix}
  =
  \begin{pmatrix}
    \frac{1}{2} & \frac{7}{4} \\
    \frac{3}{2} & \frac{1}{4}    
  \end{pmatrix}.
\end{gather*}


\subsection{Family \textnumero2.36}\label{subsection:02-36}

The pencil \(\mathcal{S}\) is defined by the equation
\[
  X^2 Y Z + X Y^2 Z + X Y Z^2 + X^3 T + Y Z T^2 = \lambda X Y Z T.
\]
Members \(\mathcal{S}_{\lambda}\) of the pencil are irreducible for any \(\lambda \in \mathbb{P}^1\) except
\(\mathcal{S}_{\infty} = S_{(X)} + S_{(Y)} + S_{(Z)} + S_{(T)}\).
The base locus of the pencil \(\mathcal{S}\) consists of the following curves:
\[
  C_1 = C_{(X, Y)}, \;
  C_2 = C_{(X, Z)}, \;
  C_3 = C_{(X, T)}, \;
  C_4 = C_{(Y, T)}, \;
  C_5 = C_{(Z, T)}, \;
  C_6 = C_{(T, X + Y + Z)}.
\]
Their linear equivalence classes on the generic member \(\mathcal{S}_{\Bbbk}\) of the pencil satisfy the following relations:
\begin{gather*}
  \begin{pmatrix}
    [C_{2}] \\ [C_{4}] \\ [C_{5}] \\ [C_{6}]
  \end{pmatrix} = 
  \begin{pmatrix}
    -1 & -2 & 1 \\
    -3 & 0 & 1 \\
    3 & 6 & -2 \\
    0 & -7 & 2
  \end{pmatrix} \cdot
  \begin{pmatrix}
    [C_{1}] \\ [C_{3}] \\ [H_{\mathcal{S}}]
  \end{pmatrix}.
\end{gather*}

For a general choice of \(\lambda \in \mathbb{C}\) the surface \(\mathcal{S}_{\lambda}\) has the following singularities:
\begin{itemize}\setlength{\itemindent}{2cm}
\item[\(P_{1} = P_{(X, Y, Z)}\):] type \(\mathbb{A}_2\) with the quadratic term \(Y \cdot Z\);
\item[\(P_{2} = P_{(X, Y, T)}\):] type \(\mathbb{A}_6\) with the quadratic term \(X \cdot Y\);
\item[\(P_{3} = P_{(X, Z, T)}\):] type \(\mathbb{A}_6\) with the quadratic term \(X \cdot Z\);
\item[\(P_{4} = P_{(X, T, Y + Z)}\):] type \(\mathbb{A}_1\) with the quadratic term \(X (X + Y + Z - \lambda T) + T^2\).
\end{itemize}

Galois action on the lattice \(L_{\lambda}\) is trivial. The intersection matrix on \(L_{\lambda} = L_{\mathcal{S}}\) is represented by
\begin{table}[H]
  \begin{tabular}{|c||cc|cccccc|cccccc|c|ccc|}
    \hline
    \(\bullet\) & \(E_1^1\) & \(E_1^2\) & \(E_2^1\) & \(E_2^2\) & \(E_2^3\) & \(E_2^4\) & \(E_2^5\) & \(E_2^6\) & \(E_3^1\) & \(E_3^2\) & \(E_3^3\) & \(E_3^4\) & \(E_3^5\) & \(E_3^6\) & \(E_4^1\) & \(\widetilde{C_{1}}\) & \(\widetilde{C_{3}}\) & \(\widetilde{H_{\mathcal{S}}}\) \\
    \hline
    \hline
    \(\widetilde{C_{1}}\) & \(1\) & \(0\) & \(0\) & \(0\) & \(0\) & \(0\) & \(1\) & \(0\) & \(0\) & \(0\) & \(0\) & \(0\) & \(0\) & \(0\) & \(0\) & \(-2\) & \(0\) & \(1\) \\
    \(\widetilde{C_{3}}\) & \(0\) & \(0\) & \(1\) & \(0\) & \(0\) & \(0\) & \(0\) & \(0\) & \(1\) & \(0\) & \(0\) & \(0\) & \(0\) & \(0\) & \(1\) & \(0\) & \(-2\) & \(1\) \\
    \(\widetilde{H_{\mathcal{S}}}\) & \(0\) & \(0\) & \(0\) & \(0\) & \(0\) & \(0\) & \(0\) & \(0\) & \(0\) & \(0\) & \(0\) & \(0\) & \(0\) & \(0\) & \(0\) & \(1\) & \(1\) & \(4\) \\
    \hline
  \end{tabular}.
\end{table}
Note that the intersection matrix is non-degenerate.

Discriminant groups and discriminant forms of the lattices \(L_{\mathcal{S}}\) and \(H \oplus \Pic(X)\) are given by
\begin{gather*}
  G' = 
  \begin{pmatrix}
    \frac{2}{5} & \frac{1}{5} & \frac{2}{5} & \frac{1}{5} & 0 & \frac{4}{5} & \frac{3}{5} & \frac{4}{5} & \frac{4}{5} & 0 & \frac{1}{5} & \frac{2}{5} & \frac{3}{5} & \frac{4}{5} & \frac{4}{5} & \frac{3}{5} & \frac{3}{5} & \frac{1}{5}
  \end{pmatrix}, \\
  G'' = 
  \begin{pmatrix}
    0 & 0 & 0 & \frac{1}{5}
  \end{pmatrix}; \;
  B' = 
  \begin{pmatrix}
    \frac{3}{5}
  \end{pmatrix}, \;
  B'' =
  \begin{pmatrix}
    \frac{2}{5}
  \end{pmatrix}; \;
  Q' =
  \begin{pmatrix}
    \frac{8}{5}
  \end{pmatrix}, \;
  Q'' =
  \begin{pmatrix}
    \frac{2}{5}
  \end{pmatrix}.
\end{gather*}


\section{Dolgachev--Nikulin duality for Fano threefolds: rank 3}\label{appendix:rank-03}
\subsection{Family \textnumero3.1}\label{subsection:03-01}

The pencil \(\mathcal{S}\) is defined by the equation
\begin{gather*}
  X^{2} Y Z + X Y^{2} Z + X^{2} Z^{2} + 3 X Y Z^{2} + Y^{2} Z^{2} + X Z^{3} + Y Z^{3} + X^{2} Y T + X Y^{2} T + 2 X^{2} Z T + 2 Y^{2} Z T + \\ 3 X Z^{2} T + 3 Y Z^{2} T + X^{2} T^{2} + 3 X Y T^{2} + Y^{2} T^{2} + 3 X Z T^{2} + 3 Y Z T^{2} + X T^{3} + Y T^{3} = \lambda X Y Z T.
\end{gather*}
Members \(\mathcal{S}_{\lambda}\) of the pencil are irreducible for any \(\lambda \in \mathbb{P}^1\) except
\begin{gather*}
  \mathcal{S}_{\infty} = S_{(X)} + S_{(Y)} + S_{(Z)} + S_{(T)}, \;
  \mathcal{S}_{- 6} = S_{(Z + T)} + S_{(X + Y + Z + T)} + S_{((X + Y) (Z + T) + X Y)}.
\end{gather*}
The base locus of the pencil \(\mathcal{S}\) consists of the following curves:
\begin{gather*}
  C_1 = C_{(X, Y)}, \;
  C_2 = C_{(Z, T)}, \;
  C_3 = C_{(X, Z + T)}, \;
  C_4 = C_{(Y, Z + T)}, \;
  C_5 = C_{(X, Y + Z + T)}, \\
  C_6 = C_{(Y, X + Z + T)}, \;
  C_7 = C_{(Z, X + Y + T)}, \;
  C_8 = C_{(T, X + Y + Z)}, \;
  C_9 = C_{(Z, X Y + T (X + Y))}, \;
  C_{10} = C_{(T, X Y + Z (X + Y))}.
\end{gather*}
Their linear equivalence classes on the generic member \(\mathcal{S}_{\Bbbk}\) of the pencil satisfy the following relations:
\begin{gather*}
  \begin{pmatrix}
    [C_{4}] \\ [C_{5}] \\ [C_{6}] \\ [C_{8}] \\ [C_{9}] \\ [C_{10}]
  \end{pmatrix} = 
  \begin{pmatrix}
    0 & -2 & -1 & 0 & 1 \\
    -1 & 0 & -2 & 0 & 1 \\
    -1 & 4 & 2 & 0 & -1 \\
    2 & -4 & 0 & -1 & 1 \\
    0 & -1 & 0 & -1 & 1 \\
    -2 & 3 & 0 & 1 & 0
  \end{pmatrix} \cdot
  \begin{pmatrix}
    [C_{1}] \\ [C_{2}] \\ [C_{3}] \\ [C_{7}] \\ [H_{\mathcal{S}}]
  \end{pmatrix}.
\end{gather*}

For a general choice of \(\lambda \in \mathbb{C}\) the surface \(\mathcal{S}_{\lambda}\) has the following singularities:
\begin{itemize}\setlength{\itemindent}{2cm}
\item[\(P_{1} = P_{(X, Z, T)}\):] type \(\mathbb{A}_3\) with the quadratic term \((Z + T) \cdot (X + Z + T)\);
\item[\(P_{2} = P_{(Y, Z, T)}\):] type \(\mathbb{A}_3\) with the quadratic term \((Z + T) \cdot (Y + Z + T)\);
\item[\(P_{3} = P_{(X, Y, Z + T)}\):] type \(\mathbb{A}_5\) with the quadratic term \((\lambda + 6) X \cdot Y\);
\item[\(P_{4} = P_{(Z, T, X + Y)}\):] type \(\mathbb{A}_1\) with the quadratic term \((Z + T) (X + Y + Z + T) - (\lambda + 6) Z T\).
\end{itemize}

Galois action on the lattice \(L_{\lambda}\) is trivial. The intersection matrix on \(L_{\lambda} = L_{\mathcal{S}}\) is represented by
\begin{table}[H]
  \begin{tabular}{|c||ccc|ccc|ccccc|c|ccccc|}
    \hline
    \(\bullet\) & \(E_1^1\) & \(E_1^2\) & \(E_1^3\) & \(E_2^1\) & \(E_2^2\) & \(E_2^3\) & \(E_3^1\) & \(E_3^2\) & \(E_3^3\) & \(E_3^4\) & \(E_3^5\) & \(E_4^1\) & \(\widetilde{C_{1}}\) & \(\widetilde{C_{2}}\) & \(\widetilde{C_{3}}\) & \(\widetilde{C_{7}}\) & \(\widetilde{H_{\mathcal{S}}}\) \\
    \hline
    \hline
    \(\widetilde{C_{1}}\) & \(0\) & \(0\) & \(0\) & \(0\) & \(0\) & \(0\) & \(0\) & \(0\) & \(1\) & \(0\) & \(0\) & \(0\) & \(-2\) & \(0\) & \(0\) & \(0\) & \(1\) \\
    \(\widetilde{C_{2}}\) & \(1\) & \(0\) & \(0\) & \(1\) & \(0\) & \(0\) & \(0\) & \(0\) & \(0\) & \(0\) & \(0\) & \(1\) & \(0\) & \(-2\) & \(0\) & \(0\) & \(1\) \\
    \(\widetilde{C_{3}}\) & \(0\) & \(1\) & \(0\) & \(0\) & \(0\) & \(0\) & \(1\) & \(0\) & \(0\) & \(0\) & \(0\) & \(0\) & \(0\) & \(0\) & \(-2\) & \(0\) & \(1\) \\
    \(\widetilde{C_{7}}\) & \(0\) & \(0\) & \(0\) & \(0\) & \(0\) & \(0\) & \(0\) & \(0\) & \(0\) & \(0\) & \(0\) & \(1\) & \(0\) & \(0\) & \(0\) & \(-2\) & \(1\) \\
    \(\widetilde{H_{\mathcal{S}}}\) & \(0\) & \(0\) & \(0\) & \(0\) & \(0\) & \(0\) & \(0\) & \(0\) & \(0\) & \(0\) & \(0\) & \(0\) & \(1\) & \(1\) & \(1\) & \(1\) & \(4\) \\
    \hline
  \end{tabular}.
\end{table}
Note that the intersection matrix is non-degenerate.

Discriminant groups and discriminant forms of the lattices \(L_{\mathcal{S}}\) and \(H \oplus \Pic(X)\) are given by
\begin{gather*}
  G' = 
  \begin{pmatrix}
    \frac{1}{2} & 0 & \frac{1}{2} & \frac{1}{2} & 0 & \frac{1}{2} & 0 & 0 & 0 & 0 & 0 & 0 & 0 & 0 & 0 & 0 & 0 \\
    0 & 0 & \frac{1}{2} & 0 & 0 & 0 & 0 & \frac{1}{2} & 0 & 0 & 0 & 0 & \frac{1}{2} & 0 & \frac{1}{2} & 0 & 0 \\
    \frac{1}{4} & \frac{1}{2} & \frac{3}{4} & \frac{3}{4} & \frac{1}{2} & \frac{1}{4} & \frac{1}{4} & \frac{1}{2} & \frac{3}{4} & \frac{1}{2} & \frac{1}{4} & \frac{3}{4} & \frac{1}{2} & 0 & 0 & \frac{1}{2} & \frac{1}{4}
  \end{pmatrix}, \\
  G'' = 
  \begin{pmatrix}
    0 & 0 & \frac{1}{2} & 0 & 0 \\
    0 & 0 & 0 & \frac{1}{2} & 0 \\
    0 & 0 & -\frac{1}{4} & -\frac{1}{4} & \frac{1}{4}
  \end{pmatrix}; \;
  B' = 
  \begin{pmatrix}
    0 & \frac{1}{2} & 0 \\
    \frac{1}{2} & 0 & 0 \\
    0 & 0 & \frac{1}{4}
  \end{pmatrix}, \;
  B'' = 
  \begin{pmatrix}
    0 & \frac{1}{2} & 0 \\
    \frac{1}{2} & 0 & 0 \\
    0 & 0 & \frac{3}{4}
  \end{pmatrix}; \;
  \begin{pmatrix}
    Q' \\ Q''
  \end{pmatrix}
  =
  \begin{pmatrix}
    0 & 0 & \frac{1}{4} \\
    0 & 0 & \frac{7}{4}
  \end{pmatrix}.
\end{gather*}


\subsection{Family \textnumero3.2}\label{subsection:03-02}

The pencil \(\mathcal{S}\) is defined by the equation
\begin{gather*}
  X^{2} Y Z + X Y^{2} Z + X Y Z^{2} + X^{3} T + 3 X^{2} Y T + 3 X Y^{2} T + Y^{3} T + 3 X^{2} Z T + \\ 3 Y^{2} Z T + 3 X Z^{2} T + 3 Y Z^{2} T + Z^{3} T + X Z T^{2} + Y Z T^{2} + Z^{2} T^{2} = \lambda X Y Z T.
\end{gather*}
Members \(\mathcal{S}_{\lambda}\) of the pencil are irreducible for any \(\lambda \in \mathbb{P}^1\) except
\begin{gather*}
  \mathcal{S}_{\infty} = S_{(X)} + S_{(Y)} + S_{(Z)} + S_{(T)}, \;
  \mathcal{S}_{- 6} = S_{(X + Y + Z)} + S_{(T ((X + Y + Z)^2 + Z T) + X Y Z)}.
\end{gather*}
The base locus of the pencil \(\mathcal{S}\) consists of the following curves:
\begin{gather*}
  C_{1} = C_{(X, T)}, \;
  C_{2} = C_{(Y, T)}, \;
  C_{3} = C_{(Z, T)}, \;
  C_{4} = C_{(X, Y + Z)}, \;
  C_{5} = C_{(Y, X + Z)}, \\
  C_{6} = C_{(Z, X + Y)}, \;
  C_{7} = C_{(T, X + Y + Z)}, \;
  C_{8} = C_{(X, (Y + Z)^2 + Z T)}, \;
  C_{9} = C_{(Y, (X + Z)^2 + Z T)}.
\end{gather*}
Their linear equivalence classes on the generic member \(\mathcal{S}_{\Bbbk}\) of the pencil satisfy the following relations:
\begin{gather*}
  \begin{pmatrix}
    [C_{3}] \\ [C_{5}] \\ [C_{7}] \\ [C_{8}] \\ [C_{9}]
  \end{pmatrix} = 
  \begin{pmatrix}
    0 & 0 & 0 & -3 & 1 \\
    1 & 1 & -1 & -4 & 1 \\
    -1 & -1 & 0 & 3 & 0 \\
    -1 & 0 & -1 & 0 & 1 \\
    -1 & -2 & 1 & 4 & 0
  \end{pmatrix} \cdot
  \begin{pmatrix}
    [C_{1}] \\ [C_{2}] \\ [C_{4}] \\ [C_{6}] \\ [H_{\mathcal{S}}]
  \end{pmatrix}.
\end{gather*}

For a general choice of \(\lambda \in \mathbb{C}\) the surface \(\mathcal{S}_{\lambda}\) has the following singularities:
\begin{itemize}\setlength{\itemindent}{2cm}
\item[\(P_{1} = P_{(X, Y, Z)}\):] type \(\mathbb{A}_5\) with the quadratic term \(Z \cdot (X + Y + Z)\);
\item[\(P_{2} = P_{(X, T, Y + Z)}\):] type \(\mathbb{A}_2\) with the quadratic term \(X \cdot (X + Y + Z - (\lambda + 6) T)\);
\item[\(P_{3} = P_{(Y, T, X + Z)}\):] type \(\mathbb{A}_2\) with the quadratic term \(Y \cdot (X + Y + Z - (\lambda + 6) T)\);
\item[\(P_{4} = P_{(Z, T, X + Y)}\):] type \(\mathbb{A}_3\) with the quadratic term \(Z \cdot (X + Y + Z - (\lambda + 6) T)\).
\end{itemize}

Galois action on the lattice \(L_{\lambda}\) is trivial. The intersection matrix on \(L_{\lambda} = L_{\mathcal{S}}\) is represented by
\begin{table}[H]
  \begin{tabular}{|c||ccccc|cc|cc|ccc|ccccc|}
    \hline
    \(\bullet\) & \(E_1^1\) & \(E_1^2\) & \(E_1^3\) & \(E_1^4\) & \(E_1^5\) & \(E_2^1\) & \(E_2^2\) & \(E_3^1\) & \(E_3^2\) & \(E_4^1\) & \(E_4^2\) & \(E_4^3\) & \(\widetilde{C_{1}}\) & \(\widetilde{C_{2}}\) & \(\widetilde{C_{4}}\) & \(\widetilde{C_{6}}\) & \(\widetilde{H_{\mathcal{S}}}\) \\
    \hline
    \hline
    \(\widetilde{C_{1}}\) & \(0\) & \(0\) & \(0\) & \(0\) & \(0\) & \(1\) & \(0\) & \(0\) & \(0\) & \(0\) & \(0\) & \(0\) & \(-2\) & \(1\) & \(0\) & \(0\) & \(1\) \\
    \(\widetilde{C_{2}}\) & \(0\) & \(0\) & \(0\) & \(0\) & \(0\) & \(0\) & \(0\) & \(1\) & \(0\) & \(0\) & \(0\) & \(0\) & \(1\) & \(-2\) & \(0\) & \(0\) & \(1\) \\
    \(\widetilde{C_{4}}\) & \(1\) & \(0\) & \(0\) & \(0\) & \(0\) & \(1\) & \(0\) & \(0\) & \(0\) & \(0\) & \(0\) & \(0\) & \(0\) & \(0\) & \(-2\) & \(0\) & \(1\) \\
    \(\widetilde{C_{6}}\) & \(0\) & \(0\) & \(0\) & \(1\) & \(0\) & \(0\) & \(0\) & \(0\) & \(0\) & \(1\) & \(0\) & \(0\) & \(0\) & \(0\) & \(0\) & \(-2\) & \(1\) \\
    \(\widetilde{H_{\mathcal{S}}}\) & \(0\) & \(0\) & \(0\) & \(0\) & \(0\) & \(0\) & \(0\) & \(0\) & \(0\) & \(0\) & \(0\) & \(0\) & \(1\) & \(1\) & \(1\) & \(1\) & \(4\) \\
    \hline
  \end{tabular}.
\end{table}
Note that the intersection matrix is non-degenerate.

Discriminant groups and discriminant forms of the lattices \(L_{\mathcal{S}}\) and \(H \oplus \Pic(X)\) are given by
\begin{gather*}
  G' = 
  \begin{pmatrix}
    \frac{3}{16} & \frac{1}{4} & \frac{5}{16} & \frac{3}{8} & \frac{3}{16} & \frac{5}{8} & \frac{13}{16} & \frac{3}{8} & \frac{3}{16} & \frac{11}{16} & \frac{1}{8} & \frac{9}{16} & \frac{5}{16} & \frac{9}{16} & \frac{1}{8} & \frac{1}{4} & \frac{7}{16}
  \end{pmatrix}, \\
  G'' = 
  \begin{pmatrix}
    0 & 0 & -\frac{5}{8} & -\frac{1}{16} & \frac{1}{8}
  \end{pmatrix}; \;
  B' = 
  \begin{pmatrix}
    \frac{1}{16}
  \end{pmatrix}, \;
  B'' = 
  \begin{pmatrix}
    \frac{15}{16}
  \end{pmatrix}; \;
  Q' =
  \begin{pmatrix}
    \frac{1}{16}
  \end{pmatrix}, \;
  Q'' =
  \begin{pmatrix}
    \frac{31}{16}
  \end{pmatrix}.
\end{gather*}


\subsection{Family \textnumero3.3}\label{subsection:03-03}

The pencil \(\mathcal{S}\) is defined by the equation
\begin{gather*}
  X^{3} Y + 2 X^{2} Y Z + X Y^{2} Z + X Y Z^{2} + Y^{2} Z^{2} + 2 X^{2} Y T + X^{2} Z T + \\ Y^{2} Z T + Y Z^{2} T + X Y T^{2} + 2 X Z T^{2} + 2 Y Z T^{2} + Z T^{3} = \lambda X Y Z T.
\end{gather*}
Members \(\mathcal{S}_{\lambda}\) of the pencil are irreducible for any \(\lambda \in \mathbb{P}^1\) except
\(\mathcal{S}_{\infty} = S_{(X)} + S_{(Y)} + S_{(Z)} + S_{(T)}\).
The base locus of the pencil \(\mathcal{S}\) consists of the following curves:
\begin{gather*}
  C_{1} = C_{(X, Z)}, \;
  C_{2} = C_{(Y, Z)}, \;
  C_{3} = C_{(Y, T)}, \;
  C_{4} = C_{(X, Y + T)}, \;
  C_{5} = C_{(Y, X + T)}, \\
  C_{6} = C_{(Z, X + T)}, \;
  C_{7} = C_{(T, X + Z)}, \;
  C_{8} = C_{(X, Y (Z + T) + T^2)}, \;
  C_{9} = C_{(T, X^2 + Z (X + Y))}.
\end{gather*}
Their linear equivalence classes on the generic member \(\mathcal{S}_{\Bbbk}\) of the pencil satisfy the following relations:
\begin{gather*}
  \begin{pmatrix}
    [C_{2}] \\ [C_{3}] \\ [C_{8}] \\ [C_{9}]
  \end{pmatrix} = 
  \begin{pmatrix}
    -1 & 0 & 0 & -2 & 0 & 1 \\
    1 & 0 & -2 & 2 & 0 & 0 \\
    -1 & -1 & 0 & 0 & 0 & 1 \\
    -1 & 0 & 2 & -2 & -1 & 1
  \end{pmatrix} \cdot
  \begin{pmatrix}
    [C_{1}] & [C_{4}] & [C_{5}] & [C_{6}] & [C_{7}] & [H_{\mathcal{S}}]
  \end{pmatrix}^T.
\end{gather*}

For a general choice of \(\lambda \in \mathbb{C}\) the surface \(\mathcal{S}_{\lambda}\) has the following singularities:
\begin{itemize}\setlength{\itemindent}{2cm}
\item[\(P_{1} = P_{(X, Y, T)}\):] type \(\mathbb{A}_4\) with the quadratic term \(Y \cdot (X + Y + T)\);
\item[\(P_{2} = P_{(X, Z, T)}\):] type \(\mathbb{A}_4\) with the quadratic term \(Z \cdot (X + Z + T)\);
\item[\(P_{3} = P_{(Y, Z, X + T)}\):] type \(\mathbb{A}_3\) with the quadratic term \((\lambda + 4) Y \cdot Z\).
\end{itemize}

Galois action on the lattice \(L_{\lambda}\) is trivial. The intersection matrix on \(L_{\lambda} = L_{\mathcal{S}}\) is represented by
\begin{table}[H]
  \begin{tabular}{|c||cccc|cccc|ccc|cccccc|}
    \hline
    \(\bullet\) & \(E_1^1\) & \(E_1^2\) & \(E_1^3\) & \(E_1^4\) & \(E_2^1\) & \(E_2^2\) & \(E_2^3\) & \(E_2^4\) & \(E_3^1\) & \(E_3^2\) & \(E_3^3\) & \(\widetilde{C_{1}}\) & \(\widetilde{C_{4}}\) & \(\widetilde{C_{5}}\) & \(\widetilde{C_{6}}\) & \(\widetilde{C_{7}}\) & \(\widetilde{H_{\mathcal{S}}}\) \\
    \hline
    \hline
    \(\widetilde{C_{1}}\) & \(0\) & \(0\) & \(0\) & \(0\) & \(1\) & \(0\) & \(0\) & \(0\) & \(0\) & \(0\) & \(0\) & \(-2\) & \(1\) & \(0\) & \(0\) & \(0\) & \(1\) \\
    \(\widetilde{C_{4}}\) & \(1\) & \(0\) & \(0\) & \(0\) & \(0\) & \(0\) & \(0\) & \(0\) & \(0\) & \(0\) & \(0\) & \(1\) & \(-2\) & \(0\) & \(0\) & \(0\) & \(1\) \\
    \(\widetilde{C_{5}}\) & \(0\) & \(0\) & \(1\) & \(0\) & \(0\) & \(0\) & \(0\) & \(0\) & \(1\) & \(0\) & \(0\) & \(0\) & \(0\) & \(-2\) & \(0\) & \(0\) & \(1\) \\
    \(\widetilde{C_{6}}\) & \(0\) & \(0\) & \(0\) & \(0\) & \(0\) & \(1\) & \(0\) & \(0\) & \(0\) & \(0\) & \(1\) & \(0\) & \(0\) & \(0\) & \(-2\) & \(0\) & \(1\) \\
    \(\widetilde{C_{7}}\) & \(0\) & \(0\) & \(0\) & \(0\) & \(0\) & \(0\) & \(0\) & \(1\) & \(0\) & \(0\) & \(0\) & \(0\) & \(0\) & \(0\) & \(0\) & \(-2\) & \(1\) \\
    \(\widetilde{H_{\mathcal{S}}}\) & \(0\) & \(0\) & \(0\) & \(0\) & \(0\) & \(0\) & \(0\) & \(0\) & \(0\) & \(0\) & \(0\) & \(1\) & \(1\) & \(1\) & \(1\) & \(1\) & \(4\) \\
    \hline
  \end{tabular}.
\end{table}
Note that the intersection matrix is non-degenerate.

Discriminant groups and discriminant forms of the lattices \(L_{\mathcal{S}}\) and \(H \oplus \Pic(X)\) are given by
\begin{gather*}
  G' = 
  \begin{pmatrix}
    \frac{3}{7} & \frac{15}{28} & \frac{9}{14} & \frac{23}{28} & \frac{1}{28} & \frac{3}{14} & \frac{3}{4} & \frac{2}{7} & \frac{6}{7} & \frac{11}{14} & \frac{5}{7} & \frac{6}{7} & \frac{9}{28} & \frac{13}{14} & \frac{9}{14} & \frac{23}{28} & \frac{5}{14}
  \end{pmatrix}, \\
  G'' = 
  \begin{pmatrix}
    0 & 0 & -\frac{17}{28} & -\frac{3}{28} & \frac{1}{14}
  \end{pmatrix}; \;
  B' = 
  \begin{pmatrix}
    \frac{1}{28}
  \end{pmatrix}, \;
  B'' = 
  \begin{pmatrix}
    \frac{27}{28}
  \end{pmatrix}; \;
  Q' =
  \begin{pmatrix}
    \frac{1}{28}
  \end{pmatrix}, \;
  Q'' =
  \begin{pmatrix}
    \frac{55}{28}
  \end{pmatrix}.
\end{gather*}


\subsection{Family \textnumero3.4}\label{subsection:03-04}

The pencil \(\mathcal{S}\) is defined by the equation
\begin{gather*}
  X^{2} Y Z + X Y^{2} Z + X^{2} Z^{2} + 2 X Y Z^{2} + Y^{2} Z^{2} + 2 X^{2} Z T + 2 Y^{2} Z T + \\ Y Z^{2} T + X^{2} T^{2} + 2 X Y T^{2} + Y^{2} T^{2} + 2 Y Z T^{2} + Y T^{3} = \lambda X Y Z T.
\end{gather*}
Members \(\mathcal{S}_{\lambda}\) of the pencil are irreducible for any \(\lambda \in \mathbb{P}^1\) except
\(\mathcal{S}_{\infty} = S_{(X)} + S_{(Y)} + S_{(Z)} + S_{(T)}\).
The base locus of the pencil \(\mathcal{S}\) consists of the following curves:
\begin{gather*}
  C_{1} = C_{(X, Y)}, \;
  C_{2} = C_{(Z, T)}, \;
  C_{3} = C_{(X, Y + T)}, \;
  C_{4} = C_{(X, Z + T)}, \;
  C_{5} = C_{(Y, Z + T)}, \\
  C_{6} = C_{(T, X + Y)}, \;
  C_{7} = C_{(Z, Y T + (X + Y)^2)}, \;
  C_{8} = C_{(T, X Y + Z (X + Y))}.
\end{gather*}
Their linear equivalence classes on the generic member \(\mathcal{S}_{\Bbbk}\) of the pencil satisfy the following relations:
\begin{gather*}
  \begin{pmatrix}
    [C_{3}] \\ [C_{7}] \\ [C_{8}] \\ [H_{\mathcal{S}}]
  \end{pmatrix} = 
  \begin{pmatrix}
    1 & 0 & -2 & 2 & 0 \\
    2 & -2 & 0 & 2 & 0 \\
    2 & -1 & 0 & 2 & -1 \\
    2 & 0 & 0 & 2 & 0
  \end{pmatrix} \cdot
  \begin{pmatrix}
    [C_{1}] & [C_{2}] & [C_{4}] & [C_{5}] & [C_{6}]
  \end{pmatrix}^T.
\end{gather*}

For a general choice of \(\lambda \in \mathbb{C}\) the surface \(\mathcal{S}_{\lambda}\) has the following singularities:
\begin{itemize}\setlength{\itemindent}{2cm}
\item[\(P_{1} = P_{(X, Y, T)}\):] type \(\mathbb{A}_1\) with the quadratic term \(Y T + (X + Y)^2\);
\item[\(P_{2} = P_{(X, Z, T)}\):] type \(\mathbb{A}_1\) with the quadratic term \(X Z + (Z + T)^2\);
\item[\(P_{3} = P_{(Y, Z, T)}\):] type \(\mathbb{A}_1\) with the quadratic term \(Y Z + (Z + T)^2\);
\item[\(P_{4} = P_{(X, Y, Z + T)}\):] type \(\mathbb{A}_5\) with the quadratic term \((\lambda + 4) X \cdot Y\);
\item[\(P_{5} = P_{(Z, T, X + Y)}\):] type \(\mathbb{A}_2\) with the quadratic term \(Z \cdot (X + Y - (\lambda + 4) T)\);
\item[\(P_{6} = P_{(X, Z + T, Y - (\lambda + 4) T)}\):] type \(\mathbb{A}_1\) with the quadratic term
  \[
    (\lambda + 4) ((\lambda + 5) (X T + (Z + T)^2) - X (X + Y + T));
  \]
\item[\(P_{7} = P_{(Y, Z + T, X - (\lambda + 4) T)}\):] type \(\mathbb{A}_1\) with the quadratic term
  \[
    (\lambda + 4) ((\lambda + 4) (Y T + (Z + T)^2) - Y (X + Y)).
  \]
\end{itemize}

Galois action on the lattice \(L_{\lambda}\) is trivial. The intersection matrix on \(L_{\lambda} = L_{\mathcal{S}}\) is represented by
\begin{table}[H]
  \begin{tabular}{|c||c|c|c|ccccc|cc|c|c|ccccc|}
    \hline
    \(\bullet\) & \(E_1^1\) & \(E_2^1\) & \(E_3^1\) & \(E_4^1\) & \(E_4^2\) & \(E_4^3\) & \(E_4^4\) & \(E_4^5\) & \(E_5^1\) & \(E_5^2\) & \(E_6^1\) & \(E_7^1\) & \(\widetilde{C_{1}}\) & \(\widetilde{C_{2}}\) & \(\widetilde{C_{4}}\) & \(\widetilde{C_{5}}\) & \(\widetilde{C_{6}}\) \\
    \hline
    \hline
    \(\widetilde{C_{1}}\) & \(1\) & \(0\) & \(0\) & \(0\) & \(0\) & \(0\) & \(1\) & \(0\) & \(0\) & \(0\) & \(0\) & \(0\) & \(-2\) & \(0\) & \(0\) & \(0\) & \(0\) \\
    \(\widetilde{C_{2}}\) & \(0\) & \(1\) & \(1\) & \(0\) & \(0\) & \(0\) & \(0\) & \(0\) & \(1\) & \(0\) & \(0\) & \(0\) & \(0\) & \(-2\) & \(0\) & \(0\) & \(0\) \\
    \(\widetilde{C_{4}}\) & \(0\) & \(1\) & \(0\) & \(1\) & \(0\) & \(0\) & \(0\) & \(0\) & \(0\) & \(0\) & \(1\) & \(0\) & \(0\) & \(0\) & \(-2\) & \(0\) & \(0\) \\
    \(\widetilde{C_{5}}\) & \(0\) & \(0\) & \(1\) & \(0\) & \(0\) & \(0\) & \(0\) & \(1\) & \(0\) & \(0\) & \(0\) & \(1\) & \(0\) & \(0\) & \(0\) & \(-2\) & \(0\) \\
    \(\widetilde{C_{6}}\) & \(1\) & \(0\) & \(0\) & \(0\) & \(0\) & \(0\) & \(0\) & \(0\) & \(0\) & \(1\) & \(0\) & \(0\) & \(0\) & \(0\) & \(0\) & \(0\) & \(-2\) \\
    \hline
  \end{tabular}.
\end{table}
Note that the intersection matrix is non-degenerate.

Discriminant groups and discriminant forms of the lattices \(L_{\mathcal{S}}\) and \(H \oplus \Pic(X)\) are given by
\begin{gather*}
  G' = 
  \begin{pmatrix}
    0 & \frac{1}{2} & 0 & 0 & 0 & 0 & 0 & \frac{1}{2} & \frac{1}{2} & 0 & \frac{1}{2} & \frac{1}{2} & \frac{1}{2} & 0 & 0 & 0 & \frac{1}{2} \\
    0 & \frac{1}{2} & \frac{1}{2} & 0 & 0 & 0 & 0 & 0 & 0 & 0 & \frac{1}{2} & \frac{1}{2} & 0 & 0 & 0 & 0 & 0 \\
    0 & \frac{5}{6} & \frac{1}{2} & \frac{1}{6} & \frac{1}{3} & \frac{1}{2} & \frac{2}{3} & \frac{1}{2} & 0 & \frac{1}{3} & 0 & \frac{2}{3} & \frac{1}{3} & \frac{2}{3} & 0 & \frac{1}{3} & \frac{2}{3}
  \end{pmatrix}, \\
  G'' = 
  \begin{pmatrix}
    0 & 0 & \frac{1}{2} & 0 & 0 \\
    0 & 0 & 0 & \frac{1}{2} & 0 \\
    0 & 0 & -\frac{1}{3} & -\frac{1}{3} & \frac{1}{6}
  \end{pmatrix}; \;
  B' = 
  \begin{pmatrix}
    \frac{1}{2} & \frac{1}{2} & 0 \\
    \frac{1}{2} & 0 & 0 \\
    0 & 0 & \frac{1}{6}
  \end{pmatrix}, \;
  B'' = 
  \begin{pmatrix}
    \frac{1}{2} & \frac{1}{2} & 0 \\
    \frac{1}{2} & 0 & 0 \\
    0 & 0 & \frac{5}{6}
  \end{pmatrix}; \;
  \begin{pmatrix}
    Q' \\ Q''
  \end{pmatrix}
  =
  \begin{pmatrix}
    \frac{1}{2} & 0 & \frac{1}{6} \\
    \frac{3}{2} & 0 & \frac{11}{6}
  \end{pmatrix}.
\end{gather*}


\subsection{Family \textnumero3.5}\label{subsection:03-05}

The pencil \(\mathcal{S}\) is defined by the equation
\begin{gather*}
  X^{3} Z + 3 X^{2} Y Z + 3 X Y^{2} Z + Y^{3} Z + X Y Z^{2} + Y^{2} Z^{2} + X^{2} Y T + \\ X Y^{2} T + 2 X^{2} Z T + 2 Y^{2} Z T + X Y T^{2} + X Z T^{2} + Y Z T^{2} = \lambda X Y Z T.
\end{gather*}
Members \(\mathcal{S}_{\lambda}\) of the pencil are irreducible for any \(\lambda \in \mathbb{P}^1\) except
\(\mathcal{S}_{\infty} = S_{(X)} + S_{(Y)} + S_{(Z)} + S_{(T)}\).
The base locus of the pencil \(\mathcal{S}\) consists of the following curves:
\begin{gather*}
  C_{1} = C_{(X, Y)}, \;
  C_{2} = C_{(X, Z)}, \;
  C_{3} = C_{(Y, Z)}, \;
  C_{4} = C_{(Z, T)}, \;
  C_{5} = C_{(Y, X + T)}, \\
  C_{6} = C_{(T, X + Y)}, \;
  C_{7} = C_{(Z, X + Y + T)}, \;
  C_{8} = C_{(X, Y Z + (Y + T)^2)}, \;
  C_{9} = C_{(T, Y Z + (X + Y)^2)}.
\end{gather*}
Their linear equivalence classes on the generic member \(\mathcal{S}_{\Bbbk}\) of the pencil satisfy the following relations:
\begin{gather*}
  \begin{pmatrix}
    [C_{3}] \\ [C_{7}] \\ [C_{8}] \\ [C_{9}]
  \end{pmatrix} = 
  \begin{pmatrix}
    -1 & 0 & 0 & -2 & 0 & 1 \\
    1 & -1 & -1 & 2 & 0 & 0 \\
    -1 & -1 & 0 & 0 & 0 & 1 \\
    0 & 0 & -1 & 0 & -1 & 1
  \end{pmatrix} \cdot
  \begin{pmatrix}
    [C_{1}] & [C_{2}] & [C_{4}] & [C_{5}] & [C_{6}] & [H_{\mathcal{S}}]
  \end{pmatrix}^T.
\end{gather*}

For a general choice of \(\lambda \in \mathbb{C}\) the surface \(\mathcal{S}_{\lambda}\) has the following singularities:
\begin{itemize}\setlength{\itemindent}{2cm}
\item[\(P_{1} = P_{(X, Y, Z)}\):] type \(\mathbb{A}_1\) with the quadratic term \(X Y + Z (X + Y)\);
\item[\(P_{2} = P_{(X, Y, T)}\):] type \(\mathbb{A}_4\) with the quadratic term \(Y \cdot (X + Y)\);
\item[\(P_{3} = P_{(X, Z, Y + T)}\):] type \(\mathbb{A}_1\) with the quadratic term \(X (X + Y - (\lambda + 4) Z + T) - Z^2\);
\item[\(P_{4} = P_{(Y, Z, X + T)}\):] type \(\mathbb{A}_2\) with the quadratic term \(Y \cdot (X + Y - (\lambda + 4) Z + T)\);
\item[\(P_{5} = P_{(Z, T, X + Y)}\):] type \(\mathbb{A}_2\) with the quadratic term \(T \cdot (X + Y - (\lambda + 4) Z + T)\);
\item[\(P_{6} = P_{(Y, X + T, Z - (\lambda + 4) T)}\):] type \(\mathbb{A}_1\).
\end{itemize}

Galois action on the lattice \(L_{\lambda}\) is trivial. The intersection matrix on \(L_{\lambda} = L_{\mathcal{S}}\) is represented by
\begin{table}[H]
  \begin{tabular}{|c||c|cccc|c|cc|cc|c|cccccc|}
    \hline
    \(\bullet\) & \(E_1^1\) & \(E_2^1\) & \(E_2^2\) & \(E_2^3\) & \(E_2^4\) & \(E_3^1\) & \(E_4^1\) & \(E_4^2\) & \(E_5^1\) & \(E_5^2\) & \(E_6^1\) & \(\widetilde{C_{1}}\) & \(\widetilde{C_{2}}\) & \(\widetilde{C_{4}}\) & \(\widetilde{C_{5}}\) & \(\widetilde{C_{6}}\) & \(\widetilde{H_{\mathcal{S}}}\) \\
    \hline
    \hline
    \(\widetilde{C_{1}}\) & \(1\) & \(0\) & \(0\) & \(1\) & \(0\) & \(0\) & \(0\) & \(0\) & \(0\) & \(0\) & \(0\) & \(-2\) & \(0\) & \(0\) & \(0\) & \(0\) & \(1\) \\
    \(\widetilde{C_{2}}\) & \(1\) & \(0\) & \(0\) & \(0\) & \(0\) & \(1\) & \(0\) & \(0\) & \(0\) & \(0\) & \(0\) & \(0\) & \(-2\) & \(1\) & \(0\) & \(0\) & \(1\) \\
    \(\widetilde{C_{4}}\) & \(0\) & \(0\) & \(0\) & \(0\) & \(0\) & \(0\) & \(0\) & \(0\) & \(1\) & \(0\) & \(0\) & \(0\) & \(1\) & \(-2\) & \(0\) & \(0\) & \(1\) \\
    \(\widetilde{C_{5}}\) & \(0\) & \(1\) & \(0\) & \(0\) & \(0\) & \(0\) & \(1\) & \(0\) & \(0\) & \(0\) & \(1\) & \(0\) & \(0\) & \(0\) & \(-2\) & \(0\) & \(1\) \\
    \(\widetilde{C_{6}}\) & \(0\) & \(0\) & \(0\) & \(0\) & \(1\) & \(0\) & \(0\) & \(0\) & \(1\) & \(0\) & \(0\) & \(0\) & \(0\) & \(0\) & \(0\) & \(-2\) & \(1\) \\
    \(\widetilde{H_{\mathcal{S}}}\) & \(0\) & \(0\) & \(0\) & \(0\) & \(0\) & \(0\) & \(0\) & \(0\) & \(0\) & \(0\) & \(0\) & \(1\) & \(1\) & \(1\) & \(1\) & \(1\) & \(4\) \\
    \hline
  \end{tabular}.
\end{table}
Note that the intersection matrix is non-degenerate.

Discriminant groups and discriminant forms of the lattices \(L_{\mathcal{S}}\) and \(H \oplus \Pic(X)\) are given by
\begin{gather*}
  G' = 
  \begin{pmatrix}
    \frac{5}{28} & \frac{1}{4} & \frac{4}{7} & \frac{25}{28} & 0 & \frac{4}{7} & \frac{2}{7} & \frac{9}{14} & \frac{6}{7} & \frac{3}{7} & \frac{27}{28} & \frac{3}{14} & \frac{1}{7} & \frac{5}{28} & \frac{13}{14} & \frac{3}{28} & \frac{5}{14}
  \end{pmatrix}, \\
  G'' = 
  \begin{pmatrix}
    0 & 0 & -\frac{5}{28} & -\frac{1}{28} & \frac{1}{7}
  \end{pmatrix}; \;
  B' = 
  \begin{pmatrix}
    \frac{9}{28}
  \end{pmatrix}, \;
  B'' = 
  \begin{pmatrix}
    \frac{19}{28}
  \end{pmatrix}; \;
  Q' =
  \begin{pmatrix}
    \frac{9}{28}
  \end{pmatrix}, \;
  Q'' =
  \begin{pmatrix}
    \frac{47}{28}
  \end{pmatrix}.
\end{gather*}


\subsection{Family \textnumero3.6}\label{subsection:03-06}

The pencil \(\mathcal{S}\) is defined by the equation
\begin{gather*}
  X^{2} Y Z + 2 X Y^{2} Z + Y^{3} Z + X Y Z^{2} + Y^{2} Z^{2} + X^{2} Y T + X Y^{2} T + \\ 3 Y^{2} Z T + Y Z^{2} T + X Y T^{2} + X Z T^{2} + 3 Y Z T^{2} + Z T^{3} = \lambda X Y Z T.
\end{gather*}
Members \(\mathcal{S}_{\lambda}\) of the pencil are irreducible for any \(\lambda \in \mathbb{P}^1\) except
\begin{gather*}
  \mathcal{S}_{\infty} = S_{(X)} + S_{(Y)} + S_{(Z)} + S_{(T)}, \;
  \mathcal{S}_{- 3} = S_{(X + Y + T)} + S_{(X Y (Z + T) + Z (Y + T)^2 + Y Z^2)}.
\end{gather*}
The base locus of the pencil \(\mathcal{S}\) consists of the following curves:
\begin{gather*}
  C_1 = C_{(X, Z)}, \;
  C_2 = C_{(Y, Z)}, \;
  C_3 = C_{(Y, T)}, \;
  C_4 = C_{(Z, T)}, \;
  C_5 = C_{(X, Y + T)}, \\
  C_6 = C_{(Y, X + T)}, \;
  C_7 = C_{(T, X + Y)}, \;
  C_8 = C_{(Z, X + Y + T)}, \;
  C_9 = C_{(T, X + Y + Z)}, \;
  C_{10} = C_{(X, Y Z + (Y + T)^2)}.
\end{gather*}
Their linear equivalence classes on the generic member \(\mathcal{S}_{\Bbbk}\) of the pencil satisfy the following relations:
\begin{gather*}
  \begin{pmatrix}
    [C_{6}] \\ [C_{7}] \\ [C_{8}] \\ [C_{9}] \\ [C_{10}]
  \end{pmatrix} = 
  \begin{pmatrix}
    0 & -1 & -2 & 0 & 0 & 1 \\
    1 & 2 & 2 & 1 & -1 & -1 \\
    -1 & -1 & 0 & -1 & 0 & 1 \\
    -1 & -2 & -3 & -2 & 1 & 2 \\
    -1 & 0 & 0 & 0 & -1 & 1
  \end{pmatrix} \cdot
  \begin{pmatrix}
    [C_{1}] & [C_{2}] & [C_{3}] & [C_{4}] & [C_{5}] & [H_{\mathcal{S}}]
  \end{pmatrix}^T.
\end{gather*}

For a general choice of \(\lambda \in \mathbb{C}\) the surface \(\mathcal{S}_{\lambda}\) has the following singularities:
\begin{itemize}\setlength{\itemindent}{2cm}
\item[\(P_{1} = P_{(X, Y, T)}\):] type \(\mathbb{A}_3\) with the quadratic term \(Y \cdot (X + Y + T)\);
\item[\(P_{2} = P_{(Y, Z, T)}\):] type \(\mathbb{A}_2\) with the quadratic term \(Y \cdot (Z + T)\);
\item[\(P_{3} = P_{(X, Z, Y + T)}\):] type \(\mathbb{A}_2\) with the quadratic term \(X \cdot (X + Y - (\lambda + 3) Z + T)\);
\item[\(P_{4} = P_{(Y, Z, X + T)}\):] type \(\mathbb{A}_1\) with the quadratic term \((Y - Z) (X + Y + T) - (\lambda + 3) Y Z\);
\item[\(P_{5} = P_{(Y, T, X + Z)}\):] type \(\mathbb{A}_1\) with the quadratic term \(Y (X + Y + Z - (\lambda + 2) T) + T^2\);
\item[\(P_{6} = P_{(Z, T, X + Y)}\):] type \(\mathbb{A}_2\) with the quadratic term \(T \cdot (X + Y - (\lambda + 3) Z + T)\).
\end{itemize}

Galois action on the lattice \(L_{\lambda}\) is trivial. The intersection matrix on \(L_{\lambda} = L_{\mathcal{S}}\) is represented by
\begin{table}[H]
  \begin{tabular}{|c||ccc|cc|cc|cc|cc|cccccc|}
    \hline
    \(\bullet\) & \(E_1^1\) & \(E_1^2\) & \(E_1^3\) & \(E_2^1\) & \(E_2^2\) & \(E_3^1\) & \(E_3^2\) & \(E_4^1\) & \(E_5^1\) & \(E_6^1\) & \(E_6^2\) & \(\widetilde{C_{1}}\) & \(\widetilde{C_{2}}\) & \(\widetilde{C_{3}}\) & \(\widetilde{C_{4}}\) & \(\widetilde{C_{5}}\) & \(\widetilde{H_{\mathcal{S}}}\) \\
    \hline
    \hline
    \(\widetilde{C_{1}}\) & \(0\) & \(0\) & \(0\) & \(0\) & \(0\) & \(1\) & \(0\) & \(0\) & \(0\) & \(0\) & \(0\) & \(-2\) & \(1\) & \(0\) & \(1\) & \(0\) & \(1\) \\
    \(\widetilde{C_{2}}\) & \(0\) & \(0\) & \(0\) & \(1\) & \(0\) & \(0\) & \(0\) & \(1\) & \(0\) & \(0\) & \(0\) & \(1\) & \(-2\) & \(0\) & \(0\) & \(0\) & \(1\) \\
    \(\widetilde{C_{3}}\) & \(1\) & \(0\) & \(0\) & \(1\) & \(0\) & \(0\) & \(0\) & \(0\) & \(1\) & \(0\) & \(0\) & \(0\) & \(0\) & \(-2\) & \(0\) & \(0\) & \(1\) \\
    \(\widetilde{C_{4}}\) & \(0\) & \(0\) & \(0\) & \(0\) & \(1\) & \(0\) & \(0\) & \(0\) & \(0\) & \(1\) & \(0\) & \(1\) & \(0\) & \(0\) & \(-2\) & \(0\) & \(1\) \\
    \(\widetilde{C_{5}}\) & \(0\) & \(0\) & \(1\) & \(0\) & \(0\) & \(1\) & \(0\) & \(0\) & \(0\) & \(0\) & \(0\) & \(0\) & \(0\) & \(0\) & \(0\) & \(-2\) & \(1\) \\
    \(\widetilde{H_{\mathcal{S}}}\) & \(0\) & \(0\) & \(0\) & \(0\) & \(0\) & \(0\) & \(0\) & \(0\) & \(0\) & \(0\) & \(0\) & \(1\) & \(1\) & \(1\) & \(1\) & \(1\) & \(4\) \\
    \hline
  \end{tabular}.
\end{table}
Note that the intersection matrix is non-degenerate.

Discriminant groups and discriminant forms of the lattices \(L_{\mathcal{S}}\) and \(H \oplus \Pic(X)\) are given by
\begin{gather*}
  G' = 
  \begin{pmatrix}
    \frac{9}{32} & \frac{5}{8} & \frac{31}{32} & \frac{13}{32} & \frac{9}{16} & \frac{7}{16} & \frac{23}{32} & \frac{5}{32} & \frac{31}{32} & \frac{13}{16} & \frac{29}{32} & \frac{27}{32} & \frac{5}{16} & \frac{15}{16} & \frac{23}{32} & \frac{5}{16} & \frac{7}{32}
  \end{pmatrix}, \\
  G'' = 
  \begin{pmatrix}
    0 & 0 & \frac{1}{8} & -\frac{1}{32} & \frac{1}{4}
  \end{pmatrix}; \;
  B' = 
  \begin{pmatrix}
    \frac{25}{32}
  \end{pmatrix}, \;
  B'' = 
  \begin{pmatrix}
    \frac{7}{32}
  \end{pmatrix}; \;
  Q' =
  \begin{pmatrix}
    \frac{57}{32}
  \end{pmatrix}, \;
  Q'' =
  \begin{pmatrix}
    \frac{7}{32}
  \end{pmatrix}.
\end{gather*}


\subsection{Family \textnumero3.7}\label{subsection:03-07}

The pencil \(\mathcal{S}\) is defined by the equation
\begin{gather*}
  X^{2} Y Z + X Y^{2} Z + X Y Z^{2} + X Y^{2} T + Y^{2} Z T + X Z^{2} T + Y Z^{2} T + \\ X Y T^{2} + Y^{2} T^{2} + X Z T^{2} + 2 Y Z T^{2} + Z^{2} T^{2} + Y T^{3} + Z T^{3} = \lambda X Y Z T.
\end{gather*}
Members \(\mathcal{S}_{\lambda}\) of the pencil are irreducible for any \(\lambda \in \mathbb{P}^1\) except
\begin{gather*}
  \mathcal{S}_{\infty} = S_{(X)} + S_{(Y)} + S_{(Z)} + S_{(T)}, \;
  \mathcal{S}_{- 3} = S_{(X + T)} + S_{(X Y Z + (Y + Z) (Y + T) (Z + T))}.
\end{gather*}
The base locus of the pencil \(\mathcal{S}\) consists of the following curves:
\begin{gather*}
  C_{1} = C_{(X, T)}, \;
  C_{2} = C_{(Y, Z)}, \;
  C_{3} = C_{(Y, T)}, \;
  C_{4} = C_{(Z, T)}, \;
  C_{5} = C_{(X, Y + Z)}, \;
  C_{6} = C_{(X, Y + T)}, \\
  C_{7} = C_{(X, Z + T)}, \;
  C_{8} = C_{(Y, X + T)}, \;
  C_{9} = C_{(Y, Z + T)}, \;
  C_{10} = C_{(Z, X + T)}, \;
  C_{11} = C_{(Z, Y + T)}, \;
  C_{12} = C_{(T, X + Y + Z)}.
\end{gather*}
Their linear equivalence classes on the generic member \(\mathcal{S}_{\Bbbk}\) of the pencil satisfy the following relations:
\begin{gather*}
  \begin{pmatrix}
    [C_{7}] \\ [C_{9}] \\ [C_{10}] \\ [C_{11}] \\ [C_{12}]
  \end{pmatrix} = 
  \begin{pmatrix}
    -1 & 0 & 0 & 0 & -1 & -1 & 0 & 1 \\
    0 & -1 & -1 & 0 & 0 & 0 & -1 & 1 \\
    -2 & 0 & 0 & 0 & 0 & 0 & -1 & 1 \\
    2 & -1 & 0 & -1 & 0 & 0 & 1 & 0 \\
    -1 & 0 & -1 & -1 & 0 & 0 & 0 & 1
  \end{pmatrix} \cdot \\
  \begin{pmatrix}
    [C_{1}] & [C_{2}] & [C_{3}] & [C_{4}] & [C_{5}] & [C_{6}] & [C_{8}] & [H_{\mathcal{S}}]
  \end{pmatrix}^T.
\end{gather*}

For a general choice of \(\lambda \in \mathbb{C}\) the surface \(\mathcal{S}_{\lambda}\) has the following singularities:
\begin{itemize}\setlength{\itemindent}{2cm}
\item[\(P_{1} = P_{(X, Y, T)}\):] type \(\mathbb{A}_2\) with the quadratic term \((X + T) \cdot (Y + T)\);
\item[\(P_{2} = P_{(X, Z, T)}\):] type \(\mathbb{A}_2\) with the quadratic term \((X + T) \cdot (Z + T)\);
\item[\(P_{3} = P_{(Y, Z, T)}\):] type \(\mathbb{A}_3\) with the quadratic term \(Y \cdot Z\);
\item[\(P_{4} = P_{(X, T, Y + Z)}\):] type \(\mathbb{A}_1\) with the quadratic term \((X + T) (X + Y + Z) - (\lambda + 3) X T\);
\item[\(P_{5} = P_{(Y, Z, X + T)}\):] type \(\mathbb{A}_1\) with the quadratic term \((X + T) (Y + Z) + (\lambda + 3) Y Z\).
\end{itemize}

Galois action on the lattice \(L_{\lambda}\) is trivial. The intersection matrix on \(L_{\lambda} = L_{\mathcal{S}}\) is represented by
\begin{table}[H]
  \begin{tabular}{|c||cc|cc|ccc|c|c|cccccccc|}
    \hline
    \(\bullet\) & \(E_1^1\) & \(E_1^2\) & \(E_2^1\) & \(E_2^2\) & \(E_3^1\) & \(E_3^2\) & \(E_3^3\) & \(E_4^1\) & \(E_5^1\) & \(\widetilde{C_{1}}\) & \(\widetilde{C_{2}}\) & \(\widetilde{C_{3}}\) & \(\widetilde{C_{4}}\) & \(\widetilde{C_{5}}\) & \(\widetilde{C_{6}}\) & \(\widetilde{C_{8}}\) & \(\widetilde{H_{\mathcal{S}}}\) \\
    \hline
    \hline
    \(\widetilde{C_{1}}\) & \(1\) & \(0\) & \(1\) & \(0\) & \(0\) & \(0\) & \(0\) & \(1\) & \(0\) & \(-2\) & \(0\) & \(0\) & \(0\) & \(0\) & \(0\) & \(0\) & \(1\) \\
    \(\widetilde{C_{2}}\) & \(0\) & \(0\) & \(0\) & \(0\) & \(0\) & \(1\) & \(0\) & \(0\) & \(1\) & \(0\) & \(-2\) & \(0\) & \(0\) & \(1\) & \(0\) & \(0\) & \(1\) \\
    \(\widetilde{C_{3}}\) & \(0\) & \(1\) & \(0\) & \(0\) & \(1\) & \(0\) & \(0\) & \(0\) & \(0\) & \(0\) & \(0\) & \(-2\) & \(0\) & \(0\) & \(0\) & \(0\) & \(1\) \\
    \(\widetilde{C_{4}}\) & \(0\) & \(0\) & \(0\) & \(1\) & \(0\) & \(0\) & \(1\) & \(0\) & \(0\) & \(0\) & \(0\) & \(0\) & \(-2\) & \(0\) & \(0\) & \(0\) & \(1\) \\
    \(\widetilde{C_{5}}\) & \(0\) & \(0\) & \(0\) & \(0\) & \(0\) & \(0\) & \(0\) & \(1\) & \(0\) & \(0\) & \(1\) & \(0\) & \(0\) & \(-2\) & \(1\) & \(0\) & \(1\) \\
    \(\widetilde{C_{6}}\) & \(0\) & \(1\) & \(0\) & \(0\) & \(0\) & \(0\) & \(0\) & \(0\) & \(0\) & \(0\) & \(0\) & \(0\) & \(0\) & \(1\) & \(-2\) & \(0\) & \(1\) \\
    \(\widetilde{C_{8}}\) & \(1\) & \(0\) & \(0\) & \(0\) & \(0\) & \(0\) & \(0\) & \(0\) & \(1\) & \(0\) & \(0\) & \(0\) & \(0\) & \(0\) & \(0\) & \(-2\) & \(1\) \\
    \(\widetilde{H_{\mathcal{S}}}\) & \(0\) & \(0\) & \(0\) & \(0\) & \(0\) & \(0\) & \(0\) & \(0\) & \(0\) & \(1\) & \(1\) & \(1\) & \(1\) & \(1\) & \(1\) & \(1\) & \(4\) \\
    \hline
  \end{tabular}.
\end{table}
Note that the intersection matrix is non-degenerate.

Discriminant groups and discriminant forms of the lattices \(L_{\mathcal{S}}\) and \(H \oplus \Pic(X)\) are given by
\begin{gather*}
  G' = 
  \begin{pmatrix}
    \frac{1}{3} & \frac{1}{3} & \frac{2}{3} & \frac{1}{3} & 0 & 0 & 0 & \frac{1}{3} & \frac{2}{3} & 0 & 0 & 0 & 0 & \frac{2}{3} & \frac{1}{3} & \frac{1}{3} & \frac{2}{3} \\
    \frac{5}{12} & \frac{1}{6} & \frac{3}{4} & \frac{1}{6} & \frac{5}{6} & \frac{3}{4} & \frac{1}{6} & \frac{2}{3} & \frac{5}{12} & \frac{1}{3} & \frac{1}{2} & \frac{11}{12} & \frac{7}{12} & 0 & 0 & \frac{1}{3} & \frac{5}{6}
  \end{pmatrix}, \\
  G'' = 
  \begin{pmatrix}
    0 & 0 & -\frac{1}{3} & \frac{1}{3} & \frac{1}{3} \\
    0 & 0 & -\frac{1}{6} & -\frac{5}{12} & \frac{1}{12}
  \end{pmatrix}; \;
  B' = 
  \begin{pmatrix}
    0 & \frac{2}{3} \\
    \frac{2}{3} & \frac{7}{12}
  \end{pmatrix}, \;
  B'' = 
  \begin{pmatrix}
    0 & \frac{1}{3} \\
    \frac{1}{3} & \frac{5}{12}
  \end{pmatrix}; \;
  \begin{pmatrix}
    Q' \\ Q''
  \end{pmatrix}
  =
  \begin{pmatrix}
    0 & \frac{19}{12} \\
    0 & \frac{5}{12}    
  \end{pmatrix}.
\end{gather*}


\subsection{Family \textnumero3.8}\label{subsection:03-08}

The pencil \(\mathcal{S}\) is defined by the equation
\begin{gather*}
  X^{2} Y Z + X Y^{2} Z + X^{2} Z^{2} + X Y Z^{2} + X^{2} Z T + Y^{2} Z T + \\ X Z^{2} T + X Y T^{2} + 2 X Z T^{2} + 2 Y Z T^{2} + Y T^{3} + Z T^{3} = \lambda X Y Z T.
\end{gather*}
Members \(\mathcal{S}_{\lambda}\) of the pencil are irreducible for any \(\lambda \in \mathbb{P}^1\) except
\(\mathcal{S}_{\infty} = S_{(X)} + S_{(Y)} + S_{(Z)} + S_{(T)}\).
The base locus of the pencil \(\mathcal{S}\) consists of the following curves:
\begin{gather*}
  C_{1} = C_{(X, T)}, \;
  C_{2} = C_{(Y, Z)}, \;
  C_{3} = C_{(Z, T)}, \;
  C_{4} = C_{(Y, X + T)}, \;
  C_{5} = C_{(Z, X + T)}, \\
  C_{6} = C_{(T, X + Y)}, \;
  C_{7} = C_{(T, Y + Z)}, \;
  C_{8} = C_{(Y, T (X + T) + X Z)}, \;
  C_{9} = C_{(X, Z (Y + T)^2 + Y T^2)}.
\end{gather*}
Their linear equivalence classes on the generic member \(\mathcal{S}_{\Bbbk}\) of the pencil satisfy the following relations:
\begin{gather*}
  \begin{pmatrix}
    [C_{5}] \\ [C_{6}] \\ [C_{8}] \\ [C_{9}]
  \end{pmatrix} = 
  \begin{pmatrix}
    0 & -1 & -2 & 0 & 0 & 1 \\
    -1 & 0 & -1 & 0 & -1 & 1 \\
    0 & -1 & 0 & -1 & 0 & 1 \\
    -1 & 0 & 0 & 0 & 0 & 1
  \end{pmatrix} \cdot
  \begin{pmatrix}
    [C_{1}] & [C_{2}] & [C_{3}] & [C_{4}] & [C_{7}] & [H_{\mathcal{S}}]
  \end{pmatrix}^T.
\end{gather*}

For a general choice of \(\lambda \in \mathbb{C}\) the surface \(\mathcal{S}_{\lambda}\) has the following singularities:
\begin{itemize}\setlength{\itemindent}{2cm}
\item[\(P_{1} = P_{(X, Y, T)}\):] type \(\mathbb{A}_3\) with the quadratic term \(X \cdot (X + Y + T)\);
\item[\(P_{2} = P_{(X, Z, T)}\):] type \(\mathbb{A}_3\) with the quadratic term \(Z \cdot (X + T)\);
\item[\(P_{3} = P_{(Y, Z, T)}\):] type \(\mathbb{A}_2\) with the quadratic term \(Z \cdot (Y + Z + T)\);
\item[\(P_{4} = P_{(Y, Z, X + T)}\):] type \(\mathbb{A}_2\) with the quadratic term \(Y \cdot (X + (\lambda + 3) Z + T)\);
\item[\(P_{5} = P_{(Z, T, X + Y)}\):] type \(\mathbb{A}_1\) with the quadratic term \(Z (X + Y - (\lambda + 2) T) + T^2\).
\end{itemize}

Galois action on the lattice \(L_{\lambda}\) is trivial. The intersection matrix on \(L_{\lambda} = L_{\mathcal{S}}\) is represented by
\begin{table}[H]
  \begin{tabular}{|c||ccc|ccc|cc|cc|c|cccccc|}
    \hline
    \(\bullet\) & \(E_1^1\) & \(E_1^2\) & \(E_1^3\) & \(E_2^1\) & \(E_2^2\) & \(E_2^3\) & \(E_3^1\) & \(E_3^2\) & \(E_4^1\) & \(E_4^2\) & \(E_5^1\) & \(\widetilde{C_{1}}\) & \(\widetilde{C_{2}}\) & \(\widetilde{C_{3}}\) & \(\widetilde{C_{4}}\) & \(\widetilde{C_{7}}\) & \(\widetilde{H_{\mathcal{S}}}\) \\
    \hline
    \hline
    \(\widetilde{C_{1}}\) & \(1\) & \(0\) & \(0\) & \(1\) & \(0\) & \(0\) & \(0\) & \(0\) & \(0\) & \(0\) & \(0\) & \(-2\) & \(0\) & \(0\) & \(0\) & \(1\) & \(1\) \\
    \(\widetilde{C_{2}}\) & \(0\) & \(0\) & \(0\) & \(0\) & \(0\) & \(0\) & \(1\) & \(0\) & \(1\) & \(0\) & \(0\) & \(0\) & \(-2\) & \(0\) & \(0\) & \(0\) & \(1\) \\
    \(\widetilde{C_{3}}\) & \(0\) & \(0\) & \(0\) & \(0\) & \(0\) & \(1\) & \(1\) & \(0\) & \(0\) & \(0\) & \(1\) & \(0\) & \(0\) & \(-2\) & \(0\) & \(0\) & \(1\) \\
    \(\widetilde{C_{4}}\) & \(0\) & \(0\) & \(1\) & \(0\) & \(0\) & \(0\) & \(0\) & \(0\) & \(1\) & \(0\) & \(0\) & \(0\) & \(0\) & \(0\) & \(-2\) & \(0\) & \(1\) \\
    \(\widetilde{C_{7}}\) & \(0\) & \(0\) & \(0\) & \(0\) & \(0\) & \(0\) & \(0\) & \(1\) & \(0\) & \(0\) & \(0\) & \(1\) & \(0\) & \(0\) & \(0\) & \(-2\) & \(1\) \\
    \(\widetilde{H_{\mathcal{S}}}\) & \(0\) & \(0\) & \(0\) & \(0\) & \(0\) & \(0\) & \(0\) & \(0\) & \(0\) & \(0\) & \(0\) & \(1\) & \(1\) & \(1\) & \(1\) & \(1\) & \(4\) \\
    \hline
  \end{tabular}.
\end{table}
Note that the intersection matrix is non-degenerate.

Discriminant groups and discriminant forms of the lattices \(L_{\mathcal{S}}\) and \(H \oplus \Pic(X)\) are given by
\begin{gather*}
  G' = 
  \begin{pmatrix}
    \frac{27}{34} & \frac{16}{17} & \frac{3}{34} & \frac{13}{34} & \frac{2}{17} & \frac{29}{34} & \frac{5}{34} & \frac{15}{34} & 0 & \frac{1}{2} & \frac{27}{34} & \frac{11}{17} & \frac{9}{34} & \frac{10}{17} & \frac{4}{17} & \frac{25}{34} & \frac{13}{34}
  \end{pmatrix}, \\
  G'' = 
  \begin{pmatrix}
    0 & 0 & -\frac{11}{17} & -\frac{15}{34} & \frac{1}{17}
  \end{pmatrix}; \;
  B' = 
  \begin{pmatrix}
    \frac{1}{34}
  \end{pmatrix}, \;
  B'' = 
  \begin{pmatrix}
    \frac{33}{34}
  \end{pmatrix}; \;
  Q' =
  \begin{pmatrix}
    \frac{1}{34}
  \end{pmatrix}, \;
  Q'' =
  \begin{pmatrix}
    \frac{67}{34}
  \end{pmatrix}.
\end{gather*}


\subsection{Family \textnumero3.9}\label{subsection:03-09}

The pencil \(\mathcal{S}\) is defined by the equation
\begin{gather*}
  X^{2} Y Z + X Y^{2} Z + X Y Z^{2} + X^{3} T + Y^{2} Z T + Y Z^{2} T + 2 X^{2} T^{2} + Y Z T^{2} + X T^{3} = \lambda X Y Z T.
\end{gather*}
Members \(\mathcal{S}_{\lambda}\) of the pencil are irreducible for any \(\lambda \in \mathbb{P}^1\) except
\begin{gather*}
  \mathcal{S}_{\infty} = S_{(X)} + S_{(Y)} + S_{(Z)} + S_{(T)}, \;
  \mathcal{S}_{- 2} = S_{(X + T)} + S_{(X T (X + T) + Y Z (X + Y + Z + T))}.
\end{gather*}
The base locus of the pencil \(\mathcal{S}\) consists of the following curves:
\begin{gather*}
  C_{1} = C_{(X, Y)}, \;
  C_{2} = C_{(X, Z)}, \;
  C_{3} = C_{(X, T)}, \;
  C_{4} = C_{(Y, T)}, \;
  C_{5} = C_{(Z, T)}, \\
  C_{6} = C_{(Y, X + T)}, \;
  C_{7} = C_{(Z, X + T)}, \;
  C_{8} = C_{(X, Y + Z + T)}, \;
  C_{9} = C_{(T, X + Y + Z)}.
\end{gather*}
Their linear equivalence classes on the generic member \(\mathcal{S}_{\Bbbk}\) of the pencil satisfy the following relations:
\begin{gather*}
  \begin{pmatrix}
    [C_{4}] \\ [C_{5}] \\ [C_{7}] \\ [C_{8}] \\ [C_{9}]
  \end{pmatrix} = 
  \begin{pmatrix}
    -1 & 0 & 0 & -2 & 1 \\
    0 & -1 & 4 & 2 & -1 \\
    0 & 0 & -2 & -1 & 1 \\
    -1 & -1 & -1 & 0 & 1 \\
    1 & 1 & -5 & 0 & 1
  \end{pmatrix} \cdot
  \begin{pmatrix}
    [C_{1}] \\ [C_{2}] \\ [C_{3}] \\ [C_{6}] \\ [H_{\mathcal{S}}]
  \end{pmatrix}.
\end{gather*}

For a general choice of \(\lambda \in \mathbb{C}\) the surface \(\mathcal{S}_{\lambda}\) has the following singularities:
\begin{itemize}\setlength{\itemindent}{2cm}
\item[\(P_{1} = P_{(X, Y, T)}\):] type \(\mathbb{A}_5\) with the quadratic term \(Y \cdot (X + T)\);
\item[\(P_{2} = P_{(X, Z, T)}\):] type \(\mathbb{A}_5\) with the quadratic term \(Z \cdot (X + T)\);
\item[\(P_{3} = P_{(X, T, Y + Z)}\):] type \(\mathbb{A}_1\) with the quadratic term \((X + T) (X + Y + Z + T) - (\lambda + 2) X T\);
\item[\(P_{4} = P_{(Y, Z, X + T)}\):] type \(\mathbb{A}_1\) with the quadratic term \((X + T)^2 - (\lambda + 2) Y Z\).
\end{itemize}

Galois action on the lattice \(L_{\lambda}\) is trivial. The intersection matrix on \(L_{\lambda} = L_{\mathcal{S}}\) is represented by
\begin{table}[H]
  \begin{tabular}{|c||ccccc|ccccc|c|c|ccccc|}
    \hline
    \(\bullet\) & \(E_1^1\) & \(E_1^2\) & \(E_1^3\) & \(E_1^4\) & \(E_1^5\) & \(E_2^1\) & \(E_2^2\) & \(E_2^3\) & \(E_2^4\) & \(E_2^5\) & \(E_3^1\) & \(E_4^1\) & \(\widetilde{C_{1}}\) & \(\widetilde{C_{2}}\) & \(\widetilde{C_{3}}\) & \(\widetilde{C_{6}}\) & \(\widetilde{H_{\mathcal{S}}}\) \\
    \hline
    \hline
    \(\widetilde{C_{1}}\) & \(1\) & \(0\) & \(0\) & \(0\) & \(0\) & \(0\) & \(0\) & \(0\) & \(0\) & \(0\) & \(0\) & \(0\) & \(-2\) & \(1\) & \(0\) & \(0\) & \(1\) \\
    \(\widetilde{C_{2}}\) & \(0\) & \(0\) & \(0\) & \(0\) & \(0\) & \(1\) & \(0\) & \(0\) & \(0\) & \(0\) & \(0\) & \(0\) & \(1\) & \(-2\) & \(0\) & \(0\) & \(1\) \\
    \(\widetilde{C_{3}}\) & \(0\) & \(0\) & \(0\) & \(0\) & \(1\) & \(0\) & \(0\) & \(0\) & \(0\) & \(1\) & \(1\) & \(0\) & \(0\) & \(0\) & \(-2\) & \(0\) & \(1\) \\
    \(\widetilde{C_{6}}\) & \(0\) & \(1\) & \(0\) & \(0\) & \(0\) & \(0\) & \(0\) & \(0\) & \(0\) & \(0\) & \(0\) & \(1\) & \(0\) & \(0\) & \(0\) & \(-2\) & \(1\) \\
    \(\widetilde{H_{\mathcal{S}}}\) & \(0\) & \(0\) & \(0\) & \(0\) & \(0\) & \(0\) & \(0\) & \(0\) & \(0\) & \(0\) & \(0\) & \(0\) & \(1\) & \(1\) & \(1\) & \(1\) & \(4\) \\
    \hline
  \end{tabular}.
\end{table}
Note that the intersection matrix is non-degenerate.

Discriminant groups and discriminant forms of the lattices \(L_{\mathcal{S}}\) and \(H \oplus \Pic(X)\) are given by
\begin{gather*}
  G' = 
  \begin{pmatrix}
    \frac{11}{12} & \frac{1}{6} & \frac{7}{12} & 0 & \frac{5}{12} & \frac{11}{12} & \frac{1}{2} & \frac{1}{12} & \frac{2}{3} & \frac{1}{4} & \frac{11}{12} & \frac{5}{12} & \frac{2}{3} & \frac{1}{3} & \frac{5}{6} & \frac{5}{6} & \frac{1}{12}
  \end{pmatrix}, \\
  G'' = 
  \begin{pmatrix}
    0 & 0 & \frac{1}{12} & \frac{1}{3} & 0
  \end{pmatrix}; \;
  B' = 
  \begin{pmatrix}
    \frac{5}{12}
  \end{pmatrix}, \;
  B'' = 
  \begin{pmatrix}
    \frac{7}{12}
  \end{pmatrix}; \;
  Q' =
  \begin{pmatrix}
    \frac{5}{12}
  \end{pmatrix}, \;
  Q'' =
  \begin{pmatrix}
    \frac{19}{12}
  \end{pmatrix}.
\end{gather*}


\subsection{Family \textnumero3.10}\label{subsection:03-10}

The pencil \(\mathcal{S}\) is defined by the equation
\begin{gather*}
  X^{2} Y^{2} + X^{2} Y Z + X Y^{2} Z + X Y Z^{2} + X^{2} Y T + X Z^{2} T + \\
  Y Z^{2} T + X Y T^{2} + X Z T^{2} + Y Z T^{2} + Z^{2} T^{2} = \lambda X Y Z T.
\end{gather*}
Members \(\mathcal{S}_{\lambda}\) of the pencil are irreducible for any \(\lambda \in \mathbb{P}^1\) except
\(\mathcal{S}_{\infty} = S_{(X)} + S_{(Y)} + S_{(Z)} + S_{(T)}\).
The base locus of the pencil \(\mathcal{S}\) consists of the following curves:
\begin{gather*}
  C_{1} = C_{(X, Z)}, \;
  C_{2} = C_{(X, T)}, \;
  C_{3} = C_{(Y, Z)}, \;
  C_{4} = C_{(Y, T)}, \;
  C_{5} = C_{(T, X + Z)}, \\
  C_{6} = C_{(T, Y + Z)}, \;
  C_{7} = C_{(X, Z T + Y (Z + T))}, \;
  C_{8} = C_{(Y, Z T + X (Z + T))}, \;
  C_{9} = C_{(Z, T^2 + X (Y + T))}.
\end{gather*}
Their linear equivalence classes on the generic member \(\mathcal{S}_{\Bbbk}\) of the pencil satisfy the following relations:
\begin{gather*}
  \begin{pmatrix}
    [C_{6}] \\ [C_{7}] \\ [C_{8}] \\ [C_{9}]
  \end{pmatrix} = 
  \begin{pmatrix}
    0 & -1 & 0 & -1 & -1 & 1 \\
    -1 & -1 & 0 & 0 & 0 & 1 \\
    0 & 0 & -1 & -1 & 0 & 1 \\
    -1 & 0 & -1 & 0 & 0 & 1
  \end{pmatrix} \cdot
  \begin{pmatrix}
    [C_{1}] & [C_{2}] & [C_{3}] & [C_{4}] & [C_{5}] & [H_{\mathcal{S}}]
  \end{pmatrix}^T.
\end{gather*}

For a general choice of \(\lambda \in \mathbb{C}\) the surface \(\mathcal{S}_{\lambda}\) has the following singularities:
\begin{itemize}\setlength{\itemindent}{2cm}
\item[\(P_{1} = P_{(X, Y, Z)}\):] type \(\mathbb{A}_2\) with the quadratic term \((X + Z) \cdot (Y + Z)\);
\item[\(P_{2} = P_{(X, Y, T)}\):] type \(\mathbb{A}_2\) with the quadratic term \((X + T) \cdot (Y + T)\);
\item[\(P_{3} = P_{(X, Z, T)}\):] type \(\mathbb{A}_4\) with the quadratic term \(X \cdot (X + Z)\);
\item[\(P_{4} = P_{(Y, Z, T)}\):] type \(\mathbb{A}_3\) with the quadratic term \(Y \cdot (Y + Z + T)\).
\end{itemize}

Galois action on the lattice \(L_{\lambda}\) is trivial. The intersection matrix on \(L_{\lambda} = L_{\mathcal{S}}\) is represented by
\begin{table}[H]
  \begin{tabular}{|c||cc|cc|cccc|ccc|cccccc|}
    \hline
    \(\bullet\) & \(E_1^1\) & \(E_1^2\) & \(E_2^1\) & \(E_2^2\) & \(E_3^1\) & \(E_3^2\) & \(E_3^3\) & \(E_3^4\) & \(E_4^1\) & \(E_4^2\) & \(E_4^3\) & \(\widetilde{C_{1}}\) & \(\widetilde{C_{2}}\) & \(\widetilde{C_{3}}\) & \(\widetilde{C_{4}}\) & \(\widetilde{C_{5}}\) & \(\widetilde{H_{\mathcal{S}}}\) \\
    \hline
    \hline
    \(\widetilde{C_{1}}\) & \(1\) & \(0\) & \(0\) & \(0\) & \(0\) & \(1\) & \(0\) & \(0\) & \(0\) & \(0\) & \(0\) & \(-2\) & \(0\) & \(0\) & \(0\) & \(0\) & \(1\) \\
    \(\widetilde{C_{2}}\) & \(0\) & \(0\) & \(1\) & \(0\) & \(1\) & \(0\) & \(0\) & \(0\) & \(0\) & \(0\) & \(0\) & \(0\) & \(-2\) & \(0\) & \(0\) & \(0\) & \(1\) \\
    \(\widetilde{C_{3}}\) & \(0\) & \(1\) & \(0\) & \(0\) & \(0\) & \(0\) & \(0\) & \(0\) & \(1\) & \(0\) & \(0\) & \(0\) & \(0\) & \(-2\) & \(0\) & \(0\) & \(1\) \\
    \(\widetilde{C_{4}}\) & \(0\) & \(0\) & \(0\) & \(1\) & \(0\) & \(0\) & \(0\) & \(0\) & \(0\) & \(0\) & \(1\) & \(0\) & \(0\) & \(0\) & \(-2\) & \(1\) & \(1\) \\
    \(\widetilde{C_{5}}\) & \(0\) & \(0\) & \(0\) & \(0\) & \(0\) & \(0\) & \(0\) & \(1\) & \(0\) & \(0\) & \(0\) & \(0\) & \(0\) & \(0\) & \(1\) & \(-2\) & \(1\) \\
    \(\widetilde{H_{\mathcal{S}}}\) & \(0\) & \(0\) & \(0\) & \(0\) & \(0\) & \(0\) & \(0\) & \(0\) & \(0\) & \(0\) & \(0\) & \(1\) & \(1\) & \(1\) & \(1\) & \(1\) & \(4\) \\
    \hline
  \end{tabular}.
\end{table}
Note that the intersection matrix is non-degenerate.

Discriminant groups and discriminant forms of the lattices \(L_{\mathcal{S}}\) and \(H \oplus \Pic(X)\) are given by
\begin{gather*}
  G' = 
  \begin{pmatrix}
    \frac{1}{2} & \frac{1}{2} & \frac{1}{2} & \frac{1}{2} & \frac{1}{2} & \frac{1}{2} & 0 & \frac{1}{2} & \frac{1}{2} & \frac{1}{2} & \frac{1}{2} & \frac{1}{2} & \frac{1}{2} & \frac{1}{2} & \frac{1}{2} & 0 & 0 \\
    0 & \frac{1}{2} & 0 & 0 & 0 & 0 & \frac{1}{2} & 0 & \frac{1}{2} & 0 & \frac{1}{2} & \frac{1}{2} & 0 & 0 & 0 & \frac{1}{2} & 0 \\
0 & \frac{1}{2} & \frac{1}{5} & \frac{9}{10} & \frac{7}{10} & \frac{9}{10} & \frac{3}{5} & \frac{3}{10} & \frac{2}{5} & \frac{4}{5} & \frac{1}{5} & \frac{1}{2} & \frac{1}{2} & 0 & \frac{3}{5} & 0 & \frac{1}{10}
  \end{pmatrix}, \\
  G'' = 
  \begin{pmatrix}
    0 & 0 & \frac{1}{2} & \frac{1}{2} & \frac{1}{2} \\
    0 & 0 & \frac{1}{2} & \frac{1}{2} & 0 \\
    0 & 0 & \frac{3}{5} & \frac{1}{10} & \frac{1}{10}
  \end{pmatrix}; \;
  B' = 
  \begin{pmatrix}
    \frac{1}{2} & 0 & \frac{1}{2} \\
    0 & 0 & \frac{1}{2} \\
    \frac{1}{2} & \frac{1}{2} & \frac{2}{5}
  \end{pmatrix}, \;
  B'' = 
  \begin{pmatrix}
    \frac{1}{2} & 0 & \frac{1}{2} \\
    0 & 0 & \frac{1}{2} \\
    \frac{1}{2} & \frac{1}{2} & \frac{3}{5}
  \end{pmatrix}; \;
  \begin{pmatrix}
    Q' \\ Q''
  \end{pmatrix}
  =
  \begin{pmatrix}
    \frac{3}{2} & 1 & \frac{2}{5} \\
    \frac{1}{2} & 1 & \frac{8}{5}
  \end{pmatrix}.
\end{gather*}


\subsection{Family \textnumero3.11}\label{subsection:03-11}

The pencil \(\mathcal{S}\) is defined by the equation
\begin{gather*}
  X^{2} Y Z + X Y^{2} Z + X Y Z^{2} + X Y^{2} T + Y^{2} Z T + X Z^{2} T + \\ Y Z^{2} T + Y^{2} T^{2} + X Z T^{2} + Y Z T^{2} + Z^{2} T^{2} + Z T^{3} = \lambda X Y Z T.
\end{gather*}
Members \(\mathcal{S}_{\lambda}\) of the pencil are irreducible for any \(\lambda \in \mathbb{P}^1\) except
\begin{gather*}
  \mathcal{S}_{\infty} = S_{(X)} + S_{(Y)} + S_{(Z)} + S_{(T)}, \;
  \mathcal{S}_{- 2} = S_{(X + T)} + S_{(X Y Z + (Z + T) (Z (Y + T) + Y^2))}.
\end{gather*}
The base locus of the pencil \(\mathcal{S}\) consists of the following curves:
\begin{gather*}
  C_1 = C_{(X, T)}, \;
  C_2 = C_{(Y, Z)}, \;
  C_3 = C_{(Y, T)}, \;
  C_4 = C_{(Z, T)}, \;
  C_5 = C_{(X, Z + T)}, \\
  C_6 = C_{(Y, X + T)}, \;
  C_7 = C_{(Y, Z + T)}, \;
  C_8 = C_{(Z, X + T)}, \;
  C_9 = C_{(T, X + Y + Z)}, \;
  C_{10} = C_{(X, Y^2 + Z (Y + T))}.
\end{gather*}
Their linear equivalence classes on the generic member \(\mathcal{S}_{\Bbbk}\) of the pencil satisfy the following relations:
\begin{gather*}
  \begin{pmatrix}
    [C_{6}] \\ [C_{7}] \\ [C_{8}] \\ [C_{9}] \\ [C_{10}]
  \end{pmatrix} = 
  \begin{pmatrix}
    -2 & 2 & 0 & 1 & 0 & 0 \\
    2 & -3 & -1 & -1 & 0 & 1 \\
    0 & -2 & 0 & -1 & 0 & 1 \\
    -1 & 0 & -1 & -1 & 0 & 1 \\
    -1 & 0 & 0 & 0 & -1 & 1
  \end{pmatrix} \cdot
  \begin{pmatrix}
    [C_{1}] & [C_{2}] & [C_{3}] & [C_{4}] & [C_{5}] & [H_{\mathcal{S}}]
  \end{pmatrix}^T.
\end{gather*}

For a general choice of \(\lambda \in \mathbb{C}\) the surface \(\mathcal{S}_{\lambda}\) has the following singularities:
\begin{itemize}\setlength{\itemindent}{2cm}
\item[\(P_{1} = P_{(X, Y, T)}\):] type \(\mathbb{A}_2\) with the quadratic term \((X + T) \cdot (Y + T)\);
\item[\(P_{2} = P_{(X, Z, T)}\):] type \(\mathbb{A}_2\) with the quadratic term \((X + T) \cdot (Z + T)\);
\item[\(P_{3} = P_{(Y, Z, T)}\):] type \(\mathbb{A}_4\) with the quadratic term \(Y \cdot Z\);
\item[\(P_{4} = P_{(X, T, Y + Z)}\):] type \(\mathbb{A}_1\) with the quadratic term \((X + T) (X + Y + Z - T) - (\lambda + 2) X T\);
\item[\(P_{5} = P_{(Y, Z, X + T)}\):] type \(\mathbb{A}_2\) with the quadratic term \(Z \cdot (X + (\lambda + 2) Y + T)\).
\end{itemize}

Galois action on the lattice \(L_{\lambda}\) is trivial. The intersection matrix on \(L_{\lambda} = L_{\mathcal{S}}\) is represented by
\begin{table}[H]
  \begin{tabular}{|c||cc|cc|cccc|c|cc|cccccc|}
    \hline
    \(\bullet\) & \(E_1^1\) & \(E_1^2\) & \(E_2^1\) & \(E_2^2\) & \(E_3^1\) & \(E_3^2\) & \(E_3^3\) & \(E_3^4\) & \(E_4^1\) & \(E_5^1\) & \(E_5^2\) & \(\widetilde{C_{1}}\) & \(\widetilde{C_{2}}\) & \(\widetilde{C_{3}}\) & \(\widetilde{C_{4}}\) & \(\widetilde{C_{5}}\) & \(\widetilde{H_{\mathcal{S}}}\) \\
    \hline
    \hline
    \(\widetilde{C_{1}}\) & \(1\) & \(0\) & \(1\) & \(0\) & \(0\) & \(0\) & \(0\) & \(0\) & \(1\) & \(0\) & \(0\) & \(-2\) & \(0\) & \(0\) & \(0\) & \(0\) & \(1\) \\
    \(\widetilde{C_{2}}\) & \(0\) & \(0\) & \(0\) & \(0\) & \(0\) & \(0\) & \(1\) & \(0\) & \(0\) & \(1\) & \(0\) & \(0\) & \(-2\) & \(0\) & \(0\) & \(0\) & \(1\) \\
    \(\widetilde{C_{3}}\) & \(0\) & \(1\) & \(0\) & \(0\) & \(1\) & \(0\) & \(0\) & \(0\) & \(0\) & \(0\) & \(0\) & \(0\) & \(0\) & \(-2\) & \(0\) & \(0\) & \(1\) \\
    \(\widetilde{C_{4}}\) & \(0\) & \(0\) & \(0\) & \(1\) & \(0\) & \(0\) & \(0\) & \(1\) & \(0\) & \(0\) & \(0\) & \(0\) & \(0\) & \(0\) & \(-2\) & \(0\) & \(1\) \\
    \(\widetilde{C_{5}}\) & \(0\) & \(0\) & \(0\) & \(1\) & \(0\) & \(0\) & \(0\) & \(0\) & \(0\) & \(0\) & \(0\) & \(0\) & \(0\) & \(0\) & \(0\) & \(-2\) & \(1\) \\
    \(\widetilde{H_{\mathcal{S}}}\) & \(0\) & \(0\) & \(0\) & \(0\) & \(0\) & \(0\) & \(0\) & \(0\) & \(0\) & \(0\) & \(0\) & \(1\) & \(1\) & \(1\) & \(1\) & \(1\) & \(4\) \\
    \hline
  \end{tabular}.
\end{table}
Note that the intersection matrix is non-degenerate.

Discriminant groups and discriminant forms of the lattices \(L_{\mathcal{S}}\) and \(H \oplus \Pic(X)\) are given by
\begin{gather*}
  G' = 
  \begin{pmatrix}
    \frac{23}{28} & \frac{5}{7} & \frac{6}{7} & \frac{11}{14} & \frac{2}{7} & \frac{27}{28} & \frac{9}{14} & \frac{3}{7} & \frac{27}{28} & \frac{13}{14} & \frac{27}{28} & \frac{13}{14} & \frac{25}{28} & \frac{17}{28} & \frac{3}{14} & \frac{1}{2} & \frac{3}{14}
  \end{pmatrix}, \\
  G'' = 
  \begin{pmatrix}
    0 & 0 & -\frac{2}{7} & -\frac{1}{7} & \frac{1}{28}
  \end{pmatrix}; \;
  B' = 
  \begin{pmatrix}
    \frac{1}{28}
  \end{pmatrix}, \;
  B'' = 
  \begin{pmatrix}
    \frac{27}{28}
  \end{pmatrix}; \;
  Q' =
  \begin{pmatrix}
    \frac{1}{28}
  \end{pmatrix}, \;
  Q'' =
  \begin{pmatrix}
    \frac{55}{28}
  \end{pmatrix}.
\end{gather*}


\subsection{Family \textnumero3.12}\label{subsection:03-12}

The pencil \(\mathcal{S}\) is defined by the equation
\begin{gather*}
  X^{2} Y^{2} + X^{2} Y Z + X Y^{2} Z + X Y Z^{2} + X Y^{2} T + X Z^{2} T + Y Z^{2} T + X Y T^{2} + X Z T^{2} + Y Z T^{2} = \lambda X Y Z T.
\end{gather*}
Members \(\mathcal{S}_{\lambda}\) of the pencil are irreducible for any \(\lambda \in \mathbb{P}^1\) except
\(\mathcal{S}_{\infty} = S_{(X)} + S_{(Y)} + S_{(Z)} + S_{(T)}\).
The base locus of the pencil \(\mathcal{S}\) consists of the following curves:
\begin{gather*}
  C_1 = C_{(X, Y)}, \;
  C_2 = C_{(X, Z)}, \;
  C_3 = C_{(X, T)}, \;
  C_4 = C_{(Y, Z)}, \;
  C_5 = C_{(Y, T)}, \\
  C_6 = C_{(X, Z + T)}, \;
  C_7 = C_{(Y, Z + T)}, \;
  C_8 = C_{(T, X + Z)}, \;
  C_9 = C_{(T, Y + Z)}, \;
  C_{10} = C_{(Z, T^2 + Y (X + T))}.
\end{gather*}
Their linear equivalence classes on the generic member \(\mathcal{S}_{\Bbbk}\) of the pencil satisfy the following relations:
\begin{gather*}
  \begin{pmatrix}
    [C_{6}] \\ [C_{7}] \\ [C_{9}] \\ [C_{10}]
  \end{pmatrix} = 
  \begin{pmatrix}
    -1 & -1 & -1 & 0 & 0 & 0 & 1 \\
    -1 & 0 & 0 & -1 & -1 & 0 & 1 \\
    0 & 0 & -1 & 0 & -1 & -1 & 1 \\
    0 & -1 & 0 & -1 & 0 & 0 & 1
  \end{pmatrix} \cdot
  \begin{pmatrix}
    [C_{1}] & [C_{2}] & [C_{3}] & [C_{4}] & [C_{5}] & [C_{8}] & [H_{\mathcal{S}}]
  \end{pmatrix}^T.
\end{gather*}

For a general choice of \(\lambda \in \mathbb{C}\) the surface \(\mathcal{S}_{\lambda}\) has the following singularities:
\begin{itemize}\setlength{\itemindent}{2cm}
\item[\(P_{1} = P_{(X, Y, Z)}\):] type \(\mathbb{A}_1\) with the quadratic term \(X Y + Z (X + Y)\);
\item[\(P_{2} = P_{(X, Y, T)}\):] type \(\mathbb{A}_1\) with the quadratic term \(X Y + T (X + Y)\);
\item[\(P_{3} = P_{(X, Z, T)}\):] type \(\mathbb{A}_3\) with the quadratic term \(X \cdot (X + Z + T)\);
\item[\(P_{4} = P_{(Y, Z, T)}\):] type \(\mathbb{A}_4\) with the quadratic term \(Y \cdot (Y + Z)\);
\item[\(P_{5} = P_{(X, Y, Z + T)}\):] type \(\mathbb{A}_1\) with the quadratic term \((X + Y) (Z + T) - (\lambda + 2) X Y\).
\end{itemize}

Galois action on the lattice \(L_{\lambda}\) is trivial. The intersection matrix on \(L_{\lambda} = L_{\mathcal{S}}\) is represented by
\begin{table}[H]
  \begin{tabular}{|c||c|c|ccc|cccc|c|ccccccc|}
    \hline
    \(\bullet\) & \(E_1^1\) & \(E_2^1\) & \(E_3^1\) & \(E_3^2\) & \(E_3^3\) & \(E_4^1\) & \(E_4^2\) & \(E_4^3\) & \(E_4^4\) & \(E_5^1\) & \(\widetilde{C_{1}}\) & \(\widetilde{C_{2}}\) & \(\widetilde{C_{3}}\) & \(\widetilde{C_{4}}\) & \(\widetilde{C_{5}}\) & \(\widetilde{C_{8}}\) & \(\widetilde{H_{\mathcal{S}}}\) \\
    \hline
    \hline
    \(\widetilde{C_{1}}\) & \(1\) & \(1\) & \(0\) & \(0\) & \(0\) & \(0\) & \(0\) & \(0\) & \(0\) & \(1\) & \(-2\) & \(0\) & \(0\) & \(0\) & \(0\) & \(0\) & \(1\) \\
    \(\widetilde{C_{2}}\) & \(1\) & \(0\) & \(1\) & \(0\) & \(0\) & \(0\) & \(0\) & \(0\) & \(0\) & \(0\) & \(0\) & \(-2\) & \(0\) & \(0\) & \(0\) & \(0\) & \(1\) \\
    \(\widetilde{C_{3}}\) & \(0\) & \(1\) & \(1\) & \(0\) & \(0\) & \(0\) & \(0\) & \(0\) & \(0\) & \(0\) & \(0\) & \(0\) & \(-2\) & \(0\) & \(0\) & \(0\) & \(1\) \\
    \(\widetilde{C_{4}}\) & \(1\) & \(0\) & \(0\) & \(0\) & \(0\) & \(0\) & \(1\) & \(0\) & \(0\) & \(0\) & \(0\) & \(0\) & \(0\) & \(-2\) & \(0\) & \(0\) & \(1\) \\
    \(\widetilde{C_{5}}\) & \(0\) & \(1\) & \(0\) & \(0\) & \(0\) & \(0\) & \(0\) & \(0\) & \(1\) & \(0\) & \(0\) & \(0\) & \(0\) & \(0\) & \(-2\) & \(1\) & \(1\) \\
    \(\widetilde{C_{8}}\) & \(0\) & \(0\) & \(0\) & \(0\) & \(1\) & \(0\) & \(0\) & \(0\) & \(0\) & \(0\) & \(0\) & \(0\) & \(0\) & \(0\) & \(1\) & \(-2\) & \(1\) \\
    \(\widetilde{H_{\mathcal{S}}}\) & \(0\) & \(0\) & \(0\) & \(0\) & \(0\) & \(0\) & \(0\) & \(0\) & \(0\) & \(0\) & \(1\) & \(1\) & \(1\) & \(1\) & \(1\) & \(1\) & \(4\) \\
    \hline
  \end{tabular}.
\end{table}
Note that the intersection matrix is non-degenerate.

Discriminant groups and discriminant forms of the lattices \(L_{\mathcal{S}}\) and \(H \oplus \Pic(X)\) are given by
\begin{gather*}
  G' = 
  \begin{pmatrix}
    \frac{29}{36} & \frac{5}{36} & \frac{4}{9} & \frac{5}{6} & \frac{2}{9} & \frac{8}{9} & \frac{7}{9} & \frac{29}{36} & \frac{5}{6} & \frac{1}{36} & \frac{1}{18} & \frac{25}{36} & \frac{13}{36} & \frac{31}{36} & \frac{31}{36} & \frac{11}{18} & \frac{5}{36}
  \end{pmatrix}, \\
  G'' = 
  \begin{pmatrix}
    0 & 0 & \frac{1}{36} & -\frac{5}{12} & \frac{1}{18}
  \end{pmatrix}; \;
  B' = 
  \begin{pmatrix}
    \frac{17}{36}
  \end{pmatrix}, \;
  B'' = 
  \begin{pmatrix}
    \frac{19}{36}
  \end{pmatrix}; \;
  Q' =
  \begin{pmatrix}
    \frac{17}{36}
  \end{pmatrix}, \;
  Q'' =
  \begin{pmatrix}
    \frac{55}{36}
  \end{pmatrix}.
\end{gather*}


\subsection{Family \textnumero3.13}\label{subsection:03-13}

The pencil \(\mathcal{S}\) is defined by the equation
\begin{gather*}
  X^{2} Y Z + X Y^{2} Z + X Y Z^{2} + X Y^{2} T + X^{2} Z T + Y Z^{2} T + X Y T^{2} + X Z T^{2} + Y Z T^{2} = \lambda X Y Z T.
\end{gather*}
Members \(\mathcal{S}_{\lambda}\) of the pencil are irreducible for any \(\lambda \in \mathbb{P}^1\) except
\(\mathcal{S}_{\infty} = S_{(X)} + S_{(Y)} + S_{(Z)} + S_{(T)}\).
The base locus of the pencil \(\mathcal{S}\) consists of the following curves:
\begin{gather*}
  C_{1} = C_{(X, Y)}, \;
  C_{2} = C_{(X, Z)}, \;
  C_{3} = C_{(X, T)}, \;
  C_{4} = C_{(Y, Z)}, \;
  C_{5} = C_{(Y, T)}, \\
  C_{6} = C_{(Z, T)}, \;
  C_{7} = C_{(X, Z + T)}, \;
  C_{8} = C_{(Y, X + T)}, \;
  C_{9} = C_{(Z, Y + T)}, \;
  C_{10} = C_{(T, X + Y + Z)}.
\end{gather*}
Their linear equivalence classes on the generic member \(\mathcal{S}_{\Bbbk}\) of the pencil satisfy the following relations:
\begin{gather*}
  \begin{pmatrix}
    [C_{7}] \\ [C_{8}] \\ [C_{9}] \\ [C_{10}]
  \end{pmatrix} = 
  \begin{pmatrix}
    -1 & -1 & -1 & 0 & 0 & 0 & 1 \\
    -1 & 0 & 0 & -1 & -1 & 0 & 1 \\
    0 & -1 & 0 & -1 & 0 & -1 & 1 \\
    0 & 0 & -1 & 0 & -1 & -1 & 1
  \end{pmatrix} \cdot
  \begin{pmatrix}
    [C_{1}] & [C_{2}] & [C_{3}] & [C_{4}] & [C_{5}] & [C_{6}] & [H_{\mathcal{S}}]
  \end{pmatrix}^T.
\end{gather*}

For a general choice of \(\lambda \in \mathbb{C}\) the surface \(\mathcal{S}_{\lambda}\) has the following singularities:
\begin{itemize}\setlength{\itemindent}{2cm}
\item[\(P_{1} = P_{(X, Y, Z)}\):] type \(\mathbb{A}_1\) with the quadratic term \(X Y + X Z + Y Z\);
\item[\(P_{2} = P_{(X, Y, T)}\):] type \(\mathbb{A}_3\) with the quadratic term \(Y \cdot (X + T)\);
\item[\(P_{3} = P_{(X, Z, T)}\):] type \(\mathbb{A}_3\) with the quadratic term \(X \cdot (Z + T)\);
\item[\(P_{4} = P_{(Y, Z, T)}\):] type \(\mathbb{A}_3\) with the quadratic term \(Z \cdot (Y + T)\).
\end{itemize}

Galois action on the lattice \(L_{\lambda}\) is trivial. The intersection matrix on \(L_{\lambda} = L_{\mathcal{S}}\) is represented by
\begin{table}[H]
  \begin{tabular}{|c||c|ccc|ccc|ccc|ccccccc|}
    \hline
    \(\bullet\) & \(E_1^1\) & \(E_2^1\) & \(E_2^2\) & \(E_2^3\) & \(E_3^1\) & \(E_3^2\) & \(E_3^3\) & \(E_4^1\) & \(E_4^2\) & \(E_4^3\) & \(\widetilde{C_{1}}\) & \(\widetilde{C_{2}}\) & \(\widetilde{C_{3}}\) & \(\widetilde{C_{4}}\) & \(\widetilde{C_{5}}\) & \(\widetilde{C_{6}}\) & \(\widetilde{H_{\mathcal{S}}}\) \\
    \hline
    \hline
    \(\widetilde{C_{1}}\) & \(1\) & \(1\) & \(0\) & \(0\) & \(0\) & \(0\) & \(0\) & \(0\) & \(0\) & \(0\) & \(-2\) & \(0\) & \(0\) & \(0\) & \(0\) & \(0\) & \(1\) \\
    \(\widetilde{C_{2}}\) & \(1\) & \(0\) & \(0\) & \(0\) & \(1\) & \(0\) & \(0\) & \(0\) & \(0\) & \(0\) & \(0\) & \(-2\) & \(0\) & \(0\) & \(0\) & \(0\) & \(1\) \\
    \(\widetilde{C_{3}}\) & \(0\) & \(0\) & \(0\) & \(1\) & \(1\) & \(0\) & \(0\) & \(0\) & \(0\) & \(0\) & \(0\) & \(0\) & \(-2\) & \(0\) & \(0\) & \(0\) & \(1\) \\
    \(\widetilde{C_{4}}\) & \(1\) & \(0\) & \(0\) & \(0\) & \(0\) & \(0\) & \(0\) & \(1\) & \(0\) & \(0\) & \(0\) & \(0\) & \(0\) & \(-2\) & \(0\) & \(0\) & \(1\) \\
    \(\widetilde{C_{5}}\) & \(0\) & \(1\) & \(0\) & \(0\) & \(0\) & \(0\) & \(0\) & \(0\) & \(0\) & \(1\) & \(0\) & \(0\) & \(0\) & \(0\) & \(-2\) & \(0\) & \(1\) \\
    \(\widetilde{C_{6}}\) & \(0\) & \(0\) & \(0\) & \(0\) & \(0\) & \(0\) & \(1\) & \(1\) & \(0\) & \(0\) & \(0\) & \(0\) & \(0\) & \(0\) & \(0\) & \(-2\) & \(1\) \\
    \(\widetilde{H_{\mathcal{S}}}\) & \(0\) & \(0\) & \(0\) & \(0\) & \(0\) & \(0\) & \(0\) & \(0\) & \(0\) & \(0\) & \(1\) & \(1\) & \(1\) & \(1\) & \(1\) & \(1\) & \(4\) \\
    \hline
  \end{tabular}.
\end{table}
Note that the intersection matrix is non-degenerate.

Discriminant groups and discriminant forms of the lattices \(L_{\mathcal{S}}\) and \(H \oplus \Pic(X)\) are given by
\begin{gather*}
  G' = 
  \begin{pmatrix}
    \frac{1}{2} & \frac{1}{2} & 0 & \frac{1}{2} & \frac{1}{2} & 0 & \frac{1}{2} & \frac{1}{2} & 0 & \frac{1}{2} & 0 & 0 & 0 & 0 & 0 & 0 & 0 \\
    \frac{1}{2} & \frac{1}{2} & \frac{1}{2} & \frac{1}{2} & \frac{1}{2} & \frac{1}{2} & \frac{1}{2} & \frac{1}{2} & 0 & \frac{1}{2} & \frac{1}{2} & 0 & \frac{1}{2} & \frac{1}{2} & 0 & \frac{1}{2} & 0 \\
    \frac{3}{5} & \frac{3}{5} & 0 & \frac{2}{5} & \frac{3}{5} & \frac{1}{2} & \frac{2}{5} & \frac{3}{5} & \frac{1}{2} & \frac{2}{5} & \frac{9}{10} & \frac{9}{10} & \frac{4}{5} & \frac{2}{5} & \frac{3}{10} & \frac{3}{10} & \frac{3}{5}
  \end{pmatrix}, \\
  G'' = 
  \begin{pmatrix}
    0 & 0 & \frac{1}{2} & 0 & 0 \\
    0 & 0 & 0 & \frac{1}{2} & 0 \\
    0 & 0 & -\frac{3}{10} & -\frac{2}{5} & \frac{1}{10}
  \end{pmatrix}; \;
  B' = 
  \begin{pmatrix}
    \frac{1}{2} & \frac{1}{2} & 0 \\
    \frac{1}{2} & \frac{1}{2} & \frac{1}{2} \\
    0 & \frac{1}{2} & \frac{4}{5}
  \end{pmatrix}, \;
  B'' = 
  \begin{pmatrix}
    \frac{1}{2} & \frac{1}{2} & 0 \\
    \frac{1}{2} & \frac{1}{2} & \frac{1}{2} \\
    0 & \frac{1}{2} & \frac{1}{5}
  \end{pmatrix}; \;
  \begin{pmatrix}
    Q' \\ Q''
  \end{pmatrix}
  =
  \begin{pmatrix}
    \frac{1}{2} & \frac{3}{2} & \frac{9}{5} \\
  \frac{3}{2} & \frac{1}{2} & \frac{1}{5}
  \end{pmatrix}.
\end{gather*}


\subsection{Family \textnumero3.14}\label{subsection:03-14}

The pencil \(\mathcal{S}\) is defined by the equation
\begin{gather*}
  X^{2} Y Z + X Y^{2} Z + X Y Z^{2} + X^{3} T + Y^{2} Z T + Y Z^{2} T + X^{2} T^{2} + Y Z T^{2} = \lambda X Y Z T.
\end{gather*}
Members \(\mathcal{S}_{\lambda}\) of the pencil are irreducible for any \(\lambda \in \mathbb{P}^1\) except
\begin{gather*}
  \mathcal{S}_{\infty} = S_{(X)} + S_{(Y)} + S_{(Z)} + S_{(T)}, \;
  \mathcal{S}_{- 2} = S_{(X + T)} + S_{(Y Z (X + Y + Z + T) + X^2 T)}.
\end{gather*}
The base locus of the pencil \(\mathcal{S}\) consists of the following curves:
\begin{gather*}
  C_{1} = C_{(X, Y)}, \;
  C_{2} = C_{(X, Z)}, \;
  C_{3} = C_{(X, T)}, \;
  C_{4} = C_{(Y, T)}, \;
  C_{5} = C_{(Z, T)}, \\
  C_{6} = C_{(Y, X + T)}, \;
  C_{7} = C_{(Z, X + T)}, \;
  C_{8} = C_{(X, Y + Z + T)}, \;
  C_{9} = C_{(T, X + Y + Z)}.
\end{gather*}
Their linear equivalence classes on the generic member \(\mathcal{S}_{\Bbbk}\) of the pencil satisfy the following relations:
\begin{gather*}
  \begin{pmatrix}
    [C_{5}] \\ [C_{6}] \\ [C_{7}] \\ [C_{8}] \\ [C_{9}]
  \end{pmatrix} = 
  \begin{pmatrix}
    -2 & -2 & 2 & -1 & 1 \\
    -2 & 0 & 0 & -1 & 1 \\
    2 & 0 & -2 & 1 & 0 \\
    -1 & -1 & -1 & 0 & 1 \\
    2 & 2 & -3 & 0 & 0
  \end{pmatrix} \cdot
  \begin{pmatrix}
    [C_{1}] \\ [C_{2}] \\ [C_{3}] \\ [C_{4}] \\ [H_{\mathcal{S}}]
  \end{pmatrix}.
\end{gather*}

For a general choice of \(\lambda \in \mathbb{C}\) the surface \(\mathcal{S}_{\lambda}\) has the following singularities:
\begin{itemize}\setlength{\itemindent}{2cm}
\item[\(P_{1} = P_{(X, Y, Z)}\):] type \(\mathbb{A}_1\) with the quadratic term \(X^2 + Y Z\);
\item[\(P_{2} = P_{(X, Y, T)}\):] type \(\mathbb{A}_4\) with the quadratic term \(Y \cdot (X + T)\);
\item[\(P_{3} = P_{(X, Z, T)}\):] type \(\mathbb{A}_4\) with the quadratic term \(Z \cdot (X + T)\);
\item[\(P_{4} = P_{(X, Y, Z + T)}\):] type \(\mathbb{A}_1\) with the quadratic term \(X (X + (\lambda + 2) Y) - Y (X + Y + Z + T)\);
\item[\(P_{5} = P_{(X, Z, Y + T)}\):] type \(\mathbb{A}_1\) with the quadratic term \(X (X + (\lambda + 2) Z) - Z (X + Y + Z + T)\);
\item[\(P_{6} = P_{(X, T, Y + Z)}\):] type \(\mathbb{A}_1\) with the quadratic term \((X + T) (X + Y + Z + T) - (\lambda + 2) X T\).
\end{itemize}

Galois action on the lattice \(L_{\lambda}\) is trivial. The intersection matrix on \(L_{\lambda} = L_{\mathcal{S}}\) is represented by
\begin{table}[H]
  \begin{tabular}{|c||c|cccc|cccc|c|c|c|ccccc|}
    \hline
    \(\bullet\) & \(E_1^1\) & \(E_2^1\) & \(E_2^2\) & \(E_2^3\) & \(E_2^4\) & \(E_3^1\) & \(E_3^2\) & \(E_3^3\) & \(E_3^4\) & \(E_4^1\) & \(E_5^1\) & \(E_6^1\) & \(\widetilde{C_{1}}\) & \(\widetilde{C_{2}}\) & \(\widetilde{C_{3}}\) & \(\widetilde{C_{4}}\) & \(\widetilde{H_{\mathcal{S}}}\) \\
    \hline
    \hline
    \(\widetilde{C_{1}}\) & \(1\) & \(1\) & \(0\) & \(0\) & \(0\) & \(0\) & \(0\) & \(0\) & \(0\) & \(1\) & \(0\) & \(0\) & \(-2\) & \(0\) & \(0\) & \(0\) & \(1\) \\
    \(\widetilde{C_{2}}\) & \(1\) & \(0\) & \(0\) & \(0\) & \(0\) & \(1\) & \(0\) & \(0\) & \(0\) & \(0\) & \(1\) & \(0\) & \(0\) & \(-2\) & \(0\) & \(0\) & \(1\) \\
    \(\widetilde{C_{3}}\) & \(0\) & \(0\) & \(0\) & \(0\) & \(1\) & \(0\) & \(0\) & \(0\) & \(1\) & \(0\) & \(0\) & \(1\) & \(0\) & \(0\) & \(-2\) & \(0\) & \(1\) \\
    \(\widetilde{C_{4}}\) & \(0\) & \(1\) & \(0\) & \(0\) & \(0\) & \(0\) & \(0\) & \(0\) & \(0\) & \(0\) & \(0\) & \(0\) & \(0\) & \(0\) & \(0\) & \(-2\) & \(1\) \\
    \(\widetilde{H_{\mathcal{S}}}\) & \(0\) & \(0\) & \(0\) & \(0\) & \(0\) & \(0\) & \(0\) & \(0\) & \(0\) & \(0\) & \(0\) & \(0\) & \(1\) & \(1\) & \(1\) & \(1\) & \(4\) \\
    \hline
  \end{tabular}.
\end{table}
Note that the intersection matrix is non-degenerate.

Discriminant groups and discriminant forms of the lattices \(L_{\mathcal{S}}\) and \(H \oplus \Pic(X)\) are given by
\begin{gather*}
  G' = 
  \begin{pmatrix}
    \frac{7}{18} & \frac{2}{9} & \frac{4}{9} & \frac{2}{3} & \frac{8}{9} & \frac{5}{9} & \frac{4}{9} & \frac{1}{3} & \frac{2}{9} & \frac{1}{18} & \frac{5}{6} & \frac{5}{9} & \frac{1}{9} & \frac{2}{3} & \frac{1}{9} & \frac{8}{9} & \frac{5}{9}
  \end{pmatrix}, \\
  G'' = 
  \begin{pmatrix}
    0 & 0 & -\frac{1}{2} & \frac{1}{9} & -\frac{1}{3}
  \end{pmatrix}; \;
  B' = 
  \begin{pmatrix}
    \frac{5}{18}
  \end{pmatrix}, \;
  B'' = 
  \begin{pmatrix}
    \frac{13}{18}
  \end{pmatrix}; \;
  Q' =
  \begin{pmatrix}
    \frac{5}{18}
  \end{pmatrix}, \;
  Q'' =
  \begin{pmatrix}
    \frac{31}{18}
  \end{pmatrix}.
\end{gather*}


\subsection{Family \textnumero3.15}\label{subsection:03-15}

The pencil \(\mathcal{S}\) is defined by the equation
\begin{gather*}
  X^{2} Y Z + X Y^{2} Z + X Y Z^{2} + X^{2} Y T + X Z^{2} T + X Y T^{2} + X Z T^{2} + Y Z T^{2} + Z^{2} T^{2} = \lambda X Y Z T.
\end{gather*}
Members \(\mathcal{S}_{\lambda}\) of the pencil are irreducible for any \(\lambda \in \mathbb{P}^1\) except
\(\mathcal{S}_{\infty} = S_{(X)} + S_{(Y)} + S_{(Z)} + S_{(T)}\).
The base locus of the pencil \(\mathcal{S}\) consists of the following curves:
\begin{gather*}
  C_{1} = C_{(X, Z)}, \;
  C_{2} = C_{(X, T)}, \;
  C_{3} = C_{(Y, Z)}, \;
  C_{4} = C_{(Y, T)}, \;
  C_{5} = C_{(Z, T)}, \\
  C_{6} = C_{(X, Y + Z)}, \;
  C_{7} = C_{(Z, X + T)}, \;
  C_{8} = C_{(T, X + Y + Z)}, \;
  C_{9} = C_{(Y, X Z + T (X + Z))}.
\end{gather*}
Their linear equivalence classes on the generic member \(\mathcal{S}_{\Bbbk}\) of the pencil satisfy the following relations:
\begin{gather*}
  \begin{pmatrix}
    [C_{6}] \\ [C_{7}] \\ [C_{8}] \\ [C_{9}]
  \end{pmatrix} = 
  \begin{pmatrix}
    -1 & -2 & 0 & 0 & 0 & 1 \\
    -1 & 0 & -1 & 0 & -1 & 1 \\
    0 & -1 & 0 & -1 & -1 & 1 \\
    0 & 0 & -1 & -1 & 0 & 1
  \end{pmatrix} \cdot
  \begin{pmatrix}
    [C_{1}] & [C_{2}] & [C_{3}] & [C_{4}] & [C_{5}] & [H_{\mathcal{S}}]
  \end{pmatrix}^T.
\end{gather*}

For a general choice of \(\lambda \in \mathbb{C}\) the surface \(\mathcal{S}_{\lambda}\) has the following singularities:
\begin{itemize}\setlength{\itemindent}{2cm}
\item[\(P_{1} = P_{(X, Y, Z)}\):] type \(\mathbb{A}_2\) with the quadratic term \((X + Z) \cdot (Y + Z)\);
\item[\(P_{2} = P_{(X, Y, T)}\):] type \(\mathbb{A}_1\) with the quadratic term \(X (Y + T) + T^2\);
\item[\(P_{3} = P_{(X, Z, T)}\):] type \(\mathbb{A}_3\) with the quadratic term \(X \cdot Z\);
\item[\(P_{4} = P_{(Y, Z, T)}\):] type \(\mathbb{A}_3\) with the quadratic term \(Y \cdot (Z + T)\);
\item[\(P_{5} = P_{(X, T, Y + Z)}\):] type \(\mathbb{A}_2\) with the quadratic term \(X \cdot (X + Y + Z - (\lambda + 1) T)\).
\end{itemize}

Galois action on the lattice \(L_{\lambda}\) is trivial. The intersection matrix on \(L_{\lambda} = L_{\mathcal{S}}\) is represented by
\begin{table}[H]
  \begin{tabular}{|c||cc|c|ccc|ccc|cc|cccccc|}
    \hline
    \(\bullet\) & \(E_1^1\) & \(E_1^2\) & \(E_2^1\) & \(E_3^1\) & \(E_3^2\) & \(E_3^3\) & \(E_4^1\) & \(E_4^2\) & \(E_4^3\) & \(E_5^1\) & \(E_5^2\) & \(\widetilde{C_{1}}\) & \(\widetilde{C_{2}}\) & \(\widetilde{C_{3}}\) & \(\widetilde{C_{4}}\) & \(\widetilde{C_{5}}\) & \(\widetilde{H_{\mathcal{S}}}\) \\
    \hline
    \hline
    \(\widetilde{C_{1}}\) & \(1\) & \(0\) & \(0\) & \(0\) & \(1\) & \(0\) & \(0\) & \(0\) & \(0\) & \(0\) & \(0\) & \(-2\) & \(0\) & \(0\) & \(0\) & \(0\) & \(1\) \\
    \(\widetilde{C_{2}}\) & \(0\) & \(0\) & \(1\) & \(1\) & \(0\) & \(0\) & \(0\) & \(0\) & \(0\) & \(1\) & \(0\) & \(0\) & \(-2\) & \(0\) & \(0\) & \(0\) & \(1\) \\
    \(\widetilde{C_{3}}\) & \(0\) & \(1\) & \(0\) & \(0\) & \(0\) & \(0\) & \(1\) & \(0\) & \(0\) & \(0\) & \(0\) & \(0\) & \(0\) & \(-2\) & \(0\) & \(0\) & \(1\) \\
    \(\widetilde{C_{4}}\) & \(0\) & \(0\) & \(1\) & \(0\) & \(0\) & \(0\) & \(1\) & \(0\) & \(0\) & \(0\) & \(0\) & \(0\) & \(0\) & \(0\) & \(-2\) & \(0\) & \(1\) \\
    \(\widetilde{C_{5}}\) & \(0\) & \(0\) & \(0\) & \(0\) & \(0\) & \(1\) & \(0\) & \(0\) & \(1\) & \(0\) & \(0\) & \(0\) & \(0\) & \(0\) & \(0\) & \(-2\) & \(1\) \\
    \(\widetilde{H_{\mathcal{S}}}\) & \(0\) & \(0\) & \(0\) & \(0\) & \(0\) & \(0\) & \(0\) & \(0\) & \(0\) & \(0\) & \(0\) & \(1\) & \(1\) & \(1\) & \(1\) & \(1\) & \(4\) \\
    \hline
  \end{tabular}.
\end{table}
Note that the intersection matrix is non-degenerate.

Discriminant groups and discriminant forms of the lattices \(L_{\mathcal{S}}\) and \(H \oplus \Pic(X)\) are given by
\begin{gather*}
  G' = 
  \begin{pmatrix}
    \frac{16}{17} & \frac{15}{34} & \frac{23}{34} & \frac{6}{17} & \frac{31}{34} & \frac{1}{34} & \frac{7}{17} & \frac{11}{34} & \frac{4}{17} & \frac{9}{17} & \frac{9}{34} & \frac{15}{34} & \frac{27}{34} & \frac{16}{17} & \frac{19}{34} & \frac{5}{34} & \frac{1}{34}
  \end{pmatrix}, \\
  G'' = 
  \begin{pmatrix}
    0 & 0 & -\frac{8}{17} & -\frac{15}{34} & \frac{1}{17}
  \end{pmatrix}; \;
  B' = 
  \begin{pmatrix}
    \frac{33}{34}
  \end{pmatrix}, \;
  B'' = 
  \begin{pmatrix}
    \frac{1}{34}
  \end{pmatrix}; \;
  Q' =
  \begin{pmatrix}
    \frac{33}{34}
  \end{pmatrix}, \;
  Q'' =
  \begin{pmatrix}
    \frac{35}{34}
  \end{pmatrix}.
\end{gather*}


\subsection{Family \textnumero3.16}\label{subsection:03-16}

The pencil \(\mathcal{S}\) is defined by the equation
\begin{gather*}
  X^{2} Y Z + X Y^{2} Z + X Y Z^{2} + X Y^{2} T + X^{2} Z T + Y^{2} T^{2} + X Z T^{2} + Y Z T^{2} = \lambda X Y Z T.
\end{gather*}
Members \(\mathcal{S}_{\lambda}\) of the pencil are irreducible for any \(\lambda \in \mathbb{P}^1\) except
\(\mathcal{S}_{\infty} = S_{(X)} + S_{(Y)} + S_{(Z)} + S_{(T)}\).
The base locus of the pencil \(\mathcal{S}\) consists of the following curves:
\begin{gather*}
  C_{1} = C_{(X, Y)}, \;
  C_{2} = C_{(X, T)}, \;
  C_{3} = C_{(Y, T)}, \;
  C_{4} = C_{(Y, Z)}, \;
  C_{5} = C_{(Z, T)}, \\
  C_{6} = C_{(X, Y + Z)}, \;
  C_{7} = C_{(Y, X + T)}, \;
  C_{8} = C_{(Z, X + T)}, \;
  C_{9} = C_{(T, X + Y + Z)}.
\end{gather*}
Their linear equivalence classes on the generic member \(\mathcal{S}_{\Bbbk}\) of the pencil satisfy the following relations:
\begin{gather*}
  \begin{pmatrix}
    [C_{6}] \\ [C_{7}] \\ [C_{8}] \\ [C_{9}]
  \end{pmatrix} = 
  \begin{pmatrix}
    -1 & -2 & 0 & 0 & 0 & 1 \\
    -1 & 0 & -1 & -1 & 0 & 1 \\
    0 & 0 & 0 & -2 & -1 & 1 \\
    0 & -1 & -1 & 0 & -1 & 1
  \end{pmatrix} \cdot
  \begin{pmatrix}
    [C_{1}] & [C_{2}] & [C_{3}] & [C_{4}] & [C_{5}] & [H_{\mathcal{S}}]
  \end{pmatrix}^T.
\end{gather*}

For a general choice of \(\lambda \in \mathbb{C}\) the surface \(\mathcal{S}_{\lambda}\) has the following singularities:
\begin{itemize}\setlength{\itemindent}{2cm}
\item[\(P_{1} = P_{(X, Y, Z)}\):] type \(\mathbb{A}_1\) with the quadratic term \(Y^2 + Z (X + Y)\);
\item[\(P_{2} = P_{(X, Y, T)}\):] type \(\mathbb{A}_3\) with the quadratic term \(X \cdot Y\);
\item[\(P_{3} = P_{(X, Z, T)}\):] type \(\mathbb{A}_1\) with the quadratic term \(X (Z + T) + T^2\);
\item[\(P_{4} = P_{(Y, Z, T)}\):] type \(\mathbb{A}_2\) with the quadratic term \(Z \cdot (Y + T)\);
\item[\(P_{5} = P_{(X, T, Y + Z)}\):] type \(\mathbb{A}_2\) with the quadratic term \(X \cdot (X + Y + Z - (\lambda + 1) T)\);
\item[\(P_{6} = P_{(Y, Z, X + T)}\):] type \(\mathbb{A}_2\) with the quadratic term \(Z \cdot (X - (\lambda + 2) Y + T)\).
\end{itemize}

Galois action on the lattice \(L_{\lambda}\) is trivial. The intersection matrix on \(L_{\lambda} = L_{\mathcal{S}}\) is represented by
\begin{table}[H]
  \begin{tabular}{|c||c|ccc|c|cc|cc|cc|cccccc|}
    \hline
    \(\bullet\) & \(E_1^1\) & \(E_2^1\) & \(E_2^2\) & \(E_2^3\) & \(E_3^1\) & \(E_4^1\) & \(E_4^2\) & \(E_5^1\) & \(E_5^2\) & \(E_6^1\) & \(E_6^2\) & \(\widetilde{C_{1}}\) & \(\widetilde{C_{2}}\) & \(\widetilde{C_{3}}\) & \(\widetilde{C_{4}}\) & \(\widetilde{C_{5}}\) & \(\widetilde{H_{\mathcal{S}}}\) \\
    \hline
    \hline
    \(\widetilde{C_{1}}\) & \(1\) & \(0\) & \(1\) & \(0\) & \(0\) & \(0\) & \(0\) & \(0\) & \(0\) & \(0\) & \(0\) & \(-2\) & \(0\) & \(0\) & \(0\) & \(0\) & \(1\) \\
    \(\widetilde{C_{2}}\) & \(0\) & \(1\) & \(0\) & \(0\) & \(1\) & \(0\) & \(0\) & \(1\) & \(0\) & \(0\) & \(0\) & \(0\) & \(-2\) & \(0\) & \(0\) & \(0\) & \(1\) \\
    \(\widetilde{C_{3}}\) & \(0\) & \(0\) & \(0\) & \(1\) & \(0\) & \(1\) & \(0\) & \(0\) & \(0\) & \(0\) & \(0\) & \(0\) & \(0\) & \(-2\) & \(0\) & \(0\) & \(1\) \\
    \(\widetilde{C_{4}}\) & \(1\) & \(0\) & \(0\) & \(0\) & \(0\) & \(0\) & \(1\) & \(0\) & \(0\) & \(1\) & \(0\) & \(0\) & \(0\) & \(0\) & \(-2\) & \(0\) & \(1\) \\
    \(\widetilde{C_{5}}\) & \(0\) & \(0\) & \(0\) & \(0\) & \(1\) & \(0\) & \(1\) & \(0\) & \(0\) & \(0\) & \(0\) & \(0\) & \(0\) & \(0\) & \(0\) & \(-2\) & \(1\) \\
    \(\widetilde{H_{\mathcal{S}}}\) & \(0\) & \(0\) & \(0\) & \(0\) & \(0\) & \(0\) & \(0\) & \(0\) & \(0\) & \(0\) & \(0\) & \(1\) & \(1\) & \(1\) & \(1\) & \(1\) & \(4\) \\
    \hline
  \end{tabular}.
\end{table}
Note that the intersection matrix is non-degenerate.

Discriminant groups and discriminant forms of the lattices \(L_{\mathcal{S}}\) and \(H \oplus \Pic(X)\) are given by
\begin{gather*}
  G' = 
  \begin{pmatrix}
    \frac{1}{3} & \frac{8}{15} & \frac{1}{15} & \frac{7}{30} & \frac{4}{5} & \frac{7}{30} & \frac{1}{15} & \frac{1}{3} & \frac{2}{3} & \frac{13}{15} & \frac{13}{30} & \frac{11}{30} & 0 & \frac{2}{5} & \frac{3}{10} & \frac{3}{5} & \frac{1}{3}
  \end{pmatrix}, \\
  G'' = 
  \begin{pmatrix}
    0 & 0 & -\frac{7}{15} & -\frac{7}{30} & \frac{1}{10}
  \end{pmatrix}; \;
  B' = 
  \begin{pmatrix}
    \frac{17}{30}
  \end{pmatrix}, \;
  B'' = 
  \begin{pmatrix}
    \frac{13}{30}
  \end{pmatrix}; \;
    Q' =
  \begin{pmatrix}
    \frac{17}{30}
  \end{pmatrix}, \;
  Q'' =
  \begin{pmatrix}
    \frac{43}{30}
  \end{pmatrix}.
\end{gather*}


\subsection{Family \textnumero3.17}\label{subsection:03-17}

The pencil \(\mathcal{S}\) is defined by the equation
\begin{gather*}
  X^{2} Y Z + X Y^{2} Z + X Y Z^{2} + X Z^{2} T + Y^{2} T^{2} + X Z T^{2} + Y Z T^{2} + Y T^{3} = \lambda X Y Z T.
\end{gather*}
Members \(\mathcal{S}_{\lambda}\) of the pencil are irreducible for any \(\lambda \in \mathbb{P}^1\) except
\(\mathcal{S}_{\infty} = S_{(X)} + S_{(Y)} + S_{(Z)} + S_{(T)}\).
The base locus of the pencil \(\mathcal{S}\) consists of the following curves:
\begin{gather*}
  C_1 = C_{(X, Y)}, \;
  C_2 = C_{(X, T)}, \;
  C_3 = C_{(Y, Z)}, \;
  C_4 = C_{(Y, T)}, \;
  C_5 = C_{(Z, T)}, \\
  C_6 = C_{(Y, Z + T)}, \;
  C_7 = C_{(Z, Y + T)}, \;
  C_8 = C_{(X, Y + Z + T)}, \;
  C_9 = C_{(T, X + Y + Z)}.
\end{gather*}
Their linear equivalence classes on the generic member \(\mathcal{S}_{\Bbbk}\) of the pencil satisfy the following relations:
\begin{gather*}
  \begin{pmatrix}
    [C_{6}] \\ [C_{7}] \\ [C_{8}] \\ [C_{9}]
  \end{pmatrix} = 
  \begin{pmatrix}
    -1 & 0 & -1 & -1 & 0 & 1 \\
    0 & 0 & -1 & 0 & -2 & 1 \\
    -1 & -2 & 0 & 0 & 0 & 1 \\
    0 & -1 & 0 & -1 & -1 & 1
  \end{pmatrix} \cdot
  \begin{pmatrix}
    [C_{1}] & [C_{2}] & [C_{3}] & [C_{4}] & [C_{5}] & [H_{\mathcal{S}}]
  \end{pmatrix}^T.
\end{gather*}

For a general choice of \(\lambda \in \mathbb{C}\) the surface \(\mathcal{S}_{\lambda}\) has the following singularities:
\begin{itemize}\setlength{\itemindent}{2cm}
\item[\(P_{1} = P_{(X, Y, T)}\):] type \(\mathbb{A}_2\) with the quadratic term \(X \cdot (Y + T)\);
\item[\(P_{2} = P_{(X, Z, T)}\):] type \(\mathbb{A}_1\) with the quadratic term \(X Z + T^2\);
\item[\(P_{3} = P_{(Y, Z, T)}\):] type \(\mathbb{A}_4\) with the quadratic term \(Y \cdot Z\);
\item[\(P_{4} = P_{(X, Y, Z + T)}\):] type \(\mathbb{A}_1\) with the quadratic term \((X - Y) (Y + Z + T) - (\lambda + 2) X Y\);
\item[\(P_{5} = P_{(X, T, Y + Z)}\):] type \(\mathbb{A}_2\) with the quadratic term \(X \cdot (X + Y + Z - (\lambda + 1) T)\);
\item[\(P_{6} = P_{(Z, T, X + Y)}\):] type \(\mathbb{A}_1\) with the quadratic term \(Z (X + Y + Z + T) - T ((\lambda + 1) Z + T)\).
\end{itemize}

Galois action on the lattice \(L_{\lambda}\) is trivial. The intersection matrix on \(L_{\lambda} = L_{\mathcal{S}}\) is represented by
\begin{table}[H]
  \begin{tabular}{|c||cc|c|cccc|c|cc|c|cccccc|}
    \hline
    \(\bullet\) & \(E_1^1\) & \(E_1^2\) & \(E_2^1\) & \(E_3^1\) & \(E_3^2\) & \(E_3^3\) & \(E_3^4\) & \(E_4^1\) & \(E_5^1\) & \(E_5^2\) & \(E_6^1\) & \(\widetilde{C_{1}}\) & \(\widetilde{C_{2}}\) & \(\widetilde{C_{3}}\) & \(\widetilde{C_{4}}\) & \(\widetilde{C_{5}}\) & \(\widetilde{H_{\mathcal{S}}}\) \\
    \hline
    \hline
    \(\widetilde{C_{1}}\) & \(1\) & \(0\) & \(0\) & \(0\) & \(0\) & \(0\) & \(0\) & \(1\) & \(0\) & \(0\) & \(0\) & \(-2\) & \(0\) & \(1\) & \(0\) & \(0\) & \(1\) \\
    \(\widetilde{C_{2}}\) & \(1\) & \(0\) & \(1\) & \(0\) & \(0\) & \(0\) & \(0\) & \(0\) & \(1\) & \(0\) & \(0\) & \(0\) & \(-2\) & \(0\) & \(0\) & \(0\) & \(1\) \\
    \(\widetilde{C_{3}}\) & \(0\) & \(0\) & \(0\) & \(0\) & \(0\) & \(1\) & \(0\) & \(0\) & \(0\) & \(0\) & \(0\) & \(1\) & \(0\) & \(-2\) & \(0\) & \(0\) & \(1\) \\
    \(\widetilde{C_{4}}\) & \(0\) & \(1\) & \(0\) & \(1\) & \(0\) & \(0\) & \(0\) & \(0\) & \(0\) & \(0\) & \(0\) & \(0\) & \(0\) & \(0\) & \(-2\) & \(0\) & \(1\) \\
    \(\widetilde{C_{5}}\) & \(0\) & \(0\) & \(1\) & \(0\) & \(0\) & \(0\) & \(1\) & \(0\) & \(0\) & \(0\) & \(1\) & \(0\) & \(0\) & \(0\) & \(0\) & \(-2\) & \(1\) \\
    \(\widetilde{H_{\mathcal{S}}}\) & \(0\) & \(0\) & \(0\) & \(0\) & \(0\) & \(0\) & \(0\) & \(0\) & \(0\) & \(0\) & \(0\) & \(1\) & \(1\) & \(1\) & \(1\) & \(1\) & \(4\) \\
    \hline
  \end{tabular}.
\end{table}
Note that the intersection matrix is non-degenerate.

Discriminant groups and discriminant forms of the lattices \(L_{\mathcal{S}}\) and \(H \oplus \Pic(X)\) are given by
\begin{gather*}
  G' = 
  \begin{pmatrix}
    \frac{17}{28} & \frac{5}{14} & \frac{19}{28} & \frac{5}{7} & \frac{9}{28} & \frac{13}{14} & \frac{6}{7} & \frac{1}{7} & \frac{5}{7} & \frac{6}{7} & \frac{25}{28} & \frac{2}{7} & \frac{4}{7} & \frac{19}{28} & \frac{3}{28} & \frac{11}{14} & \frac{1}{7}
  \end{pmatrix}, \\
  G'' = 
  \begin{pmatrix}
    0 & 0 & -\frac{17}{28} & -\frac{3}{28} & \frac{1}{14}
  \end{pmatrix}; \;
  B' = 
  \begin{pmatrix}
    \frac{1}{28}
  \end{pmatrix}, \;
  B'' = 
  \begin{pmatrix}
    \frac{27}{28}
  \end{pmatrix}; \;
  Q' =
  \begin{pmatrix}
    \frac{1}{28}
  \end{pmatrix}, \;
  Q'' =
  \begin{pmatrix}
    \frac{55}{28}
  \end{pmatrix}.
\end{gather*}


\subsection{Family \textnumero3.18}\label{subsection:03-18}

The pencil \(\mathcal{S}\) is defined by the equation
\[
  X^{2} Y Z + X Y^{2} Z + X Y Z^{2} + X^{2} Y T + X Y^{2} T + X^{2} Z T + X Z T^{2} + Y Z T^{2} = \lambda X Y Z T.
\]
Members \(\mathcal{S}_{\lambda}\) of the pencil are irreducible for any \(\lambda \in \mathbb{P}^1\) except
\(\mathcal{S}_{\infty} = S_{(X)} + S_{(Y)} + S_{(Z)} + S_{(T)}\).
The base locus of the pencil \(\mathcal{S}\) consists of the following curves:
\begin{gather*}
  C_1 = C_{(X, Y)}, \;
  C_2 = C_{(X, Z)}, \;
  C_3 = C_{(X, T)}, \;
  C_4 = C_{(Y, Z)}, \;
  C_5 = C_{(Y, T)}, \\
  C_6 = C_{(Z, T)}, \;
  C_7 = C_{(Y, X + T)}, \;
  C_8 = C_{(Z, X + Y)}, \;
  C_9 = C_{(T, X + Y + Z)}.
\end{gather*}
Their linear equivalence classes on the generic member \(\mathcal{S}_{\Bbbk}\) of the pencil satisfy the following relations:
\begin{gather*}
  \begin{pmatrix}
    [C_{2}] \\ [C_{7}] \\ [C_{8}] \\ [C_{9}]
  \end{pmatrix} = 
  \begin{pmatrix}
    -1 & -2 & 0 & 0 & 0 & 1 \\
    -1 & 0 & -1 & -1 & 0 & 1 \\
    1 & 2 & -1 & 0 & -1 & 0 \\
    0 & -1 & 0 & -1 & -1 & 1
  \end{pmatrix} \cdot
  \begin{pmatrix}
    [C_{1}] & [C_{3}] & [C_{4}] & [C_{5}] & [C_{6}] & [H_{\mathcal{S}}]
  \end{pmatrix}^T.
\end{gather*}

For a general choice of \(\lambda \in \mathbb{C}\) the surface \(\mathcal{S}_{\lambda}\) has the following singularities:
\begin{itemize}\setlength{\itemindent}{2cm}
\item[\(P_{1} = P_{(X, Y, Z)}\):] type \(\mathbb{A}_3\) with the quadratic term \(Z \cdot (X + Y)\);
\item[\(P_{2} = P_{(X, Y, T)}\):] type \(\mathbb{A}_3\) with the quadratic term \(X \cdot Y\);
\item[\(P_{3} = P_{(X, Z, T)}\):] type \(\mathbb{A}_2\) with the quadratic term \(X \cdot (Z + T)\);
\item[\(P_{4} = P_{(Y, Z, T)}\):] type \(\mathbb{A}_1\) with the quadratic term \(Y Z + Y T + Z T\);
\item[\(P_{5} = P_{(X, T, Y + Z)}\):] type \(\mathbb{A}_1\) with the quadratic term \(X (X + Y + Z - (\lambda + 1) T) + T^2\);
\item[\(P_{6} = P_{(Z, T, X + Y)}\):] type \(\mathbb{A}_1\) with the quadratic term \((Z + T) (X + Y + Z) - (\lambda + 2) Z T\).
\end{itemize}

Galois action on the lattice \(L_{\lambda}\) is trivial. The intersection matrix on \(L_{\lambda} = L_{\mathcal{S}}\) is represented by
\begin{table}[H]
  \begin{tabular}{|c||ccc|ccc|cc|c|c|c|cccccc|}
    \hline
    \(\bullet\) & \(E_1^1\) & \(E_1^2\) & \(E_1^3\) & \(E_2^1\) & \(E_2^2\) & \(E_2^3\) & \(E_3^1\) & \(E_3^2\) & \(E_4^1\) & \(E_5^1\) & \(E_6^1\) & \(\widetilde{C_{1}}\) & \(\widetilde{C_{3}}\) & \(\widetilde{C_{4}}\) & \(\widetilde{C_{5}}\) & \(\widetilde{C_{6}}\) & \(\widetilde{H_{\mathcal{S}}}\) \\
    \hline
    \hline
    \(\widetilde{C_{1}}\) & \(1\) & \(0\) & \(0\) & \(0\) & \(1\) & \(0\) & \(0\) & \(0\) & \(0\) & \(0\) & \(0\) & \(-2\) & \(0\) & \(0\) & \(0\) & \(0\) & \(1\) \\
    \(\widetilde{C_{3}}\) & \(0\) & \(0\) & \(0\) & \(1\) & \(0\) & \(0\) & \(1\) & \(0\) & \(0\) & \(1\) & \(0\) & \(0\) & \(-2\) & \(0\) & \(0\) & \(0\) & \(1\) \\
    \(\widetilde{C_{4}}\) & \(0\) & \(0\) & \(1\) & \(0\) & \(0\) & \(0\) & \(0\) & \(0\) & \(1\) & \(0\) & \(0\) & \(0\) & \(0\) & \(-2\) & \(0\) & \(0\) & \(1\) \\
    \(\widetilde{C_{5}}\) & \(0\) & \(0\) & \(0\) & \(0\) & \(0\) & \(1\) & \(0\) & \(0\) & \(1\) & \(0\) & \(0\) & \(0\) & \(0\) & \(0\) & \(-2\) & \(0\) & \(1\) \\
    \(\widetilde{C_{6}}\) & \(0\) & \(0\) & \(0\) & \(0\) & \(0\) & \(0\) & \(0\) & \(1\) & \(1\) & \(0\) & \(1\) & \(0\) & \(0\) & \(0\) & \(0\) & \(-2\) & \(1\) \\
    \(\widetilde{H_{\mathcal{S}}}\) & \(0\) & \(0\) & \(0\) & \(0\) & \(0\) & \(0\) & \(0\) & \(0\) & \(0\) & \(0\) & \(0\) & \(1\) & \(1\) & \(1\) & \(1\) & \(1\) & \(4\) \\
    \hline
  \end{tabular}.
\end{table}
Note that the intersection matrix is non-degenerate.

Discriminant groups and discriminant forms of the lattices \(L_{\mathcal{S}}\) and \(H \oplus \Pic(X)\) are given by
\begin{gather*}
  G' = 
  \begin{pmatrix}
    \frac{15}{26} & \frac{1}{13} & \frac{15}{26} & \frac{3}{26} & \frac{7}{13} & \frac{23}{26} & \frac{5}{13} & \frac{1}{13} & \frac{7}{13} & \frac{11}{13} & \frac{23}{26} & \frac{1}{13} & \frac{9}{13} & \frac{1}{13} & \frac{3}{13} & \frac{10}{13} & \frac{1}{26}
  \end{pmatrix}, \\
  G'' = 
  \begin{pmatrix}
    0 & 0 & -\frac{6}{13} & -\frac{11}{26} & \frac{1}{13}
  \end{pmatrix}; \;
  B' = 
  \begin{pmatrix}
    \frac{25}{26}
  \end{pmatrix}, \;
  B'' = 
  \begin{pmatrix}
    \frac{1}{26}
  \end{pmatrix}; \;
  Q' =
  \begin{pmatrix}
    \frac{25}{26}
  \end{pmatrix}, \;
  Q'' =
  \begin{pmatrix}
    \frac{27}{26}
  \end{pmatrix}.
\end{gather*}


\subsection{Family \textnumero3.19}\label{subsection:03-19}

The pencil \(\mathcal{S}\) is defined by the equation
\begin{gather*}
  X^{2} Y Z + X Y^{2} Z + X Y Z^{2} + Y Z^{2} T + X^{2} T^{2} + Y Z T^{2} + X T^{3} = \lambda X Y Z T.
\end{gather*}
Members \(\mathcal{S}_{\lambda}\) of the pencil are irreducible for any \(\lambda \in \mathbb{P}^1\) except
\(\mathcal{S}_{\infty} = S_{(X)} + S_{(Y)} + S_{(Z)} + S_{(T)}\).
The base locus of the pencil \(\mathcal{S}\) consists of the following curves:
\begin{gather*}
  C_{1} = C_{(X, Y)}, \;
  C_{2} = C_{(X, Z)}, \;
  C_{3} = C_{(X, T)}, \;
  C_{4} = C_{(Y, T)}, \;
  C_{5} = C_{(Z, T)}, \\
  C_{6} = C_{(X, Z + T)}, \;
  C_{7} = C_{(Y, X + T)}, \;
  C_{8} = C_{(Z, X + T)}, \;
  C_{9} = C_{(T, X + Y + Z)}.
\end{gather*}
Their linear equivalence classes on the generic member \(\mathcal{S}_{\Bbbk}\) of the pencil satisfy the following relations:
\begin{gather*}
  \begin{pmatrix}
    [C_{6}] \\ [C_{7}] \\ [C_{8}] \\ [C_{9}]
  \end{pmatrix} = 
  \begin{pmatrix}
    -1 & -1 & -1 & 0 & 0 & 1 \\
    -1 & 0 & 0 & -2 & 0 & 1 \\
    0 & -1 & 0 & 0 & -2 & 1 \\
    0 & 0 & -1 & -1 & -1 & 1
  \end{pmatrix} \cdot
  \begin{pmatrix}
    [C_{1}] & [C_{2}] & [C_{3}] & [C_{4}] & [C_{5}] & [H_{\mathcal{S}}]
  \end{pmatrix}^T.
\end{gather*}

For a general choice of \(\lambda \in \mathbb{C}\) the surface \(\mathcal{S}_{\lambda}\) has the following singularities:
\begin{itemize}\setlength{\itemindent}{2cm}
\item[\(P_{1} = P_{(X, Y, T)}\):] type \(\mathbb{A}_4\) with the quadratic term \(Y \cdot (X + T)\);
\item[\(P_{2} = P_{(X, Z, T)}\):] type \(\mathbb{A}_4\) with the quadratic term \(X \cdot Z\);
\item[\(P_{3} = P_{(Y, Z, T)}\):] type \(\mathbb{A}_1\) with the quadratic term \(Y Z + T^2\);
\item[\(P_{4} = P_{(Y, T, X + Z)}\):] type \(\mathbb{A}_1\) with the quadratic term \(Y (X + Y + Z - (\lambda + 1) T) - T^2\);
\item[\(P_{5} = P_{(Z, T, X + Y)}\):] type \(\mathbb{A}_1\) with the quadratic term \(Z (X + Y + Z - \lambda T) - T^2\).
\end{itemize}

Galois action on the lattice \(L_{\lambda}\) is trivial. The intersection matrix on \(L_{\lambda} = L_{\mathcal{S}}\) is represented by
\begin{table}[H]
  \begin{tabular}{|c||cccc|cccc|c|c|c|cccccc|}
    \hline
    \(\bullet\) & \(E_1^1\) & \(E_1^2\) & \(E_1^3\) & \(E_1^4\) & \(E_2^1\) & \(E_2^2\) & \(E_2^3\) & \(E_2^4\) & \(E_3^1\) & \(E_4^1\) & \(E_5^1\) & \(\widetilde{C_{1}}\) & \(\widetilde{C_{2}}\) & \(\widetilde{C_{3}}\) & \(\widetilde{C_{4}}\) & \(\widetilde{C_{5}}\) & \(\widetilde{H_{\mathcal{S}}}\) \\
    \hline
    \hline
    \(\widetilde{C_{1}}\) & \(1\) & \(0\) & \(0\) & \(0\) & \(0\) & \(0\) & \(0\) & \(0\) & \(0\) & \(0\) & \(0\) & \(-2\) & \(1\) & \(0\) & \(0\) & \(0\) & \(1\) \\
    \(\widetilde{C_{2}}\) & \(0\) & \(0\) & \(0\) & \(0\) & \(0\) & \(0\) & \(1\) & \(0\) & \(0\) & \(0\) & \(0\) & \(1\) & \(-2\) & \(0\) & \(0\) & \(0\) & \(1\) \\
    \(\widetilde{C_{3}}\) & \(0\) & \(0\) & \(0\) & \(1\) & \(1\) & \(0\) & \(0\) & \(0\) & \(0\) & \(0\) & \(0\) & \(0\) & \(0\) & \(-2\) & \(0\) & \(0\) & \(1\) \\
    \(\widetilde{C_{4}}\) & \(1\) & \(0\) & \(0\) & \(0\) & \(0\) & \(0\) & \(0\) & \(0\) & \(1\) & \(1\) & \(0\) & \(0\) & \(0\) & \(0\) & \(-2\) & \(0\) & \(1\) \\
    \(\widetilde{C_{5}}\) & \(0\) & \(0\) & \(0\) & \(0\) & \(0\) & \(0\) & \(0\) & \(1\) & \(1\) & \(0\) & \(1\) & \(0\) & \(0\) & \(0\) & \(0\) & \(-2\) & \(1\) \\
    \(\widetilde{H_{\mathcal{S}}}\) & \(0\) & \(0\) & \(0\) & \(0\) & \(0\) & \(0\) & \(0\) & \(0\) & \(0\) & \(0\) & \(0\) & \(1\) & \(1\) & \(1\) & \(1\) & \(1\) & \(4\) \\
    \hline
  \end{tabular}.
\end{table}
Note that the intersection matrix is non-degenerate.

Discriminant groups and discriminant forms of the lattices \(L_{\mathcal{S}}\) and \(H \oplus \Pic(X)\) are given by
\begin{gather*}
  G' = 
  \begin{pmatrix}
    0 & 0 & 0 & 0 & 0 & 0 & 0 & 0 & \frac{1}{2} & \frac{1}{2} & \frac{1}{2} & 0 & 0 & 0 & 0 & 0 & 0 \\
    \frac{1}{2} & 0 & \frac{1}{2} & 0 & 0 & \frac{1}{2} & 0 & 0 & \frac{1}{2} & 0 & \frac{1}{2} & 0 & \frac{1}{2} & \frac{1}{2} & 0 & 0 & 0 \\
    \frac{5}{6} & \frac{5}{6} & \frac{5}{6} & \frac{5}{6} & \frac{2}{3} & \frac{1}{2} & \frac{1}{3} & \frac{5}{6} & 0 & \frac{1}{3} & \frac{2}{3} & \frac{1}{6} & \frac{1}{3} & \frac{5}{6} & \frac{2}{3} & \frac{1}{3} & \frac{1}{6}
  \end{pmatrix}, \\
  G'' = 
  \begin{pmatrix}
    0 & 0 & \frac{1}{2} & 0 & 0 \\
    0 & 0 & 0 & \frac{1}{2} & 0 \\
    0 & 0 & 0 & 0 & \frac{1}{6}
  \end{pmatrix}; \;
  B' = 
  \begin{pmatrix}
    \frac{1}{2} & 0 & 0 \\
    0 & \frac{1}{2} & 0 \\
    0 & 0 & \frac{5}{6}
  \end{pmatrix}, \;
  B'' = 
  \begin{pmatrix}
    \frac{1}{2} & 0 & 0 \\
    0 & \frac{1}{2} & 0 \\
    0 & 0 & \frac{1}{6}
  \end{pmatrix}; \;
  \begin{pmatrix}
    Q' \\ Q''
  \end{pmatrix}
  =
  \begin{pmatrix}
    \frac{1}{2} & \frac{1}{2} & \frac{11}{6} \\
    \frac{3}{2} & \frac{3}{2} & \frac{1}{6}
  \end{pmatrix}.
\end{gather*}


\subsection{Family \textnumero3.20}\label{subsection:03-20}

The pencil \(\mathcal{S}\) is defined by the equation
\begin{gather*}
  X^{2} Y Z + X Y^{2} Z + X Y Z^{2} + Y^{2} Z T + X^{2} T^{2} + X Z T^{2} + Y Z T^{2} = \lambda X Y Z T.
\end{gather*}
Members \(\mathcal{S}_{\lambda}\) of the pencil are irreducible for any \(\lambda \in \mathbb{P}^1\) except
\(\mathcal{S}_{\infty} = S_{(X)} + S_{(Y)} + S_{(Z)} + S_{(T)}\).
The base locus of the pencil \(\mathcal{S}\) consists of the following curves:
\begin{gather*}
  C_{1} = C_{(X, Y)}, \;
  C_{2} = C_{(X, Z)}, \;
  C_{3} = C_{(X, T)}, \;
  C_{4} = C_{(Y, T)}, \\
  C_{5} = C_{(Z, T)}, \;
  C_{6} = C_{(X, Y + T)}, \;
  C_{7} = C_{(Y, X + Z)}, \;
  C_{8} = C_{(T, X + Y + Z)}.
\end{gather*}
Their linear equivalence classes on the generic member \(\mathcal{S}_{\Bbbk}\) of the pencil satisfy the following relations:
\begin{gather*}
  \begin{pmatrix}
    [C_{6}] \\ [C_{7}] \\ [C_{8}] \\ [H_{\mathcal{S}}]
  \end{pmatrix} = 
  \begin{pmatrix}
    -1 & 1 & -1 & 0 & 2 \\
    -1 & 2 & 0 & -2 & 2 \\
    0 & 2 & -1 & -1 & 1 \\
    0 & 2 & 0 & 0 & 2
  \end{pmatrix} \cdot
  \begin{pmatrix}
    [C_{1}] & [C_{2}] & [C_{3}] & [C_{4}] & [C_{5}]
  \end{pmatrix}^T.
\end{gather*}

For a general choice of \(\lambda \in \mathbb{C}\) the surface \(\mathcal{S}_{\lambda}\) has the following singularities:
\begin{itemize}\setlength{\itemindent}{2cm}
\item[\(P_{1} = P_{(X, Y, Z)}\):] type \(\mathbb{A}_1\) with the quadratic term \(X^2 + Z (X + Y)\);
\item[\(P_{2} = P_{(X, Y, T)}\):] type \(\mathbb{A}_3\) with the quadratic term \(X \cdot Y\);
\item[\(P_{3} = P_{(X, Z, T)}\):] type \(\mathbb{A}_3\) with the quadratic term \(Z \cdot (X + T)\);
\item[\(P_{4} = P_{(Y, Z, T)}\):] type \(\mathbb{A}_1\) with the quadratic term \(Y Z + T^2\);
\item[\(P_{5} = P_{(X, Z, Y + T)}\):] type \(\mathbb{A}_1\) with the quadratic term \(Z ((\lambda + 2) X - Y - T) + X^2\);
\item[\(P_{6} = P_{(Y, T, X + Z)}\):] type \(\mathbb{A}_2\) with the quadratic term \(Y \cdot (X + Y + Z - \lambda T)\);
\item[\(P_{7} = P_{(Z, T, X + Y)}\):] type \(\mathbb{A}_1\) with the quadratic term \(Z (X + Y + Z + T) - T ((\lambda + 2) Z + T)\).
\end{itemize}

Galois action on the lattice \(L_{\lambda}\) is trivial. The intersection matrix on \(L_{\lambda} = L_{\mathcal{S}}\) is represented by
\begin{table}[H]
  \begin{tabular}{|c||c|ccc|ccc|c|c|cc|c|ccccc|}
    \hline
    \(\bullet\) & \(E_1^1\) & \(E_2^1\) & \(E_2^2\) & \(E_2^3\) & \(E_3^1\) & \(E_3^2\) & \(E_3^3\) & \(E_4^1\) & \(E_5^1\) & \(E_6^1\) & \(E_6^2\) & \(E_7^1\) & \(\widetilde{C_{1}}\) & \(\widetilde{C_{2}}\) & \(\widetilde{C_{3}}\) & \(\widetilde{C_{4}}\) & \(\widetilde{C_{5}}\) \\
    \hline
    \hline
    \(\widetilde{C_{1}}\) & \(1\) & \(0\) & \(1\) & \(0\) & \(0\) & \(0\) & \(0\) & \(0\) & \(0\) & \(0\) & \(0\) & \(0\) & \(-2\) & \(0\) & \(0\) & \(0\) & \(0\) \\
    \(\widetilde{C_{2}}\) & \(1\) & \(0\) & \(0\) & \(0\) & \(1\) & \(0\) & \(0\) & \(0\) & \(1\) & \(0\) & \(0\) & \(0\) & \(0\) & \(-2\) & \(0\) & \(0\) & \(0\) \\
    \(\widetilde{C_{3}}\) & \(0\) & \(1\) & \(0\) & \(0\) & \(0\) & \(0\) & \(1\) & \(0\) & \(0\) & \(0\) & \(0\) & \(0\) & \(0\) & \(0\) & \(-2\) & \(0\) & \(0\) \\
    \(\widetilde{C_{4}}\) & \(0\) & \(0\) & \(0\) & \(1\) & \(0\) & \(0\) & \(0\) & \(1\) & \(0\) & \(1\) & \(0\) & \(0\) & \(0\) & \(0\) & \(0\) & \(-2\) & \(0\) \\
    \(\widetilde{C_{5}}\) & \(0\) & \(0\) & \(0\) & \(0\) & \(1\) & \(0\) & \(0\) & \(1\) & \(0\) & \(0\) & \(0\) & \(1\) & \(0\) & \(0\) & \(0\) & \(0\) & \(-2\) \\
    \hline
  \end{tabular}.
\end{table}
Note that the intersection matrix is non-degenerate.

Discriminant groups and discriminant forms of the lattices \(L_{\mathcal{S}}\) and \(H \oplus \Pic(X)\) are given by
\begin{gather*}
  G' = 
  \begin{pmatrix}
    \frac{4}{7} & \frac{13}{28} & \frac{4}{7} & \frac{17}{28} & \frac{1}{28} & \frac{1}{7} & \frac{1}{4} & \frac{1}{4} & \frac{15}{28} & \frac{3}{7} & \frac{3}{14} & \frac{3}{7} & \frac{1}{14} & \frac{1}{14} & \frac{5}{14} & \frac{9}{14} & \frac{6}{7}
  \end{pmatrix}, \\
  G'' = 
  \begin{pmatrix}
    0 & 0 & \frac{1}{28} & -\frac{13}{28} & \frac{1}{14}
  \end{pmatrix}; \;
  B' = 
  \begin{pmatrix}
    \frac{13}{28}
  \end{pmatrix}, \;
  B'' = 
  \begin{pmatrix}
    \frac{15}{28}
  \end{pmatrix}; \;
  Q' =
  \begin{pmatrix}
    \frac{13}{28}
  \end{pmatrix}, \;
  Q'' =
  \begin{pmatrix}
    \frac{43}{28}
  \end{pmatrix}.
\end{gather*}


\subsection{Family \textnumero3.21}\label{subsection:03-21}

The pencil \(\mathcal{S}\) is defined by the equation
\begin{gather*}
  X^{2} Y Z + X Y^{2} Z + X Y Z^{2} + X Z^{2} T + X Y T^{2} + X Z T^{2} + Y Z T^{2} + Z^{2} T^{2} = \lambda X Y Z T.
\end{gather*}
Members \(\mathcal{S}_{\lambda}\) of the pencil are irreducible for any \(\lambda \in \mathbb{P}^1\) except
\(\mathcal{S}_{\infty} = S_{(X)} + S_{(Y)} + S_{(Z)} + S_{(T)}\).
The base locus of the pencil \(\mathcal{S}\) consists of the following curves:
\begin{gather*}
  C_{1} = C_{(X, Z)}, \;
  C_{2} = C_{(X, T)}, \;
  C_{3} = C_{(Y, Z)}, \;
  C_{4} = C_{(Y, T)}, \\
  C_{5} = C_{(Z, T)}, \;
  C_{6} = C_{(X, Y + Z)}, \;
  C_{7} = C_{(T, X + Y + Z)}, \;
  C_{8} = C_{(Y, X Z + T (X + Z))}.
\end{gather*}
Their linear equivalence classes on the generic member \(\mathcal{S}_{\Bbbk}\) of the pencil satisfy the following relations:
\begin{gather*}
  \begin{pmatrix}
    [C_{3}] \\ [C_{6}] \\ [C_{7}] \\ [C_{8}]
  \end{pmatrix} = 
  \begin{pmatrix}
    -1 & 0 & 0 & -2 & 1 \\
    -1 & -2 & 0 & 0 & 1 \\
    0 & -1 & -1 & -1 & 1 \\
    1 & 0 & -1 & 2 & 0
  \end{pmatrix} \cdot
  \begin{pmatrix}
    [C_{1}] & [C_{2}] & [C_{4}] & [C_{5}] & [H_{\mathcal{S}}]
  \end{pmatrix}^T.
\end{gather*}

For a general choice of \(\lambda \in \mathbb{C}\) the surface \(\mathcal{S}_{\lambda}\) has the following singularities:
\begin{itemize}\setlength{\itemindent}{2cm}
\item[\(P_{1} = P_{(X, Y, Z)}\):] type \(\mathbb{A}_2\) with the quadratic term \((X + Z) \cdot (Y + Z)\);
\item[\(P_{2} = P_{(X, Y, T)}\):] type \(\mathbb{A}_1\) with the quadratic term \(X (Y + T) + T^2\);
\item[\(P_{3} = P_{(X, Z, T)}\):] type \(\mathbb{A}_3\) with the quadratic term \(X \cdot Z\);
\item[\(P_{4} = P_{(Y, Z, T)}\):] type \(\mathbb{A}_3\) with the quadratic term \(Y \cdot Z\);
\item[\(P_{5} = P_{(X, T, Y + Z)}\):] type \(\mathbb{A}_2\) with the quadratic term \(X \cdot (X + Y + Z - (\lambda + 1) T)\);
\item[\(P_{6} = P_{(Z, T, X + Y)}\):] type \(\mathbb{A}_1\) with the quadratic term \(Z (X + Y + Z - \lambda T) + T^2\).
\end{itemize}

Galois action on the lattice \(L_{\lambda}\) is trivial. The intersection matrix on \(L_{\lambda} = L_{\mathcal{S}}\) is represented by
\begin{table}[H]
  \begin{tabular}{|c||cc|c|ccc|ccc|cc|c|ccccc|}
    \hline
    \(\bullet\) & \(E_1^1\) & \(E_1^2\) & \(E_2^1\) & \(E_3^1\) & \(E_3^2\) & \(E_3^3\) & \(E_4^1\) & \(E_4^2\) & \(E_4^3\) & \(E_5^1\) & \(E_5^2\) & \(E_6^1\) & \(\widetilde{C_{1}}\) & \(\widetilde{C_{2}}\) & \(\widetilde{C_{4}}\) & \(\widetilde{C_{5}}\) & \(\widetilde{H_{\mathcal{S}}}\) \\
    \hline
    \hline
    \(\widetilde{C_{1}}\) & \(1\) & \(0\) & \(0\) & \(0\) & \(1\) & \(0\) & \(0\) & \(0\) & \(0\) & \(0\) & \(0\) & \(0\) & \(-2\) & \(0\) & \(0\) & \(0\) & \(1\) \\
    \(\widetilde{C_{2}}\) & \(0\) & \(0\) & \(1\) & \(1\) & \(0\) & \(0\) & \(0\) & \(0\) & \(0\) & \(1\) & \(0\) & \(0\) & \(0\) & \(-2\) & \(0\) & \(0\) & \(1\) \\
    \(\widetilde{C_{4}}\) & \(0\) & \(0\) & \(1\) & \(0\) & \(0\) & \(0\) & \(1\) & \(0\) & \(0\) & \(0\) & \(0\) & \(0\) & \(0\) & \(0\) & \(-2\) & \(0\) & \(1\) \\
    \(\widetilde{C_{5}}\) & \(0\) & \(0\) & \(0\) & \(0\) & \(0\) & \(1\) & \(0\) & \(0\) & \(1\) & \(0\) & \(0\) & \(1\) & \(0\) & \(0\) & \(0\) & \(-2\) & \(1\) \\
    \(\widetilde{H_{\mathcal{S}}}\) & \(0\) & \(0\) & \(0\) & \(0\) & \(0\) & \(0\) & \(0\) & \(0\) & \(0\) & \(0\) & \(0\) & \(0\) & \(1\) & \(1\) & \(1\) & \(1\) & \(4\) \\
    \hline
  \end{tabular}.
\end{table}
Note that the intersection matrix is non-degenerate.

Discriminant groups and discriminant forms of the lattices \(L_{\mathcal{S}}\) and \(H \oplus \Pic(X)\) are given by
\begin{gather*}
  G' = 
  \begin{pmatrix}
    \frac{5}{11} & \frac{8}{11} & \frac{19}{22} & \frac{1}{2} & \frac{2}{11} & \frac{15}{22} & \frac{5}{22} & \frac{6}{11} & \frac{19}{22} & \frac{6}{11} & \frac{3}{11} & \frac{1}{11} & \frac{2}{11} & \frac{9}{11} & \frac{10}{11} & \frac{2}{11} & \frac{8}{11}
  \end{pmatrix}, \\
  G'' = 
  \begin{pmatrix}
    0 & 0 & \frac{81}{22} & \frac{9}{22} & -\frac{5}{11}
  \end{pmatrix}; \;
  B' = 
  \begin{pmatrix}
    \frac{3}{22}
  \end{pmatrix}, \;
  B'' = 
  \begin{pmatrix}
    \frac{19}{22}
  \end{pmatrix}; \;
  Q' =
  \begin{pmatrix}
    \frac{25}{22}
  \end{pmatrix}, \;
  Q'' =
  \begin{pmatrix}
    \frac{19}{22}
  \end{pmatrix}.
\end{gather*}


\subsection{Family \textnumero3.22}\label{subsection:03-22}

The pencil \(\mathcal{S}\) is defined by the equation
\begin{gather*}
  X^{2} Y Z + X Y^{2} Z + X Y Z^{2} + Y Z^{2} T + Y Z T^{2} + X T^{3} + T^{4} = \lambda X Y Z T.
\end{gather*}
Members \(\mathcal{S}_{\lambda}\) of the pencil are irreducible for any \(\lambda \in \mathbb{P}^1\) except
\(\mathcal{S}_{\infty} = S_{(X)} + S_{(Y)} + S_{(Z)} + S_{(T)}\).
The base locus of the pencil \(\mathcal{S}\) consists of the following curves:
\begin{gather*}
  C_{1} = C_{(X, T)}, \;
  C_{2} = C_{(Y, T)}, \;
  C_{3} = C_{(Z, T)}, \;
  C_{4} = C_{(Y, X + T)}, \\
  C_{5} = C_{(Z, X + T)}, \;
  C_{6} = C_{(T, X + Y + Z)}, \;
  C_{7} = C_{(X, T^3 + Y Z (Z + T))}.
\end{gather*}
Their linear equivalence classes on the generic member \(\mathcal{S}_{\Bbbk}\) of the pencil satisfy the following relations:
\begin{gather*}
  \begin{pmatrix}
    [C_{4}] \\ [C_{5}] \\ [C_{6}] \\ [C_{7}]
  \end{pmatrix} = 
  \begin{pmatrix}
    0 & -3 & 0 & 1 \\
    0 & 0 & -3 & 1 \\
    -1 & -1 & -1 & 1 \\
    -1 & 0 & 0 & 1
  \end{pmatrix} \cdot
  \begin{pmatrix}
    [C_{1}] \\ [C_{2}] \\ [C_{3}] \\ [H_{\mathcal{S}}]
  \end{pmatrix}.
\end{gather*}

For a general choice of \(\lambda \in \mathbb{C}\) the surface \(\mathcal{S}_{\lambda}\) has the following singularities:
\begin{itemize}\setlength{\itemindent}{2cm}
\item[\(P_{1} = P_{(X, Y, T)}\):] type \(\mathbb{A}_4\) with the quadratic term \(Y \cdot (X + T)\);
\item[\(P_{2} = P_{(X, Z, T)}\):] type \(\mathbb{A}_3\) with the quadratic term \(X \cdot Z\);
\item[\(P_{3} = P_{(Y, Z, T)}\):] type \(\mathbb{A}_2\) with the quadratic term \(Y \cdot Z\);
\item[\(P_{4} = P_{(Y, T, X + Z)}\):] type \(\mathbb{A}_2\) with the quadratic term \(Y \cdot (X + Y + Z - (\lambda + 1) T)\);
\item[\(P_{5} = P_{(Z, T, X + Y)}\):] type \(\mathbb{A}_2\) with the quadratic term \(Z \cdot (X + Y + Z - \lambda T)\).
\end{itemize}

Galois action on the lattice \(L_{\lambda}\) is trivial. The intersection matrix on \(L_{\lambda} = L_{\mathcal{S}}\) is represented by
\begin{table}[H]
  \begin{tabular}{|c||cccc|ccc|cc|cc|cc|cccc|}
    \hline
    \(\bullet\) & \(E_1^1\) & \(E_1^2\) & \(E_1^3\) & \(E_1^4\) & \(E_2^1\) & \(E_2^2\) & \(E_2^3\) & \(E_3^1\) & \(E_3^2\) & \(E_4^1\) & \(E_4^2\) & \(E_5^1\) & \(E_5^2\) & \(\widetilde{C_{1}}\) & \(\widetilde{C_{2}}\) & \(\widetilde{C_{3}}\) & \(\widetilde{H_{\mathcal{S}}}\) \\
    \hline
    \hline
    \(\widetilde{C_{1}}\) & \(1\) & \(0\) & \(0\) & \(0\) & \(1\) & \(0\) & \(0\) & \(0\) & \(0\) & \(0\) & \(0\) & \(0\) & \(0\) & \(-2\) & \(0\) & \(0\) & \(1\) \\
    \(\widetilde{C_{2}}\) & \(0\) & \(0\) & \(0\) & \(1\) & \(0\) & \(0\) & \(0\) & \(1\) & \(0\) & \(1\) & \(0\) & \(0\) & \(0\) & \(0\) & \(-2\) & \(0\) & \(1\) \\
    \(\widetilde{C_{3}}\) & \(0\) & \(0\) & \(0\) & \(0\) & \(0\) & \(0\) & \(1\) & \(0\) & \(1\) & \(0\) & \(0\) & \(1\) & \(0\) & \(0\) & \(0\) & \(-2\) & \(1\) \\
    \(\widetilde{H_{\mathcal{S}}}\) & \(0\) & \(0\) & \(0\) & \(0\) & \(0\) & \(0\) & \(0\) & \(0\) & \(0\) & \(0\) & \(0\) & \(0\) & \(0\) & \(1\) & \(1\) & \(1\) & \(4\) \\
    \hline
  \end{tabular}.
\end{table}
Note that the intersection matrix is non-degenerate.

Discriminant groups and discriminant forms of the lattices \(L_{\mathcal{S}}\) and \(H \oplus \Pic(X)\) are given by
\begin{gather*}
  G' = 
  \begin{pmatrix}
    \frac{1}{3} & \frac{5}{6} & \frac{1}{3} & \frac{5}{6} & \frac{2}{3} & \frac{1}{2} & \frac{1}{3} & \frac{11}{18} & \frac{8}{9} & \frac{5}{9} & \frac{7}{9} & \frac{4}{9} & \frac{13}{18} & \frac{5}{6} & \frac{1}{3} & \frac{1}{6} & \frac{2}{3}
  \end{pmatrix}, \\
  G'' = 
  \begin{pmatrix}
    0 & 0 & -\frac{1}{6} & -\frac{1}{9} & \frac{1}{3}
  \end{pmatrix}; \;
  B' = 
  \begin{pmatrix}
    \frac{5}{18}
  \end{pmatrix}, \;
  B'' = 
  \begin{pmatrix}
    \frac{13}{18}
  \end{pmatrix}; \;
  Q' =
  \begin{pmatrix}
    \frac{5}{18}
  \end{pmatrix}, \;
  Q'' =
  \begin{pmatrix}
    \frac{31}{18}
  \end{pmatrix}.
\end{gather*}


\subsection{Family \textnumero3.23}\label{subsection:03-23}

The pencil \(\mathcal{S}\) is defined by the equation
\begin{gather*}
  X^{2} Y Z + X Y^{2} Z + X Y Z^{2} + Y Z^{2} T + Y Z T^{2} + X T^{3} + Z T^{3} = \lambda X Y Z T.
\end{gather*}
Members \(\mathcal{S}_{\lambda}\) of the pencil are irreducible for any \(\lambda \in \mathbb{P}^1\) except
\(\mathcal{S}_{\infty} = S_{(X)} + S_{(Y)} + S_{(Z)} + S_{(T)}\).
The base locus of the pencil \(\mathcal{S}\) consists of the following curves:
\begin{gather*}
  C_{1} = C_{(X, Z)}, \;
  C_{2} = C_{(X, T)}, \;
  C_{3} = C_{(Y, T)}, \;
  C_{4} = C_{(Z, T)}, \\
  C_{5} = C_{(Y, X + Z)}, \;
  C_{6} = C_{(T, X + Y + Z)}, \;
  C_{7} = C_{(X, T^2 + Y (Z + T))}.
\end{gather*}
Their linear equivalence classes on the generic member \(\mathcal{S}_{\Bbbk}\) of the pencil satisfy the following relations:
\begin{gather*}
  \begin{pmatrix}
    [C_{1}] \\ [C_{5}] \\ [C_{6}] \\ [C_{7}]
  \end{pmatrix} = 
  \begin{pmatrix}
    0 & 0 & -3 & 1 \\
    0 & -3 & 0 & 1 \\
    -1 & -1 & -1 & 1 \\
    -1 & 0 & 3 & 0
  \end{pmatrix} \cdot
  \begin{pmatrix}
    [C_{2}] \\ [C_{3}] \\ [C_{4}] \\ [H_{\mathcal{S}}]
  \end{pmatrix}.
\end{gather*}

For a general choice of \(\lambda \in \mathbb{C}\) the surface \(\mathcal{S}_{\lambda}\) has the following singularities:
\begin{itemize}\setlength{\itemindent}{2cm}
\item[\(P_{1} = P_{(X, Y, T)}\):] type \(\mathbb{A}_2\) with the quadratic term \(Y \cdot (X + T)\);
\item[\(P_{2} = P_{(X, Z, T)}\):] type \(\mathbb{A}_4\) with the quadratic term \(X \cdot Z\);
\item[\(P_{3} = P_{(Y, Z, T)}\):] type \(\mathbb{A}_2\) with the quadratic term \(Y \cdot Z\);
\item[\(P_{4} = P_{(Y, T, X + Z)}\):] type \(\mathbb{A}_3\) with the quadratic term \(Y \cdot (X + Y + Z - (\lambda + 1) T)\);
\item[\(P_{5} = P_{(Z, T, X + Y)}\):] type \(\mathbb{A}_2\) with the quadratic term \(Z \cdot (X + Y + Z - \lambda T)\).
\end{itemize}

Galois action on the lattice \(L_{\lambda}\) is trivial. The intersection matrix on \(L_{\lambda} = L_{\mathcal{S}}\) is represented by
\begin{table}[H]
  \begin{tabular}{|c||cc|cccc|cc|ccc|cc|cccc|}
    \hline
    \(\bullet\) & \(E_1^1\) & \(E_1^2\) & \(E_2^1\) & \(E_2^2\) & \(E_2^3\) & \(E_2^4\) & \(E_3^1\) & \(E_3^2\) & \(E_4^1\) & \(E_4^2\) & \(E_4^3\) & \(E_5^1\) & \(E_5^2\) & \(\widetilde{C_{2}}\) & \(\widetilde{C_{3}}\) & \(\widetilde{C_{4}}\) & \(\widetilde{H_{\mathcal{S}}}\) \\
    \hline
    \hline
    \(\widetilde{C_{2}}\) & \(1\) & \(0\) & \(1\) & \(0\) & \(0\) & \(0\) & \(0\) & \(0\) & \(0\) & \(0\) & \(0\) & \(0\) & \(0\) & \(-2\) & \(0\) & \(0\) & \(1\) \\
    \(\widetilde{C_{3}}\) & \(0\) & \(1\) & \(0\) & \(0\) & \(0\) & \(0\) & \(1\) & \(0\) & \(1\) & \(0\) & \(0\) & \(0\) & \(0\) & \(0\) & \(-2\) & \(0\) & \(1\) \\
    \(\widetilde{C_{4}}\) & \(0\) & \(0\) & \(0\) & \(0\) & \(0\) & \(1\) & \(0\) & \(1\) & \(0\) & \(0\) & \(0\) & \(1\) & \(0\) & \(0\) & \(0\) & \(-2\) & \(1\) \\
    \(\widetilde{H_{\mathcal{S}}}\) & \(0\) & \(0\) & \(0\) & \(0\) & \(0\) & \(0\) & \(0\) & \(0\) & \(0\) & \(0\) & \(0\) & \(0\) & \(0\) & \(1\) & \(1\) & \(1\) & \(4\) \\
    \hline
  \end{tabular}.
\end{table}
Note that the intersection matrix is non-degenerate.

Discriminant groups and discriminant forms of the lattices \(L_{\mathcal{S}}\) and \(H \oplus \Pic(X)\) are given by
\begin{gather*}
  G' = 
  \begin{pmatrix}
    \frac{1}{10} & \frac{7}{20} & \frac{11}{20} & \frac{1}{4} & \frac{19}{20} & \frac{13}{20} & \frac{17}{20} & \frac{1}{10} & \frac{19}{20} & \frac{3}{10} & \frac{13}{20} & \frac{9}{10} & \frac{9}{20} & \frac{17}{20} & \frac{3}{5} & \frac{7}{20} & \frac{1}{20}
  \end{pmatrix}, \\
  G'' = 
  \begin{pmatrix}
    0 & 0 & -\frac{3}{5} & -\frac{3}{10} & \frac{1}{20}
  \end{pmatrix}; \;
  B' = 
  \begin{pmatrix}
    \frac{13}{20}
  \end{pmatrix}, \;
  B'' = 
  \begin{pmatrix}
    \frac{7}{20}
  \end{pmatrix}; \;
  Q' =
  \begin{pmatrix}
    \frac{13}{20}
  \end{pmatrix}, \;
  Q'' =
  \begin{pmatrix}
    \frac{27}{20}
  \end{pmatrix}.
\end{gather*}


\subsection{Family \textnumero3.24}\label{subsection:03-24}

The pencil \(\mathcal{S}\) is defined by the equation
\begin{gather*}
  X^{2} Y Z + X Y^{2} Z + X Y Z^{2} + Y^{2} Z T + X Z T^{2} + Y Z T^{2} + T^{4} = \lambda X Y Z T.
\end{gather*}
Members \(\mathcal{S}_{\lambda}\) of the pencil are irreducible for any \(\lambda \in \mathbb{P}^1\) except
\(\mathcal{S}_{\infty} = S_{(X)} + S_{(Y)} + S_{(Z)} + S_{(T)}\).
The base locus of the pencil \(\mathcal{S}\) consists of the following curves:
\begin{gather*}
  C_{1} = C_{(X, T)}, \;
  C_{2} = C_{(Y, T)}, \;
  C_{3} = C_{(Z, T)}, \;
  C_{4} = C_{(T, X + Y + Z)}, \;
  C_{5} = C_{(Y, X Z + T^2)}, \;
  C_{6} = C_{(X, T^3 + Y Z (Y + T))}.
\end{gather*}
Their linear equivalence classes on the generic member \(\mathcal{S}_{\Bbbk}\) of the pencil satisfy the following relations:
\begin{gather*}
  \begin{pmatrix}
    [C_{4}] \\ [C_{5}] \\ [C_{6}] \\ [H_{\mathcal{S}}]
  \end{pmatrix} = 
  \begin{pmatrix}
    -1 & -1 & 3 \\
    0 & -2 & 4 \\
    -1 & 0 & 4 \\
    0 & 0 & 4
  \end{pmatrix} \cdot
  \begin{pmatrix}
    [C_{1}] \\ [C_{2}] \\ [C_{3}]
  \end{pmatrix}.
\end{gather*}

For a general choice of \(\lambda \in \mathbb{C}\) the surface \(\mathcal{S}_{\lambda}\) has the following singularities:
\begin{itemize}\setlength{\itemindent}{2cm}
\item[\(P_{1} = P_{(X, Y, T)}\):] type \(\mathbb{A}_4\) with the quadratic term \(X \cdot Y\);
\item[\(P_{2} = P_{(X, Z, T)}\):] type \(\mathbb{A}_3\) with the quadratic term \(Z \cdot (X + T)\);
\item[\(P_{3} = P_{(Y, Z, T)}\):] type \(\mathbb{A}_3\) with the quadratic term \(Y \cdot Z\);
\item[\(P_{4} = P_{(Y, T, X + Z)}\):] type \(\mathbb{A}_1\) with the quadratic term \(Y (X + Y + Z - \lambda T) + T^2\);
\item[\(P_{5} = P_{(Z, T, X + Y)}\):] type \(\mathbb{A}_3\) with the quadratic term \(Z \cdot (X + Y + Z - (\lambda + 1) T)\).
\end{itemize}

Galois action on the lattice \(L_{\lambda}\) is trivial. The intersection matrix on \(L_{\lambda} = L_{\mathcal{S}}\) is represented by
\begin{table}[H]
  \begin{tabular}{|c||cccc|ccc|ccc|c|ccc|ccc|}
    \hline
    \(\bullet\) & \(E_1^1\) & \(E_1^2\) & \(E_1^3\) & \(E_1^4\) & \(E_2^1\) & \(E_2^2\) & \(E_2^3\) & \(E_3^1\) & \(E_3^2\) & \(E_3^3\) & \(E_4^1\) & \(E_5^1\) & \(E_5^2\) & \(E_5^3\) & \(\widetilde{C_{1}}\) & \(\widetilde{C_{2}}\) & \(\widetilde{C_{3}}\) \\
    \hline
    \hline
    \(\widetilde{C_{1}}\) & \(1\) & \(0\) & \(0\) & \(0\) & \(1\) & \(0\) & \(0\) & \(0\) & \(0\) & \(0\) & \(0\) & \(0\) & \(0\) & \(0\) & \(-2\) & \(0\) & \(0\) \\
    \(\widetilde{C_{2}}\) & \(0\) & \(0\) & \(0\) & \(1\) & \(0\) & \(0\) & \(0\) & \(1\) & \(0\) & \(0\) & \(1\) & \(0\) & \(0\) & \(0\) & \(0\) & \(-2\) & \(0\) \\
    \(\widetilde{C_{3}}\) & \(0\) & \(0\) & \(0\) & \(0\) & \(0\) & \(0\) & \(1\) & \(0\) & \(0\) & \(1\) & \(0\) & \(1\) & \(0\) & \(0\) & \(0\) & \(0\) & \(-2\) \\
    \hline
  \end{tabular}.
\end{table}
Note that the intersection matrix is non-degenerate.

Discriminant groups and discriminant forms of the lattices \(L_{\mathcal{S}}\) and \(H \oplus \Pic(X)\) are given by
\begin{gather*}
  G' = 
  \begin{pmatrix}
    \frac{6}{11} & \frac{4}{11} & \frac{2}{11} & 0 & \frac{10}{11} & \frac{1}{11} & \frac{3}{11} & \frac{5}{22} & \frac{7}{11} & \frac{1}{22} & \frac{9}{22} & \frac{13}{22} & \frac{8}{11} & \frac{19}{22} & \frac{8}{11} & \frac{9}{11} & \frac{5}{11}
  \end{pmatrix}, \\
  G'' = 
  \begin{pmatrix}
    0 & 0 & \frac{25}{11} & \frac{9}{22} & -\frac{5}{11}
  \end{pmatrix}; \;
  B' = 
  \begin{pmatrix}
    \frac{3}{22}
  \end{pmatrix}, \;
  B'' = 
  \begin{pmatrix}
    \frac{19}{22}
  \end{pmatrix}; \;
  Q' =
  \begin{pmatrix}
    \frac{25}{22}
  \end{pmatrix}, \;
  Q'' =
  \begin{pmatrix}
    \frac{19}{22}
  \end{pmatrix}.
\end{gather*}


\subsection{Family \textnumero3.25}\label{subsection:03-25}

The pencil \(\mathcal{S}\) is defined by the equation
\begin{gather*}
  X^{2} Y Z + X Y^{2} Z + X Y Z^{2} + X^{2} Y T + Y Z T^{2} + Z T^{3} = \lambda X Y Z T.
\end{gather*}
Members \(\mathcal{S}_{\lambda}\) of the pencil are irreducible for any \(\lambda \in \mathbb{P}^1\) except
\(\mathcal{S}_{\infty} = S_{(X)} + S_{(Y)} + S_{(Z)} + S_{(T)}\).
The base locus of the pencil \(\mathcal{S}\) consists of the following curves:
\begin{gather*}
  C_{1} = C_{(X, Z)}, \;
  C_{2} = C_{(X, T)}, \;
  C_{3} = C_{(Y, Z)}, \;
  C_{4} = C_{(Y, T)}, \;
  C_{5} = C_{(Z, T)}, \;
  C_{6} = C_{(X, Y + T)}, \;
  C_{7} = C_{(T, X + Y + Z)}.
\end{gather*}
Their linear equivalence classes on the generic member \(\mathcal{S}_{\Bbbk}\) of the pencil satisfy the following relations:
\begin{gather*}
  \begin{pmatrix}
    [C_{3}] \\ [C_{5}] \\ [C_{6}] \\ [C_{7}]
  \end{pmatrix} = 
  \begin{pmatrix}
    0 & 0 & -3 & 1 \\
    -2 & 0 & 3 & 0 \\
    -1 & -2 & 0 & 1 \\
    2 & -1 & -4 & 1
  \end{pmatrix} \cdot
  \begin{pmatrix}
    [C_{1}] \\ [C_{2}] \\ [C_{4}] \\ [H_{\mathcal{S}}]
  \end{pmatrix}.
\end{gather*}

For a general choice of \(\lambda \in \mathbb{C}\) the surface \(\mathcal{S}_{\lambda}\) has the following singularities:
\begin{itemize}\setlength{\itemindent}{2cm}
\item[\(P_{1} = P_{(X, Y, T)}\):] type \(\mathbb{A}_2\) with the quadratic term \(X \cdot Y\);
\item[\(P_{2} = P_{(X, Z, T)}\):] type \(\mathbb{A}_4\) with the quadratic term \(X \cdot Z\);
\item[\(P_{3} = P_{(Y, Z, T)}\):] type \(\mathbb{A}_3\) with the quadratic term \(Y \cdot (Z + T)\);
\item[\(P_{4} = P_{(X, Z, Y + T)}\):] type \(\mathbb{A}_1\) with the quadratic term \(Z ((\lambda + 1) X + Y + T) - X^2\);
\item[\(P_{5} = P_{(X, T, Y + Z)}\):] type \(\mathbb{A}_1\) with the quadratic term \(X (X + Y + Z - \lambda T) + T^2\);
\item[\(P_{6} = P_{(Y, T, X + Z)}\):] type \(\mathbb{A}_2\) with the quadratic term \(Y \cdot (X + Y + Z - (\lambda + 1) T)\).
\end{itemize}

Galois action on the lattice \(L_{\lambda}\) is trivial. The intersection matrix on \(L_{\lambda} = L_{\mathcal{S}}\) is represented by
\begin{table}[H]
  \begin{tabular}{|c||cc|cccc|ccc|c|c|cc|cccc|}
    \hline
    \(\bullet\) & \(E_1^1\) & \(E_1^2\) & \(E_2^1\) & \(E_2^2\) & \(E_2^3\) & \(E_2^4\) & \(E_3^1\) & \(E_3^2\) & \(E_3^3\) & \(E_4^1\) & \(E_5^1\) & \(E_6^1\) & \(E_6^2\) & \(\widetilde{C_{1}}\) & \(\widetilde{C_{2}}\) & \(\widetilde{C_{4}}\) & \(\widetilde{H_{\mathcal{S}}}\) \\
    \hline
    \hline
    \(\widetilde{C_{1}}\) & \(0\) & \(0\) & \(0\) & \(0\) & \(1\) & \(0\) & \(0\) & \(0\) & \(0\) & \(1\) & \(0\) & \(0\) & \(0\) & \(-2\) & \(0\) & \(0\) & \(1\) \\
    \(\widetilde{C_{2}}\) & \(1\) & \(0\) & \(1\) & \(0\) & \(0\) & \(0\) & \(0\) & \(0\) & \(0\) & \(0\) & \(1\) & \(0\) & \(0\) & \(0\) & \(-2\) & \(0\) & \(1\) \\
    \(\widetilde{C_{4}}\) & \(0\) & \(1\) & \(0\) & \(0\) & \(0\) & \(0\) & \(1\) & \(0\) & \(0\) & \(0\) & \(0\) & \(1\) & \(0\) & \(0\) & \(0\) & \(-2\) & \(1\) \\
    \(\widetilde{H_{\mathcal{S}}}\) & \(0\) & \(0\) & \(0\) & \(0\) & \(0\) & \(0\) & \(0\) & \(0\) & \(0\) & \(0\) & \(0\) & \(0\) & \(0\) & \(1\) & \(1\) & \(1\) & \(4\) \\
    \hline
  \end{tabular}.
\end{table}
Note that the intersection matrix is non-degenerate.

Discriminant groups and discriminant forms of the lattices \(L_{\mathcal{S}}\) and \(H \oplus \Pic(X)\) are given by
\begin{gather*}
  G' = 
  \begin{pmatrix}
    \frac{3}{5} & \frac{1}{10} & \frac{4}{5} & \frac{1}{2} & \frac{1}{5} & \frac{3}{5} & \frac{19}{20} & \frac{3}{10} & \frac{13}{20} & \frac{13}{20} & \frac{1}{20} & \frac{2}{5} & \frac{1}{5} & \frac{3}{10} & \frac{1}{10} & \frac{3}{5} & \frac{3}{4}
  \end{pmatrix}, \\
  G'' = 
  \begin{pmatrix}
    0 & 0 & \frac{1}{20} & -\frac{9}{20} & \frac{1}{10}
  \end{pmatrix}; \;
  B' = 
  \begin{pmatrix}
    \frac{9}{20}
  \end{pmatrix}, \;
  B'' = 
  \begin{pmatrix}
    \frac{11}{20}
  \end{pmatrix}; \;
  Q' =
  \begin{pmatrix}
    \frac{9}{20}
  \end{pmatrix}, \;
  Q'' =
  \begin{pmatrix}
    \frac{31}{20}
  \end{pmatrix}.
\end{gather*}


\subsection{Family \textnumero3.26}\label{subsection:03-26}

The pencil \(\mathcal{S}\) is defined by the equation
\[
  X^{2} Y Z + X Y^{2} Z + X Y Z^{2} + Y^{2} Z T + Y Z T^{2} + X T^{3} = \lambda X Y Z T.
\]
Members \(\mathcal{S}_{\lambda}\) of the pencil are irreducible for any \(\lambda \in \mathbb{P}^1\) except
\(\mathcal{S}_{\infty} = S_{(X)} + S_{(Y)} + S_{(Z)} + S_{(T)}\).
The base locus of the pencil \(\mathcal{S}\) consists of the following curves:
\[
  C_1 = C_{(X, Y)}, \;
  C_2 = C_{(X, Z)}, \;
  C_3 = C_{(X, T)}, \;
  C_4 = C_{(Y, T)}, \;
  C_5 = C_{(Z, T)}, \;
  C_6 = C_{(X, Y + T)}, \;
  C_7 = C_{(T, X + Y + Z)}.
\]
Their linear equivalence classes on the generic member \(\mathcal{S}_{\Bbbk}\) of the pencil satisfy the following relations:
\begin{gather*}
  \begin{pmatrix}
    [C_{1}] \\ [C_{2}] \\ [C_{6}] \\ [C_{7}]
  \end{pmatrix} = 
  \begin{pmatrix}
    0 & -3 & 0 & 1 \\
    0 & 0 & -3 & 1 \\
    -1 & 3 & 3 & -1 \\
    -1 & -1 & -1 & 1
  \end{pmatrix} \cdot
  \begin{pmatrix}
    [C_{3}] \\ [C_{4}] \\ [C_{5}] \\ [H_{\mathcal{S}}]
  \end{pmatrix}.
\end{gather*}

For a general choice of \(\lambda \in \mathbb{C}\) the surface \(\mathcal{S}_{\lambda}\) has the following singularities:
\begin{itemize}\setlength{\itemindent}{2cm}
\item[\(P_{1} = P_{(X, Y, T)}\):] type \(\mathbb{A}_4\) with the quadratic term \(X \cdot Y\);
\item[\(P_{2} = P_{(X, Z, T)}\):] type \(\mathbb{A}_3\) with the quadratic term \(Z \cdot (X + T)\);
\item[\(P_{3} = P_{(Y, Z, T)}\):] type \(\mathbb{A}_2\) with the quadratic term \(Y \cdot Z\);
\item[\(P_{4} = P_{(Y, T, X + Z)}\):] type \(\mathbb{A}_2\) with the quadratic term \(Y \cdot (X + Y + Z - \lambda T)\);
\item[\(P_{5} = P_{(Z, T, X + Y)}\):] type \(\mathbb{A}_2\) with the quadratic term \(Z \cdot (X + Y + Z - (\lambda + 1) T)\).
\end{itemize}

Galois action on the lattice \(L_{\lambda}\) is trivial. The intersection matrix on \(L_{\lambda} = L_{\mathcal{S}}\) is represented by
\begin{table}[H]
  \begin{tabular}{|c||cccc|ccc|cc|cc|cc|cccc|}
    \hline
    \(\bullet\) & \(E_1^1\) & \(E_1^2\) & \(E_1^3\) & \(E_1^4\) & \(E_2^1\) & \(E_2^2\) & \(E_2^3\) & \(E_3^1\) & \(E_3^2\) & \(E_4^1\) & \(E_4^2\) & \(E_5^1\) & \(E_5^2\) & \(\widetilde{C_{3}}\) & \(\widetilde{C_{4}}\) & \(\widetilde{C_{5}}\) & \(\widetilde{H_{\mathcal{S}}}\) \\
    \hline
    \hline
    \(\widetilde{C_{3}}\) & \(1\) & \(0\) & \(0\) & \(0\) & \(0\) & \(0\) & \(1\) & \(0\) & \(0\) & \(0\) & \(0\) & \(0\) & \(0\) & \(-2\) & \(0\) & \(0\) & \(1\) \\
    \(\widetilde{C_{4}}\) & \(0\) & \(0\) & \(0\) & \(1\) & \(0\) & \(0\) & \(0\) & \(1\) & \(0\) & \(1\) & \(0\) & \(0\) & \(0\) & \(0\) & \(-2\) & \(0\) & \(1\) \\
    \(\widetilde{C_{5}}\) & \(0\) & \(0\) & \(0\) & \(0\) & \(1\) & \(0\) & \(0\) & \(0\) & \(1\) & \(0\) & \(0\) & \(1\) & \(0\) & \(0\) & \(0\) & \(-2\) & \(1\) \\
    \(\widetilde{H_{\mathcal{S}}}\) & \(0\) & \(0\) & \(0\) & \(0\) & \(0\) & \(0\) & \(0\) & \(0\) & \(0\) & \(0\) & \(0\) & \(0\) & \(0\) & \(1\) & \(1\) & \(1\) & \(4\) \\
    \hline
  \end{tabular}.
\end{table}
Note that the intersection matrix is non-degenerate.

Discriminant groups and discriminant forms of the lattices \(L_{\mathcal{S}}\) and \(H \oplus \Pic(X)\) are given by
\begin{gather*}
  G' = 
  \begin{pmatrix}
    \frac{2}{3} & \frac{1}{6} & \frac{2}{3} & \frac{1}{6} & \frac{2}{3} & \frac{1}{2} & \frac{1}{3} & \frac{13}{18} & \frac{7}{9} & \frac{1}{9} & \frac{5}{9} & \frac{8}{9} & \frac{17}{18} & \frac{1}{6} & \frac{2}{3} & \frac{5}{6} & \frac{1}{3}
  \end{pmatrix}, \\
  G'' = 
  \begin{pmatrix}
    0 & 0 & -\frac{1}{2} & -\frac{2}{9} & -\frac{4}{9}
  \end{pmatrix}; \;
  B' = 
  \begin{pmatrix}
    \frac{11}{18}
  \end{pmatrix}, \;
  B'' = 
  \begin{pmatrix}
    \frac{7}{18}
  \end{pmatrix}; \;
  Q' =
  \begin{pmatrix}
    \frac{29}{18}
  \end{pmatrix}, \;
  Q'' =
  \begin{pmatrix}
    \frac{7}{18}
  \end{pmatrix}.
\end{gather*}


\subsection{Family \textnumero3.27}\label{subsection:03-27}

The pencil \(\mathcal{S}\) is defined by the equation
\begin{gather*}
  X^{2} Y Z + X Y^{2} Z + X Y Z^{2} + X Y T^{2} + X Z T^{2} + Y Z T^{2} = \lambda X Y Z T.
\end{gather*}
Members \(\mathcal{S}_{\lambda}\) of the pencil are irreducible for any \(\lambda \in \mathbb{P}^1\) except
\(\mathcal{S}_{\infty} = S_{(X)} + S_{(Y)} + S_{(Z)} + S_{(T)}\).
The base locus of the pencil \(\mathcal{S}\) consists of the following curves:
\begin{gather*}
  C_{1} = C_{(X, Y)}, \;
  C_{2} = C_{(X, Z)}, \;
  C_{3} = C_{(X, T)}, \;
  C_{4} = C_{(Y, Z)}, \;
  C_{5} = C_{(Y, T)}, \;
  C_{6} = C_{(Z, T)}, \;
  C_{7} = C_{(T, X + Y + Z)}.
\end{gather*}
Their linear equivalence classes on the generic member \(\mathcal{S}_{\Bbbk}\) of the pencil satisfy the following relations:
\begin{gather*}
  \begin{pmatrix}
    [C_{2}] \\ [C_{4}] \\ [C_{7}] \\ [H_{\mathcal{S}}]
  \end{pmatrix} = 
  \begin{pmatrix}
    1 & 0 & 2 & -2 \\
    1 & 2 & 0 & -2 \\
    2 & 1 & 1 & -3 \\
    2 & 2 & 2 & -2
  \end{pmatrix} \cdot
  \begin{pmatrix}
    [C_{1}] \\ [C_{3}] \\ [C_{5}] \\ [C_{6}]
  \end{pmatrix}.
\end{gather*}

For a general choice of \(\lambda \in \mathbb{C}\) the surface \(\mathcal{S}_{\lambda}\) has the following singularities:
\begin{itemize}\setlength{\itemindent}{2cm}
\item[\(P_{1} = P_{(X, Y, Z)}\):] type \(\mathbb{A}_1\) with the quadratic term \(X Y + X Z + Y Z\);
\item[\(P_{2} = P_{(X, Y, T)}\):] type \(\mathbb{A}_3\) with the quadratic term \(X \cdot Y\);
\item[\(P_{3} = P_{(X, Z, T)}\):] type \(\mathbb{A}_3\) with the quadratic term \(X \cdot Z\);
\item[\(P_{4} = P_{(Y, Z, T)}\):] type \(\mathbb{A}_3\) with the quadratic term \(Y \cdot Z\);
\item[\(P_{5} = P_{(X, T, Y + Z)}\):] type \(\mathbb{A}_1\) with the quadratic term \(X (X + Y + Z - \lambda T) + T^2\);
\item[\(P_{6} = P_{(Y, T, X + Z)}\):] type \(\mathbb{A}_1\) with the quadratic term \(Y (X + Y + Z - \lambda T) + T^2\);
\item[\(P_{7} = P_{(Z, T, X + Y)}\):] type \(\mathbb{A}_1\) with the quadratic term \(Z (X + Y + Z - \lambda T) + T^2\).
\end{itemize}

Galois action on the lattice \(L_{\lambda}\) is trivial. The intersection matrix on \(L_{\lambda} = L_{\mathcal{S}}\) is represented by
\begin{table}[H]
  \begin{tabular}{|c||c|ccc|ccc|ccc|c|c|c|cccc|}
    \hline
    \(\bullet\) & \(E_1^1\) & \(E_2^1\) & \(E_2^2\) & \(E_2^3\) & \(E_3^1\) & \(E_3^2\) & \(E_3^3\) & \(E_4^1\) & \(E_4^2\) & \(E_4^3\) & \(E_5^1\) & \(E_6^1\) & \(E_7^1\) & \(\widetilde{C_{1}}\) & \(\widetilde{C_{3}}\) & \(\widetilde{C_{5}}\) & \(\widetilde{C_{6}}\) \\
    \hline
    \hline
    \(\widetilde{C_{1}}\) & \(1\) & \(0\) & \(1\) & \(0\) & \(0\) & \(0\) & \(0\) & \(0\) & \(0\) & \(0\) & \(0\) & \(0\) & \(0\) & \(-2\) & \(0\) & \(0\) & \(0\) \\
    \(\widetilde{C_{3}}\) & \(0\) & \(1\) & \(0\) & \(0\) & \(1\) & \(0\) & \(0\) & \(0\) & \(0\) & \(0\) & \(1\) & \(0\) & \(0\) & \(0\) & \(-2\) & \(0\) & \(0\) \\
    \(\widetilde{C_{5}}\) & \(0\) & \(0\) & \(0\) & \(1\) & \(0\) & \(0\) & \(0\) & \(1\) & \(0\) & \(0\) & \(0\) & \(1\) & \(0\) & \(0\) & \(0\) & \(-2\) & \(0\) \\
    \(\widetilde{C_{6}}\) & \(0\) & \(0\) & \(0\) & \(0\) & \(0\) & \(0\) & \(1\) & \(0\) & \(0\) & \(1\) & \(0\) & \(0\) & \(1\) & \(0\) & \(0\) & \(0\) & \(-2\) \\
    \hline
  \end{tabular}.
\end{table}
Note that the intersection matrix is non-degenerate.

Discriminant groups and discriminant forms of the lattices \(L_{\mathcal{S}}\) and \(H \oplus \Pic(X)\) are given by
\begin{gather*}
  G' = 
  \begin{pmatrix}
    0 & 0 & 0 & 0 & \frac{1}{2} & 0 & \frac{1}{2} & 0 & 0 & 0 & \frac{1}{2} & 0 & \frac{1}{2} & 0 & 0 & 0 & 0 \\
    0 & \frac{1}{2} & 0 & \frac{1}{2} & 0 & 0 & 0 & 0 & 0 & 0 & \frac{1}{2} & \frac{1}{2} & 0 & 0 & 0 & 0 & 0 \\
    \frac{1}{2} & \frac{3}{4} & \frac{1}{2} & \frac{1}{4} & \frac{1}{4} & \frac{1}{2} & \frac{3}{4} & \frac{1}{4} & \frac{1}{2} & \frac{3}{4} & 0 & \frac{1}{2} & \frac{1}{2} & 0 & 0 & 0 & 0
  \end{pmatrix}, \\
  G'' = 
  \begin{pmatrix}
    0 & 0 & \frac{1}{2} & 0 & 0 \\
    0 & 0 & 0 & \frac{1}{2} & 0 \\
    0 & 0 & -\frac{1}{4} & -\frac{1}{4} & \frac{1}{4}
  \end{pmatrix}; \;
  B' = 
  \begin{pmatrix}
    0 & \frac{1}{2} & 0 \\
    \frac{1}{2} & 0 & 0 \\
    0 & 0 & \frac{1}{4}
  \end{pmatrix}, \;
  B'' = 
  \begin{pmatrix}
    0 & \frac{1}{2} & 0 \\
    \frac{1}{2} & 0 & 0 \\
    0 & 0 & \frac{3}{4}
  \end{pmatrix}; \;
  \begin{pmatrix}
    Q' \\ Q''
  \end{pmatrix}
  =
  \begin{pmatrix}
    0 & 0 & \frac{1}{4} \\
    0 & 0 & \frac{7}{4}
  \end{pmatrix}.
\end{gather*}


\subsection{Family \textnumero3.28}\label{subsection:03-28}

The pencil \(\mathcal{S}\) is defined by the equation
\begin{gather*}
  X^{2} Y Z + X Y^{2} Z + X Y Z^{2} + X^{2} Y T + X Z T^{2} + Y Z T^{2} = \lambda X Y Z T.
\end{gather*}
Members \(\mathcal{S}_{\lambda}\) of the pencil are irreducible for any \(\lambda \in \mathbb{P}^1\) except
\(\mathcal{S}_{\infty} = S_{(X)} + S_{(Y)} + S_{(Z)} + S_{(T)}\).
The base locus of the pencil \(\mathcal{S}\) consists of the following curves:
\begin{gather*}
  C_{1} = C_{(X, Y)}, \;
  C_{2} = C_{(X, Z)}, \;
  C_{3} = C_{(X, T)}, \;
  C_{4} = C_{(Y, Z)}, \;
  C_{5} = C_{(Y, T)}, \;
  C_{6} = C_{(Z, T)}, \;
  C_{7} = C_{(T, X + Y + Z)}.
\end{gather*}
Their linear equivalence classes on the generic member \(\mathcal{S}_{\Bbbk}\) of the pencil satisfy the following relations:
\begin{gather*}
  \begin{pmatrix}
    [C_{2}] \\ [C_{4}] \\ [C_{6}] \\ [C_{7}]
  \end{pmatrix} = 
  \begin{pmatrix}
    -1 & -2 & 0 & 1 \\
    -1 & 0 & -2 & 1 \\
    3 & 4 & 2 & -2 \\
    -3 & -5 & -3 & 3
  \end{pmatrix} \cdot
  \begin{pmatrix}
    [C_{1}] \\ [C_{3}] \\ [C_{5}] \\ [H_{\mathcal{S}}]
  \end{pmatrix}.
\end{gather*}

For a general choice of \(\lambda \in \mathbb{C}\) the surface \(\mathcal{S}_{\lambda}\) has the following singularities:
\begin{itemize}\setlength{\itemindent}{2cm}
\item[\(P_{1} = P_{(X, Y, Z)}\):] type \(\mathbb{A}_2\) with the quadratic term \(Z \cdot (X + Y)\);
\item[\(P_{2} = P_{(X, Y, T)}\):] type \(\mathbb{A}_3\) with the quadratic term \(X \cdot Y\);
\item[\(P_{3} = P_{(X, Z, T)}\):] type \(\mathbb{A}_4\) with the quadratic term \(X \cdot Z\);
\item[\(P_{4} = P_{(Y, Z, T)}\):] type \(\mathbb{A}_2\) with the quadratic term \(Y \cdot (Z + T)\);
\item[\(P_{5} = P_{(X, T, Y + Z)}\):] type \(\mathbb{A}_1\) with the quadratic term \(X (X + Y + Z - \lambda T) + T^2\);
\item[\(P_{6} = P_{(Y, T, X + Z)}\):] type \(\mathbb{A}_1\) with the quadratic term \(Y (X + Y + Z - (\lambda + 1) T) + T^2\).
\end{itemize}

Galois action on the lattice \(L_{\lambda}\) is trivial. The intersection matrix on \(L_{\lambda} = L_{\mathcal{S}}\) is represented by
\begin{table}[H]
  \begin{tabular}{|c||cc|ccc|cccc|cc|c|c|cccc|}
    \hline
    \(\bullet\) & \(E_1^1\) & \(E_1^2\) & \(E_2^1\) & \(E_2^2\) & \(E_2^3\) & \(E_3^1\) & \(E_3^2\) & \(E_3^3\) & \(E_3^4\) & \(E_4^1\) & \(E_4^2\) & \(E_5^1\) & \(E_6^1\) & \(\widetilde{C_{1}}\) & \(\widetilde{C_{3}}\) & \(\widetilde{C_{5}}\) & \(\widetilde{H_{\mathcal{S}}}\) \\
    \hline
    \hline
    \(\widetilde{C_{1}}\) & \(1\) & \(0\) & \(0\) & \(1\) & \(0\) & \(0\) & \(0\) & \(0\) & \(0\) & \(0\) & \(0\) & \(0\) & \(0\) & \(-2\) & \(0\) & \(0\) & \(1\) \\
    \(\widetilde{C_{3}}\) & \(0\) & \(0\) & \(1\) & \(0\) & \(0\) & \(1\) & \(0\) & \(0\) & \(0\) & \(0\) & \(0\) & \(1\) & \(0\) & \(0\) & \(-2\) & \(0\) & \(1\) \\
    \(\widetilde{C_{5}}\) & \(0\) & \(0\) & \(0\) & \(0\) & \(1\) & \(0\) & \(0\) & \(0\) & \(0\) & \(1\) & \(0\) & \(0\) & \(1\) & \(0\) & \(0\) & \(-2\) & \(1\) \\
    \(\widetilde{H_{\mathcal{S}}}\) & \(0\) & \(0\) & \(0\) & \(0\) & \(0\) & \(0\) & \(0\) & \(0\) & \(0\) & \(0\) & \(0\) & \(0\) & \(0\) & \(1\) & \(1\) & \(1\) & \(4\) \\
    \hline
  \end{tabular}.
\end{table}
Note that the intersection matrix is non-degenerate.

Discriminant groups and discriminant forms of the lattices \(L_{\mathcal{S}}\) and \(H \oplus \Pic(X)\) are given by
\begin{gather*}
  G' = 
  \begin{pmatrix}
    0 & \frac{1}{2} & \frac{13}{16} & \frac{1}{4} & \frac{3}{16} & \frac{1}{2} & \frac{5}{8} & \frac{3}{4} & \frac{7}{8} & \frac{3}{4} & \frac{3}{8} & \frac{11}{16} & \frac{9}{16} & \frac{1}{2} & \frac{3}{8} & \frac{1}{8} & \frac{3}{4}
  \end{pmatrix}, \\
  G'' = 
  \begin{pmatrix}
    0 & 0 & -\frac{3}{16} & -\frac{3}{8} & \frac{1}{16}
  \end{pmatrix}; \;
  B' = 
  \begin{pmatrix}
    \frac{1}{16}
  \end{pmatrix}, \;
  B'' = 
  \begin{pmatrix}
    \frac{15}{16}
  \end{pmatrix}; \;
  Q' =
  \begin{pmatrix}
    \frac{1}{16}
  \end{pmatrix}, \;
  Q'' =
  \begin{pmatrix}
    \frac{31}{16}
  \end{pmatrix}.
\end{gather*}


\subsection{Family \textnumero3.29}\label{subsection:03-29}

The pencil \(\mathcal{S}\) is defined by the equation
\begin{gather*}
  X^{2} Y Z + X Y^{2} Z + X Y Z^{2} + Y^{2} Z T + Y Z T^{2} + T^{4} = \lambda X Y Z T.
\end{gather*}
Members \(\mathcal{S}_{\lambda}\) of the pencil are irreducible for any \(\lambda \in \mathbb{P}^1\) except
\(\mathcal{S}_{\infty} = S_{(X)} + S_{(Y)} + S_{(Z)} + S_{(T)}\).
The base locus of the pencil \(\mathcal{S}\) consists of the following curves:
\begin{gather*}
  C_{1} = C_{(X, T)}, \;
  C_{2} = C_{(Y, T)}, \;
  C_{3} = C_{(Z, T)}, \;
  C_{4} = C_{(T, X + Y + Z)}, \;
  C_{5} = C_{(X, T^3 + Y Z (Y + T))}.
\end{gather*}
Their linear equivalence classes on the generic member \(\mathcal{S}_{\Bbbk}\) of the pencil satisfy the following relations:
\[
  \begin{pmatrix}
    [C_3] + [C_4] \\ 4 [C_4] \\ [C_5] \\ [H_{\mathcal{S}}]
  \end{pmatrix} =
  \begin{pmatrix}
    -1 & 3 \\
    -4 & 8 \\
    -1 & 4 \\
    0 & 4
  \end{pmatrix} \cdot
  \begin{pmatrix}
    [C_1] \\ [C_2]
  \end{pmatrix}.
\]

For a general choice of \(\lambda \in \mathbb{C}\) the surface \(\mathcal{S}_{\lambda}\) has the following singularities:
\begin{itemize}\setlength{\itemindent}{2cm}
\item[\(P_{1} = P_{(X, Y, T)}\):] type \(\mathbb{A}_3\) with the quadratic term \(X \cdot Y\);
\item[\(P_{2} = P_{(X, Z, T)}\):] type \(\mathbb{A}_3\) with the quadratic term \(Z \cdot (X + T)\);
\item[\(P_{3} = P_{(Y, Z, T)}\):] type \(\mathbb{A}_3\) with the quadratic term \(Y \cdot Z\);
\item[\(P_{4} = P_{(Y, T, X + Z)}\):] type \(\mathbb{A}_3\) with the quadratic term \(Y \cdot (X + Y + Z - \lambda T)\);
\item[\(P_{5} = P_{(Z, T, X + Y)}\):] type \(\mathbb{A}_3\) with the quadratic term \(Z \cdot (X + Y + Z - (\lambda + 1) T)\).
\end{itemize}

Galois action on the lattice \(L_{\lambda}\) is trivial. The intersection matrix on \(L_{\lambda} = L_{\mathcal{S}}\) is represented by
\begin{table}[H]
  \begin{tabular}{|c||ccc|ccc|ccc|ccc|ccc|cccc|}
    \hline
    \(\bullet\) & \(E_1^1\) & \(E_1^2\) & \(E_1^3\) & \(E_2^1\) & \(E_2^2\) & \(E_2^3\) & \(E_3^1\) & \(E_3^2\) & \(E_3^3\) & \(E_4^1\) & \(E_4^2\) & \(E_4^3\) & \(E_5^1\) & \(E_5^2\) & \(E_5^3\) & \(\widetilde{C_{1}}\) & \(\widetilde{C_{2}}\) & \(\widetilde{C_{3}}\) & \(\widetilde{C_{4}}\) \\
    \hline
    \hline
    \(\widetilde{C_{1}}\) & \(1\) & \(0\) & \(0\) & \(1\) & \(0\) & \(0\) & \(0\) & \(0\) & \(0\) & \(0\) & \(0\) & \(0\) & \(0\) & \(0\) & \(0\) & \(-2\) & \(0\) & \(0\) & \(1\) \\
    \(\widetilde{C_{2}}\) & \(0\) & \(0\) & \(1\) & \(0\) & \(0\) & \(0\) & \(1\) & \(0\) & \(0\) & \(1\) & \(0\) & \(0\) & \(0\) & \(0\) & \(0\) & \(0\) & \(-2\) & \(0\) & \(0\) \\
    \(\widetilde{C_{3}}\) & \(0\) & \(0\) & \(0\) & \(0\) & \(0\) & \(1\) & \(0\) & \(0\) & \(1\) & \(0\) & \(0\) & \(0\) & \(1\) & \(0\) & \(0\) & \(0\) & \(0\) & \(-2\) & \(0\) \\
    \(\widetilde{C_{4}}\) & \(0\) & \(0\) & \(0\) & \(0\) & \(0\) & \(0\) & \(0\) & \(0\) & \(0\) & \(0\) & \(0\) & \(1\) & \(0\) & \(0\) & \(1\) & \(1\) & \(0\) & \(0\) & \(-2\) \\
    \hline
  \end{tabular}.
\end{table}
Note that the intersection matrix is degenerate. We choose the following integral basis of the lattice \(L_{\lambda}\):
\begin{align*}
  \begin{pmatrix}
    [E_5^3] \\ [\widetilde{C_{4}}]
  \end{pmatrix} = 
  \begin{pmatrix}
    1 & 2 & 3 & -1 & -2 & -3 & 2 & 0 & -2 & 3 & 2 & 1 & -3 & -2 & 0 & 4 & -4 \\
    -1 & -1 & -1 & 0 & 1 & 2 & 0 & 1 & 2 & -1 & -1 & -1 & 2 & 1 & -1 & -1 & 3
  \end{pmatrix} \cdot \\
  \begin{pmatrix}
    [E_1^1] & [E_1^2] & [E_1^3] & [E_2^1] & [E_2^2] & [E_2^3] & [E_3^1] & [E_3^2] & [E_3^3] \\ [E_4^1] & [E_4^2] & [E_4^3] & [E_5^1] & [E_5^2] & [\widetilde{C_{1}}] & [\widetilde{C_{2}}] & [\widetilde{C_{3}}]
  \end{pmatrix}^T.
\end{align*}

Discriminant groups and discriminant forms of the lattices \(L_{\mathcal{S}}\) and \(H \oplus \Pic(X)\) are given by
\begin{gather*}
  G' = 
  \begin{pmatrix}
    \frac{7}{12} & \frac{1}{2} & \frac{5}{12} & \frac{3}{4} & \frac{5}{6} & \frac{11}{12} & \frac{1}{4} & \frac{1}{6} & \frac{1}{12} & 0 & \frac{2}{3} & \frac{1}{3} & 0 & 0 & \frac{2}{3} & \frac{1}{3} & 0
  \end{pmatrix}, \\
  G'' = 
  \begin{pmatrix}
    0 & 0 & \frac{2}{3} & -\frac{1}{3} & -\frac{1}{4}
  \end{pmatrix}; \;
  B' = 
  \begin{pmatrix}
    \frac{5}{12}
  \end{pmatrix}, \;
  B'' = 
  \begin{pmatrix}
    \frac{7}{12}
  \end{pmatrix}; \;
  Q' =
  \begin{pmatrix}
    \frac{5}{12}
  \end{pmatrix}, \;
  Q'' =
  \begin{pmatrix}
    \frac{19}{12}
  \end{pmatrix}.
\end{gather*}


\subsection{Family \textnumero3.30}\label{subsection:03-30}

The pencil \(\mathcal{S}\) is defined by the equation
\begin{gather*}
  X^{2} Y Z + X Y^{2} Z + X Y Z^{2} + X Y^{2} T + X^{2} Z T + Y Z T^{2} = \lambda X Y Z T.
\end{gather*}
Members \(\mathcal{S}_{\lambda}\) of the pencil are irreducible for any \(\lambda \in \mathbb{P}^1\) except
\(\mathcal{S}_{\infty} = S_{(X)} + S_{(Y)} + S_{(Z)} + S_{(T)}\).
The base locus of the pencil \(\mathcal{S}\) consists of the following curves:
\begin{gather*}
  C_{1} = C_{(X, Y)}, \;
  C_{2} = C_{(X, Z)}, \;
  C_{3} = C_{(X, T)}, \;
  C_{4} = C_{(Y, Z)}, \;
  C_{5} = C_{(Y, T)}, \;
  C_{6} = C_{(Z, T)}, \;
  C_{7} = C_{(T, X + Y + Z)}.
\end{gather*}
Their linear equivalence classes on the generic member \(\mathcal{S}_{\Bbbk}\) of the pencil satisfy the following relations:
\begin{gather*}
  \begin{pmatrix}
    [C_{2}] \\ [C_{5}] \\ [C_{6}] \\ [C_{7}]
  \end{pmatrix} = 
  \begin{pmatrix}
    -1 & -2 & 0 & 1 \\
    -2 & 0 & -1 & 1 \\
    1 & 2 & -2 & 0 \\
    1 & -3 & 3 & 0
  \end{pmatrix} \cdot
  \begin{pmatrix}
    [C_{1}] \\ [C_{3}] \\ [C_{4}] \\ [H_{\mathcal{S}}]
  \end{pmatrix}.
\end{gather*}

For a general choice of \(\lambda \in \mathbb{C}\) the surface \(\mathcal{S}_{\lambda}\) has the following singularities:
\begin{itemize}\setlength{\itemindent}{2cm}
\item[\(P_{1} = P_{(X, Y, Z)}\):] type \(\mathbb{A}_4\) with the quadratic term \(Y \cdot Z\);
\item[\(P_{2} = P_{(X, Y, T)}\):] type \(\mathbb{A}_4\) with the quadratic term \(X \cdot Y\);
\item[\(P_{3} = P_{(X, Z, T)}\):] type \(\mathbb{A}_2\) with the quadratic term \(X \cdot (Z + T)\);
\item[\(P_{4} = P_{(Y, Z, T)}\):] type \(\mathbb{A}_2\) with the quadratic term \(Z \cdot (Y + T)\);
\item[\(P_{5} = P_{(X, T, Y + Z)}\):] type \(\mathbb{A}_1\) with the quadratic term \(X (X + Y + Z - (\lambda + 1) T) + T^2\).
\end{itemize}

Galois action on the lattice \(L_{\lambda}\) is trivial. The intersection matrix on \(L_{\lambda} = L_{\mathcal{S}}\) is represented by
\begin{table}[H]
  \begin{tabular}{|c||cccc|cccc|cc|cc|c|cccc|}
    \hline
    \(\bullet\) & \(E_1^1\) & \(E_1^2\) & \(E_1^3\) & \(E_1^4\) & \(E_2^1\) & \(E_2^2\) & \(E_2^3\) & \(E_2^4\) & \(E_3^1\) & \(E_3^2\) & \(E_4^1\) & \(E_4^2\) & \(E_5^1\) & \(\widetilde{C_{1}}\) & \(\widetilde{C_{3}}\) & \(\widetilde{C_{4}}\) & \(\widetilde{H_{\mathcal{S}}}\) \\
    \hline
    \hline
    \(\widetilde{C_{1}}\) & \(1\) & \(0\) & \(0\) & \(0\) & \(0\) & \(0\) & \(1\) & \(0\) & \(0\) & \(0\) & \(0\) & \(0\) & \(0\) & \(-2\) & \(0\) & \(0\) & \(1\) \\
    \(\widetilde{C_{3}}\) & \(0\) & \(0\) & \(0\) & \(0\) & \(1\) & \(0\) & \(0\) & \(0\) & \(1\) & \(0\) & \(0\) & \(0\) & \(1\) & \(0\) & \(-2\) & \(0\) & \(1\) \\
    \(\widetilde{C_{4}}\) & \(0\) & \(0\) & \(1\) & \(0\) & \(0\) & \(0\) & \(0\) & \(0\) & \(0\) & \(0\) & \(1\) & \(0\) & \(0\) & \(0\) & \(0\) & \(-2\) & \(1\) \\
    \(\widetilde{H_{\mathcal{S}}}\) & \(0\) & \(0\) & \(0\) & \(0\) & \(0\) & \(0\) & \(0\) & \(0\) & \(0\) & \(0\) & \(0\) & \(0\) & \(0\) & \(1\) & \(1\) & \(1\) & \(4\) \\
    \hline
  \end{tabular}.
\end{table}
Note that the intersection matrix is non-degenerate.

Discriminant groups and discriminant forms of the lattices \(L_{\mathcal{S}}\) and \(H \oplus \Pic(X)\) are given by
\begin{gather*}
  G' = 
  \begin{pmatrix}
    0 & \frac{3}{14} & \frac{3}{7} & \frac{5}{7} & \frac{2}{7} & \frac{6}{7} & \frac{3}{7} & \frac{3}{14} & \frac{1}{7} & \frac{4}{7} & \frac{2}{7} & \frac{9}{14} & \frac{6}{7} & \frac{11}{14} & \frac{5}{7} & \frac{13}{14} & \frac{1}{7}
  \end{pmatrix}, \\
  G'' = 
  \begin{pmatrix}
    0 & 0 & -\frac{2}{7} & -\frac{9}{14} & \frac{1}{14}
  \end{pmatrix}; \;
  B' = 
  \begin{pmatrix}
    \frac{9}{14}
  \end{pmatrix}, \;
  B'' = 
  \begin{pmatrix}
    \frac{5}{14}
  \end{pmatrix}; \;
  Q' =
  \begin{pmatrix}
    \frac{9}{14}
  \end{pmatrix}, \;
  Q'' =
  \begin{pmatrix}
    \frac{19}{14}
  \end{pmatrix}.
\end{gather*}


\subsection{Family \textnumero3.31}\label{subsection:03-31}

The pencil \(\mathcal{S}\) is defined by the equation
\begin{gather*}
  X^{2} Y Z + X Y^{2} Z + X Y Z^{2} + X^{2} Y T + X^{2} Z T + Y Z T^{2} = \lambda X Y Z T.
\end{gather*}
Members \(\mathcal{S}_{\lambda}\) of the pencil are irreducible for any \(\lambda \in \mathbb{P}^1\) except
\(\mathcal{S}_{\infty} = S_{(X)} + S_{(Y)} + S_{(Z)} + S_{(T)}\).
The base locus of the pencil \(\mathcal{S}\) consists of the following curves:
\begin{gather*}
  C_{1} = C_{(X, Y)}, \;
  C_{2} = C_{(X, Z)}, \;
  C_{3} = C_{(X, T)}, \;
  C_{4} = C_{(Y, Z)}, \;
  C_{5} = C_{(Y, T)}, \;
  C_{6} = C_{(Z, T)}, \;
  C_{7} = C_{(T, X + Y + Z)}.
\end{gather*}
Their linear equivalence classes on the generic member \(\mathcal{S}_{\Bbbk}\) of the pencil satisfy the following relations:
\begin{gather*}
  \begin{pmatrix}
    [C_{2}] \\ [C_{5}] \\ [C_{6}] \\ [C_{7}]
  \end{pmatrix} = 
  \begin{pmatrix}
    -1 & -2 & 0 & 1 \\
    -2 & 0 & -1 & 1 \\
    2 & 4 & -1 & -1 \\
    0 & -5 & 2 & 1
  \end{pmatrix} \cdot
  \begin{pmatrix}
    [C_{1}] \\ [C_{3}] \\ [C_{4}] \\ [H_{\mathcal{S}}]
  \end{pmatrix}.
\end{gather*}

For a general choice of \(\lambda \in \mathbb{C}\) the surface \(\mathcal{S}_{\lambda}\) has the following singularities:
\begin{itemize}\setlength{\itemindent}{2cm}
\item[\(P_{1} = P_{(X, Y, Z)}\):] type \(\mathbb{A}_3\) with the quadratic term \(Y \cdot Z\);
\item[\(P_{2} = P_{(X, Y, T)}\):] type \(\mathbb{A}_4\) with the quadratic term \(X \cdot Y\);
\item[\(P_{3} = P_{(X, Z, T)}\):] type \(\mathbb{A}_4\) with the quadratic term \(X \cdot Z\);
\item[\(P_{4} = P_{(Y, Z, T)}\):] type \(\mathbb{A}_1\) with the quadratic term \(Y (Z + T) + Z T\);
\item[\(P_{5} = P_{(X, T, Y + Z)}\):] type \(\mathbb{A}_1\) with the quadratic term \(X (X + Y + Z - \lambda T) + T^2\).
\end{itemize}

Galois action on the lattice \(L_{\lambda}\) is trivial. The intersection matrix on \(L_{\lambda} = L_{\mathcal{S}}\) is represented by
\begin{table}[H]
  \begin{tabular}{|c||ccc|cccc|cccc|c|c|cccc|}
    \hline
    \(\bullet\) & \(E_1^1\) & \(E_1^2\) & \(E_1^3\) & \(E_2^1\) & \(E_2^2\) & \(E_2^3\) & \(E_2^4\) & \(E_3^1\) & \(E_3^2\) & \(E_3^3\) & \(E_3^4\) & \(E_4^1\) & \(E_5^1\) & \(\widetilde{C_{1}}\) & \(\widetilde{C_{3}}\) & \(\widetilde{C_{4}}\) & \(\widetilde{H_{\mathcal{S}}}\) \\
    \hline
    \hline
    \(\widetilde{C_{1}}\) & \(1\) & \(0\) & \(0\) & \(0\) & \(0\) & \(1\) & \(0\) & \(0\) & \(0\) & \(0\) & \(0\) & \(0\) & \(0\) & \(-2\) & \(0\) & \(0\) & \(1\) \\
    \(\widetilde{C_{3}}\) & \(0\) & \(0\) & \(0\) & \(1\) & \(0\) & \(0\) & \(0\) & \(1\) & \(0\) & \(0\) & \(0\) & \(0\) & \(1\) & \(0\) & \(-2\) & \(0\) & \(1\) \\
    \(\widetilde{C_{4}}\) & \(0\) & \(1\) & \(0\) & \(0\) & \(0\) & \(0\) & \(0\) & \(0\) & \(0\) & \(0\) & \(0\) & \(1\) & \(0\) & \(0\) & \(0\) & \(-2\) & \(1\) \\
    \(\widetilde{H_{\mathcal{S}}}\) & \(0\) & \(0\) & \(0\) & \(0\) & \(0\) & \(0\) & \(0\) & \(0\) & \(0\) & \(0\) & \(0\) & \(0\) & \(0\) & \(1\) & \(1\) & \(1\) & \(4\) \\
    \hline
  \end{tabular}.
\end{table}
Note that the intersection matrix is non-degenerate.

Discriminant groups and discriminant forms of the lattices \(L_{\mathcal{S}}\) and \(H \oplus \Pic(X)\) are given by
\begin{gather*}
  G' = 
  \begin{pmatrix}
    \frac{1}{4} & \frac{1}{6} & \frac{7}{12} & 0 & \frac{1}{6} & \frac{1}{3} & \frac{1}{6} & \frac{2}{3} & \frac{1}{2} & \frac{1}{3} & \frac{1}{6} & \frac{3}{4} & \frac{11}{12} & \frac{1}{3} & \frac{5}{6} & \frac{1}{2} & \frac{1}{12}
  \end{pmatrix}, \\
  G'' = 
  \begin{pmatrix}
    0 & 0 & \frac{1}{4} & -\frac{1}{4} & \frac{1}{6}
  \end{pmatrix}; \;
  B' = 
  \begin{pmatrix}
    \frac{1}{12}
  \end{pmatrix}, \;
  B'' = 
  \begin{pmatrix}
    \frac{11}{12}
  \end{pmatrix}; \;
  Q' =
  \begin{pmatrix}
    \frac{1}{12}
  \end{pmatrix}, \;
  Q'' =
  \begin{pmatrix}
    \frac{23}{12}
  \end{pmatrix}.
\end{gather*}


\section{Dolgachev--Nikulin duality for Fano threefolds: rank 4}\label{appendix:rank-04}
\subsection{Family \textnumero4.1}\label{subsection:04-01}

The pencil \(\mathcal{S}\) is defined by the equation
\begin{gather*}
  X^{2} Y Z + X Y^{2} Z + X Y Z^{2} + X Y^{2} T + Y^{2} Z T + X Z^{2} T + Y Z^{2} T + \\ X Y T^{2} + Y^{2} T^{2} + X Z T^{2} + 3 Y Z T^{2} + Z^{2} T^{2} + Y T^{3} + Z T^{3} = \lambda X Y Z T.
\end{gather*}
Members \(\mathcal{S}_{\lambda}\) of the pencil are irreducible for any \(\lambda \in \mathbb{P}^1\) except
\begin{gather*}
  \mathcal{S}_{\infty} = S_{(X)} + S_{(Y)} + S_{(Z)} + S_{(T)}, \;
  \mathcal{S}_{- 4} = S_{(X + T)} + S_{(X Y Z + (Y + Z + T) (Y Z + Y T + Z T))}.
\end{gather*}
The base locus of the pencil \(\mathcal{S}\) consists of the following curves:
\begin{gather*}
  C_{1} = C_{(X, T)}, \;
  C_{2} = C_{(Y, Z)}, \;
  C_{3} = C_{(Y, T)}, \;
  C_{4} = C_{(Z, T)}, \;
  C_{5} = C_{(Y, X + T)}, \;
  C_{6} = C_{(Y, Z + T)}, \\
  C_{7} = C_{(Z, X + T)}, \;
  C_{8} = C_{(Z, Y + T)}, \;
  C_{9} = C_{(X, Y + Z + T)}, \;
  C_{10} = C_{(T, X + Y + Z)}, \;
  C_{11} = C_{(X, Y Z + T (Y + Z))}.
\end{gather*}
Their linear equivalence classes on the generic member \(\mathcal{S}_{\Bbbk}\) of the pencil satisfy the following relations:
\begin{gather*}
  \begin{pmatrix}
    [C_{6}] \\ [C_{7}] \\ [C_{8}] \\ [C_{10}] \\ [C_{11}]
  \end{pmatrix} = 
  \begin{pmatrix}
    0 & -1 & -1 & 0 & -1 & 0 & 1 \\
    -2 & 0 & 0 & 0 & -1 & 0 & 1 \\
    2 & -1 & 0 & -1 & 1 & 0 & 0 \\
    -1 & 0 & -1 & -1 & 0 & 0 & 1 \\
    -1 & 0 & 0 & 0 & 0 & -1 & 1
  \end{pmatrix} \cdot
  \begin{pmatrix}
    [C_{1}] & [C_{2}] & [C_{3}] & [C_{4}] & [C_{5}] & [C_{9}] & [H_{\mathcal{S}}]
  \end{pmatrix}^T.
\end{gather*}

For a general choice of \(\lambda \in \mathbb{C}\) the surface \(\mathcal{S}_{\lambda}\) has the following singularities:
\begin{itemize}\setlength{\itemindent}{2cm}
\item[\(P_{1} = P_{(X, Y, T)}\):] type \(\mathbb{A}_2\) with the quadratic term \((X + T) \cdot (Y + T)\);
\item[\(P_{2} = P_{(X, Z, T)}\):] type \(\mathbb{A}_2\) with the quadratic term \((X + T) \cdot (Z + T)\);
\item[\(P_{3} = P_{(Y, Z, T)}\):] type \(\mathbb{A}_3\) with the quadratic term \(Y \cdot Z\);
\item[\(P_{4} = P_{(X, T, Y + Z)}\):] type \(\mathbb{A}_1\) with the quadratic term \((X + T) (X + Y + Z + T) - (\lambda + 4) X T\);
\item[\(P_{5} = P_{(Y, Z, X + T)}\):] type \(\mathbb{A}_1\) with the quadratic term \((X + T) (Y + Z) + (\lambda + 4) Y Z\).
\end{itemize}

Galois action on the lattice \(L_{\lambda}\) is trivial. The intersection matrix on \(L_{\lambda} = L_{\mathcal{S}}\) is represented by
\begin{table}[H]
  \begin{tabular}{|c||cc|cc|ccc|c|c|ccccccc|}
    \hline
    \(\bullet\) & \(E_1^1\) & \(E_1^2\) & \(E_2^1\) & \(E_2^2\) & \(E_3^1\) & \(E_3^2\) & \(E_3^3\) & \(E_4^1\) & \(E_5^1\) & \(\widetilde{C_{1}}\) & \(\widetilde{C_{2}}\) & \(\widetilde{C_{3}}\) & \(\widetilde{C_{4}}\) & \(\widetilde{C_{5}}\) & \(\widetilde{C_{9}}\) & \(\widetilde{H_{\mathcal{S}}}\) \\
    \hline
    \hline
    \(\widetilde{C_{1}}\) & \(1\) & \(0\) & \(1\) & \(0\) & \(0\) & \(0\) & \(0\) & \(1\) & \(0\) & \(-2\) & \(0\) & \(0\) & \(0\) & \(0\) & \(0\) & \(1\) \\
    \(\widetilde{C_{2}}\) & \(0\) & \(0\) & \(0\) & \(0\) & \(0\) & \(1\) & \(0\) & \(0\) & \(1\) & \(0\) & \(-2\) & \(0\) & \(0\) & \(0\) & \(0\) & \(1\) \\
    \(\widetilde{C_{3}}\) & \(0\) & \(1\) & \(0\) & \(0\) & \(1\) & \(0\) & \(0\) & \(0\) & \(0\) & \(0\) & \(0\) & \(-2\) & \(0\) & \(0\) & \(0\) & \(1\) \\
    \(\widetilde{C_{4}}\) & \(0\) & \(0\) & \(0\) & \(1\) & \(0\) & \(0\) & \(1\) & \(0\) & \(0\) & \(0\) & \(0\) & \(0\) & \(-2\) & \(0\) & \(0\) & \(1\) \\
    \(\widetilde{C_{5}}\) & \(1\) & \(0\) & \(0\) & \(0\) & \(0\) & \(0\) & \(0\) & \(0\) & \(1\) & \(0\) & \(0\) & \(0\) & \(0\) & \(-2\) & \(0\) & \(1\) \\
    \(\widetilde{C_{9}}\) & \(0\) & \(0\) & \(0\) & \(0\) & \(0\) & \(0\) & \(0\) & \(1\) & \(0\) & \(0\) & \(0\) & \(0\) & \(0\) & \(0\) & \(-2\) & \(1\) \\
    \(\widetilde{H_{\mathcal{S}}}\) & \(0\) & \(0\) & \(0\) & \(0\) & \(0\) & \(0\) & \(0\) & \(0\) & \(0\) & \(1\) & \(1\) & \(1\) & \(1\) & \(1\) & \(1\) & \(4\) \\
    \hline
  \end{tabular}.
\end{table}
Note that the intersection matrix is non-degenerate.

Discriminant groups and discriminant forms of the lattices \(L_{\mathcal{S}}\) and \(H \oplus \Pic(X)\) are given by
\begin{gather*}
  G' = 
  \begin{pmatrix}
    0 & \frac{1}{2} & 0 & 0 & \frac{1}{2} & 0 & 0 & 0 & 0 & 0 & \frac{1}{2} & 0 & 0 & \frac{1}{2} & 0 & 0 \\
    0 & \frac{1}{2} & 0 & 0 & 0 & 0 & \frac{1}{2} & \frac{1}{2} & \frac{1}{2} & 0 & \frac{1}{2} & 0 & 0 & \frac{1}{2} & 0 & \frac{1}{2} \\
    \frac{1}{2} & 0 & \frac{1}{2} & 0 & \frac{1}{2} & \frac{1}{2} & \frac{1}{2} & \frac{1}{2} & 0 & 0 & 0 & \frac{1}{2} & \frac{1}{2} & 0 & 0 & \frac{1}{2} \\
    \frac{1}{3} & \frac{5}{6} & 0 & \frac{5}{6} & \frac{2}{3} & 0 & \frac{1}{3} & \frac{5}{6} & \frac{5}{6} & \frac{1}{6} & 0 & \frac{1}{3} & \frac{2}{3} & \frac{2}{3} & \frac{1}{2} & \frac{1}{6}
  \end{pmatrix}, \;
  B' = 
  \begin{pmatrix}
    0 & \frac{1}{2} & \frac{1}{2} & \frac{1}{2} \\
    \frac{1}{2} & 0 & \frac{1}{2} & \frac{1}{2} \\
    \frac{1}{2} & \frac{1}{2} & 0 & \frac{1}{2} \\
    \frac{1}{2} & \frac{1}{2} & \frac{1}{2} & \frac{1}{3}
  \end{pmatrix}; \\
  G'' = 
  \begin{pmatrix}
    0 & 0 & \frac{1}{2} & 0 & 0 & 0 \\
    0 & 0 & 0 & \frac{1}{2} & 0 & 0 \\
    0 & 0 & 0 & 0 & \frac{1}{2} & 0 \\
    0 & 0 & -\frac{1}{3} & -\frac{1}{3} & -\frac{1}{3} & \frac{1}{6}
  \end{pmatrix}, \;
  B'' = 
  \begin{pmatrix}
    0 & \frac{1}{2} & \frac{1}{2} & \frac{1}{2} \\
    \frac{1}{2} & 0 & \frac{1}{2} & \frac{1}{2} \\
    \frac{1}{2} & \frac{1}{2} & 0 & \frac{1}{2} \\
    \frac{1}{2} & \frac{1}{2} & \frac{1}{2} & \frac{2}{3}
  \end{pmatrix}; \;
  \begin{pmatrix}
    Q' \\ Q''
  \end{pmatrix}
  =
  \begin{pmatrix}
    0 & 0 & 0 & \frac{4}{3} \\
    0 & 0 & 0 & \frac{2}{3}
  \end{pmatrix}.
\end{gather*}


\subsection{Family \textnumero4.2}\label{subsection:04-02}

The pencil \(\mathcal{S}\) is defined by the equation
\begin{gather*}
  X^{2} Y Z + X Y^{2} Z + X^{2} Z T + Y^{2} Z T + X Z^{2} T + Y Z^{2} T + 2 X Z T^{2} + 2 Y Z T^{2} + X T^{3} + Y T^{3} = \lambda X Y Z T.
\end{gather*}
Members \(\mathcal{S}_{\lambda}\) of the pencil are irreducible for any \(\lambda \in \mathbb{P}^1\) except
\begin{gather*}
  \mathcal{S}_{\infty} = S_{(X)} + S_{(Y)} + S_{(Z)} + S_{(T)}, \;
  \mathcal{S}_{- 2} = S_{(X + Y)} + S_{(Y Z T + X Z (Y + T) + (Z + T)^2 T)}.
\end{gather*}
The base locus of the pencil \(\mathcal{S}\) consists of the following curves:
\begin{gather*}
  C_{1} = C_{(X, Y)}, \;
  C_{2} = C_{(X, T)}, \;
  C_{3} = C_{(Y, T)}, \;
  C_{4} = C_{(Z, T)}, \;
  C_{5} = C_{(Z, X + Y)}, \\
  C_{6} = C_{(T, X + Y)}, \;
  C_{7} = C_{(X, Y Z + (Z + T)^2)}, \;
  C_{8} = C_{(Y, X Z + (Z + T)^2)}.
\end{gather*}
Their linear equivalence classes on the generic member \(\mathcal{S}_{\Bbbk}\) of the pencil satisfy the following relations:
\begin{gather*}
  \begin{pmatrix}
    [C_{3}] \\ [C_{5}] \\ [C_{6}] \\ [C_{7}] \\ [C_{8}]
  \end{pmatrix} = 
  \begin{pmatrix}
    2 & -1 & -4 & 1 \\
    0 & 0 & -3 & 1 \\
    -2 & 0 & 3 & 0 \\
    -1 & -1 & 0 & 1 \\
    -3 & 1 & 4 & 0
  \end{pmatrix} \cdot
  \begin{pmatrix}
    [C_{1}] \\ [C_{2}] \\ [C_{4}] \\ [H_{\mathcal{S}}]
  \end{pmatrix}.
\end{gather*}

Put \(\mu (\mu - 1) = (\lambda - 2)^{-1}\). For a general choice of \(\lambda \in \mathbb{C}\) the surface \(\mathcal{S}_{\lambda}\) has the following singularities:
\begin{itemize}\setlength{\itemindent}{2cm}
\item[\(P_{1} = P_{(X, Y, T)}\):] type \(\mathbb{A}_3\) with the quadratic term \(T \cdot (X + Y)\);
\item[\(P_{2} = P_{(X, Z, T)}\):] type \(\mathbb{A}_2\) with the quadratic term \(Z \cdot (X + T)\);
\item[\(P_{3} = P_{(Y, Z, T)}\):] type \(\mathbb{A}_2\) with the quadratic term \(Z \cdot (Y + T)\);
\item[\(P_{4} = P_{(X, Y, Z + T)}\):] type \(\mathbb{A}_3\) with the quadratic term \((\mu X - (\mu - 1) Y) \cdot ((\mu - 1) X - \mu Y)\);
\item[\(P_{5} = P_{(Z, T, X + Y)}\):] type \(\mathbb{A}_3\) with the quadratic term \(Z \cdot (X + Y - (\lambda + 2) T)\).
\end{itemize}

The only non-trivial Galois orbit on the lattice \(L_{\lambda}\) is \(E_4^1 + E_4^3\).

The intersection matrix on the lattice \(L_{\lambda}\) is represented by
\begin{table}[H]
  \begin{tabular}{|c||ccc|cc|cc|ccc|ccc|cccc|}
    \hline
    \(\bullet\) & \(E_1^1\) & \(E_1^2\) & \(E_1^3\) & \(E_2^1\) & \(E_2^2\) & \(E_3^1\) & \(E_3^2\) & \(E_4^1\) & \(E_4^2\) & \(E_4^3\) & \(E_5^1\) & \(E_5^2\) & \(E_5^3\) & \(\widetilde{C_{1}}\) & \(\widetilde{C_{2}}\) & \(\widetilde{C_{4}}\) & \(\widetilde{H_{\mathcal{S}}}\) \\
    \hline
    \hline
    \(\widetilde{C_{1}}\) & \(1\) & \(0\) & \(0\) & \(0\) & \(0\) & \(0\) & \(0\) & \(0\) & \(1\) & \(0\) & \(0\) & \(0\) & \(0\) & \(-2\) & \(0\) & \(0\) & \(1\) \\
    \(\widetilde{C_{2}}\) & \(0\) & \(0\) & \(1\) & \(1\) & \(0\) & \(0\) & \(0\) & \(0\) & \(0\) & \(0\) & \(0\) & \(0\) & \(0\) & \(0\) & \(-2\) & \(0\) & \(1\) \\
    \(\widetilde{C_{4}}\) & \(0\) & \(0\) & \(0\) & \(0\) & \(1\) & \(1\) & \(0\) & \(0\) & \(0\) & \(0\) & \(1\) & \(0\) & \(0\) & \(0\) & \(0\) & \(-2\) & \(1\) \\
    \(\widetilde{H_{\mathcal{S}}}\) & \(0\) & \(0\) & \(0\) & \(0\) & \(0\) & \(0\) & \(0\) & \(0\) & \(0\) & \(0\) & \(0\) & \(0\) & \(0\) & \(1\) & \(1\) & \(1\) & \(4\) \\
    \hline
  \end{tabular}.
\end{table}
Note that the intersection matrix is non-degenerate.

Discriminant groups and discriminant forms of the lattices \(L_{\mathcal{S}}\) and \(H \oplus \Pic(X)\) are given by
\begin{gather*}
  G' = 
  \begin{pmatrix}
    \frac{1}{4} & \frac{1}{2} & \frac{3}{4} & 0 & 0 & 0 & 0 & 0 & \frac{1}{2} & \frac{3}{4} & \frac{1}{2} & \frac{1}{4} & 0 & 0 & 0 & \frac{1}{4} \\
    \frac{3}{8} & \frac{1}{2} & \frac{5}{8} & 0 & \frac{1}{4} & 0 & \frac{1}{2} & \frac{5}{8} & \frac{1}{4} & \frac{7}{8} & \frac{1}{4} & \frac{5}{8} & \frac{1}{4} & \frac{3}{4} & \frac{1}{2} & \frac{7}{8}
  \end{pmatrix}, \\
  G'' = 
  \begin{pmatrix}
    0 & 0 & -\frac{1}{4} & -\frac{1}{4} & \frac{1}{4} & 0 \\
    0 & 0 & -\frac{3}{8} & 0 & 0 & \frac{1}{4}
  \end{pmatrix}; \;
  B' = 
  \begin{pmatrix}
    \frac{1}{4} & \frac{1}{4} \\
    \frac{1}{4} & \frac{3}{8}
  \end{pmatrix}, \;
  B'' = 
  \begin{pmatrix}
    \frac{3}{4} & \frac{3}{4} \\
    \frac{3}{4} & \frac{5}{8}
  \end{pmatrix}; \;
  \begin{pmatrix}
    Q' \\ Q''
  \end{pmatrix}
  =
  \begin{pmatrix}
    \frac{1}{4} & \frac{3}{8} \\
    \frac{7}{4} & \frac{13}{8}
  \end{pmatrix}.
\end{gather*}


\subsection{Family \textnumero4.3}\label{subsection:04-03}

The pencil \(\mathcal{S}\) is defined by the equation
\begin{gather*}
  X^{2} Y Z + X Y^{2} Z + X Y Z^{2} + X Y^{2} T + Y^{2} Z T + X Z^{2} T + Y Z^{2} T + X Y T^{2} + X Z T^{2} + Y Z T^{2} = \lambda X Y Z T.
\end{gather*}
Members \(\mathcal{S}_{\lambda}\) of the pencil are irreducible for any \(\lambda \in \mathbb{P}^1\) except
\(\mathcal{S}_{\infty} = S_{(X)} + S_{(Y)} + S_{(Z)} + S_{(T)}\).
The base locus of the pencil \(\mathcal{S}\) consists of the following curves:
\begin{gather*}
  C_{1} = C_{(X, Y)}, \;
  C_{2} = C_{(X, Z)}, \;
  C_{3} = C_{(X, T)}, \;
  C_{4} = C_{(Y, Z)}, \;
  C_{5} = C_{(Y, T)}, \;
  C_{6} = C_{(Z, T)}, \\
  C_{7} = C_{(Y, Z + T)}, \;
  C_{8} = C_{(Z, Y + T)}, \;
  C_{9} = C_{(X, Y + Z + T)}, \;
  C_{10} = C_{(T, X + Y + Z)}.
\end{gather*}
Their linear equivalence classes on the generic member \(\mathcal{S}_{\Bbbk}\) of the pencil satisfy the following relations:
\begin{gather*}
  \begin{pmatrix}
    [C_{7}] \\ [C_{8}] \\ [C_{9}] \\ [C_{10}]
  \end{pmatrix} = 
  \begin{pmatrix}
    -1 & 0 & 0 & -1 & -1 & 0 & 1 \\
    0 & -1 & 0 & -1 & 0 & -1 & 1 \\
    -1 & -1 & -1 & 0 & 0 & 0 & 1 \\
    0 & 0 & -1 & 0 & -1 & -1 & 1
  \end{pmatrix} \cdot
  \begin{pmatrix}
    [C_{1}] & [C_{2}] & [C_{3}] & [C_{4}] & [C_{5}] & [C_{6}] & [H_{\mathcal{S}}]
  \end{pmatrix}^T.
\end{gather*}

For a general choice of \(\lambda \in \mathbb{C}\) the surface \(\mathcal{S}_{\lambda}\) has the following singularities:
\begin{itemize}\setlength{\itemindent}{2cm}
\item[\(P_{1} = P_{(X, Y, Z)}\):] type \(\mathbb{A}_1\) with the quadratic term \(X (Y + Z) + Y Z\);
\item[\(P_{2} = P_{(X, Y, T)}\):] type \(\mathbb{A}_1\) with the quadratic term \(X (Y + T) + Y T\);
\item[\(P_{3} = P_{(X, Z, T)}\):] type \(\mathbb{A}_1\) with the quadratic term \(X (Z + T) + Z T\);
\item[\(P_{4} = P_{(Y, Z, T)}\):] type \(\mathbb{A}_3\) with the quadratic term \(Y \cdot Z\);
\item[\(P_{5} = P_{(X, Y, Z + T)}\):] type \(\mathbb{A}_1\) with the quadratic term \((X + Y) (Y + Z + T) - (\lambda + 3) X Y\);
\item[\(P_{6} = P_{(X, Z, Y + T)}\):] type \(\mathbb{A}_1\) with the quadratic term \((X + Z) (Y + Z + T) - (\lambda + 3) X Z\);
\item[\(P_{7} = P_{(X, T, Y + Z)}\):] type \(\mathbb{A}_1\) with the quadratic term \((X + T) (X + Y + Z + T) - (\lambda + 4) X T\).
\end{itemize}

Galois action on the lattice \(L_{\lambda}\) is trivial. The intersection matrix on \(L_{\lambda} = L_{\mathcal{S}}\) is represented by
\begin{table}[H]
  \begin{tabular}{|c||c|c|c|ccc|c|c|c|ccccccc|}
    \hline
    \(\bullet\) & \(E_1^1\) & \(E_2^1\) & \(E_3^1\) & \(E_4^1\) & \(E_4^2\) & \(E_4^3\) & \(E_5^1\) & \(E_6^1\) & \(E_7^1\) & \(\widetilde{C_{1}}\) & \(\widetilde{C_{2}}\) & \(\widetilde{C_{3}}\) & \(\widetilde{C_{4}}\) & \(\widetilde{C_{5}}\) & \(\widetilde{C_{6}}\) & \(\widetilde{H_{\mathcal{S}}}\) \\
    \hline
    \hline
    \(\widetilde{C_{1}}\) & \(1\) & \(1\) & \(0\) & \(0\) & \(0\) & \(0\) & \(1\) & \(0\) & \(0\) & \(-2\) & \(0\) & \(0\) & \(0\) & \(0\) & \(0\) & \(1\) \\
    \(\widetilde{C_{2}}\) & \(1\) & \(0\) & \(1\) & \(0\) & \(0\) & \(0\) & \(0\) & \(1\) & \(0\) & \(0\) & \(-2\) & \(0\) & \(0\) & \(0\) & \(0\) & \(1\) \\
    \(\widetilde{C_{3}}\) & \(0\) & \(1\) & \(1\) & \(0\) & \(0\) & \(0\) & \(0\) & \(0\) & \(1\) & \(0\) & \(0\) & \(-2\) & \(0\) & \(0\) & \(0\) & \(1\) \\
    \(\widetilde{C_{4}}\) & \(1\) & \(0\) & \(0\) & \(0\) & \(1\) & \(0\) & \(0\) & \(0\) & \(0\) & \(0\) & \(0\) & \(0\) & \(-2\) & \(0\) & \(0\) & \(1\) \\
    \(\widetilde{C_{5}}\) & \(0\) & \(1\) & \(0\) & \(1\) & \(0\) & \(0\) & \(0\) & \(0\) & \(0\) & \(0\) & \(0\) & \(0\) & \(0\) & \(-2\) & \(0\) & \(1\) \\
    \(\widetilde{C_{6}}\) & \(0\) & \(0\) & \(1\) & \(0\) & \(0\) & \(1\) & \(0\) & \(0\) & \(0\) & \(0\) & \(0\) & \(0\) & \(0\) & \(0\) & \(-2\) & \(1\) \\
    \(\widetilde{H_{\mathcal{S}}}\) & \(0\) & \(0\) & \(0\) & \(0\) & \(0\) & \(0\) & \(0\) & \(0\) & \(0\) & \(1\) & \(1\) & \(1\) & \(1\) & \(1\) & \(1\) & \(4\) \\
    \hline
  \end{tabular}.
\end{table}
Note that the intersection matrix is non-degenerate.

Discriminant groups and discriminant forms of the lattices \(L_{\mathcal{S}}\) and \(H \oplus \Pic(X)\) are given by
\begin{gather*}
  G' = 
  \begin{pmatrix}
    0 & \frac{1}{4} & \frac{3}{4} & \frac{1}{4} & \frac{1}{2} & \frac{3}{4} & \frac{1}{4} & \frac{3}{4} & \frac{1}{2} & \frac{1}{2} & \frac{1}{2} & 0 & 0 & 0 & 0 & \frac{1}{2} \\
    \frac{1}{4} & \frac{1}{2} & \frac{1}{2} & \frac{1}{12} & 0 & \frac{5}{12} & \frac{1}{6} & \frac{5}{6} & \frac{1}{4} & \frac{1}{3} & \frac{2}{3} & \frac{1}{2} & \frac{1}{2} & \frac{1}{6} & \frac{5}{6} & \frac{3}{4}
  \end{pmatrix}, \\
  G'' = 
  \begin{pmatrix}
    0 & 0 & -1 & -\frac{7}{4} & \frac{1}{4} & \frac{1}{4} \\
    0 & 0 & \frac{1}{6} & \frac{5}{12} & -\frac{1}{12} & 0
  \end{pmatrix}; \;
  B' = 
  \begin{pmatrix}
    \frac{1}{4} & \frac{1}{2} \\
    \frac{1}{2} & \frac{1}{12}
  \end{pmatrix}, \;
  B'' = 
  \begin{pmatrix}
    \frac{3}{4} & \frac{1}{2} \\
    \frac{1}{2} & \frac{11}{12}
  \end{pmatrix}; \;
  \begin{pmatrix}
    Q' \\ Q''
  \end{pmatrix}
  =
  \begin{pmatrix}
    \frac{5}{4} & \frac{1}{12} \\
    \frac{3}{4} & \frac{23}{12}
  \end{pmatrix}.
\end{gather*}


\subsection{Family \textnumero4.4}\label{subsection:04-04}

The pencil \(\mathcal{S}\) is defined by the equation
\begin{gather*}
  X^{2} Y Z + X Y^{2} Z + X Y Z^{2} + X^{2} Y T + X Y^{2} T + X^{2} Z T + Y^{2} Z T + X Z T^{2} + Y Z T^{2} = \lambda X Y Z T.
\end{gather*}
Members \(\mathcal{S}_{\lambda}\) of the pencil are irreducible for any \(\lambda \in \mathbb{P}^1\) except
\(\mathcal{S}_{\infty} = S_{(X)} + S_{(Y)} + S_{(Z)} + S_{(T)}\).
The base locus of the pencil \(\mathcal{S}\) consists of the following curves:
\begin{gather*}
  C_{1} = C_{(X, Y)}, \;
  C_{2} = C_{(X, Z)}, \;
  C_{3} = C_{(X, T)}, \;
  C_{4} = C_{(Y, Z)}, \;
  C_{5} = C_{(Y, T)}, \;
  C_{6} = C_{(Z, T)}, \\
  C_{7} = C_{(X, Y + T)}, \;
  C_{8} = C_{(Y, X + T)}, \;
  C_{9} = C_{(Z, X + Y)}, \;
  C_{10} = C_{(T, X + Y + Z)}.
\end{gather*}
Their linear equivalence classes on the generic member \(\mathcal{S}_{\Bbbk}\) of the pencil satisfy the following relations:
\begin{gather*}
  \begin{pmatrix}
    [C_{7}] \\ [C_{8}] \\ [C_{9}] \\ [C_{10}]
  \end{pmatrix} = 
  \begin{pmatrix}
    -1 & -1 & -1 & 0 & 0 & 0 & 1 \\
    -1 & 0 & 0 & -1 & -1 & 0 & 1 \\
    0 & -1 & 0 & -1 & 0 & -1 & 1 \\
    0 & 0 & -1 & 0 & -1 & -1 & 1
  \end{pmatrix} \cdot
  \begin{pmatrix}
    [C_{1}] & [C_{2}] & [C_{3}] & [C_{4}] & [C_{5}] & [C_{6}] & [H_{\mathcal{S}}]
  \end{pmatrix}^T.
\end{gather*}

For a general choice of \(\lambda \in \mathbb{C}\) the surface \(\mathcal{S}_{\lambda}\) has the following singularities:
\begin{itemize}\setlength{\itemindent}{2cm}
\item[\(P_{1} = P_{(X, Y, Z)}\):] type \(\mathbb{A}_3\) with the quadratic term \(Z \cdot (X + Y)\);
\item[\(P_{2} = P_{(X, Y, T)}\):] type \(\mathbb{A}_3\) with the quadratic term \(X \cdot Y\);
\item[\(P_{3} = P_{(X, Z, T)}\):] type \(\mathbb{A}_1\) with the quadratic term \(X (Z + T) + Z T\);
\item[\(P_{4} = P_{(Y, Z, T)}\):] type \(\mathbb{A}_1\) with the quadratic term \(Y (Z + T) + Z T\);
\item[\(P_{5} = P_{(Z, T, X + Y)}\):] type \(\mathbb{A}_1\) with the quadratic term \((Z + T) (X + Y + Z) - (\lambda + 3) Z T\).
\end{itemize}

Galois action on the lattice \(L_{\lambda}\) is trivial. The intersection matrix on \(L_{\lambda} = L_{\mathcal{S}}\) is represented by
\begin{table}[H]
  \begin{tabular}{|c||ccc|ccc|c|c|c|ccccccc|}
    \hline
    \(\bullet\) & \(E_1^1\) & \(E_1^2\) & \(E_1^3\) & \(E_2^1\) & \(E_2^2\) & \(E_2^3\) & \(E_3^1\) & \(E_4^1\) & \(E_5^1\) & \(\widetilde{C_{1}}\) & \(\widetilde{C_{2}}\) & \(\widetilde{C_{3}}\) & \(\widetilde{C_{4}}\) & \(\widetilde{C_{5}}\) & \(\widetilde{C_{6}}\) & \(\widetilde{H_{\mathcal{S}}}\) \\
    \hline
    \hline
    \(\widetilde{C_{1}}\) & \(1\) & \(0\) & \(0\) & \(0\) & \(1\) & \(0\) & \(0\) & \(0\) & \(0\) & \(-2\) & \(0\) & \(0\) & \(0\) & \(0\) & \(0\) & \(1\) \\
    \(\widetilde{C_{2}}\) & \(0\) & \(0\) & \(1\) & \(0\) & \(0\) & \(0\) & \(1\) & \(0\) & \(0\) & \(0\) & \(-2\) & \(0\) & \(0\) & \(0\) & \(0\) & \(1\) \\
    \(\widetilde{C_{3}}\) & \(0\) & \(0\) & \(0\) & \(1\) & \(0\) & \(0\) & \(1\) & \(0\) & \(0\) & \(0\) & \(0\) & \(-2\) & \(0\) & \(0\) & \(0\) & \(1\) \\
    \(\widetilde{C_{4}}\) & \(0\) & \(0\) & \(1\) & \(0\) & \(0\) & \(0\) & \(0\) & \(1\) & \(0\) & \(0\) & \(0\) & \(0\) & \(-2\) & \(0\) & \(0\) & \(1\) \\
    \(\widetilde{C_{5}}\) & \(0\) & \(0\) & \(0\) & \(0\) & \(0\) & \(1\) & \(0\) & \(1\) & \(0\) & \(0\) & \(0\) & \(0\) & \(0\) & \(-2\) & \(0\) & \(1\) \\
    \(\widetilde{C_{6}}\) & \(0\) & \(0\) & \(0\) & \(0\) & \(0\) & \(0\) & \(1\) & \(1\) & \(1\) & \(0\) & \(0\) & \(0\) & \(0\) & \(0\) & \(-2\) & \(1\) \\
    \(\widetilde{H_{\mathcal{S}}}\) & \(0\) & \(0\) & \(0\) & \(0\) & \(0\) & \(0\) & \(0\) & \(0\) & \(0\) & \(1\) & \(1\) & \(1\) & \(1\) & \(1\) & \(1\) & \(4\) \\
    \hline
  \end{tabular}.
\end{table}
Note that the intersection matrix is non-degenerate.

Discriminant groups and discriminant forms of the lattices \(L_{\mathcal{S}}\) and \(H \oplus \Pic(X)\) are given by
\begin{gather*}
  G' = 
  \begin{pmatrix}
    \frac{1}{2} & 0 & \frac{1}{2} & \frac{1}{2} & \frac{1}{2} & \frac{1}{2} & \frac{1}{2} & \frac{1}{2} & 0 & 0 & \frac{1}{2} & \frac{1}{2} & \frac{1}{2} & \frac{1}{2} & 0 & 0 \\
    \frac{1}{2} & \frac{1}{2} & \frac{1}{2} & \frac{3}{10} & \frac{1}{4} & \frac{7}{10} & \frac{3}{20} & \frac{7}{20} & \frac{1}{4} & \frac{1}{2} & \frac{9}{20} & \frac{7}{20} & \frac{1}{20} & \frac{3}{20} & \frac{1}{2} & \frac{1}{4}
  \end{pmatrix}, \\
  G'' = 
  \begin{pmatrix}
    0 & 0 & -2 & -\frac{7}{2} & -\frac{1}{2} & \frac{1}{2} \\
    0 & 0 & -\frac{3}{10} & -\frac{13}{20} & -\frac{3}{20} & \frac{1}{10}
  \end{pmatrix}; \;
  B' = 
  \begin{pmatrix}
    \frac{1}{2} & \frac{1}{2} \\
    \frac{1}{2} & \frac{13}{20}
  \end{pmatrix}, \;
  B'' = 
  \begin{pmatrix}
    \frac{1}{2} & \frac{1}{2} \\
    \frac{1}{2} & \frac{7}{20}
  \end{pmatrix}; \;
  \begin{pmatrix}
    Q' \\ Q''
  \end{pmatrix}
  =
  \begin{pmatrix}
    \frac{3}{2} & \frac{13}{20} \\
    \frac{1}{2} & \frac{27}{20}
  \end{pmatrix}.
\end{gather*}


\subsection{Family \textnumero4.5}\label{subsection:04-05}

The pencil \(\mathcal{S}\) is defined by the equation
\begin{gather*}
  X^{2} Y Z + X Y^{2} Z + X Y Z^{2} + X Y^{2} T + Y^{2} Z T + X Z^{2} T + X Y T^{2} + X Z T^{2} + Y Z T^{2} = \lambda X Y Z T.
\end{gather*}
Members \(\mathcal{S}_{\lambda}\) of the pencil are irreducible for any \(\lambda \in \mathbb{P}^1\) except
\(\mathcal{S}_{\infty} = S_{(X)} + S_{(Y)} + S_{(Z)} + S_{(T)}\).
The base locus of the pencil \(\mathcal{S}\) consists of the following curves:
\begin{gather*}
  C_{1} = C_{(X, Y)}, \;
  C_{2} = C_{(X, Z)}, \;
  C_{3} = C_{(X, T)}, \;
  C_{4} = C_{(Y, Z)}, \;
  C_{5} = C_{(Y, T)}, \;
  C_{6} = C_{(Z, T)}, \\
  C_{7} = C_{(X, Y + T)}, \;
  C_{8} = C_{(Y, Z + T)}, \;
  C_{9} = C_{(Z, Y + T)}, \;
  C_{10} = C_{(T, X + Y + Z)}.
\end{gather*}
Their linear equivalence classes on the generic member \(\mathcal{S}_{\Bbbk}\) of the pencil satisfy the following relations:
\begin{gather*}
  \begin{pmatrix}
    [C_{7}] \\ [C_{8}] \\ [C_{9}] \\ [C_{10}]
  \end{pmatrix} = 
  \begin{pmatrix}
    -1 & -1 & -1 & 0 & 0 & 0 & 1 \\
    -1 & 0 & 0 & -1 & -1 & 0 & 1 \\
    0 & -1 & 0 & -1 & 0 & -1 & 1 \\
    0 & 0 & -1 & 0 & -1 & -1 & 1
  \end{pmatrix} \cdot
  \begin{pmatrix}
    [C_{1}] & [C_{2}] & [C_{3}] & [C_{4}] & [C_{5}] & [C_{6}] & [H_{\mathcal{S}}]
  \end{pmatrix}^T.
\end{gather*}

For a general choice of \(\lambda \in \mathbb{C}\) the surface \(\mathcal{S}_{\lambda}\) has the following singularities:
\begin{itemize}\setlength{\itemindent}{2cm}
\item[\(P_{1} = P_{(X, Y, Z)}\):] type \(\mathbb{A}_1\) with the quadratic term \(Z (X + Y) + X Y\);
\item[\(P_{2} = P_{(X, Y, T)}\):] type \(\mathbb{A}_3\) with the quadratic term \(X \cdot (Y + T)\);
\item[\(P_{3} = P_{(X, Z, T)}\):] type \(\mathbb{A}_1\) with the quadratic term \(X (Z + T) + Z T\);
\item[\(P_{4} = P_{(Y, Z, T)}\):] type \(\mathbb{A}_3\) with the quadratic term \(Y \cdot Z\);
\item[\(P_{5} = P_{(X, Z, Y + T)}\):] type \(\mathbb{A}_1\) with the quadratic term \((X + Z) (Y + T) - (\lambda + 2) X Z\).
\end{itemize}

Galois action on the lattice \(L_{\lambda}\) is trivial. The intersection matrix on \(L_{\lambda} = L_{\mathcal{S}}\) is represented by
\begin{table}[H]
  \begin{tabular}{|c||c|ccc|c|ccc|c|ccccccc|}
    \hline
    \(\bullet\) & \(E_1^1\) & \(E_2^1\) & \(E_2^2\) & \(E_2^3\) & \(E_3^1\) & \(E_4^1\) & \(E_4^2\) & \(E_4^3\) & \(E_5^1\) & \(\widetilde{C_{1}}\) & \(\widetilde{C_{2}}\) & \(\widetilde{C_{3}}\) & \(\widetilde{C_{4}}\) & \(\widetilde{C_{5}}\) & \(\widetilde{C_{6}}\) & \(\widetilde{H_{\mathcal{S}}}\) \\
    \hline
    \hline
    \(\widetilde{C_{1}}\) & \(1\) & \(1\) & \(0\) & \(0\) & \(0\) & \(0\) & \(0\) & \(0\) & \(0\) & \(-2\) & \(0\) & \(0\) & \(0\) & \(0\) & \(0\) & \(1\) \\
    \(\widetilde{C_{2}}\) & \(1\) & \(0\) & \(0\) & \(0\) & \(1\) & \(0\) & \(0\) & \(0\) & \(1\) & \(0\) & \(-2\) & \(0\) & \(0\) & \(0\) & \(0\) & \(1\) \\
    \(\widetilde{C_{3}}\) & \(0\) & \(1\) & \(0\) & \(0\) & \(1\) & \(0\) & \(0\) & \(0\) & \(0\) & \(0\) & \(0\) & \(-2\) & \(0\) & \(0\) & \(0\) & \(1\) \\
    \(\widetilde{C_{4}}\) & \(1\) & \(0\) & \(0\) & \(0\) & \(0\) & \(0\) & \(1\) & \(0\) & \(0\) & \(0\) & \(0\) & \(0\) & \(-2\) & \(0\) & \(0\) & \(1\) \\
    \(\widetilde{C_{5}}\) & \(0\) & \(0\) & \(0\) & \(1\) & \(0\) & \(1\) & \(0\) & \(0\) & \(0\) & \(0\) & \(0\) & \(0\) & \(0\) & \(-2\) & \(0\) & \(1\) \\
    \(\widetilde{C_{6}}\) & \(0\) & \(0\) & \(0\) & \(0\) & \(1\) & \(0\) & \(0\) & \(1\) & \(0\) & \(0\) & \(0\) & \(0\) & \(0\) & \(0\) & \(-2\) & \(1\) \\
    \(\widetilde{H_{\mathcal{S}}}\) & \(0\) & \(0\) & \(0\) & \(0\) & \(0\) & \(0\) & \(0\) & \(0\) & \(0\) & \(1\) & \(1\) & \(1\) & \(1\) & \(1\) & \(1\) & \(4\) \\
    \hline
  \end{tabular}.
\end{table}
Note that the intersection matrix is non-degenerate.

Discriminant groups and discriminant forms of the lattices \(L_{\mathcal{S}}\) and \(H \oplus \Pic(X)\) are given by
\begin{gather*}
  G' = 
  \begin{pmatrix}
    \frac{11}{13} & \frac{7}{13} & \frac{28}{39} & \frac{35}{39} & \frac{23}{39} & \frac{2}{13} & \frac{3}{13} & \frac{28}{39} & \frac{7}{39} & \frac{29}{39} & \frac{14}{39} & \frac{8}{13} & \frac{23}{39} & \frac{1}{13} & \frac{8}{39} & \frac{4}{39}
  \end{pmatrix}, \\
  G'' = 
  \begin{pmatrix}
    0 & 0 & -\frac{31}{39} & -\frac{19}{39} & -\frac{25}{39} & \frac{1}{39}
  \end{pmatrix}; \;
  B' = 
  \begin{pmatrix}
    \frac{4}{39}
  \end{pmatrix}, \;
  B'' = 
  \begin{pmatrix}
    \frac{35}{39}
  \end{pmatrix}; \;
  Q' =
  \begin{pmatrix}
    \frac{4}{39}
  \end{pmatrix}. \;
  Q'' =
  \begin{pmatrix}
    \frac{74}{39}
  \end{pmatrix}.
\end{gather*}


\subsection{Family \textnumero4.6}\label{subsection:04-06}

The pencil \(\mathcal{S}\) is defined by the equation
\begin{gather*}
  X^{2} Y Z + X Y^{2} Z + X Y Z^{2} + X Z^{2} T + X Y T^{2} + X Z T^{2} + Y Z T^{2} + Y T^{3} + Z T^{3} = \lambda X Y Z T.
\end{gather*}
Members \(\mathcal{S}_{\lambda}\) of the pencil are irreducible for any \(\lambda \in \mathbb{P}^1\) except
\(\mathcal{S}_{\infty} = S_{(X)} + S_{(Y)} + S_{(Z)} + S_{(T)}\).
The base locus of the pencil \(\mathcal{S}\) consists of the following curves:
\begin{gather*}
  C_{1} = C_{(X, T)}, \;
  C_{2} = C_{(Y, Z)}, \;
  C_{3} = C_{(Y, T)}, \;
  C_{4} = C_{(Z, T)}, \;
  C_{5} = C_{(Z, X + T)}, \\
  C_{6} = C_{(T, X + Y + Z)}, \;
  C_{7} = C_{(X, Z T + Y (Z + T))}, \;
  C_{8} = C_{(Y, X (Z + T) + T^2)}.
\end{gather*}
Their linear equivalence classes on the generic member \(\mathcal{S}_{\Bbbk}\) of the pencil satisfy the following relations:
\begin{gather*}
  \begin{pmatrix}
    [C_{5}] \\ [C_{6}] \\ [C_{7}] \\ [C_{8}]
  \end{pmatrix} = 
  \begin{pmatrix}
    0 & -1 & 0 & -2 & 1 \\
    -1 & 0 & -1 & -1 & 1 \\
    -2 & 0 & 0 & 0 & 1 \\
    0 & -1 & -1 & 0 & 1
  \end{pmatrix} \cdot
  \begin{pmatrix}
    [C_{1}] & [C_{2}] & [C_{3}] & [C_{4}] & [H_{\mathcal{S}}]
  \end{pmatrix}^T.
\end{gather*}

For a general choice of \(\lambda \in \mathbb{C}\) the surface \(\mathcal{S}_{\lambda}\) has the following singularities:
\begin{itemize}\setlength{\itemindent}{2cm}
\item[\(P_{1} = P_{(X, Y, T)}\):] type \(\mathbb{A}_3\) with the quadratic term \(X \cdot (Y + T)\);
\item[\(P_{2} = P_{(X, Z, T)}\):] type \(\mathbb{A}_2\) with the quadratic term \(X \cdot Z\);
\item[\(P_{3} = P_{(Y, Z, T)}\):] type \(\mathbb{A}_3\) with the quadratic term \(Y \cdot Z\);
\item[\(P_{4} = P_{(X, T, Y + Z)}\):] type \(\mathbb{A}_1\) with the quadratic term \(X (X + Y + Z  - (\lambda + 1) T) + T^2\);
\item[\(P_{5} = P_{(Y, Z, X + T)}\):] type \(\mathbb{A}_1\) with the quadratic term \((Y + Z) (X - Z + T) + (\lambda + 3) Y Z\);
\item[\(P_{6} = P_{(Z, T, X + Y)}\):] type \(\mathbb{A}_1\) with the quadratic term \(Z (X + Y + Z - \lambda T) + T^2\).
\end{itemize}

Galois action on the lattice \(L_{\lambda}\) is trivial. The intersection matrix on \(L_{\lambda} = L_{\mathcal{S}}\) is represented by
\begin{table}[H]
  \begin{tabular}{|c||ccc|cc|ccc|c|c|c|ccccc|}
    \hline
    \(\bullet\) & \(E_1^1\) & \(E_1^2\) & \(E_1^3\) & \(E_2^1\) & \(E_2^2\) & \(E_3^1\) & \(E_3^2\) & \(E_3^3\) & \(E_4^1\) & \(E_5^1\) & \(E_6^1\) & \(\widetilde{C_{1}}\) & \(\widetilde{C_{2}}\) & \(\widetilde{C_{3}}\) & \(\widetilde{C_{4}}\) & \(\widetilde{H_{\mathcal{S}}}\) \\
    \hline
    \hline
    \(\widetilde{C_{1}}\) & \(1\) & \(0\) & \(0\) & \(1\) & \(0\) & \(0\) & \(0\) & \(0\) & \(1\) & \(0\) & \(0\) & \(-2\) & \(0\) & \(0\) & \(0\) & \(1\) \\
    \(\widetilde{C_{2}}\) & \(0\) & \(0\) & \(0\) & \(0\) & \(0\) & \(0\) & \(1\) & \(0\) & \(0\) & \(1\) & \(0\) & \(0\) & \(-2\) & \(0\) & \(0\) & \(1\) \\
    \(\widetilde{C_{3}}\) & \(0\) & \(0\) & \(1\) & \(0\) & \(0\) & \(1\) & \(0\) & \(0\) & \(0\) & \(0\) & \(0\) & \(0\) & \(0\) & \(-2\) & \(0\) & \(1\) \\
    \(\widetilde{C_{4}}\) & \(0\) & \(0\) & \(0\) & \(0\) & \(1\) & \(0\) & \(0\) & \(1\) & \(0\) & \(0\) & \(1\) & \(0\) & \(0\) & \(0\) & \(-2\) & \(1\) \\
    \(\widetilde{H_{\mathcal{S}}}\) & \(0\) & \(0\) & \(0\) & \(0\) & \(0\) & \(0\) & \(0\) & \(0\) & \(0\) & \(0\) & \(0\) & \(1\) & \(1\) & \(1\) & \(1\) & \(4\) \\
    \hline
  \end{tabular}.
\end{table}
Note that the intersection matrix is non-degenerate.

Discriminant groups and discriminant forms of the lattices \(L_{\mathcal{S}}\) and \(H \oplus \Pic(X)\) are given by
\begin{gather*}
  G' = 
  \begin{pmatrix}
    \frac{1}{2} & 0 & \frac{1}{2} & 0 & 0 & \frac{1}{2} & 0 & \frac{1}{2} & \frac{1}{2} & 0 & \frac{1}{2} & 0 & 0 & 0 & 0 & 0 \\
    0 & \frac{6}{11} & \frac{1}{11} & \frac{9}{11} & \frac{2}{11} & \frac{7}{22} & 0 & \frac{17}{22} & \frac{5}{22} & \frac{21}{22} & \frac{3}{11} & \frac{5}{11} & \frac{10}{11} & \frac{7}{11} & \frac{6}{11} & \frac{19}{22}
  \end{pmatrix}, \\
  G'' = 
  \begin{pmatrix}
    0 & 0 & -\frac{1}{2} & 0 & \frac{1}{2} & 0 \\
    0 & 0 & \frac{3}{11} & \frac{39}{22} & -\frac{5}{22} & -\frac{5}{11}
  \end{pmatrix}; \;
  B' = 
  \begin{pmatrix}
    0 & \frac{1}{2} \\
    \frac{1}{2} & \frac{4}{11}
  \end{pmatrix}, \;
  B'' = 
  \begin{pmatrix}
    0 & \frac{1}{2} \\
    \frac{1}{2} & \frac{7}{11}
  \end{pmatrix}; \;
  \begin{pmatrix}
    Q' \\ Q''
  \end{pmatrix}
  =
  \begin{pmatrix}
  1 & \frac{15}{11} \\
  1 & \frac{7}{11}
  \end{pmatrix}.
\end{gather*}


\subsection{Family \textnumero4.7}\label{subsection:04-07}

The pencil \(\mathcal{S}\) is defined by the equation
\begin{gather*}
  X^{2} Y Z + X Y^{2} Z + X Y Z^{2} + X Z^{2} T + X Y T^{2} + X Z T^{2} + Y Z T^{2} + Y T^{3} = \lambda X Y Z T.
\end{gather*}
Members \(\mathcal{S}_{\lambda}\) of the pencil are irreducible for any \(\lambda \in \mathbb{P}^1\) except
\(\mathcal{S}_{\infty} = S_{(X)} + S_{(Y)} + S_{(Z)} + S_{(T)}\).
The base locus of the pencil \(\mathcal{S}\) consists of the following curves:
\begin{gather*}
  C_{1} = C_{(X, Y)}, \;
  C_{2} = C_{(X, T)}, \;
  C_{3} = C_{(Y, Z)}, \;
  C_{4} = C_{(Y, T)}, \;
  C_{5} = C_{(Z, T)}, \\
  C_{6} = C_{(X, Z + T)}, \;
  C_{7} = C_{(Y, Z + T)}, \;
  C_{8} = C_{(Z, X + T)}, \;
  C_{9} = C_{(T, X + Y + Z)}.
\end{gather*}
Their linear equivalence classes on the generic member \(\mathcal{S}_{\Bbbk}\) of the pencil satisfy the following relations:
\begin{gather*}
  \begin{pmatrix}
    [C_{6}] \\ [C_{7}] \\ [C_{8}] \\ [C_{9}]
  \end{pmatrix} = 
  \begin{pmatrix}
    -1 & -2 & 0 & 0 & 0 & 1 \\
    -1 & 0 & -1 & -1 & 0 & 1 \\
    0 & 0 & -1 & 0 & -2 & 1 \\
    0 & -1 & 0 & -1 & -1 & 1
  \end{pmatrix} \cdot
  \begin{pmatrix}
    [C_{1}] & [C_{2}] & [C_{3}] & [C_{4}] & [C_{5}] & [H_{\mathcal{S}}]
  \end{pmatrix}^T.
\end{gather*}

For a general choice of \(\lambda \in \mathbb{C}\) the surface \(\mathcal{S}_{\lambda}\) has the following singularities:
\begin{itemize}\setlength{\itemindent}{2cm}
\item[\(P_{1} = P_{(X, Y, T)}\):] type \(\mathbb{A}_2\) with the quadratic term \(X \cdot (Y + T)\);
\item[\(P_{2} = P_{(X, Z, T)}\):] type \(\mathbb{A}_2\) with the quadratic term \(X \cdot Z\);
\item[\(P_{3} = P_{(Y, Z, T)}\):] type \(\mathbb{A}_3\) with the quadratic term \(Y \cdot Z\);
\item[\(P_{4} = P_{(X, Y, Z + T)}\):] type \(\mathbb{A}_1\) with the quadratic term \((X - Y) (Z + T) - (\lambda + 2) X Y\);
\item[\(P_{5} = P_{(X, T, Y + Z)}\):] type \(\mathbb{A}_1\) with the quadratic term \(X (X + Y + Z - (\lambda + 1) T) + T^2\);
\item[\(P_{6} = P_{(Z, T, X + Y)}\):] type \(\mathbb{A}_1\) with the quadratic term \(Z (X + Y + Z - \lambda T) + T^2\).
\end{itemize}

Galois action on the lattice \(L_{\lambda}\) is trivial. The intersection matrix on \(L_{\lambda} = L_{\mathcal{S}}\) is represented by
\begin{table}[H]
  \begin{tabular}{|c||cc|cc|ccc|c|c|c|cccccc|}
    \hline
    \(\bullet\) & \(E_1^1\) & \(E_1^2\) & \(E_2^1\) & \(E_2^2\) & \(E_3^1\) & \(E_3^2\) & \(E_3^3\) & \(E_4^1\) & \(E_5^1\) & \(E_6^1\) & \(\widetilde{C_{1}}\) & \(\widetilde{C_{2}}\) & \(\widetilde{C_{3}}\) & \(\widetilde{C_{4}}\) & \(\widetilde{C_{5}}\) & \(\widetilde{H_{\mathcal{S}}}\) \\
    \hline
    \hline
    \(\widetilde{C_{1}}\) & \(1\) & \(0\) & \(0\) & \(0\) & \(0\) & \(0\) & \(0\) & \(1\) & \(0\) & \(0\) & \(-2\) & \(0\) & \(1\) & \(0\) & \(0\) & \(1\) \\
    \(\widetilde{C_{2}}\) & \(1\) & \(0\) & \(1\) & \(0\) & \(0\) & \(0\) & \(0\) & \(0\) & \(1\) & \(0\) & \(0\) & \(-2\) & \(0\) & \(0\) & \(0\) & \(1\) \\
    \(\widetilde{C_{3}}\) & \(0\) & \(0\) & \(0\) & \(0\) & \(0\) & \(1\) & \(0\) & \(0\) & \(0\) & \(0\) & \(1\) & \(0\) & \(-2\) & \(0\) & \(0\) & \(1\) \\
    \(\widetilde{C_{4}}\) & \(0\) & \(1\) & \(0\) & \(0\) & \(1\) & \(0\) & \(0\) & \(0\) & \(0\) & \(0\) & \(0\) & \(0\) & \(0\) & \(-2\) & \(0\) & \(1\) \\
    \(\widetilde{C_{5}}\) & \(0\) & \(0\) & \(0\) & \(1\) & \(0\) & \(0\) & \(1\) & \(0\) & \(0\) & \(1\) & \(0\) & \(0\) & \(0\) & \(0\) & \(-2\) & \(1\) \\
    \(\widetilde{H_{\mathcal{S}}}\) & \(0\) & \(0\) & \(0\) & \(0\) & \(0\) & \(0\) & \(0\) & \(0\) & \(0\) & \(0\) & \(1\) & \(1\) & \(1\) & \(1\) & \(1\) & \(4\) \\
    \hline
  \end{tabular}.
\end{table}
Note that the intersection matrix is non-degenerate.

Discriminant groups and discriminant forms of the lattices \(L_{\mathcal{S}}\) and \(H \oplus \Pic(X)\) are given by
\begin{gather*}
  G' = 
  \begin{pmatrix}
    \frac{2}{13} & \frac{20}{39} & \frac{2}{39} & \frac{11}{13} & \frac{2}{39} & \frac{3}{13} & \frac{17}{39} & \frac{10}{13} & \frac{5}{39} & \frac{32}{39} & \frac{7}{13} & \frac{10}{39} & \frac{38}{39} & \frac{34}{39} & \frac{25}{39} & \frac{7}{39}
  \end{pmatrix}, \\
  G'' = 
  \begin{pmatrix}
    0 & 0 & -\frac{32}{39} & -\frac{19}{39} & -\frac{25}{39} & \frac{1}{39}
  \end{pmatrix}; \;
  B' = 
  \begin{pmatrix}
    \frac{4}{39}
  \end{pmatrix}, \;
  B'' = 
  \begin{pmatrix}
    \frac{35}{39}
  \end{pmatrix}; \;
  Q' =
  \begin{pmatrix}
    \frac{4}{39}
  \end{pmatrix}, \;
  Q'' =
  \begin{pmatrix}
    \frac{74}{39}
  \end{pmatrix}.
\end{gather*}


\subsection{Family \textnumero4.8}\label{subsection:04-08}

The pencil \(\mathcal{S}\) is defined by the equation
\begin{gather*}
  X^{2} Y Z + X Y^{2} Z + X Y Z^{2} + X Z^{2} T + Y Z^{2} T + X Y T^{2} + X Z T^{2} + Y Z T^{2} = \lambda X Y Z T.
\end{gather*}
Members \(\mathcal{S}_{\lambda}\) of the pencil are irreducible for any \(\lambda \in \mathbb{P}^1\) except
\(\mathcal{S}_{\infty} = S_{(X)} + S_{(Y)} + S_{(Z)} + S_{(T)}\).
The base locus of the pencil \(\mathcal{S}\) consists of the following curves:
\begin{gather*}
  C_{1} = C_{(X, Y)}, \;
  C_{2} = C_{(X, Z)}, \;
  C_{3} = C_{(X, T)}, \;
  C_{4} = C_{(Y, Z)}, \;
  C_{5} = C_{(Y, T)}, \\
  C_{6} = C_{(Z, T)}, \;
  C_{7} = C_{(X, Z + T)}, \;
  C_{8} = C_{(Y, Z + T)}, \;
  C_{9} = C_{(T, X + Y + Z)}.
\end{gather*}
Their linear equivalence classes on the generic member \(\mathcal{S}_{\Bbbk}\) of the pencil satisfy the following relations:
\begin{gather*}
  \begin{pmatrix}
    [C_{4}] \\ [C_{7}] \\ [C_{8}] \\ [C_{9}]
  \end{pmatrix} = 
  \begin{pmatrix}
    0 & -1 & 0 & 0 & -2 & 1 \\
    -1 & -1 & -1 & 0 & 0 & 1 \\
    -1 & 1 & 0 & -1 & 2 & 0 \\
    0 & 0 & -1 & -1 & -1 & 1
  \end{pmatrix} \cdot
  \begin{pmatrix}
    [C_{1}] & [C_{2}] & [C_{3}] & [C_{5}] & [C_{6}] & [H_{\mathcal{S}}]
  \end{pmatrix}^T.
\end{gather*}

For a general choice of \(\lambda \in \mathbb{C}\) the surface \(\mathcal{S}_{\lambda}\) has the following singularities:
\begin{itemize}\setlength{\itemindent}{2cm}
\item[\(P_{1} = P_{(X, Y, Z)}\):] type \(\mathbb{A}_1\) with the quadratic term \(X Y + Z (X + Y)\);
\item[\(P_{2} = P_{(X, Y, T)}\):] type \(\mathbb{A}_1\) with the quadratic term \(X Y + T (X + Y)\);
\item[\(P_{3} = P_{(X, Z, T)}\):] type \(\mathbb{A}_3\) with the quadratic term \(X \cdot Z\);
\item[\(P_{4} = P_{(Y, Z, T)}\):] type \(\mathbb{A}_3\) with the quadratic term \(Y \cdot Z\);
\item[\(P_{5} = P_{(X, Y, Z + T)}\):] type \(\mathbb{A}_1\) with the quadratic term \((X + Y) (Z + T) - (\lambda + 2) X Y\);
\item[\(P_{6} = P_{(Z, T, X + Y)}\):] type \(\mathbb{A}_1\) with the quadratic term \(Z (X + Y + Z - \lambda T) + T^2\).
\end{itemize}

Galois action on the lattice \(L_{\lambda}\) is trivial. The intersection matrix on \(L_{\lambda} = L_{\mathcal{S}}\) is represented by
\begin{table}[H]
  \begin{tabular}{|c||c|c|ccc|ccc|c|c|cccccc|}
    \hline
    \(\bullet\) & \(E_1^1\) & \(E_2^1\) & \(E_3^1\) & \(E_3^2\) & \(E_3^3\) & \(E_4^1\) & \(E_4^2\) & \(E_4^3\) & \(E_5^1\) & \(E_6^1\) & \(\widetilde{C_{1}}\) & \(\widetilde{C_{2}}\) & \(\widetilde{C_{3}}\) & \(\widetilde{C_{5}}\) & \(\widetilde{C_{6}}\) & \(\widetilde{H_{\mathcal{S}}}\) \\
    \hline
    \hline
    \(\widetilde{C_{1}}\) & \(1\) & \(1\) & \(0\) & \(0\) & \(0\) & \(0\) & \(0\) & \(0\) & \(1\) & \(0\) & \(-2\) & \(0\) & \(0\) & \(0\) & \(0\) & \(1\) \\
    \(\widetilde{C_{2}}\) & \(1\) & \(0\) & \(0\) & \(1\) & \(0\) & \(0\) & \(0\) & \(0\) & \(0\) & \(0\) & \(0\) & \(-2\) & \(0\) & \(0\) & \(0\) & \(1\) \\
    \(\widetilde{C_{3}}\) & \(0\) & \(1\) & \(1\) & \(0\) & \(0\) & \(0\) & \(0\) & \(0\) & \(0\) & \(0\) & \(0\) & \(0\) & \(-2\) & \(0\) & \(0\) & \(1\) \\
    \(\widetilde{C_{5}}\) & \(0\) & \(1\) & \(0\) & \(0\) & \(0\) & \(1\) & \(0\) & \(0\) & \(0\) & \(0\) & \(0\) & \(0\) & \(0\) & \(-2\) & \(0\) & \(1\) \\
    \(\widetilde{C_{6}}\) & \(0\) & \(0\) & \(0\) & \(0\) & \(1\) & \(0\) & \(0\) & \(1\) & \(0\) & \(1\) & \(0\) & \(0\) & \(0\) & \(0\) & \(-2\) & \(1\) \\
    \(\widetilde{H_{\mathcal{S}}}\) & \(0\) & \(0\) & \(0\) & \(0\) & \(0\) & \(0\) & \(0\) & \(0\) & \(0\) & \(0\) & \(1\) & \(1\) & \(1\) & \(1\) & \(1\) & \(4\) \\
    \hline
  \end{tabular}.
\end{table}
Note that the intersection matrix is non-degenerate.

Discriminant groups and discriminant forms of the lattices \(L_{\mathcal{S}}\) and \(H \oplus \Pic(X)\) are given by
\begin{gather*}
  G' = 
  \begin{pmatrix}
    \frac{1}{2} & \frac{3}{4} & \frac{1}{4} & \frac{1}{2} & \frac{1}{4} & \frac{1}{4} & \frac{1}{2} & \frac{3}{4} & \frac{3}{4} & 0 & \frac{1}{2} & \frac{1}{2} & 0 & 0 & 0 & 0 \\
    \frac{5}{8} & \frac{1}{4} & \frac{7}{8} & \frac{1}{2} & \frac{3}{8} & \frac{7}{8} & 0 & \frac{1}{8} & \frac{3}{4} & \frac{5}{8} & \frac{1}{2} & \frac{3}{4} & \frac{1}{4} & \frac{3}{4} & \frac{1}{4} & \frac{3}{8}
  \end{pmatrix}, \\
  G'' = 
  \begin{pmatrix}
    0 & 0 & 0 & -\frac{1}{4} & \frac{3}{4} & \frac{1}{4} \\
    0 & 0 & -\frac{1}{4} & \frac{1}{8} & -\frac{1}{2} & 0
  \end{pmatrix}; \;
  B' = 
  \begin{pmatrix}
    \frac{1}{4} & 0 \\
    0 & \frac{1}{8}
  \end{pmatrix}, \;
  B'' = 
  \begin{pmatrix}
    \frac{3}{4} & 0 \\
    0 & \frac{7}{8}
  \end{pmatrix}; \;
  \begin{pmatrix}
    Q' \\ Q''
  \end{pmatrix}
  =
  \begin{pmatrix}
    \frac{1}{4} & \frac{1}{8} \\
    \frac{7}{4} & \frac{15}{8}
  \end{pmatrix}.
\end{gather*}


\subsection{Family \textnumero4.9}\label{subsection:04-09}

The pencil \(\mathcal{S}\) is defined by the equation
\begin{gather*}
  X^{2} Y Z + X Y^{2} Z + X Y Z^{2} + Y^{2} Z T + X Z T^{2} + Y Z T^{2} + X T^{3} = \lambda X Y Z T.
\end{gather*}
Members \(\mathcal{S}_{\lambda}\) of the pencil are irreducible for any \(\lambda \in \mathbb{P}^1\) except
\(\mathcal{S}_{\infty} = S_{(X)} + S_{(Y)} + S_{(Z)} + S_{(T)}\).
The base locus of the pencil \(\mathcal{S}\) consists of the following curves:
\begin{gather*}
  C_{1} = C_{(X, Y)}, \;
  C_{2} = C_{(X, Z)}, \;
  C_{3} = C_{(X, T)}, \;
  C_{4} = C_{(Y, T)}, \\
  C_{5} = C_{(Z, T)}, \;
  C_{6} = C_{(X, Y + T)}, \;
  C_{7} = C_{(Y, Z + T)}, \;
  C_{8} = C_{(T, X + Y + Z)}.
\end{gather*}
Their linear equivalence classes on the generic member \(\mathcal{S}_{\Bbbk}\) of the pencil satisfy the following relations:
\begin{gather*}
  \begin{pmatrix}
    [C_{2}] \\ [C_{6}] \\ [C_{7}] \\ [C_{8}]
  \end{pmatrix} = 
  \begin{pmatrix}
    0 & 0 & 0 & -3 & 1 \\
    -1 & -1 & 0 & 3 & 0 \\
    -1 & 0 & -2 & 0 & 1 \\
    0 & -1 & -1 & -1 & 1
  \end{pmatrix} \cdot
  \begin{pmatrix}
    [C_{1}] & [C_{3}] & [C_{4}] & [C_{5}] & [H_{\mathcal{S}}]
  \end{pmatrix}^T.
\end{gather*}

For a general choice of \(\lambda \in \mathbb{C}\) the surface \(\mathcal{S}_{\lambda}\) has the following singularities:
\begin{itemize}\setlength{\itemindent}{2cm}
\item[\(P_{1} = P_{(X, Y, T)}\):] type \(\mathbb{A}_3\) with the quadratic term \(X \cdot Y\);
\item[\(P_{2} = P_{(X, Z, T)}\):] type \(\mathbb{A}_3\) with the quadratic term \(Z \cdot (X + T)\);
\item[\(P_{3} = P_{(Y, Z, T)}\):] type \(\mathbb{A}_2\) with the quadratic term \(Y \cdot Z\);
\item[\(P_{4} = P_{(Y, T, X + Z)}\):] type \(\mathbb{A}_1\) with the quadratic term \(Y (X + Y + Z - \lambda T) + T^2\);
\item[\(P_{5} = P_{(Z, T, X + Y)}\):] type \(\mathbb{A}_2\) with the quadratic term \(Z \cdot (X + Y + Z - (\lambda + 1) T)\).
\end{itemize}

Galois action on the lattice \(L_{\lambda}\) is trivial. The intersection matrix on \(L_{\lambda} = L_{\mathcal{S}}\) is represented by
\begin{table}[H]
  \begin{tabular}{|c||ccc|ccc|cc|c|cc|ccccc|}
    \hline
    \(\bullet\) & \(E_1^1\) & \(E_1^2\) & \(E_1^3\) & \(E_2^1\) & \(E_2^2\) & \(E_2^3\) & \(E_3^1\) & \(E_3^2\) & \(E_4^1\) & \(E_5^1\) & \(E_5^2\) & \(\widetilde{C_{1}}\) & \(\widetilde{C_{3}}\) & \(\widetilde{C_{4}}\) & \(\widetilde{C_{5}}\) & \(\widetilde{H_{\mathcal{S}}}\) \\
    \hline
    \hline
    \(\widetilde{C_{1}}\) & \(0\) & \(1\) & \(0\) & \(0\) & \(0\) & \(0\) & \(0\) & \(0\) & \(0\) & \(0\) & \(0\) & \(-2\) & \(0\) & \(0\) & \(0\) & \(1\) \\
    \(\widetilde{C_{3}}\) & \(1\) & \(0\) & \(0\) & \(1\) & \(0\) & \(0\) & \(0\) & \(0\) & \(0\) & \(0\) & \(0\) & \(0\) & \(-2\) & \(0\) & \(0\) & \(1\) \\
    \(\widetilde{C_{4}}\) & \(0\) & \(0\) & \(1\) & \(0\) & \(0\) & \(0\) & \(1\) & \(0\) & \(1\) & \(0\) & \(0\) & \(0\) & \(0\) & \(-2\) & \(0\) & \(1\) \\
    \(\widetilde{C_{5}}\) & \(0\) & \(0\) & \(0\) & \(0\) & \(0\) & \(1\) & \(0\) & \(1\) & \(0\) & \(1\) & \(0\) & \(0\) & \(0\) & \(0\) & \(-2\) & \(1\) \\
    \(\widetilde{H_{\mathcal{S}}}\) & \(0\) & \(0\) & \(0\) & \(0\) & \(0\) & \(0\) & \(0\) & \(0\) & \(0\) & \(0\) & \(0\) & \(1\) & \(1\) & \(1\) & \(1\) & \(4\) \\
    \hline
  \end{tabular}.
\end{table}
Note that the intersection matrix is non-degenerate.

Discriminant groups and discriminant forms of the lattices \(L_{\mathcal{S}}\) and \(H \oplus \Pic(X)\) are given by
\begin{gather*}
  G' = 
  \begin{pmatrix}
    \frac{18}{31} & \frac{29}{31} & \frac{19}{31} & \frac{14}{31} & \frac{21}{31} & \frac{28}{31} & \frac{28}{31} & \frac{16}{31} & \frac{20}{31} & \frac{13}{31} & \frac{22}{31} & \frac{21}{31} & \frac{7}{31} & \frac{9}{31} & \frac{4}{31} & \frac{13}{31}
  \end{pmatrix}, \\
  G'' = 
  \begin{pmatrix}
    0 & 0 & -\frac{21}{31} & -\frac{15}{31} & -\frac{20}{31} & \frac{1}{31}
  \end{pmatrix}; \;
  B' = 
  \begin{pmatrix}
    \frac{5}{31}
  \end{pmatrix}, \;
  B'' = 
  \begin{pmatrix}
    \frac{26}{31}
  \end{pmatrix}; \;
  Q' =
  \begin{pmatrix}
    \frac{36}{31}
  \end{pmatrix}, \;
  Q'' =
  \begin{pmatrix}
    \frac{26}{31}
  \end{pmatrix}.
\end{gather*}


\subsection{Family \textnumero4.10}\label{subsection:04-10}

The pencil \(\mathcal{S}\) is defined by the equation
\begin{gather*}
  X^{2} Y Z + X Y^{2} Z + X Y Z^{2} + Y^{2} Z T + X Y T^{2} + X Z T^{2} + Y Z T^{2} = \lambda X Y Z T.
\end{gather*}
Members \(\mathcal{S}_{\lambda}\) of the pencil are irreducible for any \(\lambda \in \mathbb{P}^1\) except
\(\mathcal{S}_{\infty} = S_{(X)} + S_{(Y)} + S_{(Z)} + S_{(T)}\).
The base locus of the pencil \(\mathcal{S}\) consists of the following curves:
\begin{gather*}
  C_{1} = C_{(X, Y)}, \;
  C_{2} = C_{(X, Z)}, \;
  C_{3} = C_{(X, T)}, \;
  C_{4} = C_{(Y, Z)}, \\
  C_{5} = C_{(Y, T)}, \;
  C_{6} = C_{(Z, T)}, \;
  C_{7} = C_{(X, Y + T)}, \;
  C_{8} = C_{(T, X + Y + Z)}.
\end{gather*}
Their linear equivalence classes on the generic member \(\mathcal{S}_{\Bbbk}\) of the pencil satisfy the following relations:
\begin{gather*}
  \begin{pmatrix}
    [C_{2}] \\ [C_{4}] \\ [C_{7}] \\ [C_{8}]
  \end{pmatrix} = 
  \begin{pmatrix}
    1 & 0 & 2 & -2 & 0 \\
    -1 & 0 & -2 & 0 & 1 \\
    -2 & -1 & -2 & 2 & 1 \\
    0 & -1 & -1 & -1 & 1
  \end{pmatrix} \cdot
  \begin{pmatrix}
    [C_{1}] & [C_{3}] & [C_{5}] & [C_{6}] & [H_{\mathcal{S}}]
  \end{pmatrix}^T.
\end{gather*}

For a general choice of \(\lambda \in \mathbb{C}\) the surface \(\mathcal{S}_{\lambda}\) has the following singularities:
\begin{itemize}\setlength{\itemindent}{2cm}
\item[\(P_{1} = P_{(X, Y, Z)}\):] type \(\mathbb{A}_1\) with the quadratic term \(X Y + X Z + Y Z\);
\item[\(P_{2} = P_{(X, Y, T)}\):] type \(\mathbb{A}_3\) with the quadratic term \(X \cdot Y\);
\item[\(P_{3} = P_{(X, Z, T)}\):] type \(\mathbb{A}_2\) with the quadratic term \(Z \cdot (X + T)\);
\item[\(P_{4} = P_{(Y, Z, T)}\):] type \(\mathbb{A}_3\) with the quadratic term \(Y \cdot Z\);
\item[\(P_{5} = P_{(Y, T, X + Z)}\):] type \(\mathbb{A}_1\) with the quadratic term \(Y (X + Y + Z - \lambda T) + T^2\);
\item[\(P_{6} = P_{(Z, T, X + Y)}\):] type \(\mathbb{A}_1\) with the quadratic term \(Z (X + Y + Z - (\lambda + 1) T) + T^2\).
\end{itemize}

Galois action on the lattice \(L_{\lambda}\) is trivial. The intersection matrix on \(L_{\lambda} = L_{\mathcal{S}}\) is represented by
\begin{table}[H]
  \begin{tabular}{|c||c|ccc|cc|ccc|c|c|ccccc|}
    \hline
    \(\bullet\) & \(E_1^1\) & \(E_2^1\) & \(E_2^2\) & \(E_2^3\) & \(E_3^1\) & \(E_3^2\) & \(E_4^1\) & \(E_4^2\) & \(E_4^3\) & \(E_5^1\) & \(E_6^1\) & \(\widetilde{C_{1}}\) & \(\widetilde{C_{3}}\) & \(\widetilde{C_{5}}\) & \(\widetilde{C_{6}}\) & \(\widetilde{H_{\mathcal{S}}}\) \\
    \hline
    \hline
    \(\widetilde{C_{1}}\) & \(1\) & \(0\) & \(1\) & \(0\) & \(0\) & \(0\) & \(0\) & \(0\) & \(0\) & \(0\) & \(0\) & \(-2\) & \(0\) & \(0\) & \(0\) & \(1\) \\
    \(\widetilde{C_{3}}\) & \(0\) & \(1\) & \(0\) & \(0\) & \(1\) & \(0\) & \(0\) & \(0\) & \(0\) & \(0\) & \(0\) & \(0\) & \(-2\) & \(0\) & \(0\) & \(1\) \\
    \(\widetilde{C_{5}}\) & \(0\) & \(0\) & \(0\) & \(1\) & \(0\) & \(0\) & \(1\) & \(0\) & \(0\) & \(1\) & \(0\) & \(0\) & \(0\) & \(-2\) & \(0\) & \(1\) \\
    \(\widetilde{C_{6}}\) & \(0\) & \(0\) & \(0\) & \(0\) & \(0\) & \(1\) & \(0\) & \(0\) & \(1\) & \(0\) & \(1\) & \(0\) & \(0\) & \(0\) & \(-2\) & \(1\) \\
    \(\widetilde{H_{\mathcal{S}}}\) & \(0\) & \(0\) & \(0\) & \(0\) & \(0\) & \(0\) & \(0\) & \(0\) & \(0\) & \(0\) & \(0\) & \(1\) & \(1\) & \(1\) & \(1\) & \(4\) \\
    \hline
  \end{tabular}.
\end{table}
Note that the intersection matrix is non-degenerate.

Discriminant groups and discriminant forms of the lattices \(L_{\mathcal{S}}\) and \(H \oplus \Pic(X)\) are given by
\begin{gather*}
  G' = 
  \begin{pmatrix}
    \frac{1}{2} & \frac{1}{2} & 0 & \frac{1}{2} & 0 & 0 & 0 & 0 & 0 & 0 & \frac{1}{2} & 0 & 0 & 0 & 0 & \frac{1}{2} \\
    \frac{9}{14} & \frac{1}{14} & 0 & \frac{9}{14} & \frac{2}{7} & \frac{3}{7} & \frac{5}{14} & \frac{3}{7} & \frac{1}{2} & \frac{9}{14} & \frac{2}{7} & \frac{2}{7} & \frac{1}{7} & \frac{2}{7} & \frac{4}{7} & \frac{13}{14}
  \end{pmatrix}, \\
  G'' = 
  \begin{pmatrix}
    0 & 0 & -\frac{1}{2} & \frac{1}{2} & 0 & 0 \\
    0 & 0 & \frac{9}{14} & \frac{1}{7} & \frac{5}{7} & -\frac{1}{14}
  \end{pmatrix}; \;
  B' = 
  \begin{pmatrix}
    0 & \frac{1}{2} \\
    \frac{1}{2} & \frac{2}{7}
  \end{pmatrix}, \;
  B'' = 
  \begin{pmatrix}
    0 & \frac{1}{2} \\
    \frac{1}{2} & \frac{5}{7}
  \end{pmatrix}; \;
  \begin{pmatrix}
    Q' \\ Q''
  \end{pmatrix}
  =
  \begin{pmatrix}
    1 & \frac{2}{7} \\
    1 & \frac{12}{7}    
  \end{pmatrix}.
\end{gather*}


\subsection{Family \textnumero4.11}\label{subsection:04-11}

The pencil \(\mathcal{S}\) is defined by the equation
\begin{gather*}
  X^{2} Y Z + X Y^{2} Z + X Y Z^{2} + Y^{2} Z T + X Z T^{2} + Y Z T^{2} + Y T^{3} = \lambda X Y Z T.
\end{gather*}
Members \(\mathcal{S}_{\lambda}\) of the pencil are irreducible for any \(\lambda \in \mathbb{P}^1\) except
\(\mathcal{S}_{\infty} = S_{(X)} + S_{(Y)} + S_{(Z)} + S_{(T)}\).
The base locus of the pencil \(\mathcal{S}\) consists of the following curves:
\begin{gather*}
  C_{1} = C_{(X, Y)}, \;
  C_{2} = C_{(X, T)}, \;
  C_{3} = C_{(Y, Z)}, \;
  C_{4} = C_{(Y, T)}, \\
  C_{5} = C_{(Z, T)}, \;
  C_{6} = C_{(T, X + Y + Z)}, \;
  C_{7} = C_{(X, Z (Y + T) + T^2)}.
\end{gather*}
Their linear equivalence classes on the generic member \(\mathcal{S}_{\Bbbk}\) of the pencil satisfy the following relations:
\begin{gather*}
  \begin{pmatrix}
    [C_{1}] \\ [C_{3}] \\ [C_{6}] \\ [C_{7}]
  \end{pmatrix} = 
  \begin{pmatrix}
    0 & -2 & 3 & 0 \\
    0 & 0 & -3 & 1 \\
    -1 & -1 & -1 & 1 \\
    -1 & 2 & -3 & 1
  \end{pmatrix} \cdot
  \begin{pmatrix}
    [C_{2}] \\ [C_{4}] \\ [C_{5}] \\ [H_{\mathcal{S}}]
  \end{pmatrix}.
\end{gather*}

For a general choice of \(\lambda \in \mathbb{C}\) the surface \(\mathcal{S}_{\lambda}\) has the following singularities:
\begin{itemize}\setlength{\itemindent}{2cm}
\item[\(P_{1} = P_{(X, Y, T)}\):] type \(\mathbb{A}_3\) with the quadratic term \(X \cdot Y\);
\item[\(P_{2} = P_{(X, Z, T)}\):] type \(\mathbb{A}_2\) with the quadratic term \(Z \cdot (X + T)\);
\item[\(P_{3} = P_{(Y, Z, T)}\):] type \(\mathbb{A}_4\) with the quadratic term \(Y \cdot Z\);
\item[\(P_{4} = P_{(Y, T, X + Z)}\):] type \(\mathbb{A}_1\) with the quadratic term \(Y (X + Y + Z - \lambda T) + T^2\);
\item[\(P_{5} = P_{(Z, T, X + Y)}\):] type \(\mathbb{A}_2\) with the quadratic term \(Z \cdot (X + Y + Z - (\lambda + 1) T)\).
\end{itemize}

Galois action on the lattice \(L_{\lambda}\) is trivial. The intersection matrix on \(L_{\lambda} = L_{\mathcal{S}}\) is represented by
\begin{table}[H]
  \begin{tabular}{|c||ccc|cc|cccc|c|cc|cccc|}
    \hline
    \(\bullet\) & \(E_1^1\) & \(E_1^2\) & \(E_1^3\) & \(E_2^1\) & \(E_2^2\) & \(E_3^1\) & \(E_3^2\) & \(E_3^3\) & \(E_3^4\) & \(E_4^1\) & \(E_5^1\) & \(E_5^2\) & \(\widetilde{C_{2}}\) & \(\widetilde{C_{4}}\) & \(\widetilde{C_{5}}\) & \(\widetilde{H_{\mathcal{S}}}\) \\
    \hline
    \hline
    \(\widetilde{C_{2}}\) & \(1\) & \(0\) & \(0\) & \(1\) & \(0\) & \(0\) & \(0\) & \(0\) & \(0\) & \(0\) & \(0\) & \(0\) & \(-2\) & \(0\) & \(0\) & \(1\) \\
    \(\widetilde{C_{4}}\) & \(0\) & \(0\) & \(1\) & \(0\) & \(0\) & \(1\) & \(0\) & \(0\) & \(0\) & \(1\) & \(0\) & \(0\) & \(0\) & \(-2\) & \(0\) & \(1\) \\
    \(\widetilde{C_{5}}\) & \(0\) & \(0\) & \(0\) & \(0\) & \(1\) & \(0\) & \(0\) & \(0\) & \(1\) & \(0\) & \(1\) & \(0\) & \(0\) & \(0\) & \(-2\) & \(1\) \\
    \(\widetilde{H_{\mathcal{S}}}\) & \(0\) & \(0\) & \(0\) & \(0\) & \(0\) & \(0\) & \(0\) & \(0\) & \(0\) & \(0\) & \(0\) & \(0\) & \(1\) & \(1\) & \(1\) & \(4\) \\
    \hline
  \end{tabular}.
\end{table}
Note that the intersection matrix is non-degenerate.

Discriminant groups and discriminant forms of the lattices \(L_{\mathcal{S}}\) and \(H \oplus \Pic(X)\) are given by
\begin{gather*}
  G' = 
  \begin{pmatrix}
    \frac{16}{23} & \frac{19}{23} & \frac{22}{23} & \frac{6}{23} & \frac{22}{23} & 0 & \frac{21}{23} & \frac{19}{23} & \frac{17}{23} & \frac{1}{23} & \frac{10}{23} & \frac{5}{23} & \frac{13}{23} & \frac{2}{23} & \frac{15}{23} & \frac{4}{23}
  \end{pmatrix}, \\
  G'' = 
  \begin{pmatrix}
    0 & 0 & -\frac{13}{23} & -\frac{20}{23} & -\frac{16}{23} & \frac{1}{23}
  \end{pmatrix}; \;
  B' = 
  \begin{pmatrix}
    \frac{12}{23}
  \end{pmatrix}, \;
  B'' = 
  \begin{pmatrix}
    \frac{11}{23}
  \end{pmatrix}; \;
  Q' =
  \begin{pmatrix}
    \frac{12}{23}
  \end{pmatrix}, \;
  Q'' =
  \begin{pmatrix}
    \frac{34}{23}
  \end{pmatrix}.
\end{gather*}


\subsection{Family \textnumero4.12}\label{subsection:04-12}

The pencil \(\mathcal{S}\) is defined by the equation
\begin{gather*}
  X^{2} Y Z + X Y^{2} Z + X Y Z^{2} + Y^{2} Z T + Y^{2} T^{2} + X Z T^{2} + Y Z T^{2} = \lambda X Y Z T.
\end{gather*}
Members \(\mathcal{S}_{\lambda}\) of the pencil are irreducible for any \(\lambda \in \mathbb{P}^1\) except
\(\mathcal{S}_{\infty} = S_{(X)} + S_{(Y)} + S_{(Z)} + S_{(T)}\).
The base locus of the pencil \(\mathcal{S}\) consists of the following curves:
\begin{gather*}
  C_{1} = C_{(X, Y)}, \;
  C_{2} = C_{(X, T)}, \;
  C_{3} = C_{(Y, Z)}, \;
  C_{4} = C_{(Y, T)}, \\
  C_{5} = C_{(Z, T)}, \;
  C_{6} = C_{(T, X + Y + Z)}, \;
  C_{7} = C_{(X, Z T + Y (Z + T))}.
\end{gather*}
Their linear equivalence classes on the generic member \(\mathcal{S}_{\Bbbk}\) of the pencil satisfy the following relations:
\begin{gather*}
  \begin{pmatrix}
    [C_{3}] \\ [C_{6}] \\ [C_{7}] \\ [H_{\mathcal{S}}]
  \end{pmatrix} = 
  \begin{pmatrix}
    1 & 0 & 2 & -2 \\
    2 & -1 & 3 & -3 \\
    1 & -1 & 4 & -2 \\
    2 & 0 & 4 & -2
  \end{pmatrix} \cdot
  \begin{pmatrix}
    [C_{1}] \\ [C_{2}] \\ [C_{4}] \\ [C_{5}]
  \end{pmatrix}.
\end{gather*}

For a general choice of \(\lambda \in \mathbb{C}\) the surface \(\mathcal{S}_{\lambda}\) has the following singularities:
\begin{itemize}\setlength{\itemindent}{2cm}
\item[\(P_{1} = P_{(X, Y, Z)}\):] type \(\mathbb{A}_1\) with the quadratic term \(Y^2 + Z (X + Y)\);
\item[\(P_{2} = P_{(X, Y, T)}\):] type \(\mathbb{A}_3\) with the quadratic term \(X \cdot Y\);
\item[\(P_{3} = P_{(X, Z, T)}\):] type \(\mathbb{A}_1\) with the quadratic term \(Z (X + T) + T^2\);
\item[\(P_{4} = P_{(Y, Z, T)}\):] type \(\mathbb{A}_5\) with the quadratic term \(Y \cdot Z\);
\item[\(P_{5} = P_{(Y, T, X + Z)}\):] type \(\mathbb{A}_1\) with the quadratic term \(Y (X + Y + Z - \lambda T) + T^2\);
\item[\(P_{6} = P_{(Z, T, X + Y)}\):] type \(\mathbb{A}_1\) with the quadratic term \(Z (X + Y + Z - (\lambda + 1) T) - T^2\).
\end{itemize}

Galois action on the lattice \(L_{\lambda}\) is trivial. The intersection matrix on \(L_{\lambda} = L_{\mathcal{S}}\) is represented by
\begin{table}[H]
  \begin{tabular}{|c||c|ccc|c|ccccc|c|c|cccc|}
    \hline
    \(\bullet\) & \(E_1^1\) & \(E_2^1\) & \(E_2^2\) & \(E_2^3\) & \(E_3^1\) & \(E_4^1\) & \(E_4^2\) & \(E_4^3\) & \(E_4^4\) & \(E_4^5\) & \(E_5^1\) & \(E_6^1\) & \(\widetilde{C_{1}}\) & \(\widetilde{C_{2}}\) & \(\widetilde{C_{4}}\) & \(\widetilde{C_{5}}\) \\
    \hline
    \hline
    \(\widetilde{C_{1}}\) & \(1\) & \(0\) & \(1\) & \(0\) & \(0\) & \(0\) & \(0\) & \(0\) & \(0\) & \(0\) & \(0\) & \(0\) & \(-2\) & \(0\) & \(0\) & \(0\) \\
    \(\widetilde{C_{2}}\) & \(0\) & \(1\) & \(0\) & \(0\) & \(1\) & \(0\) & \(0\) & \(0\) & \(0\) & \(0\) & \(0\) & \(0\) & \(0\) & \(-2\) & \(0\) & \(0\) \\
    \(\widetilde{C_{4}}\) & \(0\) & \(0\) & \(0\) & \(1\) & \(0\) & \(1\) & \(0\) & \(0\) & \(0\) & \(0\) & \(1\) & \(0\) & \(0\) & \(0\) & \(-2\) & \(0\) \\
    \(\widetilde{C_{5}}\) & \(0\) & \(0\) & \(0\) & \(0\) & \(1\) & \(0\) & \(0\) & \(0\) & \(0\) & \(1\) & \(0\) & \(1\) & \(0\) & \(0\) & \(0\) & \(-2\) \\
    \hline
  \end{tabular}.
\end{table}
Note that the intersection matrix is non-degenerate.

Discriminant groups and discriminant forms of the lattices \(L_{\mathcal{S}}\) and \(H \oplus \Pic(X)\) are given by
\begin{gather*}
  G' = 
  \begin{pmatrix}
    0 & \frac{1}{2} & 0 & \frac{1}{2} & \frac{1}{2} & \frac{1}{2} & 0 & \frac{1}{2} & 0 & \frac{1}{2} & 0 & 0 & 0 & 0 & 0 & 0 \\
    \frac{1}{5} & \frac{1}{2} & \frac{3}{5} & \frac{3}{10} & \frac{3}{10} & \frac{1}{5} & \frac{2}{5} & \frac{3}{5} & \frac{4}{5} & 0 & \frac{1}{2} & \frac{1}{10} & \frac{2}{5} & \frac{2}{5} & 0 & \frac{1}{5}
  \end{pmatrix}, \\
  G'' = 
  \begin{pmatrix}
    0 & 0 & -\frac{1}{2} & \frac{1}{2} & 0 & 0 \\
    0 & 0 & -\frac{1}{5} & \frac{3}{10} & \frac{2}{5} & -\frac{1}{10}
  \end{pmatrix}; \;
  B' = 
  \begin{pmatrix}
    0 & \frac{1}{2} \\
    \frac{1}{2} & \frac{7}{10}
  \end{pmatrix}, \;
  B'' = 
  \begin{pmatrix}
    0 & \frac{1}{2} \\
    \frac{1}{2} & \frac{3}{10}
  \end{pmatrix}; \;
  \begin{pmatrix}
    Q' \\ Q''
  \end{pmatrix}
  =
  \begin{pmatrix}
    1 & \frac{7}{10} \\
    1 & \frac{13}{10}
  \end{pmatrix}.
\end{gather*}


\subsection{Family \textnumero4.13}\label{subsection:04-13}

The pencil \(\mathcal{S}\) is defined by the equation
\begin{gather*}
  X^{2} Y Z + X Y^{2} Z + X Y Z^{2} + X^{2} Z T + Y^{2} Z T + X^{2} T^{2} + \\ X Y T^{2} + 2 X Z T^{2} + 2 Y Z T^{2} + X T^{3} + Z T^{3} = \lambda X Y Z T.
\end{gather*}
Members \(\mathcal{S}_{\lambda}\) of the pencil are irreducible for any \(\lambda \in \mathbb{P}^1\) except
\(\mathcal{S}_{\infty} = S_{(X)} + S_{(Y)} + S_{(Z)} + S_{(T)}\).
The base locus of the pencil \(\mathcal{S}\) consists of the following curves:
\begin{gather*}
  C_{1} = C_{(X, Z)}, \;
  C_{2} = C_{(X, T)}, \;
  C_{3} = C_{(Y, T)}, \;
  C_{4} = C_{(Z, T)}, \;
  C_{5} = C_{(X, Y + T)}, \\
  C_{6} = C_{(Y, X + T)}, \;
  C_{7} = C_{(Z, X + Y + T)}, \;
  C_{8} = C_{(T, X + Y + Z)}, \;
  C_{9} = C_{(Y, Z T + X (Z + T))}.
\end{gather*}
Their linear equivalence classes on the generic member \(\mathcal{S}_{\Bbbk}\) of the pencil satisfy the following relations:
\begin{gather*}
  \begin{pmatrix}
    [C_{2}] \\ [C_{7}] \\ [C_{8}] \\ [C_{9}]
  \end{pmatrix} = 
  \begin{pmatrix}
    -1 & 0 & 0 & -2 & 0 & 1 \\
    -1 & 0 & -2 & 0 & 0 & 1 \\
    1 & -1 & -1 & 2 & 0 & 0 \\
    0 & -1 & 0 & 0 & -1 & 1
  \end{pmatrix} \cdot
  \begin{pmatrix}
    [C_{1}] & [C_{3}] & [C_{4}] & [C_{5}] & [C_{6}] & [H_{\mathcal{S}}]
  \end{pmatrix}^T.
\end{gather*}

For a general choice of \(\lambda \in \mathbb{C}\) the surface \(\mathcal{S}_{\lambda}\) has the following singularities:
\begin{itemize}\setlength{\itemindent}{2cm}
\item[\(P_{1} = P_{(X, Y, T)}\):] type \(\mathbb{A}_2\) with the quadratic term \(X \cdot Y\);
\item[\(P_{2} = P_{(X, Z, T)}\):] type \(\mathbb{A}_2\) with the quadratic term \(Z \cdot (X + T)\);
\item[\(P_{3} = P_{(Y, Z, T)}\):] type \(\mathbb{A}_1\) with the quadratic term \(Z (Y + T) + T^2\);
\item[\(P_{4} = P_{(X, Z, Y + T)}\):] type \(\mathbb{A}_2\) with the quadratic term \(X \cdot (X + Y + (\lambda + 3) Z + T)\);
\item[\(P_{5} = P_{(Z, T, X + Y)}\):] type \(\mathbb{A}_2\) with the quadratic term \(Z \cdot (X + Y + Z - (\lambda + 2) T)\);
\item[\(P_{6} = P_{(X, Y + T, Z - (\lambda + 3) T)}\):] type \(\mathbb{A}_1\).
\end{itemize}

Galois action on the lattice \(L_{\lambda}\) is trivial. The intersection matrix on \(L_{\lambda} = L_{\mathcal{S}}\) is represented by
\begin{table}[H]
  \begin{tabular}{|c||cc|cc|c|cc|cc|c|cccccc|}
    \hline
    \(\bullet\) & \(E_1^1\) & \(E_1^2\) & \(E_2^1\) & \(E_2^2\) & \(E_3^1\) & \(E_4^1\) & \(E_4^2\) & \(E_5^1\) & \(E_5^2\) & \(E_6^1\) & \(\widetilde{C_{1}}\) & \(\widetilde{C_{3}}\) & \(\widetilde{C_{4}}\) & \(\widetilde{C_{5}}\) & \(\widetilde{C_{6}}\) & \(\widetilde{H_{\mathcal{S}}}\) \\
    \hline
    \hline
    \(\widetilde{C_{1}}\) & \(0\) & \(0\) & \(1\) & \(0\) & \(0\) & \(1\) & \(0\) & \(0\) & \(0\) & \(0\) & \(-2\) & \(0\) & \(0\) & \(0\) & \(0\) & \(1\) \\
    \(\widetilde{C_{3}}\) & \(1\) & \(0\) & \(0\) & \(0\) & \(1\) & \(0\) & \(0\) & \(0\) & \(0\) & \(0\) & \(0\) & \(-2\) & \(0\) & \(0\) & \(0\) & \(1\) \\
    \(\widetilde{C_{4}}\) & \(0\) & \(0\) & \(1\) & \(0\) & \(1\) & \(0\) & \(0\) & \(1\) & \(0\) & \(0\) & \(0\) & \(0\) & \(-2\) & \(0\) & \(0\) & \(1\) \\
    \(\widetilde{C_{5}}\) & \(0\) & \(1\) & \(0\) & \(0\) & \(0\) & \(1\) & \(0\) & \(0\) & \(0\) & \(1\) & \(0\) & \(0\) & \(0\) & \(-2\) & \(0\) & \(1\) \\
    \(\widetilde{C_{6}}\) & \(1\) & \(0\) & \(0\) & \(0\) & \(0\) & \(0\) & \(0\) & \(0\) & \(0\) & \(0\) & \(0\) & \(0\) & \(0\) & \(0\) & \(-2\) & \(1\) \\
    \(\widetilde{H_{\mathcal{S}}}\) & \(0\) & \(0\) & \(0\) & \(0\) & \(0\) & \(0\) & \(0\) & \(0\) & \(0\) & \(0\) & \(1\) & \(1\) & \(1\) & \(1\) & \(1\) & \(4\) \\
    \hline
  \end{tabular}.
\end{table}
Note that the intersection matrix is non-degenerate.

Discriminant groups and discriminant forms of the lattices \(L_{\mathcal{S}}\) and \(H \oplus \Pic(X)\) are given by
\begin{gather*}
  G' = 
  \begin{pmatrix}
    0 & \frac{1}{2} & 0 & \frac{1}{2} & 0 & 0 & \frac{1}{2} & 0 & 0 & \frac{1}{2} & \frac{1}{2} & 0 & 0 & 0 & \frac{1}{2} & 0 \\
    \frac{7}{11} & \frac{3}{11} & \frac{7}{11} & \frac{9}{11} & \frac{10}{11} & \frac{2}{11} & \frac{13}{22} & \frac{8}{11} & \frac{19}{22} & \frac{5}{11} & \frac{19}{22} & \frac{5}{22} & \frac{13}{22} & \frac{10}{11} & \frac{17}{22} & \frac{10}{11}
  \end{pmatrix}, \\
  B' = 
  \begin{pmatrix}
    0 & \frac{1}{2} \\
    \frac{1}{2} & \frac{1}{11}
  \end{pmatrix}; \;
  G'' = 
  \begin{pmatrix}
    0 & 0 & 0 & -\frac{1}{2} & \frac{1}{2} & 0 \\
    0 & 0 & -\frac{9}{11} & -\frac{7}{11} & -\frac{3}{22} & \frac{1}{22}
  \end{pmatrix}, \;
  B'' = 
  \begin{pmatrix}
    0 & \frac{1}{2} \\
    \frac{1}{2} & \frac{10}{11}
  \end{pmatrix}; \;
  \begin{pmatrix}
    Q' \\ Q''
  \end{pmatrix}
  =
  \begin{pmatrix}
    1 & \frac{1}{11} \\
    1 & \frac{21}{11}    
  \end{pmatrix}.
\end{gather*}


\section{Dolgachev--Nikulin duality for Fano threefolds: ranks 5--10}\label{appendix:higher-ranks}
\subsection{Family \textnumero5.1}\label{subsection:05-01}

The pencil \(\mathcal{S}\) is defined by the equation
\begin{gather*}
  X^{2} Y Z + X Y^{2} Z + X^{2} Z T + Y^{2} Z T + X Z^{2} T + Y Z^{2} T + \\ X Y T^{2} + 2 X Z T^{2} + 2 Y Z T^{2} + Z^{2} T^{2} + Z T^{3} = \lambda X Y Z T.
\end{gather*}
Members \(\mathcal{S}_{\lambda}\) of the pencil are irreducible for any \(\lambda \in \mathbb{P}^1\) except
\(\mathcal{S}_{\infty} = S_{(X)} + S_{(Y)} + S_{(Z)} + S_{(T)}\).
The base locus of the pencil \(\mathcal{S}\) consists of the following curves:
\begin{gather*}
  C_{1} = C_{(X, Z)}, \;
  C_{2} = C_{(X, T)}, \;
  C_{3} = C_{(Y, Z)}, \;
  C_{4} = C_{(Y, T)}, \;
  C_{5} = C_{(Z, T)}, \;
  C_{6} = C_{(X, Y + T)}, \\
  C_{7} = C_{(Y, X + T)}, \;
  C_{8} = C_{(T, X + Y)}, \;
  C_{9} = C_{(X, Y + Z + T)}, \;
  C_{10} = C_{(Y, X + Z + T)}.
\end{gather*}
Their linear equivalence classes on the generic member \(\mathcal{S}_{\Bbbk}\) of the pencil satisfy the following relations:
\begin{gather*}
  \begin{pmatrix}
    [C_{3}] \\ [C_{8}] \\ [C_{9}] \\ [C_{10}]
  \end{pmatrix} = 
  \begin{pmatrix}
    -1 & 0 & 0 & -2 & 0 & 0 & 1 \\
    0 & -1 & -1 & -1 & 0 & 0 & 1 \\
    -1 & -1 & 0 & 0 & -1 & 0 & 1 \\
    1 & 0 & -1 & 2 & 0 & -1 & 0
  \end{pmatrix} \cdot
  \begin{pmatrix}
    [C_{1}] & [C_{2}] & [C_{4}] & [C_{5}] & [C_{6}] & [C_{7}] & [H_{\mathcal{S}}]
  \end{pmatrix}^T.
\end{gather*}

For a general choice of \(\lambda \in \mathbb{C}\) the surface \(\mathcal{S}_{\lambda}\) has the following singularities:
\begin{itemize}\setlength{\itemindent}{2cm}
\item[\(P_{1} = P_{(X, Y, T)}\):] type \(\mathbb{A}_3\) with the quadratic term \(T \cdot (X + Y + T)\);
\item[\(P_{2} = P_{(X, Z, T)}\):] type \(\mathbb{A}_2\) with the quadratic term \(Z \cdot (X + T)\);
\item[\(P_{3} = P_{(Y, Z, T)}\):] type \(\mathbb{A}_2\) with the quadratic term \(Z \cdot (Y + T)\);
\item[\(P_{4} = P_{(Z, T, X + Y)}\):] type \(\mathbb{A}_1\) with the quadratic term \(Z (X + Y - (\lambda + 2) T) + T^2\).
\end{itemize}

Galois action on the lattice \(L_{\lambda}\) is trivial. The intersection matrix on \(L_{\lambda} = L_{\mathcal{S}}\) is represented by
\begin{table}[H]
  \begin{tabular}{|c||ccc|cc|cc|c|ccccccc|}
    \hline
    \(\bullet\) & \(E_1^1\) & \(E_1^2\) & \(E_1^3\) & \(E_2^1\) & \(E_2^2\) & \(E_3^1\) & \(E_3^2\) & \(E_4^1\) & \(\widetilde{C_{1}}\) & \(\widetilde{C_{2}}\) & \(\widetilde{C_{4}}\) & \(\widetilde{C_{5}}\) & \(\widetilde{C_{6}}\) & \(\widetilde{C_{7}}\) & \(\widetilde{H_{\mathcal{S}}}\) \\
    \hline
    \hline
    \(\widetilde{C_{1}}\) & \(0\) & \(0\) & \(0\) & \(1\) & \(0\) & \(0\) & \(0\) & \(0\) & \(-2\) & \(0\) & \(0\) & \(0\) & \(1\) & \(0\) & \(1\) \\
    \(\widetilde{C_{2}}\) & \(1\) & \(0\) & \(0\) & \(0\) & \(1\) & \(0\) & \(0\) & \(0\) & \(0\) & \(-2\) & \(0\) & \(0\) & \(0\) & \(0\) & \(1\) \\
    \(\widetilde{C_{4}}\) & \(1\) & \(0\) & \(0\) & \(0\) & \(0\) & \(1\) & \(0\) & \(0\) & \(0\) & \(0\) & \(-2\) & \(0\) & \(0\) & \(0\) & \(1\) \\
    \(\widetilde{C_{5}}\) & \(0\) & \(0\) & \(0\) & \(1\) & \(0\) & \(0\) & \(1\) & \(1\) & \(0\) & \(0\) & \(0\) & \(-2\) & \(0\) & \(0\) & \(1\) \\
    \(\widetilde{C_{6}}\) & \(0\) & \(0\) & \(1\) & \(0\) & \(0\) & \(0\) & \(0\) & \(0\) & \(1\) & \(0\) & \(0\) & \(0\) & \(-2\) & \(0\) & \(1\) \\
    \(\widetilde{C_{7}}\) & \(0\) & \(0\) & \(1\) & \(0\) & \(0\) & \(0\) & \(0\) & \(0\) & \(0\) & \(0\) & \(0\) & \(0\) & \(0\) & \(-2\) & \(1\) \\
    \(\widetilde{H_{\mathcal{S}}}\) & \(0\) & \(0\) & \(0\) & \(0\) & \(0\) & \(0\) & \(0\) & \(0\) & \(1\) & \(1\) & \(1\) & \(1\) & \(1\) & \(1\) & \(4\) \\
    \hline
  \end{tabular}.
\end{table}
Note that the intersection matrix is non-degenerate.

Discriminant groups and discriminant forms of the lattices \(L_{\mathcal{S}}\) and \(H \oplus \Pic(X)\) are given by
\begin{gather*}
  G' = 
  \begin{pmatrix}
    \frac{1}{2} & \frac{1}{2} & \frac{1}{2} & \frac{1}{2} & 0 & 0 & 0 & 0 & 0 & \frac{1}{2} & 0 & 0 & 0 & \frac{1}{2} & \frac{1}{2} \\
    \frac{1}{2} & \frac{1}{2} & \frac{1}{2} & 0 & 0 & 0 & \frac{1}{2} & 0 & 0 & 0 & \frac{1}{2} & 0 & \frac{1}{2} & 0 & \frac{1}{2} \\
    \frac{1}{2} & 0 & \frac{1}{2} & \frac{1}{7} & \frac{5}{7} & \frac{4}{7} & \frac{3}{7} & \frac{9}{14} & \frac{2}{7} & \frac{2}{7} & \frac{5}{7} & \frac{2}{7} & \frac{1}{14} & \frac{13}{14} & \frac{5}{14}
  \end{pmatrix}, \;
  B' = 
  \begin{pmatrix}
    0 & \frac{1}{2} & 0 \\
    \frac{1}{2} & 0 & 0 \\
    0 & 0 & \frac{13}{14}
  \end{pmatrix}; \\
  G'' = 
  \begin{pmatrix}
    0 & 0 & -\frac{1}{2} & 0 & \frac{1}{2} & 0 & 0 \\
    0 & 0 & -\frac{1}{2} & \frac{1}{2} & 0 & 0 & 0 \\
    0 & 0 & \frac{1}{7} & \frac{1}{7} & \frac{1}{7} & \frac{5}{7} & -\frac{1}{14}
  \end{pmatrix}, \;
  B'' = 
  \begin{pmatrix}
    0 & \frac{1}{2} & 0 \\
    \frac{1}{2} & 0 & 0 \\
    0 & 0 & \frac{1}{14}
  \end{pmatrix}; \;
  \begin{pmatrix}
    Q' \\ Q''
  \end{pmatrix}
  =
  \begin{pmatrix}
    1 & 1 & \frac{27}{14} \\
    1 & 1 & \frac{1}{14}
  \end{pmatrix}.
\end{gather*}


\subsection{Family \textnumero5.2}\label{subsection:05-02}

The pencil \(\mathcal{S}\) is defined by the equation
\begin{gather*}
  X^{2} Y Z + X Y^{2} Z + X Y Z^{2} + X^{2} Y T + X^{2} Z T + Y^{2} Z T + X Z T^{2} + Y Z T^{2} = \lambda X Y Z T.
\end{gather*}
Members \(\mathcal{S}_{\lambda}\) of the pencil are irreducible for any \(\lambda \in \mathbb{P}^1\) except
\(\mathcal{S}_{\infty} = S_{(X)} + S_{(Y)} + S_{(Z)} + S_{(T)}\).
The base locus of the pencil \(\mathcal{S}\) consists of the following curves:
\begin{gather*}
  C_{1} = C_{(X, Y)}, \;
  C_{2} = C_{(X, Z)}, \;
  C_{3} = C_{(X, T)}, \;
  C_{4} = C_{(Y, Z)}, \;
  C_{5} = C_{(Y, T)}, \\
  C_{6} = C_{(Z, T)}, \;
  C_{7} = C_{(X, Y + T)}, \;
  C_{8} = C_{(Y, X + T)}, \;
  C_{9} = C_{(T, X + Y + Z)}.
\end{gather*}
Their linear equivalence classes on the generic member \(\mathcal{S}_{\Bbbk}\) of the pencil satisfy the following relations:
\begin{gather*}
  \begin{pmatrix}
    [C_{6}] \\ [C_{7}] \\ [C_{8}] \\ [C_{9}]
  \end{pmatrix} = 
  \begin{pmatrix}
    0 & -2 & 0 & -1 & 0 & 1 \\
    -1 & -1 & -1 & 0 & 0 & 1 \\
    -1 & 0 & 0 & -1 & -1 & 1 \\
    0 & 2 & -1 & 1 & -1 & 0
  \end{pmatrix} \cdot
  \begin{pmatrix}
    [C_{1}] & [C_{2}] & [C_{3}] & [C_{4}] & [C_{5}] & [H_{\mathcal{S}}]
  \end{pmatrix}^T.
\end{gather*}

For a general choice of \(\lambda \in \mathbb{C}\) the surface \(\mathcal{S}_{\lambda}\) has the following singularities:
\begin{itemize}\setlength{\itemindent}{2cm}
\item[\(P_{1} = P_{(X, Y, Z)}\):] type \(\mathbb{A}_2\) with the quadratic term \(Z \cdot (X + Y)\);
\item[\(P_{2} = P_{(X, Y, T)}\):] type \(\mathbb{A}_3\) with the quadratic term \(X \cdot Y\);
\item[\(P_{3} = P_{(X, Z, T)}\):] type \(\mathbb{A}_2\) with the quadratic term \(Z \cdot (X + T)\);
\item[\(P_{4} = P_{(Y, Z, T)}\):] type \(\mathbb{A}_1\) with the quadratic term \(Y Z + Y T + Z T\);
\item[\(P_{5} = P_{(X, Z, Y + T)}\):] type \(\mathbb{A}_1\) with the quadratic term \(Z ((\lambda + 2) X - Y - T) - X^2\).
\end{itemize}

Galois action on the lattice \(L_{\lambda}\) is trivial. The intersection matrix on \(L_{\lambda} = L_{\mathcal{S}}\) is represented by
\begin{table}[H]
  \begin{tabular}{|c||cc|ccc|cc|c|c|cccccc|}
    \hline
    \(\bullet\) & \(E_1^1\) & \(E_1^2\) & \(E_2^1\) & \(E_2^2\) & \(E_2^3\) & \(E_3^1\) & \(E_3^2\) & \(E_4^1\) & \(E_5^1\) & \(\widetilde{C_{1}}\) & \(\widetilde{C_{2}}\) & \(\widetilde{C_{3}}\) & \(\widetilde{C_{4}}\) & \(\widetilde{C_{5}}\) & \(\widetilde{H_{\mathcal{S}}}\) \\
    \hline
    \hline
    \(\widetilde{C_{1}}\) & \(1\) & \(0\) & \(0\) & \(1\) & \(0\) & \(0\) & \(0\) & \(0\) & \(0\) & \(-2\) & \(0\) & \(0\) & \(0\) & \(0\) & \(1\) \\
    \(\widetilde{C_{2}}\) & \(0\) & \(1\) & \(0\) & \(0\) & \(0\) & \(1\) & \(0\) & \(0\) & \(1\) & \(0\) & \(-2\) & \(0\) & \(0\) & \(0\) & \(1\) \\
    \(\widetilde{C_{3}}\) & \(0\) & \(0\) & \(1\) & \(0\) & \(0\) & \(0\) & \(1\) & \(0\) & \(0\) & \(0\) & \(0\) & \(-2\) & \(0\) & \(0\) & \(1\) \\
    \(\widetilde{C_{4}}\) & \(0\) & \(1\) & \(0\) & \(0\) & \(0\) & \(0\) & \(0\) & \(1\) & \(0\) & \(0\) & \(0\) & \(0\) & \(-2\) & \(0\) & \(1\) \\
    \(\widetilde{C_{5}}\) & \(0\) & \(0\) & \(0\) & \(0\) & \(1\) & \(0\) & \(0\) & \(1\) & \(0\) & \(0\) & \(0\) & \(0\) & \(0\) & \(-2\) & \(1\) \\
    \(\widetilde{H_{\mathcal{S}}}\) & \(0\) & \(0\) & \(0\) & \(0\) & \(0\) & \(0\) & \(0\) & \(0\) & \(0\) & \(1\) & \(1\) & \(1\) & \(1\) & \(1\) & \(4\) \\
    \hline
  \end{tabular}.
\end{table}
Note that the intersection matrix is non-degenerate.

Discriminant groups and discriminant forms of the lattices \(L_{\mathcal{S}}\) and \(H \oplus \Pic(X)\) are given by
\begin{gather*}
  G' = 
  \begin{pmatrix}
    \frac{9}{44} & \frac{39}{44} & \frac{7}{11} & \frac{9}{44} & \frac{1}{4} & \frac{29}{44} & \frac{19}{22} & \frac{31}{44} & \frac{8}{11} & \frac{23}{44} & \frac{5}{11} & \frac{3}{44} & \frac{5}{44} & \frac{13}{44} & \frac{7}{11}
  \end{pmatrix}, \;
  B' = 
  \begin{pmatrix}
    \frac{31}{44}
  \end{pmatrix}; \\
  G'' =
  \begin{pmatrix}
    0 & 0 & -\frac{13}{44} & \frac{9}{44} & \frac{1}{22} & -\frac{9}{22} & \frac{1}{11}
  \end{pmatrix}, \;
  B'' =
  \begin{pmatrix}
    \frac{13}{44}
  \end{pmatrix}; \;
  Q' =
  \begin{pmatrix}
    \frac{31}{44}
  \end{pmatrix}, \;
  Q'' =
  \begin{pmatrix}
    \frac{57}{44}
  \end{pmatrix}.
\end{gather*}


\subsection{Family \textnumero5.3}\label{subsection:05-03}

The pencil \(\mathcal{S}\) is defined by the equation
\begin{gather*}
  X^{2} Y Z + X Y^{2} Z + X Y Z^{2} + X Y^{2} T + X Z^{2} T + X Y T^{2} + X Z T^{2} + Y Z T^{2} = \lambda X Y Z T.
\end{gather*}
Members \(\mathcal{S}_{\lambda}\) of the pencil are irreducible for any \(\lambda \in \mathbb{P}^1\) except
\(\mathcal{S}_{\infty} = S_{(X)} + S_{(Y)} + S_{(Z)} + S_{(T)}\).
The base locus of the pencil \(\mathcal{S}\) consists of the following curves:
\begin{gather*}
  C_{1} = C_{(X, Y)}, \;
  C_{2} = C_{(X, Z)}, \;
  C_{3} = C_{(X, T)}, \;
  C_{4} = C_{(Y, Z)}, \;
  C_{5} = C_{(Y, T)}, \\
  C_{6} = C_{(Z, T)}, \;
  C_{7} = C_{(Y, Z + T)}, \;
  C_{8} = C_{(Z, Y + T)}, \;
  C_{9} = C_{(T, X + Y + Z)}.
\end{gather*}
Their linear equivalence classes on the generic member \(\mathcal{S}_{\Bbbk}\) of the pencil satisfy the following relations:
\begin{gather*}
  \begin{pmatrix}
    [C_{2}] \\ [C_{7}] \\ [C_{8}] \\ [C_{9}]
  \end{pmatrix} = 
  \begin{pmatrix}
    -1 & -2 & 0 & 0 & 0 & 1 \\
    -1 & 0 & -1 & -1 & 0 & 1 \\
    1 & 2 & -1 & 0 & -1 & 0 \\
    0 & -1 & 0 & -1 & -1 & 1
  \end{pmatrix} \cdot
  \begin{pmatrix}
    [C_{1}] & [C_{3}] & [C_{4}] & [C_{5}] & [C_{6}] & [H_{\mathcal{S}}]
  \end{pmatrix}^T.
\end{gather*}

For a general choice of \(\lambda \in \mathbb{C}\) the surface \(\mathcal{S}_{\lambda}\) has the following singularities:
\begin{itemize}\setlength{\itemindent}{2cm}
\item[\(P_{1} = P_{(X, Y, Z)}\):] type \(\mathbb{A}_1\) with the quadratic term \(Y Z + X (Y + Z)\);
\item[\(P_{2} = P_{(X, Y, T)}\):] type \(\mathbb{A}_2\) with the quadratic term \(X \cdot (Y + T)\);
\item[\(P_{3} = P_{(X, Z, T)}\):] type \(\mathbb{A}_2\) with the quadratic term \(X \cdot (Z + T)\);
\item[\(P_{4} = P_{(Y, Z, T)}\):] type \(\mathbb{A}_3\) with the quadratic term \(Y \cdot Z\);
\item[\(P_{5} = P_{(X, T, Y + Z)}\):] type \(\mathbb{A}_1\) with the quadratic term \(X (X + Y + Z - (\lambda + 2) T) + T^2\).
\end{itemize}

Galois action on the lattice \(L_{\lambda}\) is trivial. The intersection matrix on \(L_{\lambda} = L_{\mathcal{S}}\) is represented by
\begin{table}[H]
  \begin{tabular}{|c||c|cc|cc|ccc|c|cccccc|}
    \hline
    \(\bullet\) & \(E_1^1\) & \(E_2^1\) & \(E_2^2\) & \(E_3^1\) & \(E_3^2\) & \(E_4^1\) & \(E_4^2\) & \(E_4^3\) & \(E_5^1\) & \(\widetilde{C_{1}}\) & \(\widetilde{C_{3}}\) & \(\widetilde{C_{4}}\) & \(\widetilde{C_{5}}\) & \(\widetilde{C_{6}}\) & \(\widetilde{H_{\mathcal{S}}}\) \\
    \hline
    \hline
    \(\widetilde{C_{1}}\) & \(1\) & \(1\) & \(0\) & \(0\) & \(0\) & \(0\) & \(0\) & \(0\) & \(0\) & \(-2\) & \(0\) & \(0\) & \(0\) & \(0\) & \(1\) \\
    \(\widetilde{C_{3}}\) & \(0\) & \(1\) & \(0\) & \(1\) & \(0\) & \(0\) & \(0\) & \(0\) & \(1\) & \(0\) & \(-2\) & \(0\) & \(0\) & \(0\) & \(1\) \\
    \(\widetilde{C_{4}}\) & \(1\) & \(0\) & \(0\) & \(0\) & \(0\) & \(0\) & \(1\) & \(0\) & \(0\) & \(0\) & \(0\) & \(-2\) & \(0\) & \(0\) & \(1\) \\
    \(\widetilde{C_{5}}\) & \(0\) & \(0\) & \(1\) & \(0\) & \(0\) & \(1\) & \(0\) & \(0\) & \(0\) & \(0\) & \(0\) & \(0\) & \(-2\) & \(0\) & \(1\) \\
    \(\widetilde{C_{6}}\) & \(0\) & \(0\) & \(0\) & \(0\) & \(1\) & \(0\) & \(0\) & \(1\) & \(0\) & \(0\) & \(0\) & \(0\) & \(0\) & \(-2\) & \(1\) \\
    \(\widetilde{H_{\mathcal{S}}}\) & \(0\) & \(0\) & \(0\) & \(0\) & \(0\) & \(0\) & \(0\) & \(0\) & \(0\) & \(1\) & \(1\) & \(1\) & \(1\) & \(1\) & \(4\) \\
    \hline
  \end{tabular}.
\end{table}
Note that the intersection matrix is non-degenerate.

Discriminant groups and discriminant forms of the lattices \(L_{\mathcal{S}}\) and \(H \oplus \Pic(X)\) are given by
\begin{gather*}
  G' = 
  \begin{pmatrix}
    \frac{1}{2} & 0 & 0 & 0 & 0 & \frac{1}{2} & 0 & \frac{1}{2} & \frac{1}{2} & 0 & 0 & 0 & 0 & 0 & \frac{1}{2} \\
    0 & 0 & \frac{1}{2} & 0 & 0 & \frac{1}{2} & 0 & 0 & 0 & \frac{1}{2} & 0 & \frac{1}{2} & 0 & 0 & 0 \\
    \frac{1}{12} & \frac{1}{12} & \frac{1}{3} & \frac{3}{4} & \frac{5}{6} & \frac{2}{3} & \frac{3}{4} & \frac{5}{6} & \frac{1}{3} & \frac{1}{6} & \frac{2}{3} & 0 & \frac{7}{12} & \frac{11}{12} & \frac{1}{6}
  \end{pmatrix}, \;
  B' = 
  \begin{pmatrix}
    0 & \frac{1}{2} & \frac{1}{2} \\
    \frac{1}{2} & 0 & \frac{1}{2} \\
    \frac{1}{2} & \frac{1}{2} & \frac{7}{12}
  \end{pmatrix}; \\  
  G'' = 
  \begin{pmatrix}
    0 & 0 & -\frac{1}{2} & 0 & \frac{1}{2} & 0 & 0 \\
    0 & 0 & -\frac{1}{2} & 0 & 0 & 0 & \frac{1}{2} \\
    0 & 0 & -\frac{5}{12} & \frac{1}{12} & \frac{1}{12} & -\frac{1}{6} & \frac{1}{4}
  \end{pmatrix}, \;
  B'' = 
  \begin{pmatrix}
    0 & \frac{1}{2} & \frac{1}{2} \\
    \frac{1}{2} & 0 & \frac{1}{2} \\
    \frac{1}{2} & \frac{1}{2} & \frac{5}{12}
  \end{pmatrix}; \;
  \begin{pmatrix}
    Q' \\ Q''
  \end{pmatrix}
  =
  \begin{pmatrix}
    1 & 0 & \frac{7}{12} \\
    1 & 0 & \frac{17}{12}
  \end{pmatrix}.
\end{gather*}


\subsection{Family \textnumero6.1}\label{subsection:06-01}

The pencil \(\mathcal{S}\) is defined by the equation
\begin{gather*}
  X^{2} Y Z + 2 X Y^{2} Z + Y^{3} Z + X Y Z^{2} + 3 Y^{2} Z T + X Y T^{2} + X Z T^{2} + 3 Y Z T^{2} + Z T^{3} = \lambda X Y Z T.
\end{gather*}
Members \(\mathcal{S}_{\lambda}\) of the pencil are irreducible for any \(\lambda \in \mathbb{P}^1\) except
\(\mathcal{S}_{\infty} = S_{(X)} + S_{(Y)} + S_{(Z)} + S_{(T)}\).
The base locus of the pencil \(\mathcal{S}\) consists of the following curves:
\begin{gather*}
  C_{1} = C_{(X, Z)}, \;
  C_{2} = C_{(Y, Z)}, \;
  C_{3} = C_{(Y, T)}, \;
  C_{4} = C_{(Z, T)}, \\
  C_{5} = C_{(X, Y + T)}, \;
  C_{6} = C_{(Y, X + T)}, \;
  C_{7} = C_{(T, X Z + (X + Y)^2)}.
\end{gather*}
Their linear equivalence classes on the generic member \(\mathcal{S}_{\Bbbk}\) of the pencil satisfy the following relations:
\begin{gather*}
  \begin{pmatrix}
    [C_{1}] \\ [C_{2}] \\ [C_{6}] \\ [C_{7}]
  \end{pmatrix} = 
  \begin{pmatrix}
    0 & 0 & -3 & 1 \\
    0 & -2 & 3 & 0 \\
    -2 & 2 & -3 & 1 \\
    -1 & -1 & 0 & 1
  \end{pmatrix} \cdot
  \begin{pmatrix}
    [C_{3}] \\ [C_{4}] \\ [C_{5}] \\ [H_{\mathcal{S}}]
  \end{pmatrix}.
\end{gather*}

Put \(\mu (\mu - 1) = (\lambda + 1)^{-1}\). For a general choice of \(\lambda \in \mathbb{C}\) the surface \(\mathcal{S}_{\lambda}\) has the following singularities:
\begin{itemize}\setlength{\itemindent}{2cm}
\item[\(P_{1} = P_{(X, Y, T)}\):] type \(\mathbb{A}_2\) with the quadratic term \(X \cdot Y\);
\item[\(P_{2} = P_{(Y, Z, T)}\):] type \(\mathbb{A}_3\) with the quadratic term \(Y \cdot Z\);
\item[\(P_{3} = P_{(Y, T, X + Z)}\):] type \(\mathbb{A}_1\) with the quadratic term \(Y (X + Y + Z - \lambda T) + (Y^2 + T^2)\);
\item[\(P_{4} = P_{(Z, T, X + Y)}\):] type \(\mathbb{A}_3\) with the quadratic term \((\mu Z - (\mu - 1) T) \cdot ((\mu - 1) Z - \mu T)\);
\item[\(P_{5} = P_{(X, \mu Y + (\mu- 1) Z, Y + T)}\):] type \(\mathbb{A}_2\) with the quadratic term
  \[
    X \cdot (\mu^2 (\mu - 1) (X + Y + T) + (2 \mu - 1) ((\mu - 1) Z -  \mu T));
  \]
\item[\(P_{6} = P_{(X, (\mu - 1) Y + \mu Z, Y + T)}\):] type \(\mathbb{A}_2\) with the quadratic term
  \[
    X \cdot (\mu (\mu - 1)^2 (X + Y + T) - (2 \mu - 1) (\mu Z - (\mu - 1) T)).
  \]
\end{itemize}

The only non-trivial Galois orbits on the lattice \(L_{\lambda}\) are
\(E_4^1 + E_4^3, E_5^1 + E_6^1, E_5^2 + E_6^2\).

The intersection matrix on the lattice \(L_{\lambda}\) is represented by
\begin{table}[H]
  \begin{tabular}{|c||cc|ccc|c|ccc|cc|cc|cccc|}
    \hline
    \(\bullet\) & \(E_1^1\) & \(E_1^2\) & \(E_2^1\) & \(E_2^2\) & \(E_2^3\) & \(E_3^1\) & \(E_4^1\) & \(E_4^2\) & \(E_4^3\) & \(E_5^1\) & \(E_5^2\) & \(E_6^1\) & \(E_6^2\) & \(\widetilde{C_{3}}\) & \(\widetilde{C_{4}}\) & \(\widetilde{C_{5}}\) & \(\widetilde{H_{\mathcal{S}}}\) \\
    \hline
    \hline
    \(\widetilde{C_{3}}\) & \(1\) & \(0\) & \(1\) & \(0\) & \(0\) & \(1\) & \(0\) & \(0\) & \(0\) & \(0\) & \(0\) & \(0\) & \(0\) & \(-2\) & \(0\) & \(0\) & \(1\) \\
    \(\widetilde{C_{4}}\) & \(0\) & \(0\) & \(0\) & \(0\) & \(1\) & \(0\) & \(0\) & \(1\) & \(0\) & \(0\) & \(0\) & \(0\) & \(0\) & \(0\) & \(-2\) & \(0\) & \(1\) \\
    \(\widetilde{C_{5}}\) & \(0\) & \(1\) & \(0\) & \(0\) & \(0\) & \(0\) & \(0\) & \(0\) & \(0\) & \(1\) & \(0\) & \(1\) & \(0\) & \(0\) & \(0\) & \(-2\) & \(1\) \\
    \(\widetilde{H_{\mathcal{S}}}\) & \(0\) & \(0\) & \(0\) & \(0\) & \(0\) & \(0\) & \(0\) & \(0\) & \(0\) & \(0\) & \(0\) & \(0\) & \(0\) & \(1\) & \(1\) & \(1\) & \(4\) \\
    \hline
  \end{tabular}.
\end{table}
Note that the intersection matrix is non-degenerate.

Discriminant groups and discriminant forms of the lattices \(L_{\mathcal{S}}\) and \(H \oplus \Pic(X)\) are given by
\begin{gather*}
  G' = 
  \begin{pmatrix}
    0 & 0 & 0 & 0 & 0 & 0 & 0 & 0 & \frac{1}{2} & 0 & 0 & 0 & 0 & 0 \\
    0 & 0 & 0 & 0 & 0 & 0 & 0 & 0 & \frac{1}{2} & \frac{1}{2} & 0 & 0 & 0 & 0 \\
    0 & 0 & \frac{1}{2} & 0 & \frac{1}{2} & \frac{1}{2} & 0 & \frac{1}{2} & 0 & \frac{1}{2} & 0 & 0 & 0 & 0 \\
    \frac{1}{5} & \frac{4}{5} & \frac{9}{10} & \frac{1}{5} & \frac{1}{2} & \frac{3}{10} & \frac{9}{10} & \frac{3}{10} & \frac{3}{5} & \frac{3}{10} & \frac{3}{5} & \frac{4}{5} & \frac{2}{5} & \frac{4}{5}
  \end{pmatrix}, \;
  B' = 
  \begin{pmatrix}
    0 & \frac{1}{2} & \frac{1}{2} & \frac{1}{2} \\
    \frac{1}{2} & 0 & \frac{1}{2} & \frac{1}{2} \\
    \frac{1}{2} & \frac{1}{2} & 0 & \frac{1}{2} \\
    \frac{1}{2} & \frac{1}{2} & \frac{1}{2} & \frac{2}{5}
  \end{pmatrix};
\end{gather*}
\begin{gather*}
  G'' = 
  \begin{pmatrix}
    0 & 0 & -\frac{1}{2} & 0 & \frac{1}{2} & 0 & 0 & 0 \\
    0 & 0 & -\frac{1}{2} & 0 & 0 & \frac{1}{2} & 0 & 0 \\
    0 & 0 & -\frac{1}{2} & \frac{1}{2} & 0 & 0 & 0 & 0 \\
    0 & 0 & -\frac{1}{5} & \frac{3}{10} & \frac{3}{10} & \frac{3}{10} & \frac{2}{5} & -\frac{1}{10}
  \end{pmatrix}, \;
  B'' = 
  \begin{pmatrix}
    0 & \frac{1}{2} & \frac{1}{2} & \frac{1}{2} \\
    \frac{1}{2} & 0 & \frac{1}{2} & \frac{1}{2} \\
    \frac{1}{2} & \frac{1}{2} & 0 & \frac{1}{2} \\
    \frac{1}{2} & \frac{1}{2} & \frac{1}{2} & \frac{3}{5}
  \end{pmatrix}; \;
  \begin{pmatrix}
    Q' \\ Q''
  \end{pmatrix}
  =
  \begin{pmatrix}
    1 & 1 & 1 & \frac{7}{5} \\
    1 & 1 & 1 & \frac{3}{5}
  \end{pmatrix}.
\end{gather*}


\subsection{Family \textnumero7.1}\label{subsection:07-01}

The pencil \(\mathcal{S}\) is defined by the equation
\begin{gather*}
  X^{3} Z + 3 X^{2} Y Z + 3 X Y^{2} Z + Y^{3} Z + X Y Z^{2} + 2 X^{2} Z T + 2 Y^{2} Z T + X Y T^{2} + X Z T^{2} + Y Z T^{2} = \lambda X Y Z T.
\end{gather*}
Members \(\mathcal{S}_{\lambda}\) of the pencil are irreducible for any \(\lambda \in \mathbb{P}^1\) except
\(\mathcal{S}_{\infty} = S_{(X)} + S_{(Y)} + S_{(Z)} + S_{(T)}\).
The base locus of the pencil \(\mathcal{S}\) consists of the following curves:
\begin{gather*}
  C_{1} = C_{(X, Y)}, \;
  C_{2} = C_{(X, Z)}, \;
  C_{3} = C_{(Y, Z)}, \;
  C_{4} = C_{(Z, T)}, \\
  C_{5} = C_{(X, Y + T)}, \;
  C_{6} = C_{(Y, X + T)}, \;
  C_{7} = C_{(T, (X + Y)^3 + X Y Z)}.
\end{gather*}
Their linear equivalence classes on the generic member \(\mathcal{S}_{\Bbbk}\) of the pencil satisfy the following relations:
\begin{gather*}
  \begin{pmatrix}
    [C_{2}] \\ [C_{3}] \\ [C_{7}] \\ [H_{\mathcal{S}}]
  \end{pmatrix} = 
  \begin{pmatrix}
    1 & -2 & 0 & 2 \\
    1 & -2 & 2 & 0 \\
    2 & -3 & 2 & 2 \\
    2 & -2 & 2 & 2
  \end{pmatrix} \cdot
  \begin{pmatrix}
    [C_{1}] \\ [C_{4}] \\ [C_{5}] \\ [C_{6}]
  \end{pmatrix}.
\end{gather*}

Put \(\mu (\mu - 1) = (\lambda + 2)^{-1}\). For a general choice of \(\lambda \in \mathbb{C}\) the surface \(\mathcal{S}_{\lambda}\) has the following singularities:
\begin{itemize}\setlength{\itemindent}{2cm}
\item[\(P_{1} = P_{(X, Y, Z)}\):] type \(\mathbb{A}_1\) with the quadratic term \(X Y + Z (X + Y)\);
\item[\(P_{2} = P_{(X, Y, T)}\):] type \(\mathbb{A}_3\) with the quadratic term \(X \cdot Y\);
\item[\(P_{3} = P_{(Z, T, X + Y)}\):] type \(\mathbb{A}_5\) with the quadratic term \((\mu Z - (\mu - 1) T) \cdot ((\mu - 1) Z - \mu T)\);
\item[\(P_{4} = P_{(X, Y + T, (\mu - 1) Z - \mu T)}\):] type \(\mathbb{A}_1\);
\item[\(P_{5} = P_{(X, Y + T, \mu Z - (\mu - 1) T)}\):] type \(\mathbb{A}_1\);
\item[\(P_{6} = P_{(Y, X + T, (\mu - 1) Z - \mu T)}\):] type \(\mathbb{A}_1\);
\item[\(P_{7} = P_{(Y, X + T, \mu Z - (\mu - 1) T)}\):] type \(\mathbb{A}_1\).
\end{itemize}

The only non-trivial Galois orbits on the lattice \(L_{\lambda}\) are
\(E_3^1 + E_3^5, E_3^2 + E_3^4, E_4^1 + E_5^1, E_6^1 + E_7^1\).

The intersection matrix on the lattice \(L_{\lambda}\) is represented by
\begin{table}[H]
  \begin{tabular}{|c||c|ccc|ccccc|c|c|c|c|cccc|}
    \hline
    \(\bullet\) & \(E_1^1\) & \(E_2^1\) & \(E_2^2\) & \(E_2^3\) & \(E_3^1\) & \(E_3^2\) & \(E_3^3\) & \(E_3^4\) & \(E_3^5\) & \(E_4^1\) & \(E_5^1\) & \(E_6^1\) & \(E_7^1\) & \(\widetilde{C_{1}}\) & \(\widetilde{C_{4}}\) & \(\widetilde{C_{5}}\) & \(\widetilde{C_{6}}\) \\
    \hline
    \hline
    \(\widetilde{C_{1}}\) & \(1\) & \(0\) & \(1\) & \(0\) & \(0\) & \(0\) & \(0\) & \(0\) & \(0\) & \(0\) & \(0\) & \(0\) & \(0\) & \(-2\) & \(0\) & \(0\) & \(0\) \\
    \(\widetilde{C_{4}}\) & \(0\) & \(0\) & \(0\) & \(0\) & \(0\) & \(0\) & \(1\) & \(0\) & \(0\) & \(0\) & \(0\) & \(0\) & \(0\) & \(0\) & \(-2\) & \(0\) & \(0\) \\
    \(\widetilde{C_{5}}\) & \(0\) & \(1\) & \(0\) & \(0\) & \(0\) & \(0\) & \(0\) & \(0\) & \(0\) & \(1\) & \(1\) & \(0\) & \(0\) & \(0\) & \(0\) & \(-2\) & \(0\) \\
    \(\widetilde{C_{6}}\) & \(0\) & \(0\) & \(0\) & \(1\) & \(0\) & \(0\) & \(0\) & \(0\) & \(0\) & \(0\) & \(0\) & \(1\) & \(1\) & \(0\) & \(0\) & \(0\) & \(-2\) \\
    \hline
  \end{tabular}.
\end{table}
Note that the intersection matrix is non-degenerate.

Discriminant groups and discriminant forms of the lattices \(L_{\mathcal{S}}\) and \(H \oplus \Pic(X)\) are given by
\begin{gather*}
  G' = 
  \begin{pmatrix}
    \frac{1}{2} & 0 & \frac{1}{2} & 0 & 0 & 0 & 0 & 0 & 0 & 0 & 0 & \frac{1}{2} & \frac{1}{2} \\
    0 & 0 & 0 & 0 & 0 & \frac{1}{2} & 0 & \frac{1}{2} & 0 & 0 & 0 & 0 & 0 \\
    0 & 0 & 0 & 0 & 0 & \frac{1}{2} & 0 & 0 & 0 & 0 & 0 & 0 & 0 \\
    \frac{1}{2} & 0 & \frac{1}{2} & 0 & \frac{1}{2} & 0 & 0 & 0 & 0 & 0 & 0 & \frac{1}{2} & \frac{1}{2} \\
    \frac{3}{4} & \frac{1}{4} & \frac{1}{4} & \frac{3}{4} & 0 & 0 & 0 & \frac{5}{8} & \frac{3}{8} & \frac{1}{2} & 0 & \frac{1}{4} & \frac{1}{4}
  \end{pmatrix}, \;
  G'' = 
  \begin{pmatrix}
    0 & 0 & \frac{1}{2} & \frac{1}{2} & \frac{1}{2} & 0 & \frac{1}{2} & 0 & 0 \\
    0 & 0 & \frac{1}{2} & \frac{1}{2} & 0 & \frac{1}{2} & \frac{1}{2} & 0 & 0 \\
    0 & 0 & \frac{1}{2} & 0 & 0 & 0 & \frac{1}{2} & 0 & 0 \\
    0 & 0 & \frac{1}{2} & \frac{1}{2} & 0 & 0 & 0 & 0 & 0 \\
    0 & 0 & \frac{1}{8} & \frac{1}{8} & \frac{1}{8} & \frac{1}{8} & \frac{1}{8} & \frac{3}{4} & \frac{7}{8}
  \end{pmatrix};
\end{gather*}
\begin{gather*}
  B' = 
  \begin{pmatrix}
    0 & \frac{1}{2} & 0 & 0 & 0 \\
    \frac{1}{2} & 0 & 0 & 0 & 0 \\
    0 & 0 & 0 & \frac{1}{2} & 0 \\
    0 & 0 & \frac{1}{2} & 0 & 0 \\
    0 & 0 & 0 & 0 & \frac{5}{8}
  \end{pmatrix}, \;
  B'' = 
  \begin{pmatrix}
    0 & \frac{1}{2} & 0 & 0 & 0 \\
    \frac{1}{2} & 0 & 0 & 0 & 0 \\
    0 & 0 & 0 & \frac{1}{2} & 0 \\
    0 & 0 & \frac{1}{2} & 0 & 0 \\
    0 & 0 & 0 & 0 & \frac{3}{8}
  \end{pmatrix}; \;
  \begin{pmatrix}
    Q' \\ Q''
  \end{pmatrix}
  =
  \begin{pmatrix}
    0 & 0 & 1 & 1 & \frac{13}{8} \\
    0 & 0 & 1 & 1 & \frac{3}{8}
  \end{pmatrix}.
\end{gather*}


\subsection{Family \textnumero8.1}\label{subsection:08-01}

The pencil \(\mathcal{S}\) is defined by the equation
\begin{gather*}
  X Y^{3} + X^{2} Y Z + 3 X Y^{2} Z + 3 X Y Z^{2} + X Z^{3} + 3 X Y^{2} T + \\ 3 X Z^{2} T + 3 X Y T^{2} + 3 X Z T^{2} + Y Z T^{2} + X T^{3} = \lambda X Y Z T.
\end{gather*}
Members \(\mathcal{S}_{\lambda}\) of the pencil are irreducible for any \(\lambda \in \mathbb{P}^1\) except
\(\mathcal{S}_{\infty} = S_{(X)} + S_{(Y)} + S_{(Z)} + S_{(T)}\).
The base locus of the pencil \(\mathcal{S}\) consists of the following curves:
\begin{gather*}
  C_{1} = C_{(X, Y)}, \;
  C_{2} = C_{(X, Z)}, \;
  C_{3} = C_{(X, T)}, \;
  C_{4} = C_{(Y, Z + T)}, \;
  C_{5} = C_{(Z, Y + T)}, \;
  C_{6} = C_{(T, (Y + Z)^3 + X Y Z)}.
\end{gather*}
Their linear equivalence classes on the generic member \(\mathcal{S}_{\Bbbk}\) of the pencil satisfy the following relations:
\begin{gather*}
  \begin{pmatrix}
    [C_{1}] \\ [C_{2}] \\ [C_{6}] \\ [H_{\mathcal{S}}]
  \end{pmatrix} = 
  \begin{pmatrix}
    -2 & 0 & 3 \\
    -2 & 3 & 0 \\
    -3 & 3 & 3 \\
    -2 & 3 & 3
  \end{pmatrix} \cdot
  \begin{pmatrix}
    [C_{3}] \\ [C_{4}] \\ [C_{5}]
  \end{pmatrix}.
\end{gather*}

Put \(\mu (\mu - 1) = (\lambda + 4)^{-1}\). For a general choice of \(\lambda \in \mathbb{C}\) the surface \(\mathcal{S}_{\lambda}\) has the following singularities:
\begin{itemize}\setlength{\itemindent}{2cm}
\item[\(P_{1} = P_{(Y, Z, T)}\):] type \(\mathbb{A}_2\) with the quadratic term \(Y \cdot Z\);
\item[\(P_{2} = P_{(X, T, Y + Z)}\):] type \(\mathbb{A}_5\) with the quadratic term \((\mu X - (\mu - 1) T) \cdot ((\mu - 1) X - \mu T)\);
\item[\(P_{3} = P_{(Y, (\mu - 1) X - \mu T, Z + T)}\):] type \(\mathbb{A}_2\) with the quadratic term \(Y \cdot ((\mu - 1) X - \mu T)\);
\item[\(P_{4} = P_{(Y, \mu X - (\mu - 1) T, Z + T)}\):] type \(\mathbb{A}_2\) with the quadratic term \(Y \cdot (\mu X - (\mu - 1) T)\);
\item[\(P_{5} = P_{(Z, (\mu - 1) X - \mu T, Y + T)}\):] type \(\mathbb{A}_2\) with the quadratic term \(Z \cdot (\mu X - (\mu - 1) T)\);
\item[\(P_{6} = P_{(Z, \mu X - (\mu - 1) T, Y + T)}\):] type \(\mathbb{A}_2\) with the quadratic term \(Z \cdot ((\mu - 1) X - \mu T)\).
\end{itemize}

The only non-trivial Galois orbits on the lattice \(L_{\lambda}\) are 
\[
  E_2^1 + E_2^5, E_2^2 + E_2^4, E_3^1 + E_4^1, E_3^2 + E_4^2, E_5^1 + E_6^1, E_5^2 + E_6^2.
\]

The intersection matrix on the lattice \(L_{\lambda}\) is represented by
\begin{table}[H]
  \begin{tabular}{|c||cc|ccccc|cc|cc|cc|cc|ccc|}
    \hline
    \(\bullet\) & \(E_1^1\) & \(E_1^2\) & \(E_2^1\) & \(E_2^2\) & \(E_2^3\) & \(E_2^4\) & \(E_2^5\) & \(E_3^1\) & \(E_3^2\) & \(E_4^1\) & \(E_4^2\) & \(E_5^1\) & \(E_5^2\) & \(E_6^1\) & \(E_6^2\) & \(\widetilde{C_{3}}\) & \(\widetilde{C_{4}}\) & \(\widetilde{C_{5}}\) \\
    \hline
    \hline
    \(\widetilde{C_{3}}\) & \(0\) & \(0\) & \(0\) & \(0\) & \(1\) & \(0\) & \(0\) & \(0\) & \(0\) & \(0\) & \(0\) & \(0\) & \(0\) & \(0\) & \(0\) & \(-2\) & \(0\) & \(0\) \\
    \(\widetilde{C_{4}}\) & \(1\) & \(0\) & \(0\) & \(0\) & \(0\) & \(0\) & \(0\) & \(1\) & \(0\) & \(1\) & \(0\) & \(0\) & \(0\) & \(0\) & \(0\) & \(0\) & \(-2\) & \(0\) \\
    \(\widetilde{C_{5}}\) & \(0\) & \(1\) & \(0\) & \(0\) & \(0\) & \(0\) & \(0\) & \(0\) & \(0\) & \(0\) & \(0\) & \(1\) & \(0\) & \(1\) & \(0\) & \(0\) & \(0\) & \(-2\) \\
    \hline
  \end{tabular}.
\end{table}
Note that the intersection matrix is non-degenerate.

Discriminant groups and discriminant forms of the lattices \(L_{\mathcal{S}}\) and \(H \oplus \Pic(X)\) are given by
\begin{gather*}
  G' = 
  \begin{pmatrix}
    0 & 0 & 0 & 0 & 0 & 0 & \frac{1}{2} & \frac{1}{2} & 0 & 0 & 0 & 0 \\
    0 & 0 & 0 & 0 & 0 & 0 & \frac{1}{2} & \frac{1}{2} & \frac{1}{2} & 0 & 0 & 0 \\
    0 & 0 & 0 & 0 & 0 & \frac{1}{2} & 0 & 0 & \frac{1}{2} & 0 & 0 & 0 \\
    0 & 0 & 0 & 0 & 0 & \frac{1}{2} & \frac{1}{2} & 0 & \frac{1}{2} & 0 & 0 & 0 \\
    0 & 0 & 0 & \frac{1}{2} & 0 & 0 & 0 & 0 & 0 & 0 & 0 & 0 \\
    0 & 0 & \frac{1}{2} & 0 & 0 & 0 & 0 & 0 & 0 & 0 & 0 & 0 \\
    \frac{2}{3} & \frac{1}{3} & 0 & 0 & 0 & \frac{2}{3} & \frac{1}{3} & \frac{1}{3} & \frac{2}{3} & 0 & 0 & 0
  \end{pmatrix}, \;
  G'' = 
  \begin{pmatrix}
    0 & 0 & \frac{1}{2} & \frac{1}{2} & \frac{1}{2} & 0 & \frac{1}{2} & \frac{1}{2} & 0 & \frac{1}{2} \\
    0 & 0 & \frac{1}{2} & \frac{1}{2} & 0 & \frac{1}{2} & \frac{1}{2} & \frac{1}{2} & 0 & \frac{1}{2} \\
    0 & 0 & 0 & \frac{1}{2} & 0 & 0 & 0 & 0 & 0 & \frac{1}{2} \\
    0 & 0 & \frac{1}{2} & 0 & 0 & 0 & 0 & 0 & 0 & \frac{1}{2} \\
    0 & 0 & \frac{1}{2} & \frac{1}{2} & 0 & 0 & \frac{1}{2} & 0 & 0 & \frac{1}{2} \\
    0 & 0 & \frac{1}{2} & \frac{1}{2} & 0 & 0 & 0 & \frac{1}{2} & 0 & \frac{1}{2} \\
    0 & 0 & \frac{2}{3} & \frac{2}{3} & \frac{2}{3} & \frac{2}{3} & \frac{2}{3} & \frac{2}{3} & \frac{2}{3} & 0
  \end{pmatrix};
\end{gather*}
\begin{gather*}
  B' = 
  \begin{pmatrix}
    0 & \frac{1}{2} & 0 & 0 & 0 & 0 & 0 \\
    \frac{1}{2} & 0 & 0 & 0 & 0 & 0 & 0 \\
    0 & 0 & 0 & \frac{1}{2} & 0 & 0 & 0 \\
    0 & 0 & \frac{1}{2} & 0 & 0 & 0 & 0 \\
    0 & 0 & 0 & 0 & 0 & \frac{1}{2} & 0 \\
    0 & 0 & 0 & 0 & \frac{1}{2} & 0 & 0 \\
    0 & 0 & 0 & 0 & 0 & 0 & \frac{2}{3}
  \end{pmatrix}, \;
  B'' = 
  \begin{pmatrix}
    0 & \frac{1}{2} & 0 & 0 & 0 & 0 & 0 \\
    \frac{1}{2} & 0 & 0 & 0 & 0 & 0 & 0 \\
    0 & 0 & 0 & \frac{1}{2} & 0 & 0 & 0 \\
    0 & 0 & \frac{1}{2} & 0 & 0 & 0 & 0 \\
    0 & 0 & 0 & 0 & 0 & \frac{1}{2} & 0 \\
    0 & 0 & 0 & 0 & \frac{1}{2} & 0 & 0 \\
    0 & 0 & 0 & 0 & 0 & 0 & \frac{1}{3}
  \end{pmatrix}; \;
  \begin{pmatrix}
    Q' \\ Q''
  \end{pmatrix}
  =
  \begin{pmatrix}
    0 & 0 & 0 & 0 & 1 & 1 & \frac{2}{3} \\
    0 & 0 & 0 & 0 & 1 & 1 & \frac{4}{3}    
  \end{pmatrix}.
\end{gather*}


\subsection{Family \textnumero9.1}\label{subsection:09-01}

The pencil \(\mathcal{S}\) is defined by the equation
\(X^3 Y = (Y^2 + Z^2 - \lambda Y Z) (X (Z - T) + T^2)\).
Members \(\mathcal{S}_{\lambda}\) of the pencil are irreducible for any \(\lambda \in \mathbb{P}^1\) except
\(\mathcal{S}_{\infty} = S_{(Y)} + S_{(Z)} + S_{(X (Z - T) + T^2)}\).
The base locus of the pencil \(\mathcal{S}\) consists of the following curves:
\begin{gather*}
  C_{1} = C_{(X, T)}, \;
  C_{2} = C_{(Y, Z)}, \;
  C_{3} = C_{(Y, X (Z - T) + T^2)}, \;
  C_{4} = C_{(Z, X^3 + Y T (X - T))}.
\end{gather*}
Their linear equivalence classes on the generic member \(\mathcal{S}_{\Bbbk}\) of the pencil satisfy the following relations:
\begin{gather*}
  \begin{pmatrix}
    [C_{3}] \\ [C_{4}]
  \end{pmatrix} = 
  \begin{pmatrix}
    0 & -2 & 1 \\
    0 & -1 & 1
  \end{pmatrix} \cdot
  \begin{pmatrix}
    [C_{1}] & [C_{2}] & [H_{\mathcal{S}}]
  \end{pmatrix}^T.
\end{gather*}

Put \(\mu (\mu - 1) = (\lambda - 2)^{-1}\). For a general choice of \(\lambda \in \mathbb{C}\) the surface \(\mathcal{S}_{\lambda}\) has the following singularities:
\begin{itemize}\setlength{\itemindent}{2cm}
\item[\(P_{1} = P_{(X, Y, Z)}\):] type \(\mathbb{A}_5\) with the quadratic term \((\mu Y - (\mu - 1) Z) \cdot ((\mu - 1) Y - \mu Z)\);
\item[\(P_{2} = P_{(X, Z, T)}\):] type \(\mathbb{A}_1\) with the quadratic term \(X (Z - T) + T^2\);
\item[\(P_{3} = P_{(X, T, (\mu - 1) Y - \mu Z)}\):] type \(\mathbb{A}_5\) with the quadratic term \(X \cdot ((\mu - 1) Y - \mu Z)\);
\item[\(P_{4} = P_{(X, T, \mu Y - (\mu - 1) Z)}\):] type \(\mathbb{A}_5\) with the quadratic term \(X \cdot (\mu Y - (\mu - 1) Z)\).
\end{itemize}

The only non-trivial Galois orbits on the lattice \(L_{\lambda}\) are
\[
  E_1^1 + E_1^5, E_1^2 + E_1^4, E_3^1 + E_4^1, E_3^2 + E_4^2, E_3^3 + E_4^3, E_3^4 + E_4^4, E_3^5 + E_4^5.
\]

The intersection matrix on the lattice \(L_{\lambda}\) is represented by
\begin{table}[H]
  \begin{tabular}{|c||ccccc|c|ccccc|ccccc|ccc|}
    \hline
    \(\bullet\) & \(E_1^1\) & \(E_1^2\) & \(E_1^3\) & \(E_1^4\) & \(E_1^5\) & \(E_2^1\) & \(E_3^1\) & \(E_3^2\) & \(E_3^3\) & \(E_3^4\) & \(E_3^5\) & \(E_4^1\) & \(E_4^2\) & \(E_4^3\) & \(E_4^4\) & \(E_4^5\) & \(\widetilde{C_{1}}\) & \(\widetilde{C_{2}}\) & \(\widetilde{H_{\mathcal{S}}}\) \\
    \hline
    \hline
    \(\widetilde{C_{1}}\) & \(0\) & \(0\) & \(0\) & \(0\) & \(0\) & \(1\) & \(1\) & \(0\) & \(0\) & \(0\) & \(0\) & \(1\) & \(0\) & \(0\) & \(0\) & \(0\) & \(-2\) & \(0\) & \(1\) \\
    \(\widetilde{C_{2}}\) & \(0\) & \(0\) & \(1\) & \(0\) & \(0\) & \(0\) & \(0\) & \(0\) & \(0\) & \(0\) & \(0\) & \(0\) & \(0\) & \(0\) & \(0\) & \(0\) & \(0\) & \(-2\) & \(1\) \\
    \(\widetilde{H_{\mathcal{S}}}\) & \(0\) & \(0\) & \(0\) & \(0\) & \(0\) & \(0\) & \(0\) & \(0\) & \(0\) & \(0\) & \(0\) & \(0\) & \(0\) & \(0\) & \(0\) & \(0\) & \(1\) & \(1\) & \(4\) \\
    \hline
  \end{tabular}.
\end{table}
Note that the intersection matrix is degenerate. We choose the following integral basis of the lattice \(L_{\mathcal{S}}\):
\begin{align*}
  \begin{pmatrix}
    [\widetilde{H_{\mathcal{S}}}]
  \end{pmatrix} =
  \begin{pmatrix}
    -1 & -2 & -3 & 3 & 5 & 4 & 3 & 2 & 1 & 6 & -2
  \end{pmatrix} \cdot \\
  \begin{pmatrix}
    [E_1^1 + E_1^5] & [E_1^2 + E_1^4] & [E_1^3] & [E_2^1] & [E_3^1 + E_4^1] & [E_3^2 + E_4^2] \\ [E_3^3 + E_4^3] & [E_3^4 + E_4^4] & [E_3^5 + E_4^5] & [\widetilde{C_{1}}] & [\widetilde{C_{2}}]
  \end{pmatrix}^T.
\end{align*}

Discriminant groups and discriminant forms of the lattices \(L_{\mathcal{S}}\) and \(H \oplus \Pic(X)\) are given by
\begin{gather*}
  G' = 
  \begin{pmatrix}
    0 & 0 & 0 & 0 & \frac{1}{2} & \frac{1}{2} & 0 & \frac{1}{2} & 0 & 0 & 0 \\
    0 & 0 & 0 & 0 & 0 & 0 & \frac{1}{2} & 0 & \frac{1}{2} & 0 & 0 \\
    0 & 0 & 0 & 0 & \frac{1}{2} & 0 & 0 & \frac{1}{2} & \frac{1}{2} & 0 & 0 \\
    0 & 0 & 0 & 0 & \frac{1}{2} & 0 & 0 & 0 & \frac{1}{2} & 0 & 0 \\
    0 & \frac{1}{2} & 0 & 0 & \frac{1}{2} & 0 & \frac{1}{2} & 0 & \frac{1}{2} & 0 & 0 \\
    \frac{1}{2} & \frac{1}{2} & 0 & 0 & \frac{1}{2} & 0 & \frac{1}{2} & 0 & \frac{1}{2} & 0 & 0 \\
    \frac{1}{2} & 0 & 0 & \frac{1}{2} & \frac{1}{4} & 0 & \frac{1}{4} & 0 & \frac{3}{4} & 0 & 0
  \end{pmatrix}, \;
  G'' = 
  \begin{pmatrix}
    0 & 0 & 0 & 0 & 0 & \frac{1}{2} & 0 & 0 & 0 & 0 & \frac{1}{2} \\
    0 & 0 & 0 & 0 & \frac{1}{2} & 0 & 0 & 0 & 0 & 0 & \frac{1}{2} \\
    0 & 0 & 0 & \frac{1}{2} & \frac{1}{2} & \frac{1}{2} & \frac{1}{2} & \frac{1}{2} & 0 & 0 & \frac{1}{2} \\
    0 & 0 & 0 & 0 & \frac{1}{2} & \frac{1}{2} & \frac{1}{2} & \frac{1}{2} & \frac{1}{2} & 0 & \frac{1}{2} \\
    0 & 0 & \frac{1}{2} & \frac{1}{2} & 0 & 0 & 0 & \frac{1}{2} & \frac{1}{2} & 0 & 0 \\
    0 & 0 & \frac{1}{2} & \frac{1}{2} & \frac{1}{2} & \frac{1}{2} & 0 & 0 & \frac{1}{2} & 0 & \frac{1}{2} \\
    0 & 0 & \frac{3}{4} & \frac{1}{4} & \frac{3}{4} & \frac{3}{4} & \frac{1}{4} & \frac{3}{4} & \frac{1}{4} & \frac{1}{2} & \frac{1}{4}
  \end{pmatrix};
\end{gather*}
\begin{gather*}
  B' = 
  \begin{pmatrix}
    0 & \frac{1}{2} & 0 & 0 & 0 & 0 & 0 \\
    \frac{1}{2} & 0 & 0 & 0 & 0 & 0 & 0 \\
    0 & 0 & 0 & \frac{1}{2} & 0 & 0 & 0 \\
    0 & 0 & \frac{1}{2} & 0 & 0 & 0 & 0 \\
    0 & 0 & 0 & 0 & 0 & \frac{1}{2} & 0 \\
    0 & 0 & 0 & 0 & \frac{1}{2} & 0 & 0 \\
    0 & 0 & 0 & 0 & 0 & 0 & \frac{3}{4}
  \end{pmatrix}, \;
  B'' = 
  \begin{pmatrix}
    0 & \frac{1}{2} & 0 & 0 & 0 & 0 & 0 \\
    \frac{1}{2} & 0 & 0 & 0 & 0 & 0 & 0 \\
    0 & 0 & 0 & \frac{1}{2} & 0 & 0 & 0 \\
    0 & 0 & \frac{1}{2} & 0 & 0 & 0 & 0 \\
    0 & 0 & 0 & 0 & 0 & \frac{1}{2} & 0 \\
    0 & 0 & 0 & 0 & \frac{1}{2} & 0 & 0 \\
    0 & 0 & 0 & 0 & 0 & 0 & \frac{1}{4}
  \end{pmatrix}; \;
  \begin{pmatrix}
    Q' \\ Q''
  \end{pmatrix}
  =
  \begin{pmatrix}
    0 & 0 & 0 & 0 & 0 & 0 & \frac{7}{4} \\ 
    0 & 0 & 0 & 0 & 0 & 0 & \frac{1}{4}
  \end{pmatrix}.
\end{gather*}


\subsection{Family \textnumero10.1}\label{subsection:10-01}

The pencil \(\mathcal{S}\) is defined by the equation
\(X Y C^3 = (A^3 - B C (A - B)) (X^2 + Y^2 - \lambda X Y)\).
Members \(\mathcal{S}_{\lambda}\) of the pencil are irreducible for any \(\lambda \in \mathbb{P}^1\) except
\(\mathcal{S}_{\infty} = S_{(X)} + S_{(Y)} + S_{(A^3 - B C (A - B))}\).
The base locus of the pencil \(\mathcal{S}\) consists of the following curves:
\[
  C_1 = C_{(A, C)}, \;
  C_2 = C_{(X, A^3 - B C (A - B))}, \;
  C_3 = C_{(Y, A^3 - B C (A - B))}.
\]
Their linear equivalence classes on the generic member \(\mathcal{S}_{\Bbbk}\) of the pencil satisfy the relations \([C_{3}] = [H_{\mathcal{S}}^{(1)}] = [C_{2}]\).

Put \(\mu (\mu - 1) = (\lambda + 4)^{-1}\). For a general choice of \(\lambda \in \mathbb{C}\) the surface \(\mathcal{S}_{\lambda}\) has the following singularities:
\begin{itemize}\setlength{\itemindent}{2cm}
\item[\(P_1 = P_{(A, C, \mu X - (\mu - 1) Y)}\):] type \(\mathbb{A}_8\);
\item[\(P_2 = P_{(A, C, (\mu - 1) X - \mu Y)}\):] type \(\mathbb{A}_8\).
\end{itemize}

The only non-trivial Galois orbits on the lattice \(L_{\lambda}\) are
\[
  (E_1^1, E_2^1), \; (E_1^2, E_2^2), \; (E_1^3, E_2^3), \; (E_1^4, E_2^4), \; (E_1^5, E_2^5), \; (E_1^6, E_2^6), \; (E_1^7, E_2^7), \; (E_1^8, E_2^8).
\]

The intersection matrix on the lattice \(L_{\lambda}\) is represented by
\begin{table}[H]
  \begin{tabular}{|c||cccccccc|cccccccc|ccc|}
    \hline
    \(\bullet\) & \(E_1^1\) & \(E_1^2\) & \(E_1^3\) & \(E_1^4\) & \(E_1^5\) & \(E_1^6\) & \(E_1^7\) & \(E_1^8\) & \(E_2^1\) & \(E_2^2\) & \(E_2^3\) & \(E_2^4\) & \(E_2^5\) & \(E_2^6\) & \(E_2^7\) & \(E_2^8\) & \(\widetilde{C_{1}}\) & \(\widetilde{C_{2}}\) & \(\widetilde{H_{\mathcal{S}}^{(2)}}\) \\
    \hline
    \hline
    \(\widetilde{C_{1}}\) & \(1\) & \(0\) & \(0\) & \(0\) & \(0\) & \(0\) & \(0\) & \(0\) & \(1\) & \(0\) & \(0\) & \(0\) & \(0\) & \(0\) & \(0\) & \(0\) & \(-2\) & \(1\) & \(0\) \\
    \(\widetilde{C_{2}}\) & \(0\) & \(0\) & \(0\) & \(0\) & \(0\) & \(0\) & \(0\) & \(0\) & \(0\) & \(0\) & \(0\) & \(0\) & \(0\) & \(0\) & \(0\) & \(0\) & \(1\) & \(0\) & \(3\) \\
    \(\widetilde{H_{\mathcal{S}}^{(2)}}\) & \(0\) & \(0\) & \(0\) & \(0\) & \(0\) & \(0\) & \(0\) & \(0\) & \(0\) & \(0\) & \(0\) & \(0\) & \(0\) & \(0\) & \(0\) & \(0\) & \(0\) & \(3\) & \(2\) \\
    \hline
  \end{tabular}.
\end{table}
Note that the intersection matrix is degenerate. We choose the following integral basis of the lattice \(L_{\mathcal{S}}\):
\begin{align*}                                                          
  \begin{pmatrix}
    [E_1^8 + E_2^8]
  \end{pmatrix} =                                                  
  \begin{pmatrix}
    -8 & -7 & -6 & -5 & -4 & -3 & -2 & -9 & -2 & 3
  \end{pmatrix} \cdot \\
  \begin{pmatrix}
    E_1^1 + E_2^1 & E_1^2 + E_2^2 & E_1^3 + E_2^3 & E_1^4 + E_2^4 & E_1^5 + E_2^5 & E_1^6 + E_2^6 & E_1^7 + E_2^7 & \widetilde{C_1} & \widetilde{C_2} & \widetilde{H_{\mathcal{S}}^{(2)}}
  \end{pmatrix}^T.
\end{align*}

Discriminant groups and discriminant forms of the lattices \(L_{\mathcal{S}}\) and \(H \oplus \Pic(X)\) are given by
\begin{gather*}
  G' = 
  \begin{pmatrix}
    0 & 0 & 0 & \frac{1}{2} & \frac{1}{2} & 0 & \frac{1}{2} & 0 & 0 & 0 \\
    0 & 0 & 0 & 0 & \frac{1}{2} & 0 & \frac{1}{2} & 0 & 0 & 0 \\
    0 & 0 & \frac{1}{2} & 0 & \frac{1}{2} & \frac{1}{2} & \frac{1}{2} & 0 & 0 & 0 \\
    0 & 0 & \frac{1}{2} & 0 & \frac{1}{2} & \frac{1}{2} & 0 & 0 & 0 & 0 \\
    0 & \frac{1}{2} & 0 & 0 & 0 & 0 & \frac{1}{2} & \frac{1}{2} & 0 & \frac{1}{2} \\
    0 & \frac{1}{2} & \frac{1}{2} & 0 & \frac{1}{2} & 0 & 0 & \frac{1}{2} & 0 & \frac{1}{2} \\
    \frac{1}{2} & 0 & 0 & 0 & 0 & 0 & 0 & \frac{1}{2} & 0 & \frac{1}{2} \\
    0 & 0 & 0 & 0 & 0 & 0 & 0 & \frac{1}{2} & 0 & \frac{1}{2}
  \end{pmatrix}, \;
  G'' = 
  \begin{pmatrix}
    0 & 0 & \frac{1}{2} & \frac{1}{2} & 0 & 0 & \frac{1}{2} & 0 & \frac{1}{2} & \frac{1}{2} & 0 & \frac{1}{2} \\
    0 & 0 & \frac{1}{2} & \frac{1}{2} & 0 & 0 & 0 & \frac{1}{2} & \frac{1}{2} & \frac{1}{2} & 0 & \frac{1}{2} \\
    0 & 0 & 0 & 0 & \frac{1}{2} & 0 & \frac{1}{2} & \frac{1}{2} & \frac{1}{2} & 0 & 0 & 0 \\
    0 & 0 & \frac{1}{2} & \frac{1}{2} & \frac{1}{2} & 0 & \frac{1}{2} & \frac{1}{2} & 0 & 0 & 0 & \frac{1}{2} \\
    0 & 0 & 0 & 0 & 0 & \frac{1}{2} & \frac{1}{2} & \frac{1}{2} & 0 & \frac{1}{2} & 0 & 0 \\
    0 & 0 & \frac{1}{2} & \frac{1}{2} & \frac{1}{2} & \frac{1}{2} & 0 & 0 & \frac{1}{2} & 0 & 0 & \frac{1}{2} \\
    0 & 0 & 0 & \frac{1}{2} & 0 & 0 & 0 & 0 & 0 & 0 & 0 & \frac{1}{2} \\
    0 & 0 & \frac{1}{2} & 0 & 0 & 0 & 0 & 0 & 0 & 0 & 0 & \frac{1}{2}
  \end{pmatrix};
\end{gather*}
\begin{gather*}
  B' = 
  \begin{pmatrix}
    0 & \frac{1}{2} & 0 & 0 & 0 & 0 & 0 & 0 \\
    \frac{1}{2} & 0 & 0 & 0 & 0 & 0 & 0 & 0 \\
    0 & 0 & 0 & \frac{1}{2} & 0 & 0 & 0 & 0 \\
    0 & 0 & \frac{1}{2} & 0 & 0 & 0 & 0 & 0 \\
    0 & 0 & 0 & 0 & 0 & \frac{1}{2} & 0 & 0 \\
    0 & 0 & 0 & 0 & \frac{1}{2} & 0 & 0 & 0 \\
    0 & 0 & 0 & 0 & 0 & 0 & 0 & \frac{1}{2} \\
    0 & 0 & 0 & 0 & 0 & 0 & \frac{1}{2} & 0
  \end{pmatrix}, \;
  B'' = 
  \begin{pmatrix}
    0 & \frac{1}{2} & 0 & 0 & 0 & 0 & 0 & 0 \\
    \frac{1}{2} & 0 & 0 & 0 & 0 & 0 & 0 & 0 \\
    0 & 0 & 0 & \frac{1}{2} & 0 & 0 & 0 & 0 \\
    0 & 0 & \frac{1}{2} & 0 & 0 & 0 & 0 & 0 \\
    0 & 0 & 0 & 0 & 0 & \frac{1}{2} & 0 & 0 \\
    0 & 0 & 0 & 0 & \frac{1}{2} & 0 & 0 & 0 \\
    0 & 0 & 0 & 0 & 0 & 0 & 0 & \frac{1}{2} \\
    0 & 0 & 0 & 0 & 0 & 0 & \frac{1}{2} & 0
  \end{pmatrix}; \;
  Q' = Q'' = 0.
\end{gather*}


\section{Dolgachev--Nikulin duality for parametrized toric Landau--Ginzburg models}
\label{appendix:parametrized}
\subsection{Toric Landau--Ginzburg models of smooth del Pezzo surfaces with very ample anticanonical class}

Let \(S\) be a smooth del Pezzo surface. If the anticanonical class \(-K_S\) is very ample, then the surface \(S\) admits a Gorenstein toric degeneration. More precisely, it is well-known that there exist 16 Gorenstein toric del Pezzo surfaces, which correspond to 16 reflexive polygons (for example, see~\cite{batyrev2010reflexive}). All of them can be realized as toric degenerations of smooth del Pezzo surfaces (see~\cite[Section~3]{przyjalkowski2017compactification} and Table~\ref{table:toric-dP}).

\begin{notation}
  We denote by \(R_i\) the \(i\)-th reflexive polygon with respect to the numeration in PALP package (see~\cite{kreuzer2004PALP}), and by \(T_i\) the corresponding Gorenstein toric del Pezzo surface. 
\end{notation}

\begin{table}[h]
  \centering
  \begin{minipage}[t]{0.5\textwidth}  
    \begin{tabular}{|c|c|c|}
      \hline
      \(S\) & \((-K_S)^2\) & Gorenstein toric dP surfaces \\
      \hline
      \hline
      \(\mathbb{P}^1 \times \mathbb{P}^1\) & \(8\) & \(T_2, T_4\) \\ 
      \hline
      \(\mathbb{P}^2\) & \(9\) & \(T_1\) \\ 
      \hline
      \(\Bl_1(\mathbb{P}^2)\) & \(8\) & \(T_3\) \\
      \hline  
      \(\Bl_2(\mathbb{P}^2)\) & \(7\) & \(T_5, T_6\) \\ 
      \hline
      \(\Bl_3(\mathbb{P}^2)\) & \(6\) & \(T_7, T_8, T_9, T_{10}\) \\ 
      \hline
    \end{tabular}
  \end{minipage}%
  \begin{minipage}[t]{0.5\textwidth}  
    \begin{tabular}{|c|c|c|}
      \hline
      \(S\) & \((-K_S)^2\) & Gorenstein toric dP surfaces \\
      \hline
      \hline
      \(\Bl_4(\mathbb{P}^2)\) & \(5\) & \(T_{11}, T_{12}\) \\ 
      \hline
      \(\Bl_5(\mathbb{P}^2)\) & \(4\) & \(T_{13}, T_{14}, T_{15}\) \\ 
      \hline
      \(\Bl_6(\mathbb{P}^2)\) & \(3\) & \(T_{16}\) \\ 
      \hline
      \(\Bl_7(\mathbb{P}^2)\) & \(2\) & -- \\ 
      \hline
      \(\Bl_8(\mathbb{P}^2)\) & \(1\) & -- \\ 
      \hline
    \end{tabular}
  \end{minipage}
  \caption{Gorenstein toric del Pezzo surfaces corresponding to smooth del Pezzo surface with the very ample anticanonical class \(-K_X\).}
  \label{table:toric-dP}
\end{table}

\begin{remark}
  Toric degenerations \(T_1, T_3, T_4, T_6, T_{10}\) correspond to smooth toric del Pezzo surfaces. 
\end{remark}

It follows from this table that a smooth del Pezzo surface \(S\) do not admit a Gorenstein toric degeneration precisely when the anticanonical degree is equal to \(1\) or \(2\). It is well-known that such a surface is either a sextic hypersurface in \(\mathbb{P}(1, 1, 2, 3)\) or a quartic hypersurface in \(\mathbb{P}(1, 1, 1, 2)\). Then \(S\) also has a toric degeneration, but its singularities are worse than Gorenstein (see~\cite[Remark~3.4]{przyjalkowski2017compactification}).

We construct a toric Landau--Ginzburg model for all smooth del Pezzo surfaces \(S\) with very ample anticanonical class by applying Przyjalkowski's algorithm (see~\cite[Section~3]{przyjalkowski2017compactification}).

\begin{notation}
  For any coefficients of the form \(\alpha, \beta, \alpha_i \in \mathbb{C}\) we put \(a = e^{-\alpha}\), \(b = e^{-\beta}\), and \(a_i = e^{-\alpha_i}\).
\end{notation}

\subsubsection{Quadric surface}\label{subsubsection:dP_quadric}

Let \(S \simeq \mathbb{P}^1 \times \mathbb{P}^1\) be a quadric surface, and \(D\) be an \((\alpha, \beta)\)-divisor on \(S\). The surface \(S\) admits two Gorenstein toric degenerations: \(T_4 \simeq S\) and a quadratic cone \(T_2\).

Firstly, let us choose \(T_4\) (see Figure~\ref{figure:polygons-T_2-4-1}) as a Gorenstein toric degeneration \(T\) of the surface \(S\), and let \(\widetilde{D}\) be a divisor on its crepant resolution \(\widetilde{T} \simeq S\). The toric Landau--Ginzburg model for the pair \((S, D) = (\widetilde{T}, \widetilde{D})\) equals to
\[
  f_{(S, D)} = f_{(\widetilde{T}, \widetilde{D})} = x + a x^{-1} + y + b y^{-1}.
\]

Secondly, let us choose \(T_2\) (see Figure~\ref{figure:polygons-T_2-4-1}) as a Gorenstein toric degeneration \(T\) of the surface \(S\), and let \(\widetilde{D}\) be a divisor on its crepant resolution \(\widetilde{T}\). In this case \(\widetilde{T}\) is the second Hirzebruch surface \(\mathbb{F}_2\). Consequently, we can present \(\widetilde{D}\) in the form \(\widetilde{D} = \alpha s + \beta f\), where \(s\) is a \((-2)\)-section of \(\widetilde{T}\), and \(f\) is a fibre of the map \(\widetilde{T} \rightarrow \mathbb{P}^1\). The toric Landau--Ginzburg model for the pair \((\widetilde{T}, \widetilde{D})\) has the form
\[
  f_{(\widetilde{T}, \widetilde{D})} = y + b x^{-1} y^{-1} + a y^{-1} + x y^{-1},
\]
and the toric Landau--Ginzburg model for the pair \((S, D)\) equals to
\[
  f_{(S, D)} = y + a x^{-1} y^{-1} + (a + b) y^{-1} + b x y^{-1}.
\]

\subsubsection{Smooth del Pezzo surface of degree 9}\label{subsubsection:dP_09}

Let \(S \simeq \mathbb{P}^2\), and \(D = \alpha_1 l \in \Pic(S) \otimes_{\mathbb{Z}} \mathbb{C}\) be a divisor on \(S\), where \(l\) is the linear equivalence class of a line. The surface \(S\) admits the unique Gorenstein toric degeneration \(T_1 \simeq S\). Let us choose \(T_1\) (see Figure~\ref{figure:polygons-T_2-4-1}) as a Gorenstein toric degeneration \(T\) of the surface \(S\), and let \(\widetilde{D} \in \Pic(\widetilde{T}) \otimes_{\mathbb{Z}} \mathbb{C}\) be the corresponding divisor on its crepant resolution \(\widetilde{T} \simeq S\) under the identification \(\Pic(S) \simeq \Pic(\widetilde{T})\).

The toric Landau--Ginzburg model for the pair \((S, D) = (\widetilde{T}, \widetilde{D})\) equals to
\[
  f_{(S, D)} = f_{(\widetilde{T}, \widetilde{D})} = x + y + a_1 x^{-1} y^{-1}.
\]

\begin{figure}[H]
  \centering
  \begin{minipage}{.33\textwidth}
    \centering
    \begin{tikzpicture}%
	[scale=1.500000,
	back/.style={loosely dotted, thin},
	edge/.style={color=blue!95!black, thick},
	facet/.style={fill=blue!95!black,fill opacity=0.250000},
	vertex/.style={inner sep=1pt,circle,draw=green!25!black,fill=green!75!black,thick}]


\tkzInit[xmax=1,ymax=1,xmin=-1,ymin=-1]
\tkzGrid


\coordinate (-1.00000, -1.00000) at (-1.00000, -1.00000);
\coordinate (0.00000, 1.00000) at (0.00000, 1.00000);
\coordinate (1.00000, -1.00000) at (1.00000, -1.00000);


\fill[facet] (1.00000, -1.00000) -- (-1.00000, -1.00000) -- (0.00000, 1.00000) -- cycle {};


\draw[edge] (-1.00000, -1.00000) -- (0.00000, 1.00000);
\draw[edge] (-1.00000, -1.00000) -- (1.00000, -1.00000);
\draw[edge] (0.00000, 1.00000) -- (1.00000, -1.00000);


\node[vertex] at (-1.00000, -1.00000)     {};
\node[vertex] at (0.00000, 1.00000)     {};
\node[vertex] at (1.00000, -1.00000)     {};

\end{tikzpicture} \\
    \(T_2\)
  \end{minipage}%
  \begin{minipage}{.33\textwidth}
    \centering
    \begin{tikzpicture}%
	[scale=1.500000,
	back/.style={loosely dotted, thin},
	edge/.style={color=blue!95!black, thick},
	facet/.style={fill=blue!95!black,fill opacity=0.250000},
	vertex/.style={inner sep=1pt,circle,draw=green!25!black,fill=green!75!black,thick}]


\tkzInit[xmax=1,ymax=1,xmin=-1,ymin=-1]
\tkzGrid


\coordinate (-1.00000, 0.00000) at (-1.00000, 0.00000);
\coordinate (0.00000, -1.00000) at (0.00000, -1.00000);
\coordinate (0.00000, 1.00000) at (0.00000, 1.00000);
\coordinate (1.00000, 0.00000) at (1.00000, 0.00000);


\fill[facet] (1.00000, 0.00000) -- (0.00000, -1.00000) -- (-1.00000, 0.00000) -- (0.00000, 1.00000) -- cycle {};


\draw[edge] (-1.00000, 0.00000) -- (0.00000, -1.00000);
\draw[edge] (-1.00000, 0.00000) -- (0.00000, 1.00000);
\draw[edge] (0.00000, -1.00000) -- (1.00000, 0.00000);
\draw[edge] (0.00000, 1.00000) -- (1.00000, 0.00000);


\node[vertex] at (-1.00000, 0.00000)     {};
\node[vertex] at (0.00000, -1.00000)     {};
\node[vertex] at (0.00000, 1.00000)     {};
\node[vertex] at (1.00000, 0.00000)     {};

\end{tikzpicture} \\
    \(T_4\)
  \end{minipage}%
  \begin{minipage}{.33\textwidth}
    \centering
    \begin{tikzpicture}%
	[scale=1.500000,
	back/.style={loosely dotted, thin},
	edge/.style={color=blue!95!black, thick},
	facet/.style={fill=blue!95!black,fill opacity=0.250000},
	vertex/.style={inner sep=1pt,circle,draw=green!25!black,fill=green!75!black,thick}]


\tkzInit[xmax=1,ymax=1,xmin=-1,ymin=-1]
\tkzGrid


\coordinate (-1.00000, -1.00000) at (-1.00000, -1.00000);
\coordinate (0.00000, 1.00000) at (0.00000, 1.00000);
\coordinate (1.00000, 0.00000) at (1.00000, 0.00000);


\fill[facet] (1.00000, 0.00000) -- (-1.00000, -1.00000) -- (0.00000, 1.00000) -- cycle {};


\draw[edge] (-1.00000, -1.00000) -- (0.00000, 1.00000);
\draw[edge] (-1.00000, -1.00000) -- (1.00000, 0.00000);
\draw[edge] (0.00000, 1.00000) -- (1.00000, 0.00000);


\node[vertex] at (-1.00000, -1.00000)     {};
\node[vertex] at (0.00000, 1.00000)     {};
\node[vertex] at (1.00000, 0.00000)     {};

\end{tikzpicture} \\
    \(T_1\)
  \end{minipage}
  \caption{Fan polygons of Gorenstein toric del Pezzo surfaces \(T_2\), \(T_4\), and \(T_1\).}
  \label{figure:polygons-T_2-4-1}
\end{figure}

\subsubsection{Smooth del Pezzo surface of degree 8}\label{subsubsection:dP_08}

Let \(S \xrightarrow{\varphi_1} S'\) be a blow-up of del Pezzo surface \(S'\) from the previous case, and \(D = \alpha_1 l + \alpha_2 e_1 \in \Pic(S) \otimes_{\mathbb{Z}} \mathbb{C}\) be a divisor on \(S\), where \(e_1\) is the \(\varphi_1\)-exceptional divisor. The surface \(S\) admits a unique Gorenstein toric degeneration \(T_3 \simeq S\). Let us choose \(T_3\) (see Figure~\ref{figure:polygons-T_3-6-10}) as a Gorenstein toric degeneration \(T\) of the surface \(S\), and let \(\widetilde{D} \in \Pic(\widetilde{T}) \otimes_{\mathbb{Z}} \mathbb{C}\) be the corresponding divisor on the crepant resolution \(\widetilde{T} \simeq S\) under the identification \(\Pic(S) \simeq \Pic(\widetilde{T})\).

Coefficients corresponding to the neighbour points \(L, R\) of the point \(K\) have the form \(c_L = a_1\) and \(c_R = 1\). Consequently, the toric Landau--Ginzburg model for the pair \((S, D) = (\widetilde{T}, \widetilde{D})\) equals to
\[
  f_{(S, D)} = f_{(\widetilde{T}, \widetilde{D})} =
  f_{(\widetilde{T'}, \widetilde{D'})} + c_L c_R a_2 x^{-1} = 
  x + y + a_1 x^{-1} y^{-1} + a_1 a_2 x^{-1}.
\]
where \((\widetilde{T'}, \widetilde{D'})\) is the crepant resolution of the Gorenstein toric degeneration from the previous case.

\subsubsection{Smooth del Pezzo surface of degree 7}\label{subsubsection:dP_07}

Let \(S \xrightarrow{\varphi_2} S'\) be a blow-up of del Pezzo surface \(S'\) from the previous case, and \(D = \alpha_1 l + \sum_{i = 1}^2 \alpha_{i + 1} e_i \in \Pic(S) \otimes_{\mathbb{Z}} \mathbb{C}\) be a divisor on \(S\), where \(e_2\) is the \(\varphi_2\)-exceptional divisor. The surface \(S\) admits two Gorenstein toric degenerations: \(T_6 \simeq S\) and \(T_5\). Let us choose \(T_6\) (see Figure~\ref{figure:polygons-T_3-6-10}) as a Gorenstein toric degeneration \(T\) of the surface \(S\), and let \(\widetilde{D} \in \Pic(\widetilde{T}) \otimes_{\mathbb{Z}} \mathbb{C}\) be a divisor on the crepant resolution \(\widetilde{T} \simeq S\) under the identification \(\Pic(S) \simeq \Pic(\widetilde{T})\).

Coefficients corresponding to the neighbour points \(L, R\) of the point \(K\) have the form \(c_L = 1\) and \(c_R = a_1\). Consequently, the toric Landau--Ginzburg model for the pair \((S, D) = (\widetilde{T}, \widetilde{D})\) equals to
\[
  f_{(S, D)} = f_{(\widetilde{T}, \widetilde{D})} =
  f_{(\widetilde{T'}, \widetilde{D'})} + c_L c_R a_3 y^{-1} =
  x + y + a_1 x^{-1} y^{-1} + a_1 a_2 x^{-1} + a_1 a_3 y^{-1},
\]
where \((\widetilde{T'}, \widetilde{D'})\) is the crepant resolution of the Gorenstein toric degeneration from the previous case.

\subsubsection{Smooth del Pezzo surface of degree 6}\label{subsubsection:dP_06}

Let \(S \xrightarrow{\varphi_3} S'\) be a blow-up of del Pezzo surface \(S'\) from the previous case, and \(D = \alpha_1 l + \sum_{i = 1}^3 \alpha_{i + 1} e_i \in \Pic(S) \otimes_{\mathbb{Z}} \mathbb{C}\) be a divisor on \(S\), where \(e_3\) is the \(\varphi_3\)-exceptional divisor. The surface \(S\) admits four Gorenstein toric degenerations: \(T_7\), \(T_8\), \(T_9\), and \(T_{10} \simeq S\). Let us choose \(T_{10}\) (see Figure~\ref{figure:polygons-T_3-6-10}) as a Gorenstein toric degeneration \(T\) of the surface \(S\), and let \(\widetilde{D} \in \Pic(\widetilde{T}) \otimes_{\mathbb{Z}} \mathbb{C}\) be a divisor on the crepant resolution \(\widetilde{T} \simeq S\) under the identification \(\Pic(S) \simeq \Pic(\widetilde{T})\).

Coefficients corresponding to the neighbour points \(L, R\) of the point \(K\) have the form \(c_L = 1\) and \(c_R = 1\). Consequently, the toric Landau--Ginzburg model for the pair \((S, D) = (\widetilde{T}, \widetilde{D})\) equals to
\[
  f_{(S, D)} = f_{(\widetilde{T}, \widetilde{D})} =
  f_{(\widetilde{T'}, \widetilde{D'})} + c_L c_R a_4 x y =
  x + y + a_1 x^{-1} y^{-1} + a_1 a_2 x^{-1} + a_1 a_3 y^{-1} + a_4 x y.
\]
where \((\widetilde{T'}, \widetilde{D'})\) is the crepant resolution of the Gorenstein toric degeneration from the previous case.

\begin{figure}[H]
  \centering
  \begin{minipage}{.33\textwidth}
    \centering
    \begin{tikzpicture}%
	[scale=1.500000,
	back/.style={loosely dotted, thin},
	edge/.style={color=blue!95!black, thick},
	facet/.style={fill=blue!95!black,fill opacity=0.250000},
	vertex/.style={inner sep=1pt,circle,draw=green!25!black,fill=green!75!black,thick}]


\tkzInit[xmax=1,ymax=1,xmin=-1,ymin=-1]
\tkzGrid


\coordinate (-1.00000, -1.00000) at (-1.00000, -1.00000);
\coordinate (-1.00000, 0.00000) at (-1.00000, 0.00000);
\coordinate (0.00000, 1.00000) at (0.00000, 1.00000);
\coordinate (1.00000, 0.00000) at (1.00000, 0.00000);


\fill[facet] (1.00000, 0.00000) -- (-1.00000, -1.00000) -- (-1.00000, 0.00000) -- (0.00000, 1.00000) -- cycle {};


\draw[edge] (-1.00000, -1.00000) -- (-1.00000, 0.00000);
\draw[edge] (-1.00000, -1.00000) -- (1.00000, 0.00000);
\draw[edge] (-1.00000, 0.00000) -- (0.00000, 1.00000);
\draw[edge] (0.00000, 1.00000) -- (1.00000, 0.00000);


\node[vertex] [draw=yellow!25!black,fill=yellow!75!black] at (-1.00000, -1.00000)     {L};
\node[vertex] [draw=orange!25!black,fill=orange!75!black] at (-1.00000, 0.00000)     {K};
\node[vertex] [draw=yellow!25!black,fill=yellow!75!black] at (0.00000, 1.00000)     {R};
\node[vertex] at (1.00000, 0.00000)     {};

\end{tikzpicture} \\
    \(T_3\)
  \end{minipage}%
  \begin{minipage}{.33\textwidth}
    \centering
    \begin{tikzpicture}%
	[scale=1.500000,
	back/.style={loosely dotted, thin},
	edge/.style={color=blue!95!black, thick},
	facet/.style={fill=blue!95!black,fill opacity=0.250000},
	vertex/.style={inner sep=1pt,circle,draw=green!25!black,fill=green!75!black,thick}]


\tkzInit[xmax=1,ymax=1,xmin=-1,ymin=-1]
\tkzGrid


\coordinate (-1.00000, -1.00000) at (-1.00000, -1.00000);
\coordinate (-1.00000, 0.00000) at (-1.00000, 0.00000);
\coordinate (0.00000, -1.00000) at (0.00000, -1.00000);
\coordinate (0.00000, 1.00000) at (0.00000, 1.00000);
\coordinate (1.00000, 0.00000) at (1.00000, 0.00000);


\fill[facet] (1.00000, 0.00000) -- (0.00000, -1.00000) -- (-1.00000, -1.00000) -- (-1.00000, 0.00000) -- (0.00000, 1.00000) -- cycle {};


\draw[edge] (-1.00000, -1.00000) -- (-1.00000, 0.00000);
\draw[edge] (-1.00000, -1.00000) -- (0.00000, -1.00000);
\draw[edge] (-1.00000, 0.00000) -- (0.00000, 1.00000);
\draw[edge] (0.00000, -1.00000) -- (1.00000, 0.00000);
\draw[edge] (0.00000, 1.00000) -- (1.00000, 0.00000);


\node[vertex] [draw=yellow!25!black,fill=yellow!75!black] at (-1.00000, -1.00000)     {R};
\node[vertex] at (-1.00000, 0.00000)     {};
\node[vertex] [draw=orange!25!black,fill=orange!75!black] at (0.00000, -1.00000)     {K};
\node[vertex] at (0.00000, 1.00000)     {};
\node[vertex] [draw=yellow!25!black,fill=yellow!75!black] at (1.00000, 0.00000)     {L};

\end{tikzpicture} \\
    \(T_6\)
  \end{minipage}%
  \begin{minipage}{.33\textwidth}
    \centering
   \begin{tikzpicture}%
	[scale=1.500000,
	back/.style={loosely dotted, thin},
	edge/.style={color=blue!95!black, thick},
	facet/.style={fill=blue!95!black,fill opacity=0.250000},
	vertex/.style={inner sep=1pt,circle,draw=green!25!black,fill=green!75!black,thick}]


\tkzInit[xmax=1,ymax=1,xmin=-1,ymin=-1]
\tkzGrid


\coordinate (-1.00000, -1.00000) at (-1.00000, -1.00000);
\coordinate (-1.00000, 0.00000) at (-1.00000, 0.00000);
\coordinate (0.00000, -1.00000) at (0.00000, -1.00000);
\coordinate (0.00000, 1.00000) at (0.00000, 1.00000);
\coordinate (1.00000, 0.00000) at (1.00000, 0.00000);
\coordinate (1.00000, 1.00000) at (1.00000, 1.00000);


\fill[facet] (1.00000, 1.00000) -- (0.00000, 1.00000) -- (-1.00000, 0.00000) -- (-1.00000, -1.00000) -- (0.00000, -1.00000) -- (1.00000, 0.00000) -- cycle {};


\draw[edge] (-1.00000, -1.00000) -- (-1.00000, 0.00000);
\draw[edge] (-1.00000, -1.00000) -- (0.00000, -1.00000);
\draw[edge] (-1.00000, 0.00000) -- (0.00000, 1.00000);
\draw[edge] (0.00000, -1.00000) -- (1.00000, 0.00000);
\draw[edge] (0.00000, 1.00000) -- (1.00000, 1.00000);
\draw[edge] (1.00000, 0.00000) -- (1.00000, 1.00000);


\node[vertex] at (-1.00000, -1.00000)     {};
\node[vertex] at (-1.00000, 0.00000)     {};
\node[vertex] at (0.00000, -1.00000)     {};
\node[vertex] [draw=yellow!25!black,fill=yellow!75!black] at (0.00000, 1.00000)     {L};
\node[vertex] [draw=yellow!25!black,fill=yellow!75!black] at (1.00000, 0.00000)     {R};
\node[vertex] [draw=orange!25!black,fill=orange!75!black] at (1.00000, 1.00000)     {K};

\end{tikzpicture} \\
   \(T_{10}\)
  \end{minipage}
  \caption{Fan polygons of Gorenstein toric del Pezzo surfaces \(T_3\), \(T_6\), and \(T_{10}\).}
  \label{figure:polygons-T_3-6-10}
\end{figure}

\subsubsection{Smooth del Pezzo surface of degree 5}\label{subsubsection:dP_05}

Let \(S \xrightarrow{\varphi_4} S'\) be a blow-up of del Pezzo surface \(S'\) from the previous case, and \(D = \alpha_1 l + \sum_{i = 1}^4 \alpha_{i + 1} e_i \in \Pic(S) \otimes_{\mathbb{Z}} \mathbb{C}\) be a divisor on \(S\), where \(e_4\) is the \(\varphi_4\)-exceptional divisor. The surface \(S\) admits two Gorenstein toric degenerations: \(T_{11}\) and \(T_{12}\). Let us choose \(T_{12}\) (see Figure~\ref{figure:polygons-T_12-14-16}) as a Gorenstein toric degeneration \(T\) of the surface \(S\), and let \(\widetilde{D} \in \Pic(\widetilde{T}) \otimes_{\mathbb{Z}} \mathbb{C}\) be a divisor on the crepant resolution \(\widetilde{T}\) under the identification \(\Pic(S) \simeq \Pic(\widetilde{T})\).

Coefficients corresponding to the neighbour points \(L, R\) of the point \(K\) have the form \(c_L = a_1 a_2\) and \(c_R = 1\). Consequently, the toric Landau--Ginzburg model for the pair \((\widetilde{T}, \widetilde{D})\) equals to
\[
  f_{(\widetilde{T}, \widetilde{D})} =
  f_{(\widetilde{T'}, \widetilde{D'})} + c_L c_R a_5 x^{-1} y =
  x + y + a_1 x^{-1} y^{-1} + a_1 a_2 x^{-1} + a_1 a_3 y^{-1} + a_4 x y + a_1 a_2 a_5 x^{-1} y.
\]
where \((\widetilde{T'}, \widetilde{D'})\) is the crepant resolution of the Gorenstein toric degeneration from the previous case.

To obtain the toric Landau--Ginzburg model for the pair \((S, D)\), we have to modify the coefficients of \(f_{(\widetilde{T}, \widetilde{D})}\) corresponding to non-vertex boundary points. Let us consider the facets of the fan polygon containing these points. Their marking polynomials and corresponding modified coefficients have the form
\begin{gather*}
  a_1 a_2 a_5 s^2 + a_1 (a_2 + a_5) s + a_1 =
  c_K s^2 + \widetilde{c}_L s + c_V, \\
  a_4 s^2 + (a_1 a_2 a_4 a_5 + 1) s + a_1 a_2 a_5 =
  c_W s^2 + \widetilde{c}_R s + c_K.
\end{gather*}
Consequently, the toric Landau--Ginzburg model for the pair \((S, D)\) equals to
\[
  f_{(S, D)} = x + (a_1 a_2 a_4 a_5 + 1) y + a_1 x^{-1} y^{-1} +
  a_1 (a_2 + a_5) x^{-1} + a_1 a_3 y^{-1} + a_4 x y + a_1 a_2 a_5 x^{-1} y.
\]

\subsubsection{Smooth del Pezzo surface of degree 4}\label{subsubsection:dP_04}

Let \(S \xrightarrow{\varphi_5} S'\) be a blow-up of del Pezzo surface \(S'\) from the previous case, and \(D = \alpha_1 l + \sum_{i = 1}^5 \alpha_{i + 1} e_i \in \Pic(S) \otimes_{\mathbb{Z}} \mathbb{C}\) be a divisor on \(S\), where \(e_5\) is the \(\varphi_5\)-exceptional divisor. The surface \(S\) admits three Gorenstein toric degenerations: \(T_{13}\), \(T_{14}\) and \(T_{15}\). Let us choose \(T_{14}\) (see Figure~\ref{figure:polygons-T_12-14-16}) as a Gorenstein toric degeneration \(T\) of the surface \(S\), and let \(\widetilde{D} \in \Pic(\widetilde{T}) \otimes_{\mathbb{Z}} \mathbb{C}\) be a divisor on the crepant resolution \(\widetilde{T}\) under the identification \(\Pic(S) \simeq \Pic(\widetilde{T})\).

Coefficients corresponding to the neighbour points \(L, R\) of the point \(K\) have the form \(c_L = a_1 a_3\) and \(c_R = a_1\). Consequently, the toric Landau--Ginzburg model for the pair \((\widetilde{T}, \widetilde{D})\) equals to
\begin{gather*}
  f_{(\widetilde{T}, \widetilde{D})} =
  f_{(\widetilde{T'}, \widetilde{D'})} + c_L c_R a_6 x^{-1} y^{-2} = \\
  x + y + a_1 x^{-1} y^{-1} + a_1 a_2 x^{-1} + 
  a_1 a_3 y^{-1} + a_4 x y + a_1 a_2 a_5 x^{-1} y +
  a_1^2 a_3 a_6 x^{-1} y^{-2},
\end{gather*}
where \((\widetilde{T'}, \widetilde{D'})\) is the crepant resolution of the Gorenstein toric degeneration from the previous case.

To obtain the toric Landau--Ginzburg model for the pair \((S, D)\), we have to modify the coefficients of \(f_{(\widetilde{T}, \widetilde{D})}\) corresponding to non-vertex boundary points. Let us consider the facets of the fan polygon containing these points. Their marking polynomials and corresponding modified coefficients have the form
\begin{gather*}
  a_1 a_2 a_5 s^3 + a_1 (a_1 a_2 a_3 a_5 a_6 + a_2 + a_5) s^2 +
  a_1 (a_1 a_3 a_6 (a_2 + a_5) + 1) s + a_1^2 a_3 a_6 =
  c_U s^3 + \widetilde{c}_A s^2 + \widetilde{c}_R s + c_K, \\
  a_4 s^2 + (a_1 a_2 a_4 a_5 + 1) s + a_1 a_2 a_5 =
  c_V s^2 + \widetilde{c}_B s + c_U, \\
  a_1^2 a_3 a_6 s^2 + a_1 (a_3 + a_6) s + 1 =
  c_K s^2 + \widetilde{c}_L s + c_W.
\end{gather*}
Consequently, the toric Landau--Ginzburg model for the pair \((S, D)\) equals to
\begin{gather*}
  f_{(S, D)} = x + (a_1 a_2 a_4 a_5 + 1) y + a_1 (a_1 a_3 a_6 (a_2 + a_5) + 1) x^{-1} y^{-1} + \\ a_1 (a_1 a_2 a_3 a_5 a_6 + a_2 + a_5) x^{-1} + a_1 (a_3 + a_6) y^{-1} + a_4 x y + a_1 a_2 a_5 x^{-1} y + a_1^2 a_3 a_6 x^{-1} y^{-2}.
\end{gather*}

\subsubsection{Smooth del Pezzo surface of degree 3}\label{subsubsection:dP_03}

Let \(S \xrightarrow{\varphi_6} S'\) be a blow-up of del Pezzo surface \(S'\) from the previous case, and \(D = \alpha_1 l + \sum_{i = 1}^6 \alpha_{i + 1} e_i \in \Pic(S) \otimes_{\mathbb{Z}} \mathbb{C}\) be a divisor on \(S\), where \(e_6\) is the \(\varphi_6\)-exceptional divisor. The surface \(S\) admits a unique Gorenstein toric degenerations \(T_{16}\). Let us choose \(T_{16}\) (see Figure~\ref{figure:polygons-T_12-14-16}) as a Gorenstein toric degeneration \(T\) of the surface \(S\), and let \(\widetilde{D}\) be a divisor on the crepant resolution \(\widetilde{T}\) under the identification \(\Pic(S) \simeq \Pic(\widetilde{T})\).

Coefficients corresponding to the neighbour points \(L, R\) of the point \(K\) have the form \(c_L = a_4\) and \(c_R = 1\). Consequently, the toric Landau--Ginzburg model for the pair \((\widetilde{T}, \widetilde{D})\) equals to
\begin{gather*}
  f_{(\widetilde{T}, \widetilde{D})} =
  f_{(\widetilde{T'}, \widetilde{D'})} + c_L c_R a_7 x^2 y = \\
  x + y + a_1 x^{-1} y^{-1} + a_1 a_2 x^{-1} + 
  a_1 a_3 y^{-1} + a_4 x y + a_1 a_2 a_5 x^{-1} y +
  a_1^2 a_3 a_6 x^{-1} y^{-2} + a_4 a_7 x^2 y.
\end{gather*}
where \((\widetilde{T'}, \widetilde{D'})\) is the crepant resolution of the Gorenstein toric degeneration from the previous case.

To obtain the toric Landau--Ginzburg model for the pair \((S, D)\), we have to modify the coefficients of \(f_{(\widetilde{T}, \widetilde{D})}\) corresponding to non-vertex boundary points. Let us consider the facets of the fan polygon containing these points. Their marking polynomials and corresponding modified coefficients have the form
\begin{gather*}
  a_1 a_2 a_5 s^3 + a_1 (a_1 a_2 a_3 a_5 a_6 + a_2 + a_5) s^2 + a_1 (a_1 a_3 a_6 (a_2 + a_5) + 1) s + a_1^2 a_3 a_6 = c_V s^3 + c_D s^2 + c_C s + c_W, \\
  a_4 a_7 s^3 + (a_1 a_2 a_4 a_5 a_7 + a_4 + a_7) s^2 + (a_1 a_2 a_5 (a_4 + a_7) + 1) s + a_1 a_2 a_5 = c_K s^3 + c_L s^2 + c_A s + c_V, \\
  a_1^2 a_3 a_6 s^3 + a_1 (a_1 a_3 a_4 a_6 a_7 + a_3 + a_6) s^2 + (a_1 a_4 a_7 (a_3 + a_6) + 1) s + a_4 a_7 = c_W s^3 + c_C s^2 + c_D s + c_W.
\end{gather*}

Consequently, the toric Landau--Ginzburg model for the pair \((S, D)\) equals to
\begin{gather*}
  f_{(S, D)} = 
  (a_1 a_4 a_7 (a_3 + a_6) + 1) x + (a_1 a_2 a_5 (a_4 + a_7) + 1) y + 
  a_1 (a_1 a_3 a_6 (a_2 + a_5) + 1) x^{-1} y^{-1} + \\
  a_1 (a_1 a_2 a_3 a_5 a_6 + a_2 + a_5) x^{-1} +
  a_1 (a_1 a_3 a_4 a_6 a_7 + a_3 + a_6) y^{-1} + 
  (a_1 a_2 a_4 a_5 a_7 + a_4 + a_7) x y + \\
  a_1 a_2 a_5 x^{-1} y +
  a_1^2 a_3 a_6 x^{-1} y^{-2} +
  a_4 a_7 x^2 y.
\end{gather*}

\begin{figure}[H]
  \centering
  \begin{minipage}{.33\textwidth}
    \centering
   \begin{tikzpicture}%
	[scale=1.500000,
	back/.style={loosely dotted, thin},
	edge/.style={color=blue!95!black, thick},
	facet/.style={fill=blue!95!black,fill opacity=0.250000},
	vertex/.style={inner sep=1pt,circle,draw=green!25!black,fill=green!75!black,thick}]


\tkzInit[xmax=1,ymax=1,xmin=-1,ymin=-1]
\tkzGrid


\coordinate (-1.00000, -1.00000) at (-1.00000, -1.00000);
\coordinate (-1.00000, 1.00000) at (-1.00000, 1.00000);
\coordinate (0.00000, -1.00000) at (0.00000, -1.00000);
\coordinate (1.00000, 0.00000) at (1.00000, 0.00000);
\coordinate (1.00000, 1.00000) at (1.00000, 1.00000);


\fill[facet] (1.00000, 1.00000) -- (-1.00000, 1.00000) -- (-1.00000, -1.00000) -- (0.00000, -1.00000) -- (1.00000, 0.00000) -- cycle {};


\draw[edge] (-1.00000, -1.00000) -- (-1.00000, 1.00000);
\draw[edge] (-1.00000, -1.00000) -- (0.00000, -1.00000);
\draw[edge] (-1.00000, 1.00000) -- (1.00000, 1.00000);
\draw[edge] (0.00000, -1.00000) -- (1.00000, 0.00000);
\draw[edge] (1.00000, 0.00000) -- (1.00000, 1.00000);


\node[vertex] at (-1.00000, -1.00000)     {};
\node[vertex] [draw=orange!25!black,fill=orange!75!black] at (-1.00000, 1.00000)     {K};
\node[vertex] at (0.00000, -1.00000)     {};
\node[vertex] at (1.00000, 0.00000)     {};
\node[vertex] at (1.00000, 1.00000)     {};
\node[vertex] [draw=yellow!25!black,fill=yellow!75!black] at (-1.00000, 0.00000)     {L};
\node[vertex] [draw=yellow!25!black,fill=yellow!75!black] at (0.00000, 1.00000)     {R};
\node[vertex] [draw=cyan!25!black,fill=cyan!75!black] at (-1.00000, -1.00000)     {V};
\node[vertex] [draw=cyan!25!black,fill=cyan!75!black] at (1.00000, 1.00000)     {W};

\end{tikzpicture} \\
   \(T_{12}\)
  \end{minipage}%
  \begin{minipage}{.33\textwidth}
    \centering
    \begin{tikzpicture}%
	[scale=1.250000,
	back/.style={loosely dotted, thin},
	edge/.style={color=blue!95!black, thick},
	facet/.style={fill=blue!95!black,fill opacity=0.250000},
	vertex/.style={inner sep=1pt,circle,draw=green!25!black,fill=green!75!black,thick}]


\tkzInit[xmax=1,ymax=1,xmin=-1,ymin=-2]
\tkzGrid


\coordinate (-1.00000, -2.00000) at (-1.00000, -2.00000);
\coordinate (-1.00000, 1.00000) at (-1.00000, 1.00000);
\coordinate (1.00000, 0.00000) at (1.00000, 0.00000);
\coordinate (1.00000, 1.00000) at (1.00000, 1.00000);


\fill[facet] (1.00000, 1.00000) -- (-1.00000, 1.00000) -- (-1.00000, -2.00000) -- (1.00000, 0.00000) -- cycle {};


\draw[edge] (-1.00000, -2.00000) -- (-1.00000, 1.00000);
\draw[edge] (-1.00000, -2.00000) -- (1.00000, 0.00000);
\draw[edge] (-1.00000, 1.00000) -- (1.00000, 1.00000);
\draw[edge] (1.00000, 0.00000) -- (1.00000, 1.00000);


\node[vertex] [draw=orange!25!black,fill=orange!75!black] at (-1.00000, -2.00000)     {K};
\node[vertex] at (-1.00000, 1.00000)     {};
\node[vertex] at (1.00000, 0.00000)     {};
\node[vertex] at (1.00000, 1.00000)     {};
\node[vertex] [draw=yellow!25!black,fill=yellow!75!black] at (0.00000, -1.00000)     {L};
\node[vertex] [draw=yellow!25!black,fill=yellow!75!black] at (-1.00000, -1.00000)     {R};
\node[vertex] [draw=green!25!black,fill=green!75!black] at (-1.00000, 0.00000)     {A};
\node[vertex] [draw=green!25!black,fill=green!75!black] at (0.00000, 1.00000)     {B};
\node[vertex] [draw=cyan!25!black,fill=cyan!75!black] at (-1.00000, 1.00000)     {U};
\node[vertex] [draw=cyan!25!black,fill=cyan!75!black] at (1.00000, 1.00000)     {V};
\node[vertex] [draw=cyan!25!black,fill=cyan!75!black] at (1.00000, 0.00000)     {W};

\end{tikzpicture} \\
    \(T_{14}\)
  \end{minipage}%
  \begin{minipage}{.33\textwidth}
    \centering
    \begin{tikzpicture}%
	[scale=1.250000,
	back/.style={loosely dotted, thin},
	edge/.style={color=blue!95!black, thick},
	facet/.style={fill=blue!95!black,fill opacity=0.250000},
	vertex/.style={inner sep=1pt,circle,draw=green!25!black,fill=green!75!black,thick}]


\tkzInit[xmax=2,ymax=1,xmin=-1,ymin=-2]
\tkzGrid


\coordinate (-1.00000, -2.00000) at (-1.00000, -2.00000);
\coordinate (-1.00000, 1.00000) at (-1.00000, 1.00000);
\coordinate (2.00000, 1.00000) at (2.00000, 1.00000);


\fill[facet] (2.00000, 1.00000) -- (-1.00000, -2.00000) -- (-1.00000, 1.00000) -- cycle {};


\draw[edge] (-1.00000, -2.00000) -- (-1.00000, 1.00000);
\draw[edge] (-1.00000, -2.00000) -- (2.00000, 1.00000);
\draw[edge] (-1.00000, 1.00000) -- (2.00000, 1.00000);


\node[vertex] at (-1.00000, -2.00000)     {};
\node[vertex] at (-1.00000, 1.00000)     {};
\node[vertex] [draw=orange!25!black,fill=orange!75!black] at (2.00000, 1.00000)     {K};
\node[vertex] [draw=yellow!25!black,fill=yellow!75!black] at (1.00000, 1.00000)     {L};
\node[vertex] [draw=yellow!25!black,fill=yellow!75!black] at (1.00000, 0.00000)     {R};
\node[vertex] [draw=green!25!black,fill=green!75!black] at (0.00000, 1.00000)     {A};
\node[vertex] [draw=green!25!black,fill=green!75!black] at (0.00000, -1.00000)     {B};
\node[vertex] [draw=green!25!black,fill=green!75!black] at (-1.00000, -1.00000)     {C};
\node[vertex] [draw=green!25!black,fill=green!75!black] at (-1.00000, 0.00000)     {D};
\node[vertex] [draw=cyan!25!black,fill=cyan!75!black] at (-1.00000, 1.00000)     {V};
\node[vertex] [draw=cyan!25!black,fill=cyan!75!black] at (-1.00000, -2.00000)     {W};

\end{tikzpicture} \\
    \(T_{16}\)
  \end{minipage}
  \caption{Fan polygons of Gorenstein toric del Pezzo surfaces \(T_{12}\), \(T_{14}\) and \(T_{16}\).}
  \label{figure:polygons-T_12-14-16}
\end{figure}


\subsection{Parametrized toric Landau--Ginzburg models for \texorpdfstring{\(S \times \mathbb{P}^1\)}{S x P\textasciicircum1} with \texorpdfstring{\((-K_S)^2 > 2\)}{(-K\_S)\textasciicircum 2 > 2}}\label{subsection:dPxP1_list}

In this subsection we mutate the obtained parametrized Landau--Ginzburg models to make them coincide with the standard Landau--Ginzburg models after the parameter specialization. Note that here we follow the numeration of Minkowski polynomials in~\cite{akhtar2012minkowski}, it differs from the similar numeration used in~\cite{cheltsov2018katzarkov} by adding one to the corresponding number of a Minkowski polynomial.

\subsubsection{Family \textnumero2.34}\label{subsubsection:dPxP1_02-34}

The toric Landau--Ginzburg model for this family is given by the Laurent polynomial \(x + y + a_1 x^{-1} y^{-1} + z + a_2 z^{-1}\). It is a Minkowski polynomial \textnumero5 (see~\cite[Appendix~B: bucket~10]{akhtar2012minkowski}). After the change of variables \((x, y, z) \mapsto (z, y, x)\) we obtain the Laurent polynomial \(x + y + z + a_2 x^{-1} + a_1 y^{-1} z^{-1}\).

\subsubsection{Family \textnumero3.27}\label{subsubsection:dPxP1_03-27}

The toric Landau--Ginzburg model for this family is given by the Laurent polynomial \(x + a_1 x^{-1} + y + a_2 y^{-1} + z + a_3 z^{-1}\). It is a Minkowski polynomial \textnumero31 (see~\cite[Appendix~B: bucket~45]{akhtar2012minkowski}).

\subsubsection{Family \textnumero3.28}\label{subsubsection:dPxP1_03-28}

The toric Landau--Ginzburg model for this family is given by the Laurent polynomial \(x + y + a_1 x^{-1} y^{-1} + a_1 a_2 x^{-1} + z + a_3 z^{-1}\). It is a Minkowski polynomial \textnumero30 (see~\cite[Appendix~B: bucket~28]{akhtar2012minkowski}). After the change of variables
\((x, y, z) \mapsto (x^{-1}, z, y)\) we obtain the Laurent polynomial \((a_1 a_2) x + y + z + a_1 x z^{-1} + a_3 y^{-1} + x^{-1}\).

\subsubsection{Family \textnumero4.10}\label{subsubsection:dPxP1_04-10}

The toric Landau--Ginzburg model for this family is given by the Laurent polynomial
\(x + y + a_1 x^{-1} y^{-1} + a_1 a_2 x^{-1} + a_1 a_3 y^{-1} + z + a_4 z^{-1}\).
It is a Minkowski polynomial \textnumero85 (see~\cite[Appendix~B: bucket~48]{akhtar2012minkowski}). After the change of variables
\((x, y, z) \mapsto (y^{-1}, x, z)\) we obtain the Laurent polynomial
\[
  x + \left(a_1 a_2\right) y + z + a_1 x^{-1} y + a_4 z^{-1} + y^{-1} + \left(a_1 a_3\right) x^{-1}.
\]

\subsubsection{Family \textnumero5.3}\label{subsubsection:dPxP1_05-03}

The toric Landau--Ginzburg model for this family is given by the Laurent polynomial
\(x + y + a_1 x^{-1} y^{-1} + a_1 a_2 x^{-1} + a_1 a_3 y^{-1} + a_4 x y + z + a_5^{-1}\).
It is a Minkowski polynomial \textnumero219 (see~\cite[Appendix~B: bucket~76]{akhtar2012minkowski}). After the change of variables
\((x, y, z) \mapsto (y^{-1}, z, x)\) we obtain the Laurent polynomial
\(x + (a_1 a_2) y + z + a_1 y z^{-1} + a_4 y^{-1} z + (a_1 a_3) z^{-1} + y^{-1} + a_5 x^{-1}\).

\subsubsection{Family \textnumero6.1}\label{subsubsection:dPxP1_06-01}

The toric Landau--Ginzburg model for this family is given by the Laurent polynomial
\[
  x + (a_1 a_2 a_4 a_5 + 1) y + a_1 x^{-1} y^{-1} + a_1 (a_2 + a_5) x^{-1} +
  a_1 a_3 y^{-1} + a_4 x y + a_1 a_2 a_5 x^{-1} y + z + a_6 z^{-1}.
\]
It is a Minkowski polynomial \textnumero357 (see~\cite[Appendix~B: bucket~107]{akhtar2012minkowski}). After the change of variables
\((x, y, z) \mapsto (y, x^{-1}, z)\)
we obtain the Laurent polynomial
\[
  \left(a_1 a_3\right) x + y + z + a_1 x y^{-1} + a_4 x^{-1} y + a_6 z^{-1} + \left(a_1 a_2 + a_1 a_5\right) y^{-1} + \left(a_1 a_2 a_4 a_5 + 1\right) x^{-1} + \left(a_1 a_2 a_5\right) x^{-1} y^{-1}.
\]

We apply the mutation
\[
  (x, y, z) \mapsto (x, y, z)^{(M, f, N)}, \;
  M =
  \begin{pmatrix}
    -1 & 0 & 1 \\
    0 & 1 & 0 \\
    0 & 1 & -1
  \end{pmatrix}, \;
  f = x (a_4 y + 1) (a_1 a_2 a_5 + y) + y^2, \;
  N =
  \begin{pmatrix}
    -1 & 1 & -1 \\
    0 & 1 & 0 \\
    -1 & -1 & 0    
  \end{pmatrix},
\]
to obtain a Minkowski polynomial \textnumero1353 from~\cite[Appendix~B: bucket~107]{akhtar2012minkowski}:
\begin{gather*}
  x + y + \left(a_1 a_3\right) z + \left(a_1 a_3 a_4\right) x^{-1} y + a_1 y^{-1} z + \left(a_1 a_2 + a_1 a_5\right) y^{-1} + \left(a_1^{2} a_2 a_3 a_4 a_5 + a_1 a_3 + a_1 a_4 + a_6\right) x^{-1} + \\ \left(a_4 a_6\right) x^{-2} y z^{-1} + \left(a_1^{2} a_2 a_3 a_5 + a_1^{2} a_2 a_4 a_5 + a_1\right) x^{-1} y^{-1} + \left(a_1 a_2 a_4 a_5 a_6 + a_6\right) x^{-2} z^{-1} + \left(a_1^{2} a_2 a_5\right) x^{-1} y^{-2} + \\ \left(a_1 a_2 a_5 a_6\right) x^{-2} y^{-1} z^{-1}.
\end{gather*}

We apply the mutation
\[
  (x, y, z) \mapsto (x, y, z)^{(M, f, N)}, \;
  M =
  \begin{pmatrix}
    0 & -1 & 0 \\
    1 & 1 & 0 \\
    -1 & 0 & 1
  \end{pmatrix}, \;
  f = (a_4 x y + 1) (a_1 a_2 a_5 + x y), \;
  N =
  \begin{pmatrix}
    1 & 1 & 0 \\
    -1 & 0 & 0 \\
    -1 & 0 & -1
  \end{pmatrix},
\]
to obtain a Minkowski polynomial \textnumero1231 from~\cite[Appendix~B: bucket~107]{akhtar2012minkowski}:
\begin{gather*}
  x + y + a_6 z + \left(a_1 a_3 a_4\right) x^{-1} y + \left(a_1 a_2 + a_1 a_5\right) y^{-1} + \left(a_1^{2} a_2 a_3 a_4 a_5 + a_1 a_3 + a_1 a_4 + a_6\right) x^{-1} + \\ \left(a_1 a_3 a_4\right) x^{-2} y z^{-1} + \left(a_1^{2} a_2 a_3 a_5 + a_1^{2} a_2 a_4 a_5 + a_1\right) x^{-1} y^{-1} + \left(a_1^{2} a_2 a_3 a_4 a_5 + a_1 a_3 + a_1 a_4\right) x^{-2} z^{-1} + \\ \left(a_1^{2} a_2 a_5\right) x^{-1} y^{-2} + \left(a_1^{2} a_2 a_3 a_5 + a_1^{2} a_2 a_4 a_5 + a_1\right) x^{-2} y^{-1} z^{-1} + \left(a_1^{2} a_2 a_5\right) x^{-2} y^{-2} z^{-1}.
\end{gather*}

We apply the mutation
\[
  (x, y, z) \mapsto (x, y, z)^{(M, f, N)}, \;
  M =
  \begin{pmatrix}
    0 & -1 & 1 \\
    1 & 0 & 0 \\
    -1 & 0 & -1
  \end{pmatrix}, \;
  f = x y + 1, \;
  N =
  \begin{pmatrix}
    0 & 1 & 0 \\
    -1 & -1 & -1 \\
    0 & -1 & -1    
  \end{pmatrix},
\]
to obtain a Minkowski polynomial \textnumero284 from~\cite[Appendix~B: bucket~107]{akhtar2012minkowski}:
\begin{gather*}
  x + y + a_6 z + \left(a_1 a_3 a_4\right) x^{-1} y + z^{-1} + \left(a_1 a_2 + a_1 a_5\right) y^{-1} + \left(a_1^{2} a_2 a_3 a_4 a_5 + a_1 a_3 + a_1 a_4\right) x^{-1} + \\ \left(a_1^{2} a_2 a_3 a_5 + a_1^{2} a_2 a_4 a_5 + a_1\right) x^{-1} y^{-1} + \left(a_1^{2} a_2 a_5\right) x^{-1} y^{-2}.
\end{gather*}

Finally, after the change of variables
\((x, y, z) \mapsto (x, y^-1, z)\)
we obtain the Laurent polynomial
\begin{gather*}
  x + (a_1 a_2 + a_1 a_5) y + (a_1^{2} a_2 a_5) x^{-1} y^{2} + a_6 z + (a_1^{2} a_2 a_3 a_5 + a_1^{2} a_2 a_4 a_5 + a_1) x^{-1} y + \\ z^{-1} + y^{-1} + (a_1^{2} a_2 a_3 a_4 a_5 + a_1 a_3 + a_1 a_4) x^{-1} + (a_1 a_3 a_4) x^{-1} y^{-1}.
\end{gather*}

\subsubsection{Family \textnumero7.1}\label{subsubsection:dPxP1_07-01}

The toric Landau--Ginzburg model for this family is given by the Laurent polynomial
\begin{gather*}
  x + (a_1 a_2 a_4 a_5 + 1) y + a_1 (a_1 a_3 a_6 (a_2 + a_5) + 1) x^{-1} y^{-1} + \\ a_1 (a_1 a_2 a_3 a_5 a_6 + a_2 + a_5) x^{-1} + a_1 (a_3 + a_6) y^{-1} + a_4 x y + a_1 a_2 a_5 x^{-1} y + a_1^2 a_3 a_6 x^{-1} y^{-2} + z + a_7 z^{-1}.
\end{gather*}
It is a Minkowski polynomial \textnumero506 (see~\cite[Appendix~B: bucket~136]{akhtar2012minkowski}). After the change of variables
\((x, y, z) \mapsto (y^{-1}, x^{-1} y, z)\)
we obtain the Laurent polynomial
\begin{gather*}
  \left(a_1^{2} a_3 a_6\right) x^{2} y^{-1} + \left(a_1^{2} a_2 a_3 a_6 + a_1^{2} a_3 a_5 a_6 + a_1\right) x + \left(a_1^{2} a_2 a_3 a_5 a_6 + a_1 a_2 + a_1 a_5\right) y + \\ \left(a_1 a_2 a_5\right) x^{-1} y^{2} + z + \left(a_1 a_3 + a_1 a_6\right) x y^{-1} + \left(a_1 a_2 a_4 a_5 + 1\right) x^{-1} y + a_7 z^{-1} + y^{-1} + a_4 x^{-1}
\end{gather*}

\subsubsection{Family \textnumero8.1}\label{subsubsection:dPxP1_08-01}

The toric Landau--Ginzburg model for this family is given by the Laurent polynomial
\begin{gather*}
  (a_1 a_4 a_7 (a_3 + a_6) + 1) x + (a_1 a_2 a_5 (a_4 + a_7) + 1) y + 
  a_1 (a_1 a_3 a_6 (a_2 + a_5) + 1) x^{-1} y^{-1} + \\
  a_1 (a_1 a_2 a_3 a_5 a_6 + a_2 + a_5) x^{-1} +
  a_1 (a_1 a_3 a_4 a_6 a_7 + a_3 + a_6) y^{-1} + 
  (a_1 a_2 a_4 a_5 a_7 + a_4 + a_7) x y + \\
  a_1 a_2 a_5 x^{-1} y +
  a_1^2 a_3 a_6 x^{-1} y^{-2} +
  a_4 a_7 x^2 y + z + a_8 z^{-1}.
\end{gather*}
It is a Minkowski polynomial \textnumero769 (see~\cite[Appendix~B: bucket~155]{akhtar2012minkowski}). After the change of variables
\((x, y, z) \mapsto (y z^{-1}, z, x)\)
we obtain the Laurent polynomial
\begin{gather*}
  (a_4 a_7) y^{2} z^{-1} + x + (a_1 a_2 a_4 a_5 a_7 + a_4 + a_7) y + (a_1 a_2 a_4 a_5 + a_1 a_2 a_5 a_7 + 1) z + (a_1 a_2 a_5) y^{-1} z^{2} + \\ (a_1 a_3 a_4 a_7 + a_1 a_4 a_6 a_7 + 1) y z^{-1} + (a_1^{2} a_2 a_3 a_5 a_6 + a_1 a_2 + a_1 a_5) y^{-1} z + (a_1^{2} a_3 a_4 a_6 a_7 + a_1 a_3 + a_1 a_6) z^{-1} + \\ (a_1^{2} a_2 a_3 a_6 + a_1^{2} a_3 a_5 a_6 + a_1) y^{-1} + a_8 x^{-1} + (a_1^{2} a_3 a_6) y^{-1} z^{-1}
\end{gather*}


\subsection{Family \textnumero2.34}\label{subsection:02-34_parametrised}

The pencil \(\mathcal{S}(\alpha)\), where \(\alpha = (a_1, a_2) \in (\mathbb{C}^*)^2\), is defined by the equation
\begin{gather*}
  X^{2} Y Z + X Y^{2} Z + X Y Z^{2} + a_2 Y Z T^{2} + a_1 X T^{3} = \lambda X Y Z T.
\end{gather*}
Note that \(\mathcal{S}(\alpha)_{\infty} = S_{(X)} + S_{(Y)} + S_{(Z)} + S_{(T)}\). The base locus of the pencil \(\mathcal{S}(\alpha)\) consists of the curves
\begin{gather*}
  C_{1} = C_{(X, Y)}, \;
  C_{2} = C_{(X, Z)}, \;
  C_{3} = C_{(X, T)}, \;
  C_{4} = C_{(Y, T)}, \;
  C_{5} = C_{(Z, T)}, \;
  C_{6} = C_{(T, X + Y + Z)}.
\end{gather*}
Their linear equivalence classes on the generic member \(\mathcal{S}(\alpha)_{\Bbbk}\) of the pencil satisfy the following relations:
\begin{gather*}
  \begin{pmatrix}
    [C_{1}] \\ [C_{2}] \\ [C_{6}] \\ [H_{\mathcal{S}(\alpha)}]
  \end{pmatrix} = 
  \begin{pmatrix}
    -2 & 0 & 3 \\
    -2 & 3 & 0 \\
    -3 & 2 & 2 \\
    -2 & 3 & 3
  \end{pmatrix} \cdot
  \begin{pmatrix}
    [C_{3}] \\ [C_{4}] \\ [C_{5}]
  \end{pmatrix}.
\end{gather*}

For a general choice of \(\lambda \in \mathbb{C}\) and \(\alpha \in (\mathbb{C}^*)^2\) the surface \(\mathcal{S}(\alpha)_{\lambda}\) has the following singularities:
\begin{itemize}\setlength{\itemindent}{2cm}
\item[\(P_{1} = P_{(X, Y, T)}\):] type \(\mathbb{A}_4\) with the quadratic term \(X \cdot Y\);
\item[\(P_{2} = P_{(X, Z, T)}\):] type \(\mathbb{A}_4\) with the quadratic term \(X \cdot Z\);
\item[\(P_{3} = P_{(Y, Z, T)}\):] type \(\mathbb{A}_2\) with the quadratic term \(Y \cdot Z\);
\item[\(P_{4} = P_{(X, T, Y + Z)}\):] type \(\mathbb{A}_1\) with the quadratic term \(X (X + Y + Z - \lambda T) + a_2 T^2\);
\item[\(P_{5} = P_{(Y, T, X + Z)}\):] type \(\mathbb{A}_2\) with the quadratic term \(Y \cdot (X + Y + Z - \lambda T)\);
\item[\(P_{6} = P_{(Z, T, X + Y)}\):] type \(\mathbb{A}_2\) with the quadratic term \(Z \cdot (X + Y + Z - \lambda T)\).
\end{itemize}

Galois action on the lattice \(L(\alpha)_{\lambda}\) is trivial. The intersection matrix on \(L(\alpha)_{\lambda} = L(\alpha)_{\mathcal{S}}\) is represented by
\begin{table}[H]
  \begin{tabular}{|c||cccc|cccc|cc|c|cc|cc|ccc|}
    \hline
    \(\bullet\) & \(E_1^1\) & \(E_1^2\) & \(E_1^3\) & \(E_1^4\) & \(E_2^1\) & \(E_2^2\) & \(E_2^3\) & \(E_2^4\) & \(E_3^1\) & \(E_3^2\) & \(E_4^1\) & \(E_5^1\) & \(E_5^2\) & \(E_6^1\) & \(E_6^2\) & \(\widetilde{C_{3}}\) & \(\widetilde{C_{4}}\) & \(\widetilde{C_{5}}\) \\
    \hline
    \hline
    \(\widetilde{C_{3}}\) & \(1\) & \(0\) & \(0\) & \(0\) & \(1\) & \(0\) & \(0\) & \(0\) & \(0\) & \(0\) & \(1\) & \(0\) & \(0\) & \(0\) & \(0\) & \(-2\) & \(0\) & \(0\) \\
    \(\widetilde{C_{4}}\) & \(0\) & \(0\) & \(0\) & \(1\) & \(0\) & \(0\) & \(0\) & \(0\) & \(1\) & \(0\) & \(0\) & \(1\) & \(0\) & \(0\) & \(0\) & \(0\) & \(-2\) & \(0\) \\
    \(\widetilde{C_{5}}\) & \(0\) & \(0\) & \(0\) & \(0\) & \(0\) & \(0\) & \(0\) & \(1\) & \(0\) & \(1\) & \(0\) & \(0\) & \(0\) & \(0\) & \(1\) & \(0\) & \(0\) & \(-2\) \\
    \hline
  \end{tabular}.
\end{table}
Note that the intersection matrix is non-degenerate.

Discriminant groups and discriminant forms of the lattices \(L(\alpha)_{\mathcal{S}}\) and \(H \oplus \Pic(X)\) are given by
\begin{gather*}
  G' = 
  \begin{pmatrix}
    \frac{1}{3} & \frac{2}{3} & 0 & \frac{1}{3} & \frac{2}{3} & \frac{1}{3} & 0 & \frac{2}{3} & \frac{8}{9} & \frac{1}{9} & 0 & \frac{1}{9} & \frac{5}{9} & \frac{4}{9} & \frac{8}{9} & 0 & \frac{2}{3} & \frac{1}{3}
  \end{pmatrix}, \\
  G'' = 
  \begin{pmatrix}
    0 & 0 & \frac{7}{9} & \frac{1}{3}
  \end{pmatrix}; \;
  B' = 
  \begin{pmatrix}
    \frac{2}{9}
  \end{pmatrix}, \;
  B'' = 
  \begin{pmatrix}
    \frac{7}{9}
  \end{pmatrix}; \;
  Q' =
  \begin{pmatrix}
    \frac{2}{9}
  \end{pmatrix}; \;
  Q'' =
  \begin{pmatrix}
    \frac{16}{9}
  \end{pmatrix}.
\end{gather*}


\subsection{Family \textnumero3.27}\label{subsection:03-27_parametrised}

The pencil \(\mathcal{S}(\alpha)\), where \(\alpha = (a_1, a_2, a_3) \in (\mathbb{C}^*)^3\), is defined by the equation
\begin{gather*}
  X^{2} Y Z + X Y^{2} Z + X Y Z^{2} + a_3 X Y T^{2} + a_2 X Z T^{2} + a_1 Y Z T^{2} = \lambda X Y Z T.
\end{gather*}
Note that \(\mathcal{S}(\alpha)_{\infty} = S_{(X)} + S_{(Y)} + S_{(Z)} + S_{(T)}\). The base locus of the pencil \(\mathcal{S}(\alpha)\) consists of the curves
\begin{gather*}
  C_{1} = C_{(X, Y)}, \;
  C_{2} = C_{(X, Z)}, \;
  C_{3} = C_{(X, T)}, \;
  C_{4} = C_{(Y, Z)}, \;
  C_{5} = C_{(Y, T)}, \;
  C_{6} = C_{(Z, T)}, \;
  C_{7} = C_{(T, X + Y + Z)}.
\end{gather*}
Their linear equivalence classes on the generic member \(\mathcal{S}(\alpha)_{\Bbbk}\) of the pencil satisfy the following relations:
\begin{gather*}
  \begin{pmatrix}
    [C_{2}] \\ [C_{4}] \\ [C_{7}] \\ [H_{\mathcal{S}(\alpha)}]
  \end{pmatrix} = 
  \begin{pmatrix}
    1 & 0 & 2 & -2 \\
    1 & 2 & 0 & -2 \\
    2 & 1 & 1 & -3 \\
    2 & 2 & 2 & -2
  \end{pmatrix} \cdot
  \begin{pmatrix}
    [C_{1}] \\ [C_{3}] \\ [C_{5}] \\ [C_{6}]
  \end{pmatrix}.
\end{gather*}

For a general choice of \(\lambda \in \mathbb{C}\) and \(\alpha \in (\mathbb{C}^*)^3\) the surface \(\mathcal{S}(\alpha)_{\lambda}\) has the following singularities:
\begin{itemize}\setlength{\itemindent}{2cm}
\item[\(P_{1} = P_{(X, Y, Z)}\):] type \(\mathbb{A}_1\) with the quadratic term \(a_3 X Y + a_2 X Z + a_1 Y Z\);
\item[\(P_{2} = P_{(X, Y, T)}\):] type \(\mathbb{A}_3\) with the quadratic term \(X \cdot Y\);
\item[\(P_{3} = P_{(X, Z, T)}\):] type \(\mathbb{A}_3\) with the quadratic term \(X \cdot Z\);
\item[\(P_{4} = P_{(Y, Z, T)}\):] type \(\mathbb{A}_3\) with the quadratic term \(Y \cdot Z\);
\item[\(P_{5} = P_{(X, T, Y + Z)}\):] type \(\mathbb{A}_1\) with the quadratic term \(X (X + Y + Z - \lambda T) + a_1 T^2\);
\item[\(P_{6} = P_{(Y, T, X + Z)}\):] type \(\mathbb{A}_1\) with the quadratic term \(Y (X + Y + Z - \lambda T) + a_2 T^2\);
\item[\(P_{7} = P_{(Z, T, X + Y)}\):] type \(\mathbb{A}_1\) with the quadratic term \(Z (X + Y + Z - \lambda T) + a_3 T^2\).
\end{itemize}

Galois action on the lattice \(L(\alpha)_{\lambda}\) is trivial. The intersection matrix on \(L(\alpha)_{\lambda} = L(\alpha)_{\mathcal{S}}\) is represented by
\begin{table}[H]
  \begin{tabular}{|c||c|ccc|ccc|ccc|c|c|c|cccc|}
    \hline
    \(\bullet\) & \(E_1^1\) & \(E_2^1\) & \(E_2^2\) & \(E_2^3\) & \(E_3^1\) & \(E_3^2\) & \(E_3^3\) & \(E_4^1\) & \(E_4^2\) & \(E_4^3\) & \(E_5^1\) & \(E_6^1\) & \(E_7^1\) & \(\widetilde{C_{1}}\) & \(\widetilde{C_{3}}\) & \(\widetilde{C_{5}}\) & \(\widetilde{C_{6}}\) \\
    \hline
    \hline
    \(\widetilde{C_{1}}\) & \(1\) & \(0\) & \(1\) & \(0\) & \(0\) & \(0\) & \(0\) & \(0\) & \(0\) & \(0\) & \(0\) & \(0\) & \(0\) & \(-2\) & \(0\) & \(0\) & \(0\) \\
    \(\widetilde{C_{3}}\) & \(0\) & \(1\) & \(0\) & \(0\) & \(1\) & \(0\) & \(0\) & \(0\) & \(0\) & \(0\) & \(1\) & \(0\) & \(0\) & \(0\) & \(-2\) & \(0\) & \(0\) \\
    \(\widetilde{C_{5}}\) & \(0\) & \(0\) & \(0\) & \(1\) & \(0\) & \(0\) & \(0\) & \(1\) & \(0\) & \(0\) & \(0\) & \(1\) & \(0\) & \(0\) & \(0\) & \(-2\) & \(0\) \\
    \(\widetilde{C_{6}}\) & \(0\) & \(0\) & \(0\) & \(0\) & \(0\) & \(0\) & \(1\) & \(0\) & \(0\) & \(1\) & \(0\) & \(0\) & \(1\) & \(0\) & \(0\) & \(0\) & \(-2\) \\
    \hline
  \end{tabular}.
\end{table}
Note that the intersection matrix is non-degenerate.

Discriminant groups and discriminant forms of the lattices \(L(\alpha)_{\mathcal{S}}\) and \(H \oplus \Pic(X)\) are given by
\begin{gather*}
  G' =
  \begin{pmatrix}
    0 & 0 & 0 & 0 & \frac{1}{2} & 0 & \frac{1}{2} & 0 & 0 & 0 & \frac{1}{2} & 0 & \frac{1}{2} & 0 & 0 & 0 & 0 \\
    0 & \frac{1}{2} & 0 & \frac{1}{2} & 0 & 0 & 0 & 0 & 0 & 0 & \frac{1}{2} & \frac{1}{2} & 0 & 0 & 0 & 0 & 0 \\
    \frac{1}{2} & \frac{3}{4} & \frac{1}{2} & \frac{1}{4} & \frac{1}{4} & \frac{1}{2} & \frac{3}{4} & \frac{1}{4} & \frac{1}{2} & \frac{3}{4} & 0 & \frac{1}{2} & \frac{1}{2} & 0 & 0 & 0 & 0
  \end{pmatrix}, \\
  G'' =
  \begin{pmatrix}
    0 & 0 & \frac{1}{2} & 0 & 0 \\
    0 & 0 & 0 & \frac{1}{2} & 0 \\
    0 & 0 & -\frac{1}{4} & -\frac{1}{4} & \frac{1}{4}
  \end{pmatrix}; \;
  B' = 
  \begin{pmatrix}
    0 & \frac{1}{2} & 0 \\
    \frac{1}{2} & 0 & 0 \\
    0 & 0 & \frac{1}{4}
  \end{pmatrix}, \;
  B'' = 
  \begin{pmatrix}
    0 & \frac{1}{2} & 0 \\
    \frac{1}{2} & 0 & 0 \\
    0 & 0 & \frac{3}{4}
  \end{pmatrix}; \;
  \begin{pmatrix}
    Q' \\ Q''
  \end{pmatrix}
  =
  \begin{pmatrix}
    0 & 0 & \frac{1}{4} \\
    0 & 0 & \frac{7}{4}    
  \end{pmatrix}.
\end{gather*}


\subsection{Family \textnumero3.28}\label{subsection:03-28_parametrised}

The pencil \(\mathcal{S}(\alpha)\), where \(\alpha = (a_1, a_2, a_3) \in (\mathbb{C}^*)^3\), is defined by the equation
\begin{gather*}
  \left(a_1 a_2\right) X^{2} Y Z + X Y^{2} Z + X Y Z^{2} + a_1 X^{2} Y T + a_3 X Z T^{2} + Y Z T^{2} = \lambda X Y Z T.
\end{gather*}
Note that \(\mathcal{S}(\alpha)_{\infty} = S_{(X)} + S_{(Y)} + S_{(Z)} + S_{(T)}\).
The base locus of the pencil \(\mathcal{S}(\alpha)\) consists of the curves
\begin{gather*}
  C_{1} = C_{(X, Y)}, \;
  C_{2} = C_{(X, Z)}, \;
  C_{3} = C_{(X, T)}, \;
  C_{4} = C_{(Y, Z)}, \;
  C_{5} = C_{(Y, T)}, \;
  C_{6} = C_{(Z, T)}, \;
  C_{7} = C_{(T, a_1 a_2 X + Y + Z)}.
\end{gather*}
Their linear equivalence classes on the generic member \(\mathcal{S}(\alpha)_{\Bbbk}\) of the pencil satisfy the following relations:
\begin{gather*}
  \begin{pmatrix}
    [C_{2}] \\ [C_{4}] \\ [C_{6}] \\ [C_{7}]
  \end{pmatrix} = 
  \begin{pmatrix}
    -1 & -2 & 0 & 1 \\
    -1 & 0 & -2 & 1 \\
    3 & 4 & 2 & -2 \\
    -3 & -5 & -3 & 3
  \end{pmatrix} \cdot
  \begin{pmatrix}
    [C_{1}] \\ [C_{3}] \\ [C_{5}] \\ [H_{\mathcal{S}(\alpha)}]
  \end{pmatrix}.
\end{gather*}

For a general choice of \(\lambda \in \mathbb{C}\) and \(\alpha \in (\mathbb{C}^*)^3\) the surface \(\mathcal{S}(\alpha)_{\lambda}\) has the following singularities:
\begin{itemize}\setlength{\itemindent}{2cm}
\item[\(P_{1} = P_{(X, Y, Z)}\):] type \(\mathbb{A}_2\) with the quadratic term \(Z \cdot (a_3 X + Y)\);
\item[\(P_{2} = P_{(X, Y, T)}\):] type \(\mathbb{A}_3\) with the quadratic term \(X \cdot Y\);
\item[\(P_{3} = P_{(X, Z, T)}\):] type \(\mathbb{A}_4\) with the quadratic term \(X \cdot Z\);
\item[\(P_{4} = P_{(Y, Z, T)}\):] type \(\mathbb{A}_2\) with the quadratic term \(Y \cdot (a_2 Z + T)\);
\item[\(P_{5} = P_{(X, T, Y + Z)}\):] type \(\mathbb{A}_1\) with the quadratic term \(X (a_1 a_2 X + Y + Z - \lambda T) + T^2\);
\item[\(P_{6} = P_{(Y, T, a_1 a_2 X + Z)}\):] type \(\mathbb{A}_1\) with the quadratic term \(Y (a_2 (a_1 a_2 X + Y + Z - \lambda T) - T) + a_2 a_3 T^2\).
\end{itemize}

Galois action on the lattice \(L(\alpha)_{\lambda}\) is trivial. The intersection matrix on \(L(\alpha)_{\lambda} = L(\alpha)_{\mathcal{S}}\) is represented by
\begin{table}[H]
  \begin{tabular}{|c||cc|ccc|cccc|cc|c|c|cccc|}
    \hline
    \(\bullet\) & \(E_1^1\) & \(E_1^2\) & \(E_2^1\) & \(E_2^2\) & \(E_2^3\) & \(E_3^1\) & \(E_3^2\) & \(E_3^3\) & \(E_3^4\) & \(E_4^1\) & \(E_4^2\) & \(E_5^1\) & \(E_6^1\) & \(\widetilde{C_{1}}\) & \(\widetilde{C_{3}}\) & \(\widetilde{C_{5}}\) & \(\widetilde{H_{\mathcal{S}}}\) \\
    \hline
    \hline
    \(\widetilde{C_{1}}\) & \(1\) & \(0\) & \(0\) & \(1\) & \(0\) & \(0\) & \(0\) & \(0\) & \(0\) & \(0\) & \(0\) & \(0\) & \(0\) & \(-2\) & \(0\) & \(0\) & \(1\) \\
    \(\widetilde{C_{3}}\) & \(0\) & \(0\) & \(1\) & \(0\) & \(0\) & \(1\) & \(0\) & \(0\) & \(0\) & \(0\) & \(0\) & \(1\) & \(0\) & \(0\) & \(-2\) & \(0\) & \(1\) \\
    \(\widetilde{C_{5}}\) & \(0\) & \(0\) & \(0\) & \(0\) & \(1\) & \(0\) & \(0\) & \(0\) & \(0\) & \(1\) & \(0\) & \(0\) & \(1\) & \(0\) & \(0\) & \(-2\) & \(1\) \\
    \(\widetilde{H_{\mathcal{S}}}\) & \(0\) & \(0\) & \(0\) & \(0\) & \(0\) & \(0\) & \(0\) & \(0\) & \(0\) & \(0\) & \(0\) & \(0\) & \(0\) & \(1\) & \(1\) & \(1\) & \(4\) \\
    \hline
  \end{tabular}.
\end{table}
Note that the intersection matrix is non-degenerate.

Discriminant groups and discriminant forms of the lattices \(L(\alpha)_{\mathcal{S}}\) and \(H \oplus \Pic(X)\) are given by
\begin{gather*}
  G' =
  \begin{pmatrix}
    0 & \frac{1}{2} & \frac{13}{16} & \frac{1}{4} & \frac{3}{16} & \frac{1}{2} & \frac{5}{8} & \frac{3}{4} & \frac{7}{8} & \frac{3}{4} & \frac{3}{8} & \frac{11}{16} & \frac{9}{16} & \frac{1}{2} & \frac{3}{8} & \frac{1}{8} & \frac{3}{4}
  \end{pmatrix}, \\
  G'' =
  \begin{pmatrix}
    0 & 0 & -\frac{3}{16} & -\frac{3}{8} & \frac{1}{16}
  \end{pmatrix}; \;
  B' = 
  \begin{pmatrix}
    \frac{1}{16}
  \end{pmatrix}, \;
  B'' = 
  \begin{pmatrix}
    \frac{15}{16}
  \end{pmatrix}; \;
  Q' =
  \begin{pmatrix}
    \frac{1}{16}
  \end{pmatrix}, \;
  Q'' =
  \begin{pmatrix}
    \frac{31}{16}
  \end{pmatrix}.
\end{gather*}


\subsection{Family \textnumero4.10}\label{subsection:04-10_parametrised}

The pencil \(\mathcal{S}(\alpha)\), where \(\alpha = (a_1, \ldots, a_4) \in (\mathbb{C}^*)^4\), is defined by the equation
\begin{gather*}
  X^{2} Y Z + \left(a_1 a_2\right) X Y^{2} Z + X Y Z^{2} + a_1 Y^{2} Z T + a_4 X Y T^{2} + X Z T^{2} + \left(a_1 a_3\right) Y Z T^{2} = \lambda X Y Z T.
\end{gather*}
Note that \(\mathcal{S}(\alpha)_{\infty} = S_{(X)} + S_{(Y)} + S_{(Z)} + S_{(T)}\). The base locus of the pencil \(\mathcal{S}(\alpha)\) consists of the curves
\begin{gather*}
  C_{1} = C_{(X, Y)}, \;
  C_{2} = C_{(X, Z)}, \;
  C_{3} = C_{(X, T)}, \;
  C_{4} = C_{(Y, Z)}, \\
  C_{5} = C_{(Y, T)}, \;
  C_{6} = C_{(Z, T)}, \;
  C_{7} = C_{(X, Y + a_3 T)}, \;
  C_{8} = C_{(T, X + a_1 a_2 Y + Z)}.
\end{gather*}
Their linear equivalence classes on the generic member \(\mathcal{S}(\alpha)_{\Bbbk}\) of the pencil satisfy the following relations:
\begin{gather*}
  \begin{pmatrix}
    [C_{2}] \\ [C_{4}] \\ [C_{7}] \\ [C_{8}]
  \end{pmatrix} = 
  \begin{pmatrix}
    1 & 0 & 2 & -2 & 0 \\
    -1 & 0 & -2 & 0 & 1 \\
    -2 & -1 & -2 & 2 & 1 \\
    0 & -1 & -1 & -1 & 1
  \end{pmatrix} \cdot
  \begin{pmatrix}
    [C_{1}] & [C_{3}] & [C_{5}] & [C_{6}] & [H_{\mathcal{S}(\alpha)}]
  \end{pmatrix}^T.
\end{gather*}

For a general choice of \(\lambda \in \mathbb{C}\) and \(\alpha \in (\mathbb{C}^*)^4\) the surface \(\mathcal{S}(\alpha)_{\lambda}\) has the following singularities:
\begin{itemize}\setlength{\itemindent}{2cm}
\item[\(P_{1} = P_{(X, Y, Z)}\):] type \(\mathbb{A}_1\) with the quadratic term \(a_1 a_3 Y Z + a_4 X Y + X Z\);
\item[\(P_{2} = P_{(X, Y, T)}\):] type \(\mathbb{A}_3\) with the quadratic term \(X \cdot Y\);
\item[\(P_{3} = P_{(X, Z, T)}\):] type \(\mathbb{A}_2\) with the quadratic term \(Z \cdot (a_2 X + T)\);
\item[\(P_{4} = P_{(Y, Z, T)}\):] type \(\mathbb{A}_3\) with the quadratic term \(Y \cdot Z\);
\item[\(P_{5} = P_{(Y, T, X + Z)}\):] type \(\mathbb{A}_1\) with the quadratic term \(Y (X + a_1 a_2 Y + Z - \lambda T) + T^2\);
\item[\(P_{6} = P_{(Z, T, a_1 a_2 Y + X)}\):] type \(\mathbb{A}_1\) with the quadratic term \(Z (a_2 (X + a_1 a_2 Y + Z - \lambda T) - T) + a_2 a_4 T^2\).
\end{itemize}

Galois action on the lattice \(L(\alpha)_{\lambda}\) is trivial. The intersection matrix on \(L(\alpha)_{\lambda} = L(\alpha)_{\mathcal{S}}\) is represented by
\begin{table}[H]
  \begin{tabular}{|c||c|ccc|cc|ccc|c|c|ccccc|}
    \hline
    \(\bullet\) & \(E_1^1\) & \(E_2^1\) & \(E_2^2\) & \(E_2^3\) & \(E_3^1\) & \(E_3^2\) & \(E_4^1\) & \(E_4^2\) & \(E_4^3\) & \(E_5^1\) & \(E_6^1\) & \(\widetilde{C_{1}}\) & \(\widetilde{C_{3}}\) & \(\widetilde{C_{5}}\) & \(\widetilde{C_{6}}\) & \(\widetilde{H_{\mathcal{S}}}\) \\
    \hline
    \hline
    \(\widetilde{C_{1}}\) & \(1\) & \(0\) & \(1\) & \(0\) & \(0\) & \(0\) & \(0\) & \(0\) & \(0\) & \(0\) & \(0\) & \(-2\) & \(0\) & \(0\) & \(0\) & \(1\) \\
    \(\widetilde{C_{3}}\) & \(0\) & \(1\) & \(0\) & \(0\) & \(1\) & \(0\) & \(0\) & \(0\) & \(0\) & \(0\) & \(0\) & \(0\) & \(-2\) & \(0\) & \(0\) & \(1\) \\
    \(\widetilde{C_{5}}\) & \(0\) & \(0\) & \(0\) & \(1\) & \(0\) & \(0\) & \(1\) & \(0\) & \(0\) & \(1\) & \(0\) & \(0\) & \(0\) & \(-2\) & \(0\) & \(1\) \\
    \(\widetilde{C_{6}}\) & \(0\) & \(0\) & \(0\) & \(0\) & \(0\) & \(1\) & \(0\) & \(0\) & \(1\) & \(0\) & \(1\) & \(0\) & \(0\) & \(0\) & \(-2\) & \(1\) \\
    \(\widetilde{H_{\mathcal{S}}}\) & \(0\) & \(0\) & \(0\) & \(0\) & \(0\) & \(0\) & \(0\) & \(0\) & \(0\) & \(0\) & \(0\) & \(1\) & \(1\) & \(1\) & \(1\) & \(4\) \\
    \hline
  \end{tabular}.
\end{table}
Note that the intersection matrix is non-degenerate.

Discriminant groups and discriminant forms of the lattices \(L(\alpha)_{\mathcal{S}}\) and \(H \oplus \Pic(X)\) are given by
\begin{gather*}
  G' =
  \begin{pmatrix}
    \frac{1}{2} & \frac{1}{2} & 0 & \frac{1}{2} & 0 & 0 & 0 & 0 & 0 & 0 & \frac{1}{2} & 0 & 0 & 0 & 0 & \frac{1}{2} \\
    \frac{9}{14} & \frac{1}{14} & 0 & \frac{9}{14} & \frac{2}{7} & \frac{3}{7} & \frac{5}{14} & \frac{3}{7} & \frac{1}{2} & \frac{9}{14} & \frac{2}{7} & \frac{2}{7} & \frac{1}{7} & \frac{2}{7} & \frac{4}{7} & \frac{13}{14}
  \end{pmatrix}, \\
  G'' =
  \begin{pmatrix}
    0 & 0 & -\frac{1}{2} & \frac{1}{2} & 0 & 0 \\
    0 & 0 & \frac{9}{14} & \frac{1}{7} & \frac{5}{7} & -\frac{1}{14}
  \end{pmatrix}; \;
  B' = 
  \begin{pmatrix}
    0 & \frac{1}{2} \\
    \frac{1}{2} & \frac{2}{7}
  \end{pmatrix}, \;
  B'' = 
  \begin{pmatrix}
    0 & \frac{1}{2} \\
    \frac{1}{2} & \frac{5}{7}
  \end{pmatrix}; \;
  \begin{pmatrix}
    Q' \\ Q''
  \end{pmatrix}
  =
  \begin{pmatrix}
    1 & \frac{2}{7} \\
    1 & \frac{12}{7}
  \end{pmatrix}.
\end{gather*}


\subsection{Family \textnumero5.3}\label{subsection:05-03_parametrised}

The pencil \(\mathcal{S}(\alpha)\), where \(\alpha = (a_1, \ldots, a_5) \in (\mathbb{C}^*)^5\), is defined by the equation
\begin{gather*}
  X^{2} Y Z + (a_1 a_2) X Y^{2} Z + X Y Z^{2} + a_1 X Y^{2} T + a_4 X Z^{2} T + (a_1 a_3) X Y T^{2} + X Z T^{2} + a_5 Y Z T^{2} = \lambda X Y Z T.
\end{gather*}
Note that \(\mathcal{S}(\alpha)_{\infty} = S_{(X)} + S_{(Y)} + S_{(Z)} + S_{(T)}\). The base locus of the pencil \(\mathcal{S}(\alpha)\) consists of the curves
\begin{gather*}
  C_{1} = C_{(X, Y)}, \;
  C_{2} = C_{(X, Z)}, \;
  C_{3} = C_{(X, T)}, \;
  C_{4} = C_{(Y, Z)}, \;
  C_{5} = C_{(Y, T)}, \\
  C_{6} = C_{(Z, T)}, \;
  C_{7} = C_{(Y, a_4 Z + T)}, \;
  C_{8} = C_{(Z, Y + a_3 T)}, \;
  C_{9} = C_{(T, X + a_1 a_2 Y + Z)}.
\end{gather*}
Their linear equivalence classes on the generic member \(\mathcal{S}(\alpha)_{\Bbbk}\) of the pencil satisfy the following relations:
\begin{gather*}
  \begin{pmatrix}
    [C_{2}] \\ [C_{7}] \\ [C_{8}] \\ [C_{9}]
  \end{pmatrix} = 
  \begin{pmatrix}
    -1 & -2 & 0 & 0 & 0 & 1 \\
    -1 & 0 & -1 & -1 & 0 & 1 \\
    1 & 2 & -1 & 0 & -1 & 0 \\
    0 & -1 & 0 & -1 & -1 & 1
  \end{pmatrix} \cdot
  \begin{pmatrix}
    [C_{1}] & [C_{3}] & [C_{4}] & [C_{5}] & [C_{6}] & [H_{\mathcal{S}(\alpha)}]
  \end{pmatrix}^T.
\end{gather*}

For a general choice of \(\lambda \in \mathbb{C}\) and \(\alpha \in (\mathbb{C}^*)^5\) the surface \(\mathcal{S}(\alpha)_{\lambda}\) has the following singularities:
\begin{itemize}\setlength{\itemindent}{2cm}
\item[\(P_{1} = P_{(X, Y, Z)}\):] type \(\mathbb{A}_1\) with the quadratic term \(a_1 a_3 X Y + a_5 Y Z + X Z\);
\item[\(P_{2} = P_{(X, Y, T)}\):] type \(\mathbb{A}_2\) with the quadratic term \(X \cdot (Y + a_4 T)\);
\item[\(P_{3} = P_{(X, Z, T)}\):] type \(\mathbb{A}_2\) with the quadratic term \(X \cdot (a_2 Z + T)\);
\item[\(P_{4} = P_{(Y, Z, T)}\):] type \(\mathbb{A}_3\) with the quadratic term \(Y \cdot Z\);
\item[\(P_{5} = P_{(X, T, a_1 a_2 Y + Z)}\):] type \(\mathbb{A}_1\) with the quadratic term
  \[
    X (a_2 (X + a_1 a_2 Y + Z - (a_1 a_2 a_4 + \lambda) T) - T) + a_2 a_5 T^2.
  \]
\end{itemize}

Galois action on the lattice \(L(\alpha)_{\lambda}\) is trivial. The intersection matrix on \(L(\alpha)_{\lambda} = L(\alpha)_{\mathcal{S}}\) is represented by
\begin{table}[H]
  \begin{tabular}{|c||c|cc|cc|ccc|c|cccccc|}
    \hline
    \(\bullet\) & \(E_1^1\) & \(E_2^1\) & \(E_2^2\) & \(E_3^1\) & \(E_3^2\) & \(E_4^1\) & \(E_4^2\) & \(E_4^3\) & \(E_5^1\) & \(\widetilde{C_{1}}\) & \(\widetilde{C_{3}}\) & \(\widetilde{C_{4}}\) & \(\widetilde{C_{5}}\) & \(\widetilde{C_{6}}\) & \(\widetilde{H_{\mathcal{S}}}\) \\
    \hline
    \hline
    \(\widetilde{C_{1}}\) & \(1\) & \(1\) & \(0\) & \(0\) & \(0\) & \(0\) & \(0\) & \(0\) & \(0\) & \(-2\) & \(0\) & \(0\) & \(0\) & \(0\) & \(1\) \\
    \(\widetilde{C_{3}}\) & \(0\) & \(1\) & \(0\) & \(1\) & \(0\) & \(0\) & \(0\) & \(0\) & \(1\) & \(0\) & \(-2\) & \(0\) & \(0\) & \(0\) & \(1\) \\
    \(\widetilde{C_{4}}\) & \(1\) & \(0\) & \(0\) & \(0\) & \(0\) & \(0\) & \(1\) & \(0\) & \(0\) & \(0\) & \(0\) & \(-2\) & \(0\) & \(0\) & \(1\) \\
    \(\widetilde{C_{5}}\) & \(0\) & \(0\) & \(1\) & \(0\) & \(0\) & \(1\) & \(0\) & \(0\) & \(0\) & \(0\) & \(0\) & \(0\) & \(-2\) & \(0\) & \(1\) \\
    \(\widetilde{C_{6}}\) & \(0\) & \(0\) & \(0\) & \(0\) & \(1\) & \(0\) & \(0\) & \(1\) & \(0\) & \(0\) & \(0\) & \(0\) & \(0\) & \(-2\) & \(1\) \\
    \(\widetilde{H_{\mathcal{S}}}\) & \(0\) & \(0\) & \(0\) & \(0\) & \(0\) & \(0\) & \(0\) & \(0\) & \(0\) & \(1\) & \(1\) & \(1\) & \(1\) & \(1\) & \(4\) \\
    \hline
  \end{tabular}.
\end{table}
The intersection matrix is non-degenerate.

Discriminant groups and discriminant forms of the lattices \(L(\alpha)_{\mathcal{S}}\) and \(H \oplus \Pic(X)\) are given by
\begin{gather*}
  G' =
  \begin{pmatrix}
    \frac{1}{2} & 0 & 0 & 0 & 0 & \frac{1}{2} & 0 & \frac{1}{2} & \frac{1}{2} & 0 & 0 & 0 & 0 & 0 & \frac{1}{2} \\
    0 & 0 & \frac{1}{2} & 0 & 0 & \frac{1}{2} & 0 & 0 & 0 & \frac{1}{2} & 0 & \frac{1}{2} & 0 & 0 & 0 \\
    \frac{1}{12} & \frac{1}{12} & \frac{1}{3} & \frac{3}{4} & \frac{5}{6} & \frac{2}{3} & \frac{3}{4} & \frac{5}{6} & \frac{1}{3} & \frac{1}{6} & \frac{2}{3} & 0 & \frac{7}{12} & \frac{11}{12} & \frac{1}{6}
  \end{pmatrix}, \;
  B' = 
  \begin{pmatrix}
    0 & \frac{1}{2} & \frac{1}{2} \\
    \frac{1}{2} & 0 & \frac{1}{2} \\
    \frac{1}{2} & \frac{1}{2} & \frac{7}{12}
  \end{pmatrix}; \\
  G'' =
  \begin{pmatrix}
    0 & 0 & -\frac{1}{2} & 0 & \frac{1}{2} & 0 & 0 \\
    0 & 0 & -\frac{1}{2} & 0 & 0 & 0 & \frac{1}{2} \\
    0 & 0 & -\frac{5}{12} & \frac{1}{12} & \frac{1}{12} & -\frac{1}{6} & \frac{1}{4}
  \end{pmatrix}, \;
  B'' = 
  \begin{pmatrix}
    0 & \frac{1}{2} & \frac{1}{2} \\
    \frac{1}{2} & 0 & \frac{1}{2} \\
    \frac{1}{2} & \frac{1}{2} & \frac{5}{12}
  \end{pmatrix}; \;
  \begin{pmatrix}
    Q' \\ Q''
  \end{pmatrix}
  =
  \begin{pmatrix}
    1 & 0 & \frac{7}{12} \\
    1 & 0 & \frac{17}{12}.
  \end{pmatrix}.
\end{gather*}


\subsection{Family \textnumero6.1}\label{subsection:06-01_parametrised}

The pencil \(\mathcal{S}(\alpha)\), where \(\alpha = (a_1, \ldots, a_6) \in (\mathbb{C}^*)^6\), is defined by the equation
\begin{gather*}
  X^{2} Y Z + (a_1 a_2 + a_1 a_5) X Y^{2} Z + (a_1^{2} a_2 a_5) Y^{3} Z + a_6 X Y Z^{2} + (a_1^{2} a_2 a_3 a_5 + a_1^{2} a_2 a_4 a_5 + a_1) Y^{2} Z T + \\ X Y T^{2} + X Z T^{2} + (a_1^{2} a_2 a_3 a_4 a_5 + a_1 a_3 + a_1 a_4) Y Z T^{2} + (a_1 a_3 a_4) Z T^{3} = \lambda X Y Z T.
\end{gather*}
Note that \(\mathcal{S}(\alpha)_{\infty} = S_{(X)} + S_{(Y)} + S_{(Z)} + S_{(T)}\). The base locus of the pencil \(\mathcal{S}(\alpha)\) consists of the curves
\begin{gather*}
  C_{1} = C_{(X, Z)}, \;
  C_{2} = C_{(Y, Z)}, \;
  C_{3} = C_{(Y, T)}, \;
  C_{4} = C_{(Z, T)}, \;
  C_{5} = C_{(X, Y + a_3 T)}, \;
  C_{6} = C_{(X, Y + a_4 T)}, \\
  C_{7} = C_{(X, a_1 a_2 a_5 Y + T)}, \;
  C_{8} = C_{(Y, X + a_1 a_3 a_4 T)}, \;
  C_{9} = C_{(T, (X + a_1 a_2 Y) (X + a_1 a_5 Y) + a_6 X Z)}.
\end{gather*}
Their linear equivalence classes on the generic member \(\mathcal{S}(\alpha)_{\Bbbk}\) of the pencil satisfy the following relations:
\begin{gather*}
  \begin{pmatrix}
    [C_{2}] \\ [C_{7}] \\ [C_{8}] \\ [C_{9}]
  \end{pmatrix} = 
  \begin{pmatrix}
    -1 & 0 & -2 & 0 & 0 & 1 \\
    -1 & 0 & 0 & -1 & -1 & 1 \\
    1 & -2 & 2 & 0 & 0 & 0 \\
    0 & -1 & -1 & 0 & 0 & 1
  \end{pmatrix} \cdot
  \begin{pmatrix}
    [C_{1}] & [C_{3}] & [C_{4}] & [C_{5}] & [C_{6}] & [H_{\mathcal{S}(\alpha)}]
  \end{pmatrix}^T.
\end{gather*}

For a general choice of \(\lambda \in \mathbb{C}\) and \(\alpha \in (\mathbb{C}^*)^6\) the surface \(\mathcal{S}(\alpha)_{\lambda}\) has the following singularities:
\begin{itemize}\setlength{\itemindent}{2cm}
\item[\(P_{1} = P_{(X, Y, T)}\):] type \(\mathbb{A}_2\) with the quadratic term \(X \cdot Y\);
\item[\(P_{2} = P_{(Y, Z, T)}\):] type \(\mathbb{A}_3\) with the quadratic term \(Y \cdot Z\);
\item[\(P_{3} = P_{(Y, T, X + a_6 Z)}\):] type \(\mathbb{A}_1\) with the quadratic term \(Y (X + a_1 a_2 Y + a_6 Z - \lambda T) + (a_1 a_5 Y^2 + T^2)\);
\item[\(P_{4} = P_{(Z, T, X + a_1 a_2 Y)}\):] type \(\mathbb{A}_1\) with the quadratic term
  \[
    a_2 a_6 Z^2 - (a_1 a_2 a_5 (a_3 + a_4) + a_2 \lambda + 1) Z T + a_2 T^2 + (a_2 - a_5) Z (X + a_1 a_2 Y);
  \]
\item[\(P_{5} = P_{(Z, T, X + a_1 a_5 Y)}\):] type \(\mathbb{A}_1\) with the quadratic term
  \[
    a_5 a_6 Z^2 - (a_1 a_2 a_5 (a_3 + a_4) + a_5 \lambda + 1) Z T + a_5 T^2 - (a_2 - a_5) Z (X + a_1 a_5 Y).
  \]
\end{itemize}

Galois action on the lattice \(L(\alpha)_{\lambda}\) is trivial. The intersection matrix on \(L(\alpha)_{\lambda} = L(\alpha)_{\mathcal{S}}\) is represented by
\begin{table}[H]
  \begin{tabular}{|c||cc|ccc|c|c|c|cccccc|}
    \hline
    \(\bullet\) & \(E_1^1\) & \(E_1^2\) & \(E_2^1\) & \(E_2^2\) & \(E_2^3\) & \(E_3^1\) & \(E_4^1\) & \(E_5^1\) & \(\widetilde{C_{1}}\) & \(\widetilde{C_{3}}\) & \(\widetilde{C_{4}}\) & \(\widetilde{C_{5}}\) & \(\widetilde{C_{6}}\) & \(\widetilde{H_{\mathcal{S}}}\) \\
    \hline
    \hline
    \(\widetilde{C_{1}}\) & \(0\) & \(0\) & \(0\) & \(0\) & \(0\) & \(0\) & \(0\) & \(0\) & \(-2\) & \(0\) & \(1\) & \(1\) & \(1\) & \(1\) \\
    \(\widetilde{C_{3}}\) & \(1\) & \(0\) & \(1\) & \(0\) & \(0\) & \(1\) & \(0\) & \(0\) & \(0\) & \(-2\) & \(0\) & \(0\) & \(0\) & \(1\) \\
    \(\widetilde{C_{4}}\) & \(0\) & \(0\) & \(0\) & \(0\) & \(1\) & \(0\) & \(1\) & \(1\) & \(1\) & \(0\) & \(-2\) & \(0\) & \(0\) & \(1\) \\
    \(\widetilde{C_{5}}\) & \(0\) & \(1\) & \(0\) & \(0\) & \(0\) & \(0\) & \(0\) & \(0\) & \(1\) & \(0\) & \(0\) & \(-2\) & \(0\) & \(1\) \\
    \(\widetilde{C_{6}}\) & \(0\) & \(1\) & \(0\) & \(0\) & \(0\) & \(0\) & \(0\) & \(0\) & \(1\) & \(0\) & \(0\) & \(0\) & \(-2\) & \(1\) \\
    \(\widetilde{H_{\mathcal{S}}}\) & \(0\) & \(0\) & \(0\) & \(0\) & \(0\) & \(0\) & \(0\) & \(0\) & \(1\) & \(1\) & \(1\) & \(1\) & \(1\) & \(4\) \\
    \hline
  \end{tabular}
\end{table}
Note that the intersection matrix is non-degenerate.

Discriminant groups and discriminant forms of the lattices \(L(\alpha)_{\mathcal{S}}\) and \(H \oplus \Pic(X)\) are given by
\begin{gather*}
  G' =
  \begin{pmatrix}
    0 & 0 & \frac{1}{2} & 0 & \frac{1}{2} & \frac{1}{2} & \frac{1}{2} & 0 & 0 & 0 & 0 & 0 & 0 & 0 \\
    0 & 0 & \frac{1}{2} & 0 & \frac{1}{2} & \frac{1}{2} & 0 & \frac{1}{2} & 0 & 0 & 0 & 0 & 0 & 0 \\
    0 & 0 & 0 & 0 & 0 & 0 & 0 & 0 & 0 & 0 & 0 & \frac{1}{2} & \frac{1}{2} & 0 \\
    \frac{1}{2} & 0 & 0 & 0 & 0 & 0 & 0 & 0 & \frac{1}{2} & 0 & 0 & \frac{1}{2} & 0 & \frac{1}{2} \\
    \frac{4}{5} & \frac{2}{5} & \frac{4}{5} & \frac{2}{5} & 0 & \frac{3}{5} & \frac{4}{5} & \frac{4}{5} & \frac{2}{5} & \frac{1}{5} & \frac{3}{5} & 0 & 0 & \frac{1}{5}
  \end{pmatrix}, \;
  G'' =
  \begin{pmatrix}
    0 & 0 & 0 & \frac{1}{2} & 0 & 0 & 0 & \frac{1}{2} \\
    0 & 0 & \frac{1}{2} & 0 & 0 & 0 & 0 & \frac{1}{2} \\
    0 & 0 & \frac{1}{2} & \frac{1}{2} & \frac{1}{2} & 0 & 0 & \frac{1}{2} \\
    0 & 0 & \frac{1}{2} & \frac{1}{2} & 0 & \frac{1}{2} & 0 & \frac{1}{2} \\
    0 & 0 & \frac{2}{5} & \frac{2}{5} & \frac{2}{5} & \frac{2}{5} & \frac{1}{5} & \frac{1}{5}
  \end{pmatrix};
\end{gather*}
\begin{gather*}
  B' = 
  \begin{pmatrix}
    0 & \frac{1}{2} & 0 & 0 & 0 \\
    \frac{1}{2} & 0 & 0 & 0 & 0 \\
    0 & 0 & 0 & \frac{1}{2} & 0 \\
    0 & 0 & \frac{1}{2} & 0 & 0 \\
    0 & 0 & 0 & 0 & \frac{3}{5}
  \end{pmatrix}, \;
  B'' = 
  \begin{pmatrix}
    0 & \frac{1}{2} & 0 & 0 & 0 \\
    \frac{1}{2} & 0 & 0 & 0 & 0 \\
    0 & 0 & 0 & \frac{1}{2} & 0 \\
    0 & 0 & \frac{1}{2} & 0 & 0 \\
    0 & 0 & 0 & 0 & \frac{2}{5}
  \end{pmatrix}; \;
  \begin{pmatrix}
    Q' \\ Q''
  \end{pmatrix}
  =
  \begin{pmatrix}
    0 & 0 & 1 & 1 & \frac{8}{5} \\
    0 & 0 & 1 & 1 & \frac{2}{5}
  \end{pmatrix}.
\end{gather*}


\subsection{Family \textnumero7.1}\label{subsection:07-01_parametrised}

The pencil \(\mathcal{S}(\alpha)\), where \(\alpha = (a_1, \ldots, a_7) \in (\mathbb{C}^*)^7\), is defined by the equation
\begin{gather*}
  \left(a_1^{2} a_3 a_6\right) X^{3} Z + \left(a_1^{2} a_2 a_3 a_6 + a_1^{2} a_3 a_5 a_6 + a_1\right) X^{2} Y Z + \left(a_1^{2} a_2 a_3 a_5 a_6 + a_1 a_2 + a_1 a_5\right) X Y^{2} Z + \left(a_1 a_2 a_5\right) Y^{3} Z + \\ X Y Z^{2} + \left(a_1 a_3 + a_1 a_6\right) X^{2} Z T + \left(a_1 a_2 a_4 a_5 + 1\right) Y^{2} Z T + a_7 X Y T^{2} + X Z T^{2} + a_4 Y Z T^{2} = \lambda X Y Z T.
\end{gather*}
Note that \(\mathcal{S}(\alpha)_{\infty} = S_{(X)} + S_{(Y)} + S_{(Z)} + S_{(T)}\). The base locus of the pencil \(\mathcal{S}(\alpha)\) consists of the curves
\begin{gather*}
  C_{1} = C_{(X, Y)}, \;
  C_{2} = C_{(X, Z)}, \;
  C_{3} = C_{(Y, Z)}, \;
  C_{4} = C_{(Z, T)}, \;
  C_{5} = C_{(X, Y + a_4 T)}, \;
  C_{6} = C_{(X, a_1 a_2 a_5 Y + T)}, \\
  C_{7} = C_{(Y, a_1 a_3 X + T)}, \;
  C_{8} = C_{(Y, a_1 a_6 X + T)}, \;
  C_{9} = C_{(T, a_1 (X + a_2 Y) (X + a_5 Y) (a_1 a_3 a_6 X + Y) + X Y Z)}.
\end{gather*}
Their linear equivalence classes on the generic member \(\mathcal{S}(\alpha)_{\Bbbk}\) of the pencil satisfy the following relations:
\begin{gather*}
  \begin{pmatrix}
    [C_{2}] \\ [C_{6}] \\ [C_{8}] \\ [C_{9}]
  \end{pmatrix} = 
  \begin{pmatrix}
    0 & -1 & -2 & 0 & 0 & 1 \\
    -1 & 1 & 2 & -1 & 0 & 0 \\
    -1 & -1 & 0 & 0 & -1 & 1 \\
    0 & 0 & -1 & 0 & 0 & 1
  \end{pmatrix} \cdot
  \begin{pmatrix}
    [C_{1}] & [C_{3}] & [C_{4}] & [C_{5}] & [C_{7}] & [H_{\mathcal{S}}]
  \end{pmatrix}^T.
\end{gather*}

For a general choice of \(\lambda \in \mathbb{C}\) and \(\alpha \in (\mathbb{C}^*)^7\) the surface \(\mathcal{S}(\alpha)_{\lambda}\) has the following singularities:
\begin{itemize}\setlength{\itemindent}{2cm}
\item[\(P_{1} = P_{(X, Y, Z)}\):] type \(\mathbb{A}_1\) with the quadratic term \(a_7 X Y + a_4 Y Z + X Z\);
\item[\(P_{2} = P_{(X, Y, T)}\):] type \(\mathbb{A}_3\) with the quadratic term \(X \cdot Y\);
\item[\(P_{3} = P_{(Z, T, X + a_2 Y)}\):] type \(\mathbb{A}_1\) with the quadratic term
  \[
    a_2 (Z^2 + a_7 T^2) - (a_1 a_2 (a_2 (a_3 + a_6) + a_4 a_5) + a_2 \lambda + 1) Z T - a_1 (a_2 - a_5) (a_1 a_2 a_3 a_6 - 1) Z (X + a_2 Y);
  \]
\item[\(P_{4} = P_{(Z, T, X + a_5 Y)}\):] type \(\mathbb{A}_1\) with the quadratic term
  \[
    a_5 (Z^2 + a_7 T^2) - (a_1 a_5 (a_5 (a_3 + a_6) + a_2 a_4) + a_5 \lambda + 1) Z T + a_1 (a_2 - a_5) (a_1 a_3 a_5 a_6 - 1) Z (X + a_5 Y); 
  \]
\item[\(P_{5} = P_{(Z, T, a_1 a_3 a_6 X + Y)}\):] type \(\mathbb{A}_1\) with the quadratic term
  \[
    a_3 a_6 (Z^2 + a_7 T^2) - (a_1 (a_3 a_6)^2 (a_1 a_2 a_4 a_5 + 1) + a_3 a_6 \lambda + a_3 + a_6) Z T - (a_1 a_2 a_3 a_6 - 1) (a_1 a_3 a_5 a_6 - 1) Z (a_1 a_3 a_6 X + Y).
  \]
\end{itemize}

Galois action on the lattice \(L(\alpha)_{\lambda}\) is trivial. The intersection matrix on \(L(\alpha)_{\lambda} = L(\alpha)_{\mathcal{S}}\) is represented by
\begin{table}[H]
  \begin{tabular}{|c||c|ccc|c|c|c|cccccc|}
    \hline
    \(\bullet\) & \(E_1^1\) & \(E_2^1\) & \(E_2^2\) & \(E_2^3\) & \(E_3^1\) & \(E_4^1\) & \(E_5^1\) & \(\widetilde{C_{1}}\) & \(\widetilde{C_{3}}\) & \(\widetilde{C_{4}}\) & \(\widetilde{C_{5}}\) & \(\widetilde{C_{7}}\) & \(\widetilde{H_{\mathcal{S}}}\) \\
    \hline
    \hline
    \(\widetilde{C_{1}}\) & \(1\) & \(0\) & \(1\) & \(0\) & \(0\) & \(0\) & \(0\) & \(-2\) & \(0\) & \(0\) & \(0\) & \(0\) & \(1\) \\
    \(\widetilde{C_{3}}\) & \(1\) & \(0\) & \(0\) & \(0\) & \(0\) & \(0\) & \(0\) & \(0\) & \(-2\) & \(1\) & \(0\) & \(1\) & \(1\) \\
    \(\widetilde{C_{4}}\) & \(0\) & \(0\) & \(0\) & \(0\) & \(1\) & \(1\) & \(1\) & \(0\) & \(1\) & \(-2\) & \(0\) & \(0\) & \(1\) \\
    \(\widetilde{C_{5}}\) & \(0\) & \(1\) & \(0\) & \(0\) & \(0\) & \(0\) & \(0\) & \(0\) & \(0\) & \(0\) & \(-2\) & \(0\) & \(1\) \\
    \(\widetilde{C_{7}}\) & \(0\) & \(0\) & \(0\) & \(1\) & \(0\) & \(0\) & \(0\) & \(0\) & \(1\) & \(0\) & \(0\) & \(-2\) & \(1\) \\
    \(\widetilde{H_{\mathcal{S}}}\) & \(0\) & \(0\) & \(0\) & \(0\) & \(0\) & \(0\) & \(0\) & \(1\) & \(1\) & \(1\) & \(1\) & \(1\) & \(4\) \\
    \hline
  \end{tabular}.
\end{table}
Note that the intersection matrix is non-degenerate.

Discriminant groups and discriminant forms of the lattices \(L(\alpha)_{\mathcal{S}}\) and \(H \oplus \Pic(X)\) are given by
\begin{gather*}
  G' =
  \begin{pmatrix}
    \frac{1}{2} & \frac{1}{2} & 0 & 0 & \frac{1}{2} & 0 & \frac{1}{2} & \frac{1}{2} & \frac{1}{2} & 0 & 0 & 0 & \frac{1}{2} \\
    \frac{1}{2} & 0 & \frac{1}{2} & 0 & 0 & 0 & 0 & 0 & 0 & 0 & \frac{1}{2} & \frac{1}{2} & 0 \\
    \frac{1}{2} & 0 & \frac{1}{2} & 0 & \frac{1}{2} & \frac{1}{2} & 0 & 0 & 0 & 0 & \frac{1}{2} & \frac{1}{2} & 0 \\
    0 & 0 & 0 & 0 & \frac{1}{2} & 0 & \frac{1}{2} & 0 & 0 & 0 & 0 & 0 & 0 \\
    \frac{1}{8} & \frac{3}{8} & \frac{1}{4} & \frac{5}{8} & \frac{3}{8} & \frac{3}{8} & \frac{3}{8} & \frac{1}{2} & \frac{3}{4} & \frac{3}{4} & \frac{1}{2} & 0 & \frac{5}{8}
  \end{pmatrix}, \;
  G'' =
  \begin{pmatrix}
    0 & 0 & \frac{1}{2} & \frac{1}{2} & \frac{1}{2} & 0 & \frac{1}{2} & 0 & 0 \\
    0 & 0 & \frac{1}{2} & \frac{1}{2} & 0 & \frac{1}{2} & \frac{1}{2} & 0 & 0 \\
    0 & 0 & \frac{1}{2} & 0 & 0 & 0 & \frac{1}{2} & 0 & 0 \\
    0 & 0 & \frac{1}{2} & \frac{1}{2} & 0 & 0 & 0 & 0 & 0 \\
    0 & 0 & \frac{1}{8} & \frac{1}{8} & \frac{1}{8} & \frac{1}{8} & \frac{1}{8} & \frac{3}{4} & \frac{7}{8}
  \end{pmatrix};
\end{gather*}
\begin{gather*}
  B' = 
  \begin{pmatrix}
    0 & \frac{1}{2} & 0 & 0 & 0 \\
    \frac{1}{2} & 0 & 0 & 0 & 0 \\
    0 & 0 & 0 & \frac{1}{2} & 0 \\
    0 & 0 & \frac{1}{2} & 0 & 0 \\
    0 & 0 & 0 & 0 & \frac{5}{8}
  \end{pmatrix}, \;
  B'' = 
  \begin{pmatrix}
    0 & \frac{1}{2} & 0 & 0 & 0 \\
    \frac{1}{2} & 0 & 0 & 0 & 0 \\
    0 & 0 & 0 & \frac{1}{2} & 0 \\
    0 & 0 & \frac{1}{2} & 0 & 0 \\
    0 & 0 & 0 & 0 & \frac{3}{8}
  \end{pmatrix}; \;
  \begin{pmatrix}
    Q' \\ Q''
  \end{pmatrix}
  =
  \begin{pmatrix}
    0 & 0 & 1 & 1 & \frac{13}{8} \\
    0 & 0 & 1 & 1 & \frac{3}{8}
  \end{pmatrix}.
\end{gather*}


\subsection{Family \textnumero8.1}\label{subsection:08-01_parametrised}

The pencil \(\mathcal{S}(\alpha)\), where \(\alpha = (a_1, \ldots, a_8) \in (\mathbb{C}^*)^8\), is defined by the equation
\begin{gather*}
  \lambda X Y Z T = (a_4 a_7) X Y^{3} + X^{2} Y Z + (a_1 a_2 a_4 a_5 a_7 + a_4 + a_7) X Y^{2} Z + (a_1 a_2 a_4 a_5 + a_1 a_2 a_5 a_7 + 1) X Y Z^{2} + \\ (a_1 a_2 a_5) X Z^{3} + (a_1 a_3 a_4 a_7 + a_1 a_4 a_6 a_7 + 1) X Y^{2} T + (a_1^{2} a_2 a_3 a_5 a_6 + a_1 a_2 + a_1 a_5) X Z^{2} T + \\ (a_1^{2} a_3 a_4 a_6 a_7 + a_1 a_3 + a_1 a_6) X Y T^{2} + (a_1^{2} a_2 a_3 a_6 + a_1^{2} a_3 a_5 a_6 + a_1) X Z T^{2} + a_8 Y Z T^{2} + (a_1^{2} a_3 a_6) X T^{3}.
\end{gather*}
Note that \(\mathcal{S}(\alpha)_{\infty} = S_{(X)} + S_{(Y)} + S_{(Z)} + S_{(T)}\). The base locus of the pencil \(\mathcal{S}(\alpha)\) consists of the curves
\begin{gather*}
  C_1 = C_{(X, Y)}, \;
  C_2 = C_{(X, Z)}, \;
  C_3 = C_{(X, T)}, \;
  C_4 = C_{(Y, a_2 Z + T)}, \;
  C_5 = C_{(Y, a_5 Z + T)}, \;
  C_6 = C_{(Y, Z + a_1 a_3 a_6 T)}, \\
  C_7 = C_{(Z, Y + a_1 a_3 T)}, \;
  C_8 = C_{(Z, Y + a_1 a_6 T)}, \;
  C_9 = C_{(Z, a_4 a_7 Y + T)}, \;
  C_{10} = C_{(T, (a_4 Y + Z) (a_7 Y + Z) (Y + a_1 a_2 a_5 Z) + X Y Z)}.
\end{gather*}
Their linear equivalence classes on the generic member \(\mathcal{S}(\alpha)_{\Bbbk}\) of the pencil satisfy the following relations:
\begin{gather*}
  \begin{pmatrix}
    [C_{2}] \\ [C_{6}] \\ [C_{9}] \\ [C_{10}]
  \end{pmatrix} = 
  \begin{pmatrix}
    -1 & -2 & 0 & 0 & 0 & 0 & 1 \\
    -1 & 0 & -1 & -1 & 0 & 0 & 1 \\
    1 & 2 & 0 & 0 & -1 & -1 & 0 \\
    0 & -1 & 0 & 0 & 0 & 0 & 1
  \end{pmatrix} \cdot
  \begin{pmatrix}
    [C_{1}] & [C_{3}] & [C_{4}] & [C_{5}] & [C_{7}] & [C_{8}] & [H_{\mathcal{S}(\alpha)}]
  \end{pmatrix}^T.
\end{gather*}

For a general choice of \(\lambda \in \mathbb{C}\) and \(\alpha \in (\mathbb{C}^*)^8\) the surface \(\mathcal{S}(\alpha)_{\lambda}\) has the following singularities:
\begin{itemize}\setlength{\itemindent}{2cm}
\item[\(P_{1} = P_{(Y, Z, T)}\):] type \(\mathbb{A}_2\) with the quadratic term \(Y \cdot Z\);
\item[\(P_{2} = P_{(X, T, a_4 Y + Z)}\):] type \(\mathbb{A}_1\) with the quadratic term
  \begin{gather*}
    (a_4 - a_7) (a_1 a_2 a_4 a_5 - 1) X (a_4 Y + Z) - a_4 (X^2 + a_8 T^2) + \\
    (a_1 a_4 (a_1 a_2 a_3 a_4 a_5 a_6 + a_4 (a_2 + a_5) + a_7 (a_3 + a_6)) + a_4 \lambda + 1) X T;
  \end{gather*}
\item[\(P_{3} = P_{(X, T, a_7 Y + Z)}\):] type \(\mathbb{A}_1\) with the quadratic term
  \begin{gather*}
    (a_4 - a_7) (a_1 a_2 a_5 a_7 - 1) X (a_7 Y + Z) + a_7 (X^2 + a_8 T^2) - \\
    (a_1 a_7 (a_1 a_2 a_3 a_5 a_6 a_7 + a_4 (a_3 + a_6) + a_7 (a_2 + a_5)) + a_7 \lambda + 1) X T;
  \end{gather*}
\item[\(P_{4} = P_{(X, T, Y + a_1 a_2 a_5 Z)}\):] type \(\mathbb{A}_1\) with the quadratic term
  \begin{gather*}
    (a_1 a_2 a_4 a_5 - 1) (a_1 a_2 a_5 a_7 - 1) X (Y + a_1 a_2 a_5 Z) - a_1 a_2 a_5 (X^2 + a_8 T^2) + \\
    a_1 (a_1 a_2 a_5 (a_1 a_2 a_4 a_5 a_7 (a_3 + a_6) + a_2 a_5 + a_3 a_6) + a_2 a_5 \lambda + a_2 + a_5) X T.
  \end{gather*}
\end{itemize}

Galois action on the lattice \(L(\alpha)_{\lambda}\) is trivial. The intersection matrix on \(L(\alpha)_{\lambda} = L(\alpha)_{\mathcal{S}}\) is represented by
\begin{table}[H]
  \begin{tabular}{|c||cc|c|cc|ccccccc|}
    \hline
    \(\bullet\) & \(E_1^1\) & \(E_1^2\) & \(E_2^1\) & \(E_3^1\) & \(E_4^1\) & \(\widetilde{C_{1}}\) & \(\widetilde{C_{3}}\) & \(\widetilde{C_{4}}\) & \(\widetilde{C_{5}}\) & \(\widetilde{C_{7}}\) & \(\widetilde{C_{8}}\) & \(\widetilde{H_{\mathcal{S}}}\) \\
    \hline
    \hline
    \(\widetilde{C_{1}}\) & \(0\) & \(0\) & \(0\) & \(0\) & \(0\) & \(-2\) & \(1\) & \(1\) & \(1\) & \(0\) & \(0\) & \(1\) \\
    \(\widetilde{C_{3}}\) & \(0\) & \(0\) & \(1\) & \(1\) & \(1\) & \(1\) & \(-2\) & \(0\) & \(0\) & \(0\) & \(0\) & \(1\) \\
    \(\widetilde{C_{4}}\) & \(1\) & \(0\) & \(0\) & \(0\) & \(0\) & \(1\) & \(0\) & \(-2\) & \(0\) & \(0\) & \(0\) & \(1\) \\
    \(\widetilde{C_{5}}\) & \(1\) & \(0\) & \(0\) & \(0\) & \(0\) & \(1\) & \(0\) & \(0\) & \(-2\) & \(0\) & \(0\) & \(1\) \\
    \(\widetilde{C_{7}}\) & \(0\) & \(1\) & \(0\) & \(0\) & \(0\) & \(0\) & \(0\) & \(0\) & \(0\) & \(-2\) & \(0\) & \(1\) \\
    \(\widetilde{C_{8}}\) & \(0\) & \(1\) & \(0\) & \(0\) & \(0\) & \(0\) & \(0\) & \(0\) & \(0\) & \(0\) & \(-2\) & \(1\) \\
    \(\widetilde{H_{\mathcal{S}}}\) & \(0\) & \(0\) & \(0\) & \(0\) & \(0\) & \(1\) & \(1\) & \(1\) & \(1\) & \(1\) & \(1\) & \(4\) \\
    \hline
  \end{tabular}
\end{table}
Note that the intersection matrix is non-degenerate.

Discriminant groups and discriminant forms of the lattices \(L(\alpha)_{\mathcal{S}}\) and \(H \oplus \Pic(X)\) are given by
\begin{gather*}
  G' =
  \begin{pmatrix}
    0 & 0 & \frac{1}{2} & \frac{1}{2} & 0 & 0 & 0 & 0 & 0 & \frac{1}{2} & \frac{1}{2} & 0 \\
    0 & 0 & \frac{1}{2} & 0 & \frac{1}{2} & 0 & 0 & 0 & 0 & \frac{1}{2} & \frac{1}{2} & 0 \\
    \frac{1}{2} & 0 & \frac{1}{2} & 0 & 0 & \frac{1}{2} & 0 & 0 & 0 & 0 & \frac{1}{2} & 0 \\
    \frac{1}{2} & 0 & \frac{1}{2} & 0 & 0 & \frac{1}{2} & 0 & 0 & 0 & \frac{1}{2} & 0 & 0 \\
    0 & 0 & 0 & 0 & 0 & 0 & 0 & \frac{1}{2} & \frac{1}{2} & 0 & 0 & 0 \\
    0 & \frac{1}{2} & 0 & 0 & 0 & \frac{1}{2} & 0 & \frac{1}{2} & 0 & 0 & 0 & \frac{1}{2} \\
    \frac{1}{3} & \frac{2}{3} & \frac{2}{3} & \frac{2}{3} & \frac{2}{3} & \frac{1}{3} & \frac{1}{3} & 0 & 0 & 0 & 0 & \frac{1}{3}
  \end{pmatrix}, \;
  G'' =
  \begin{pmatrix}
    0 & 0 & \frac{1}{2} & \frac{1}{2} & \frac{1}{2} & 0 & \frac{1}{2} & \frac{1}{2} & 0 & \frac{1}{2} \\
    0 & 0 & \frac{1}{2} & \frac{1}{2} & 0 & \frac{1}{2} & \frac{1}{2} & \frac{1}{2} & 0 & \frac{1}{2} \\
    0 & 0 & 0 & \frac{1}{2} & 0 & 0 & 0 & 0 & 0 & \frac{1}{2} \\
    0 & 0 & \frac{1}{2} & 0 & 0 & 0 & 0 & 0 & 0 & \frac{1}{2} \\
    0 & 0 & \frac{1}{2} & \frac{1}{2} & 0 & 0 & \frac{1}{2} & 0 & 0 & \frac{1}{2} \\
    0 & 0 & \frac{1}{2} & \frac{1}{2} & 0 & 0 & 0 & \frac{1}{2} & 0 & \frac{1}{2} \\
    0 & 0 & \frac{2}{3} & \frac{2}{3} & \frac{2}{3} & \frac{2}{3} & \frac{2}{3} & \frac{2}{3} & \frac{2}{3} & 0
  \end{pmatrix};
\end{gather*}
\begin{gather*}
  B' = 
  \begin{pmatrix}
    0 & \frac{1}{2} & 0 & 0 & 0 & 0 & 0 \\
    \frac{1}{2} & 0 & 0 & 0 & 0 & 0 & 0 \\
    0 & 0 & 0 & \frac{1}{2} & 0 & 0 & 0 \\
    0 & 0 & \frac{1}{2} & 0 & 0 & 0 & 0 \\
    0 & 0 & 0 & 0 & 0 & \frac{1}{2} & 0 \\
    0 & 0 & 0 & 0 & \frac{1}{2} & 0 & 0 \\
    0 & 0 & 0 & 0 & 0 & 0 & \frac{2}{3}
  \end{pmatrix}, \;
  B'' =
  \begin{pmatrix}
    0 & \frac{1}{2} & 0 & 0 & 0 & 0 & 0 \\
    \frac{1}{2} & 0 & 0 & 0 & 0 & 0 & 0 \\
    0 & 0 & 0 & \frac{1}{2} & 0 & 0 & 0 \\
    0 & 0 & \frac{1}{2} & 0 & 0 & 0 & 0 \\
    0 & 0 & 0 & 0 & 0 & \frac{1}{2} & 0 \\
    0 & 0 & 0 & 0 & \frac{1}{2} & 0 & 0 \\
    0 & 0 & 0 & 0 & 0 & 0 & \frac{1}{3}
  \end{pmatrix}; \;
  \begin{pmatrix}
    Q' \\ Q''
  \end{pmatrix}
  =
  \begin{pmatrix}
    0 & 0 & 0 & 0 & 1 & 1 & \frac{2}{3} \\
    0 & 0 & 0 & 0 & 1 & 1 & \frac{4}{3}
  \end{pmatrix}.
\end{gather*}


\printbibliography

\end{document}